\documentclass[11pt]{amsart}
\usepackage[a4paper,left=25mm,right=25mm,top=25mm,bottom=25mm]{geometry}
\usepackage{microtype}
\usepackage{mathrsfs}
\usepackage{amsfonts}
\usepackage{amstext}
\usepackage{amssymb}
\usepackage[dvipsnames]{xcolor}
\usepackage{enumitem}
\setlist[enumerate]{label=\textup{(\roman*)},ref=\roman*,
  leftmargin=2em,labelsep=0.5em,itemsep=0.35\baselineskip,
  topsep=0.4\baselineskip,parsep=0pt}
\usepackage{tikz}
\usepackage{float}
\usepackage[
  colorlinks,
  linkcolor=blue,
  anchorcolor=green,
  citecolor=red,
  linktocpage=true
]{hyperref}
\hypersetup{hypertexnames=false}
\allowdisplaybreaks[2]

\newcommand{\Keywords}[1]{\par\noindent {\small{ Keywords\/}: #1}}

\theoremstyle{plain}
\newtheorem{theorem}{Theorem}[section]
\newtheorem{lemma}[theorem]{Lemma}
\newtheorem{proposition}[theorem]{Proposition}
\newtheorem{corollary}[theorem]{Corollary}
\newtheorem{criterion}[theorem]{Criterion}

\theoremstyle{definition}
\newtheorem{definition}[theorem]{Definition}

\numberwithin{equation}{section}

\newcommand{\DC}{\mathrm{DC}}
\newcommand{\eps}{\varepsilon}
\newcommand{\Id}{\mathrm{Id}}
\newcommand{\T}{\mathbb T}
\newcommand{\Z}{\mathbb Z}
\newcommand{\R}{\mathbb R}

\title[Acceleration Strata for Prevalent Perturbations]
{Positive-Measure Acceleration Strata for Prevalent Perturbations of
N-Fold Pullbacks of Supercritical Almost Mathieu Potentials}
\author[Liang Jinhao]{Liang Jinhao}
\address{(Liang Jinhao) Hohai University}
\email{jhliang\_hhu@163.com}

\author[Wang Yiqian]{Wang Yiqian}
\address{(Wang Yiqian) School of Mathematics, Nanjing University}
\email{yiqianw@nju.edu.cn}

\author[Xu Jiahao]{Xu Jiahao\textsuperscript{*}}
\address{(Xu Jiahao) School of Mathematics, Nanjing University}
\email{xjhao512@outlook.com}
\thanks{\textsuperscript{*}Corresponding author: Xu Jiahao.}
\date{}

\begin{document}
\raggedbottom

\begin{abstract}
Fix a strip width \(\rho>0\), a Diophantine frequency \(\alpha\),
an integer \(N\ge2\), and \(\lambda>1\).  For the family
\[
 V_{\varepsilon,\delta}(x)
 =2\lambda\cos(2\pi N x)+\varepsilon\delta(x),
\]
we prove that there is an open prevalent set
\(\mathcal P_{\rm strat}\) of real-analytic strip-\(\rho\) perturbations
such that every \(\delta\in\mathcal P_{\rm strat}\) admits
\(\varepsilon_0(\delta)>0\) for which
\[
\begin{gathered}
 \operatorname{Leb}\left\{
 E\in\Sigma(V_{\varepsilon,\delta},\alpha):
 \omega(\alpha,E;V_{\varepsilon,\delta})=s
 \right\}>0,\\
 0<\varepsilon<\varepsilon_0(\delta),
 \qquad s=1,\ldots,N.
\end{gathered}
\]
The proof relates acceleration to the total multiplicity of real zeros
of analytic overlap coefficients for first-return cocycles.  Under
quantitative control of the return products, Poisson integration and
Kac normalization identify acceleration with half this multiplicity.
A positive analytic dual state gives the finite-volume spectral gaps and Schur
curvature needed at the upper spectral edge.  Schur variations separate
the perturbed maxima, and parameter exclusion preserves \(s\) pairs of
simple zeros on a positive-measure spectral set.

\Keywords{almost Mathieu operator; acceleration; prevalence; Lyapunov exponent}
\end{abstract}

\maketitle
\section{Introduction}

\noindent Acceleration records the one-sided variation of the Lyapunov exponent
under an imaginary phase shift.  We study how its spectral level sets
change under small analytic perturbations of supercritical almost
Mathieu potentials pulled back by an \(N\)-fold covering.

Write \(\mathbb T:=\mathbb R/\mathbb Z\) and
\(\|x\|_{\mathbb T}:=\min_{k\in\mathbb Z}|x-k|\).
For
\(r>0\), put
\[
 \mathbb T_r:=\{z\in\mathbb C/\mathbb Z:|\operatorname{Im}z|<r\}
\]
and let
\[
 C^\omega_r(\mathbb T,\mathbb R)
 :=\left\{f\in C(\overline{\mathbb T_r}):
 f|_{\mathbb T_r}\ \text{is holomorphic and }%
 f(\overline z)=\overline{f(z)}\right\}.
\]
This is a real Banach space with norm
\[
 \|f\|_r:=\sup_{z\in\overline{\mathbb T_r}}|f(z)|.
\]

We study the one-frequency Schr\"odinger operator
\begin{equation}\label{eq:Schrodinger-operator-definition}
 (H_{x,V,\alpha}u)_j
 =u_{j+1}+u_{j-1}+V(x+j\alpha)u_j
 \quad\text{on }\ell^2(\mathbb Z).
\end{equation}
In \eqref{eq:Schrodinger-operator-definition},
\(V\in C^\omega_{\rho_V}(\mathbb T,\mathbb R)\) for some
\(\rho_V>0\), and \(\alpha\in\mathbb R\setminus\mathbb Q\).  Minimality of the rotation makes
the spectrum independent of the phase; this common spectrum is denoted by
\(\Sigma(V,\alpha)\) (see, for example, \cite{ZhangUH2020}).  Put
\[
 A_E^V(x)=\begin{pmatrix}E-V(x)&-1\\1&0\end{pmatrix},
 \qquad
 A_{m,\beta}^V(x,E)
 :=A_E^V(x+(m-1)\beta)\cdots A_E^V(x).
\]
When \(\beta=\alpha\), write \(A_m^V:=A_{m,\alpha}^V\).
For \(E\in\mathbb R\) and \(|y|<\rho_V\), the complexified Lyapunov
exponent and acceleration are
\[
\begin{aligned}
 L(\alpha,E,y;V)
 &=\lim_{m\to\infty}\frac1m\int_{\mathbb T}
   \log\|A_m^V(x+iy,E)\|\,dx,\\
 \omega(\alpha,E;V)
 &=\frac1{2\pi}\partial_y^+L(\alpha,E,0;V).
\end{aligned}
\]

The function \(y\mapsto L(\alpha,E,y;V)\) is finite, even, and convex;
the acceleration is therefore well defined and nonnegative.  Avila's
quantization theorem gives
\(\omega(\alpha,E;V)\in\mathbb Z_{\ge0}\)
\cite{AvilaGlobal}.  The elementary regularity and measurability facts
used below are recorded at the end of this introduction.

Fix an integer \(N\ge2\), \(\lambda>1\), and a strip width \(\rho>0\), and put
\[
 X:=C^\omega_\rho(\mathbb T,\mathbb R)
\]
with its strip supremum norm \(\|\cdot\|_\rho\).

\begin{definition}
\label{def:prevalence}
A Borel set \(\mathcal A\subset X\) is prevalent in the sense of
Hunt--Sauer--Yorke \cite{HuntSauerYorke} if there exists a
compactly supported Borel probability measure \(\mu\) on \(X\) such that
\[
 \mu\{w\in X:x+w\in\mathcal A\}=1,
 \qquad \forall x\in X.
\]
\end{definition}

The perturbative family is
\begin{equation}\label{eq:perturbative-potential-family}
 V_{\varepsilon,\delta}(x)
 =2\lambda\cos(2\pi N x)+\varepsilon\delta(x),
 \qquad
 \delta\in X.
\end{equation}
For \(\kappa>0\) and \(\tau>1\), define
\[
 \DC(\kappa,\tau):=
 \left\{\vartheta\in\mathbb R\setminus\mathbb Q:
 \|q\vartheta\|_{\mathbb T}\ge
 \kappa|q|^{-\tau}\ \text{for every }q\in\mathbb Z\setminus\{0\}\right\}.
\]
For the family \eqref{eq:perturbative-potential-family}, assume
\(\alpha\in\DC(\kappa,\tau)\).

For the unperturbed potential \(2\lambda\cos(2\pi Nx)\), the acceleration
equals \(N\) throughout the spectrum when \(\lambda>1\).  Avila asked
whether a typical perturbation has, for each \(s=1,\ldots,N\), a set of
spectral energies of positive measure with acceleration \(s\)
\cite[Section~2.1.4, item~(5)]{AvilaGlobal}.
The following theorem establishes this conclusion for Diophantine
frequencies, for every \(\lambda>1\) and every fixed analytic strip.

\begin{theorem}
\label{theorem.complete-main}
Fix \(\rho>0\), an integer \(N\ge2\), \(\lambda>1\), and
\(\alpha\in\DC(\kappa,\tau)\), where \(\kappa>0\) and \(\tau>1\).
There is an open prevalent set
\(\mathcal P_{\mathrm{strat}}\subset
C^\omega_\rho(\mathbb T,\mathbb R)\) such that every
\(\delta\in\mathcal P_{\mathrm{strat}}\) admits
\(\varepsilon_0(\delta)>0\) with the following property.  If
\(0<\varepsilon<\varepsilon_0(\delta)\), then
\begin{equation}\label{eq:main-positive-measure-strata}
 \operatorname{Leb}\left\{E\in
 \Sigma(V_{\varepsilon,\delta},\alpha):
 \omega(\alpha,E;V_{\varepsilon,\delta})=s\right\}>0,
 \qquad s=1,\ldots,N.
\end{equation}
\end{theorem}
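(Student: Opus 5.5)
The plan follows the route sketched in the abstract. The first move is to pass from \(V_{\varepsilon,\delta}\) to a first-return (block) cocycle. For \(\varepsilon=0\), \(2\lambda\cos(2\pi Nx)\) is the pullback of \(2\lambda\cos(2\pi x)\) under \(x\mapsto Nx\). Hence on the spectrum the complexified exponent is \(L(\alpha,E,y)=\log\lambda+2\pi N|y|\) for small \(|y|\), and \(\omega=N\). For small \(\varepsilon\) we work in the band \(0<|y|<\rho'\), where the large-\(|y|\) asymptotics of the cocycle are dominated by the \(\lambda e^{2\pi N|y|}\) terms. Writing \(A^{V}_{m}(x+iy,E)\) in a basis adapted to the dominated splitting, we isolate an analytic overlap coefficient \(c_{\varepsilon}(x,E)\) of the return product. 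Up to a factor of controlled size, it is a trigonometric polynomial in \(x\) of degree \(N\) perturbed by \(O(\varepsilon)\).

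The key identity to prove is
\[
\omega(\alpha,E;V_{\varepsilon,\delta})=\tfrac12\,\#\{\text{real zeros of }c_\varepsilon(\cdot,E)\text{ in }\mathbb T,\ \text{with multiplicity}\}.
\]
The argument is by Jensen/Poisson integration. Quantitative control of the return products, coming from large deviation estimates for Diophantine \(\alpha\) and the uniform hyperbolicity available for \(|y|\) bounded away from \(0\), reduces \(L(\alpha,E,y)\) for \(y\downarrow0\) to \(\int\log|c_\varepsilon(x+iy,E)|\,dx\) plus a term that is affine in \(y\). The slope jump at \(y=0\) then counts the zeros on the real line. Kac normalization, that is, the standard counting of zeros of a real trigonometric polynomial (real zeros come in pairs on \(\mathbb T\)), produces the factor \(\tfrac12\). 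With this in hand the problem becomes: for each \(s\), produce a positive-measure set of spectral \(E\) at which \(c_\varepsilon(\cdot,E)\) has exactly \(s\) pairs of simple real zeros.

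We locate these energies near the upper spectral edge, using the dual picture. Aubry duality turns the supercritical operator into a subcritical one with a positive analytic dual eigenstate at the top of the spectrum. Its finite-volume truncations give spectral gaps, and a Schur complement reduction near the edge gives a nondegenerate (curved) dependence of the maxima of the relevant real function \(x\mapsto f_E(x)\) on \(E\). For \(\varepsilon=0\) the \(N\) maxima are equal by \(1/N\)-translation symmetry. The first-order Schur variation in \(\varepsilon\delta\) moves the \(N\) maximal values by a linear functional \(\ell_j(\delta)\), \(j=1,\ldots,N\). The set \(\mathcal P_{\rm strat}\) is defined by the open condition that the \(\ell_j(\delta)\) are pairwise distinct, together with nondegeneracy of the maxima. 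Its complement is contained in a finite union of closed hyperplanes, so it is open, and it is prevalent via a probe measure on a finite-dimensional space of trigonometric polynomials. For such \(\delta\) and small \(\varepsilon\), the energy threshold at which the \(j\)-th maximum crosses the zero level is shifted by \(\approx\varepsilon\ell_j(\delta)\). Ordering these thresholds gives \(N\) disjoint energy windows of size \(\gtrsim\varepsilon\), and in the \(s\)-th window exactly \(s\) maxima lie above the level, giving \(2s\) simple zeros.

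The last step is parameter exclusion. Inside each window we must show that a positive-measure subset lies in \(\Sigma(V_{\varepsilon,\delta},\alpha)\) and that the quantitative return-product control holds there. I would use the Diophantine condition and a multiscale/KAM-type exclusion: energies where small divisors or resonances destroy the estimates form a set of measure \(o(\varepsilon)\). The spectral measure of the window is then bounded below using the regularity of the integrated density of states together with the positivity of \(L\) ensured by \(\lambda>1\). Simple zeros are stable, so \(s\) is constant on the surviving set. I expect the main obstacle to be this final exclusion: showing that the spectrum occupies a proportion of the \(\varepsilon\)-sized windows bounded below while the overlap-coefficient approximation stays valid uniformly. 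The approach I would try first is comparing the IDS increment across a window with the Schur-curvature estimate.
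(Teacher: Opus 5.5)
Your prevalence step is essentially the paper's. You still have to check $\ell_a-\ell_b\not\equiv0$; this holds because the limiting Schur measure attached to $p_a$ has a unit atom at $p_a$ that the one attached to $p_b$ lacks.

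\textbf{Spectral inclusion.} The step you flag as the main obstacle is a genuine gap, and the IDS route would not close it. A lower bound on $N(E_2)-N(E_1)$ across a window only shows that the spectrum meets the window. A H\"older continuous IDS can be carried by a Lebesgue-null set. So converting the increment into $\operatorname{Leb}(\Sigma\cap W)\gtrsim|W|$ would need an upper bound on the density of $dN$, which is not available when $L>0$. Even then, the spectral share of the window would still have to beat your excluded set.

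The missing idea is that no measure estimate for $\Sigma$ is needed: every energy at which the zero hierarchy survives at all scales is automatically spectral. Suppose $E\notin\Sigma$. Johnson's theorem makes $(\alpha,A_E^V)$ uniformly hyperbolic, with invariant lines at angle at least $\delta_0$. As $p,q\to\infty$, uniformly in $x$, the expanding output line of $A_q^V(x-q\alpha,E)$ tends to $E^u(x)$ and the expanding input line of $A_p^V(x,E)$ tends to $E^s(x)^\perp$. Hence every sufficiently long return word has $|\cos G|\ge c>0$. This contradicts having, at every level $j$, a zero of the overlap coefficient on an atom whose return times exceed $q_{j+1}\to\infty$. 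So the positive-measure set is just the set of surviving energies, and its measure is controlled by the exclusion bookkeeping alone.

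\textbf{The criterion and persistence.} That puts all the weight on the criterion and on persistence, and here the proposal misreads the structure. A single overlap coefficient $c_\varepsilon(\cdot,E)$ on $\mathbb T$ cannot determine $\partial_y^+L(\alpha,E,0)$. (It is also not a perturbed degree-$N$ trigonometric polynomial: it exists only atomwise on a return partition, with fold normal forms $C_1(x-\xi_a)^2+C_2(E-E_a)$ near the covering points $p_a$.) For a fixed return set, the available representation is $L_K=\int(\log|\Lambda|+\log|H|)\,d\mu_K+O(D^{24}\Lambda_*^{-2}\eta_*^{-2})$. At height $y$ a simple real zero forces $\eta_*\lesssim y$, so the error bound is of order $\Lambda_*^{-2}y^{-2}$, which is not $o(y)$ as $y\downarrow0$ at any fixed scale.

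The paper therefore uses:
\begin{itemize}
\item return sets $K_j$ with return times $m_j\to\infty$;
\item the exact inducing identity $L=|K_j|L_{K_j}$, which is what the Kac normalization is (the factor $\tfrac12$ comes from each real zero contributing $\int\frac{y\,dx}{(x-c)^2+y^2}\to\pi$, divided by $2\pi$, not from zeros pairing up);
\item secants at heights $e^{-\beta_\pm m_j}$ with $2\chi-2P_{\rm loc}\beta_--\beta_+>0$.
\end{itemize}
Consequently ``exactly $2s$ simple real zeros, no others, with boundary clearance and subexponential constants'' must hold at every level $j$. The statement ``simple zeros are stable'' does not carry this through infinitely many reinductions.

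What makes the level-by-level exclusions summable in measure is pairwise energy transversality of the zero branches, $|\partial_E(c_a-c_b)|\ge Q^{-D_6}$. The exclusions are short returns, $\|c_a-c_b+m\alpha\|_{\mathbb T}<8w_j$, and boundary resonances. For same-side zeros of two different folds this transversality is not automatic. The paper derives it from the strict monotonicity of $|\partial_Ec|^{-2}$ in the fold depth $D=E_a-E$ (it is $\approx 4aD/b$ for the normal form $ay^2-bD$), combined with the $\gtrsim\varepsilon$ separation of the fold energies. Your plan contains neither this transversality nor a mechanism for persistence across scales.
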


Phase complexification in quasiperiodic Schr\"odinger theory goes back
to Herman's subharmonic method.  Sorets--Spencer used this method to
prove positivity of the Lyapunov exponent at large coupling
\cite{Herman1983,SoretsSpencer1991}.  Avila's global theory establishes
quantization of acceleration and describes the resulting spectral
regimes; Avila--Jitomirskaya--Sadel develop the corresponding structure
theory for complex one-frequency cocycles
\cite{AvilaGlobal,AJSComplex}.
In the positive-Lyapunov-exponent regime and under a strong Diophantine
condition, Han--Schlag characterize acceleration by the asymptotic
number of zeros of finite-volume Dirichlet determinants
\cite{HanSchlag2026}.  Other descriptions relate acceleration to winding
numbers \cite{WangWangYouZhou2024}, or to dual Lyapunov exponents through
the multiplicative Jensen formula \cite{GeJitomirskayaYouZhou2026}.

\subsection*{Proof strategy and organization}

Our criterion uses the local zero divisors of overlap coefficients for
first-return cocycles.  Under the quantitative hypotheses of
Theorem~\ref{thm:persistent-divisor-acceleration}, the
multiplier--overlap approximation, Poisson integration, and exact Kac
normalization \cite[Lemma~2.2]{Knill1992} give
\[
 P_{\rm loc}=2s
 \quad\Longrightarrow\quad
 \omega(\alpha,E;V_{\varepsilon,\delta})
 =\frac{P_{\rm loc}}2=s.
\]
The zeros may move with the return scale; their total multiplicity
\(P_{\rm loc}\) remains fixed.  To construct such divisors, we use a
positive analytic dual state to obtain finite-volume spectral gaps and
positive Schur curvature at the upper edge.  Schur variations then
separate the perturbed maxima.  A return induction with parameter
exclusion, following the approach of Young and Wang--Zhang
\cite{Young1997,WangZhang2015}, preserves the required simple zeros.

Section~\ref{sec:persistent-return-acceleration} proves the acceleration
criterion, and Section~\ref{sec:black-box-main-reduction} reduces the main
theorem to its hypotheses.  The almost Mathieu (AMO) upper-edge estimates in
Section~\ref{sec:AMO-upper-edge-seed} provide the starting data.
Section~\ref{sec:multiwell-seed-construction} constructs \(2s\) simple
zeros at the initial scale.  Section~\ref{sec:real-CF-propagation}
preserves them on a positive-measure spectral set and constructs their
holomorphic extensions.  Appendix~\ref{app:AMO-upper-edge} proves the
AMO upper-edge estimates; Appendix~\ref{app:normalized-product-proof}
proves the normalized-product estimate used by the criterion.

\subsection*{Notation and preliminary facts}

For the model-specific estimates, the fixed background data are
\[
 \mathfrak b:=(N,\lambda,\rho,\alpha,\kappa,\tau).
\]
The letters \(c,C>0\) denote constants depending only on these data;
their values may change between estimates.  Once a return-window exponent
\(C_I\) has been fixed, it is included in the background data for that
application.  Further dependence is displayed in numbered constants,
such as \(C_i=C_i(R,r_*)\) or \(D_i=D_i(R,r_*)\), and the arguments
are then suppressed.  Constants in conclusions and smallness conditions,
and exponents in the scale inequalities, retain their chosen values.
Auxiliary coefficients in a proof may vary between estimates when this
is stated explicitly; their parameter dependence remains unchanged.
All choices are completed before the corresponding scale threshold is
fixed.  The abstract estimates state their parameter dependence separately.

We use \(|\cdot|\) for Lebesgue measure on \(\mathbb T\) or
\(\mathbb R\), and \(\mathbf1_A\) for the indicator of a measurable
set \(A\).  Write \(\mathbb N=\{1,2,\ldots\}\) and
\(\mathbb N_0=\{0,1,2,\ldots\}\); for \(d\in\mathbb N\),
\(A^d\) denotes the \(d\)-fold Cartesian product.  For compact \(K\),
\(C(K)\) denotes the complex-valued continuous functions on \(K\);
\(\ell^2(S)\), for \(S\subset\mathbb Z\), denotes the space of
square-summable complex-valued functions on \(S\).

Put
\[
 T_\beta x:=x+\beta\pmod1,
 \qquad \beta\in\mathbb R.
\]
For a finite interval \(\Gamma\subset\mathbb Z\), let
\(H_{V,\Gamma}(x)\) denote the Dirichlet restriction of
\(H_{x,V,\alpha}\) to \(\ell^2(\Gamma)\); explicitly,
\[
 (H_{V,\Gamma}(x)\psi)_j
 =\psi_{j+1}+\psi_{j-1}+V(x+j\alpha)\psi_j,
 \qquad j\in\Gamma,
\]
with \(\psi=0\) outside \(\Gamma\).  If the frequency must be displayed,
we write \(H_{x,V,\beta}^{[a,b]}\) for the corresponding restriction to
\([a,b]\cap\mathbb Z\).  For the standard almost Mathieu potential, use
the abbreviations
\[
 H_{\lambda,\beta,x}:=H_{x,2\lambda\cos(2\pi\,\cdot),\beta},
 \qquad
 H_{\lambda,\beta,x}^{[a,b]}
 :=H_{x,2\lambda\cos(2\pi\,\cdot),\beta}^{[a,b]}.
\]
For complex vectors and matrices, \({}^{\mathsf T}\) denotes transpose
without complex conjugation; vector norms are Hermitian Euclidean norms,
and matrix norms are the induced operator norms.
For a real domain \(\Omega\), our derivative norm convention is
\[
 \|f\|_{C^k(\Omega)}
 :=\max_{|\zeta|\le k}\sup_\Omega\|\partial^\zeta f\|,
\]
using the absolute value for scalar \(f\).
Throughout, \(e_1=(1,0)^{\mathsf T}\) and
\(e_2=(0,1)^{\mathsf T}\) denote the standard basis of \(\mathbb C^2\).
For \(\theta\in\mathbb R\), our rotation convention is
\[
 R_\theta:=
 \begin{pmatrix}\cos\theta&-\sin\theta\\
                 \sin\theta& \cos\theta\end{pmatrix}.
\]

We next record the regularity facts used in the statement of the theorem.
For fixed \(y\), subadditivity gives
\[
\begin{gathered}
 a_m(y):=\int_{\mathbb T}\log\|A_m^V(x+iy,E)\|\,dx,\\
 a_{m+n}(y)\le a_m(y)+a_n(y),\qquad
 L(\alpha,E,y;V)=\inf_{m\ge1}\frac{a_m(y)}m.
\end{gathered}
\]
Put \(F_m=a_m/m\). Subharmonicity, periodicity, and real symmetry give,
in the sense of distributions,
\[
 F_m''(y)=\frac1m\int_{\mathbb T}
 \Delta_{x,y}\log\|A_m^V(x+iy,E)\|\,dx\ge0,
 \qquad F_m(-y)=F_m(y).
\]
Thus, for \(0\le t\le1\),
\[
\begin{aligned}
 F_m(ty_1+(1-t)y_2)&\le tF_m(y_1)+(1-t)F_m(y_2),\\
 L(\alpha,E,ty_1+(1-t)y_2;V)
 &\le tL(\alpha,E,y_1;V)+(1-t)L(\alpha,E,y_2;V).
\end{aligned}
\]
The second inequality follows by passing to the pointwise limit.
Since \(0\le L(\alpha,E,y;V)\le\log\sup_x\|A_E^V(x+iy)\|\),
the limit is finite on every smaller strip and its right derivative
exists \cite{Herman1983,AvilaGlobal}.

For fixed \(y\), every \(F_m(E,y)\) is continuous in \(E\), so
\(L(\alpha,E,y;V)=\inf_mF_m(E,y)\) is Borel measurable. With
\(h_k=\rho_V/(k+1)\),
\[
 \omega(\alpha,E;V)
 =\lim_{k\to\infty}
 \frac{L(\alpha,E,h_k;V)-L(\alpha,E,0;V)}{2\pi h_k}.
\]
Thus the acceleration is Borel measurable in \(E\), and each spectral
acceleration stratum in Theorem~\ref{theorem.complete-main} is a Borel set.

The unperturbed profile is the pullback, by the \(N\)-fold covering, of
the standard almost Mathieu potential:
\begin{equation}\label{eq:N-fold-AMO-pullback}
\begin{gathered}
 \pi_N(x):=Nx\pmod1,
 \qquad
 2\lambda\cos(2\pi Nx)
 =2\lambda\cos(2\pi\,\cdot)\circ\pi_N,\\
 \pi_N\circ T_\alpha=T_{N\alpha}\circ\pi_N.
\end{gathered}
\end{equation}
Set \(\widehat\alpha:=N\alpha\pmod1\).  Then, for every phase \(x\),
\begin{equation}\label{eq:N-fold-AMO-operator-identity}
 H_{x,2\lambda\cos(2\pi N\,\cdot),\alpha}
 =H_{\lambda,\widehat\alpha,Nx},
 \qquad
 \Sigma(2\lambda\cos(2\pi N\,\cdot),\alpha)
 =\Sigma(2\lambda\cos(2\pi\,\cdot),\widehat\alpha).
\end{equation}
In the covering \eqref{eq:N-fold-AMO-pullback}, label the fiber over \(0\) by
\[
 p_a:=\frac aN,\qquad 0\le a<N.
\]
Put
\begin{equation}\label{eq:effective-frequency-data}
 E_+:=\max\Sigma\bigl(2\lambda\cos(2\pi\,\cdot),\widehat\alpha\bigr),
 \qquad \gamma:=\log\lambda.
\end{equation}
By \eqref{eq:N-fold-AMO-operator-identity}, \(E_+\) is also the upper
spectral edge of the unperturbed \(N\)-fold potential at frequency
\(\alpha\).
The data in \eqref{eq:effective-frequency-data} satisfy
\(\widehat\alpha\notin\mathbb Q\) and
\begin{equation}\label{eq:effective-frequency-Diophantine}
 \|q\widehat\alpha\|_{\mathbb T}
 =\|qN\alpha\|_{\mathbb T}
 \ge \kappa N^{-\tau}|q|^{-\tau},
 \qquad q\in\mathbb Z\setminus\{0\}.
\end{equation}
Thus \eqref{eq:effective-frequency-Diophantine} permits the upper-edge
initial-scale proposition to be applied with frequency
\(\widehat\alpha\), Diophantine
constant \(\kappa N^{-\tau}\), and upper edge \(E_+\).

\section{Scale-dependent zero divisors and acceleration}
\label{sec:persistent-return-acceleration}

\noindent We first define the first-return cocycle on a finite union of
intervals and establish the exact inducing identity for its Lyapunov
exponent.  Algebraic singular value decompositions then reduce the
complexified exponent to the multiplier and overlap terms.  A Poisson
estimate for scale-dependent zero divisors, together with the asymptotic
product estimate and Kac normalization, yields
Theorem~\ref{thm:persistent-divisor-acceleration}: the acceleration is
half the persistent total multiplicity.

Throughout,
\(\log^+t:=\max\{\log t,0\}\) for \(t>0\).
For positive \(b_j\), the notation \(a_j=o_j(b_j)\) means
\(a_j/b_j\to0\) as \(j\to+\infty\).

\subsection{First-return maps on finite unions of intervals}

\begin{definition}
\label{fr:def:regular-phase-family}
Let \(T_\vartheta x=x+\vartheta\) be an irrational rotation and let
\(\mathscr K\) be a finite or countable family of nonempty finite unions of
nondegenerate closed intervals in \(\mathbb T\).  Define
\begin{equation}\label{fr:eq:regular-phase-family}
 \mathscr N_\vartheta(\mathscr K)
 :=\bigcup_{K\in\mathscr K}\ \bigcup_{m\in\mathbb Z}
 T_\vartheta^m(\partial_{\mathbb T}K),
 \qquad
 \operatorname{Reg}_\vartheta(\mathscr K)
 :=\mathbb T\setminus\mathscr N_\vartheta(\mathscr K).
\end{equation}
When \(\vartheta\) is fixed, the subscript \(\vartheta\) is omitted.  If
only one return base \(K\) is under consideration, write
\[
 \operatorname{Reg}(K):=\operatorname{Reg}(\{K\}),
 \qquad K^{\rm reg}:=K\cap\operatorname{Reg}(K).
\]
For \(K\in\mathscr K\) and
\(x\in K\cap\operatorname{Reg}_\vartheta(\mathscr K)\), define the
positive forward and backward first-return times by
\[
 r_K^+(x):=\min\{r\ge1:T_\vartheta^rx\in K\},
 \qquad
 r_K^-(x):=\min\{r\ge1:T_\vartheta^{-r}x\in K\}.
\]
\end{definition}

\begin{lemma}
\label{fr:lem:regular-phases-induced-map}
Let \(\mathscr K\) be as in
Definition~\ref{fr:def:regular-phase-family}, and put
\[
 D_K:=K\cap\operatorname{Reg}_\vartheta(\mathscr K),
 \qquad K\in\mathscr K.
\]
Then the following hold.
\begin{enumerate}
\item
\(\mathscr N_\vartheta(\mathscr K)\) is countable, null, and invariant
under both \(T_\vartheta\) and \(T_\vartheta^{-1}\).
\item
For every \(K\in\mathscr K\), the forward and backward return times
are finite and locally constant on \(D_K\).  The first-return map
\[
 S_K:D_K\to D_K,
 \qquad S_Kx:=T_\vartheta^{r_K^+(x)}x,
\]
is a bimeasurable bijection with
\[
 S_K^{-1}x=T_\vartheta^{-r_K^-(x)}x.
\]
\item
For every \(K\in\mathscr K\), the map \(S_K\) preserves and is ergodic with respect to
\(d\mu_K=|K|^{-1}\mathbf1_{D_K}\,dx\), and
\begin{equation}\label{fr:eq:regular-Kac-identity}
 \int_{D_K}r_K^+(x)\,d\mu_K(x)=\frac1{|K|}.
\end{equation}
\end{enumerate}
\end{lemma}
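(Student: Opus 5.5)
The plan is to prove the three items in order, using throughout only that \(\vartheta\) is irrational, so every orbit of \(T_\vartheta\) is dense and \(T_\vartheta\) is uniquely ergodic.

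\emph{Item (i).} Each \(K\) is a finite union of closed intervals, so \(\partial_{\mathbb T}K\) is finite. Since \(\mathscr K\) is countable, \(\mathscr N_\vartheta(\mathscr K)\) is a countable union over \(K\in\mathscr K\) and \(m\in\mathbb Z\) of finite sets. It is therefore countable and Lebesgue null. Invariance under \(T_\vartheta^{\pm1}\) is immediate, because the union runs over all \(m\in\mathbb Z\) and \(T_\vartheta^{\pm1}\) shifts the index \(m\) by \(\pm1\).

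\emph{Item (ii).} Fix \(K\) and \(x\in D_K\).

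1. \emph{Finiteness.} \(K\) contains a nondegenerate interval \(J\), and the forward and backward orbits of \(x\) are each dense. Hence some \(T_\vartheta^rx\) and some \(T_\vartheta^{-r'}x\) with \(r,r'\ge1\) lie in \(J\subset K\). So \(r_K^\pm(x)<\infty\).

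2. \emph{Local constancy.} Let \(r=r_K^+(x)\). For \(1\le j<r\) the point \(T_\vartheta^jx\) lies outside the closed set \(K\), so it has a neighbourhood disjoint from \(K\). The point \(T_\vartheta^rx\) lies in \(K\), and it is not in \(\partial K\) because \(x\) is regular. Hence \(T_\vartheta^rx\) lies in the interior of \(K\). These finitely many open conditions persist for \(x'\) in a neighbourhood of \(x\), so \(r_K^+\) is constant near \(x\) on \(D_K\). The same argument works for \(r_K^-\).

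3. \emph{\(S_K\) maps \(D_K\) to itself.} \(S_Kx\in K\) by construction, and \(S_Kx\) is regular by the invariance in (i). So \(S_K(D_K)\subset D_K\), and likewise for \(\tilde S_Kx:=T_\vartheta^{-r_K^-(x)}x\).

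4. \emph{Bijectivity.} To show \(\tilde S_K S_Kx=x\), put \(y=S_Kx=T_\vartheta^rx\). The points \(T_\vartheta^{-j}y=T_\vartheta^{r-j}x\) for \(1\le j<r\) are not in \(K\), and \(T_\vartheta^{-r}y=x\in K\). Hence \(r_K^-(y)=r\), which gives \(\tilde S_KS_K=\mathrm{Id}\). The symmetric argument gives \(S_K\tilde S_K=\mathrm{Id}\).

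5. \(D_K\) is a countable disjoint union of the Borel sets \(\{r_K^+=r\}\). On each of them \(S_K\) is the restriction of a translation. So \(S_K\) and its inverse are Borel, and the same decomposition shows that \(S_K\) preserves Lebesgue measure.

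\emph{Item (iii).} Invariance of \(\mu_K\) follows from the piecewise-translation structure in step 5.

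1. \emph{Kac identity.} I use the tower decomposition. The sets \(T_\vartheta^j\{x\in D_K:r_K^+(x)=r\}\), for \(r\ge1\) and \(0\le j<r\), are pairwise disjoint, since each point has a unique last visit to \(K\). Their union is all of \(\operatorname{Reg}_\vartheta(\mathscr K)\): by density of backward orbits, every regular point has a last previous visit to \(K\) at some finite time. Summing measures gives
\[
 \int_{D_K}r_K^+\,dx=|\operatorname{Reg}_\vartheta(\mathscr K)|=1 .
\]
Dividing by \(|K|\) yields \eqref{fr:eq:regular-Kac-identity}.

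2. \emph{Ergodicity.} Let \(A\subset D_K\) be \(S_K\)-invariant. Form its saturation \(\hat A:=\bigcup_{r}\bigcup_{0\le j<r}T_\vartheta^j(A\cap\{r_K^+=r\})\). This set is \(T_\vartheta\)-invariant modulo null sets, so ergodicity of \(T_\vartheta\) gives \(|\hat A|\in\{0,1\}\). Moreover \(\hat A\cap D_K=A\), because the tower levels with \(j\ge1\) avoid \(K\). If \(|\hat A|=0\) then \(\mu_K(A)=0\). If \(|\hat A|=1\), the same argument applied to \(D_K\setminus A\) gives \(\mu_K(A)=1\).

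The main obstacle is bookkeeping rather than depth: checking that the tower levels are disjoint and exhaust the regular set, and that the regularity assumption handles boundary hits so that \(S_K\) is genuinely bijective on \(D_K\) instead of merely modulo null sets.
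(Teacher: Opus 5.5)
Your proof is correct, but it takes a more self-contained route than the paper. The paper proceeds in three moves:
\begin{enumerate}
\item It uses minimality and compactness to get a uniform bound \(r_K^\pm\le M_K\), from \(\mathbb T=\bigcup_{r\le M_K}T_\vartheta^{\mp r}(\operatorname{int}_{\mathbb T}K)\).
\item It proves local constancy by keeping \(x'\in D_K\) within distance \(\delta_x=\min_{|m|\le M_K}\operatorname{dist}(T_\vartheta^m x,\partial_{\mathbb T}K)\) of \(x\).
\item It cites Kakutani's induced-transformation theorem for bimeasurability, invariance and ergodicity. It cites Kac's formula, together with the remark that the saturation of \(D_K\) has full measure, for \(\int_{D_K}r_K^+\,dx=1\).
\end{enumerate}
You instead work pointwise. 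You get finiteness from density of one-sided orbits. You get local constancy from the finitely many open conditions up to time \(r_K^+(x)\), using regularity to place \(T_\vartheta^{r}x\) in \(\operatorname{int}K\). You check explicitly that \(T_\vartheta^{-r_K^-}\) inverts \(S_K\). Finally, you build the Kakutani skyscraper over \(D_K\) to obtain both Kac's identity and ergodicity by hand.

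Your version has the advantage that everything holds exactly on the regular set, not just modulo null sets, which is how the cited theorems are usually stated. So the claim that \(S_K\) is a genuine bijection of \(D_K\) is verified rather than inherited. The paper's version is shorter, and its uniform bound \(M_K\) is the same device reused in Lemma~\ref{fr:lem:finite-common-return-partition}.

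One small point in your ergodicity step should be made explicit. When \(|\hat A|=1\), the saturation of \(D_K\setminus A\) is null because it is disjoint from \(\hat A\), by the tower disjointness you already proved. That is what lets you conclude \(\mu_K(A)=1\).
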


\begin{proof}
By \eqref{fr:eq:regular-phase-family},
\[
 \#\mathscr N_\vartheta(\mathscr K)\le\aleph_0,\qquad
 |\mathscr N_\vartheta(\mathscr K)|=0,\qquad
 T_\vartheta^{\pm1}\mathscr N_\vartheta(\mathscr K)
 =\mathscr N_\vartheta(\mathscr K).
\]
Minimality and compactness give \(M_K<\infty\) such that
\[
 \mathbb T=\bigcup_{r=1}^{M_K}
 T_\vartheta^{\mp r}(\operatorname{int}_{\mathbb T}K),
 \qquad r_K^\pm\le M_K.
\]
If \(K=\mathbb T\), then \(r_K^\pm\equiv1\). Otherwise, for \(x\in D_K\), put
\[
 \delta_x:=\min_{|m|\le M_K}
 \operatorname{dist}(T_\vartheta^m x,\partial_{\mathbb T}K)>0.
\]
If \(x'\in D_K\) and \(\operatorname{dist}(x,x')<\delta_x\), then
\[
 \mathbf1_K(T_\vartheta^m x')=\mathbf1_K(T_\vartheta^m x)
 \quad(|m|\le M_K),
 \qquad r_K^\pm(x')=r_K^\pm(x).
\]
Thus the return times are locally constant in \(D_K\).  Invariance of the
regular set and the induced-transformation theorem
\cite{Kakutani1943} give the asserted bimeasurable inverse, preservation,
and ergodicity.  Since \(|D_K|=|K|\) and its saturation has full measure,
Kac's formula \cite{Kac1947} yields
\[
 \int_{D_K}r_K^+\,dx=1,\qquad
 \int_{D_K}r_K^+\,d\mu_K=|K|^{-1}.
\]
\end{proof}

\begin{definition}
\label{fr:def:common-return-partition}
Fix an irrational rotation \(T=T_\vartheta\) and let
\(I\subset\mathbb T\) be a nonempty
finite union of pairwise disjoint nondegenerate closed intervals, with
\(I\ne\mathbb T\).  A nonempty connected
open interval
\(\mathcal A\subset\operatorname{int}_{\mathbb T}I\) is a
\emph{two-sided return-time atom} if
\[
 r_I^+\equiv p_{\mathcal A},\qquad
 r_I^-\equiv q_{\mathcal A}
 \quad\text{on }\mathcal A\cap I^{\rm reg}
\]
for some \(p_{\mathcal A},q_{\mathcal A}\ge1\), and \(\mathcal A\) is
maximal with this property.  Put
\[
 \mathcal P_I
 :=\{\mathcal A:\mathcal A\text{ is a two-sided return-time atom of }I\},
 \qquad
 \mathscr B_I^{\rm ret}
 :=I\setminus\bigcup_{\mathcal A\in\mathcal P_I}\mathcal A.
\]
\end{definition}

\begin{lemma}
\label{fr:lem:finite-common-return-partition}
In the setting of Definition~\ref{fr:def:common-return-partition},
both \(\mathscr B_I^{\rm ret}\) and \(\mathcal P_I\) are finite.
\end{lemma}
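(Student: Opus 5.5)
The plan is to show that the functions \(r_I^\pm\) on \(\operatorname{int}_{\mathbb T}I\) are controlled by finitely many points, namely the points of the finite set
\[
 F:=\bigcup_{|m|\le M_I} T^{m}(\partial_{\mathbb T}I)\cap I,
\]
where \(M_I<\infty\) is the uniform bound on \(r_I^\pm\) from the proof of Lemma~\ref{fr:lem:regular-phases-induced-map} (applied with \(\mathscr K=\{I\}\)). This set is finite because \(\partial_{\mathbb T}I\) is finite, \(I\) being a finite union of nondegenerate closed intervals with \(I\ne\mathbb T\).

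First we show that \(r_I^+\) and \(r_I^-\) are constant on each connected component \(J\) of \(\operatorname{int}_{\mathbb T}I\setminus F\). For \(x,x'\in J\cap I^{\rm reg}\), the segment joining them avoids every point \(T^{-m}(\partial I)\) with \(|m|\le M_I\). Hence \(\mathbf 1_I(T^m\cdot)\) is constant along the segment for \(|m|\le M_I\). Since \(r_I^\pm\le M_I\), this gives \(r_I^\pm(x)=r_I^\pm(x')\). Moreover \(J\) is an open interval, since \(\operatorname{int}I\) is a finite union of open arcs and \(F\) is finite. Thus each \(J\) is a connected open set on which both return times are constant.

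Next we compare components with atoms. Each \(J\) lies in a unique maximal connected open interval with the same constant return times, namely the union of all such intervals containing \(J\). That interval is an atom \(\mathcal A\in\mathcal P_I\). Conversely, every atom \(\mathcal A\) is a nonempty open set, so it meets \(\operatorname{int}I\setminus F\) and therefore contains or meets some \(J\). Two distinct atoms are disjoint: if they overlapped, their union would be a connected open interval with the same return times, and this would contradict maximality. Here one uses that overlapping atoms carry the same values \(p_{\mathcal A},q_{\mathcal A}\), because regular points are dense. Consequently \(J\mapsto\mathcal A\) is a surjection from the finitely many components onto \(\mathcal P_I\). The number of components is at most \(\#F\) plus the number of intervals of \(I\), so \(\mathcal P_I\) is finite.

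Finally, the atoms cover \(\operatorname{int}I\setminus F\). So \(\mathscr B_I^{\rm ret}\subset \partial_{\mathbb T} I\cup F\), which is finite.

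The delicate point is the constancy claim when \(x\) and \(x'\) are separated by irregular points of \(J\). Such points are orbits of endpoints under \(T^m\) with \(|m|>M_I\), and these could be dense. The segment argument handles this, because only iterates with \(|m|\le M_I\) matter for the values of \(r_I^\pm\), so points coming from larger \(|m|\) do not affect them. One must also confirm that \(I\ne\mathbb T\) is what makes the bound \(M_I\) and the finiteness of the boundary legitimate.
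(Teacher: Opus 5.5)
Your proof is correct and follows the paper's argument: the paper also takes the finite set \(F\) of translates \(T^{m}(\partial_{\mathbb T}I)\cap I\), \(|m|\le M_I\), notes that every \(\mathbf 1_I\circ T^m\) with \(|m|\le M_I\), and hence \(r_I^\pm\), is constant on each component of \(\operatorname{int}_{\mathbb T}I\setminus F\), and concludes \(\mathscr B_I^{\rm ret}\subset F\) and \(\#\mathcal P_I\le\#\{\text{components of }\operatorname{int}_{\mathbb T}I\setminus F\}\). Your additional checks (disjointness of distinct atoms via density of regular points, and surjectivity of the map from components to atoms) fill in details the paper leaves implicit.
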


\begin{proof}
Minimality and compactness \cite{Walters1982} give \(M_I<\infty\) with
\[
 r_I^\pm\le M_I,\qquad
 F:=I\cap\bigcup_{|m|\le M_I}T^{-m}(\partial_{\mathbb T}I),
 \qquad \#F<\infty.
\]
On each component of \(\operatorname{int}_{\mathbb T}I\setminus F\),
all functions \(\mathbf1_I\circ T^m\), \(|m|\le M_I\), are constant.
Hence each component lies in one maximal return atom, and
\[
 \mathscr B_I^{\rm ret}\subset F,\qquad
 \#\mathcal P_I
 \le\#\{\text{components of }\operatorname{int}_{\mathbb T}I\setminus F\}
 <\infty.
\]
\end{proof}

\subsection{Induced exponents and the exact inducing identity}
\label{subsec:induced-exponents}

Fix \(\rho>0\), \(\alpha\in\mathbb R\setminus\mathbb Q\),
\(V\in C^\omega_\rho(\mathbb T,\mathbb R)\), and \(E\in\mathbb R\).
Let \(T x=x+\alpha\) on \(\mathbb T\), and let \(K\) be a nonempty finite
union of nondegenerate closed intervals.  The return dynamics is used only on
the regular set of Definition~\ref{fr:def:regular-phase-family}.  For
\(x\in K^{\rm reg}\), let \(S_K\) be the first-return map from
Lemma~\ref{fr:lem:regular-phases-induced-map}.
For \(|y|<\rho\), use the real return times in the complexified cocycle
and define
\[
\begin{aligned}
 B_{K,y}^+(x,E)
 &:=A_{r_K^+(x)}^V(x+iy,E),\\
 B_{K,y}^-(x,E)
 &:=A_{r_K^-(x)}^V
 \bigl(T^{-r_K^-(x)}x+iy,E\bigr).
\end{aligned}
\]
Write \(B_{K,y}:=B_{K,y}^+\) for the induced cocycle.
With \(d\mu_K=|K|^{-1}\mathbf1_{K^{\rm reg}}dx\), define the \(n\)-th induced
cocycle iterate by
\[
 B_{K,y}^{(n)}(x,E)
 :=B_{K,y}(S_K^{n-1}x,E)\cdots B_{K,y}(x,E),
 \qquad n\ge1,
\]
and put
\[
\begin{aligned}
 L_{K,n}(E,y)&:=\frac1n\int_{K^{\rm reg}}
 \log\|B_{K,y}^{(n)}(x,E)\|\,d\mu_K(x),\\
 L_K(E,y)&:=\lim_{n\to\infty}L_{K,n}(E,y).
\end{aligned}
\]
The limit exists because the integrated logarithmic norms form a
subadditive sequence.
When \(K\ne\mathbb T\), for a two-sided return-time atom
\(\mathcal B\in\mathcal P_K\), analytic quantities mean the fixed-word
extensions on this atom.  Restrict \(B_K^\pm:=B_{K,0}^\pm\) to
\(\mathcal B\cap K^{\rm reg}\).  Whenever these
blocks have simple top singular values and coherent lifts of their standard
singular value decomposition (SVD) data are fixed on this atom, write
\((\lambda_K^\pm,u_K^\pm,s_K^\pm)\), with the convention
\[
 B_K^\pm
 =R_{u_K^\pm}
  \operatorname{diag}\bigl(\lambda_K^\pm,(\lambda_K^\pm)^{-1}\bigr)
  R_{\pi/2-s_K^\pm},
 \qquad \lambda_K^\pm>1.
\]
In the convention for the stable and unstable directions used in the return-induction
schemes of \cite{Young1997,WangZhang2015}, define the two-sided
first-return angle function by
\begin{equation}\label{fr:eq:complete-return-joining-convention}
 G_{K,\mathcal B}(x,E)
 :=\frac\pi2-s_K^+(x,E)+u_K^-(x,E)
 \quad(\operatorname{mod}2\pi).
\end{equation}
We call \(\cos G_{K,\mathcal B}\) the real overlap coefficient.
The next proposition restores the original Lyapunov exponent by Kac
normalization~\cite[Lemma~2.2]{Knill1992}.
For a return base \(\mathcal J\) of the same type, use the preceding
definitions with \(K=\mathcal J\).

\begin{proposition}
\label{prop:induced-global-LE-identity}
For every \(|y|<\rho\),
\begin{equation}
\label{eq:induced-global-LE-identity}
        L(\alpha,E,y;V)=|\mathcal J|L_{\mathcal J}(E,y).
\end{equation}
\end{proposition}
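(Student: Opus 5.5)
The plan is to prove the identity first for the finite-$n$ integrals along a sequence of Birkhoff-type times, and then pass to the limit using Kac's formula \eqref{fr:eq:regular-Kac-identity} and the ergodic theorem. Write $\mathcal J$ for the return base, $S=S_{\mathcal J}$, $r=r^+_{\mathcal J}$, and $\mu=\mu_{\mathcal J}$. For $x\in\mathcal J^{\rm reg}$ put $R_n(x):=\sum_{k=0}^{n-1}r(S^kx)$. By construction,
\[
 B^{(n)}_{\mathcal J,y}(x,E)=A^V_{R_n(x)}(x+iy,E),
\]
since the consecutive return blocks concatenate to the full cocycle product along the orbit. (Here we use that the imaginary shift $iy$ commutes with the real translation, so $B_{\mathcal J,y}(S^kx)=A^V_{r(S^kx)}(T^{R_k(x)}x+iy,E)$.) Hence
\[
 \frac1n\log\|B^{(n)}_{\mathcal J,y}(x,E)\|=\frac{R_n(x)}{n}\cdot\frac1{R_n(x)}\log\|A^V_{R_n(x)}(x+iy,E)\|.
\]

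Step one handles the return-time factor. By Lemma~\ref{fr:lem:regular-phases-induced-map}(iii), $S$ is ergodic for $\mu$ and $\int r\,d\mu=1/|\mathcal J|$. Birkhoff's theorem therefore gives $R_n(x)/n\to1/|\mathcal J|$ for $\mu$-a.e.\ $x$; in fact $r\le M_{\mathcal J}$ is bounded, so $R_n/n\le M_{\mathcal J}$.

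Step two handles the cocycle factor. The complexified cocycle $x\mapsto A_E^V(x+iy)$ is continuous on $\mathbb T$, and $T$ is uniquely ergodic. Kingman's theorem for $(T,\mathrm{Leb})$ gives
\[
 \frac1m\log\|A^V_m(x+iy,E)\|\to L(\alpha,E,y;V)
\]
for Lebesgue-a.e.\ $x\in\mathbb T$, in particular for a.e.\ $x\in\mathcal J^{\rm reg}$. Since $R_n(x)\to\infty$ along a subsequence of integers, the same limit holds along $m=R_n(x)$. Consequently
\[
 \frac1n\log\|B^{(n)}_{\mathcal J,y}(x,E)\|\to\frac{L(\alpha,E,y;V)}{|\mathcal J|}\qquad\mu\text{-a.e.}
\]

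Step three integrates. The integrands are bounded above uniformly by $M_{\mathcal J}\log\sup_x\|A_E^V(x+iy)\|$ and bounded below by $0$, because $\det=1$ forces $\|\cdot\|\ge1$. Dominated convergence then yields
\[
 L_{\mathcal J,n}(E,y)\to\frac{L(\alpha,E,y;V)}{|\mathcal J|},
\]
which is \eqref{eq:induced-global-LE-identity}. This is the Kac normalization of \cite[Lemma~2.2]{Knill1992}.

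The main obstacle is the rigor of step two. Kingman gives a.e.\ convergence for the $T$-cocycle, and we need it at the full-measure set of regular points, along the random subsequence $R_n(x)$. The a.e.\ statement transfers to $\mathcal J^{\rm reg}$ because the null sets are Lebesgue null and $\mu\ll\mathrm{Leb}$. The subsequence causes no issue, since pointwise convergence holds for the whole sequence in $m$.

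If one prefers to avoid Kingman for complex $y$, there is an alternative. Subadditivity of $a_m(y)$ gives $L_{\mathcal J}$ as an infimum, and one can instead compare $\int\log\|A_{R_n}\|\,d\mu$ with $a_m(y)$ through the tower representation: $\mathbb T$ equals the disjoint union of $T^j(\{r=p\})$ for $0\le j<p$, up to a null set. This is the alternative route I would fall back on.
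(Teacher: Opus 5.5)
Your proposal is correct and follows essentially the same route as the paper: the cocycle identity $B^{(n)}_{\mathcal J,y}(x,E)=A^V_{R_n(x)}(x+iy,E)$, Birkhoff's theorem together with Kac's formula for $R_n/n\to|\mathcal J|^{-1}$, and the original Kingman limit transferred to $\mu$-a.e.\ points via $\mu\ll dx$. The only difference is how you pass from pointwise to integrated convergence. You use dominated convergence, relying on the bounded return time $r\le M_{\mathcal J}$ and on $\log\|\cdot\|\ge0$, whereas the paper applies Kingman's theorem to the induced cocycle as well; both arguments work.
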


\begin{proof}
Fix \(|y|<\rho\), and put
\[
 R_k(x):=\sum_{\ell=0}^{k-1}
 r_{\mathcal J}^+(S_{\mathcal J}^\ell x),\qquad
 M_y:=\sup_{x\in\mathbb T}\|A_E^V(x+iy)\|.
\]
The cocycle identity and determinant one give
\[
\begin{aligned}
 B_{\mathcal J,y}^{(k)}(x,E)&=A_{R_k(x)}^V(x+iy,E),\\
 0\le\log\|A_E^V(x+iy)\|&\le\log M_y,\\
 0\le\log\|B_{\mathcal J,y}(x,E)\|
 &\le r_{\mathcal J}^+(x)\log M_y.
\end{aligned}
\]
By Lemma~\ref{fr:lem:regular-phases-induced-map}, the induced map is
ergodic and
\[
 \int r_{\mathcal J}^+\,d\mu_{\mathcal J}=|\mathcal J|^{-1};
 \qquad
 \frac{R_k(x)}k\longrightarrow|\mathcal J|^{-1}
 \quad(\mu_{\mathcal J}\text{-a.e.})
\]
by Birkhoff's theorem \cite{Birkhoff1931}.  The displayed bounds verify
Kingman's integrability hypotheses \cite{Kingman1973} for both cocycles.
More explicitly, on a common set \(\Omega_y\) of full
\(\mu_{\mathcal J}\)-measure,
\[
 R_k(x)\to\infty,\qquad
 \lim_{k\to\infty}\frac{\log\|A_{R_k(x)}^V(x+iy,E)\|}{R_k(x)}
 =L(\alpha,E,y;V),\qquad x\in\Omega_y,
\]
because \(\mu_{\mathcal J}\ll dx\) and the original Kingman limit
holds along the full sequence. Therefore
\[
\begin{aligned}
 L_{\mathcal J}(E,y)
 &=\lim_{k\to\infty}\frac1k
   \log\|B_{\mathcal J,y}^{(k)}(x,E)\|\\
 &=\lim_{k\to\infty}
   \frac{R_k(x)}k\,
   \frac{\log\|A_{R_k(x)}^V(x+iy,E)\|}{R_k(x)}\\
 &=|\mathcal J|^{-1}L(\alpha,E,y;V).
\end{aligned}
\]
\end{proof}

\subsection{Algebraic singular value decompositions and asymptotic approximation}

\begin{definition}
\label{def:complex-special-orthogonal-group}
Let \(M_2(\mathbb C)\) be the space of \(2\times2\) complex matrices,
let
\[
 I_2:=\begin{pmatrix}1&0\\0&1\end{pmatrix},
 \qquad
 \mathcal J_0:=\begin{pmatrix}0&-1\\1&0\end{pmatrix},
\]
and define
\[
 {\rm SO}(2,\mathbb C)
 :=\{Q\in M_2(\mathbb C):Q^{\mathsf T}Q=I_2,\ \det Q=1\}.
\]
\end{definition}

Equivalently,
\[
 Q\in{\rm SO}(2,\mathbb C)
 \quad\Longleftrightarrow\quad
 Q=(w,\mathcal J_0w)
 \ \text{for some }w\in\mathbb C^2
 \ \text{with }w^{\mathsf T}w=1.
\]

Thus \(Q^{-1}=Q^{\mathsf T}\).  A
holomorphic scalar- or matrix-valued function \(F\) on a
conjugation-invariant domain is called \emph{real-symmetric} if
\(F(\overline z)=\overline{F(z)}\), entrywise for matrices.
Write \(\mathbb C^\times:=\mathbb C\setminus\{0\}\).

The terminology below follows the algebraic singular value decomposition
of~\cite[Definition~3.1]{DrusvyatskiyLeeOttavianiThomas2017}, specialized
to \(2\times2\) matrices with oriented complex-orthogonal frames and
branches adapted to the cocycle.

\begin{definition}
\label{def:complex-orthogonal-rank-one-normal-form}
Let \(\Omega\) be a set, let \(S:\Omega\to\Omega\), and let
\(B:\Omega\to{\rm SL}(2,\mathbb C)\).  A representation of the form
\[
\begin{gathered}
 B(x)=U(Sx)
 \begin{pmatrix}\Lambda(x)&0\\0&\Lambda(x)^{-1}\end{pmatrix}
 V(x)^{\mathsf T},\\
 U,V:\Omega\to{\rm SO}(2,\mathbb C),
 \qquad \Lambda:\Omega\to\mathbb C^\times,
\end{gathered}
\]
is called an
\emph{algebraic singular value decomposition of \(B\) over \(S\)}.
The ordered multiplier branch and the oriented frames are
part of the normal-form data.  For \(k=1,2\), put
\[
 u_k(x):=U(x)e_k,
 \qquad
 v_k(x):=V(x)e_k,
 \qquad
 H(x):=v_1(x)^{\mathsf T}u_1(x).
\]
The matrix \(V(x)^{\mathsf T}U(x)\) is the transition matrix between the
two frames, and \(H\) is its leading \emph{overlap coefficient}.
\end{definition}

\paragraph{\textbf{Bilinear and Hermitian Gram matrices.}}
For \(N=(n_1,n_2)\in M_2(\mathbb C)\), set
\[
\begin{aligned}
 G_{\rm in}&:=N^{\mathsf T}N=(n_i^{\mathsf T}n_j)_{i,j=1}^2,
 &G_{\rm out}&:=NN^{\mathsf T},&
 G_{\diamond}^{\mathsf T}&=G_{\diamond},\\
 N^*&:=\overline N^{\mathsf T},
 &N^*N&\ge0,&NN^*&\ge0.
\end{aligned}
\]
The bilinear Gram matrices \(G_{\rm in},G_{\rm out}\) are holomorphic
for holomorphic \(N\), but need not be Hermitian or positive.  The Hermitian
Gram matrices generally fail to be holomorphic.  Both notions coincide
for real \(N\).  For invertible \(N\) and a positively oriented contour
\(\Gamma\) enclosing one isolated eigenvalue,
\[
\begin{aligned}
 G_{\rm out}&=NG_{\rm in}N^{-1},\\
 \Pi_G(\Gamma)&:=\frac1{2\pi i}
  \int_\Gamma(\xi I_2-G)^{-1}\,d\xi,
 &\Pi_G(\Gamma)^{\mathsf T}&=\Pi_G(\Gamma),\\
 \Pi_{G_{\rm out}}(\Gamma)&=
  N\Pi_{G_{\rm in}}(\Gamma)N^{-1}.
\end{aligned}
\]
These projections are holomorphic while \(\Gamma\) remains in the
resolvent set.  For real \(N\) with a simple top singular value, they are
the orthogonal projections onto the corresponding singular lines, with
largest Gram eigenvalue \(\|N\|^2\).

The following standard perturbation criterion for Riesz spectral projections
follows from
\cite[Chapter~II, \S\S~1.3--1.4]{Kato1995} and
\cite[Chapter~6]{HornJohnson2013}.

\begin{criterion}
\label{crit:rank-one-spectral-separation}
Put \(P:=e_1e_1^{\mathsf T}\).  If \(N\in{\rm GL}(2,\mathbb C)\) and
\(\|N-P\|\le1/10\), then each of \(N^{\mathsf T}N\) and
\(NN^{\mathsf T}\) has exactly one eigenvalue, counted with algebraic
multiplicity, in \(\{\mu:|\mu-1|<1/2\}\) and one in
\(\{\mu:|\mu|<1/2\}\); the corresponding Riesz spectral projections
have rank one.
\end{criterion}

\begin{proof}
Write \(N=P+E\), let \(G\) be either Gram matrix, and put
\(G_s=P+s(G-P)\). For \(j=0,1\), \(0\le s\le1\), and
\(|\xi-j|=1/2\),
\[
\begin{aligned}
 \|G-P\|&\le2\|E\|+\|E\|^2\le21/100,\\
 \|(\xi I_2-P)^{-1}s(G-P)\|&\le42/100<1,\\
 (\xi I_2-G_s)^{-1}
 &=\sum_{k=0}^{\infty}
 \bigl((\xi I_2-P)^{-1}s(G-P)\bigr)^k(\xi I_2-P)^{-1},\\
 \operatorname{rank}\Pi_{G_s}(\{|\xi-j|=1/2\})
 &=\operatorname{rank}\Pi_P(\{|\xi-j|=1/2\})=1.
\end{aligned}
\]
The last equality follows from continuity of the Riesz projections and
integer-valuedness of their ranks.
\end{proof}

We will use stability of the real singular directions.  Let
\(r\in\mathbb N\), let \(\Omega\subset\mathbb R^2\) be open, and,
for \(\sigma\in\{+,-\}\), let
\(\mathcal N_t^\sigma:\Omega\to{\rm GL}(2,\mathbb R)\),
\(0\le t\le1\), be continuous in \(t\) with values in \(C^r(\Omega)\).
Assume \(\det\mathcal N_t^\sigma>0\) throughout the parameter domain.
For \(1\le\nu\le r\), put
\[
 K_\nu:=1+\max_{\sigma,t}
 \|\mathcal N_t^\sigma\|_{C^\nu(\Omega)},
 \qquad
 \eta:=\max_{\sigma,t}
 \|\mathcal N_t^\sigma-\mathcal N_0^\sigma\|_{C^r(\Omega)}.
\]
Put
\[
 G_{t,\sigma}^{\rm in}:=(\mathcal N_t^\sigma)^{\mathsf T}\mathcal N_t^\sigma,
 \qquad
 G_{t,\sigma}^{\rm out}:=\mathcal N_t^\sigma(\mathcal N_t^\sigma)^{\mathsf T},
\]
and let \(\Pi_{t,\sigma}^{\rm in}\) and
\(\Pi_{t,\sigma}^{\rm out}\) be their spectral projections associated
with the larger eigenvalues.
Writing \(\mu_1(G)\ge\mu_2(G)\) for the eigenvalues of a real symmetric
matrix \(G\), we require
\begin{equation}\label{svd:eq:fixed-gap-input}
 \inf_{\sigma,t,x}\min_{\diamond\in\{{\rm in},{\rm out}\}}
 \bigl(\mu_1(G_{t,\sigma}^{\diamond}(x))
       -\mu_2(G_{t,\sigma}^{\diamond}(x))\bigr)\ge\frac34.
\end{equation}
\begin{lemma}
\label{svd:lem:fixed-gap-oriented-stability}
For the matrix families satisfying \eqref{svd:eq:fixed-gap-input},
there are polynomials \(\mathfrak P_r^{\rm svd}\) with nonnegative
coefficients and a constant \(C_1(r)>0\), depending only on \(r\), such that, if
\begin{equation}\label{svd:eq:fixed-gap-chart-smallness}
 C_1(r)\eta\mathfrak P_r^{\rm svd}(K_1,\ldots,K_r)\le\frac14,
\end{equation}
then the following estimates hold.
\begin{enumerate}
\item
The spectral projections satisfy
\begin{equation}\label{svd:eq:fixed-gap-projector-stability}
\begin{aligned}
 \max_{\sigma,t}\bigl(&
 \|\Pi_{t,\sigma}^{\rm in}-\Pi_{0,\sigma}^{\rm in}\|_{C^r}
 +\|\Pi_{t,\sigma}^{\rm out}-\Pi_{0,\sigma}^{\rm out}\|_{C^r}\bigr)
 \le C_1(r)\eta\mathfrak P_r^{\rm svd}(K_1,\ldots,K_r).
\end{aligned}
\end{equation}
\item
For \(1\le|\beta|\le r\),
\begin{equation}\label{svd:eq:fixed-gap-log-singular-derivatives}
 \max_{\sigma,t}
 \|\partial^\beta\log s_1(\mathcal N_t^\sigma)\|_{L^\infty(\Omega)}
 \le C_1(r)\mathfrak P_r^{\rm svd}(K_1,\ldots,K_r).
\end{equation}
\item
The logarithms of the dominant singular values satisfy
\begin{equation}\label{svd:eq:fixed-gap-log-singular-stability}
 \max_{\sigma,t}
 \|\log s_1(\mathcal N_t^\sigma)-
 \log s_1(\mathcal N_0^\sigma)\|_{C^r(\Omega)}
 \le C_1(r)\eta\mathfrak P_r^{\rm svd}(K_1,\ldots,K_r).
\end{equation}
\end{enumerate}
\end{lemma}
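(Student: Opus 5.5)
The plan is to realize the spectral projections through the Riesz contour integral and propagate everything from the holomorphic resolvent calculus. Fix \(\sigma\), \(t\), \(\diamond\) and \(x\in\Omega\). The Gram matrices \(G=G^{\diamond}_{t,\sigma}(x)\) are real symmetric, with eigenvalues \(\mu_1\ge\mu_2>0\) and \(\mu_1-\mu_2\ge3/4\) by \eqref{svd:eq:fixed-gap-input}. Moreover \(\mu_1\le\|\mathcal N\|^2\le K_1^2\) and \(\mu_1\mu_2=\det(\mathcal N)^2\).

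Choose a contour \(\Gamma(x)\) around \(\mu_1\): the circle of radius \(1/4\) centred at \(\mu_1(G_{0,\sigma}^\diamond(x))\). This depends only on the base point, and locally it can be frozen in \(x\). Weyl's inequality gives \(|\mu_j(G_t)-\mu_j(G_0)|\le\|G_t-G_0\|\le C K_1\eta\). Under \eqref{svd:eq:fixed-gap-chart-smallness} this is \(<1/8\), so the same circle encloses exactly \(\mu_1(G_t)\) for all \(t\). On it we have \(\|(\xi-G_t)^{-1}\|\le 8\), since \(G_t\) is normal and its spectrum stays at distance \(\ge1/8\) from \(\Gamma\). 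Then
\[
\Pi^\diamond_{t,\sigma}-\Pi^\diamond_{0,\sigma}=\frac1{2\pi i}\int_\Gamma(\xi-G_t)^{-1}(G_t-G_0)(\xi-G_0)^{-1}\,d\xi .
\]
Differentiating in \(x\) up to order \(r\), with the contour locally fixed, uses \(\partial(\xi-G)^{-1}=(\xi-G)^{-1}(\partial G)(\xi-G)^{-1}\). The Leibniz/Faà di Bruno expansion produces sums of products of resolvents (each bounded by \(8\)), derivatives of \(G_t\) (bounded by \(C K_\nu K_{\nu'}\) after Leibniz on \(\mathcal N^{\mathsf T}\mathcal N\)), and exactly one factor of a derivative of \(G_t-G_0\) (bounded by \(C\eta K_r\)). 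Collecting these gives a polynomial with nonnegative coefficients in \(K_1,\dots,K_r\), times \(\eta\), for the \(C^r\) norm over a contour of length \(\pi/2\). This proves (i) and fixes the form of \(\mathfrak P_r^{\rm svd}\) as the maximum of the polynomials arising in all three parts.

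For (ii), write \(\mu_1=\operatorname{tr}(G\Pi)\), so \(\log s_1=\tfrac12\log\operatorname{tr}(G^{\rm in}\Pi^{\rm in})\). The key lower bound comes from the gap: \(\mu_1\ge\mu_1-\mu_2\ge3/4\). Thus \(\log\) is applied on \([3/4,K_1^2]\), where all its derivatives are bounded by absolute constants. The same resolvent expansion, without the \(\eta\) factor, bounds \(\|\Pi_t\|_{C^\nu}\) polynomially in the \(K\)'s, and hence bounds \(\|\mu_1\|_{C^\nu}\). Faà di Bruno for \(\log\circ\mu_1\) then gives \eqref{svd:eq:fixed-gap-log-singular-derivatives}.

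For (iii), write
\[
\log\mu_1(G_t)-\log\mu_1(G_0)=\int_0^1\frac{d}{d\theta}\log\bigl(\theta\,\mu_1(G_t)+(1-\theta)\,\mu_1(G_0)\bigr)\,d\theta.
\]
Alternatively, and more cleanly, express the difference as \((\mu_t-\mu_0)\) times a smooth function of \(\mu_t,\mu_0\), where both arguments stay in \([3/4,K_1^2]\). Here \(\mu_t-\mu_0=\operatorname{tr}(G_t\Pi_t-G_0\Pi_0)\), which splits as \(\operatorname{tr}((G_t-G_0)\Pi_t)+\operatorname{tr}(G_0(\Pi_t-\Pi_0))\). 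Each term is \(O(\eta\cdot\mathrm{poly}(K))\) in \(C^r\) by (i) together with the product rule.

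The main obstacle I expect is bookkeeping rather than any conceptual difficulty. It has two parts: ensuring that the contour can be taken locally constant in \(x\), and tracking that the constant \(C_1(r)\) and the polynomial depend only on \(r\). For the first, cover \(\Omega\) by balls on which \(\mu_1(G_0)\) varies by less than \(1/16\); the resolvent integral is independent of the admissible contour, so derivatives computed in different charts agree. The smallness hypothesis \eqref{svd:eq:fixed-gap-chart-smallness} is used only to keep the perturbed eigenvalue inside the frozen contour uniformly in \(t\). This keeps all constants independent of \(\Omega\), \(t\) and \(\sigma\).
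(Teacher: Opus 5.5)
Your proposal is correct, but it takes a different route from the paper. The paper uses no contour. It writes the top projection of a \(2\times2\) real symmetric matrix in closed form, \(\Pi_1=(G-\mu_2I_2)/(\mu_1-\mu_2)\), with \(\mu_{1,2}\) given by the quadratic formula. It then notes that along the matrix segment \(G_s=(1-s)G_{0,\sigma}^{\diamond}+sG_{t,\sigma}^{\diamond}\) the gap stays \(\ge 1/4\). The difference bound follows by differentiating \(\Pi_1(G_t)-\Pi_1(G_0)=\int_0^1 D\Pi_1(G_s)[G_t-G_0]\,ds\) in \(x\) through order \(r\). Parts (ii) and (iii) come from \(\log s_1=\tfrac12\log\mu_1\), \(\mu_1\ge 3/4\), and the same segment identity.

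You use the Riesz integral together with the second resolvent identity, which plays the role of that segment formula. The paper's version is global on \(\Omega\) and needs no covering argument, because the explicit formula is one fixed smooth map on the set of symmetric matrices with gap \(\ge1/4\); the price is that it is special to \(2\times2\). Your version is dimension-free and needs only the normal-matrix resolvent bound. Its cost is the locally frozen contour, and your covering by balls on which \(\mu_1(G_0)\) varies by less than \(1/16\) handles this correctly: it only changes the universal resolvent constant.

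Your argument for (ii) also shows that the derivative bound uses only the gap, not the smallness condition. For (iii), your scalar interpolation between \(\mu_1(G_0)\) and \(\mu_1(G_t)\) is an equally valid replacement for the matrix segment, combined with \(\mu_t-\mu_0=\operatorname{tr}((G_t-G_0)\Pi_t)+\operatorname{tr}(G_0(\Pi_t-\Pi_0))\).
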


\begin{proof}
The product rule gives
\[
 \|G_{t,\sigma}^{\diamond}-G_{0,\sigma}^{\diamond}\|_{C^r}
 \le C_1(r)\eta\mathfrak P_r^{\rm svd}(K_1,\ldots,K_r),
 \qquad \diamond\in\{{\rm in},{\rm out}\}.
\]
For a real symmetric \(2\times2\) matrix with eigenvalues
\(\mu_1>\mu_2\), its principal projection is
\[
 \Pi_1=\frac{G-\mu_2I_2}{\mu_1-\mu_2},
 \qquad
 \mu_{1,2}=\frac{\operatorname{tr}G
 \pm\sqrt{(\operatorname{tr}G)^2-4\det G}}2.
\]
Use the positive square-root branch.  Enlarging \(C_1(r)\) once in
\eqref{svd:eq:fixed-gap-chart-smallness}, the segment
\(G_s:=(1-s)G_{0,\sigma}^{\diamond}+sG_{t,\sigma}^{\diamond}\) satisfies
\[
\begin{aligned}
 \mu_1(G_s)-\mu_2(G_s)
 &\ge\frac34-2\|G_{t,\sigma}^{\diamond}-G_{0,\sigma}^{\diamond}\|
 \ge\frac14,\qquad 0\le s\le1,\\
 \Pi_1(G_{t,\sigma}^{\diamond})-\Pi_1(G_{0,\sigma}^{\diamond})
 &=\int_0^1D\Pi_1(G_s)
   [G_{t,\sigma}^{\diamond}-G_{0,\sigma}^{\diamond}]\,ds.
\end{aligned}
\]
Differentiating the explicit projection formula through order \(r\)
therefore proves \eqref{svd:eq:fixed-gap-projector-stability}.
For the multiplier, use
\[
 \mu_1(G_{t,\sigma}^{\diamond})\ge\frac34,\qquad
 \log s_1(\mathcal N_t^\sigma)=\frac12\log\mu_1(G_{t,\sigma}^{\diamond}),
\]
and the same chain rule and segment identity to obtain
\eqref{svd:eq:fixed-gap-log-singular-derivatives} and
\eqref{svd:eq:fixed-gap-log-singular-stability}.

\end{proof}

Fix coherent \(C^r\) unit sections
\[
 \mathbf v_{0,\sigma}:\Omega\to\mathbb S^1,
 \qquad
 \Pi_{0,\sigma}^{\rm in}\mathbf v_{0,\sigma}
 =\mathbf v_{0,\sigma}.
\]

Where the denominator is nonzero, set
\begin{equation}\label{svd:eq:fixed-gap-oriented-axis-selection}
 \mathbf v_{t,\sigma}:=
 \frac{\Pi_{t,\sigma}^{\rm in}\mathbf v_{0,\sigma}}
      {\|\Pi_{t,\sigma}^{\rm in}\mathbf v_{0,\sigma}\|},
 \qquad
 \mathbf u_{t,\sigma}:=
 s_1(\mathcal N_t^\sigma)^{-1}
 \mathcal N_t^\sigma\mathbf v_{t,\sigma}.
\end{equation}

For these axes, write
\[
 H_t:=\mathbf v_{t,+}^{\mathsf T}\mathbf u_{t,-}.
\]
Whenever standard SVD angle lifts are used, our relative-angle charts are
\[
\begin{gathered}
 \mathbf u_{t,\sigma}=R_{u_t^\sigma}e_1,\qquad
 \mathbf v_{t,\sigma}=R_{s_t^\sigma-\pi/2}e_1,\\
 u_t^\sigma-u_0^\sigma,\ s_t^\sigma-s_0^\sigma\in(-\pi/2,\pi/2),
 \qquad H_t=\cos\!\left(\frac\pi2-s_t^++u_t^-\right).
\end{gathered}
\]

The constant and polynomial in
Lemma~\ref{svd:lem:fixed-gap-oriented-stability} are chosen to apply also
to the following consequence.

\begin{corollary}
\label{svd:cor:fixed-gap-oriented-axes}
Under the hypotheses and smallness condition of
Lemma~\ref{svd:lem:fixed-gap-oriented-stability}, the following estimates
hold with the same \(C_1(r)\) and \(\mathfrak P_r^{\rm svd}\).
\begin{enumerate}
\item
The axes in \eqref{svd:eq:fixed-gap-oriented-axis-selection} are defined
on the whole homotopy and satisfy
\begin{equation}\label{svd:eq:fixed-gap-axis-stability}
 \max_{\sigma,t}
 \bigl(\|\mathbf v_{t,\sigma}-\mathbf v_{0,\sigma}\|_{C^r}
      +\|\mathbf u_{t,\sigma}-\mathbf u_{0,\sigma}\|_{C^r}\bigr)
 \le C_1(r)\eta\mathfrak P_r^{\rm svd}(K_1,\ldots,K_r).
\end{equation}
For any angle lifts in the specified relative-angle charts, the left-hand
side of \eqref{svd:eq:fixed-gap-axis-stability} may also be replaced by
\[
 \max_{\sigma,t}
 \bigl(\|u_t^\sigma-u_0^\sigma\|_{C^r}
      +\|s_t^\sigma-s_0^\sigma\|_{C^r}\bigr).
\]
\item
For \(1\le|\beta|\le r\),
\begin{equation}\label{svd:eq:fixed-gap-angle-derivatives}
 \max_{\sigma,t}\left(
 \|\partial^\beta u_t^\sigma\|_{L^\infty(\Omega)}
 +\|\partial^\beta s_t^\sigma\|_{L^\infty(\Omega)}\right)
 \le C_1(r)\mathfrak P_r^{\rm svd}(K_1,\ldots,K_r).
\end{equation}
\item
The overlap coefficients satisfy
\begin{equation}\label{svd:eq:fixed-gap-overlap-stability}
 \|H_t-H_0\|_{C^r(\Omega)}
 \le C_1(r)\eta\mathfrak P_r^{\rm svd}(K_1,\ldots,K_r),
 \qquad0\le t\le1.
\end{equation}
\end{enumerate}
\end{corollary}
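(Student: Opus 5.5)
The plan is to derive all three items from the projector estimate \eqref{svd:eq:fixed-gap-projector-stability} and the multiplier estimates \eqref{svd:eq:fixed-gap-log-singular-derivatives}--\eqref{svd:eq:fixed-gap-log-singular-stability}, using only the product, quotient and chain rules in $C^r(\Omega)$. Any enlargement of $C_1(r)$ and $\mathfrak P_r^{\rm svd}$ will be absorbed into the single choice made in Lemma~\ref{svd:lem:fixed-gap-oriented-stability}, as the remark before the corollary allows.

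For item (i), first check that $\mathbf v_{t,\sigma}$ is defined. Since $\Pi_{0,\sigma}^{\rm in}\mathbf v_{0,\sigma}=\mathbf v_{0,\sigma}$ and $\|\mathbf v_{0,\sigma}\|=1$, we get
\[
\|\Pi_{t,\sigma}^{\rm in}\mathbf v_{0,\sigma}-\mathbf v_{0,\sigma}\|\le\|\Pi_{t,\sigma}^{\rm in}-\Pi_{0,\sigma}^{\rm in}\|\le\tfrac14
\]
by \eqref{svd:eq:fixed-gap-chart-smallness}. Hence the denominator in \eqref{svd:eq:fixed-gap-oriented-axis-selection} is at least $3/4$.

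Normalization $w\mapsto w/\|w\|$ is smooth on $\{\|w\|\ge 3/4\}$ in $\mathbb R^2$. The numerator is $\mathbf v_{0,\sigma}+(\Pi_{t,\sigma}^{\rm in}-\Pi_{0,\sigma}^{\rm in})\mathbf v_{0,\sigma}$. Its $C^r$ norm is controlled by $\|\mathbf v_{0,\sigma}\|_{C^r}$, and that in turn is bounded by a polynomial in $K_1,\dots,K_r$. To see this, note that $\mathbf v_{0,\sigma}$ is a unit section of the range of $\Pi^{\rm in}_{0,\sigma}$, so its derivatives are determined by those of $\Pi^{\rm in}_{0,\sigma}$ (differentiate $\Pi\mathbf v=\mathbf v$, $\mathbf v^{\mathsf T}\mathbf v=1$). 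By the explicit formula and the gap $3/4$, those projector derivatives are polynomial in the $K_\nu$. A Faà di Bruno bound for the segment between the two numerators then gives the $\mathbf v$-part of \eqref{svd:eq:fixed-gap-axis-stability}.

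For $\mathbf u_{t,\sigma}$, write
\[
s_1^{-1}=\exp(-\log s_1),\qquad
\mathbf u_t-\mathbf u_0=(s_{1,t}^{-1}-s_{1,0}^{-1})\mathcal N_t\mathbf v_t+s_{1,0}^{-1}\bigl((\mathcal N_t-\mathcal N_0)\mathbf v_t+\mathcal N_0(\mathbf v_t-\mathbf v_0)\bigr).
\]
Bound the first factor using \eqref{svd:eq:fixed-gap-log-singular-stability}. The remaining terms are controlled by $\eta$, by the $\mathbf v$-estimate just proved, and by $K_r$. We also need $s_1^{-1}\le(3/4)^{-1/2}$, which follows from $\mu_1\ge 3/4$.

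For the angle lifts, $\mathbf u=R_ue_1$ gives $u=\arg$ of a unit vector. Within the relative chart $|u_t-u_0|<\pi/2$, we can write $u_t-u_0=\arcsin\bigl(\mathbf u_0^{\mathsf T}\mathcal J_0^{\mathsf T}\mathbf u_t\bigr)$. This is a smooth function of the two unit vectors, with derivatives bounded once the argument stays $\le 1/4$ in size, so the angle version follows. The same argument handles $s_t$.

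Item (ii) follows by differentiating $\mathbf u=R_ue_1$. This gives $\partial u=(\mathcal J_0\mathbf u)^{\mathsf T}\partial\mathbf u$, and iterating bounds $\partial^\beta u$ by derivatives of $\mathbf u$ of order at most $|\beta|$. Those derivatives are polynomial in the $K_\nu$ by the argument used in (i).

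Item (iii) uses the bilinear identity
\[
H_t-H_0=(\mathbf v_{t,+}-\mathbf v_{0,+})^{\mathsf T}\mathbf u_{t,-}+\mathbf v_{0,+}^{\mathsf T}(\mathbf u_{t,-}-\mathbf u_{0,-}),
\]
together with the Leibniz rule and (i).

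The main obstacle I expect is bookkeeping rather than a conceptual difficulty. We must confirm that all derivative bounds for the unperturbed sections $\mathbf v_{0,\sigma}$ and $\mathbf u_{0,\sigma}$ are genuinely polynomial in $K_1,\dots,K_r$ with $r$-dependent constants only, and that the lower bound $3/4$ on the normalizing denominators holds uniformly. Both follow from the gap hypothesis \eqref{svd:eq:fixed-gap-input}. I would handle them first, before the perturbative estimates.
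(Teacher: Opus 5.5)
Your proposal is correct and follows essentially the same route as the paper: the projector estimate bounds the normalizing denominator from below, $\partial_i\mathbf v=(\partial_i\Pi)\mathbf v$ gives polynomial bounds for the initial section, and the chain rule in difference form (with $s_1^{-1}\le(3/4)^{-1/2}$) keeps the factor $\eta$ for $\mathbf v_t$ and $\mathbf u_t$. The bilinear identity with Leibniz's rule then gives the overlap bound. One step the paper states explicitly and you leave implicit is that $\|\mathbf u_t-\mathbf u_0\|,\|\mathbf v_t-\mathbf v_0\|\le\tfrac14$ forces $\mathbf u_t^{\mathsf T}\mathbf u_0>0$ and $\mathbf v_t^{\mathsf T}\mathbf v_0>0$. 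This guarantees that the relative-angle charts, and hence your $\arcsin$ formula, are available throughout the homotopy. Note also that the argument of that $\arcsin$ equals $(\mathcal J_0\mathbf u_0)^{\mathsf T}(\mathbf u_t-\mathbf u_0)$, which is what carries the factor $\eta$ into the angle bounds.
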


\begin{proof}
By \eqref{svd:eq:fixed-gap-chart-smallness},
\(\|\Pi_{t,\sigma}^{\rm in}-\Pi_{0,\sigma}^{\rm in}\|<1/2\), and hence
\[
 \mathbf v_{0,\sigma}^{\mathsf T}\Pi_{t,\sigma}^{\rm in}
 \mathbf v_{0,\sigma}
 =\|\Pi_{t,\sigma}^{\rm in}\mathbf v_{0,\sigma}\|^2>\frac12.
\]
Thus \eqref{svd:eq:fixed-gap-oriented-axis-selection} is defined for
every \(t\) and satisfies
\(\mathbf v_{t,\sigma}^{\mathsf T}\mathbf v_{0,\sigma}>0\).
For the initial section, writing \(\Pi=\Pi_{0,\sigma}^{\rm in}\) and
\(\mathbf v=\mathbf v_{0,\sigma}\),
\[
 \Pi=\mathbf v\mathbf v^{\mathsf T},\qquad
 \mathbf v^{\mathsf T}\mathbf v=1,\qquad
 \partial_i\mathbf v=(\partial_i\Pi)\mathbf v.
\]
For a multiindex \(\beta\) with \(\beta_i\ge1\),
\[
 \partial^\beta\mathbf v
 =\sum_{\gamma\le\beta-e_i}
 \binom{\beta-e_i}{\gamma}
 (\partial^{\gamma+e_i}\Pi)
 (\partial^{\beta-e_i-\gamma}\mathbf v),
\]
where \(e_i\) in this formula is the \(i\)-th coordinate multiindex.
Induction in \(|\beta|\) bounds these derivatives by
\(C_1(r)\mathfrak P_r^{\rm svd}(K_1,\ldots,K_r)\). In
\eqref{svd:eq:fixed-gap-oriented-axis-selection}, the chain rule and its
difference form now give \eqref{svd:eq:fixed-gap-axis-stability}, retaining
the factor \(\eta\).  Moreover, with \(s_1=s_1(\mathcal N_t^\sigma)\),
\[
\begin{aligned}
 (\mathcal N_t^\sigma)^{\mathsf T}\mathcal N_t^\sigma\mathbf v_{t,\sigma}
 &=s_1^2\mathbf v_{t,\sigma},&
 \|\mathbf u_{t,\sigma}\|^2
 &=s_1^{-2}\mathbf v_{t,\sigma}^{\mathsf T}
 (\mathcal N_t^\sigma)^{\mathsf T}\mathcal N_t^\sigma
 \mathbf v_{t,\sigma}=1,\\
 \mathbf v_{t,\sigma}^{\mathsf T}\mathbf v_{0,\sigma}&>0,&
 \mathbf u_{t,\sigma}^{\mathsf T}\mathbf u_{0,\sigma}&>0.
\end{aligned}
\]
The last inequality follows from the same smallness condition after
enlarging \(C_1(r)\).  Positive determinant gives the standard oriented SVD;
the positive inner products fix the relative-angle charts throughout the
homotopy.  Differentiating in these charts gives the angle version of
\eqref{svd:eq:fixed-gap-axis-stability} and
\eqref{svd:eq:fixed-gap-angle-derivatives}.  Finally,
\[
 H_t-H_0=
 (\mathbf v_{t,+}-\mathbf v_{0,+})^{\mathsf T}\mathbf u_{t,-}
 +\mathbf v_{0,+}^{\mathsf T}(\mathbf u_{t,-}-\mathbf u_{0,-})
\]
gives \eqref{svd:eq:fixed-gap-overlap-stability} by Leibniz's rule.
\end{proof}

For holomorphic complex families, the corresponding normalized axis is
\(\Pi v_0/(v_0^{\mathsf T}\Pi v_0)^{1/2}\), with the square root
continued from the real base point.  Hermitian normalization and positivity
are used only in the real results above.

We next consider finite products in diagonal form.  We first estimate
their growth, then their spectral data and canonical frames.  The proofs
of the next two lemmas and their corollary are given in
Appendix~\ref{app:normalized-product-proof}.
Let \(d,L\in\mathbb N\) and
\(\mathbb F\in\{\mathbb R,\mathbb C\}\), and
write \(\mathbb F^\times:=\mathbb F\setminus\{0\}\). Let
\(\Omega\subset\mathbb F^d\) be open and fix \(m\in\mathbb N_0\).
The data below are assumed to be \(C^m\) if
\(\mathbb F=\mathbb R\) and holomorphic if \(\mathbb F=\mathbb C\).
Here multiindices belong to \(\mathbb N_0^d\), and
\(\|\cdot\|_\infty\) is the supremum norm on
\(\Omega\).  For \(0\le t<L\), let
\(\epsilon_t:\Omega\to\mathbb F^\times\) be nonvanishing, and, for
\(1\le t<L\), let \(H_t,\Theta_t:\Omega\to\mathbb F\) satisfy
\[
 W_t:=\begin{pmatrix}H_t&-\Theta_t\\
                      \Theta_t&H_t\end{pmatrix}
 \in {\rm SO}(2,\mathbb C),
 \qquad H_t^2+\Theta_t^2=1.
\]
Put \(D_t:=\operatorname{diag}(1,\epsilon_t)\) and
\begin{equation}\label{B:eq:finite-two-state-product}
\begin{aligned}
 \mathscr M_1&:=D_0,\\
 \mathscr M_L&:=D_{L-1}W_{L-1}\cdots D_1W_1D_0\quad(L\ge2),\\
 \mathscr H_L&:=\prod_{t=1}^{L-1}H_t,
\end{aligned}
\end{equation}
where an empty product equals one.  Assume that there are
\(0<\epsilon,\kappa\le1\) and \(K_0,\ldots,K_m\ge0\) such that,
throughout \(\Omega\),
\begin{equation}\label{B:eq:finite-two-state-input-bounds}
\begin{aligned}
 0<|\epsilon_t|&\le\epsilon\le1&& (0\le t<L),\\
 |H_t|&\ge\kappa&& (1\le t<L),\\
 |\Theta_t|&\le K_0&& (1\le t<L).
\end{aligned}
\end{equation}

\begin{lemma}
\label{B:lem:finite-two-state-growth}
Under \eqref{B:eq:finite-two-state-input-bounds}, the condition
\begin{equation}\label{B:eq:finite-two-state-local-smallness}
 \epsilon\kappa^{-2}\le10^{-4}
\end{equation}
implies
\begin{equation}\label{B:eq:finite-two-state-uniform-log-product}
 \left\|
  \log\|\mathscr M_L\|-\sum_{t=1}^{L-1}\log|H_t|
 \right\|_\infty
 \le 10L\epsilon\kappa^{-2}.
\end{equation}
\end{lemma}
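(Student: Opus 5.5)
The plan is a projective cone argument for the column vector \(\mathscr M_Le_1\), plus a separate estimate showing that the \(e_2\)-column contributes only a relative error of order \(\epsilon\kappa^{-1}\). The estimate is pointwise on \(\Omega\), so no regularity of the data is needed. The first step is to remove the dependence on \(K_0\). From \(H_t^2+\Theta_t^2=1\) we get \(|\Theta_t|^2\le1+|H_t|^2\), and \(|H_t|\ge\kappa\), \(\kappa\le1\) then give
\[
 |\tau_t|\le\sqrt2\,\kappa^{-1},\qquad \tau_t:=\Theta_t/H_t .
\]
Then \(W_t=H_t\begin{pmatrix}1&-\tau_t\\ \tau_t&1\end{pmatrix}\), and \(\mathscr M_L=\mathscr H_L\,\widetilde{\mathscr M}_L\), where \(\widetilde{\mathscr M}_L\) is the same product with each \(W_t\) replaced by \(\widetilde W_t:=H_t^{-1}W_t\). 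It therefore suffices to show \(\bigl|\log\|\widetilde{\mathscr M}_L\|\bigr|\le10L\epsilon\kappa^{-2}\).

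For the lower bound I track the slope \(z=b/a\) of a vector \((a,b)^{\mathsf T}\). One step \(D_t\widetilde W_t\) sends
\[
 (a,b)\mapsto\bigl(a(1-\tau_tz),\ \epsilon_ta(\tau_t+z)\bigr),\qquad
 z'=\epsilon_t\,\frac{\tau_t+z}{1-\tau_tz}.
\]
On the cone \(\mathcal C:=\{|z|\le3\epsilon\kappa^{-1}\}\), the smallness condition \eqref{B:eq:finite-two-state-local-smallness} gives \(|\tau_tz|\le3\sqrt2\,\epsilon\kappa^{-2}\le10^{-3}\). Hence \(|z'|\le\epsilon(\sqrt2\kappa^{-1}+3\epsilon\kappa^{-1})/(1-10^{-3})\le3\epsilon\kappa^{-1}\), so \(\mathcal C\) is invariant. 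On \(\mathcal C\) the first coordinate satisfies \(\bigl|\log|a'|-\log|a|\bigr|=|\log|1-\tau_tz||\le5\epsilon\kappa^{-2}\). Starting from \(D_0e_1=e_1\) (slope \(0\)), after \(L-1\) steps we get \(\log|a_L|\ge-5(L-1)\epsilon\kappa^{-2}\). Since \(\|\widetilde{\mathscr M}_Le_1\|\ge|a_L|\), this gives \(\log\|\widetilde{\mathscr M}_L\|\ge-5L\epsilon\kappa^{-2}\). The same computation gives \(\log\|\widetilde{\mathscr M}_Le_1\|\le5(L-1)\epsilon\kappa^{-2}+\tfrac12\log(1+9\epsilon^2\kappa^{-2})\).

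For the upper bound I use \(\|\widetilde{\mathscr M}_L\|\le\|\widetilde{\mathscr M}_Le_1\|+\|\widetilde{\mathscr M}_Le_2\|\), and handle the \(e_2\)-column by linearity rather than by the cone, because its slope can leave \(\mathcal C\) when \(\Theta_t\approx0\). The vector \(D_0e_2=\epsilon_0e_2\) is mapped by \(D_1\widetilde W_1\) to \(-\epsilon_0\tau_1e_1+\epsilon_0\epsilon_1e_2\). The \(e_1\)-piece has coefficient at most \(\sqrt2\,\epsilon\kappa^{-1}\) and lies in \(\mathcal C\), so its subsequent growth is controlled by the cone estimate. The \(e_2\)-piece carries a coefficient at most \(\epsilon^2\), and the same decomposition is applied to it recursively. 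Summing the resulting series, \(\|\widetilde{\mathscr M}_Le_2\|\) is at most
\[
 \sum_{k\ge1}\epsilon^{k-1}\,\sqrt2\,\epsilon\kappa^{-1}\,e^{5L\epsilon\kappa^{-2}}(1+10^{-3})+\epsilon^{L}
 \le3\epsilon\kappa^{-1}e^{5L\epsilon\kappa^{-2}}.
\]
Using \(\log(1+x)\le x\) and \(\epsilon\kappa^{-1}\le\epsilon\kappa^{-2}\), we then obtain \(\log\|\widetilde{\mathscr M}_L\|\le5L\epsilon\kappa^{-2}+4\epsilon\kappa^{-2}\le10L\epsilon\kappa^{-2}\) for \(L\ge1\).

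I expect the main obstacle to be this upper bound. The \(K_0\)-free control of \(\tau_t\) is easy, but the \(e_2\)-column genuinely escapes the cone, and the bookkeeping of the recursive splitting must be arranged so that each restart costs a factor \(\epsilon\) while the exponent stays linear in \(L\) with constant \(\le10\). If the recursion turns out to be awkward, the fallback is to apply the cone argument to the transposed product \(\mathscr M_L^{\mathsf T}=D_0W_1^{\mathsf T}D_1\cdots W_{L-1}^{\mathsf T}D_{L-1}\), which has the same structure (with \(\tau_t\mapsto-\tau_t\)), and to combine it with \(\|\mathscr M_L\|=\|\mathscr M_L^{\mathsf T}\|\) and \(|\det\widetilde{\mathscr M}_L|=\prod_t|\epsilon_t|\prod_t|H_t|^{-2}\). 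This bounds the smaller singular value and so isolates the dominant column direction.
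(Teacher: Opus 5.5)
Your proof is correct. Its first half is the paper's argument in different notation. Your slope $z$ and first coordinate $a$ are the forward column graph $\zeta_t$ and the factor $a_t$ built from $T_t=H_t^{-1}D_tW_t$ (with $r_t=-\tau_t$, $s_t=\tau_t$). Your invariant cone $|z|\le3\epsilon\kappa^{-1}$ plays the role of the paper's bound $|\zeta_t|\le4\sqrt2\,\kappa^{-1}|\epsilon_t|$.

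The routes part at the upper bound. The paper also runs a backward recursion for the first row ($\eta_t$, with $\rho_L=\epsilon_0\eta_1$). It then uses $\det(\mathscr H_L^{-1}\mathscr M_L)=\Delta_L$ to get the exact identity $\mathscr H_L^{-1}\mathscr M_L=a_L\binom{1}{\zeta_L}(1,\rho_L)+\Delta_La_L^{-1}e_2e_2^{\mathsf T}$. A two-sided estimate $\bigl|\|N_L\|-1\bigr|<0.034\,\epsilon\kappa^{-2}$ for the normalized matrix $N_L$ then gives both inequalities at once.

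You instead bound $\|\widetilde{\mathscr M}_L\|\le\|\widetilde{\mathscr M}_Le_1\|+\|\widetilde{\mathscr M}_Le_2\|$. You expand the second column by linearity into re-injected $e_1$-pieces of size at most $\epsilon^k\sqrt2\,\kappa^{-1}$, each propagated by the same forward cone, plus a residual of size at most $\epsilon^L$. This correctly handles the fact you noticed, that this column can leave the cone. It needs neither the backward graph nor the determinant, and the constants are comparable.

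What your route does not provide is the structure the paper uses immediately afterwards. The paper's decomposition consists of analytic functions of the data. Differentiating the same two graph recursions yields the derivative bounds, the output and input Riesz projections (through $\zeta_L$ and $\rho_L$ respectively), the multiplier, and the canonical frames in Lemma~\ref{B:lem:differentiated-finite-two-state-product} and Corollary~\ref{B:cor:finite-two-state-frames}. A columnwise norm bound cannot be differentiated and gives no information on the input singular direction. For the lemma as stated, however, your argument is complete, and the fallback via the transposed product is not needed.
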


We next estimate derivatives of the spectral data.  For the next lemma
and corollary, assume \(m\ge1\).
For a nonvanishing scalar function
\(f\), \(\partial^\beta\log f\), \(|\beta|\ge1\), denotes the
branch-independent derivative of any local logarithm of \(f\).  Let
\(\mathscr L_0:\Omega\to\mathbb F^\times\) be \(C^m\) if
\(\mathbb F=\mathbb R\) and holomorphic if \(\mathbb F=\mathbb C\), and put
\(M:=\mathscr L_0\mathscr M_L\).  Suppose that, for \(1\le r\le m\),
\begin{equation}\label{B:eq:finite-two-state-jet-input}
 \max_{\substack{1\le t<L\\|\beta|=r}}
 \left(\|\partial^\beta H_t\|_\infty
       +\|\partial^\beta\Theta_t\|_\infty\right)
 \le K_r,
 \qquad
 \max_{\substack{0\le t<L\\|\beta|=r}}
 \|\partial^\beta\log\epsilon_t\|_\infty\le K_r,
\end{equation}
where the first maximum is zero when \(L=1\).

For the derivative conclusions, put
\[
 \widehat M:=(\mathscr L_0\mathscr H_L)^{-1}M,\qquad
 \epsilon_{\rm in}:=\|\epsilon_0\|_\infty,\qquad
 \epsilon_{\rm out}:=\|\epsilon_{L-1}\|_\infty.
\]
We use the following conventions for the spectral data.  Wherever
\(\|\widehat M-e_1e_1^{\mathsf T}\|\le1/10\),
Criterion~\ref{crit:rank-one-spectral-separation} defines rank-one Riesz
projections \(\Pi_M^{\rm in}\) and \(\Pi_M^{\rm out}\) for
\(\widehat M^{\mathsf T}\widehat M\) and
\(\widehat M\widehat M^{\mathsf T}\), respectively, selecting the
eigenvalues in \(\{|\mu-1|<1/2\}\).  The two Gram matrices are similar;
write \(\nu_M\) for their common selected eigenvalue and set
\[
 \Lambda_M:=(\mathscr L_0\mathscr H_L)\nu_M^{1/2},\qquad
 |\nu_M^{1/2}-1|<\frac12.
\]
The square root is the branch on \(\{|z-1|<1/2\}\) with \(\sqrt1=1\).
Likewise, \(\log(\Lambda_M/(\mathscr L_0\mathscr H_L))\) is the logarithm
of \(\nu_M^{1/2}\) with the branch normalized by \(\log1=0\).
\begin{lemma}
\label{B:lem:differentiated-finite-two-state-product}
Under \eqref{B:eq:finite-two-state-input-bounds} and
\eqref{B:eq:finite-two-state-jet-input}, there are constants
\(C_2=C_2(m)\ge2\) and \(0<c_1=c_1(m)\le10^{-4}\),
depending only on \(m\), and, for \(1\le r\le m\), polynomials \(\mathcal P_r\) with nonnegative
coefficients depending only on \(r\), such that the following holds.
The same constants are used simultaneously for every order
\(0\le r\le m\). Put
\[
 \mathcal B_0:=1+K_0,\qquad
 \mathcal B_r:=1+K_r+\mathcal P_r(K_1,\ldots,K_{r-1}).
\]
The condition
\begin{equation}\label{B:eq:finite-two-state-smallness}
 \epsilon L^{C_2}\kappa^{-C_2}
 \left(2+K_0+\sum_{r=1}^mK_r\right)^{C_2}
 \le c_1
\end{equation}
implies the following conclusions.
\begin{enumerate}
\item
The spectral data \(\Pi_M^{\rm in},\Pi_M^{\rm out},\nu_M\), and
\(\Lambda_M\) are defined throughout \(\Omega\), and
\[
 \widehat M:\Omega\to{\rm GL}(2,\mathbb C),\qquad
 \|\widehat M-e_1e_1^{\mathsf T}\|_\infty\le\frac1{10}.
\]
\item
For every \(|\beta|=r\le m\),
\begin{equation}\label{B:eq:finite-two-state-projector-output}
\begin{aligned}
 \|\partial^\beta(\Pi_M^{\rm in}-e_1e_1^{\mathsf T})\|_\infty
 &\le C_2\epsilon_{\rm in}L^{C_2}\kappa^{-C_2}\mathcal B_r,\\
 \|\partial^\beta(\Pi_M^{\rm out}-e_1e_1^{\mathsf T})\|_\infty
 &\le C_2\epsilon_{\rm out}L^{C_2}\kappa^{-C_2}\mathcal B_r,\\
 \left\|\partial^\beta
 \log\frac{\Lambda_M}{\mathscr L_0\mathscr H_L}\right\|_\infty
 &\le C_2\epsilon L^{C_2}\kappa^{-C_2}\mathcal B_r.
\end{aligned}
\end{equation}
\item
For \(1\le r\le m\), these order-\(r\) derivatives depend affinely on
\(\partial^\beta\log\epsilon_t,\partial^\beta H_t,
\partial^\beta\Theta_t\), with the lower-order input data fixed.
The linear and constant terms separately satisfy the corresponding bound
in \eqref{B:eq:finite-two-state-projector-output}.
\end{enumerate}
\end{lemma}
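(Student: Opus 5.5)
The plan is to factor the product so that every interior block becomes a small perturbation of the rank-one projection. Then the perturbation estimates of Criterion~\ref{crit:rank-one-spectral-separation} can be applied, with derivatives tracked through explicit algebraic formulas. Write \(P=e_1e_1^{\mathsf T}\). Since \(D_t=P+\epsilon_t e_2e_2^{\mathsf T}\), the product \(\mathscr M_L\) is an alternating word in \(D_t\) and \(W_t\). The key identity is \(PW_tP=H_tP\). Expanding each \(D_t\) as \(P+\epsilon_te_2e_2^{\mathsf T}\) gives a sum over subsets \(S\subset\{0,\dots,L-1\}\) of the positions where the \(e_2e_2^{\mathsf T}\) term is chosen. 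The empty subset contributes exactly \(\mathscr H_LP\). A term with \(|S|=k\) carries \(\prod_{t\in S}\epsilon_t\), and its factors are entries of the \(W_t\): each is either \(H_t\) or \(\pm\Theta_t\). Dividing by \(\mathscr H_L\) replaces each \(H_t\) by \(1\) and each \(\Theta_t\) by \(\Theta_t/H_t\), which is bounded by \(K_0\kappa^{-1}\).

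\emph{Step 1 (size of \(\widehat M-P\)).} Since \(\widehat M=\mathscr H_L^{-1}\mathscr M_L\), we have
\[
 \|\widehat M-P\|\le\sum_{k\ge1}\binom{L}{k}\epsilon^k\bigl(C(1+K_0)\kappa^{-2}\bigr)^k .
\]
Under \eqref{B:eq:finite-two-state-smallness}, with \(C_2\) large and \(c_1\) small, this is at most \(C\epsilon L(1+K_0)^2\kappa^{-2}\le1/10\).

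A more careful bookkeeping is needed for the refined bounds involving \(\epsilon_{\rm in}\) and \(\epsilon_{\rm out}\). Here I would use \(\widehat M^{\mathsf T}\widehat M\) and \(\widehat M\widehat M^{\mathsf T}\) directly. Write \(\widehat M=\widehat M' D_0\), where \(\widehat M'\) is the normalized product without its last factor. This gives \(\widehat M^{\mathsf T}\widehat M=D_0\,G'\,D_0\), where \(G'\) is close to \(P\) up to \(O(\epsilon L\kappa^{-2})\). Since \(D_0GD_0-P\) has off-diagonal entries of size \(|\epsilon_0|\) and a \((2,2)\) entry of size \(|\epsilon_0|^2\), the eigenprojection near \(1\) deviates from \(P\) by \(O(\epsilon_{\rm in})\). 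The same argument with the leftmost factor \(D_{L-1}\) gives the bound for \(\Pi^{\rm out}_M\) with \(\epsilon_{\rm out}\). Conclusion (i) then follows from the Criterion, and \(\nu_M=1+O(\epsilon L^{C}\kappa^{-C})\) gives the bound on the logarithm.

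\emph{Step 2 (derivatives).} The entries of \(\widehat M\) are finite sums of monomials in \(\epsilon_t\), \(H_t^{\pm1}\) and \(\Theta_t\). Write \(\partial^\beta\epsilon_t=\epsilon_t\cdot(\text{polynomial in }\partial^{\le\beta}\log\epsilon_t)\). Then each derivative of a monomial keeps its full \(\epsilon\)-prefactor, and the Faà di Bruno/Leibniz rule contributes at most \(k^{r}\) terms, each bounded by polynomials in \(K_1,\dots,K_r\) and \(\kappa^{-1}\); the factors \(H_t^{-1}\) supply the powers of \(\kappa^{-1}\). Summing over \(k\) with the geometric factor \((L\epsilon\cdots)^k\) gives \(\|\partial^\beta(\widehat M-P)\|\le C\epsilon L^{C}\kappa^{-C}\mathcal B_r\). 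The crucial point is that the top-order input appears only linearly, which yields the \(\mathcal B_r=1+K_r+\mathcal P_r(K_{<r})\) structure and the affine statement (iii).

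The spectral data are then recovered explicitly from Gram matrices close to \(P\): \(\Pi=(G-\mu_2)/(\mu_1-\mu_2)\), \(\mu\) is given by the quadratic formula with the branch fixed near \(1\), and \(\nu^{1/2}\) and \(\log\) are taken on discs. These are holomorphic functions with uniformly bounded derivatives on a fixed neighborhood of \(P\). The chain rule therefore transfers the bounds, again linearly in the top-order jets, with the \(\epsilon_{\rm in}\)/\(\epsilon_{\rm out}\) prefactors preserved by the \(D_0GD_0\) structure of Step 1. In the holomorphic case everything is algebraic and identical. In the real \(C^m\) case the same formulas apply verbatim.

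\emph{Main obstacle.} The hard part is the combinatorial control that keeps the polynomial dependence on \(L\) (rather than \(L^r\)-type blowups multiplied by exponentials) uniformly for all orders. The prefactors \(\epsilon_{\rm in}\) and \(\epsilon_{\rm out}\) must also survive differentiation rather than degrading to \(\epsilon\). I would first try to handle this by a transfer-matrix induction in \(L\), peeling one factor at a time. Each peeling step would track the three quantities \(\widehat M_{11}-1\), \(\widehat M_{12},\widehat M_{21}=O(\epsilon_{\rm out}),O(\epsilon_{\rm in})\), and \(\widehat M_{22}\). The induction hypothesis would carry jets of all orders at once.
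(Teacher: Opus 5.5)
Your argument is correct up to the repairs listed at the end, and it takes a genuinely different route from the paper. The paper never expands the product. It writes $\mathscr H_L^{-1}\mathscr M_L=T_{L-1}\cdots T_1D_0$ with $T_t=H_t^{-1}D_tW_t$ and runs two nonlinear graph recursions: a forward column slope $\zeta_t$ and a backward row slope $\eta_t$. Their M\"obius maps have derivative at most $\delta/(1-8\delta)^2\le 1/4$. This produces the exact form $\widehat M=a_L\binom{1}{\zeta_L}(1,\rho_L)+(\Delta_L/a_L)e_2e_2^{\mathsf T}$, with $\rho_L=\epsilon_0\eta_1$ and $\zeta_L=\epsilon_{L-1}F_{L-1}$, so the endpoint factors sit in explicit scalars. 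Derivative bounds then follow by geometric summation in $t$, and the Riesz projections are compared with those of rank-one Gram matrices by a resolvent integral.

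Your multilinear expansion in $D_t=P+\epsilon_te_2e_2^{\mathsf T}$ is more elementary and linear in the data. The affine top-order structure is visible term by term, as is the placement of $\epsilon_0$ in $\widehat M_{12},\widehat M_{22}$ and of $\epsilon_{L-1}$ in $\widehat M_{21},\widehat M_{22}$. The combinatorics you call the main obstacle is harmless: a term with $|S|=k$ has at most $3k$ nontrivial factors, and $\sum_{k\ge1}\binom Lk(3k)^ry^k\le C_rLy$ once $Ly\le1$. So no peeling induction is needed.

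What the recursion buys is locality in $t$. It needs only $\epsilon\kappa^{-2}\le10^{-4}$, so the same computation proves Lemma~\ref{B:lem:finite-two-state-growth} for arbitrarily long products. That lemma is used in Lemma~\ref{lem:rank-one-asymptotic-LE-approximation} with $L=n\to\infty$, where $a_L$ may be far from $1$ and your expansion diverges. For the present lemma this costs you nothing, because \eqref{B:eq:finite-two-state-smallness} makes $L\epsilon\kappa^{-2}$ small.

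Three details need repair when you write it out.

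\emph{First,} $G'=\widehat M'^{\mathsf T}\widehat M'$ is not close to $P$. For $L\ge2$ the rightmost factor of $\widehat M'$ is $W_1$, so $\widehat M'\approx e_1(1,-\Theta_1/H_1)$. What you actually use is only that $G'$ is bounded and that $G_{11}=G'_{11}\approx1$. It is simpler to read $G_{12}=\widehat M_{11}\widehat M_{12}+\widehat M_{21}\widehat M_{22}$ directly off the entry structure.

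\emph{Second,} bound $|\Theta_t/H_t|^2=|H_t^{-2}-1|\le1+\kappa^{-2}$ using $H_t^2+\Theta_t^2=1$, rather than by $K_0\kappa^{-1}$. Otherwise every order-$r$ bound carries a factor $(1+K_0)^2$. That factor is absent from $\mathcal B_r$ for $r\ge1$, and $C_2$ cannot absorb it.

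\emph{Third,} the endpoint prefactor does not survive a naive chain rule through $\Pi=(G-\mu_-)/(\mu_+-\mu_-)$, because the diagonal entries of $G_{\rm in}-P$ are only $O(\epsilon L\kappa^{-2})$. You need that $\Pi(G)-P$ vanishes identically on $\{G_{12}=0\}$. Hence $\Pi(G)-P=G_{12}\Psi(G)$ with $\Psi$ holomorphic on a fixed neighborhood of $P$. Leibniz's rule then puts a derivative of $G_{12}$, and hence a factor $\epsilon_{\rm in}$, in every term. The same argument with $(G_{\rm out})_{12}$ gives $\epsilon_{\rm out}$ for $\Pi_M^{\rm out}$.
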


The stronger smallness condition also gives
\[
 \epsilon\kappa^{-2}\le L\epsilon\kappa^{-2}\le c_1\le 10^{-4},
 \qquad 10L\epsilon\kappa^{-2}\le10c_1<10^{-2},
\]
so Lemma~\ref{B:lem:finite-two-state-growth} applies as well.
The spectral projections determine canonical orthogonal frames as follows.
Where \(|e_1^{\mathsf T}\Pi_M^{\rm in}e_1-1|<1/2\), set
\[
\begin{aligned}
 v_M&:=\frac{\Pi_M^{\rm in}e_1}
 {(e_1^{\mathsf T}\Pi_M^{\rm in}e_1)^{1/2}},
 &u_M&:=\Lambda_M^{-1}Mv_M,\\
 V_M&:=(v_M,\mathcal J_0v_M),
 &U_M&:=(u_M,\mathcal J_0u_M).
\end{aligned}
\]
Here \(\mathcal J_0\) is from
Definition~\ref{def:complex-special-orthogonal-group}; the denominator
uses the square-root branch on \(\{|z-1|<1/2\}\) with \(\sqrt1=1\).

The constants and polynomials in
Lemma~\ref{B:lem:differentiated-finite-two-state-product} are chosen so
that the following corollary holds with the same \(c_1\), \(C_2\), and
\(\mathcal B_r\).

\begin{corollary}
\label{B:cor:finite-two-state-frames}
Under the hypotheses and smallness condition of
Lemma~\ref{B:lem:differentiated-finite-two-state-product}, the following
conclusions hold.
\begin{enumerate}
\item
The canonical frames are defined throughout \(\Omega\), and
\[
 |e_1^{\mathsf T}\Pi_M^{\rm in}e_1-1|<\frac12,\qquad
 U_M,V_M\in{\rm SO}(2,\mathbb C).
\]
They give the exact normal form
\begin{equation}\label{B:eq:finite-two-state-exact-bilinear-normal-form}
 M=U_M
 \begin{pmatrix}
  \Lambda_M&0\\
  0&\det(M)\Lambda_M^{-1}
 \end{pmatrix}
 V_M^{\mathsf T},
\end{equation}
with
\[
 \Pi_M^{\rm in}=V_Me_1e_1^{\mathsf T}V_M^{\mathsf T},\qquad
 \Pi_M^{\rm out}=U_Me_1e_1^{\mathsf T}U_M^{\mathsf T}.
\]
\item
For every \(|\beta|=r\le m\),
\begin{equation}\label{B:eq:finite-two-state-frame-output}
\begin{aligned}
 \|\partial^\beta(V_M-I_2)\|_\infty
 &\le C_2\epsilon_{\rm in}L^{C_2}\kappa^{-C_2}\mathcal B_r,\\
 \|\partial^\beta(U_M-I_2)\|_\infty
 &\le C_2\epsilon_{\rm out}L^{C_2}\kappa^{-C_2}\mathcal B_r.
\end{aligned}
\end{equation}
The affine highest-order assertion of
Lemma~\ref{B:lem:differentiated-finite-two-state-product}, including the
separate bounds for the linear and constant terms, holds for these
estimates as well.

\item
If \(\det M\equiv1\), equivalently
\(\mathscr L_0^2\prod_{t=0}^{L-1}\epsilon_t\equiv1\), then
\eqref{B:eq:finite-two-state-exact-bilinear-normal-form} is an algebraic
singular value decomposition over the identity base map, with diagonal
entries \(\Lambda_M,\Lambda_M^{-1}\).
\item
For real data, \(\Lambda_M\) is the oriented lift of the dominant
singular value and \(|\Lambda_M|=\|M\|\).  If \(\Omega\subset\mathbb C^d\)
is invariant under conjugation and all inputs are real-symmetric, then
the spectral projections, \(\Lambda_M\), and the frames are real-symmetric.
\end{enumerate}
\end{corollary}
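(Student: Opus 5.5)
The plan is to build everything from the objects already fixed in Lemma~\ref{B:lem:differentiated-finite-two-state-product}. Work on the set where \(\|\widehat M-e_1e_1^{\mathsf T}\|\le1/10\); part~(i) of that lemma shows this set is all of \(\Omega\). There Criterion~\ref{crit:rank-one-spectral-separation} supplies the rank-one projections \(\Pi_M^{\rm in},\Pi_M^{\rm out}\) and the common eigenvalue \(\nu_M\).

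\textbf{Part (i).} Choose \(c_1\) small enough that the first bound in \eqref{B:eq:finite-two-state-projector-output} with \(r=0\) gives \(\|\Pi_M^{\rm in}-e_1e_1^{\mathsf T}\|\le C_2c_1<1/2\). Then \(|e_1^{\mathsf T}\Pi_M^{\rm in}e_1-1|<1/2\), so \(v_M\) is defined, and it is holomorphic or \(C^m\) as the data are.

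Next, check \(v_M^{\mathsf T}v_M=1\). Since \(\Pi:=\Pi_M^{\rm in}\) is a rank-one idempotent with \(\Pi^{\mathsf T}=\Pi\), it has the form \(\Pi=ww^{\mathsf T}\) with \(w^{\mathsf T}w=1\). Hence
\[
v_M^{\mathsf T}v_M=\frac{e_1^{\mathsf T}\Pi^{\mathsf T}\Pi e_1}{e_1^{\mathsf T}\Pi e_1}=1,
\]
and this also gives \(\Pi=v_Mv_M^{\mathsf T}\). It follows that \(V_M\in{\rm SO}(2,\mathbb C)\) and \(\Pi_M^{\rm in}=V_Me_1e_1^{\mathsf T}V_M^{\mathsf T}\).

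For \(u_M\), use \(M^{\mathsf T}Mv_M=(\mathscr L_0\mathscr H_L)^2\nu_Mv_M=\Lambda_M^2v_M\). This gives \(u_M^{\mathsf T}u_M=\Lambda_M^{-2}v_M^{\mathsf T}M^{\mathsf T}Mv_M=1\). The identity \(MM^{\mathsf T}u_M=\Lambda_M^2u_M\) together with rank one of the selected projection for \(\widehat M\widehat M^{\mathsf T}\) gives \(\Pi_M^{\rm out}=u_Mu_M^{\mathsf T}\).

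For the normal form, compute \(U_M^{\mathsf T}MV_M\) entrywise.
\begin{itemize}
\item[(a)] \(u_M^{\mathsf T}Mv_M=\Lambda_M\).
\item[(b)] \((\mathcal J_0u_M)^{\mathsf T}Mv_M=\Lambda_M(\mathcal J_0u_M)^{\mathsf T}u_M=0\), because \(x^{\mathsf T}\mathcal J_0^{\mathsf T}x=0\).
\item[(c)] \(u_M^{\mathsf T}M\mathcal J_0v_M=\Lambda_M^{-1}v_M^{\mathsf T}M^{\mathsf T}M\mathcal J_0v_M=\Lambda_M v_M^{\mathsf T}\mathcal J_0v_M=0\), using symmetry of \(M^{\mathsf T}M\).
\end{itemize}
The remaining entry then equals \(\det M\,\Lambda_M^{-1}\), by taking determinants and using \(\det U_M=\det V_M=1\). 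This proves \eqref{B:eq:finite-two-state-exact-bilinear-normal-form}.

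\textbf{Part (ii).} Differentiate the explicit formulas.
\begin{itemize}
\item[(a)] \(V_M-I_2\) is a smooth function of \(\Pi_M^{\rm in}-e_1e_1^{\mathsf T}\) that vanishes at \(0\), with bounded derivatives on the range where the denominator stays in \(\{|z-1|<1/2\}\). Faà di Bruno and \eqref{B:eq:finite-two-state-projector-output} then give the bound with \(\epsilon_{\rm in}\), after enlarging \(C_2\) and the \(\mathcal P_r\) once.
\item[(b)] For \(U_M\), avoid \(\Lambda_M^{-1}Mv_M\), which carries \(\epsilon_{\rm in}\)-dependence. Instead write \(u_M=\Pi_M^{\rm out}e_1/(e_1^{\mathsf T}\Pi_M^{\rm out}e_1)^{1/2}\) up to a sign. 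The sign is \(+1\) on the connected branch, since \(u_M\) is close to \(e_1\) (check \(e_1^{\mathsf T}u_M\approx1\) from \(\widehat M\approx e_1e_1^{\mathsf T}\)). This expresses \(U_M-I_2\) through \(\Pi_M^{\rm out}-e_1e_1^{\mathsf T}\) alone, giving the \(\epsilon_{\rm out}\) bound.
\item[(c)] The affine structure in the top-order derivative is inherited. The chain rule is linear in the highest derivative of \(\Pi_M^{\diamond}\), with coefficients built from lower-order jets.
\end{itemize}
I expect (b) to be the main obstacle, since it is where the sign/branch identification and the separation of \(\epsilon_{\rm in}\) from \(\epsilon_{\rm out}\) must be handled carefully.

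\textbf{Parts (iii) and (iv).} For (iii), \(\det M=\mathscr L_0^2\prod_t\epsilon_t\det W_t=\mathscr L_0^2\prod_t\epsilon_t\), because \(\det W_t=H_t^2+\Theta_t^2=1\). So \(\det M\equiv1\) exactly under the stated condition, and then the normal form is an algebraic SVD over the identity map.

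For (iv), with real data the bilinear and Hermitian Gram matrices coincide. The selected eigenvalue is the top eigenvalue \(\|\widehat M\|^2\), the projections are orthogonal, and \(|\Lambda_M|=|\mathscr L_0\mathscr H_L|\,\|\widehat M\|=\|M\|\). Real symmetry follows from uniqueness. Conjugating a real-symmetric input carries each Riesz projection, square-root branch (fixed by \(\sqrt1=1\)), and normalization to the corresponding object at \(\overline z\), so all outputs satisfy \(F(\overline z)=\overline{F(z)}\).
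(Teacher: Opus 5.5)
Your proposal is correct and follows essentially the same route as the paper's proof (Steps~(v)--(vi) of Appendix~\ref{app:normalized-product-proof}). Both arguments take \(v_M\) from the normalized input projection and identify \(u_M=\Lambda_M^{-1}Mv_M\) with the normalized branch \(\Pi_M^{\rm out}e_1/(e_1^{\mathsf T}\Pi_M^{\rm out}e_1)^{1/2}\), so that the \(V_M\) and \(U_M\) bounds carry \(\epsilon_{\rm in}\) and \(\epsilon_{\rm out}\) respectively, and both read off the normal form from \(Mv_M=\Lambda_Mu_M\), \(M^{\mathsf T}u_M=\Lambda_Mv_M\) and the determinant; your explicit checks that a symmetric rank-one idempotent equals \(ww^{\mathsf T}\) with \(w^{\mathsf T}w=1\), and that the off-diagonal entries vanish by antisymmetry of \(\mathcal J_0\), only spell out steps the paper states tersely.
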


We now pass from finite products to Lyapunov exponents.  Let \((\Omega,\mu,S)\) be an invertible probability-preserving system and
let \(B:\Omega\to {\rm SL}(2,\mathbb C)\) be a measurable cocycle with
\(\log^+\|B\|\in L^1(\mu)\), and put
\[
\begin{aligned}
 B_n(x)&:=B(S^{n-1}x)\cdots B(x),\\
 L_n&:=\frac1n\int_\Omega\log\|B_n(x)\|\,d\mu(x),
 & L_\infty&:=\lim_{n\to\infty}L_n.
\end{aligned}
\]
Assume that \(B\) admits the algebraic singular value decomposition over
the base map from
Definition~\ref{def:complex-orthogonal-rank-one-normal-form}, with
measurable data \(U,V,\Lambda\), overlap coefficient \(H\), and
\(\log|\Lambda|\in L^1(\mu)\):
\begin{equation}\label{eq:rank-one-exact-normal-form}
 B(x)=U(Sx)
 \begin{pmatrix}\Lambda(x)&0\\0&\Lambda(x)^{-1}\end{pmatrix}
 V(x)^{\mathsf T}.
\end{equation}
Suppose, for some \(D\ge4\), that
\begin{equation}\label{eq:rank-one-conditioning}
 \|U(x)\|+\|U(x)^{-1}\|
 +\|V(x)\|+\|V(x)^{-1}\|\le D
\end{equation}
almost everywhere.  Put
\[
 \Lambda_*:=\operatorname*{ess\,inf}_\Omega|\Lambda|,\qquad
 \eta_*:=\operatorname*{ess\,inf}_\Omega|H|.
\]

\begin{lemma}
\label{lem:rank-one-asymptotic-LE-approximation}
For the normal form \eqref{eq:rank-one-exact-normal-form} satisfying
\eqref{eq:rank-one-conditioning}, there are absolute constants
\(0<\delta_{\rm ro}\le10^{-4}\) and \(C>0\) with the following property.
If \(\Lambda_*>1\), \(\eta_*>0\), and
\begin{equation}\label{eq:rank-one-small-parameter}
 \mathfrak r:=D^{24}\Lambda_*^{-2}\eta_*^{-2}\le\delta_{\rm ro},
\end{equation}
then
\begin{equation}\label{eq:rank-one-asymptotic-LE-main-term}
 \left|L_\infty-
 \int_\Omega(\log|\Lambda|+\log|H|)\,d\mu\right|
 \le C\mathfrak r.
\end{equation}
\end{lemma}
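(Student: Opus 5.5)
We plan to write the iterate \(B_n\) as a telescoped product and reduce it to the finite two-state products of Lemma~\ref{B:lem:finite-two-state-growth}. Substituting the normal form \eqref{eq:rank-one-exact-normal-form} at each step gives
\[
 B_n(x)=U(S^nx)\,\Delta(S^{n-1}x)\,W(S^{n-1}x)\cdots W(Sx)\,\Delta(x)\,V(x)^{\mathsf T},
\]
with \(\Delta=\operatorname{diag}(\Lambda,\Lambda^{-1})\) and transition matrices \(W(S^jx):=V(S^jx)^{\mathsf T}U(S^jx)\in{\rm SO}(2,\mathbb C)\). Each such \(W\) has diagonal entries \(H\) and off-diagonal entries \(\mp\Theta\), where \(H^2+\Theta^2=1\). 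Factoring out \(\prod_j\Lambda(S^jx)\) replaces \(\Delta\) by \(D_t=\operatorname{diag}(1,\epsilon_t)\) with \(\epsilon_t=\Lambda(S^tx)^{-2}\). The middle product is then exactly \(\mathscr M_n\) of \eqref{B:eq:finite-two-state-product}, with parameters \(\epsilon=\Lambda_*^{-2}\), \(\kappa=\eta_*\), and \(K_0\le\|W\|\le CD^2\) by \eqref{eq:rank-one-conditioning}. The outer frames \(U(S^nx)\) and \(V(x)^{\mathsf T}\) change \(\log\|\cdot\|\) by at most \(2\log D\), which is \(O(1/n)\) after dividing by \(n\).

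The hypothesis \(\mathfrak r\le\delta_{\rm ro}\) gives \(\epsilon\kappa^{-2}\le\mathfrak r\le10^{-4}\), so \eqref{B:eq:finite-two-state-local-smallness} holds. Lemma~\ref{B:lem:finite-two-state-growth} then yields
\[
 \Bigl|\log\|B_n(x)\|-\sum_{t=0}^{n-1}\log|\Lambda(S^tx)|-\sum_{t=1}^{n-1}\log|H(S^tx)|\Bigr|\le 2\log D+10n\,\Lambda_*^{-2}\eta_*^{-2}.
\]
We integrate over \(\Omega\), use \(S\)-invariance of \(\mu\) and integrability of \(\log|\Lambda|\) and \(\log|H|\), divide by \(n\), and let \(n\to\infty\). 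This gives \eqref{eq:rank-one-asymptotic-LE-main-term} with the error \(10\Lambda_*^{-2}\eta_*^{-2}\le10\mathfrak r\), since \(D\ge4\). Integrability of \(\log|H|\) holds because \(\eta_*\le|H|\le\|W\|\le CD^2\).

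The main obstacle is that the bound \eqref{B:eq:finite-two-state-uniform-log-product} is stated with the error \(10L\epsilon\kappa^{-2}\). That error is linear in the length and only just sufficient, and it requires \(|\Theta_t|\) to be controlled in the proof in the appendix. If the appendix constant secretly depends on \(K_0\), we would need to absorb powers of \(D\); the factor \(D^{24}\) in \(\mathfrak r\) is presumably there to pay for this. For the \(K_0\) dependence, we would therefore verify directly that each step \(D_tW_t\) acts on the cone \(\{|z_2|\le c\,\epsilon^{1/2}|z_1|\}\) near \(e_1\), multiplying the first coordinate by \(H_t(1+O(\epsilon\kappa^{-2}D^{C}))\). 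We would then sum these per-step log errors, which are uniform in \(t\). The cone argument also needs the initial vector from \(V(x)^{\mathsf T}\) to lie in the cone. We would handle this by testing on the vector \(V(x)e_1\) for the lower bound and using the norm of the whole product for the upper bound; both cost only \(O(\log D)\) independently of \(n\).
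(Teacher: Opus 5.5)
Your proposal follows the paper's proof. The paper uses the same factorization $B_n(x)=U(S^nx)\bigl(\prod_k\Lambda(S^kx)\bigr)\mathscr M_n(x)V(x)^{\mathsf T}$ and applies Lemma~\ref{B:lem:finite-two-state-growth} with $L=n$. It also charges $2\log D$ for the two exterior frames, then integrates using invariance and lets $n\to\infty$.

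One hypothesis check needs repairing. Lemma~\ref{B:lem:finite-two-state-growth} requires $0<\kappa\le1$; its proof uses $|r_t|=|H_t^{-2}-1|^{1/2}\le\sqrt2\,\kappa^{-1}$, which is false for $\kappa>1$. With complex frames, $\eta_*$ can exceed $1$, so your choice $\kappa=\eta_*$ is not admissible in general. Take $\kappa=\min\{1,\eta_*\}$, as the paper does. When $\eta_*>1$, your bound $\eta_*\le D^2$ then gives
\[
 \epsilon\kappa^{-2}=\Lambda_*^{-2}=D^{-24}\eta_*^{2}\,\mathfrak r\le D^{-20}\mathfrak r\le10^{-4},
\]
so the error is still at most $10\mathfrak r$.

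This is where a power of $D$ in $\mathfrak r$ is actually needed, not to absorb a hidden $K_0$-dependence. The constant in Lemma~\ref{B:lem:finite-two-state-growth} is independent of $K_0$: its proof uses only $H_t^2+\Theta_t^2=1$, never the bound on $\Theta_t$. You can therefore use the lemma as stated, and the cone-argument fallback is unnecessary.
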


\subsection{Scale-dependent zero divisors with fixed total multiplicity}

Fix \(\rho>0\),
\(V\in C^\omega_\rho(\mathbb T,\mathbb R)\),
\(\alpha\notin\mathbb Q\), and fix a level \(j\).  Let
\(K_j\subsetneq\mathbb T\) be a
nonempty finite union of nondegenerate closed intervals and let \(S_j\) be its
first-return map
under \(T_\alpha\), and put
\[
 d\mu_j:=|K_j|^{-1}\mathbf1_{K_j^{\rm reg}}\,dx.
\]
Fix \(E\in\mathbb R\).  Let \(\mathcal P_j\) be a finite refinement,
modulo a finite boundary set, of the two-sided return-time partition
\(\mathcal P_{K_j}\) from Definition~\ref{fr:def:common-return-partition};
its finiteness is Lemma~\ref{fr:lem:finite-common-return-partition}.  Thus both return words
are constant on the regular part of every nondegenerate open interval
atom \(\mathcal B\in\mathcal P_j\); denote the forward return time there
by \(R_{j,\mathcal B}\).  Fix real lifts of these atoms and retain the
same symbols for the lifts.  For \(z\) in the strip of analyticity of \(V\), set
\[
 B_{j,\mathcal B}(z,E):=A_{R_{j,\mathcal B}}^V(z,E),
 \qquad
 S_{j,\mathcal B}z:=z+R_{j,\mathcal B}\alpha.
\]
Suppress \(E\) from the notation below.

\begin{definition}
\label{def:moving-divisors-persistent-total}
A choice of the factorization
maps, distinguished atoms, zeros, and quotient functions occurring in
\textup{(\ref{item:divisor-factorization})--(\ref{item:divisor-nondistinguished})} is called a \emph{level-\(j\) first-return
zero-divisor datum} and is denoted by \(\mathscr D_j\).

Fix
\[
 P\in\mathbb N,\quad P\ge1,\qquad
 0<h<\rho,\qquad d>0,\qquad 0<\kappa\le1.
\]
We say that \(\mathscr D_j\) is \emph{admissible with parameters
\((P;h,d,\kappa)\)}, and write
\[
 \mathscr D_j\in\operatorname{DivAdm}(P;h,d,\kappa),
\]
if the following conditions hold.

\begin{enumerate}
\item\label{item:divisor-factorization} \emph{Holomorphic algebraic singular value decomposition of the
return matrices.}
There exist measurable maps
\[
 U_j,V_j:K_j^{\rm reg}\to{\rm SO}(2,\mathbb C),
 \qquad \Lambda_j:K_j^{\rm reg}\to\mathbb C^\times,
\]
together with atomwise holomorphic real-symmetric extensions, denoted by
the same symbols.  For
\[
 \mathcal B[h]:=\overline{\mathcal B}+i[-h,h],
\]
\(V_j\) and \(\Lambda_j\) are defined on a neighborhood of
\(\mathcal B[h]\), while \(U_j\) is defined on neighborhoods of
\(\mathcal B[h]\) and \(S_{j,\mathcal B}(\mathcal B[h])\).  On each atom,
\begin{equation}\label{eq:moving-divisor-rank-one-normal-form}
 B_{j,\mathcal B}(z,E)=U_j(S_{j,\mathcal B}z)
 \begin{pmatrix}\Lambda_j(z)&0\\0&\Lambda_j(z)^{-1}\end{pmatrix}
 V_j(z)^{\mathsf T},
 \qquad z\in\mathcal B[h].
\end{equation}
Put
\[
 H_j(z):=e_1^{\mathsf T}V_j(z)^{\mathsf T}U_j(z)e_1.
\]
For \(F_j\in\{U_j,V_j,\Lambda_j,H_j\}\), write
\[
 F_j(x,y):=F_j(x+iy).
\]
Only germs from the same fixed return word (including its temporary
refinements) and corresponding source--image branches must agree.  Distinct
genuine return-time atoms carry separate germs.
If a complex point belongs to thickenings of distinct atoms, every expression
is interpreted in the germ attached to the atom under discussion; boundary
values are understood atomwise.
More precisely, for every fixed \(|y|\le h\), the source and return-image
germs determine single-valued measurable maps, modulo the finite atom
boundary set,
\[
 U_{j,y},V_{j,y}:K_j^{\rm reg}\to{\rm SO}(2,\mathbb C),
 \qquad
 \Lambda_{j,y}:K_j^{\rm reg}\to\mathbb C^\times,
\]
where, in the preceding atomwise notation,
\[
 U_{j,y}(x):=U_j(x,y),\qquad
 V_{j,y}(x):=V_j(x,y),\qquad
 \Lambda_{j,y}(x):=\Lambda_j(x,y).
\]
These maps satisfy, for almost every \(x\in K_j\),
\[
 B_{K_j,y}(x,E)=U_{j,y}(S_jx)
 \begin{pmatrix}\Lambda_{j,y}(x)&0\\0&\Lambda_{j,y}(x)^{-1}\end{pmatrix}
 V_{j,y}(x)^{\mathsf T}.
\]
Here \(U_{j,y}(S_jx)\) is the return-image germ attached to the same
fixed word as \(x\), and it is the incoming germ used at \(S_jx\).
Consequently the overlap coefficient of this algebraic singular value
decomposition over \(S_j\) is exactly \(H_j(x+iy)\).

\item\label{item:divisor-zeros} \emph{Real zeros and total multiplicity.}
There exist an integer \(\nu_j\ge1\), pairwise distinct atoms
\(\mathcal B_{j,a}\in\mathcal P_j\), and integers \(P_{j,a}\ge1\),
\(1\le a\le\nu_j\), such that
\begin{equation}\label{eq:moving-divisor-total-multiplicity}
 \sum_{a=1}^{\nu_j}P_{j,a}=P,
 \qquad
 P_{j,\max}:=\max_{1\le a\le\nu_j}P_{j,a}.
\end{equation}
Set
\[
 \mathcal P_j^{\rm dist}
 :=\{\mathcal B_{j,a}:1\le a\le\nu_j\},
 \qquad
 \mathcal P_j^{\rm nd}:=
 \mathcal P_j\setminus\mathcal P_j^{\rm dist}.
\]
For each \(a\), there are real zeros
\[
 c_{j,a,\ell}\in\mathcal B_{j,a},
 \qquad 1\le\ell\le P_{j,a},
\]
listed with multiplicity, and a holomorphic real-symmetric nonvanishing
function \(\mathfrak a_{j,a}\), defined on a neighborhood of
\(\mathcal B_{j,a}[h]\), such that
\begin{equation}\label{eq:moving-divisor-local-factorization}
 H_j(z)=
 \left(\prod_{\ell=1}^{P_{j,a}}(z-c_{j,a,\ell})\right)
 \mathfrak a_{j,a}(z),
 \qquad z\in\mathcal B_{j,a}[h].
\end{equation}

\item\label{item:divisor-boundary} \emph{Boundary separation of the zeros.}
The zeros satisfy
\begin{equation}\label{eq:moving-divisor-root-clearance}
 \min_{\substack{1\le a\le\nu_j\\1\le\ell\le P_{j,a}}}
 \operatorname{dist}
 (c_{j,a,\ell},\partial\mathcal B_{j,a})
 \ge d.
\end{equation}

\item\label{item:divisor-nondistinguished} \emph{Uniform nonvanishing on nondistinguished atoms.}
For every
\(\mathcal B\in\mathcal P_j^{\rm nd}\),
\begin{equation}\label{eq:moving-divisor-safe-H}
 |H_j(x+iy)|\ge\kappa,
 \qquad x\in\mathcal B,\qquad |y|\le h.
\end{equation}
If \(\mathcal P_j^{\rm nd}=\varnothing\), this condition is void and we
use \(\kappa=1\).
\end{enumerate}

Fix \(j_0\in\mathbb N\), \(j_0\ge1\).  For
\(P_{\rm loc}\in\mathbb N\), \(P_{\rm loc}\ge1\), a sequence
\(\{\mathscr D_j\}_{j\ge j_0}\) of the preceding level-\(j\) data is
said to have \emph{persistent
zero-divisor data of total multiplicity \(P_{\rm loc}\)} if there are
sequences
\[
 0<h_j<\rho,\qquad d_j>0,\qquad 0<\kappa_j\le1
\]
such that
\begin{equation}\label{eq:moving-divisor-admissible-hierarchy}
 \mathscr D_j\in
 \operatorname{DivAdm}(P_{\rm loc};h_j,d_j,\kappa_j),
 \qquad j\ge j_0.
\end{equation}
Only \(P_{\rm loc}\) is fixed across levels.  The number \(\nu_j\), the
distinguished atoms, the individual multiplicities \(P_{j,a}\), the zero
multisets, the factorization maps, and the parameters
\(h_j,d_j,\kappa_j\) may depend on \(j\).
\end{definition}

The next lemma passes from a real zero divisor to a complex one.  Let \(I\subset\mathbb R\) be a bounded closed nondegenerate interval, let
\(P\in\mathbb N\), and let \(H\) be real-symmetric and holomorphic on a
neighborhood of \(I+i[-h_0,h_0]\), where \(h_0>0\).  Suppose that the
real zero divisor of \(H\) on \(I\), counted with multiplicity, is
\(r_1,\ldots,r_P\).  Put
\[
 \mathfrak a(z):=
 \frac{H(z)}{\prod_{\ell=1}^P(z-r_\ell)},\qquad
 J_k:=\sup_{I+i[-h_0,h_0]}
 |\partial_z^{P+k}H|\quad(k=1,2),
\]
where the quotient at repeated roots is defined by removable
continuation. Assume that, for some \(0<\mu_-\le\mu_+\),
\begin{equation}\label{eq:real-cluster-real-quotient-bounds}
 \mu_-\le|\mathfrak a(x)|\le\mu_+,\qquad x\in I.
\end{equation}
With the convention \(\mu_-/J_1=+\infty\) for \(J_1=0\), set
\begin{equation}\label{eq:real-cluster-reduced-height}
 \widehat h
 :=\frac14\min\left\{h_0,
       \frac{(P+1)!\mu_-}{J_1}\right\}.
\end{equation}

\begin{lemma}
\label{lem:real-cluster-complex-divisor}
If \eqref{eq:real-cluster-real-quotient-bounds} holds, then the following
conclusions hold.
\begin{enumerate}
\item
The quotient \(\mathfrak a\) is real-symmetric and nonvanishing on
\(I+i[-\widehat h,\widehat h]\), and
\begin{equation}\label{eq:real-cluster-complex-quotient-bounds}
 \frac34\mu_-
 \le|\mathfrak a(z)|
 \le\frac54\mu_+,
 \qquad z\in I+i[-\widehat h,\widehat h].
\end{equation}
\item
The second derivative of its logarithm satisfies
\begin{equation}\label{eq:real-cluster-logarithmic-bound}
 \sup_{I+i[-\widehat h,\widehat h]}
 |\partial_z^2\log\mathfrak a|
 \le \frac{8J_2}{3(P+2)!\mu_-}
      +\frac{16J_1^2}{9((P+1)!)^2\mu_-^2}.
\end{equation}
\end{enumerate}
\end{lemma}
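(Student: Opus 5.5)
The plan is to use a Taylor expansion of \(\mathfrak a\) in the imaginary direction from the real axis, together with an integral (Cauchy/Taylor) representation of \(\mathfrak a\) in terms of \(H^{(P+k)}\). The key identity is the divided-difference representation. For \(z\) near \(I\), Hermite–Genocchi (iterated integration of Taylor's formula with respect to the roots) writes \(\mathfrak a\) as an average of \(H^{(P)}/P!\) over the simplex spanned by \(r_1,\ldots,r_P,z\). Concretely,
\[
 \mathfrak a(z)=\int_{\Delta_P}H^{(P)}\bigl(t_0z+\textstyle\sum_\ell t_\ell r_\ell\bigr)\,dt,
\]
with \(\Delta_P\) the standard simplex of volume \(1/P!\). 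The argument stays inside \(I+i[-h_0,h_0]\), because it is a convex combination of points of \(I\) and \(z\), and \(\operatorname{Im}\) of it equals \(t_0\operatorname{Im}z\). Differentiating under the integral gives
\[
 \partial_z^k\mathfrak a(z)=\int_{\Delta_P}t_0^k H^{(P+k)}(\cdots)\,dt,\qquad k=1,2.
\]
Since \(\int_{\Delta_P}t_0^k\,dt=k!/(P+k)!\), this yields
\[
 |\mathfrak a'|\le J_1/(P+1)!,\qquad |\mathfrak a''|\le 2J_2/(P+2)!
\]
on the whole rectangle \(I+i[-h_0,h_0]\). Real symmetry of \(\mathfrak a\) is immediate, because \(H\) is real-symmetric and the roots are real.

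For part (i), fix \(z=x+iy\) with \(x\in I\) and \(|y|\le\widehat h\). Then
\[
 |\mathfrak a(z)-\mathfrak a(x)|\le |y|\,J_1/(P+1)!\le \tfrac14\mu_-,
\]
using \(\widehat h\le (P+1)!\mu_-/(4J_1)\). Combined with \(\mu_-\le|\mathfrak a(x)|\le\mu_+\), this gives
\[
 |\mathfrak a(z)|\ge \tfrac34\mu_-,\qquad |\mathfrak a(z)|\le\mu_++\tfrac14\mu_-\le\tfrac54\mu_+ .
\]
This proves nonvanishing and \eqref{eq:real-cluster-complex-quotient-bounds}. When \(J_1=0\), \(\mathfrak a\) is constant and the claim is trivial.

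For part (ii), on the reduced rectangle \(\log\mathfrak a\) has holomorphic local branches, since \(\mathfrak a\neq0\). Its second derivative is
\[
 \partial_z^2\log\mathfrak a=\mathfrak a''/\mathfrak a-(\mathfrak a'/\mathfrak a)^2 .
\]
Inserting the derivative bounds and \(|\mathfrak a|\ge\frac34\mu_-\) gives
\[
 |\partial_z^2\log\mathfrak a|\le \frac{2J_2}{(P+2)!}\cdot\frac{4}{3\mu_-}+\frac{J_1^2}{((P+1)!)^2}\cdot\frac{16}{9\mu_-^2},
\]
which is exactly \eqref{eq:real-cluster-logarithmic-bound}.

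The main point to verify carefully is the simplex representation when roots repeat or coincide with \(z\). It is valid as an identity of holomorphic functions by continuity in the nodes, so the removable continuation in the definition of \(\mathfrak a\) agrees with it. If one prefers to avoid Hermite–Genocchi, an alternative is induction on \(P\) with
\[
 g(z)=\int_0^1 f'(r+t(z-r))\,dt
\]
applied to \(f(z)/(z-r)\). Each step gains a factor \(t\) in the integrand for the derivative bookkeeping, and this reproduces the same factorial constants.
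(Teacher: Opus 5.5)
Your proposal is correct and follows the paper's proof essentially step for step. It uses the Hermite--Genocchi representation of \(\mathfrak a\) as the confluent divided difference \(H[r_1,\ldots,r_P,z]\), differentiates under the integral with the simplex moments \(\int_{\Delta_P}t_0^k\,d\mathbf t=k!/(P+k)!\), integrates vertically to get \(|\mathfrak a(z)-\mathfrak a(x)|\le\mu_-/4\) on the reduced height, and finishes with \(\partial_z^2\log\mathfrak a=\mathfrak a''/\mathfrak a-(\mathfrak a'/\mathfrak a)^2\).
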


\begin{proof}
The multiplicity convention makes the apparent singularities in this
quotient removable.  For
\[
 \Delta_P:=\left\{(t_1,\ldots,t_P)\in[0,1]^P:
                 \sum_{\ell=1}^Pt_\ell\le1\right\},
 \qquad
 t_0:=1-\sum_{\ell=1}^Pt_\ell,
\]
the vanishing conditions identify \(\mathfrak a(z)\) with the confluent
divided difference \(H[r_1,\ldots,r_P,z]\).  The Hermite--Genocchi formula
\cite[Section~9]{deBoor2005} therefore gives
\[
 \mathfrak a(z)
 =\int_{\Delta_P}
   H^{(P)}\!\left(t_0z+\sum_{\ell=1}^Pt_\ell r_\ell\right)
   \,d\mathbf t.
\]
This remains valid when roots coincide; the usual proof by iterated
integration along line segments applies verbatim to holomorphic \(H\).
The simplex is compact and every displayed argument remains in
\(I+i[-h_0,h_0]\), so differentiation under the integral gives, for
\(k=1,2\),
\[
 \mathfrak a^{(k)}(z)
 =\int_{\Delta_P}t_0^k
 H^{(P+k)}\!\left(t_0z+\sum_{\ell=1}^Pt_\ell r_\ell\right)
 \,d\mathbf t.
\]
The simplex integrals give
\[
\begin{aligned}
 |\mathfrak a'(z)|
 &\le J_1\int_{\Delta_P}t_0\,d\mathbf t
 =\frac{J_1}{(P+1)!},\\
 |\mathfrak a''(z)|
 &\le J_2\int_{\Delta_P}t_0^2\,d\mathbf t
 =\frac{2J_2}{(P+2)!}.
\end{aligned}
\]
Using \eqref{eq:real-cluster-real-quotient-bounds} and the height choice
\eqref{eq:real-cluster-reduced-height}, for
\(z=x+iy\), \(|y|\le\widehat h\),
\[
 |\mathfrak a(z)-\mathfrak a(x)|
 \le\frac{J_1|y|}{(P+1)!}\le\frac14\mu_-.
\]
Hence \eqref{eq:real-cluster-complex-quotient-bounds} holds and
\(\mathfrak a\) is nonvanishing.  Finally,
\[
 \partial_z^2\log\mathfrak a
 =\frac{\mathfrak a''}{\mathfrak a}
  -\left(\frac{\mathfrak a'}{\mathfrak a}\right)^2
\]
gives \eqref{eq:real-cluster-logarithmic-bound}.
\end{proof}

\subsection{Poisson estimate for zeros of the overlap coefficient}

Let
\[
 \mathscr D_j\in
 \operatorname{DivAdm}(P;h_j,d_j,\kappa_j)
\]
be a level-\(j\) first-return zero-divisor datum in the sense of
Definition~\ref{def:moving-divisors-persistent-total}, and retain
\(P_{j,\max}\) from \eqref{eq:moving-divisor-total-multiplicity}.
Fix \(0<y_j^-<y_j^+\le\min\{h_j,1\}\), set
\(\Delta y_j=y_j^+-y_j^-\), and suppose that, for some \(D_j\ge4\),
\begin{equation}\label{eq:moving-pair-frame-conditioning}
\begin{aligned}
 \max_{\sigma\in\{+,-\}}
 \operatorname*{ess\,sup}_{x\in K_j}
 \bigl(&\|U_j(x,y_j^\sigma)\|+\|U_j(x,y_j^\sigma)^{-1}\|\\
 &+\|V_j(x,y_j^\sigma)\|+\|V_j(x,y_j^\sigma)^{-1}\|\bigr)
 \le D_j.
\end{aligned}
\end{equation}
Suppose also that,
for some \(\mathscr A_j\ge2\), the first two bounds below hold for every
\(\mathcal B_{j,a}\in\mathcal P_j^{\rm dist}\), while the third holds for
every \(\mathcal B\in\mathcal P_j^{\rm nd}\):
\begin{equation}\label{eq:moving-pair-logarithmic-bounds}
\begin{aligned}
 |\mathfrak a_{j,a}(z)|&\ge\mathscr A_j^{-1}
 && z\in\mathcal B_{j,a}[h_j],\\
 |\partial_z^2\log\mathfrak a_{j,a}(z)|&\le\mathscr A_j
 && z\in\mathcal B_{j,a}[h_j],\\
 |\partial_z^2\log H_j(z)|&\le\mathscr A_j
 && z\in\mathcal B[h_j].
\end{aligned}
\end{equation}
For \(\sigma\in\{+,-\}\), define
\[
\begin{aligned}
 \Lambda_{j,*}^\sigma
 &:=\operatorname*{ess\,inf}_{x\in K_j}|\Lambda_j(x,y_j^\sigma)|,\\
 \eta_j^\sigma
 &:=\min\{\mathscr A_j^{-1}(y_j^\sigma)^{P_{j,\max}},\kappa_j\}.
\end{aligned}
\]
Assume, for both signs, that
\begin{equation}\label{eq:moving-pair-expansion-smallness}
 \Lambda_{j,*}^\sigma>1,
 \qquad
 \mathfrak r_j^\sigma
 :=D_j^{24}(\Lambda_{j,*}^\sigma)^{-2}(\eta_j^\sigma)^{-2}
 \le\delta_{\rm ro},
 \qquad \sigma\in\{+,-\},
\end{equation}
where \(\delta_{\rm ro}\) is the absolute threshold in
Lemma~\ref{lem:rank-one-asymptotic-LE-approximation}, and put
\[
 \mathcal V_j:=\sup_{y_j^-\le y\le y_j^+}
 \int_{K_j}|\partial_y\log|\Lambda_j(x,y)||\,dx.
\]

\begin{lemma}
\label{lem:moving-base-complete-pair-Poisson}
For \(\mathscr D_j\in\operatorname{DivAdm}(P;h_j,d_j,\kappa_j)\)
satisfying \eqref{eq:moving-pair-frame-conditioning},
\eqref{eq:moving-pair-logarithmic-bounds}, and
\eqref{eq:moving-pair-expansion-smallness}, there is an absolute constant
\(C>0\) such that
\begin{equation}\label{eq:moving-pair-Poisson-secant}
\begin{aligned}
 &\left|
 \frac{|K_j|}{2\pi}
 \frac{L_{K_j}(E,y_j^+)-L_{K_j}(E,y_j^-)}{\Delta y_j}
 -\frac12P\right|\\
 &\quad\le C\mathcal V_j
 +C\mathscr A_j|K_j|y_j^+
 +CP\frac{y_j^+}{d_j}
 +C\frac{|K_j|}{\Delta y_j}(\mathfrak r_j^++\mathfrak r_j^-).
\end{aligned}
\end{equation}
\end{lemma}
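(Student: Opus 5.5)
The plan is to apply Lemma~\ref{lem:rank-one-asymptotic-LE-approximation} at the two heights \(y_j^\pm\) and subtract. Then the secant is controlled by the \(y\)-variation of \(\int(\log|\Lambda_j|+\log|H_j|)\,d\mu_j\). The \(\Lambda_j\)-part of this variation is the \(\mathcal V_j\) term. The \(H_j\)-part is evaluated atom by atom from the factorization \eqref{eq:moving-divisor-local-factorization}.

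\emph{Step 1: reduction to the main terms.} For fixed \(y=y_j^\sigma\), item~(\ref{item:divisor-factorization}) of Definition~\ref{def:moving-divisors-persistent-total} gives an algebraic singular value decomposition of \(B_{K_j,y}\) over \(S_j\), with overlap coefficient \(H_j(x+iy)\). We need \(\eta_*\ge\eta_j^\sigma\).
\begin{itemize}
\item On a nondistinguished atom, \eqref{eq:moving-divisor-safe-H} gives \(|H_j|\ge\kappa_j\).
\item On a distinguished atom, the real zeros give \(|z-c_{j,a,\ell}|\ge y\) for \(z=x+iy\). Together with \(|\mathfrak a_{j,a}|\ge\mathscr A_j^{-1}\) and \(y\le1\), this yields \(|H_j|\ge\mathscr A_j^{-1}y^{P_{j,a}}\ge\mathscr A_j^{-1}y^{P_{j,\max}}\).
\end{itemize}
With \eqref{eq:moving-pair-frame-conditioning} and \eqref{eq:moving-pair-expansion-smallness}, the lemma then gives \(|L_{K_j}(E,y_j^\sigma)-\int(\log|\Lambda_j|+\log|H_j|)(\cdot,y_j^\sigma)\,d\mu_j|\le C\mathfrak r_j^\sigma\). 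Multiplying by \(|K_j|/(2\pi\Delta y_j)\) produces the last error term. The \(\Lambda_j\)-difference is at most \(\Delta y_j\,\mathcal V_j/|K_j|\) by the fundamental theorem of calculus in \(y\); after normalization this is \(C\mathcal V_j\).

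\emph{Step 2: the overlap term.} It remains to compare \(\frac1{2\pi\Delta y_j}\int_{K_j}(\log|H_j(x,y_j^+)|-\log|H_j(x,y_j^-)|)\,dx\) with \(P/2\). On each atom write \(\log|H_j|=\sum_\ell\log|z-c_{j,a,\ell}|+\log|\mathfrak a_{j,a}|\). For \(g\) holomorphic and nonvanishing (\(\log\mathfrak a_{j,a}\), or \(\log H_j\) on nondistinguished atoms), \(\operatorname{Re}g\) is even in \(y\) by real symmetry. Taylor expansion in \(y\) then gives \(|\operatorname{Re}g(x+iy^+)-\operatorname{Re}g(x+iy^-)|\le\frac12\mathscr A_j((y^+)^2-(y^-)^2)\le\mathscr A_j\,y^+\Delta y_j\). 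Integrating over \(K_j\) gives the term \(C\mathscr A_j|K_j|y_j^+\).

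For each root \(c\) the integrand is \(\frac12\log\frac{(x-c)^2+(y^+)^2}{(x-c)^2+(y^-)^2}\). Its \(y\)-derivative at \(y\) is the Poisson kernel \(\pi\cdot\frac1\pi\frac{y}{(x-c)^2+y^2}\), whose integral over all of \(\mathbb R\) is \(\pi\). Over the atom, whose endpoints lie at distance \(\ge d_j\) from \(c\), the missing tails have mass at most \(2y/d_j\). Averaging over \(y\in[y^-,y^+]\) and dividing by \(2\pi\), each root contributes \(\frac12+O(y^+/d_j)\). Summing over the \(P\) roots gives \(P/2+CPy_j^+/d_j\).

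\emph{Step 3 and main obstacle.} Combining these pieces yields \eqref{eq:moving-pair-Poisson-secant}. The delicate point is the bookkeeping in Step~1. The lemma must be applied to the induced system \((K_j^{\rm reg},\mu_j,S_j)\) with the atomwise germs, and one must check that the source and image germs glue into single-valued measurable data, so that the overlap coefficient is exactly \(H_j\). One must also confirm the integrability of \(\log|\Lambda_j|\), which follows from the bounds on \(\|B\|\) and on the frames. The Poisson computation itself is routine. Care is needed only to integrate over the real atom \(\mathcal B_{j,a}\), rather than its lift, and to use the root clearance \(d_j\).
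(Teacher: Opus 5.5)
Your proposal is correct and follows the paper's proof essentially step for step. Both arguments apply the rank-one Lyapunov-exponent approximation at the two heights, using the lower bound $|H_j|\ge\eta_j^\sigma$ obtained from the factorization and the nondistinguished bound. Both then control the multiplier difference by $\mathcal V_j/(2\pi)$, bound the regular factors through $|\partial_y\log|F||\le\mathscr A_j|y|$ (from real symmetry and $|\partial_z^2\log F|\le\mathscr A_j$), and evaluate each root's contribution by the explicit arctangent (Poisson) integral with tail error at most $2y/d_j$.
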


\begin{proof}
For \(\sigma\in\{+,-\}\) and
\(\mathcal B_{j,a}\in\mathcal P_j^{\rm dist}\), the zero-divisor
factorization \eqref{eq:moving-divisor-local-factorization} and
\eqref{eq:moving-pair-logarithmic-bounds} give
\[
\begin{aligned}
 |H_j(x+iy_j^\sigma)|
 &\ge \mathscr A_j^{-1}
      \prod_{\ell=1}^{P_{j,a}}
      |x-c_{j,a,\ell}+iy_j^\sigma|\\
 &\ge \mathscr A_j^{-1}(y_j^\sigma)^{P_{j,a}}
 \ge \mathscr A_j^{-1}(y_j^\sigma)^{P_{j,\max}}.
\end{aligned}
\]
Here we used \(0<y_j^\sigma\le1\) and
\(P_{j,a}\le P_{j,\max}\).  For every
\(\mathcal B\in\mathcal P_j^{\rm nd}\),
\eqref{eq:moving-divisor-safe-H} gives
\(|H_j(x+iy_j^\sigma)|\ge\kappa_j\) for \(x\in\mathcal B\).
Consequently,
\[
 \operatorname*{ess\,inf}_{x\in K_j}
 |H_j(x+iy_j^\sigma)|\ge\eta_j^\sigma>0.
\]
Condition \ref{def:moving-divisors-persistent-total}\textup{(\ref{item:divisor-factorization})} identifies the endpoint branches with the
algebraic SVD of \(B_{K_j,y_j^\sigma}\) over \(S_j\), with overlap
\(H_j(\,\cdot\,+iy_j^\sigma)\).  The induced system is invertible and
probability-preserving by Lemma~\ref{fr:lem:regular-phases-induced-map}.
Finitely many compact atom closures and holomorphy give
\[
 \log^+\|B_{K_j,y_j^\sigma}\|,\ \log|\Lambda_j(\,\cdot\,,y_j^\sigma)|
 \in L^1(\mu_j).
\]
Thus \eqref{eq:moving-pair-expansion-smallness} and the preceding bounds
allow Lemma~\ref{lem:rank-one-asymptotic-LE-approximation} to be applied
at both heights:
\begin{equation}\label{eq:moving-pair-induced-LE-expansion}
\begin{aligned}
 \biggl|L_{K_j}(E,y_j^\sigma)
 &-\int_{K_j}\bigl(
   \log|\Lambda_j(x,y_j^\sigma)|
   +\log|H_j(x+iy_j^\sigma)|
   \bigr)\,d\mu_j(x)\biggr|\\
 &\le C\mathfrak r_j^\sigma,
 \qquad \sigma\in\{+,-\}.
\end{aligned}
\end{equation}
Since \(|K_j|\,d\mu_j=dx\) on \(K_j\), the factor \(|K_j|\) removes
the normalization of \(\mu_j\).  In particular,
\[
\begin{aligned}
 &\frac{|K_j|}{2\pi\Delta y_j}
 \left|\int_{K_j}
 \bigl(\log|\Lambda_j(x,y_j^+)|
       -\log|\Lambda_j(x,y_j^-)|\bigr)\,d\mu_j(x)\right|\\
 &\qquad\le\frac{\mathcal V_j}{2\pi}.
\end{aligned}
\]
For \(\mathcal B_{j,a}\in\mathcal P_j^{\rm dist}\),
\[
 \partial_y\log|H_j(x+iy)|
 =\sum_{\ell=1}^{P_{j,a}}
   \frac{y}{(x-c_{j,a,\ell})^2+y^2}
 +\partial_y\log|\mathfrak a_{j,a}(x+iy)|.
\]
If \(F\) is nonvanishing, holomorphic, and real-symmetric on a neighborhood
of one of the closed atom rectangles, shrink that neighborhood to a
simply connected zero-free neighborhood and choose a holomorphic logarithm
of \(F\) there.  Then
\[
\begin{aligned}
 \partial_y\log|F(x+iy)|
 &=\operatorname{Re}\!\left(i(\log F)'(x+iy)\right),\\
 \partial_y^2\log|F(x+iy)|
 &=-\operatorname{Re}\!\left((\log F)''(x+iy)\right),
 \qquad
 \left.\partial_y\log|F(x+iy)|\right|_{y=0}=0.
\end{aligned}
\]
Consequently, if \(|\partial_z^2\log F|\le\mathscr A_j\), then
\begin{equation}\label{eq:moving-pair-regular-factor-vertical-bound}
 |\partial_y\log|F(x+iy)||
 \le\mathscr A_j|y|.
\end{equation}
Apply \eqref{eq:moving-pair-regular-factor-vertical-bound} atomwise with
\[
 F=
 \begin{cases}
  \mathfrak a_{j,a},&\mathcal B=\mathcal B_{j,a}\in\mathcal P_j^{\rm dist},\\
  H_j,&\mathcal B\in\mathcal P_j^{\rm nd},
 \end{cases}
\]
using \eqref{eq:moving-pair-logarithmic-bounds} and, in the second case,
the zero-free bound \eqref{eq:moving-divisor-safe-H}.
For \(\mathcal B_{j,a}=(a_{j,a},b_{j,a})\) and every root
\(c=c_{j,a,\ell}\),
\[
 \int_{a_{j,a}}^{b_{j,a}}
 \frac{y}{(x-c)^2+y^2}\,dx
 =\arctan\frac{b_{j,a}-c}{y}
  +\arctan\frac{c-a_{j,a}}{y},
\]
and \eqref{eq:moving-divisor-root-clearance} implies
\[
 0\le \pi-
 \int_{a_{j,a}}^{b_{j,a}}
 \frac{y}{(x-c)^2+y^2}\,dx
 \le \frac{y}{b_{j,a}-c}+\frac{y}{c-a_{j,a}}
 \le\frac{2y}{d_j}.
\]
Consequently, \eqref{eq:moving-divisor-total-multiplicity} gives
\[
 \left|
 \int_{K_j}\partial_y\log|H_j(x+iy)|\,dx-\pi P
 \right|
 \le \mathscr A_j|K_j|y+2P\frac{y}{d_j}.
\]
Integrating in \(y\) between the two heights and dividing by
\(\Delta y_j\) gives the explicit averaged estimate
\[
\begin{aligned}
\biggl|\frac1{\Delta y_j}\int_{K_j}
 &\bigl(\log|H_j(x+iy_j^+)|
       -\log|H_j(x+iy_j^-)|\bigr)\,dx-\pi P\biggr|\\
&\le \mathscr A_j|K_j|y_j^++2P\frac{y_j^+}{d_j}.
\end{aligned}
\]
Combining these estimates with
\eqref{eq:moving-pair-induced-LE-expansion} gives
\[
\begin{aligned}
 \left|\frac{|K_j|\,[L_{K_j}(E,y_j^+)-L_{K_j}(E,y_j^-)]}
 {2\pi\Delta y_j}-\frac P2\right|
 &\le \frac{\mathcal V_j+\mathscr A_j|K_j|y_j^+
                   +2Py_j^+/d_j}{2\pi}\\
 &\quad+\frac{C|K_j|}{\Delta y_j}
       (\mathfrak r_j^++\mathfrak r_j^-).
\end{aligned}
\]
This is \eqref{eq:moving-pair-Poisson-secant}.
\end{proof}

\subsection{The acceleration criterion}

Fix a total multiplicity \(P_{\rm loc}\in\mathbb N\), \(P_{\rm loc}\ge1\).
Let \(\rho>0\), \(V\in C^\omega_\rho(\mathbb T,\mathbb R)\),
\(\alpha\notin\mathbb Q\), and \(E\in\mathbb R\).  Suppose there are
an integer \(j_0\ge1\), scale-dependent return sets \(K_j\), positive scale parameters
\(m_j\to\infty\), and first-return zero-divisor data
\(\{\mathscr D_j\}_{j\ge j_0}\) with persistent total multiplicity
\(P_{\rm loc}\) and parameters \((h_j,d_j,\kappa_j)\), in the sense of
Definition~\ref{def:moving-divisors-persistent-total}.  Assume there exist
\(\chi>0\) and sequences
\[
 D_j\ge4,\qquad \mathscr A_j\ge2,
\]
such that
\begin{equation}\label{eq:moving-criterion-frame-conditioning}
 \sup_{|y|\le h_j}\operatorname*{ess\,sup}_{x\in K_j}
 \bigl(\|U_j(x,y)\|+\|U_j(x,y)^{-1}\|
 +\|V_j(x,y)\|+\|V_j(x,y)^{-1}\|\bigr)
 \le D_j,
\end{equation}
\begin{equation}\label{eq:moving-criterion-subexponential-data}
 \varepsilon_j
 :=\frac{\log D_j+\log\mathscr A_j+\log^+\kappa_j^{-1}
 +\log^+d_j^{-1}+\log^+h_j^{-1}}{m_j}
 \longrightarrow0,
\end{equation}
\begin{equation}\label{eq:moving-criterion-expansion}
 \operatorname*{ess\,inf}_{x\in K_j}|\Lambda_j(x,y)|
 \ge e^{\chi m_j},\qquad |y|\le h_j,
\end{equation}
and that the three bounds in
\eqref{eq:moving-pair-logarithmic-bounds} hold on their respective
distinguished and nondistinguished atoms.  Assume that there are constants
\(\beta_\pm\) satisfying
\begin{equation}\label{eq:moving-criterion-beta-budget}
 0<\beta_+<\beta_-,
 \qquad 2\chi-2P_{\rm loc}\beta_- -\beta_+>0.
\end{equation}
Define
\[
 y_j^+:=e^{-\beta_+m_j},\qquad y_j^-:=e^{-\beta_-m_j}.
\]
For all sufficiently large \(j\), one has \(y_j^+\le h_j\);
increase \(j_0\) accordingly.  Assume in addition that
\begin{equation}\label{eq:moving-criterion-modulus-smallness}
 \mathcal V_j
 :=\sup_{y_j^-\le y\le y_j^+}
 \int_{K_j}|\partial_y\log|\Lambda_j(x,y)||\,dx
 \longrightarrow0.
\end{equation}

\begin{theorem}
\label{thm:persistent-divisor-acceleration}
With the return sets and scales fixed above, let
\(\{\mathscr D_j\}_{j\ge j_0}\) have persistent total multiplicity
\(P_{\rm loc}\) as in
Definition~\ref{def:moving-divisors-persistent-total}.  If
\eqref{eq:moving-criterion-frame-conditioning},
\eqref{eq:moving-criterion-subexponential-data},
\eqref{eq:moving-criterion-expansion},
\eqref{eq:moving-pair-logarithmic-bounds},
\eqref{eq:moving-criterion-beta-budget}, and
\eqref{eq:moving-criterion-modulus-smallness} hold, then
\begin{equation}\label{eq:persistent-divisor-global-acceleration}
 \omega(\alpha,E;V)=\frac12P_{\rm loc}.
\end{equation}
\end{theorem}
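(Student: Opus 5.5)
The plan is to combine the Poisson secant estimate of Lemma~\ref{lem:moving-base-complete-pair-Poisson} with the Kac identity of Proposition~\ref{prop:induced-global-LE-identity}, and then use convexity of \(y\mapsto L(\alpha,E,y;V)\) to turn secant slopes at shrinking heights into the right derivative at \(0\). By Proposition~\ref{prop:induced-global-LE-identity} applied with \(\mathcal J=K_j\),
\[
 \frac{|K_j|}{2\pi}\,\frac{L_{K_j}(E,y_j^+)-L_{K_j}(E,y_j^-)}{\Delta y_j}
 =\frac1{2\pi}\,\frac{L(\alpha,E,y_j^+;V)-L(\alpha,E,y_j^-;V)}{y_j^+-y_j^-}.
\]
So it suffices to show that the right-hand side of \eqref{eq:moving-pair-Poisson-secant} tends to zero as \(j\to\infty\). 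Then I will use that, for a convex even function, the secant slope over \([y_j^-,y_j^+]\) lies between \(\partial_y^+L(\cdot,y_j^-)\) and \(\partial_y^-L(\cdot,y_j^+)\). Both of these tend to \(\partial_y^+L(\alpha,E,0;V)\) as \(y_j^\pm\downarrow0\), because the one-sided derivatives of a convex function are monotone and right-continuous from the right at \(0\). This gives \(\omega=P_{\rm loc}/2\).

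First I will verify the hypotheses of Lemma~\ref{lem:moving-base-complete-pair-Poisson} for large \(j\).
\begin{enumerate}
\item Frame conditioning \eqref{eq:moving-pair-frame-conditioning} follows from \eqref{eq:moving-criterion-frame-conditioning}, since \(y_j^\pm\le h_j\).
\item The logarithmic bounds are assumed directly.
\item The expansion bound \eqref{eq:moving-criterion-expansion} gives \(\Lambda_{j,*}^\sigma\ge e^{\chi m_j}>1\).
\end{enumerate}
For the smallness condition, \eqref{eq:moving-criterion-subexponential-data} gives \(\mathscr A_j,D_j,\kappa_j^{-1},d_j^{-1}\le e^{\varepsilon_j m_j}\). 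Since \(P_{j,\max}\le P_{\rm loc}\) and \(y_j^\sigma\le1\), we get \(\eta_j^\sigma\ge e^{-\varepsilon_jm_j}e^{-\beta_-P_{\rm loc}m_j}\). Hence
\[
 \mathfrak r_j^\sigma\le \exp\bigl(-m_j(2\chi-2P_{\rm loc}\beta_- -26\varepsilon_j)\bigr),
\]
which falls below \(\delta_{\rm ro}\) for large \(j\).

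Next I will bound each error term in \eqref{eq:moving-pair-Poisson-secant}.
\begin{enumerate}
\item \(\mathcal V_j\to0\) by \eqref{eq:moving-criterion-modulus-smallness}.
\item \(\mathscr A_j|K_j|y_j^+\le e^{(\varepsilon_j-\beta_+)m_j}\to0\), using \(|K_j|\le1\).
\item \(Py_j^+/d_j\le P e^{(\varepsilon_j-\beta_+)m_j}\to0\).
\item Since \(\beta_-> \beta_+\), we have \(\Delta y_j\ge \tfrac12 e^{-\beta_+m_j}\) for large \(j\). So \(|K_j|(\mathfrak r_j^++\mathfrak r_j^-)/\Delta y_j\le 4\exp\bigl(-m_j(2\chi-2P_{\rm loc}\beta_--\beta_+-26\varepsilon_j)\bigr)\), which tends to zero by \eqref{eq:moving-criterion-beta-budget}.
\end{enumerate}
Hence the secant slopes of \(L/(2\pi)\) over \([y_j^-,y_j^+]\) converge to \(P_{\rm loc}/2\).

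The main obstacle is the last step. Convexity only places the secant between \(\partial^+_yL(y_j^-)\) and \(\partial^-_yL(y_j^+)\), and I must make sure both are squeezed to \(\partial_y^+L(0)\). This holds because \(y\mapsto\partial_y^\pm L\) is nondecreasing and \(\lim_{y\downarrow0}\partial_y^\pm L(y)=\partial_y^+L(0)\) for a convex function finite near \(0\). Concretely, \(\partial_y^+L(0)\le \text{secant}_j\le \partial_y^-L(y_j^+)\to\partial^+_yL(0)\). Finiteness and convexity of \(L\) on \(|y|<\rho\) were recorded in the introduction.

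There is also a bookkeeping point: the error estimate depends on \(\mathfrak r_j^-\) at the smaller height, where \(\eta_j^-\) is smallest. This is exactly why the budget \eqref{eq:moving-criterion-beta-budget} pairs \(\beta_-\) with \(P_{\rm loc}\) and \(\beta_+\) with the division by \(\Delta y_j\).
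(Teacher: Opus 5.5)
Your proposal is correct and follows the paper's proof essentially step by step. It uses the same bounds $D_j,\mathscr A_j,\kappa_j^{-1},d_j^{-1}\le e^{\varepsilon_jm_j}$, $\eta_j^\sigma\ge e^{-(P_{\rm loc}\beta_-+\varepsilon_j)m_j}$, $\Delta y_j\ge\frac12e^{-\beta_+m_j}$ and $\mathfrak r_j^\pm\le e^{-(2\chi-2P_{\rm loc}\beta_--26\varepsilon_j)m_j}$, then Lemma~\ref{lem:moving-base-complete-pair-Poisson}, the inducing identity of Proposition~\ref{prop:induced-global-LE-identity}, and the same convexity squeeze $f'_+(0)\le(\text{secant})\le f'_-(y_j^+)\to f'_+(0)$.
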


\begin{proof}
Term by term, \eqref{eq:moving-criterion-subexponential-data} gives
\[
 D_j,\mathscr A_j,\kappa_j^{-1},d_j^{-1},h_j^{-1}
 \le e^{\varepsilon_jm_j}.
\]
Hence \(y_j^+\le h_j\) for all sufficiently large \(j\).
Use \(\Lambda_{j,*}^\sigma,\eta_j^\sigma,\mathfrak r_j^\sigma\), and
\(\Delta y_j\) from the setup of
Lemma~\ref{lem:moving-base-complete-pair-Poisson}, with \(P=P_{\rm loc}\).
Then
\[
 \Delta y_j
 =e^{-\beta_+m_j}
  \left(1-e^{-(\beta_--\beta_+)m_j}\right)
 \ge\frac12e^{-\beta_+m_j}
\]
for all sufficiently large \(j\).  Since
\(P_{j,\max}\le P_{\rm loc}\) by
\eqref{eq:moving-divisor-total-multiplicity},
\[
 \eta_j^-
 \ge \exp\{-(P_{\rm loc}\beta_-+\varepsilon_j)m_j\}.
\]
Since \(y_j^+>y_j^-\),
\[
 \eta_j^+\ge\eta_j^-
 \ge e^{-(P_{\rm loc}\beta_-+\varepsilon_j)m_j}.
\]
By \eqref{eq:moving-criterion-expansion},
\[
 \Lambda_{j,*}^\pm\ge e^{\chi m_j}>1.
\]
These bounds give
\[
 \mathfrak r_j^-+\mathfrak r_j^+
 \le2\exp\left\{-\left(
 2\chi-2P_{\rm loc}\beta_--26\varepsilon_j
 \right)m_j\right\}.
\]
By \eqref{eq:moving-criterion-beta-budget}, this tends to zero and
eventually \(\mathfrak r_j^\pm\le\delta_{\rm ro}\).
We may therefore apply
Lemma~\ref{lem:moving-base-complete-pair-Poisson} with \(P=P_{\rm loc}\).
Since \(|K_j|\le1\),
\begin{equation}\label{eq:moving-criterion-secant-error}
 \frac{|K_j|}{\Delta y_j}(\mathfrak r_j^-+\mathfrak r_j^+)
 \le4\exp\left\{-\left(
 2\chi-2P_{\rm loc}\beta_--\beta_+-26\varepsilon_j
 \right)m_j\right\}
 \longrightarrow0.
\end{equation}
The other errors in
Lemma~\ref{lem:moving-base-complete-pair-Poisson} satisfy
\[
 \mathscr A_j|K_j|y_j^+
 \le e^{-(\beta_+-\varepsilon_j)m_j}\longrightarrow0,
 \qquad
 \frac{y_j^+}{d_j}
 \le e^{-(\beta_+-\varepsilon_j)m_j}\longrightarrow0,
\]
and \(\mathcal V_j\to0\) by
\eqref{eq:moving-criterion-modulus-smallness}.  Thus
\eqref{eq:moving-criterion-secant-error} and the preceding estimates make
the right-hand side of \eqref{eq:moving-pair-Poisson-secant} tend to zero.
Therefore
Lemma~\ref{lem:moving-base-complete-pair-Poisson} and the exact inducing
identity \eqref{eq:induced-global-LE-identity} give
\begin{equation}\label{eq:moving-criterion-global-secant-limit}
\begin{aligned}
 \frac{L(\alpha,E,y_j^+;V)-L(\alpha,E,y_j^-;V)}
      {2\pi\Delta y_j}
 &=\frac{|K_j|}{2\pi}
   \frac{L_{K_j}(E,y_j^+)-L_{K_j}(E,y_j^-)}{\Delta y_j}\\
 &\longrightarrow\frac12P_{\rm loc}.
\end{aligned}
\end{equation}
Convexity of \(f(y):=L(\alpha,E,y;V)\) gives
\[
 f'_+(0)
 \le\frac{f(y_j^+)-f(y_j^-)}{y_j^+-y_j^-}
 \le f'_-(y_j^+)\longrightarrow f'_+(0).
\]
Thus \eqref{eq:moving-criterion-global-secant-limit} yields
\[
 \omega(\alpha,E;V)=\frac{f'_+(0)}{2\pi}=\frac12P_{\rm loc}.
\]

\end{proof}

\subsection{Proof of the direct asymptotic Lyapunov exponent approximation}

\begin{proof}[Proof of Lemma~\ref{lem:rank-one-asymptotic-LE-approximation}]
All constants below are absolute and uniform under
the hypotheses of the lemma.  Replacing \(\Omega\) by the intersection
of all forward and backward \(S\)-iterates of the full-measure set on
which the normal form and all bounds hold, we may assume that they hold
simultaneously along every finite orbit.  Since the column norms of
\(U\) and \(V\) are at most \(D\),
\[
 |H(x)|=|v_1(x)^{\mathsf T}u_1(x)|\le D^2,
 \qquad \eta_*\le D^2.
\]
Put
\[
 W(x):=V(x)^{\mathsf T}U(x)
 =\begin{pmatrix}H(x)&-\Theta(x)\\
                  \Theta(x)&H(x)\end{pmatrix}
 \in{\rm SO}(2,\mathbb C),
 \qquad
 \kappa_*:=\min\{1,\eta_*\}.
\]
The frame bound gives \(|\Theta|\le D^2\).  For fixed \(n\ge1\) and
almost every \(x\), write
\[
 \Lambda_k:=\Lambda(S^kx),\qquad
 \epsilon_k:=\Lambda_k^{-2},\qquad
 D_k:=\operatorname{diag}(1,\epsilon_k),\qquad
 W_k:=W(S^kx).
\]
The definition \eqref{eq:rank-one-small-parameter} of \(\mathfrak r\),
the inequalities \(\eta_*\le D^2\), \(D\ge4\), and
\(\mathfrak r\le\delta_{\rm ro}\le10^{-4}\) imply
\[
 \Lambda_*^{-2}\kappa_*^{-2}\le
 \begin{cases}
  D^{-24}\mathfrak r,&\eta_*\le1,\\
  D^{-20}\mathfrak r,&\eta_*>1,
 \end{cases}
 \le\mathfrak r\le10^{-4}.
\]
Thus \eqref{B:eq:finite-two-state-local-smallness} holds. For fixed
\(x\), regard these scalar data as constant functions on \(\Omega=\mathbb C\).  Take
\(d=1\), \(L=n\), \(m=0\), \(K_0=D^2\),
\(\epsilon=\Lambda_*^{-2}\), and \(\kappa=\kappa_*\).  Then
Lemma~\ref{B:lem:finite-two-state-growth} applies to
\[
 \mathscr M_n(x):=D_{n-1}W_{n-1}\cdots D_1W_1D_0.
\]
Direct multiplication of \eqref{eq:rank-one-exact-normal-form} gives
\[
 B_n(x)=U(S^nx)
 \left(\prod_{k=0}^{n-1}\Lambda_k\right)
 \mathscr M_n(x)V(x)^{\mathsf T}.
\]
By \eqref{B:eq:finite-two-state-uniform-log-product},
\[
 \left|\log\|\mathscr M_n\|-\sum_{k=1}^{n-1}\log|H(S^kx)|\right|
 \le10n\Lambda_*^{-2}\kappa_*^{-2}\le10n\mathfrak r.
\]
For invertible \(P,Q\),
\[
 |\log\|PNQ\|-\log\|N\||
 \le\max\{\log(\|P\|\|Q\|),
            \log(\|P^{-1}\|\|Q^{-1}\|)\},
\]
so the two exterior frames contribute at most \(2\log D\).  Consequently,
\begin{equation}\label{eq:rank-one-full-finite-product-estimate}
\begin{aligned}
 &\left|
 \log\|B_n(x)\|
 -\sum_{k=0}^{n-1}\log|\Lambda(S^kx)|
 -\sum_{k=1}^{n-1}\log|H(S^kx)|
 \right|\\
 &\hspace{55mm}\le Cn\mathfrak r+C\log D.
\end{aligned}
\end{equation}
Integrating \eqref{eq:rank-one-full-finite-product-estimate}, dividing by
\(n\), and using \(S\)-invariance gives
\[
 \left|L_n-
 \int_\Omega\log|\Lambda|\,d\mu
 -\left(1-\frac1n\right)\int_\Omega\log|H|\,d\mu
 \right|
 \le C\mathfrak r+\frac{C\log D}{n}.
\]
Here \(\log|H|\in L^1(\mu)\) because
\(\eta_*\le|H|\le D^2\). Letting \(n\to\infty\) proves
\eqref{eq:rank-one-asymptotic-LE-main-term}.
\end{proof}
\section{Reduction of the main theorem}
\label{sec:black-box-main-reduction}

\noindent We reduce Theorem~\ref{theorem.complete-main} to the construction
of first-return zero divisors with total multiplicity \(2s\) on
positive-measure spectral sets.  Proposition~\ref{prop:perturbative-hierarchy-package}
states the precise data required by the acceleration criterion, and its
application below gives the main theorem.  The construction of these data
occupies Sections~\ref{sec:AMO-upper-edge-seed}--\ref{sec:real-CF-propagation}.

Retain the fixed parameters of Theorem~\ref{theorem.complete-main} and
the family \(V_{\varepsilon,\delta}\) in
\eqref{eq:perturbative-potential-family}.
\begin{proposition}
\label{prop:perturbative-hierarchy-package}
There exists an open prevalent set
\(\mathcal P_{\mathrm{strat}}\subset
C^\omega_\rho(\mathbb T,\mathbb R)\) such that every
\(\delta\in\mathcal P_{\mathrm{strat}}\) admits
\(\varepsilon_{\mathrm{hier}}(\delta)>0\) with the following property.
For every
\[
 0<\varepsilon<\varepsilon_{\mathrm{hier}}(\delta),
 \qquad 1\le s\le N,
\]
there is a compact set
\(\mathcal Y_s^{\varepsilon,\delta}\subset\mathbb R\) with the following properties.
\begin{enumerate}
\item One has
\begin{equation}\label{eq:black-box-positive-spectral-set}
 \operatorname{Leb}(\mathcal Y_s^{\varepsilon,\delta})>0,
 \qquad
 \mathcal Y_s^{\varepsilon,\delta}
 \subset\Sigma(V_{\varepsilon,\delta},\alpha).
\end{equation}
\item For every \(E\in\mathcal Y_s^{\varepsilon,\delta}\), there are
\(j_0=j_0(E,\varepsilon,\delta,s)\ge1\), \(\chi>0\), and data
\[
 (K_j,m_j,\mathscr D_j,D_j,\mathscr A_j)_{j\ge j_0}
\]
satisfying all hypotheses of
Theorem~\ref{thm:persistent-divisor-acceleration} with
\(V=V_{\varepsilon,\delta}\), the fixed frequency \(\alpha\), and the
present energy \(E\).
Their level-\(j\) first-return zero-divisor data \(\mathscr D_j\),
\(j\ge j_0\), have persistent total multiplicity \(P_{\rm loc}\) in the
sense of Definition~\ref{def:moving-divisors-persistent-total}, where
\begin{equation}\label{eq:black-box-total-multiplicity}
 P_{\rm loc}=2s,
\end{equation}
and at every level there are exactly \(2s\) distinguished atoms, each
containing one simple real zero of its overlap coefficient; there are no other
local real zeros.  The height exponents in
\eqref{eq:moving-criterion-beta-budget} may be chosen as
\[
 \beta_+=\frac{\chi}{16s},\qquad
 \beta_-=\frac{\chi}{8s}.
\]
\end{enumerate}
\end{proposition}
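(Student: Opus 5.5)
The proof of Proposition~\ref{prop:perturbative-hierarchy-package} has three stages. First, unperturbed edge data from the almost Mathieu operator at the effective frequency \(\widehat\alpha\). Second, a multiwell seed at an initial scale with exactly \(2s\) simple real zeros of the overlap coefficient. Third, a return induction with parameter exclusion that keeps these zeros on a positive-measure set of energies.

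\emph{Stage 1 (upper-edge data).} We apply the upper-edge initial-scale proposition to \(H_{\lambda,\widehat\alpha,x}\). This is legitimate by \eqref{eq:effective-frequency-Diophantine}, with constant \(\kappa N^{-\tau}\). From a positive analytic dual state at \(E_+\) we extract three things: a finite-volume spectral gap, positive Schur curvature of the effective one-site function near its maximum, and exponential decay \(e^{-\gamma m}\) of Green's functions away from the resonant window. By \eqref{eq:N-fold-AMO-operator-identity}, the \(N\) preimages \(p_a=a/N\) of the maximizing phase give \(N\) identical wells for the pulled-back potential. Near \(E_+\), the first-return cocycle on a small return set \(K_{j_0}\) around these wells has overlap coefficient \(H_{j_0}\) vanishing quadratically at each \(p_a\).

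\emph{Stage 2 (the prevalent set and the seed).} Adding \(\varepsilon\delta\) perturbs the well heights through their Schur variations. To first order these are linear functionals \(\ell_a(\delta)\), obtained by pairing \(\delta\) with the squared dual state translated to \(p_a\). We let \(\mathcal P_{\rm strat}\) be the set of \(\delta\) for which the values \(\ell_a(\delta)\) are pairwise distinct and a second-order nondegeneracy holds. This set is open. It is prevalent: take \(\mu\) to be Lebesgue measure on a finite-dimensional box spanned by trigonometric polynomials on which \((\ell_a)\) is surjective; the complement is then contained in a finite union of hyperplane and quadric preimages, which are \(\mu\)-null for every translate. Order the maxima, relabelled \(p_1,\dots,p_N\), so that \(\ell_1>\cdots>\ell_N\). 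For \(E\) slightly below the \(s\)-th perturbed maximum and above the \((s+1)\)-st, the top \(s\) wells each cut the energy level. Each such well gives a pair of simple real zeros of \(H_{j_0}\), separated by \(\asymp\sqrt{\varepsilon}\) times a curvature factor. The remaining wells give \(|H_{j_0}|\ge\kappa_{j_0}\). The result is \(2s\) distinguished atoms with clearance \(d_{j_0}\). Lemma~\ref{lem:real-cluster-complex-divisor} then supplies the complex quotient bounds \eqref{eq:moving-pair-logarithmic-bounds}, and Corollary~\ref{B:cor:finite-two-state-frames} supplies the holomorphic algebraic SVD with conditioning \(D_{j_0}\).

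\emph{Stage 3 (induction and exclusion; the main obstacle).} Following \cite{Young1997,WangZhang2015}, we pass from scale \(m_j\) to \(m_{j+1}\approx m_j^{C}\), taking \(K_{j+1}\) to be subintervals of \(K_j\) around the \(2s\) zeros. The return matrices are products of level-\(j\) blocks. Lemma~\ref{B:lem:differentiated-finite-two-state-product} gives \(\Lambda_{j+1}\approx\prod\Lambda_j\cdot\prod H_j\). It also shows that the new overlap coefficient \(H_{j+1}\) is a \(C^2\)-small perturbation of a translate of \(H_j\), so that Corollary~\ref{svd:cor:fixed-gap-oriented-axes} transports the simple zeros. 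The hard step is the exclusion. At each scale, energies are removed for which some return orbit comes closer than \(e^{-\beta m_j}\) to the critical set; the removed measure must be summable, which is achieved by transversality of the zero positions in \(E\) from the Schur curvature, and this keeps \(\operatorname{Leb}(\mathcal Y_s)>0\). On the surviving energies one checks three bounds: the growth \(\Lambda\ge e^{\chi m_j}\) (with \(\chi\) close to \(\gamma\)), the parameter growth \eqref{eq:moving-criterion-subexponential-data}, and \(\mathcal V_j\to0\), the last via \eqref{svd:eq:fixed-gap-log-singular-derivatives} at exponentially small heights \(y\le e^{-\beta_+m_j}\). With \(\beta_\pm\) as stated, \eqref{eq:moving-criterion-beta-budget} reads \(2\chi-\chi/4-\chi/(16s)>0\), which holds. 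Finally, \(\mathcal Y_s\subset\Sigma\): positive acceleration at these energies forces \(L>0\) with non-uniform hyperbolicity, so they are not in the resolvent set.
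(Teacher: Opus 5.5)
Your architecture is the paper's: upper-edge AMO data at $\widehat\alpha$, Schur first variations separating the $N$ fold heights on a kernel-complement prevalent set, a seed with $2s$ simple zeros in a window between consecutive folds, Young/Wang--Zhang reinduction with exclusion, holomorphic divisors, and then Theorem~\ref{thm:persistent-divisor-acceleration}. There is, however, a genuine gap in the step you single out as hard: it rests on the wrong transversality, and as you describe it the measure bound does not follow.

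The energies to be removed form sets $\{E:\|c_a(E)-c_b(E)+m\alpha-\theta\|_{\mathbb T}<r\}$, where $a\ne b$ are two of the $2s$ zero branches and $\theta\in\{0,\pm\widehat w_j\}$. Returns of a child component to itself are handled by the Diophantine condition and the choice of radius, independently of $E$. What you need is therefore a lower bound on $|\partial_E(c_a-c_b)|$ for every pair, not transversality of each zero separately. Curvature only gives $|\partial_E c_{a,\pm}|\asymp D_a^{-1/2}$, with $D_a$ the depth below the $a$-th fold. For same-label zeros in two wells of equal depth these velocities agree to leading order, so $c_a-c_b$ is essentially constant in $E$ and no energy exclusion can make its resonance sets small.

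The paper obtains the pairwise bound (Lemma~\ref{fr:lem:multiwell-pair-velocity-transversality}, i.e.\ Definition~\ref{fr:def:Young-seed-package}\textup{(\ref{item:seed-velocities})}) from the separated fold heights. The quantity $U=|\partial_E c|^{-2}$ is a monotone function of depth with $U'\asymp1$, so $|T_a-T_b|\gtrsim\varepsilon\,\sigma_{\rm Sch}(\delta)$ yields $|\partial_E(c_a-c_b)|\gtrsim|T_a-T_b|\rho_Q^{-3/2}$. Opposite labels move in opposite directions. Thus the Schur separation defining $\mathcal P_{\rm strat}$ is used a second time, precisely in the exclusion. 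This relative bound must then be carried to all levels, which the $C^{r_\dagger-1}$ contraction of the branches does in \eqref{fr:eq:internal-relative-transversality}. The preimage estimates also must not lose a factor equal to the number of parameter cells, which can grow like $\exp\{C(\log q_j)^2\}$; the paper avoids this with the buffered parameter tree, Lemma~\ref{fr:lem:buffered-parameter-tree}.

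A second missing ingredient is choosing the initial scale as a function of $\varepsilon$. The seed estimates need $\varepsilon\le n^{-D_3}$, and the fold gap is $\asymp\varepsilon$ only up to the remainder in Lemma~\ref{lem:differentiated-tip-response}. Conversely, the window has length $\asymp Q^{-B_9}$ and later exclusions use polynomial thresholds $q_j^{-B_{10}},q_j^{-B_{11}}$. So summability of the removed measures is not enough: they must be smaller than the window, which requires $\varepsilon$ bounded below by a power of $Q^{-1}$. The paper takes $Q$ to be the largest continued-fraction denominator below $\varepsilon^{-1/\varkappa_{\rm per}}$, giving \eqref{fr:eq:Young-epsilon-Q-comparison}. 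The thresholds also determine $d_j$ and $\kappa_j$ in \eqref{eq:moving-criterion-subexponential-data}, so they must be $e^{-o(m_j)}$ at the level where they are used. If your $e^{-\beta m_j}$ with fixed $\beta$ is what bounds $d_j$ or $\kappa_j$, that hypothesis fails; $q_j^{-B_{11}}$ with $m_j>q_{j+1}$ does not.

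Smaller points:
\begin{itemize}
\item The second-order nondegeneracy is unnecessary, since the first-order Schur term dominates the $O(n^{2D_5}\varepsilon^2)$ remainder.
\item $\mathcal V_j\to0$ needs bounds on $\partial_z^2\log\Lambda_j$ on complex strips of width $e^{-o(m_j)}$. These come from the holomorphic reinduction (Lemma~\ref{fr:lem:graded-holomorphic-jet-propagation} with \eqref{fr:eq:CF-holomorphic-strip-recursion}), not from the real estimate \eqref{svd:eq:fixed-gap-log-singular-derivatives}.
\item The budget is $2\chi-\chi/2-\chi/(16s)$, not $2\chi-\chi/4-\chi/(16s)$; it is still positive.
\end{itemize}

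Your spectral inclusion is valid, but not for the reason you give, since positive acceleration does not force $L>0$. The correct argument is that a uniformly hyperbolic cocycle is regular (Avila), and evenness of $y\mapsto L(\alpha,E,y;V)$ then forces $\omega=0$ off the spectrum. This is a legitimate shortcut once $\omega=s$ is known, and it differs from the paper's Proposition~\ref{fr:prop:Young-regular-is-spectral}, which contradicts uniform hyperbolicity directly with the persistent zeros of long-block overlaps.
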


The proof of Proposition~\ref{prop:perturbative-hierarchy-package} is
deferred to the end of Section~\ref{sec:real-CF-propagation}.

\begin{proof}[Proof of Theorem~\ref{theorem.complete-main}]
Let \(\mathcal P_{\mathrm{strat}}\) and
\(\varepsilon_{\mathrm{hier}}(\delta)\) be given by
Proposition~\ref{prop:perturbative-hierarchy-package}.  Fix
\(\delta\in\mathcal P_{\mathrm{strat}}\),
put \(\varepsilon_0(\delta):=\varepsilon_{\mathrm{hier}}(\delta)\), and
let \(0<\varepsilon<\varepsilon_0(\delta)\) and
\(1\le s\le N\). Theorem~\ref{thm:persistent-divisor-acceleration} and
\eqref{eq:black-box-total-multiplicity} give
\[
\begin{aligned}
 E\in\mathcal Y_s^{\varepsilon,\delta}
 &\Longrightarrow E\in\Sigma(V_{\varepsilon,\delta},\alpha),
 \quad\omega(\alpha,E;V_{\varepsilon,\delta})=\tfrac12(2s)=s,\\
 \operatorname{Leb}\{E\in\Sigma(V_{\varepsilon,\delta},\alpha):
 \omega(\alpha,E;V_{\varepsilon,\delta})=s\}
 &\ge\operatorname{Leb}(\mathcal Y_s^{\varepsilon,\delta})>0.
\end{aligned}
\]
The same proposition supplies the openness and prevalence of
\(\mathcal P_{\mathrm{strat}}\).
\end{proof}

\section{Upper-edge initial-scale analysis for the almost Mathieu operator}
\label{sec:AMO-upper-edge-seed}

\noindent We collect the almost Mathieu upper-edge estimates needed to
start the perturbative construction.  Beginning with the state at the
spectral edge, we state expansion and derivative bounds for the return
matrices, a quadratic normal form for the central overlap, and separation
estimates outside the central domain.  We then specify the uniform
initial-scale data used in the subsequent induction.  The upper-edge
estimates are proved in Appendix~\ref{app:AMO-upper-edge}.

Throughout this section, fix
\[
 \lambda>1,\qquad \beta\in\DC(\kappa_\beta,\tau),\qquad
 \kappa_\beta>0,\quad\tau>1,\quad C_I>0.
\]
In the AMO estimates below, \(c,C\) may depend only on
\((\lambda,\beta,\kappa_\beta,\tau,C_I)\).
Put
\[
 E_+(\beta):=\sup\Sigma(2\lambda\cos(2\pi\,\cdot),\beta),
 \qquad \gamma:=\log\lambda.
\]
All returns below are taken with respect to \(T_\beta\).
For integers \(n\ge2\), define
\begin{equation}\label{fr:eq:reciprocal-return-window}
 N_n:=\lceil n^{C_I}\rceil,\qquad
 I_n:=[-N_n^{-1},N_n^{-1}]\subset\mathbb T.
\end{equation}
The matrices \(B_{I_n}^\pm\) are those of
Subsection~\ref{subsec:induced-exponents} for
\(V(x)=2\lambda\cos(2\pi x)\) and frequency \(\beta\).
For exponents \(B_1>C_I\) and \(B_2>0\), to be chosen below, write
\begin{equation}\label{fr:eq:central-polynomial-bidomain}
 \widetilde I_n:=(-n^{-B_1},n^{-B_1}),\qquad
 \mathcal E_n:=[E_+(\beta)-n^{-B_2},E_+(\beta)+n^{-B_2}].
\end{equation}
For \(\mathcal A\in\mathcal P_{I_n}\) and
\(\zeta=(\zeta_x,\zeta_E)\in\mathbb N_0^2\), use the notation
\[
 r_+:=p_{\mathcal A},\qquad r_-:=q_{\mathcal A},\qquad
 \partial^\zeta_{x,E}:=\partial_x^{\zeta_x}\partial_E^{\zeta_E},
 \quad |\zeta|:=\zeta_x+\zeta_E.
\]

For candidate exponents
\(0<\sigma_{\rm cmp}\le(2\tau)^{-1}\) and integers \(B\ge4\), write
\begin{equation}\label{B:eq:Rtau-definition}
 R_\tau(m):=m^{1-\sigma_{\rm cmp}}
             (\log(m+2))^B,\qquad m\ge1.
\end{equation}
The first proposition gives the state at the spectral edge.

\begin{proposition}
\label{fr:prop:AMO-edge-state}
There are exponents \(\sigma_{\rm cmp},B\) in the stated ranges, a constant
\(C\ge1\), and a real-valued \(u\in\ell^2(\mathbb Z)\) such that
\begin{equation}\label{eq:AMO-seed-edge-state}
 H_{\lambda,\beta,0}u=E_+(\beta)u,
 \qquad u_0=1,
 \qquad u_{-j}=u_j,
\end{equation}
and
\begin{equation}\label{eq:AMO-seed-edge-state-decay}
 |u_j|+\|u\|_{\ell^2(\{k\in\mathbb Z:k>j\})}
 \le C\exp\{-\gamma j+CR_\tau(j+2)\},
 \qquad j\ge0.
\end{equation}
The pair \((\sigma_{\rm cmp},B)\) may be chosen so that
Propositions~\ref{fr:prop:deterministic-AMO-seed}--\ref{fr:prop:AMO-return-nondegeneracy}
hold for every finite derivative order \(r_\dagger\ge3\).
\end{proposition}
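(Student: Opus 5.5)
We will obtain the edge state by combining Aubry duality with the known localization theory for the subcritical dual almost Mathieu operator at the top of the spectrum. The dual of \(H_{\lambda,\beta,x}\) is, after rescaling, the almost Mathieu operator with coupling \(\lambda^{-1}<1\). For Diophantine \(\beta\) and subcritical coupling, the dual cocycle at every energy is almost reducible, and by Avila's almost reducibility and the quantitative reducibility results for Diophantine frequencies (Eliasson-type KAM in the analytic category, with the Avila--Jitomirskaya refinement), it is analytically reducible to a constant parabolic or elliptic matrix at the edge energy. At \(E_+(\beta)\), the top of the spectrum, the fibered rotation number is \(0\) modulo \(\mathbb Z\). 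Reducibility then produces an analytic conjugacy \(B(\theta)\) with a parabolic or identity normal form. The first column of \(B\) gives an analytic Bloch-type solution \(\phi(\theta)\) of the dual equation.

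Its Fourier coefficients \(u_j:=\widehat\phi_j\) then give, by Aubry duality, a solution of \(H_{\lambda,\beta,0}u=E_+u\) at phase \(0\). The phase is \(0\) because the rotation number vanishes and no phase shift \(\theta\) appears.

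The next step is to fix the normalization and the symmetry. The AMO at phase \(0\) is invariant under \(j\mapsto -j\), since \(\cos\) is even. Realness of the cocycle lets us choose \(\phi\) real-symmetric, so \(u\) is real. We then replace \(u\) by its even part \(u_j+u_{-j}\), which is again an eigenfunction. It is nonzero because the edge state is positive. Positivity follows from the Perron--Frobenius argument at the top of the spectrum: the hopping \(+1\) is positive, so the top eigenvector can be taken nonnegative and hence has \(u_0\neq0\). After dividing by \(u_0\) we get \(u_0=1\) and \(u_{-j}=u_j\).

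The decay \eqref{eq:AMO-seed-edge-state-decay} is the main obstacle, because the rate must be exactly \(\gamma=\log\lambda\) up to a subexponential loss \(R_\tau(j)\). Analyticity of \(\phi\) on the dual strip controls the width: the dual cocycle is analytic in \(|\operatorname{Im}\theta|<\infty\), but the conjugacy is only controlled in a strip of width \(h<\frac{\gamma}{2\pi}\). The relevant quantity is the sharp rate \(\gamma\), which is where the dual Lyapunov exponent \(L=\log\lambda\) enters.

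We would first try the quantitative version of Avila--Jitomirskaya's almost localization. With Diophantine \(\beta\), the generalized eigenfunctions of the supercritical AMO decay like \(e^{-\gamma|j|}\) up to a loss governed by small divisors. The source of the loss is the resonances \(\|2\theta_0+k\beta\|\); here \(\theta_0=0\) and \(\|k\beta\|\ge\kappa_\beta|k|^{-\tau}\). A block-resolvent expansion (Green's function estimates on \([j/2,3j/2]\) with the large deviation theorem for Dirichlet determinants) then gives
\[
|u_j|\le C\exp\{-\gamma j+C j^{1-\sigma}(\log j)^B\},
\]
with \(\sigma\le(2\tau)^{-1}\) coming from the Diophantine exponent.

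The tail bound on \(\|u\|_{\ell^2(k>j)}\) follows by summing the pointwise bound, since \(\gamma\) dominates the loss and the sum of \(e^{-\gamma k+CR_\tau(k)}\) over \(k>j\) is comparable to its first term up to a constant. Finally, the compatibility claim, that \((\sigma_{\rm cmp},B)\) can be taken so the later propositions hold for all \(r_\dagger\ge3\), follows by fixing \(\sigma_{\rm cmp}\) small and \(B\) large once. Shrinking \(\sigma_{\rm cmp}\) and raising \(B\) only weakens \eqref{eq:AMO-seed-edge-state-decay}, so the choice can be made uniform.
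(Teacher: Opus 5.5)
\textbf{Where the proposal falls short.} Your existence argument matches the paper's Theorem~\ref{amo:thm:dual-edge-state-input} in outline: duality, reducibility at rotation number $0$, Perron--Frobenius positivity, evenness, and the normalization $u_0=1$. Two details are glossed over. The dual solution is $\mathcal B^{-1}v$ with $\mathcal C v=v$, not a column of the conjugacy. The degree and the $\operatorname{PSL}(2,\mathbb R)$ lift must also be removed before an $\operatorname{SL}(2,\mathbb R)$ invariant vector exists.

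The genuine gap is the decay estimate \eqref{eq:AMO-seed-edge-state-decay}, which is the real content of the proposition. Your sketch gives no working mechanism for it.
\begin{itemize}
\item Almost localization yields decay only at some small rate $\epsilon_0$, not at the sharp rate $\gamma$.
\item The Green's-function iteration on $[j/2,3j/2]$ would give $-\gamma j+O(R_\tau(j))$ if you already had $\log\det(E_+-H_0^{[a,b]})\ge\gamma(b-a+1)-CR_\tau(b-a+2)$ for those specific intervals, together with a uniform upper bound.
\item The large deviation theorem cannot supply that lower bound. It controls $\log|P_n(x,E)|$ only outside an $E$-dependent exceptional set of phases of small measure. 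Here the energy $E_+$ and the phases $k\beta$ are both fixed, so nothing keeps them out of that set.
\item Passing from large deviations to specified phases needs an extra argument, such as Jitomirskaya's Lagrange-interpolation uniformity argument. Doing that with a subexponential loss, rather than an $\epsilon$-loss, is exactly the step you have not addressed.
\end{itemize}

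\textbf{How the paper gets the determinant bound.} The paper avoids this by using that $E_+$ is the \emph{top} of the spectrum.
\begin{enumerate}
\item The positive dual state gives the identity $\langle v,(E_+-H_x)v\rangle=\lambda\int_{\mathbb T}a(t)|e^{2\pi ix}g(t+\beta)-g(t)|^2\,dt$, where $a=\psi\,\psi(\cdot+\beta)>0$ and $\mathcal Fv=\psi g$.
\item This yields the finite-volume gap $E_+-\max\sigma(H_x^J)\ge c\delta^2$ whenever the strict interior of $J$ has no resonance $\|x+j\beta\|<\delta$ (Lemma~\ref{amo:lem:single-resonance-gap}).
\item With this gap, $\log P_m(x,E_+)=\int\log(E_+-t)\,d\nu_{m,x}(t)$ holds exactly. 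Compare it with $m\int\log(E_+-t)\,dN=m\gamma$, which follows from the Thouless formula and $L(E_+)=\gamma$.
\item The comparison uses the uniform counting discrepancy $|K_m-mN|\le Cm^{1-1/\tau}\log m$, obtained from bounded variation of the counting function and Denjoy--Koksma.
\item The result is $|\log Q_j-\gamma j|\le CR_\tau(j+2)$ for $Q_j=\det(E_+-H_0^{[1,j]})$.
\item The Wronskian identity $Q_ju_j-Q_{j-1}u_{j+1}=1$ and the series $u_j=Q_{j-1}\sum_{\ell\ge j}(Q_{\ell-1}Q_\ell)^{-1}$ then give the pointwise and tail bounds directly, with no Green's-function iteration. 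The pair $(\sigma_{\rm cmp},B)=((2\tau)^{-1},4)$ is fixed independently of $r_\dagger$.
\end{enumerate}
The same gap and determinant estimates drive Propositions~\ref{fr:prop:deterministic-AMO-seed}--\ref{fr:prop:AMO-return-nondegeneracy}. Your monotonicity remark about the compatibility clause therefore presupposes the very estimate you have not proved.
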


Fix this pair \((\sigma_{\rm cmp},B)\) and an order \(r_\dagger\ge3\).
All numbered constants, thresholds, and exponents in the next three
propositions may depend only on
\[
 (\lambda,\beta,\kappa_\beta,\tau,C_I,r_\dagger).
\]
They are chosen simultaneously, so the return windows, energy interval,
and central domain are the same in all three statements.
We first control the return matrices and their singular data.

\begin{proposition}
\label{fr:prop:deterministic-AMO-seed}
There are \(n_0=n_0(C_I,r_\dagger)\ge2\), \(c>0\), \(C\ge1\), and exponents
\[
 B_1>C_I,\qquad
 B_2>0,\qquad D_1=D_1(C_I,r_\dagger)>0,
\]
with
\begin{equation}\label{fr:eq:companion-edge-gap-hierarchy}
 B_2>D_1+2,
\end{equation}
such that the following conclusions and those of
Propositions~\ref{fr:prop:AMO-central-normal-form}
and~\ref{fr:prop:AMO-return-nondegeneracy} hold for every \(n\ge n_0\).
\begin{enumerate}
\item For each \(\mathcal A\in\mathcal P_{I_n}\), the matrices \(B_{I_n}^\pm\)
have simple top singular values. After fixing a real lift of
\(\mathcal A\), their standard SVD data
\((\lambda_{I_n}^\pm,u_{I_n}^\pm,s_{I_n}^\pm)\) admit continuous
coherent angle lifts on \(\mathcal A\times\mathcal E_n\).  For each sign, modulo independent additions of
\(2\pi\), the pair \((u_{I_n}^\pm,s_{I_n}^\pm)\) is unique up to
simultaneous addition of \(\pi\).

\item There are \(C_{10}=C_{10}(C_I,r_\dagger)>0\) and
\(D_{24}=D_{24}(C_I,r_\dagger)>0\) such that, for every atom
\(\mathcal A\), uniformly on \(\mathcal A\times\mathcal E_n\),
\(\sigma\in\{+,-\}\), and \(1\le|\zeta|\le r_\dagger\),
\[
\begin{aligned}
 \lambda_{I_n}^\sigma
 &\ge c\exp\{\gamma r_\sigma-CR_\tau(r_\sigma+2)\},\\
 |\partial^\zeta_{x,E}\lambda_{I_n}^\sigma|
 &\le C_{10}(r_\sigma+2)^{D_{24}}
       \lambda_{I_n}^\sigma,
\end{aligned}
\]
and
\begin{equation}\label{fr:eq:AMO-return-angle-derivatives}
 |\partial^\zeta_{x,E}u_{I_n}^\sigma|
 +|\partial^\zeta_{x,E}s_{I_n}^\sigma|
 \le C_{10}(r_\sigma+2)^{D_{24}}.
\end{equation}
The constants are independent of \(n,\mathcal A,\sigma\).
\end{enumerate}
\end{proposition}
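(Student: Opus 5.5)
The plan is to derive both parts of Proposition~\ref{fr:prop:deterministic-AMO-seed} from large-deviation and avalanche-principle estimates for the almost Mathieu cocycle at frequency \(\beta\), in the positive-exponent regime \(L=\gamma=\log\lambda>0\) (Bourgain--Goldstein type), combined with the edge state of Proposition~\ref{fr:prop:AMO-edge-state}. First, for \(x\) in a return atom \(\mathcal A\in\mathcal P_{I_n}\), the return times satisfy \(r_\pm\lesssim N_n^{1+\tau}\) by the Diophantine condition (three-gap bounds), so they are polynomial in \(n\). I would then obtain the lower bound \(\lambda_{I_n}^\sigma\ge c\exp\{\gamma r_\sigma-CR_\tau(r_\sigma+2)\}\) uniformly for real \(x\in\mathcal A\) and \(E\in\mathcal E_n\). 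The subexponential loss \(R_\tau\) would come from a uniform upper bound \(\|A_m\|\le e^{\gamma m+CR_\tau(m)}\), valid for all \(x\), which follows from the Diophantine condition and the uniform upper semicontinuity at sharp rate. For the matching lower bound I would use the edge state: since \(u\) decays like \(e^{-\gamma j}\) while \(\det=1\), the transfer matrix applied to a vector transversal to \((u_{1},u_0)\) must grow like \(e^{\gamma m-CR_\tau}\). Here Wronskian/Green's-function identities convert the decay \eqref{eq:AMO-seed-edge-state-decay} at phase \(0\) into growth along the block \([0,r_\sigma)\) for nearby phases \(x\in I_n\) and energies in \(\mathcal E_n\). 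The perturbation from phase \(0\) to \(x\), and from energy \(E_+\) to \(E\), costs at most \(|x|,|E-E_+|\le n^{-B_2}\) times \(e^{\gamma r+CR_\tau}\), which is absorbed because \(r_\sigma\) is polynomial in \(n\). This perturbation step is where the hierarchy \eqref{fr:eq:companion-edge-gap-hierarchy} and the choice of \(B_1,B_2\) enter.

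Given expansion \(\lambda\gg1\), the top singular value is simple, since \(\lambda>\lambda^{-1}\). The SVD data depend real-analytically on \((x,E)\) because each atom carries a fixed word. Continuous angle lifts on the simply connected set \(\mathcal A\times\mathcal E_n\) then exist, and they are unique up to the sign ambiguity \((u,s)\mapsto(u+\pi,s+\pi)\) coming from \(v\mapsto-v\). This proves (i).

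For the derivative bounds in (ii), I would apply Lemma~\ref{svd:lem:fixed-gap-oriented-stability} and Corollary~\ref{svd:cor:fixed-gap-oriented-axes} to the normalized matrix \(\mathcal N=B/\lambda\). Its Gram matrices have eigenvalues near \(1\) and \(\lambda^{-4}\), so the gap condition \eqref{svd:eq:fixed-gap-input} holds. The task therefore reduces to bounding \(\|\partial^\zeta B\|/\|B\|\) by a polynomial in \(r_\sigma\). Differentiating the product by Leibniz gives a sum over at most \(r^{|\zeta|}\) terms, each a product of subblocks. Controlling these terms requires \(\|A_{[a,b)}\|\|A_{[b,c)}\|\lesssim \mathrm{poly}(r)\,\|A_{[a,c)}\|\), that is, no cancellation inside the block. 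I would derive this from the avalanche principle, applied along the block with a large-deviation estimate whose exceptional set avoids the orbit segment of points in \(I_n\). For points near the edge-state phase, the required lower bound on the full block comes from the same edge-state argument as above. I expect this ``no cancellation within a return block'' estimate, uniformly over all atoms and subwords, to be the main obstacle. Polynomial losses are acceptable, and I would first try the avalanche principle with blocks of length about \(\log^{C}r\).

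With these bounds, \(K_\nu\) in Lemma~\ref{svd:lem:fixed-gap-oriented-stability} is polynomial in \((r_\sigma+2)\). Then \eqref{svd:eq:fixed-gap-log-singular-derivatives} and \eqref{svd:eq:fixed-gap-angle-derivatives} yield \(|\partial^\zeta\lambda|\le C_{10}(r+2)^{D_{24}}\lambda\) and the angle bounds \eqref{fr:eq:AMO-return-angle-derivatives}. Finally, \(n_0\) is taken large so that all absorbed losses are dominated, and \(D_1\) is taken to be the polynomial degree that appears in the perturbation step.
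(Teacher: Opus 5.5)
Your lower bound for \(\lambda_{I_n}^\sigma\) rests on perturbing from the edge state at phase \(0\), and this step fails.

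First, the atoms of \(\mathcal P_{I_n}\) fill \(I_n=[-N_n^{-1},N_n^{-1}]\). So \(|x|\) is only \(\lesssim n^{-C_I}\), not \(\le n^{-B_2}\).

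Second, even when \(|x|+|E-E_+|\) is tiny, the estimate does not close. A perturbation of size \(r(|x|+|E-E_+|)e^{\gamma r+CR_\tau(r)}\), compared with \(\|A_r(0,E_+)\|\ge e^{\gamma r-CR_\tau(r)}\), leaves relative error \(r(|x|+|E-E_+|)e^{2CR_\tau(r)}\). Since \(R_\tau(r)\ge r^{1-\sigma_{\rm cmp}}\) and \(r\ge cn^{C_I/\tau}\), the factor \(e^{2CR_\tau(r)}\) is stretched-exponential in \(n\). That \(r\) is polynomial in \(n\) does not absorb it.

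Third, and decisively, your mechanism never uses that the block is a \emph{first} return. It would therefore equally give \(\|A_r(x,E_+)\|\ge e^{\gamma r-CR_\tau(r)}\) for every \(x\in I_n\) and every polynomial length, which is false. Take \(x=-m_n\beta\), which lies in \(I_n\) because \(\|m_n\beta\|_{\T}\le N_n^{-1}\). The block of length \(2m_n\) then carries the exact edge well at its midpoint. The translate \(u_{\cdot-m_n}\) decays toward both ends, and every independent solution grows toward both ends. Hence \(\|A_{2m_n}(x,E_+)\|\le e^{CR_\tau(2m_n)}\).

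This resonance phenomenon is exactly what your ``no cancellation within a block'' step must rule out. Large deviations plus the avalanche principle control only a set of large measure. You need control on every orbit segment issuing from every atom, and you leave this open.

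The missing ingredient is a finite-volume spectral gap below \(E_+\) for first-return blocks. The paper builds it in these steps:
\begin{enumerate}
\item A positive analytic dual state \(\psi\) is obtained from Aubry duality and reducibility at the edge (Theorem~\ref{amo:thm:dual-edge-state-input}).
\item \(\psi\) yields the ground-state representation of Lemma~\ref{amo:lem:magnetic-ground-form}. This gives the single-resonance gap \(E_+-\max\sigma(H_x^J)\ge c\delta^2\) whenever the strict interior phases are \(\delta\)-far from \(0\) (Lemma~\ref{amo:lem:single-resonance-gap}).
\item For a first return, \(\|z+j\beta\|_{\T}>N_n^{-1}\) for \(1\le j<r\), so the gap is at least \(c(r+1)^{-2\tau}\).
\item Under this gap, three inputs give \(\log P_r(z,E)\ge\gamma r-CR_\tau(r+2)\) directly at each phase and for \(|E-E_+|\le cn^{-D_1}\). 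They are a phase-uniform eigenvalue-counting discrepancy (Denjoy--Koksma), the Thouless formula, and the \(1/2\)-H\"older bound on the IDS at the edge.
\item Since \(P_r\) is the \((1,1)\) entry, \(\|A_r\|\ge P_r\) (Propositions~\ref{amo:prop:determinant-estimates} and~\ref{amo:prop:stopped-blocks}).
\end{enumerate}

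The derivative bounds then need no subproduct estimate. The matrix \(P_r^{-1}A_r\) has Green-function entries whose jets are polynomially bounded by the resolvent bound, and \(\partial\log P_r=\operatorname{tr}(R\,\partial(E-H))\) (Lemma~\ref{amo:lem:svd-calculus}). By contrast, normalizing by \(\lambda\) itself, as you propose, makes the inputs to Lemma~\ref{svd:lem:fixed-gap-oriented-stability} depend on the very derivatives of \(\lambda\) you are trying to bound.

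Finally, \(D_1\) and \(B_2>D_1+2\) are not fixed by a perturbation degree. They are dictated by how far the energy can move while the gap survives, and by the central normal-form and exterior estimates that the proposition also asserts. Your plan does not address those.
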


We now fix these constants and domains.  The overlap on the central
atom has a quadratic normal form; its orientation is fixed by the
positive energy derivative below.  The SVD notation is that preceding
\eqref{fr:eq:complete-return-joining-convention}.

\begin{proposition}
\label{fr:prop:AMO-central-normal-form}
For every \(n\ge n_0\), the following statements hold.
\begin{enumerate}
\item The point \(0\) belongs to
\(\operatorname{Reg}(I_n)\), and
\[
 m_n:=r_{I_n}^+(0)=r_{I_n}^-(0),\qquad
 cN_n^{1/\tau}\le m_n\le N_n.
\]
There is a central atom \(\mathcal A_n^0\in\mathcal P_{I_n}\) with
\[
 \widetilde I_n\subset\mathcal A_n^0,\qquad
 p_{\mathcal A_n^0}=q_{\mathcal A_n^0}=m_n.
\]
\item On \(\widetilde I_n\times\mathcal E_n\), the standard SVD angles admit
continuous oriented lifts for which
\[
 \partial_E\cos G_{I_n,\mathcal A_n^0}(0,E_+(\beta))>0.
\]
There are real \(C^2\) functions
\(\widetilde C_{1,n},\widetilde C_{2,n},\widetilde R_n\) such that
\begin{equation}\label{fr:eq:central-polynomial-normal-form}
 \cos G_{I_n,\mathcal A_n^0}(x,E)
 =\widetilde C_{1,n}(x,E)x^2
  +\widetilde C_{2,n}(x,E)(E-E_+(\beta))
  +\widetilde R_n(x,E),
\end{equation}
and, on the same domain,
\[
\begin{gathered}
 c\le\widetilde C_{1,n},\widetilde C_{2,n}\le C,\\
 \|\widetilde C_{1,n}\|_{C^2}+\|\widetilde C_{2,n}\|_{C^2}
 \le n^{D_1},\qquad
 \|\widetilde R_n\|_{C^2}\le e^{-c\gamma m_n}.
\end{gathered}
\]
\item On \(\widetilde I_n\times\mathcal E_n\), one has
\begin{equation}\label{fr:eq:central-mismatch-transversality}
 c\le\partial_E\cos G_{I_n,\mathcal A_n^0}\le C,
 \qquad
 c\le\partial_x^2\cos G_{I_n,\mathcal A_n^0}\le C.
\end{equation}
\end{enumerate}
\end{proposition}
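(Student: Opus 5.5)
Statement (i) is return-time combinatorics for the window \(I_n=[-N_n^{-1},N_n^{-1}]\). The point \(0\) is regular because \(k\beta\equiv\pm N_n^{-1}\) would force \(\beta\in\mathbb Q\). The symmetry \(x\mapsto -x\) of \(I_n\) conjugates \(T_\beta\) to \(T_\beta^{-1}\), which gives \(r^+_{I_n}(0)=r^-_{I_n}(0)=:m_n\), the least \(m\ge1\) with \(\|m\beta\|_{\mathbb T}\le N_n^{-1}\). The upper bound \(m_n\le N_n\) follows from Dirichlet's pigeonhole argument applied to \(0,\beta,\dots,N_n\beta\). The lower bound follows from the Diophantine condition: \(\kappa_\beta m_n^{-\tau}\le N_n^{-1}\) gives \(m_n\ge cN_n^{1/\tau}\). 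For the central atom, take any \(|x|<n^{-B_1}\) with \(B_1>C_I\). Then \(T^{k}x\) and \(T^k0\) differ by less than \(n^{-B_1}\ll N_n^{-1}\), so the membership \(T^k x\in I_n\) for \(|k|\le m_n\) agrees with that of \(0\). This needs a quantitative margin: we need \(\operatorname{dist}(k\beta,\partial I_n)\gtrsim n^{-B_1}\) for \(|k|\le m_n\). Such a margin is not automatic at the endpoints \(\pm N_n^{-1}\). I would obtain it by choosing \(B_1\) large and, if necessary, enlarging \(n_0\). The Diophantine condition gives \(\|k\beta\pm k'\beta\|\ge\kappa_\beta(2N_n)^{-\tau}\). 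However, \(\|k\beta\|-N_n^{-1}\) can be tiny, and this is exactly where the genericity of the window matters. A fallback is to note that \(N_n^{-1}\) is rational, while \(k\beta\) avoids rationals with denominator \(N_n\) by \(\gtrsim N_n^{-1}\kappa_\beta(N_n m_n)^{-\tau}\) polynomially. This gives a polynomial margin and hence \(\widetilde I_n\subset\mathcal A_n^0\) with \(p=q=m_n\).

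Statements (ii) and (iii) concern the overlap \(\cos G=\cos(\pi/2-s^+ +u^-)\), which is the bilinear pairing \(\mathbf v_+^{\mathsf T}\mathbf u_-\) of the forward return's most-contracted-preimage direction with the backward return's expanded image direction. My plan is to compare it with the eigenfunction \(u\) of Proposition~\ref{fr:prop:AMO-edge-state}. At \((x,E)=(0,E_+)\), the vector \((u_{m_n},u_{m_n-1})\) is exponentially small and has decay rate matching \(\lambda^\pm\). The forward transfer matrix \(A_{m_n}(0,E_+)\) maps \((u_0,u_{-1})\) to a vector of size \(e^{-\gamma m_n+CR_\tau}\). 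Hence \((u_0,u_{-1})\) is, up to error \(e^{-c\gamma m_n}\), the stable (contracted) direction of \(B^+\). By the evenness \(u_{-j}=u_j\), it is similarly the image direction from the left for \(B^-\). Matching the two directions gives \(\cos G(0,E_+)=O(e^{-c\gamma m_n})\); this defines the constant term absorbed in \(\widetilde R_n\). Derivatives are controlled by Proposition~\ref{fr:prop:deterministic-AMO-seed}(ii) at polynomial cost \(n^{D_1}\).

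For the sign and size of \(\partial_E\cos G\) and \(\partial_x^2\cos G\), I would use the standard Dirichlet/Schur reduction on \([-m_n,m_n]\). The overlap vanishes precisely when the finite problem has an eigenvalue with the decaying boundary conditions. To first order, \(\cos G(x,E)\) is therefore proportional, with a positive factor of order one, to \(E-E_{m_n}(x)\), where \(E_{m_n}(x)\) is the top eigenvalue of the restricted operator \(H^{[-m_n,m_n]}_{\lambda,\beta,x}\) with those boundary conditions. Feynman--Hellmann gives \(\partial_E\) of order one, with sign fixed by orientation. Because \(E_+\) is the top of the spectrum, \(x=0\) is a maximum of \(E_{m_n}(x)\). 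The second derivative \(-\partial_x^2E_{m_n}(0)=\sum_j (2\pi\beta)^2\cdots\) is a perturbation-theory expression involving \(\langle u,V''u\rangle\) and a resolvent term. Its positivity and order-one size come from positivity of \(\cos\) weighted by the localized \(u\) (mass near \(0\), where \(2\lambda\cos\) has negative second derivative) minus the spectral-gap-controlled Schur term. Taylor expanding in \((x,E-E_+)\) on \(\widetilde I_n\times\mathcal E_n\), whose size \(n^{-B_1}\) and \(n^{-B_2}\) is tiny, then yields the normal form \eqref{fr:eq:central-polynomial-normal-form} and the bounds \eqref{fr:eq:central-mismatch-transversality}.

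The main obstacle is the uniform lower bound \(\partial_x^2\cos G\ge c\), i.e.\ positive Schur curvature at the edge. It requires a finite-volume spectral gap below the top eigenvalue of \(H^{[-m_n,m_n]}\) that is uniform enough to beat the resolvent correction. The abstract calls this the positive analytic dual state. I would first try Aubry duality: the edge state of the supercritical operator corresponds to a positive analytic eigenfunction of the dual subcritical operator. Positivity then gives a Perron--Frobenius gap and a sign for the curvature, independent of \(n\).
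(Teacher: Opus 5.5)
Part (i) is correct and is the paper's argument. The margin you give as a fallback, \(\operatorname{dist}_{\T}(j\beta,\pm N_n^{-1})\ge N_n^{-1}\|jN_n\beta\|_{\T}\ge\kappa_\beta N_n^{-(2\tau+1)}\) for \(1\le|j|\le m_n\le N_n\), is exactly what is used, with \(B_1>C_I(2\tau+1)\). Reducing \(\cos G\) to the central Schur complement is also the paper's frame: it proves \(\cos G=\Xi S_{m,m-1}+\varepsilon\) with \(\Xi\to(1+u_1^2)^{-1}\), \(\varepsilon\) exponentially small, and \(\partial_ES=\|f\|_2^2\ge1\).

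The genuine gap is the uniform bound \(\partial_x^2\cos G\ge c\), and the mechanism you sketch for it does not work. In the variable \(\theta=2\pi x\), with \(Q(\theta)=E_+-H_{\theta/(2\pi)}\), the limiting curvature is \(\langle u,\ddot Q(0)u\rangle-2\langle\dot Q(0)u,Q(0)^{-1}\dot Q(0)u\rangle\). The first term is positive, but not because of localization of \(u\). By Parseval it equals \(2\lambda\int_\T a\), where \(a(t)=\psi(t)\psi(t+\beta)\) and \(\psi>0\) is the dual state.

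The second term is neither small nor controlled by a spectral gap. Since \(E_+\) is not an isolated point of the spectrum, \(Q(0)\) has no gap on \(u^\perp\), and \(Q(0)^{-1}\dot Q(0)u\) exists only through a small-divisor equation. In finite volume, the top eigenvalue of \(H_x^{[-m,m-1]}\) is separated from the next one only by a polynomially small amount, because competing states sit at sites \(j\) with small \(\|j\beta\|_\T\). So there is no \(n\)-uniform Perron--Frobenius gap to appeal to. In fact the second term equals \(2\lambda\bigl(\int_\T a-(\int_\T a^{-1})^{-1}\bigr)\), i.e.\ the fixed fraction \(1-(\int_\T a\int_\T a^{-1})^{-1}\in(0,1)\) of the first. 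Positivity is therefore an exact cancellation, not the result of a small correction.

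What is missing is the way the positive dual state actually enters: not through a gap, but through the ground-state (magnetic) representation \(\langle v,(E_+-H_x)v\rangle=\lambda\int_\T a\,|e^{2\pi ix}g(\cdot+\beta)-g|^2\,dt\), with \(\mathcal Fv=\psi g\). Along \(v=u+\theta w\), \(\mathcal Fw=\psi h\), the second \(\theta\)-derivative becomes \(2\lambda\int_\T a\,|i+D_\beta h|^2\). By Cauchy--Schwarz this is at least \(2\lambda(\int_\T a^{-1})^{-1}\), with equality for the corrector solving the cohomological equation \(D_\beta h=i\bigl((a\int_\T a^{-1})^{-1}-1\bigr)\). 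The Diophantine condition solves this equation with exponentially decaying Fourier coefficients. The resulting \(w\) satisfies \(Q(0)w+\dot Q(0)u=0\), and one gets \(\partial_x^2S(0,E_+)\to8\pi^2\lambda/\int_\T a^{-1}>0\).

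The same representation supplies the finite-volume input that is really needed. This is a gap of size \(cm^{-2\tau}\) for the two side blocks with the central site removed, obtained from \(\langle v,(E_+-H_x)v\rangle\ge16\lambda\min_\T a\sum_k\|x+k\beta\|_\T^2|\widehat g_k|^2\) and a single-resonance argument. It is not a uniform gap for the full block. This merely polynomial gap suffices because the finite minimizer \(f\) and its phase derivative converge to \(u\) and \(w\) with errors \(M^Ce^{-cd}\). Without this identity your plan gives neither \(\widetilde C_{1,n}\ge c\) in (ii) nor the curvature bound in (iii).
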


Every atom other than \(\mathcal A_n^0\) is called exterior.  The next
proposition controls the overlap away from the central domain and the
Dirichlet restrictions associated with every return atom.

\begin{proposition}
\label{fr:prop:AMO-return-nondegeneracy}
For the same constants and domains, fix \(n\ge n_0\) and
\(\mathcal A\in\mathcal P_{I_n}\). The following statements hold.
\begin{enumerate}
\item One has
\begin{equation}\label{fr:eq:exterior-cos-lower}
\begin{gathered}
 |\cos G_{I_n,\mathcal A}(x,E)|\ge c n^{-D_1},\\
 x\in\mathcal A\cap(I_n\setminus\widetilde I_n),\qquad E\in\mathcal E_n.
\end{gathered}
\end{equation}
\item If \(p,q\) are the forward and backward return-length labels of
\(\mathcal A\), then for either
\[
 (r,z)=(p,x)\quad\text{or}\quad(r,z)=(q,x-q\beta),
 \qquad x\in\mathcal A,
\]
one has
\begin{equation}\label{fr:eq:companion-stopped-edge-gap}
 E_+(\beta)-\max\operatorname{spec}
 H_{\lambda,\beta,z}^{[0,r-1]}
 \ge cn^{-D_1}.
\end{equation}
For each choice, the Dirichlet determinant
\(P_r(z,E):=\det(E-H_{\lambda,\beta,z}^{[0,r-1]})\) obeys
\begin{equation}\label{fr:eq:companion-return-determinant-persistence}
\begin{gathered}
 P_r(z,E)\ge\exp\{\gamma r-CR_\tau(r+2)\}>0,\\
 E\in\mathbb R,\qquad |E-E_+(\beta)|\le cn^{-D_1}.
\end{gathered}
\end{equation}
\end{enumerate}
\end{proposition}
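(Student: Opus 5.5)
The plan is to treat the two parts of Proposition~\ref{fr:prop:AMO-return-nondegeneracy} separately. Both rest on the edge state \(u\) from Proposition~\ref{fr:prop:AMO-edge-state} and on the return-time structure of \(I_n\).

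\textbf{Exterior lower bound \eqref{fr:eq:exterior-cos-lower}.} I would start from the normal form \eqref{fr:eq:central-polynomial-normal-form}. On the boundary layer of \(\widetilde I_n\) inside \(\mathcal A_n^0\), the energy term is bounded by \(Cn^{-B_2}\) and the remainder by \(e^{-c\gamma m_n}\). Since \(\widetilde C_{1,n}\ge c\), this gives \(\cos G\ge c|x|^2-Cn^{-B_2}-e^{-c\gamma m_n}\ge cn^{-2B_1}\), provided \(B_2>2B_1\); the exponent \(D_1\) is enlarged so that \(D_1\ge 2B_1\). The convexity bound \(\partial_x^2\cos G\ge c\) in \eqref{fr:eq:central-mismatch-transversality} does not rely on the normal form, so I would use it to extend the bound across the remaining part of \(\mathcal A_n^0\setminus\widetilde I_n\).

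For genuinely exterior atoms I would argue by contradiction. Suppose \(\cos G_{I_n,\mathcal A}(x,E)\) is small. Then the outgoing stable direction of \(B^+\) almost matches the incoming unstable direction of \(B^-\). Joining the two returns would produce an approximate eigenfunction of \(H_{\lambda,\beta,x}\) that decays on both sides and has energy within \(n^{-B_2}\) of \(E_+\). This function is localized near the phase \(x\ne0\) with \(|x|\ge n^{-B_1}\). Compared with \(u\), it is almost orthogonal to the edge eigenfunction at phase \(0\) translated appropriately. This contradicts the simplicity and separation of the maximal eigenvalue that the next part supplies. Quantitatively, \(|\cos G|\) lower-bounds the gap via a Schur-complement identity, so I would prove part (ii) first and derive (i) from it.

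\textbf{Dirichlet gap \eqref{fr:eq:companion-stopped-edge-gap}.} I would compare \(H^{[0,r-1]}_{\lambda,\beta,z}\) with the positive edge state. Aubry duality would supply a positive dual ground state \(\phi>0\) with \(H\phi=E_+\phi\) on all of \(\mathbb Z\) at a suitable phase. I would then use the variational principle with the positive test function \(\phi\) and the ground-state transform, \(E_+-\mu_{\max}(H^{[0,r-1]}_z)\ge \phi\)-weighted boundary terms. These boundary terms equal \(\phi_{-1}\psi_0\phi_0^{-1}+\phi_r\psi_{r-1}\phi_{r-1}^{-1}\) for the top Dirichlet eigenvector \(\psi\ge0\). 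The return condition says that the window \([0,r-1]\) starting at \(z\) avoids phases in \(I_n\) except at the endpoints. This keeps it away from the localization center of the edge state, so \(\psi\) has mass of size at least \(N_n^{-C}\) near the boundary, and \(\phi\) and \(\psi\) are comparable up to polynomial factors. I expect this step to be the main obstacle: turning positivity into a polynomial lower bound \(cn^{-D_1}\) requires Diophantine control of \(\min_{0\le k<r}\|z+k\beta\|\) through the return structure, together with the subexponential factor \(R_\tau\) in \eqref{eq:AMO-seed-edge-state-decay}.

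\textbf{Determinant bound \eqref{fr:eq:companion-return-determinant-persistence}.} Given the gap, every eigenvalue \(\mu_k\) of \(H^{[0,r-1]}_z\) lies below \(E_+-cn^{-D_1}\). So for \(|E-E_+|\le \tfrac c2 n^{-D_1}\), each factor \(E-\mu_k\) is positive, and \(P_r(z,E)>0\). For the size, I would use the standard large-deviation and Herman-type estimate: \(\log|P_r|\ge \gamma r-CR_\tau(r+2)\) at the edge energy. This follows from the lower bound on \(\lambda^\pm_{I_n}\) in Proposition~\ref{fr:prop:deterministic-AMO-seed}, since \(P_r\) is an entry of the return transfer matrix \(B^\pm\) and its magnitude is comparable to \(\lambda^\pm\) up to the nonvanishing overlap factor. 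Monotonicity of \(E\mapsto P_r\) above the spectrum then transfers the bound over the whole energy window.
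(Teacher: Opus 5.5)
There are genuine gaps in your proposal; the most serious one is in part (i). Deriving \eqref{fr:eq:exterior-cos-lower} from part (ii) cannot work. The one-sided gaps \eqref{fr:eq:companion-stopped-edge-gap} hold on every atom, including $\mathcal A_n^0$, and there the overlap does vanish at the fold near $(0,E_+(\beta))$. What controls $\cos G_{I_n,\mathcal A}$ is the Schur complement $S_{q,p-1}$ of the \emph{two-sided} block $[-q,p-1]$ at the cut site $0$. It enters through the exact comparison $\cos G=\Xi S_{q,p-1}+\varepsilon$, with $c\delta^2\le\Xi\le1$ and $|\varepsilon|\le2(D_-^{-2}+D_+^{-2})$ (Lemma~\ref{amo:lem:schur-svd}). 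Since $S_{q,p-1}=\langle e_0,(E-H_{\lambda,\beta,x}^{[-q,p-1]})^{-1}e_0\rangle^{-1}$, you need a polynomial gap for the full block $H_{\lambda,\beta,x}^{[-q,p-1]}$. That gap requires the center site to be nonresonant, $\|x\|_{\mathbb T}\ge n^{-B_1}$, in addition to $\|x+j\beta\|_{\mathbb T}>N_n^{-1}$ at the other interior sites. This is exactly where $x\notin\widetilde I_n$ enters, and it handles the part of $\mathcal A_n^0$ outside $\widetilde I_n$ in the same way as every other atom. Your separate treatment of $\mathcal A_n^0\setminus\widetilde I_n$ fails. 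Both \eqref{fr:eq:central-polynomial-normal-form} and the convexity bound in \eqref{fr:eq:central-mismatch-transversality} are established only on $\widetilde I_n\times\mathcal E_n$, since they come from a Schur analysis on a polynomially small bidisk. But $\mathcal A_n^0$ contains $[-cN_n^{-(2\tau+1)},cN_n^{-(2\tau+1)}]$ with $N_n^{-(2\tau+1)}\gg n^{-B_1}$, so neither estimate can be continued past $|x|=n^{-B_1}$.

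In part (ii), the step you flag as the main obstacle is a real missing idea, and the direct-space ground-state transform does not supply it. Aubry duality gives the positive $\ell^2$ solution $u$ only at phase $0$ (and on its $\beta$-orbit), not at the phase $z$ of a return window. More importantly, the boundary terms alone cannot give a polynomial bound. If an interior site of the window has phase just outside $I_n$, the top Dirichlet eigenvector localizes there and its boundary values are exponentially small, yet the true gap is polynomial and comes from the interior part of the form.

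The paper works instead in Fourier space with the positive dual function $\psi$ of Theorem~\ref{amo:thm:dual-edge-state-input}. For $\mathcal Fv=\psi g$ and $a(t)=\psi(t)\psi(t+\beta)$, at every phase
\[
\langle v,(E_+(\beta)-H_{\lambda,\beta,x})v\rangle=\lambda\int_{\mathbb T}a\,|e^{2\pi ix}g(\cdot+\beta)-g|^2\ge16\lambda a_*\sum_k\|x+k\beta\|_{\mathbb T}^2|\widehat g_k|^2 .
\]
Decay of $\widehat{1/\psi}$ confines $\widehat g$ to a fixed padding of $J$, and the Diophantine condition leaves at most one resonant $k$. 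That $k$ is not a strict interior site, so positivity of $u$ at an adjacent site where $v=0$ controls the resonant component (Lemma~\ref{amo:lem:single-resonance-gap}).

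For \eqref{fr:eq:companion-return-determinant-persistence}, you cannot take the lower bound on $\lambda_{I_n}^\pm$ as input. These propositions are proved jointly, and that bound is itself obtained from the present determinant estimate through $\|A_r\|\ge P_r$. In addition, large-deviation bounds exclude a set of phases, and $P_r$ is compared with $\|A_r\|$ through Green-function bounds, not through an overlap factor.

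The missing mechanism is that the gap makes $\log P_r(z,E_+(\beta))=\int\log(E_+(\beta)-t)\,d\nu_{r,z}(t)$ exact, with no truncation. This integral is compared with $r\int\log(E_+(\beta)-t)\,dN(t)=\gamma r$ using the uniform counting discrepancy of Lemma~\ref{amo:lem:uniform-counting-discrepancy}, as in Proposition~\ref{amo:prop:determinant-estimates}. Finally, monotonicity in $E$ covers only $E\ge E_+(\beta)$. Below the edge you need $|\log(P_r(z,E)/P_r(z,E_+(\beta)))|\le2r|E-E_+(\beta)|/g_r$, where $g_r$ is the Dirichlet gap. This is small because $g_r\ge c(r+1)^{-2\tau}$ and $D_1>C_I\tau(2\tau+1)$.
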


In Appendix~\ref{app:AMO-upper-edge}, the positive dual state gives the
finite-volume gap in Lemma~\ref{amo:lem:single-resonance-gap} and the
central Schur estimates in Proposition~\ref{amo:prop:central-schur} and
Corollary~\ref{amo:cor:central-schur-curvature}. The positively oriented
Schur--SVD comparison of Lemma~\ref{amo:lem:schur-svd} then yields the
return-matrix estimates above.

Let \((q_j)_{j\ge1}\) be the continued-fraction denominators of
\(\alpha\).  Fix a return-window exponent \(C_I>0\).  Let
\(0<\sigma_{\rm cmp}\le(2\tau)^{-1}\) and \(B\ge4\) be supplied by
Proposition~\ref{fr:prop:AMO-edge-state}.  By
\eqref{eq:effective-frequency-Diophantine}, its parameters here are
\[
 \beta=\widehat\alpha,\qquad
 \kappa_\beta=\kappa N^{-\tau},\qquad
 E_+(\beta)=E_+.
\]
The scale in Proposition~\ref{fr:prop:deterministic-AMO-seed} is arbitrary;
we apply it at
\[
 n=Q=q_\ell(\alpha),\qquad T_\beta=T_{\widehat\alpha}.
\]
Thus \(Q\) need not be a continued-fraction denominator of \(\beta\).
Retain \(R_\tau\) from
\eqref{B:eq:Rtau-definition}.  For each finite \(d\ge3\), set
\begin{equation}\label{eq:fixed-background-data}
 \mathfrak D_{\rm AMO}^{(d)}
 :=(\mathfrak b,C_I,d).
\end{equation}
The derived frequency and the choices \(\sigma_{\rm cmp},B\) above are fixed by \(\mathfrak b\).
For each application, fix its largest required order \(d_{\max}\) and
denote the common output of
Propositions~\ref{fr:prop:deterministic-AMO-seed}--\ref{fr:prop:AMO-return-nondegeneracy} by
\(\operatorname{Data}^{(d_{\max})}\), and set
\[
 \operatorname{Data}^{(d)}
 :=\left.\operatorname{Data}^{(d_{\max})}\right|_{\text{derivative orders }\le d},
 \qquad 3\le d\le d_{\max}.
\]
These restrictions share their windows and structural exponents, and all
bounds depend only on \(\mathfrak D_{\rm AMO}^{(d_{\max})}\). Moreover,
\(R_\tau(m)=o(m)\).

\subsection{Uniform initial-scale data}

The following definition specifies the initial-scale data constructed in
Proposition~\ref{fr:prop:prevalent-multiwell-seed} and propagated by
Proposition~\ref{fr:prop:Young-CF-reinduction-exclusion}.
See Figure~\ref{fig:initial-scale-domain-hierarchy} for the domain hierarchy.

\begin{figure}[H]
\centering
\resizebox{\textwidth}{!}{%
\begin{tikzpicture}[
 x=0.47cm,y=0.47cm,
 every node/.style={font=\scriptsize},
 component/.style={draw=RoyalBlue,fill=RoyalBlue!12,thick,rounded corners=1pt},
 atom/.style={draw=Orange,very thick},
 smallset/.style={draw=ForestGreen,very thick},
 itinerary/.style={draw=gray!75,densely dashed},
 selected/.style={draw=Orange,thick}
]
% Parameter cell and fixed-energy phase fiber.
\node[font=\small\bfseries] at (7,6.55)
 {(i) Parameter cell and one lifted phase fiber};
\draw[gray!75] (0.75,5.75)--(13.25,5.75);
\draw[Orange,line width=4pt,line cap=round] (5.15,5.75)--(8.25,5.75);
\foreach \x in {0.75,2.15,3.70,5.15,8.25,10.10,11.65,13.25}
  \draw[gray!75] (\x,5.58)--(\x,5.92);
\node[Orange!80!black,above] at (6.70,5.84) {$\mathcal C$};
\node[anchor=east] at (0.65,5.75) {$\mathcal J_Q$};
\node at (7,5.12) {$\mathcal K_Q^0$};
\draw[gray!70] (0.45,4.45)--(13.55,4.45);
\path[component] (0.75,4.05) rectangle (3.05,4.85);
\path[component] (4.75,4.05) rectangle (9.25,4.85);
\path[component] (10.95,4.05) rectangle (13.25,4.85);
\node at (1.90,4.45) {$\mathcal U_{Q,1}$};
\node at (7.00,4.45) {$\mathcal U_{Q,a}$};
\node at (12.10,4.45) {$\mathcal U_{Q,2s}$};
\node at (3.90,4.45) {$\cdots$};
\node at (10.10,4.45) {$\cdots$};
\draw[itinerary] (5.55,4.05)--(5.55,4.85);
\draw[itinerary] (6.10,4.05)--(6.10,4.85);
\draw[itinerary] (8.15,4.05)--(8.15,4.85);
\draw[itinerary] (8.65,4.05)--(8.65,4.85);
\draw[atom] (6.10,3.55)--(8.15,3.55);
\draw[Orange] (6.10,3.39)--(6.10,3.71)
              (8.15,3.39)--(8.15,3.71);
\node[Orange!80!black,anchor=west] at (8.35,3.55) {$\nu_a$};
\draw[smallset] (6.65,2.55)--(7.60,2.55);
\draw[ForestGreen] (6.65,2.39)--(6.65,2.71)
                   (7.60,2.39)--(7.60,2.71);
\fill[BrickRed] (7.125,2.55) circle (2.1pt);
\node[ForestGreen!55!black,anchor=west] at (7.80,2.55)
 {$I_{q_\ell,a}^{(r)}(E)$};
\node[BrickRed] at (7.13,1.90) {$c_{\ell,a}(E)$};
\node at (7,0.85)
 {$I_{q_\ell,a}^{(r)}(E)\Subset\nu_a
       \subset\mathcal U_{Q,a}\subset\mathcal K_Q^0$};

% Phase--energy panel.
\node[font=\small\bfseries] at (23.25,6.55)
 {(ii) Fixed-itinerary cells over $\mathcal C'\subset\mathcal C$};
\draw[->] (16.10,1.25)--(16.10,6.05);
\node[left] at (16.10,5.75) {$E$};
\draw[->] (16.10,1.25)--(30.00,1.25) node[right] {$x$};
\draw[RoyalBlue,thick] (16.45,1.55) rectangle (29.55,5.85);
\node[anchor=west,RoyalBlue!70!black] at (16.65,5.55)
 {$\nu_a\times\mathcal C'$};
\path[fill=Orange!10]
 (21.00,1.55) .. controls (20.80,2.85) and (21.50,4.45) .. (21.25,5.85)
 -- (25.05,5.85)
 .. controls (25.30,4.35) and (24.65,2.80) .. (24.90,1.55)
 -- cycle;
\draw[itinerary]
 (18.10,1.55) .. controls (18.40,2.90) and (17.90,4.35) .. (18.45,5.85);
\draw[selected]
 (21.00,1.55) .. controls (20.80,2.85) and (21.50,4.45) .. (21.25,5.85);
\draw[selected]
 (24.90,1.55) .. controls (24.65,2.80) and (25.30,4.35) .. (25.05,5.85);
\draw[itinerary]
 (27.35,1.55) .. controls (27.00,2.95) and (27.70,4.45) .. (27.25,5.85);
\draw[ForestGreen!45,line width=10pt,opacity=.50]
 (23.05,1.55) .. controls (22.85,2.75) and (23.65,4.30) .. (23.45,5.85);
\draw[BrickRed,very thick]
 (23.05,1.55) .. controls (22.85,2.75) and (23.65,4.30) .. (23.45,5.85);
\node at (19.35,3.65) {$\mathscr B_{\ell,\eta_1}$};
\node[Orange!80!black] at (23.05,4.95) {$\mathscr B_{\ell,a}$};
\node at (28.25,4.80) {$\mathscr B_{\ell,\eta_2}$};
\draw[->,BrickRed]
 (26.15,4.25)--(23.45,4.15)
 node[pos=0,anchor=west] {$x=c_{\ell,a}(E)$};
\draw[->,ForestGreen!55!black]
 (26.40,2.85)--(23.20,2.75)
 node[pos=0,anchor=west] {$I_{q_\ell,a}^{(r)}(E)$};
\draw[<->] (23.15,2.05)--(24.80,2.05);
\node[above] at (23.98,2.05)
 {$\ge Q^{-(B_{11}+1)}$};
\end{tikzpicture}%
}
\caption{Domain hierarchy in
Definition~\ref{fr:def:Young-seed-package} (schematic, one real lift); the
double arrow is the fiberwise boundary margin.}
\label{fig:initial-scale-domain-hierarchy}
\end{figure}

For the initial-scale construction, write
\[
\begin{aligned}
 \mathfrak D_{\rm seed}:=\bigl(&r_\dagger,B,\tau,\gamma;\,
 B_{10},B_{11},B_{13},a_{\rm seed},\eta_{\rm seed},\\
 &D_7,D_9,D_6,D_{11},
 D_8,M_*^{\rm seed},C_8\bigr),
\end{aligned}
\]
where \(r_\dagger,B\in\mathbb N\), \(r_\dagger\ge5\), \(B\ge4\),
\(\tau>1\), \(\gamma>0\), all remaining entries are positive,
\(a_{\rm seed}>1\), and
\(1\le C_8<\infty\).
Fix such initial-scale parameters \(\mathfrak D_{\rm seed}\), a real-analytic
potential \(V:\mathbb T\to\mathbb R\),
\(\alpha\in\DC(\kappa,\tau)\) with \(\kappa>0\), an integer \(s\ge1\),
a continued-fraction denominator \(Q=q_\ell\), and a nondegenerate compact
energy interval \(\mathcal J_Q\).
Put
\[
 w_\ell:=Q^{-B_{10}},
 \qquad
 \mathscr W_\ell:=[w_\ell,2w_\ell].
\]
\begin{definition}
\label{fr:def:Young-seed-package}
Scale-\(Q\) initial data with \(2s\) simple zeros and parameters
\(\mathfrak D_{\rm seed}\) consist of the following objects and estimates.
There are maps
\[
 c_{\ell,a}\in C^{r_\dagger-1}(\mathcal J_Q,\mathbb T),
 \qquad 1\le a\le2s,
\]
independent of \(r\), and fixed real lifts
\(\widetilde c_{\ell,a}\in
C^{r_\dagger-1}(\mathcal J_Q,\mathbb R)\).
For every \(r\in\mathscr W_\ell\), there is a partition of
\(\mathcal J_Q\) into at most \(C_8Q^{D_{11}}\) compact intervals
\(\mathcal C\) with disjoint interiors, on which all return lengths and
oriented lifts of the SVD angles are fixed.  On each cell, the zero
branches below are the restrictions of these global maps.
There are pairwise disjoint fixed nondegenerate closed intervals
\(\mathcal U_{Q,a}\) satisfying
\[
 \operatorname{dist}_{\mathbb T}
 \bigl(c_{\ell,a}(E),\mathbb T\setminus
       \operatorname{int}\mathcal U_{Q,a}\bigr)
 \ge Q^{-B_{13}},
 \qquad E\in\mathcal J_Q,\quad 1\le a\le2s.
\]
Put
\[
 \mathcal K_Q^0:=\bigcup_{a=1}^{2s}\mathcal U_{Q,a}.
\]
Let \(\pi_{\mathbb T}:\mathbb R\to\mathbb T\) be the quotient map.
For \(r\in\mathscr W_\ell\), define the energy-dependent smaller return set
\[
 I_{q_\ell,a}^{(r)}(E)
 :=\pi_{\mathbb T}
 [\widetilde c_{\ell,a}(E)-r,\widetilde c_{\ell,a}(E)+r],
 \qquad
 K_{q_\ell}^{(r)}(E)
 :=\bigcup_{a=1}^{2s}I_{q_\ell,a}^{(r)}(E).
\]
Require
\[
 K_{q_\ell}^{(r)}(E)\Subset\mathcal K_Q^0,
 \qquad (E,r)\in\mathcal J_Q\times\mathscr W_\ell.
\]
Conditions \textup{(\ref{item:seed-factorization})--(\ref{item:seed-velocities})} are uniform in the radius,
parameter cell, return-time atom, and component labels. Write
\(\partial^\zeta=\partial_{x,E}^\zeta\).

\begin{enumerate}
\item\label{item:seed-factorization} \emph{Exact return factorization and expanding initial matrices.}
For each fixed pair \((E,r)\), restrict the return dynamics to
\[
 \operatorname{Reg}\bigl(
   \{\mathcal K_Q^0,K_{q_\ell}^{(r)}(E)\}\bigr).
\]
On this set, let \(S_Q^0\) be the first-return map to
\(\mathcal K_Q^0\).  If
\(x\in K_{q_\ell}^{(r)}(E)\) belongs to this regular set, put
\[
\begin{aligned}
 L^+&:=\min\{L\ge1:(S_Q^0)^Lx\in K_{q_\ell}^{(r)}(E)\},\\
 L^-&:=\min\{L\ge1:(S_Q^0)^{-L}x\in K_{q_\ell}^{(r)}(E)\},
\end{aligned}
\]
and let \(x_i:=(S_Q^0)^ix\) for \(-L^-\le i\le L^+\).
The first-return matrices to the smaller set then have the exact chronological factorizations
\[
\begin{aligned}
 B_{K_{q_\ell}^{(r)}}^+(x,E)
 &=B_{\mathcal K_Q^0}^+(x_{L^+-1},E)\cdots
   B_{\mathcal K_Q^0}^+(x_0,E),\\
 B_{K_{q_\ell}^{(r)}}^-(x,E)
 &=B_{\mathcal K_Q^0}^+(x_{-1},E)\cdots
   B_{\mathcal K_Q^0}^+(x_{-L^-},E).
\end{aligned}
\]
Writing \(R_i:=r_{\mathcal K_Q^0}^+(x_i)\), every factor above is the
genuine fixed-word matrix \(B_{\mathcal K_Q^0}^+(x_i,E)=A_{R_i}^V(x_i,E)\),
and
\[
 r_{K_{q_\ell}^{(r)}(E)}^+(x)
   =\sum_{i=0}^{L^+-1}R_i,
 \qquad
 r_{K_{q_\ell}^{(r)}(E)}^-(x)
   =\sum_{i=-L^-}^{-1}R_i.
\]
On every subdomain on which the complete itinerary is fixed, these
factorizations extend continuously to its closure.
There exists
\[
 0\le\omega_Q^{\rm seed}
 \le C_8Q^{-a_{\rm seed}\eta_{\rm seed}}(\log(Q+2))^B
\]
such that every factor occurring in these two identities has lattice
length \(R\) satisfying
\[
 Q^{a_{\rm seed}}\le R\le C_8Q^{B_{13}\tau},
\]
and top singular value \(\lambda_{\rm seed}\) satisfying
\[
 \lambda_{\rm seed}
 \ge\exp\{(\gamma-\omega_Q^{\rm seed})R\}.
\]
For each such factor \(B_{\rm seed}\) of length \(R\), with SVD data
\((\lambda_{\rm seed},u_{\rm seed},s_{\rm seed})\), and every
\(1\le|\zeta|\le r_\dagger\),
\[
 |\partial^\zeta\log\lambda_{\rm seed}|
 +|\partial^\zeta u_{\rm seed}|+|\partial^\zeta s_{\rm seed}|
 +\lambda_{\rm seed}^{-1}\|\partial^\zeta B_{\rm seed}\|
 \le C_8(R+2)^{M_*^{\rm seed}}.
\]

For every two-sided return-time atom
\(\nu\in\mathcal P_{\mathcal K_Q^0}\), put
\begin{equation}\label{fr:eq:Young-seed-joining-definition}
 H_{{\rm seed},\nu}
 :=\cos G_{\mathcal K_Q^0,\nu}.
\end{equation}

\item\label{item:seed-transversality} \emph{Uniform transversality of the simple zeros.}
For every \(r\in\mathscr W_\ell\), every parameter cell \(\mathcal C\), and
every \(1\le a\le2s\), there is a return-time atom \(\nu_a\) such that
\[
 \mathscr U_{Q,a}^{\rm seed}
 :=\nu_a\times\mathcal C
 \subset \mathcal U_{Q,a}\times\mathcal C.
\]
Choose a real lift
\(\widetilde{\mathcal U}_{Q,a}\subset\mathbb R\) of
\(\mathcal U_{Q,a}\) and the contained lift
\(\widetilde\nu_a\subset\widetilde{\mathcal U}_{Q,a}\) of \(\nu_a\),
translating both by the same integer so that
\(\widetilde c_{\ell,a}|_{\mathcal C}\) takes values in
\(\widetilde\nu_a\), and put
\[
 \widetilde{\mathscr U}_{Q,a}^{\rm seed}
 :=\widetilde\nu_a\times\mathcal C
 \subset\widetilde{\mathcal U}_{Q,a}\times\mathcal C,
 \qquad
 \widetilde I_{q_\ell,a}^{(r)}(E)
 :=[\widetilde c_{\ell,a}(E)-r,\widetilde c_{\ell,a}(E)+r].
\]
For \(\widetilde{\mathscr U}\subset\mathbb R\times\mathcal C\), write
\(\widetilde{\mathscr U}_E
 :=\{x\in\mathbb R:(x,E)\in\widetilde{\mathscr U}\}\).  Then
\[
 \widetilde I_{q_\ell,a}^{(r)}(E)
 \Subset\widetilde\nu_a
 =(\widetilde{\mathscr U}_{Q,a}^{\rm seed})_E
 \quad(E\in\mathcal C).
\]
On this lifted domain, the initial-scale overlap coefficient in
\eqref{fr:eq:Young-seed-joining-definition} has the exact factorization
\[
 H_{{\rm seed},\nu_a}(\pi_{\mathbb T}x,E)
 =(x-\widetilde c_{\ell,a}(E))\mathfrak a_{{\rm seed},a}(x,E),
\]
where
\[
 Q^{-D_7}
 \le|\mathfrak a_{{\rm seed},a}(x,E)|
 \le Q^{D_7}.
\]
Moreover,
\[
 \|H_{{\rm seed},\nu_a}\circ(\pi_{\mathbb T},\operatorname{id})
   \|_{C^{r_\dagger}(\widetilde{\mathscr U}_{Q,a}^{\rm seed})}
 +\|\mathfrak a_{{\rm seed},a}\|_{C^{r_\dagger-1}}
 +\|\widetilde c_{\ell,a}\|_{C^{r_\dagger-1}}
 \le Q^{D_8}.
\]

\item\label{item:seed-endpoint-error} \emph{Fixed-order endpoint approximation error and domain compatibility.}
For each fixed \((r,\mathcal C)\), let
\(\{\mathscr B_{\ell,\eta}:\eta\in\mathfrak I(r,\mathcal C)\}\)
be the connected fixed-itinerary domains in
\[
 \{(x,E):E\in\mathcal C,\ x\in K_{q_\ell}^{(r)}(E)\}.
\]
Their energy fibers form a temporary refinement of the genuine two-sided
return atoms of \(K_{q_\ell}^{(r)}(E)\); both return lengths and the ordered
\(\mathcal K_Q^0\)-itinerary are fixed. Let
\(\nu(\eta)\in\mathcal P_{\mathcal K_Q^0}\) be the fixed old atom
containing the common base point of the backward and forward chains.
Orient the new forward input and backward output axes by positive inner
product with their respective old endpoint axes, which are continuous
on each child interval by \textup{(\ref{item:seed-transversality})}.
On pieces of the same genuine atom, the total return words agree;
these orientations therefore glue their singular data across the
temporary cuts. The overlap on the genuine atom restricts to
\[
 h_{\ell,\eta}^{(r)}
 :=\cos G_{K_{q_\ell}^{(r)},\mathscr B_{\ell,\eta}}.
\]
Then, for every \(\eta\in\mathfrak I(r,\mathcal C)\),
\[
 \|h_{\ell,\eta}^{(r)}-H_{{\rm seed},\nu(\eta)}\|
      _{C^{r_\dagger}(\mathscr B_{\ell,\eta})}
 \le C_8Q^{D_9}
       e^{-C_8^{-1}Q^{a_{\rm seed}}}.
\]
A parameter subcell \(\mathcal C'\subset\mathcal C\) is
\emph{boundary-admissible} if, for each \(a\), there is a first-return
phase--energy domain \(\mathscr B_{\ell,a}\) such that
\[
\begin{aligned}
 \mathscr B_{\ell,a}
 &\subset \mathscr U_{Q,a}^{\rm seed}|_{\mathcal C'},\\
 \{(c_{\ell,a}(E),E):E\in\mathcal C'\}
 &\subset\mathscr B_{\ell,a},\\
 \inf_{E\in\mathcal C'}
 \operatorname{dist}_{\mathbb T}
 \bigl(c_{\ell,a}(E),
       \partial\bigl((\mathscr B_{\ell,a})_E\bigr)\bigr)
 &\ge Q^{-(B_{11}+1)}.
\end{aligned}
\]
Here \((\mathscr B_{\ell,a})_E\) is a genuine return atom, whose boundary
excludes temporary cuts. On such a cell the domains
\(\mathscr B_{\ell,a}\) are called distinguished.

\item\label{item:seed-exterior} \emph{Uniform exterior lower bound.}
For every \(r\in\mathscr W_\ell\), every parameter cell \(\mathcal C\), and every
\(\mathcal A\in\mathcal P_{\mathcal K_Q^0}
\setminus\{\nu_a:1\le a\le2s\}\),
\[
 \inf_{(x,E)\in\mathcal A\times\mathcal C}
 |H_{{\rm seed},\mathcal A}(x,E)|\ge Q^{-D_7},
 \qquad
 \|H_{{\rm seed},\mathcal A}\|_{C^{r_\dagger}(\mathcal A\times\mathcal C)}
 \le Q^{D_8},
\]
and, for every \(1\le a\le2s\) and every
\((x,E)\in\mathscr U_{Q,a}^{\rm seed}\) with
\(x\notin I_{q_\ell,a}^{(r)}(E)\), require
\[
 |H_{{\rm seed},\nu_a}(x,E)|
 \ge Q^{-D_7}\operatorname{dist}_{\mathbb T}
      (x,c_{\ell,a}(E)).
\]

\item\label{item:seed-velocities} \emph{Separation of the energy derivatives of zero branches.}
For every pair \(a\ne b\), there is
\(\sigma_{ab}\in\{+1,-1\}\) such that the fixed relative lift
\[
 f_{\ell,a,b}:=\widetilde c_{\ell,a}-\widetilde c_{\ell,b}
\]
satisfies
\[
 2Q^{-D_6}
 \le \sigma_{ab}\partial_E f_{\ell,a,b}
 \le Q^{D_6},
 \quad E\in\mathcal J_Q,
 \qquad
 \operatorname{osc}_{\mathcal J_Q}f_{\ell,a,b}<\frac18.
\]
\end{enumerate}

Finally, require the uniform margin
\begin{equation}\label{fr:eq:Young-seed-angle-bootstrap-margin}
 \sup_{\nu}
 \|H_{{\rm seed},\nu}\|_{C^{r_\dagger}}
 \le \frac14 Q^{D_8},
\end{equation}
where the supremum is over the return-time atoms occurring on all
phase--energy domains, distinguished or otherwise, for all radii and
parameter cells, and each norm is taken on the corresponding atom--cell
domain (in its fixed real lift when required).
\end{definition}
\section{From the almost Mathieu initial scale to data with multiple
nondegenerate maxima}
\label{sec:multiwell-seed-construction}

\noindent We construct the admissible initial-scale data of
Definition~\ref{fr:def:Young-seed-package} from the almost Mathieu
upper-edge estimates.  We first establish the matrix estimates needed to
control the perturbed folds and their first-return induction.  The
limiting Schur-complement variations then separate the fold heights on an
open prevalent set of perturbations, yielding energy intervals with
exactly \(2s\) simple return zeros and the required pairwise transversality
for every \(1\le s\le N\).

Throughout this section, \(N,\lambda,\rho\),
\(X=C^\omega_\rho(\mathbb T,\mathbb R)\), and the order-indexed data
\(\mathfrak D_{\rm AMO}^{(d)}\) from \eqref{eq:fixed-background-data} are fixed; in
every occurrence of this notation, its superscript is the largest displayed
auxiliary order.
Write \(V_0(x):=2\lambda\cos(2\pi Nx)\).

\subsection{Internal return and matrix tools}
\paragraph{\textbf{Return times.}}
The following estimate bounds first returns in terms of interval size.
\begin{lemma}
\label{B:lem:return-time-poly-two-sided}
Let \(T_\alpha x=x+\alpha\), where \(\alpha\notin\mathbb Q\), and assume
that \(\kappa>0\) and \(\tau\ge1\) satisfy
\[
 \|m\alpha\|_{\mathbb T}\ge\kappa |m|^{-\tau},
 \qquad m\in\mathbb Z\setminus\{0\}.
\]
Put \(\ell_0:=\min\{1/4,\kappa/4\}\). There are \(c,C>0\), depending only on
\((\kappa,\tau)\), such that, for every interval
\(I\subset\mathbb T\) with \(0<|I|\le\ell_0\) and every
\[
 x\in I\setminus\bigcup_{m\in\mathbb Z}
 (T_\alpha)^m(\partial_{\mathbb T}I),
\]
the first positive forward and backward return times satisfy
\[
 c|I|^{-1/\tau}\le r_I^\pm(x)\le C|I|^{-\tau}.
\]
\end{lemma}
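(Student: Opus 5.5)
We prove the two bounds separately, using only the Diophantine condition and the elementary geometry of orbits of \(T_\alpha\). Throughout, fix an interval \(I\) with \(0<|I|\le\ell_0\) and a regular point \(x\in I\). Regularity guarantees that \(r_I^\pm(x)\) is an honest first-return time, with no orbit point on \(\partial_{\mathbb T}I\).

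\emph{Lower bound.} Suppose \(r=r_I^+(x)\) (or \(r_I^-(x)\)). Then both \(x\) and \(T_\alpha^{\pm r}x\) lie in \(I\). Since they are distinct points of an interval, \(\|r\alpha\|_{\mathbb T}\le|I|\). The Diophantine condition gives \(\kappa r^{-\tau}\le\|r\alpha\|_{\mathbb T}\le|I|\), hence \(r\ge\kappa^{1/\tau}|I|^{-1/\tau}\). This is the lower bound with \(c=\kappa^{1/\tau}\). The restriction \(|I|\le\ell_0\le1/4\) ensures that the arc distance in \(\mathbb T\) agrees with \(\|\cdot\|_{\mathbb T}\) on \(I\).

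\emph{Upper bound.} Put \(M:=\lceil(2\kappa^{-1}|I|^{-1})^{1/\tau}\rceil\), or a comparable choice. We claim that the points \(\{T_\alpha^mx:0\le m\le M'\}\) form a \(|I|\)-dense set in \(\mathbb T\) for some \(M'\le C|I|^{-\tau}\). Two routes are available.

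The first uses the three-gap theorem. For any \(n\), the gaps between the points \(\{m\alpha\}_{0\le m\le n}\) take at most three values. The largest gap is at most \(\|q_k\alpha\|_{\mathbb T}+\|q_{k-1}\alpha\|_{\mathbb T}\le 2/q_k\) once \(n\ge q_k+q_{k-1}\). To make the largest gap smaller than \(|I|\), we need \(q_k\gtrsim|I|^{-1}\). By the Diophantine condition, \(1/q_{k+1}\ge\|q_k\alpha\|\ge\kappa q_k^{-\tau}\), so \(q_{k+1}\le\kappa^{-1}q_k^\tau\). Hence there is a first \(k\) with \(q_k\ge 2|I|^{-1}\), and for it \(q_k\le\kappa^{-1}(2|I|^{-1})^\tau\). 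Taking \(n=2q_k\) shows that every interval of length \(|I|\) contains some \(x+m\alpha\) with \(1\le m\le 2q_k\le C|I|^{-\tau}\). The same holds for negative \(m\), because the set \(\{-m\alpha\}\) has the same gap structure.

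Because \(x\) is regular, the point found lies in the interior of \(I\), so \(r_I^\pm(x)\le C|I|^{-\tau}\) with \(C\) depending on \((\kappa,\tau)\). The second route, a direct Dirichlet/pigeonhole argument, gives the same bound.

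\emph{Main obstacle.} The delicate step is the continued-fraction control \(q_{k+1}\le\kappa^{-1}q_k^\tau\) combined with the gap bound. One must check that the first \(q_k\) exceeding \(2/|I|\) is not excessively large. This is exactly where \(|I|\le\kappa/4\) enters: it guarantees that \(k\ge1\) and that the initial denominator \(q_0=1\) case is handled. One must also track that the constants depend only on \((\kappa,\tau)\).
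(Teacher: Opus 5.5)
Your proof is correct and follows the paper's route. The lower bound is the same Diophantine estimate \(\kappa r^{-\tau}\le\|r\alpha\|_{\mathbb T}\le|I|\). For the upper bound, the paper likewise takes the first continued-fraction denominator beyond a fixed multiple of \(|I|^{-1}\), shows that the corresponding orbit segment (and its reflection for \(-\alpha\)) is \(|I|\)-dense, and bounds that denominator by \(\kappa^{-1}q_{k-1}^{\tau}\le C|I|^{-\tau}\).

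The only real difference is how density is obtained. You use the three-gap theorem, while the paper argues by hand: it compares \(\{j\alpha\}_{1\le j\le q_s}\) with the grid \(\{jp_s/q_s\}\) to within \(q_{s+1}^{-1}\), so all gaps are at most \(3/q_s<|I|\). Two small corrections: \(k\ge1\) holds automatically because \(2|I|^{-1}>1=q_0\), so the hypothesis \(|I|\le\kappa/4\) is not what secures it; and your unused \(M\) can simply be dropped.
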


\begin{proof}
For \(r=r_I^+(x)\),
\[
 \kappa r^{-\tau}\le\|r\alpha\|_\T\le|I|
 \quad\Longrightarrow\quad r\ge(\kappa/|I|)^{1/\tau}.
\]
Let $q_s$ be the first continued-fraction denominator at least
$4|I|^{-1}$. For $1\le j\le q_s$,
\[
 \left\|j\alpha-\frac{jp_s}{q_s}\right\|_\T
 <q_{s+1}^{-1}\le q_s^{-1}.
\]
The orbit segment is $3/(2q_s)$-dense, so every gap in it has length
at most $3/q_s<|I|$. Every translate meets $I$, and
\[
 r_I^+(x)\le q_s\le\kappa^{-1}q_{s-1}^{\tau}
 \le C|I|^{-\tau}.
\]
Apply the same argument to $-\alpha$.
\end{proof}

For a matrix \(\mathcal Q\in M_2(\mathbb C)\) with distinct eigenvalues
\(\lambda_+\ne\lambda_-\), the Riesz spectral projection onto the
\(\lambda_+\)-eigenspace is
\begin{equation}\label{svd:eq:explicit-projector-formula}
 P_+(\mathcal Q)=
 \frac{\mathcal Q-\lambda_-I_2}{\lambda_+-\lambda_-}.
\end{equation}

\paragraph{\textbf{Products with separated singular directions.}}
For the next lemma and its proof, fix the following matrix chain.
Let \(d\ge1\), \(L\ge1\), and \(s\ge3\) be integers, let
\(\Omega\subset\mathbb R^d\) be open, and let
\(M_0,\ldots,M_{L-1}:\Omega\to\mathrm{SL}(2,\mathbb R)\) be \(C^s\)
and admit coherently oriented standard singular value decompositions with
\(C^s\) data
\[
 M_j=R_{u_j}\operatorname{diag}(\lambda_j,\lambda_j^{-1})
 R_{\pi/2-s_j},\qquad \lambda_j>1.
\]
Assume \(\lambda_j\ge\Lambda>1\) and \(0<\varkappa\le1\),
\[
 \left|\cos(\pi/2-s_j+u_{j-1})\right|\ge\varkappa
 \quad(1\le j<L),
\]
and define, for multiindices \(\beta\in\mathbb N_0^d\),
\[
 Q_s:=2+\max_{\substack{0\le j<L\\1\le|\beta|\le s}}
 \left(\|\partial^\beta\log\lambda_j\|_\infty
 +\|\partial^\beta u_j\|_\infty
 +\|\partial^\beta s_j\|_\infty\right).
\]
Put \(\mathcal M=M_{L-1}\cdots M_0\).

\begin{lemma}
\label{B:lem:polynomial-hyperbolic-chain}
For the matrix chain above, there are constants \(D_0(s),C_3(s)>0\), depending
only on \(s\), such that, if
\[
 L^{D_0(s)}Q_s^{D_0(s)}\varkappa^{-D_0(s)}\Lambda^{-2}
 \le10^{-3}\varkappa,
\]
the following conclusions hold.
\begin{enumerate}
\item One has
\[
 \|\mathcal M\|\ge
 (\varkappa/2)^{L-1}\prod_{j=0}^{L-1}\lambda_j.
\]
\item On any connected open subdomain \(\Omega_0\subset\Omega\) on which the
orientations are fixed, choose coherent lifts \(U,S\) of the expanding left
and contracting right singular directions of \(\mathcal M\), uniquely
normalized by
\[
 U-u_{L-1},\ S-s_0\in(-\pi/2,\pi/2).
\]
For a constant \(\chi\in\{+1,-1\}\) on \(\Omega_0\),
\[
 \mathcal M=\chi R_U
 \operatorname{diag}(\|\mathcal M\|,\|\mathcal M\|^{-1})
 R_{\pi/2-S}.
\]
If \(\chi=-1\), using \(U+\pi\) in place of \(U\) gives a standard SVD
without changing positive-order derivatives.  The normalized lifts
\(U,S\) satisfy, with all norms on the left taken over \(\Omega_0\),
\begin{equation}\label{fr:eq:chain-fixed-order-endpoint}
\begin{aligned}
 \|S-s_0\|_{C^s}
 &\le C_3(s)L^{D_0(s)}Q_s^{D_0(s)}\varkappa^{-D_0(s)}
       (\inf_\Omega\lambda_0)^{-2},\\
 \|U-u_{L-1}\|_{C^s}
 &\le C_3(s)L^{D_0(s)}Q_s^{D_0(s)}\varkappa^{-D_0(s)}
       (\inf_\Omega\lambda_{L-1})^{-2}.
\end{aligned}
\end{equation}
\item On the same subdomain \(\Omega_0\), the singular data satisfy
\[
 \max_{1\le|\beta|\le s}
 \bigl(\|\partial^\beta\log\|\mathcal M\|\|_\infty
 +\|\partial^\beta U\|_\infty+\|\partial^\beta S\|_\infty\bigr)
 \le C_3(s)L^{D_0(s)}Q_s^{D_0(s)}\varkappa^{-D_0(s)}.
\]
\end{enumerate}
\end{lemma}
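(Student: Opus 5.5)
The plan is to reduce the chain to the normalized two-state products of Lemma~\ref{B:lem:finite-two-state-growth}, Lemma~\ref{B:lem:differentiated-finite-two-state-product} and Corollary~\ref{B:cor:finite-two-state-frames}, with \(\mathbb F=\mathbb R\) and \(m=s\). Write each factor as \(M_j=R_{u_j}\operatorname{diag}(\lambda_j,\lambda_j^{-1})R_{\pi/2-s_j}\) and regroup the product by the inner transitions
\[
 W_j:=R_{\pi/2-s_j}R_{u_{j-1}},
 \qquad
 H_j=\cos(\pi/2-s_j+u_{j-1}),\qquad \Theta_j=\sin(\pi/2-s_j+u_{j-1}),
\]
for \(1\le j<L\). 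This gives
\[
 \mathcal M=R_{u_{L-1}}\Bigl(\prod_{j}\lambda_j\Bigr)\,\mathscr M_L\,R_{\pi/2-s_0},
 \qquad \epsilon_j=\lambda_j^{-2},
\]
with \(\mathscr M_L=D_{L-1}W_{L-1}\cdots W_1D_0\) as in \eqref{B:eq:finite-two-state-product}. The inputs in \eqref{B:eq:finite-two-state-input-bounds} then hold with \(\epsilon=\Lambda^{-2}\), \(\kappa=\varkappa\) and \(K_0=1\). The jets of \(H_j,\Theta_j\) are polynomial in the jets of \(u_{j-1},s_j\), and \(\partial^\beta\log\epsilon_j=-2\partial^\beta\log\lambda_j\). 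Hence every \(K_r\) with \(r\le s\) is at most a polynomial in \(Q_s\), and \(\mathcal B_r\lesssim Q_s^{C(s)}\).

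Choosing \(D_0(s)\ge C_2(s)\) plus the polynomial degree, and using the hypothesis \(L^{D_0}Q_s^{D_0}\varkappa^{-D_0}\Lambda^{-2}\le10^{-3}\varkappa\) with \(D_0\) large, implies \eqref{B:eq:finite-two-state-smallness}; the absolute constant is absorbed by enlarging \(D_0\). For part (i), Lemma~\ref{B:lem:finite-two-state-growth} gives
\[
 \log\|\mathscr M_L\|\ge\sum\log|H_j|-10L\epsilon\varkappa^{-2}\ge (L-1)\log\varkappa-\tfrac1{100}.
\]
This is not quite \((\varkappa/2)^{L-1}\) when \(L=1\) or \(L=2\), so I would handle it separately. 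For \(L=1\) the claim is trivial. For \(L\ge2\), the slack \((L-1)\log2\ge\log2>10L\epsilon\varkappa^{-2}\) suffices, because the smallness gives \(10L\epsilon\varkappa^{-2}\le10^{-2}\). Since the exterior rotations are isometries, this proves (i).

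For (ii), Corollary~\ref{B:cor:finite-two-state-frames} gives real frames \(U_M,V_M\in\mathrm{SO}(2,\mathbb R)\) with
\[
 \mathscr M_L=U_M\operatorname{diag}(\Lambda_M,\det\Lambda_M^{-1})V_M^{\mathsf T},
 \qquad |\Lambda_M|=\|\mathscr M_L\|.
\]
The estimates \(\|\partial^\beta(V_M-I)\|\lesssim\epsilon_{\rm in}L^{C_2}\varkappa^{-C_2}\mathcal B_r\), with \(\epsilon_{\rm in}=\|\lambda_0^{-2}\|_\infty=(\inf\lambda_0)^{-2}\), and the analogous bound for \(U_M\) with \(\epsilon_{\rm out}\), are exactly \eqref{fr:eq:chain-fixed-order-endpoint}. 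To pass from them to the angles, write \(U_M=R_{\phi}\) and \(V_M=R_{\psi}\) with \(|\phi|,|\psi|<\pi/2\); this is possible because \(\|U_M-I\|\) is small. Then set \(U=u_{L-1}+\phi\) and \(S=s_0-\psi\), noting that \(R_{\pi/2-s_0}\) composed with \(V_M^{\mathsf T}\) is \(R_{\pi/2-s_0+\dots}\). The \(C^s\) bounds on \(\phi,\psi\) follow from the frame bounds by the analytic \(\arcsin\)-chart. The sign \(\chi=\operatorname{sgn}(\mathscr H_L\nu_M^{1/2})\) is constant on connected \(\Omega_0\), since \(H_j\) does not vanish there and \(\nu_M^{1/2}\) is near 1. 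By uniqueness of the normalized lifts, these coincide with the chosen \(U,S\).

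For (iii), the bound \(\partial^\beta\log\|\mathcal M\|=\sum_j\partial^\beta\log\lambda_j+\sum_j\partial^\beta\log|H_j|+\partial^\beta\log\nu_M^{1/2}\) gives a bound of order \(LQ_s^{C}\varkappa^{-C}\); the factor \(\varkappa^{-C}\) comes from derivatives of \(\log|H_j|\) via Faà di Bruno. The angle derivatives come from \(\partial^\beta u_{L-1}\), \(\partial^\beta s_0\), and the frame bounds above. The main obstacle is bookkeeping rather than new ideas: one must verify that the regrouping matches the conventions \(\mathscr M_1=D_0\) and \(\det=\mathscr L_0^2\prod\epsilon_t=1\) with \(\mathscr L_0=\prod\lambda_j\), and that the polynomial dependence of \(\mathcal B_r\) on \(Q_s\) can be absorbed into a single exponent \(D_0(s)\).
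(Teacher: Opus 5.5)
Your proposal is correct and follows essentially the same route as the paper: strip off the exterior rotations and $\prod_j\lambda_j$, apply Lemma~\ref{B:lem:differentiated-finite-two-state-product} and Corollary~\ref{B:cor:finite-two-state-frames} with $\epsilon_t=\lambda_t^{-2}$, $H_t=\cos g_t$, $\Theta_t=\sin g_t$, $K_0=1$ and $K_r\le C(s)Q_s^{C(s)}$ (absorbing constants via $Q_s\ge2$), then read off the endpoint frames in the relative-angle chart. For part~(i) the paper uses the zeroth-order multiplier bound where you use Lemma~\ref{B:lem:finite-two-state-growth}, which is an inessential difference; one small slip is that with $V_M=R_\psi$ one has $V_M^{\mathsf T}R_{\pi/2-s_0}=R_{\pi/2-(s_0+\psi)}$, so the lift is $S=s_0+\psi$, not $s_0-\psi$, and this does not affect any of the bounds.
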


\begin{proof}
If \(L=1\), then \(\mathcal M=M_0\), \(\|\mathcal M\|=\lambda_0\), and one
may take \(U=u_0\), \(S=s_0\); the endpoint errors vanish and all assertions
follow from the definition of \(Q_s\).  Hence assume \(L\ge2\) and fix one
connected open subdomain \(\Omega_0\) with fixed orientations.
For \(1\le t<L\), set
\[
 g_t:=\frac\pi2-s_t+u_{t-1},\qquad
 H_t:=\cos g_t,\qquad \Theta_t:=\sin g_t,
\]
and, for \(0\le t<L\), set
\[
 \epsilon_t:=\lambda_t^{-2},\qquad
 \Delta_t:=\operatorname{diag}(1,\epsilon_t),\qquad
 \mathscr L:=\prod_{t=0}^{L-1}\lambda_t.
\]
Then
\[
 \mathcal M
 =\mathscr L R_{u_{L-1}}\mathscr M_LR_{\pi/2-s_0},
 \qquad
 \mathscr M_L=\Delta_{L-1}R_{g_{L-1}}\cdots\Delta_1R_{g_1}\Delta_0.
\]
By the multivariable Fa\`a di Bruno formula~\cite{ConstantineSavits1996},
\[
 \max_{\substack{1\le t<L\\1\le|\beta|\le s}}
 \bigl(\|\partial^\beta H_t\|_\infty
      +\|\partial^\beta\Theta_t\|_\infty\bigr)
 +
 \max_{\substack{0\le t<L\\1\le|\beta|\le s}}
 \|\partial^\beta\log\epsilon_t\|_\infty
 \le C_3(s)Q_s^{C_3(s)}.
\]
Moreover, \(|H_t|\ge\varkappa\) and
\(\|\epsilon_t\|_{L^\infty(\Omega_0)}\le\Lambda^{-2}\).  Each \(H_t\) has constant
sign on \(\Omega_0\), and the quotient rule gives
\[
 \max_{\substack{1\le t<L\\1\le|\beta|\le s}}
 \|\partial^\beta\log|H_t|\|_\infty
 \le C_3(s)Q_s^{C_3(s)}\varkappa^{-s}.
\]
Thus one may take \(K_0=1\) and \(K_r=C_3(s)Q_s^{C_3(s)}\),
\(1\le r\le s\), in
Lemma~\ref{B:lem:differentiated-finite-two-state-product}.  Write
\(\Xi:=LQ_s\varkappa^{-1}\ge2\).  There is an exponent \(p_s>0\), depending
only on \(s\), such that the left-hand side of
\eqref{B:eq:finite-two-state-smallness} at order \(s\) is at most
\(\Lambda^{-2}\Xi^{p_s}\).  If \(c_1(s)\) is the corresponding constant in
that lemma, enlarge \(D_0(s)\ge p_s\) until
\(10^{-3}2^{p_s-D_0(s)}\le c_1(s)\).  The present hypothesis then gives
\[
 \Lambda^{-2}\Xi^{p_s}
 \le10^{-3}\varkappa \Xi^{p_s-D_0(s)}
 \le c_1(s),
\]
so \eqref{B:eq:finite-two-state-smallness} holds.  Enlarge \(D_0(s)\) once
more, if necessary, to dominate \(s\) and all fixed exponents below.
Apply Lemma~\ref{B:lem:differentiated-finite-two-state-product} and
Corollary~\ref{B:cor:finite-two-state-frames} with
\(W_t=R_{g_t}\) and \(\mathscr L_0=\mathscr L\).  Since
\[
 \mathscr L^2\prod_{t=0}^{L-1}\epsilon_t
 =\left(\prod_{t=0}^{L-1}\lambda_t\right)^2
  \prod_{t=0}^{L-1}\lambda_t^{-2}=1,
\]
its real determinant-one specialization applies.  Denote its internal input
and output projections by \(\widehat\Pi^{\rm in}\) and
\(\widehat\Pi^{\rm out}\), its canonical input and output frames by
\(\widehat V\) and \(\widehat U\), and its multiplier by
\(\Lambda_{\rm int}\).  Since the exterior rotations are orthogonal,
Corollary~\ref{B:cor:finite-two-state-frames} gives
\[
 |\Lambda_{\rm int}|=\|\mathscr L\mathscr M_L\|=\|\mathcal M\|.
\]
The zeroth-order multiplier estimate in
Lemma~\ref{B:lem:differentiated-finite-two-state-product}
and the smallness above give
\[
 \|\mathcal M\|
 \ge\frac12\mathscr L\prod_{t=1}^{L-1}|H_t|
 \ge\left(\frac{\varkappa}{2}\right)^{L-1}
      \prod_{t=0}^{L-1}\lambda_t.
\]

The expanding left and contracting right singular projections of
\(\mathcal M\) are, respectively,
\[
 R_{u_{L-1}}\widehat\Pi^{\rm out}R_{-u_{L-1}},
 \qquad
 I_2-R_{s_0-\pi/2}\widehat\Pi^{\rm in}R_{\pi/2-s_0}.
\]
The projection estimates in
\eqref{B:eq:finite-two-state-projector-output}, after differentiating
the exterior rotations and absorbing the resulting powers of \(Q_s\),
have endpoint factors bounded by
\((\inf_\Omega\lambda_{L-1})^{-2}\) and
\((\inf_\Omega\lambda_0)^{-2}\), respectively.  By the spectral separation and canonical
normalization in
Lemma~\ref{B:lem:differentiated-finite-two-state-product} and
Corollary~\ref{B:cor:finite-two-state-frames}, the real frames
\(\widehat U,\widehat V\) lie in the fixed relative-angle chart about
\(I_2\).  Thus there are unique \(C^s\) functions
\(\delta_U,\delta_S:\Omega_0\to(-\pi/2,\pi/2)\), and restoring the endpoint
rotations gives
\[
 \widehat U=R_{\delta_U},\quad \widehat V=R_{\delta_S},\qquad
 U=u_{L-1}+\delta_U,\qquad S=s_0+\delta_S.
\]
The canonical-frame estimates and the smoothness of this fixed chart give
\eqref{fr:eq:chain-fixed-order-endpoint}.  The relative-angle chart is
global on \(\Omega_0\), so no simple connectivity assumption is needed.
The multiplier bound in the same cited lemma gives
\[
 \log\|\mathcal M\|
 =\sum_{t=0}^{L-1}\log\lambda_t
  +\sum_{t=1}^{L-1}\log|H_t|+\mathcal E,
 \qquad
 \|\mathcal E\|_{C^s}
 \le C_3(s)L^{D_0(s)}Q_s^{D_0(s)}\varkappa^{-D_0(s)}\Lambda^{-2}.
\]
Differentiating this identity, using the displayed bounds for
\(\partial^\beta\log|H_t|\) and the endpoint angle estimates, proves the
last assertion.
\end{proof}

\paragraph{\textbf{Elementary identities for singular directions, spectral projections,
and transfer matrices.}}
We use the standard finite-dimensional spectral conventions of
\cite{HornJohnson2013} and the Dirichlet transfer-matrix and Green-function
conventions of \cite{Teschl2000}.
If \(\mathcal M\in\mathrm{SL}(2,\mathbb R)\) has top singular value
\(\lambda(\mathcal M)>1\), its contracting right-singular and expanding
left-singular projections
are
\begin{equation}\label{fr:eq:complete-K-return-projectors}
\begin{aligned}
 \mathscr P^s(\mathcal M)
 &=\frac{\lambda(\mathcal M)^2I_2-\mathcal M^{\mathsf T}\mathcal M}
 {\lambda(\mathcal M)^2-\lambda(\mathcal M)^{-2}},\\
 \mathscr P^u(\mathcal M)
 &=\frac{\mathcal M\mathcal M^{\mathsf T}-\lambda(\mathcal M)^{-2}I_2}
 {\lambda(\mathcal M)^2-\lambda(\mathcal M)^{-2}}.
\end{aligned}
\end{equation}
If \(P_{\rm ref}=vv^{\mathsf T}\), \(\|v\|=1\), and
\(P\) is a rank-one orthogonal projection with
\(\|P-P_{\rm ref}\|<1/2\), the compatible oriented representative of
\(\operatorname{ran}P\) is
\begin{equation}\label{svd:eq:signed-axis-from-projector}
 v(P):=\frac{Pv}{\|Pv\|}.
\end{equation}
If instead \(\Pi\) is a holomorphic rank-one projection satisfying
\[
 \Pi^2=\Pi=\Pi^{\mathsf T},\qquad \operatorname{tr}\Pi=1,
\]
\(v_0^{\mathsf T}v_0=1\), and
\(v_0^{\mathsf T}\Pi v_0\ne0\) on a simply connected parameter domain, then a
compatible holomorphic representative normalized by
\(v^{\mathsf T}v=1\) is
\begin{equation}\label{svd:eq:complex-signed-axis-from-projector}
 v_{\mathbb C}(\Pi;v_0)
 :=\frac{\Pi v_0}
 {(v_0^{\mathsf T}\Pi v_0)^{1/2}},
 \qquad
 v_{\mathbb C}(\Pi;v_0)^{\mathsf T}
 v_{\mathbb C}(\Pi;v_0)=1,
\end{equation}
where the square root is fixed by continuation from the real oriented
direction.

For the Dirichlet restriction
\(H_{y,V,\alpha}^{[0,k-1]}\) fixed in the Introduction, set
\[
 P_k^V(y,E):=\det\bigl(E-H_{y,V,\alpha}^{[0,k-1]}\bigr),
 \qquad P_0^V(y,E):=1.
\]
For a block of length \(m\), whenever \(D:=P_m^V(y,E)\ne0\), write
\(g_L=P_{m-1}^V(y+\alpha,E)/D\), and
\(g_R=P_{m-1}^V(y,E)/D\).  Then
\begin{equation}\label{svd:eq:normalized-transfer-green}
 D^{-1}A_m^V(y,E)=
 \begin{pmatrix}
 1&-g_L\\ g_R&-g_Lg_R+D^{-2}
 \end{pmatrix}.
\end{equation}
If \(M(\vartheta)\in M_2(\mathbb R)\), \(\vartheta\in[0,1/2]\), has singular values
\(s_1(\vartheta)\ge s_2(\vartheta)\), with \(s_1(\vartheta)\ge1\) and
\(\det M(\vartheta)=\vartheta\), then
\begin{equation}\label{svd:eq:artificial-uniform-singular-gap}
 s_1(\vartheta)^2-s_2(\vartheta)^2\ge1-\vartheta^2\ge\tfrac34.
\end{equation}
Finally, for real slope data define the positive algebraic factor
\begin{equation}\label{svd:eq:XiD-explicit}
 \Xi^D(h_L,h_R,a):=
 \frac1{\sqrt{1+h_L^2}\sqrt{1+(a-h_R)^2}}.
\end{equation}
The projection and axis identities follow from
\eqref{svd:eq:explicit-projector-formula}.  The continuant identity
\cite[Chapter~1]{Teschl2000} and \(\det A_m^V=1\) give
\eqref{svd:eq:normalized-transfer-green};
\eqref{svd:eq:artificial-uniform-singular-gap} follows from
\(s_1s_2=\vartheta\) and \(s_1\ge1\).

\subsection{Initial-scale estimates on a finite-union inducing domain}

The identity \eqref{eq:N-fold-AMO-pullback} transfers the unperturbed
one-peak estimates to \(N\) components.

For the next five results and their common proof, fix \(R>0\), the return-window
exponent \(C_I>0\), and an integer
\(r_*\ge3\).  Let
\(B,R_\tau,B_1,B_2,D_1\) be the data supplied
by Propositions~\ref{fr:prop:AMO-edge-state}--
\ref{fr:prop:AMO-return-nondegeneracy} with
\(\beta=\widehat\alpha\) at order \(r_*\).  Recall that
\(X=C^\omega_\rho(\mathbb T,\mathbb R)\) for the fixed strip width
\(\rho>0\), and recall \(\widehat\alpha,\pi_N,p_a,E_+\), and \(\gamma\)
from the Introduction.
Let
\(I_n,\widetilde I_n,\mathcal E_n\) be the windows
\eqref{fr:eq:reciprocal-return-window}--
\eqref{fr:eq:central-polynomial-bidomain} at frequency
\(\widehat\alpha\), and put
\[
 \mathcal J_n:=\pi_N^{-1}(I_n),\qquad
 \widetilde J_{a,n}:=p_a+N^{-1}\widetilde I_n,
 \qquad 0\le a<N.
\]
For the family \eqref{eq:perturbative-potential-family}, let
\(\delta\in X\) satisfy \(\|\delta\|_\rho\le R\).

For a return atom \(\mathcal A\in\mathcal P_{\mathcal J_n}\), write
\(p=p_{\mathcal A}\), \(q=q_{\mathcal A}\), and put
\[
 J_r^+:=\{0,\ldots,r-1\},\qquad
 J_r^-:=\{-r,\ldots,-1\},
\]
and let \(\mathfrak J_{\mathcal A}\) consist of the nonempty integer
subintervals of \(J_p^+\) or \(J_q^-\).  For
\(J\in\mathfrak J_{\mathcal A}\), define
the finite-volume homotopy and determinant by
\begin{equation}\label{eq:perturbed-finite-volume-homotopy-definition}
\begin{aligned}
 H_J^0(x)&:=H_{V_0,J}(x),\\
 H_{J,t}^{\eps,\delta}(x)
 &:=H_J^0(x)+t\eps\,
   \operatorname{diag}\bigl(\delta(x+j\alpha)\bigr)_{j\in J},\\
 P_{J,t}^{\eps,\delta}(x,E)
 &:=\det\bigl(E-H_{J,t}^{\eps,\delta}(x)\bigr),
 \qquad 0\le t\le1.
\end{aligned}
\end{equation}
Set
\(P_J^0(x,E):=P_{J,0}^{\eps,\delta}(x,E)=\det(E-H_J^0(x))\).  For a complete
forward or backward block of length \(r\), use \(J_r^+\) or \(J_r^-\),
respectively.

The two-sided return-time partition of \(\mathcal J_n\) is the pullback under
\(\pi_N\) of that of \(I_n\) and is independent of \((\eps,\delta)\).
For every \(\mathcal A\in\mathcal P_{\mathcal J_n}\), the labels
\(p_{\mathcal A},q_{\mathcal A}\) are common to the perturbed and
unperturbed cocycles and, on
\(\mathcal A\cap\operatorname{Reg}(\mathcal J_n)\), equal the actual
forward and backward return times.

\begin{proposition}
\label{prop:polynomial-starting-scale-perturbed-dichotomy}
With this notation, there are
\[
\begin{gathered}
 D_2=D_2(C_I,R,r_*)>0,\qquad
 C_4=C_4(C_I,R,r_*)\ge1,\\
 0<c\le1\le C<\infty,\qquad n_0=n_0(C_I,R,r_*)\ge2.
\end{gathered}
\]
The numbered constants, exponent, and threshold depend only on
\((\mathfrak D_{\rm AMO}^{(r_*)},R)\). Define
\begin{equation}
\label{eq:polynomial-perturbation-exponent-choice}
 \begin{aligned}
 D_3&:=2D_2
 +\max\{B_1,B_2,D_1\}+5,\\
 M_{r_*}&:=1+\frac{\tau r_*D_2}{C_I}.
 \end{aligned}
\end{equation}
This proposition and the four results below hold with one common choice
of these constants whenever
\begin{equation}
\label{eq:polynomial-starting-scale-smallness}
        n\in\mathbb N,\qquad n\ge n_0,
        \qquad
        0<\eps\le n^{-D_3}.
\end{equation}

The following hold uniformly for every
\(\mathcal A\in\mathcal P_{\mathcal J_n}\).
\begin{enumerate}
\item The return times satisfy
\begin{equation}\label{eq:perturbed-return-length-polynomial}
 c n^{C_I/\tau}\le\min\{p,q\},
 \qquad p+q\le Cn^{C_I\tau}.
\end{equation}
\item For every \(J\in\mathfrak J_{\mathcal A}\),
\(x\in\mathcal A\), \(E\in\mathcal E_n\), and \(t\in[0,1]\),
\begin{equation}\label{eq:perturbed-resolvent-gap}
 E-\max\operatorname{spec}H_{J,t}^{\eps,\delta}(x)
 \ge cn^{-D_1}.
\end{equation}
Under the same quantifiers, for every \(\beta\in\mathbb N_0^2\) with
\(1\le|\beta|\le r_*+1\),
\begin{equation}\label{eq:perturbed-resolvent-polynomial-derivatives}
 \left\|\partial_{x,E}^{\beta}
 (E-H_{J,t}^{\eps,\delta}(x))^{-1}\right\|_{\rm op}
 \le C_4n^{D_2}.
\end{equation}
\item For the same \(x,E,t\) and
\((\sigma,r)=(+,p)\) or \((-,q)\),
\begin{equation}\label{eq:perturbed-principal-determinant-lower}
 P_{J_r^\sigma,t}^{\eps,\delta}(x,E)
 \ge c\exp\{\gamma r-CR_\tau(r+2)\}>0.
\end{equation}
\end{enumerate}
\end{proposition}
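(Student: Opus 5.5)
The plan is to transfer everything from the unperturbed almost Mathieu operator at frequency \(\widehat\alpha\) through the covering identity \eqref{eq:N-fold-AMO-operator-identity}, and then treat \(t\eps\delta\) as a small diagonal perturbation. For (i), the two-sided return partition of \(\mathcal J_n=\pi_N^{-1}(I_n)\) under \(T_\alpha\) is the pullback of that of \(I_n\) under \(T_{\widehat\alpha}\), since \(\pi_N\circ T_\alpha=T_{\widehat\alpha}\circ\pi_N\) and \(\pi_N^{-1}(I_n)\) is \(T_\alpha^m\)-invariant exactly when \(I_n\) is \(T_{\widehat\alpha}^m\)-invariant. Return times are therefore those of \(I_n\). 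The lower bound \(cN_n^{1/\tau}\) then comes from Proposition~\ref{fr:prop:AMO-central-normal-form} or from Lemma~\ref{B:lem:return-time-poly-two-sided} with constant \(\kappa N^{-\tau}\). The upper bound \(CN_n^{\tau}\) comes from the same lemma. Since \(N_n=\lceil n^{C_I}\rceil\), this gives \eqref{eq:perturbed-return-length-polynomial}; these bounds do not involve \(\eps\) at all.

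For (ii), write \(H_J^0(x)=H^{J}_{\lambda,\widehat\alpha,Nx}\). Every \(J\in\mathfrak J_{\mathcal A}\) is a subinterval of a complete forward block \([0,p-1]\) or backward block \([-q,-1]\). Cauchy interlacing (monotonicity of the top eigenvalue under restriction to a subinterval) reduces the gap to the complete blocks. These are exactly the blocks treated in \eqref{fr:eq:companion-stopped-edge-gap}, after the shift \(z=x-q\widehat\alpha\) for the backward block, so \(E_+-\max\operatorname{spec}H_J^0\ge cn^{-D_1}\). For \(E\in\mathcal E_n\) we have \(|E-E_+|\le n^{-B_2}\) with \(B_2>D_1+2\), so the gap at \(E\) is at least \(\tfrac12 cn^{-D_1}\) for large \(n\). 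The perturbation has operator norm at most \(\eps R\le Rn^{-D_3}\), and \(D_3>D_1\), so by Weyl's inequality \eqref{eq:perturbed-resolvent-gap} holds uniformly in \(t\). For the derivatives \eqref{eq:perturbed-resolvent-polynomial-derivatives}, I differentiate the resolvent \(G=(E-H)^{-1}\) repeatedly. Each \(x\)- or \(E\)-derivative produces products of at most \(|\beta|+1\) copies of \(G\) interleaved with diagonal matrices \(\partial_x^k H\). The entries of \(\partial_x^kH\) are \(\partial_x^k[V_0+t\eps\delta](x+j\alpha)\), bounded by \(C(1+R)\) via Cauchy estimates on a smaller strip. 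Hence the bound is \(C(r_*,R)\,(n^{D_1})^{r_*+2}\), which gives \(D_2:=(r_*+2)D_1+1\), independent of the block length.

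For (iii), I write
\[
\log P^{\eps,\delta}_{J,t}=\log P_J^0+\int_0^t\partial_{t'}\log P_{J,t'}\,dt',
\]
with \(\partial_{t'}\log P=-\eps\operatorname{tr}\bigl(G\operatorname{diag}\delta\bigr)\). This gives \(|\log P_{J,t}-\log P_J^0|\le \eps R\,r\,\|G\|\le CR\,n^{-D_3}\,n^{C_I\tau}\,n^{D_1}\). The main obstacle is making this exponent negative. The choice \(D_3=2D_2+\max\{B_1,B_2,D_1\}+5\) does not obviously dominate \(C_I\tau+D_1\) unless \(D_2\) is taken large, so I would enlarge \(D_2\) in (ii) to at least \(C_I\tau+D_1\); this is free, since it only weakens \eqref{eq:perturbed-resolvent-polynomial-derivatives}. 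The bound in \eqref{fr:eq:companion-return-determinant-persistence} requires \(|E-E_+|\le cn^{-D_1}\), which holds on \(\mathcal E_n\) because \(B_2>D_1+2\). Combining, \(P\ge e^{-1}\exp\{\gamma r-CR_\tau(r+2)\}\), and positivity persists along the homotopy since \(P\) never vanishes by (ii). Finally, \(n_0\) is chosen to exceed the threshold of the AMO propositions and to absorb the absolute factors.
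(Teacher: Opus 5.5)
Your proposal is correct and follows the paper's proof essentially step for step. The ingredients match: return times through the relation $T_\alpha^jx\in\mathcal J_n\Leftrightarrow\pi_Nx+j\widehat\alpha\in I_n$ and Lemma~\ref{B:lem:return-time-poly-two-sided}; the gap from the complete-block estimate \eqref{fr:eq:companion-stopped-edge-gap} via Cauchy interlacing and Weyl's inequality; resolvent jets by iterated differentiation of $(E-H)^{-1}$; and the determinant via Jacobi's formula integrated in $t$, with the block-length factor $|J|\le Cn^{C_I\tau}$ absorbed by enlarging $D_2$ (the paper lets $D_{10}$ dominate these losses and takes $D_2\ge4(d_{\rm svd}+2)(D_{10}+1)$).

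The only slip is citing Proposition~\ref{fr:prop:AMO-central-normal-form} for the lower bound in (i). That proposition controls only the central return time $m_n$, so for general atoms the bound must come from Lemma~\ref{B:lem:return-time-poly-two-sided}, as your alternative already does.
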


On each \(\mathcal A\in\mathcal P_{\mathcal J_n}\), use the atomwise
SVD and angle-function convention
\eqref{fr:eq:complete-return-joining-convention}.  Denote the resulting
data by
\[
 (\lambda_{\mathcal A}^{\pm,\eps,\delta},
   u_{\mathcal A}^{\pm,\eps,\delta},
   s_{\mathcal A}^{\pm,\eps,\delta},
   G_{\mathcal A}^{\eps,\delta}),
\]
with oriented angle lifts continued along
\(t\mapsto V_{t\eps,\delta}\), \(0\le t\le1\), from \(t=0\).  A
superscript \(0\) denotes the corresponding unperturbed datum.

\begin{lemma}
\label{lem:polynomial-starting-return-growth}
With the constants and threshold in
Proposition~\ref{prop:polynomial-starting-scale-perturbed-dichotomy},
the following hold on \(\mathcal A\times\mathcal E_n\).
\begin{enumerate}
\item The singular values satisfy
\begin{align}
 \lambda_{\mathcal A}^{+,\eps,\delta}
 &\ge
 c\exp\{\gamma p-CR_\tau(p+2)\},
 \label{eq:perturbed-return-growth-plus}\\
 \lambda_{\mathcal A}^{-,\eps,\delta}
 &\ge
 c\exp\{\gamma q-CR_\tau(q+2)\}.
 \label{eq:perturbed-return-growth-minus}
\end{align}
\item For every \(\beta\in\mathbb N_0^2\) with \(1\le|\beta|\le r_*\),
\begin{align}
 \left|\partial_{x,E}^{\beta}
 \lambda_{\mathcal A}^{+,\eps,\delta}\right|
 &\le
 C_4(p+2)^{M_{r_*}}
 \lambda_{\mathcal A}^{+,\eps,\delta},
 \label{eq:perturbed-return-derivative-plus}\\
 \left|\partial_{x,E}^{\beta}
 \lambda_{\mathcal A}^{-,\eps,\delta}\right|
 &\le
 C_4(q+2)^{M_{r_*}}
 \lambda_{\mathcal A}^{-,\eps,\delta}.
 \label{eq:perturbed-return-derivative-minus}
\end{align}
\end{enumerate}
\end{lemma}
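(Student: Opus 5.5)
The plan is to express each full return block through normalized transfer matrices and Dirichlet determinants, and to read off both the growth and the derivative bounds from Proposition~\ref{prop:polynomial-starting-scale-perturbed-dichotomy}. Fix an atom \(\mathcal A\) and take the forward block of length \(p\); the backward block with \(q\) is identical, with \(J_q^-\) in place of \(J_p^+\). By \eqref{svd:eq:normalized-transfer-green}, with \(D=P_{J_p^+,1}^{\eps,\delta}(x,E)\), we have
\[
 D^{-1}A_p^{V_{\eps,\delta}}(x,E)=\begin{pmatrix}1&-g_L\\ g_R&-g_Lg_R+D^{-2}\end{pmatrix},
\]
where \(g_L,g_R\) are ratios of sub-block determinants to \(D\). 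Each is a corner entry of the resolvent \((E-H_{J,1}^{\eps,\delta})^{-1}\) for \(J=J_p^+\). By \eqref{eq:perturbed-resolvent-gap}, the operator is positive with gap \(cn^{-D_1}\), so \(|g_L|,|g_R|\le Cn^{D_1}\). By \eqref{eq:perturbed-principal-determinant-lower}, \(D\ge c\exp\{\gamma p-CR_\tau(p+2)\}\). Since the \((1,1)\) entry of \(D^{-1}A_p\) equals \(1\), we get \(\lambda_{\mathcal A}^{+,\eps,\delta}=\|A_p\|\ge D\), which gives \eqref{eq:perturbed-return-growth-plus} at once, and likewise \eqref{eq:perturbed-return-growth-minus}.

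For the derivatives, I would write \(\log\lambda^+=\log D+\log s_1(N)\) with \(N:=D^{-1}A_p\). This identity uses only the definition of \(\lambda^+\) as top singular value, so the orientation choice plays no role.

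\emph{Determinant term.} We have \(\partial_E\log D=\operatorname{tr}(E-H)^{-1}\) and \(\partial_x\log D=-\operatorname{tr}\bigl((E-H)^{-1}\partial_xH\bigr)\). Here \(\partial_x^kH\) is diagonal with entries \(V_{\eps,\delta}^{(k)}(x+j\alpha)\), which are bounded by \(C(R)\) uniformly through order \(r_*+1\) by Cauchy estimates on the strip. Higher derivatives of \(\log D\) are sums of traces of products of resolvent derivatives and derivatives of \(H\), with \(O(1)\) factors for each fixed \(|\beta|\le r_*\). Each trace is at most \(p\) times an operator norm, so \eqref{eq:perturbed-resolvent-polynomial-derivatives} gives \(|\partial^\beta\log D|\le C_4\,p\,n^{D_2 r_*}\), after enlarging \(C_4\) by a constant depending only on \(r_*\). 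By \eqref{eq:perturbed-return-length-polynomial}, \(p\ge cn^{C_I/\tau}\), so \(n^{D_2r_*}\le C(p+2)^{\tau r_*D_2/C_I}\). Hence \(|\partial^\beta\log D|\le C_4(p+2)^{M_{r_*}}\), which is exactly the exponent in \eqref{eq:polynomial-perturbation-exponent-choice}.

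\emph{Normalized term.} The entries of \(N\) are resolvent matrix elements and \(D^{-2}\), so their derivatives through order \(r_*\) are at most \(C n^{D_2r_*}\) (the \(D^{-2}\) contributions are exponentially small). The matrix \(N\) has \(s_1(N)\ge1\) and \(\det N=D^{-2}\le\frac12\). Thus \eqref{svd:eq:artificial-uniform-singular-gap} gives the uniform gap \(s_1^2-s_2^2\ge\frac34\), and we can apply Lemma~\ref{svd:lem:fixed-gap-oriented-stability}(ii) to a single-element family with \(\mathcal N_t^\sigma=N\) and \(\eta=0\). This bounds \(\partial^\beta\log s_1(N)\) by a polynomial in \(K_\nu\le Cn^{D_2 r_*}\). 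That polynomial exponent is not obviously \(\le D_2r_*\). If it exceeds \(D_2 r_*\), I would instead use the explicit formula \(s_1^2=\tfrac12(\operatorname{tr}G+\sqrt{(\operatorname{tr}G)^2-4\det G})\) with \(G=N^{\mathsf T}N\) and \(\operatorname{tr}G\ge1\). Derivatives of \(\log\) of this expression involve no division by small quantities, since both \(\operatorname{tr}G\) and the discriminant are bounded below. So \(\log s_1\) has derivatives controlled by the \(C^{r_*}\) norm of \(N\) times powers of \(\|N\|\le Cn^{2D_1}\).

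Combining the two terms gives \(|\partial^\beta\log\lambda^\pm|\le C_4(p+2)^{M_{r_*}}\), after absorbing the \(N\)-part into the constant \(C_4\) and the exponent through the choice of \(D_2\). Faà di Bruno then converts this into \(|\partial^\beta\lambda|\le C_4(p+2)^{M_{r_*}}\lambda\). Strictly, this step produces powers of the log-derivative bounds; I would absorb them by allowing \(D_2\) to be enlarged at the outset. \(D_2\) is a free output constant of Proposition~\ref{prop:polynomial-starting-scale-perturbed-dichotomy}, and \(M_{r_*}\) scales with it.

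The main obstacle is this exponent bookkeeping. All polynomial losses in \(n\) must be absorbed into the single exponent \(M_{r_*}=1+\tau r_*D_2/C_I\), with \(p\) appearing to power one from the trace. The conversion \(n\lesssim p^{\tau/C_I}\) from \eqref{eq:perturbed-return-length-polynomial} is what makes this possible.
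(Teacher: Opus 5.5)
Your route is the paper's. You write \(\lambda=D\,s_1(N)\) with \(N=D^{-1}A\), get the growth bounds from \(N_{11}=1\) and the determinant lower bound, split \(\log\lambda=\log D+\log s_1(N)\), and treat the two pieces by Jacobi's formula and by the fixed-gap SVD lemma (gap \(\tfrac34\) from \(\det N\le\tfrac12\)). You then finish with Fa\`a di Bruno and \(n\lesssim p^{\tau/C_I}\). The growth part is complete. Using Lemma~\ref{svd:lem:fixed-gap-oriented-stability}(ii) with \(\eta=0\) is also fine here, since only the derivative bound is needed; the paper runs it along the homotopy for the corollary's orientation and stability statements.

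The final absorption step does not close as you set it up. You first spend the whole target exponent, \(|\partial^\beta\log\lambda|\le C(p+2)^{M_{r_*}}\), and Fa\`a di Bruno then produces products of up to \(r_*\) such factors. Enlarging \(D_2\) does not repair this. Your inputs are the proposition's bounds \(\|\partial^\beta(E-H)^{-1}\|\le C_4n^{D_2}\), so your log-derivative bound grows with \(D_2\) at the same rate as \(M_{r_*}=1+\tau r_*D_2/C_I\). Even the sharp log-determinant bound \(Cp\,n^{D_2}\) gives, after Fa\`a di Bruno, \(p^{r_*}n^{r_*D_2}\le Cp^{r_*-1}(p+2)^{M_{r_*}}\). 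The extra factor \(p^{r_*-1}\) remains for every choice of \(D_2\), and the polynomial \(\mathfrak P^{\rm svd}_{r_*}\) only makes it worse.

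The repair is to run the estimate with an exponent fixed before \(D_2\). The gap \eqref{eq:perturbed-resolvent-gap} alone gives \(\|\partial^\beta(E-H)^{-1}\|\le Cn^{D_1(|\beta|+1)}\). Absorb the trace factor into a power of \(n\) via \(p\le Cn^{C_I\tau}\), and include the degree \(d_{\rm svd}\) of the SVD polynomials. This yields \(|\partial^\beta\log D|+|\partial^\beta\log s_1(N)|\le Cn^{P}\) with \(P\) independent of \(D_2\). The paper calls this exponent \(D_{10}\) and chooses \(D_2\ge4(d_{\rm svd}+2)(D_{10}+1)\), so that \(|\partial^\beta\log\lambda|\le C_4n^{D_2/4}\). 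Fa\`a di Bruno then gives \(|\partial^\beta\lambda|\le C_4n^{r_*D_2/4}\lambda\). Only now should you convert, using \(n^{r_*D_2/4}\le C(p+2)^{\tau r_*D_2/(4C_I)}\le C(p+2)^{M_{r_*}}\). The passage from powers of \(n\) to powers of \(p\) must come after Fa\`a di Bruno, with a fixed fraction of \(D_2\) held in reserve.
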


The same choice controls the singular directions and the overlap.

\begin{corollary}
\label{cor:polynomial-starting-overlap-stability}
For every \(\mathcal A\in\mathcal P_{\mathcal J_n}\), the following
hold.
\begin{enumerate}
\item On \(\mathcal A\times\mathcal E_n\), the coherently oriented
angle lifts satisfy, for \(\beta\in\mathbb N_0^2\) with
\(1\le|\beta|\le r_*\),
\begin{equation}
\label{eq:perturbed-axis-angle-derivatives}
\sum_{\sigma\in\{+,-\}}
\left(
\left|\partial_{x,E}^{\beta}
u_{\mathcal A}^{\sigma,\eps,\delta}\right|
+
\left|\partial_{x,E}^{\beta}
s_{\mathcal A}^{\sigma,\eps,\delta}\right|
\right)
\le C_4n^{D_2}.
\end{equation}
\item On the same \((x,E)\)-domain,
\begin{equation}
\label{eq:polynomial-Schur-SVD-stability}
 \left\|
 \cos G_{\mathcal A}^{\eps,\delta}
 -\cos G_{\mathcal A}^{0}
 \right\|_{C^{r_*}(\mathcal A\times\mathcal E_n)}
 \le C_4\eps n^{D_2}.
\end{equation}
\item If \(E_+^{\eps,\delta}\) denotes the perturbed upper spectral edge, then
\begin{equation}\label{eq:perturbed-edge-polynomial-margin}
 |E_+^{\eps,\delta}-E_+|
 \le\eps\|\delta\|_\infty
 \le\frac14n^{-B_2}.
\end{equation}

\item On the exterior portion of \(\mathcal A\),
\begin{equation}
\label{eq:perturbed-exterior-mismatch-separation}
 \left|\cos G_{\mathcal A}^{\eps,\delta}(x,E)\right|
 \ge\frac{c}{2}n^{-D_1}
\end{equation}
whenever
\[
 x\in
 \mathcal A\cap
 \left(
 \mathcal J_n\setminus
 \bigcup_{a=0}^{N-1}\widetilde J_{a,n}
 \right),
 \qquad E\in\mathcal E_n.
\]
\end{enumerate}
\end{corollary}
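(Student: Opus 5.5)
The corollary follows from Proposition~\ref{prop:polynomial-starting-scale-perturbed-dichotomy} and Lemma~\ref{lem:polynomial-starting-return-growth}. These give uniform control of the perturbed return blocks along the homotopy \(t\mapsto V_{t\eps,\delta}\). We then feed these bounds into the stability results for singular directions, Lemma~\ref{svd:lem:fixed-gap-oriented-stability} and Corollary~\ref{svd:cor:fixed-gap-oriented-axes}.

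\emph{Step 1: derivative bounds for the normalized blocks.} For \((\sigma,r)=(+,p)\) or \((-,q)\), write the block through the Green-function identity \eqref{svd:eq:normalized-transfer-green}:
\[
 A_r=P_r\cdot\mathcal N^\sigma_t,\qquad P_r=P_{J_r^\sigma,t}^{\eps,\delta}.
\]
The entries of \(\mathcal N^\sigma_t\) are \(1\), \(-g_L\), \(g_R\) and \(-g_Lg_R+P_r^{-2}\), with \(g_L,g_R\) corner entries of the resolvent \((E-H_{J,t}^{\eps,\delta})^{-1}\) on subintervals \(J\in\mathfrak J_{\mathcal A}\). The estimate \eqref{eq:perturbed-resolvent-polynomial-derivatives} bounds all \(x,E\)-derivatives of \(g_L,g_R\) up to order \(r_*\) by \(C_4n^{D_2}\). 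The determinant bound \eqref{eq:perturbed-principal-determinant-lower}, together with \(\partial\log P_r=\operatorname{tr}(E-H)^{-1}\partial(E-H)\), makes \(P_r^{-2}\) exponentially small with controlled derivatives. The \(t\)-derivative of each resolvent carries a factor \(\eps\|\delta\|\le\eps R\), by the resolvent identity. Hence
\[
 \eta\le C\eps n^{D_2},\qquad K_\nu\le Cn^{D_2}.
\]
To match the determinant-\(\vartheta\) normalization in \eqref{svd:eq:artificial-uniform-singular-gap}, I would use \(\det\mathcal N=P_r^{-2}=:\vartheta\le 1/2\), which yields the fixed gap \eqref{svd:eq:fixed-gap-input}. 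The top singular value is \(\ge1\) because the \((1,1)\) entry equals \(1\).

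\emph{Step 2: apply the stability results.} The smallness \eqref{svd:eq:fixed-gap-chart-smallness} holds because
\[
 \eps n^{D_2}\mathfrak P(n^{D_2})\le n^{-D_3}n^{CD_2}.
\]
This is small provided \(D_3\) dominates the polynomial degree. I expect this to be the one bookkeeping obstacle: \(D_3=2D_2+\dots\) may force re-choosing \(D_2\) so that the polynomial \(\mathfrak P^{\rm svd}_{r_*}\) contributes at most linearly in \(n^{D_2}\). This can be absorbed by enlarging \(D_2\) in the proposition, since that constant is chosen there. The SVD axes of \(A_r\) coincide with those of \(\mathcal N\), because the scalar \(P_r>0\) leaves the directions unchanged. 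Corollary~\ref{svd:cor:fixed-gap-oriented-axes}(ii) then gives (i), after absorbing constants into \(C_4\). Item (iii) of the same corollary, with
\[
 H_t=\cos G_{\mathcal A}^{t\eps,\delta},
\]
gives \eqref{eq:polynomial-Schur-SVD-stability}.

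\emph{Step 3: edge and exterior.} The edge statement (iii) follows from \(\|\eps\delta\|_{\rm op}\le\eps\|\delta\|_\infty\) and Weyl's perturbation inequality for self-adjoint operators, which gives \(|E_+^{\eps,\delta}-E_+|\le\eps\|\delta\|_\infty\). Then
\[
 \eps R\le Rn^{-D_3}\le\tfrac14n^{-B_2},
\]
since \(D_3>B_2+1\) and \(n\ge n_0\). For (iv), pull back by \(\pi_N\) via \eqref{eq:N-fold-AMO-operator-identity}. The unperturbed overlap on \(\mathcal J_n\setminus\bigcup_a\widetilde J_{a,n}\) is the AMO overlap at \(Nx\in I_n\setminus\widetilde I_n\), which \eqref{fr:eq:exterior-cos-lower} bounds below by \(cn^{-D_1}\). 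The \(C^0\) part of (ii) changes it by at most
\[
 C_4\eps n^{D_2}\le C_4n^{D_2-D_3}\le \tfrac c2 n^{-D_1},
\]
because \(D_3>D_2+D_1\).
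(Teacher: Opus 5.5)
Your proposal is correct and follows essentially the same route as the paper. You normalize the return blocks by their Dirichlet determinants and obtain the \(3/4\) Gram gap from \(\det\mathcal N_t^\sigma=(D_t^\sigma)^{-2}\le\frac12\). You then apply Lemma~\ref{svd:lem:fixed-gap-oriented-stability} and Corollary~\ref{svd:cor:fixed-gap-oriented-axes} with the unperturbed axes as initial sections, get (iii) from Weyl's inequality, and get (iv) from \eqref{fr:eq:exterior-cos-lower} together with the \(C^0\) part of (ii).

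The bookkeeping point you flagged is resolved in the paper as you suggest. The raw resolvent and determinant losses are carried at a fixed exponent \(D_{10}\) that does not depend on \(D_2\); only afterwards is \(D_2\ge4(d_{\rm svd}+2)(D_{10}+1)\) chosen. This gives \(C_1(r_*)\eta\,\mathfrak P_{r_*}^{\rm svd}\le C_4\eps n^{D_2/4}\le\frac14\), and the angle-derivative output is at most \(C_4n^{D_2/4}\). As written, your Step~1 cites \eqref{eq:perturbed-resolvent-polynomial-derivatives} with exponent \(D_2\), so \(D_2\) cannot then be enlarged; the input bounds must be taken with the fixed raw exponent instead.
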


For \(0\le a<N\), let
\(\mathcal A_{a,n}\supset\widetilde J_{a,n}\) be the common return
atom containing \(p_a\).  Call these atoms central and every other atom
of \(\mathcal P_{\mathcal J_n}\) exterior.  Set
\[
 H_{a,n}^{\eps,\delta}
 :=\cos G_{\mathcal A_{a,n}}^{\eps,\delta},\qquad
 H_{a,n}^{0}:=\cos G_{\mathcal A_{a,n}}^{0},\qquad
 \widehat\Omega_{a,n}:=\widetilde J_{a,n}\times\mathcal E_n.
\]

\begin{proposition}
\label{prop:polynomial-starting-folds}
For every \(0\le a<N\), under the same scale condition
\eqref{eq:polynomial-starting-scale-smallness},
the following hold.
\begin{enumerate}
\item There is a unique
\((\xi_{a,n}^{\eps,\delta},E_{a,n}^{\eps,\delta})
\in\widehat\Omega_{a,n}\) satisfying
\[
 H_{a,n}^{\eps,\delta}=\partial_xH_{a,n}^{\eps,\delta}=0
 \quad\text{at }(\xi_{a,n}^{\eps,\delta},E_{a,n}^{\eps,\delta}).
\]
This is the fold point of \(H_{a,n}^{\eps,\delta}=0\) for projection
onto the energy coordinate.
With the first-return time \(m_n\) from
Proposition~\ref{fr:prop:AMO-central-normal-form},
\begin{equation}
\label{eq:perturbed-fold-displacement-polynomial}
 \left|\xi_{a,n}^{\eps,\delta}-p_a\right|
 +\left|E_{a,n}^{\eps,\delta}-E_+\right|
 \le
 C_4
 \left(
 \eps n^{D_2}+e^{-c\gamma m_n}
 \right).
\end{equation}
\item Throughout \(\widehat\Omega_{a,n}\),
\begin{equation}
\label{eq:perturbed-central-transversality}
 c\le
 \partial_E H_{a,n}^{\eps,\delta}
 \le C,
 \qquad
 c\le
 \partial_x^2 H_{a,n}^{\eps,\delta}
 \le C
\end{equation}
in the same upper-edge orientation.
\end{enumerate}
\end{proposition}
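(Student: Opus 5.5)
The plan is to transfer the unperturbed fold from Proposition~\ref{fr:prop:AMO-central-normal-form} to each central atom through the covering \eqref{eq:N-fold-AMO-pullback}, and then to continue it under the perturbation with the implicit function theorem. The continuation uses the $C^{r_*}$ closeness \eqref{eq:polynomial-Schur-SVD-stability} from Corollary~\ref{cor:polynomial-starting-overlap-stability}.

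\emph{Step 1: the unperturbed overlap.} By \eqref{eq:N-fold-AMO-operator-identity}, the unperturbed return matrices on $\mathcal A_{a,n}$ at phase $x$ coincide with the AMO return matrices at phase $Nx-a=N(x-p_a)$ and frequency $\widehat\alpha$. The return times agree because the partition of $\mathcal J_n$ is the pullback of that of $I_n$. Hence
\[
 H_{a,n}^0(x,E)=\cos G_{I_n,\mathcal A_n^0}(N(x-p_a),E),
\]
and \eqref{fr:eq:central-mismatch-transversality} gives, on $\widehat\Omega_{a,n}$,
\[
 c\le\partial_EH_{a,n}^0\le C,\qquad cN^2\le\partial_x^2H_{a,n}^0\le CN^2 .
\]
After renaming $c,C$, this is \eqref{eq:perturbed-central-transversality} for $t=0$.

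\emph{Step 2: transversality for the perturbed overlap.} Part (ii) follows from Step~1 and the perturbation bound
\[
 \|H_{a,n}^{\eps,\delta}-H_{a,n}^0\|_{C^2}\le C_4\eps n^{D_2}\le C_4n^{D_2-D_3}.
\]
Since $D_3>D_2$, this is $o(1)$ once $n_0$ is large, so $c/2$ and $2C$ bounds survive. The orientation is preserved because the angle lifts are continued from $t=0$.

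\emph{Step 3: existence and uniqueness of the fold.} Part (i) is a two-dimensional inverse-function argument for the map
\[
 F=(H,\partial_xH)\colon \widehat\Omega_{a,n}\to\mathbb R^2 .
\]
At any point, $DF$ has determinant
\[
 \partial_xH\,\partial_x\partial_EH-\partial_EH\,\partial_x^2H .
\]
Near the zero set of $\partial_xH$ this is bounded away from $0$ by $c^2/2$, using the bounds of Step~2 and the smallness of $\partial_xH$ there. We first locate the unperturbed fold. From the normal form \eqref{fr:eq:central-polynomial-normal-form} in the variable $N(x-p_a)$, the function
\[
 \widetilde C_{1,n}x^2+\widetilde C_{2,n}(E-E_+)
\]
has its fold at $(0,E_+)$, and $\widetilde R_n$ moves it by $O(e^{-c\gamma m_n})$. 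This uses the $C^2$ bound on $\widetilde R_n$ and the nondegenerate Jacobian. The perturbation $F^{\eps,\delta}-F^0$ has $C^1$ norm at most $C_4\eps n^{D_2}$, which moves the zero by at most a comparable amount. A quantitative inverse-function theorem with Jacobian bounds uniform on a ball of radius $\gg C_4(\eps n^{D_2}+e^{-c\gamma m_n})$ gives \eqref{eq:perturbed-fold-displacement-polynomial}.

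\emph{Main obstacle: uniqueness on all of $\widehat\Omega_{a,n}$, and whether the ball fits.} The half-width of $\widehat\Omega_{a,n}$ is $N^{-1}n^{-B_1}$ in $x$ and $n^{-B_2}$ in $E$. The displacement satisfies $\eps n^{D_2}\le n^{D_2-D_3}$, which is much smaller because $D_3\ge 2D_2+\max\{B_1,B_2\}+5$. The term $e^{-c\gamma m_n}$ is also smaller, because $m_n\ge cN_n^{1/\tau}$ grows polynomially. For global uniqueness, strict convexity $\partial_x^2H>0$ makes $\partial_xH(\cdot,E)$ strictly increasing on each fiber, so it has at most one zero $x(E)$. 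Then
\[
 \frac{d}{dE}H(x(E),E)=\partial_EH\ge c>0,
\]
so $E\mapsto H(x(E),E)$ has at most one zero. This rules out a second fold anywhere in the domain.
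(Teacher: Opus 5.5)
Your strategy is the same as the paper's: pull back the AMO normal form, use the stability bound \eqref{eq:polynomial-Schur-SVD-stability} for (ii), run a Newton-type contraction for \(F=(H,\partial_xH)\) near \((p_a,E_+)\), and get uniqueness from convexity plus \(\partial_EH>0\). Two steps, however, do not go through as written.

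\emph{The displacement bound.} Your Step~2 controls \(\partial_EH\) and \(\partial_x^2H\), but not \(\partial_x\partial_EH\). The available data bound this mixed derivative only polynomially, e.g.\ through \(\|\widetilde C_{2,n}\|_{C^2}\le n^{D_1}\). At a fold
\[
 DF^{-1}=\begin{pmatrix}-H_{xE}/(H_EH_{xx})&1/H_{xx}\\ 1/H_E&0\end{pmatrix}.
\]
So the energy component of the displacement is indeed comparable to \(\|F^{\eps,\delta}-F^0\|\), but the phase component carries the extra factor \(|H_{xE}|\). Likewise, the Jacobian determinant is bounded below only where \(|\partial_xH|\ll|\partial_x\partial_EH|^{-1}\), so ``near the zero set'' has to be quantified polynomially. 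If you feed in only the stated bound \(C_4\eps n^{D_2}\) from \eqref{eq:polynomial-Schur-SVD-stability}, you get roughly \(C\eps n^{D_2+D_1}\). That is weaker than \eqref{eq:perturbed-fold-displacement-polynomial}. The exponentially small term causes no trouble, since polynomial factors are absorbed by shrinking \(c\), using \(m_n\ge cn^{C_I/\tau}\).

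The paper closes this gap with the sharper bounds actually produced in the stability step. These are \(C_4\eps n^{D_2/4}\) for the overlap difference and \(C_4n^{D_2/4}\) for all derivatives through order \(r_*\), including \(H_{xE}\) and \(D^2F\). Then \(\|DF(z_0)^{-1}\|\le C_4n^{D_2/4}\), and the displacement is at most \(4C_4^2(\eps n^{D_2/2}+n^{D_2/4}e^{-c\gamma m_n})\). This is below \(C_4(\eps n^{D_2}+e^{-c'\gamma m_n})\) for large \(n\).

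\emph{Uniqueness.} The critical point \(x(E)\) exists only for energies at which \(\partial_xH(\cdot,E)\) changes sign in \(\widetilde J_{a,n}\). That set is open but need not be an interval. Monotonicity of \(E\mapsto H(x(E),E)\) on each component therefore does not exclude two folds lying in different components.

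Use the paper's argument instead. At a fold \((x_1,E_1)\) the convex function \(H(\cdot,E_1)\) has a critical point with value \(0\), so \(H(x,E_1)\ge H_x(x_1,E_1)(x-x_1)=0\) on the whole fiber. Then \(\partial_EH\ge c/2\) gives \(H(x,E_2)\ge\frac c2(E_2-E_1)>0\) for every \(x\in\widetilde J_{a,n}\) and every \(E_2>E_1\). This rules out a second fold. The argument needs only the product structure \(\widetilde J_{a,n}\times\mathcal E_n\) of the domain, which you have.
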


For the fold point above, set
\[
 \Omega_{a,n}^{\eps,\delta}
 :=
 \left\{
 |x-\xi_{a,n}^{\eps,\delta}|
 <\frac{1}{4N}n^{-B_1},
 \quad
 |E-E_{a,n}^{\eps,\delta}|
 <\frac14n^{-B_2}
 \right\}.
\]

\begin{corollary}
\label{cor:polynomial-starting-fold-normal-form}
For the same fold point, on \(\Omega_{a,n}^{\eps,\delta}\), there is
the exact recentered normal form
\begin{equation}
\label{eq:perturbed-recentered-normal-form}
 H_{a,n}^{\eps,\delta}(x,E)
 =
 C_{1,a,n}^{\eps,\delta}(x)
 (x-\xi_{a,n}^{\eps,\delta})^2
 +
 C_{2,a,n}^{\eps,\delta}(x,E)
 (E-E_{a,n}^{\eps,\delta}),
\end{equation}
where, in the common upper-edge orientation,
\begin{equation}
\label{eq:perturbed-recentered-coefficient-bounds}
 0<c\le C_{1,a,n}^{\eps,\delta}\le C,
 \qquad
 0<c\le C_{2,a,n}^{\eps,\delta}\le C.
\end{equation}
\end{corollary}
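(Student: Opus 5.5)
The plan is a Taylor/Hadamard division in \(x\) and in \(E\) around the fold point supplied by Proposition~\ref{prop:polynomial-starting-folds}. Write \(H=H_{a,n}^{\eps,\delta}\), \(\xi=\xi_{a,n}^{\eps,\delta}\), \(E_*=E_{a,n}^{\eps,\delta}\).

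First I check that \(\Omega_{a,n}^{\eps,\delta}\subset\widehat\Omega_{a,n}\). By \eqref{eq:perturbed-fold-displacement-polynomial} and the scale condition \eqref{eq:polynomial-starting-scale-smallness}, we have \(|\xi-p_a|+|E_*-E_+|\le C_4(n^{D_2-D_3}+e^{-c\gamma m_n})\). Since \(D_3>D_2+\max\{B_1,B_2\}+5\) and \(m_n\ge cN_n^{1/\tau}\) grows polynomially, this is much smaller than \(\frac1{4N}n^{-B_1}\) and \(\frac14 n^{-B_2}\) once \(n_0\) is large. Hence \(\Omega_{a,n}^{\eps,\delta}\) lies in the product \(\widetilde J_{a,n}\times\mathcal E_n\), where \(\widetilde J_{a,n}\) has half-width \(N^{-1}n^{-B_1}\). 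This product is convex, so the segment arguments below stay inside the domain on which \eqref{eq:perturbed-central-transversality} holds.

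Next I split
\[
H(x,E)=\bigl[H(x,E)-H(x,E_*)\bigr]+H(x,E_*).
\]
For the first bracket I use
\[
C_2(x,E):=\int_0^1\partial_EH(x,E_*+\theta(E-E_*))\,d\theta
\]
when \(E\ne E_*\), and \(C_2(x,E_*)=\partial_EH(x,E_*)\). By \eqref{eq:perturbed-central-transversality} this gives \(c\le C_2\le C\). For the second term, \(H(\xi,E_*)=\partial_xH(\xi,E_*)=0\), so Taylor's formula with integral remainder gives
\[
H(x,E_*)=C_1(x)(x-\xi)^2,\qquad
C_1(x):=\int_0^1(1-\theta)\,\partial_x^2H(\xi+\theta(x-\xi),E_*)\,d\theta .
\]
Because \(\int_0^1(1-\theta)\,d\theta=\tfrac12\), the curvature bound yields \(c/2\le C_1\le C/2\). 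Adding the two pieces gives \eqref{eq:perturbed-recentered-normal-form}. Here \(C_1\) depends only on \(x\), as the statement requires, and relabelling \(c\) and \(C\) gives \eqref{eq:perturbed-recentered-coefficient-bounds}.

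The only delicate point is regularity. The function \(H\) must be \(C^2\) on \(\widehat\Omega_{a,n}\) so that the integral formulas make sense. This follows from \eqref{eq:polynomial-Schur-SVD-stability} with \(r_*\ge3\) together with the smoothness of the unperturbed overlap. The coefficients then inherit \(C^1\) regularity, with no extra work beyond what is already established.
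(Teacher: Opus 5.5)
Your proposal is correct and matches the paper's argument (Step~(v) of the joint proof). The paper uses the same splitting \(H(x,E)=[H(x,E)-H(x,E_*)]+H(x,E_*)\) and the same integral formulas for \(C_{1,a,n}^{\eps,\delta}\) and \(C_{2,a,n}^{\eps,\delta}\), and it reads the coefficient bounds off \eqref{eq:perturbed-central-transversality}. Your check that \(\Omega_{a,n}^{\eps,\delta}\subset\widehat\Omega_{a,n}\), via \eqref{eq:perturbed-fold-displacement-polynomial} and the choice of \(D_3\), is correct; the paper leaves this step implicit.
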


\begin{proof}[Proof of Proposition~\ref{prop:polynomial-starting-scale-perturbed-dichotomy}
and the four subsequent results]
\smallskip
\noindent\emph{Step (i): Parameters.}
Let \(D_{10}\) dominate \(D_1\) and the finitely many
polynomial losses obtained,
by repeated resolvent differentiation and Jacobi's formula, for resolvents
through order \(r_*+1\), determinant ratios, and normalized return matrices
through order \(r_*\).  Let \(d_{\rm svd}\) dominate the total degrees of
the finitely many polynomials
\(\mathfrak P_k^{\rm svd}\), \(1\le k\le r_*\), in
Lemma~\ref{svd:lem:fixed-gap-oriented-stability}.  Choose
\begin{equation}\label{eq:starting-scale-SVD-exponent-ledger}
 D_2\ge4(d_{\rm svd}+2)(D_{10}+1),
\end{equation}
and define \(D_3,M_{r_*}\) by
\eqref{eq:polynomial-perturbation-exponent-choice}.
By \eqref{eq:N-fold-AMO-pullback}, for every \(j\in\mathbb Z\),
\[
 (T_\alpha)^jx\in\mathcal J_n
 \quad\Longleftrightarrow\quad
 \pi_Nx+j\widehat\alpha\in I_n.
\]
Hence the return partition and both return lengths are unchanged.
Lemma~\ref{B:lem:return-time-poly-two-sided}, applied to \(I_n\), yields
\[
 c n^{C_I/\tau}\le\min\{p,q\},
 \qquad p+q\le Cn^{C_I\tau}.
\]
\smallskip
\noindent\emph{Step (ii): Finite-volume estimates.}\par
\noindent For \((\sigma,r)\in\{(+,p),(-,q)\}\), the forward and backward source
phases are \(\pi_Nx\) and \(\pi_Nx-r\widehat\alpha\), respectively.
After translating the lattice indices, the corresponding complete
unperturbed one-sided block is \(H_{J_r^\sigma}^0(x)\).  Hence
\eqref{fr:eq:companion-stopped-edge-gap} gives
\[
 E_+-\max\operatorname{spec}H_{J_r^\sigma}^0(x)
 \ge cn^{-D_1}.
\]
Thus, for \(E\in\mathcal E_n\),
\[
\begin{aligned}
 E-\max\operatorname{spec}H_{J_r^\sigma}^0(x)
 &\ge
 E_+-\max\operatorname{spec}H_{J_r^\sigma}^0(x)-n^{-B_2}\\
 &\ge cn^{-D_1}-n^{-B_2}
 \ge c'n^{-D_1},
\end{aligned}
\]
where the last inequality follows, after increasing \(n_0\), from
\eqref{fr:eq:companion-edge-gap-hierarchy}.  Cauchy interlacing
\cite{HornJohnson2013} passes this bound to every principal block in either
return matrix:
\begin{equation}\label{eq:perturbed-unperturbed-principal-block-gap}
 E-\max\operatorname{spec}H_J^0(x)
 \ge cn^{-D_1},
 \qquad E\in\mathcal E_n.
\end{equation}
Use the finite-volume homotopy
\eqref{eq:perturbed-finite-volume-homotopy-definition}.
Cauchy's estimate \cite{RudinRCA1987} and Weyl's eigenvalue perturbation
inequality \cite{HornJohnson2013} give, respectively,
\[
 \sup_x|\partial_x^k\delta(x)|
 \le C_4R,
 \qquad 0\le k\le r_*+2,
\]
and
\[
 \left|
 \max\operatorname{spec}H_{J,t}^{\eps,\delta}
 -\max\operatorname{spec}H_J^0
 \right|
 \le\eps R.
\]
By \eqref{eq:polynomial-perturbation-exponent-choice}--
\eqref{eq:polynomial-starting-scale-smallness},
\[
 \eps R\le Rn^{-D_3}\le\frac c4n^{-D_1}
\]
after increasing \(n_0\).  Weyl's inequality and
\eqref{eq:perturbed-unperturbed-principal-block-gap} therefore give
\begin{equation}\label{eq:perturbed-homotopy-principal-block-gap}
 E-\max\operatorname{spec}H_{J,t}^{\eps,\delta}(x)
 \ge cn^{-D_1},
 \qquad 0\le t\le1.
\end{equation}

With \eqref{eq:perturbed-homotopy-principal-block-gap}, put
\[
 R_{J,t}:=(E-H_{J,t}^{\eps,\delta})^{-1}.
\]
Writing \(K_{J,t}:=E-H_{J,t}^{\eps,\delta}\), Leibniz's rule gives,
for \(|\beta|\ge1\),
\[
 \partial^\beta R_{J,t}
 =-R_{J,t}\sum_{0<\nu\le\beta}
   \binom\beta\nu(\partial^\nu K_{J,t})
                    (\partial^{\beta-\nu}R_{J,t}).
\]
Thus induction on \(|\beta|\), using
\(\|R_{J,t}\|\le Cn^{D_1}\) and
\(\|\partial^\nu K_{J,t}\|\le C_4\), yields
\[
 \|\partial_{x,E}^\beta R_{J,t}\|_{\rm op}
 \le C_4n^{D_{10}},
 \qquad |\beta|\le r_*+1.
\]
The identity
\[
 R_{J,t}-R_{J,0}
 =t\eps R_{J,t}
   \operatorname{diag}_{j\in J}\delta(x+j\alpha)R_{J,0}
\]
and the same differentiation formula yield
\begin{equation}\label{eq:perturbed-resolvent-difference}
 \|\partial_{x,E}^\beta(R_{J,t}-R_{J,0})\|_{\rm op}
 \le C_4\eps n^{D_{10}},
 \qquad |\beta|\le r_*.
\end{equation}
Positivity and Jacobi's formula give
\[
 \partial_t\log P_{J,t}^{\eps,\delta}
 =-\eps\operatorname{tr}\!\left[
 R_{J,t}\operatorname{diag}(\delta(x+j\alpha))\right].
\]
Using the return-length and derivative bounds and integrating in \(t\),
\begin{equation}\label{eq:perturbed-proof-logdet-homotopy-bound}
 \left\|
 \log\frac{P_{J,t}^{\eps,\delta}}{P_J^0}
 \right\|_{C^{r_*}}
 \le C_4\eps n^{D_{10}}.
\end{equation}
For a complete forward or backward block, this and
\eqref{fr:eq:companion-return-determinant-persistence} imply
\[
 P_{J,t}^{\eps,\delta}(x,E)
 \ge
 \exp\{\gamma |J|-CR_\tau(|J|+2)-1\}.
\]
Here the common threshold is increased so that
\(C_4\eps n^{D_{10}}\le1\).

\smallskip
\noindent\emph{Step (iii): Singular-value estimates.}
For each \(\sigma\in\{+,-\}\), translate a backward block to forward
orientation when necessary, let \(A_{r,t}^{\sigma}\) be its length-\(r\)
fixed-word matrix for \(V_0+t\eps\delta\), and set
\[
 D_t^\sigma:=P_{J_r^\sigma,t}^{\eps,\delta},
 \qquad
 \mathcal N_t^\sigma:=(D_t^\sigma)^{-1}A_{r,t}^{\sigma},
 \qquad
 \lambda_{\mathcal A,t}^{\sigma}:=\|A_{r,t}^{\sigma}\|.
\]
For the homotopy notation used below, put
\[
 u_{\mathcal A,t}^{\sigma}
 :=u_{\mathcal A}^{\sigma,t\eps,\delta},\qquad
 s_{\mathcal A,t}^{\sigma}
 :=s_{\mathcal A}^{\sigma,t\eps,\delta},\qquad
 G_{\mathcal A,t}:=G_{\mathcal A}^{t\eps,\delta}.
\]
Positivity, \eqref{svd:eq:normalized-transfer-green},
\eqref{eq:perturbed-resolvent-difference}, and
\eqref{eq:perturbed-proof-logdet-homotopy-bound} give, after increasing
\(n_0\),
\begin{align}
 &(\mathcal N_t^\sigma)_{11}=1,\qquad
 \det\mathcal N_t^\sigma=(D_t^\sigma)^{-2}\le\tfrac12,\qquad
 \|\mathcal N_t^\sigma\|\ge1,
 \label{eq:perturbed-normalized-return-algebra}\\
 &\max_{\substack{\sigma\in\{+,-\},\ 0\le t\le1\\1\le k\le r_*}}
 \|\mathcal N_t^\sigma\|_{C^k}
 \le C_4n^{D_{10}},\notag\\
 &\qquad
 \max_{\sigma,t}
 \|\mathcal N_t^\sigma-\mathcal N_0^\sigma\|_{C^{r_*}}
 \le C_4\eps n^{D_{10}}.
 \label{eq:perturbed-normalized-return-raw-stability}\\
 &\max_{\substack{\sigma,t\\1\le|\beta|\le r_*}}
 \|\partial_{x,E}^{\beta}\log D_t^\sigma\|_{L^\infty}
 \le C_4n^{D_{10}}.
 \label{eq:perturbed-principal-logdet-raw-jets}
\end{align}
In particular,
\[
 s_1(\mathcal N_t^\sigma)\ge1,\qquad
 s_1(\mathcal N_t^\sigma)^2-s_2(\mathcal N_t^\sigma)^2
 \ge1-(D_t^\sigma)^{-4}\ge\tfrac34,
 \qquad 0\le t\le1.
\]

We now apply Lemma~\ref{svd:lem:fixed-gap-oriented-stability} and
Corollary~\ref{svd:cor:fixed-gap-oriented-axes} on
\(\operatorname{int}\mathcal A\times\operatorname{int}\mathcal E_n\)
to the pair
\((\mathcal N_t^+,\mathcal N_t^-)\), using the unperturbed oriented axes
as its initial sections; the resulting estimates extend to
\(\mathcal A\times\mathcal E_n\) by continuity.  With the common notation of those results,
\[
 K_k\le C_4n^{D_{10}},
 \qquad \eta\le C_4\eps n^{D_{10}}.
\]
By the definition of \(d_{\rm svd}\) and
\eqref{eq:starting-scale-SVD-exponent-ledger},
\begin{equation}\label{eq:perturbed-SVD-chart-budget}
 C_1(r_*)\eta\mathfrak P_{r_*}^{\rm svd}(K_1,\ldots,K_{r_*})
 \le C_4\eps n^{D_2/4}\le\frac14
\end{equation}
after increasing \(n_0\).  By
\eqref{eq:perturbed-SVD-chart-budget}, the formula
\[
 \mathbf v_{t,\sigma}
 =\frac{\Pi_{t,\sigma}^{\rm in}\mathbf v_{0,\sigma}}
 {\|\Pi_{t,\sigma}^{\rm in}\mathbf v_{0,\sigma}\|}
\]
is valid for every \(0\le t\le1\), and
\(\mathbf v_{t,\sigma}^{\mathsf T}\mathbf v_{0,\sigma}>0\).
The analogous left-axis formula also has positive inner product with
its initial axis; both lifts therefore remain in their initial charts.

The axis, overlap, and logarithmic singular-value conclusions
now give
\begin{align*}
 &\sum_{\sigma\in\{+,-\}}
 \|u_{\mathcal A,t}^{\sigma}-u_{\mathcal A,0}^{\sigma}\|_{C^{r_*}}
 +\sum_{\sigma\in\{+,-\}}
 \|s_{\mathcal A,t}^{\sigma}-s_{\mathcal A,0}^{\sigma}\|_{C^{r_*}}\\
 &\qquad
 +\|\cos G_{\mathcal A,t}-\cos G_{\mathcal A,0}\|_{C^{r_*}}
 \le C_4\eps n^{D_2/4},\\
 &\max_{\substack{t\\1\le|\beta|\le r_*}}
 \sum_{\sigma\in\{+,-\}}\left(
 |\partial_{x,E}^{\beta}u_{\mathcal A,t}^{\sigma}|
 +|\partial_{x,E}^{\beta}s_{\mathcal A,t}^{\sigma}|\right)
 \le C_4n^{D_2/4},\\
 &\max_{\substack{\sigma,t\\1\le|\beta|\le r_*}}
 \|\partial_{x,E}^{\beta}\log s_1(\mathcal N_t^\sigma)\|_{L^\infty}
 \le C_4n^{D_2/4}.
\end{align*}
The second line implies \eqref{eq:perturbed-axis-angle-derivatives}, and
the first gives \eqref{eq:polynomial-Schur-SVD-stability} at \(t=1\).
Since
\[
 \lambda_{\mathcal A,t}^{\sigma}=D_t^\sigma s_1(\mathcal N_t^\sigma),
 \qquad s_1(\mathcal N_t^\sigma)\ge1,
\]
the determinant bounds give
\eqref{eq:perturbed-return-growth-plus}--
\eqref{eq:perturbed-return-growth-minus}.  Moreover,
\(\log\lambda=\log D+\log s_1(\mathcal N)\) and
\eqref{eq:perturbed-principal-logdet-raw-jets} and
\eqref{svd:eq:fixed-gap-log-singular-derivatives},
\[
 \max_{\substack{\sigma,t\\1\le|\beta|\le r_*}}
 |\partial_{x,E}^{\beta}\log\lambda_{\mathcal A,t}^{\sigma}|
 \le C_4n^{D_2/4}.
\]
Fa\`a di Bruno applied to
\(\lambda=\exp(\log\lambda)\) therefore gives
\[
 |\partial_{x,E}^{\beta}\lambda_{\mathcal A,t}^{\sigma}|
 \le C_4n^{r_*D_2/4}
       \lambda_{\mathcal A,t}^{\sigma},
 \qquad1\le|\beta|\le r_*.
\]
The lower estimate in \eqref{eq:perturbed-return-length-polynomial}
absorbs this power into \(C_4(p+2)^{M_{r_*}}\), respectively
\(C_4(q+2)^{M_{r_*}}\), by the definition of \(M_{r_*}\). This proves
\eqref{eq:perturbed-return-derivative-plus}--
\eqref{eq:perturbed-return-derivative-minus}.
\eqref{eq:perturbed-edge-polynomial-margin} follows from
\(|E_+^{\eps,\delta}-E_+|\le\eps\|\delta\|_\infty\).

\smallskip
\noindent\emph{Step (iv): Fold points.}
The exact pullback of
\eqref{fr:eq:central-polynomial-normal-form}--
\eqref{fr:eq:central-mismatch-transversality} gives
\[
 \partial_EH_{a,n}^0\ge c,
 \qquad
 \partial_x^2H_{a,n}^0\ge c
\]
on the pulled-back central phase--energy domain.  By
\eqref{eq:polynomial-Schur-SVD-stability}, the coherently continued
upper-edge orientation satisfies
\[
        \partial_EH_{a,n}^{\eps,\delta}\ge c/2,
        \qquad
        \partial_x^2H_{a,n}^{\eps,\delta}\ge c/2
\]
throughout \(\widehat\Omega_{a,n}\).

At \((p_a,E_+)\), the unperturbed remainder estimate gives
\[
 |H_{a,n}^0(p_a,E_+)|
 +|\partial_xH_{a,n}^0(p_a,E_+)|
 \le Ce^{-c\gamma m_n}.
\]
Consequently, by the \(D_{10}\)-bounds and the choice of
\(D_2\),
\begin{equation*}
 \left|
 \begin{pmatrix}
 H_{a,n}^{\eps,\delta}\\
 \partial_xH_{a,n}^{\eps,\delta}
 \end{pmatrix}
 (p_a,E_+)
 \right|
 \le
 C_4
 \left(
 \eps n^{D_2/4}+e^{-c\gamma m_n}
 \right).
\end{equation*}
For the remainder of the proof, write
\[
 H:=H_{a,n}^{\eps,\delta}.
\]
The Jacobian of
\((x,E)\mapsto(H_{a,n}^{\eps,\delta},
\partial_xH_{a,n}^{\eps,\delta})\) has determinant
\[
 H_xH_{xE}-H_EH_{xx}.
\]
Put
\[
        \eta_n
        :=C_4
        \left(\eps n^{D_2/4}+e^{-c\gamma m_n}\right).
\]
Set
\[
 F_n(x,E):=(H(x,E),H_x(x,E)),
 \qquad z_0:=(p_a,E_+).
\]
The same derivative estimates give
\[
 |H_{xE}|+
 \sup_{\widehat\Omega_{a,n}}\|D^2F_n\|
 \le C_4n^{D_2/4}.
\]
Moreover,
\begin{equation*}
 \eta_n n^{3D_2/4}
 \le
 C_4\left(
 \eps n^{D_2}
 +n^{3D_2/4}e^{-c\gamma m_n}\right)
 =o_{n\to\infty}(1).
\end{equation*}
Here we used \eqref{eq:polynomial-perturbation-exponent-choice} and
\(m_n\ge c n^{C_I/\tau}\).  At the base point,
\[
 |H_xH_{xE}|\le C_4\eta_n n^{D_2/4}
 =o_{n\to\infty}(1),
 \qquad
 H_EH_{xx}\ge c^2/4.
\]
Thus the determinant stays away from zero and, allowing for \(H_{xE}\),
\[
 \|DF_n(z_0)^{-1}\|
 \le C_4n^{D_2/4}.
\]
Moreover,
\[
\begin{aligned}
 \|F_n(z_0)\|&\le\eta_n,\\
 \|DF_n(z_0)^{-1}\|^2
 \sup_{\widehat\Omega_{a,n}}\|D^2F_n\|\,\eta_n
 &\le C_4\eta_n n^{3D_2/4}\\
 &=o_{n\to\infty}(1).
\end{aligned}
\]
Put
\[
 A_n:=DF_n(z_0)^{-1},\qquad
 r_n:=2\|A_n\|\eta_n,\qquad
 \Phi_n(z):=z-A_nF_n(z).
\]
The exponent conditions give
\[
 \overline{B(z_0,r_n)}\subset\widehat\Omega_{a,n},\qquad
 \|\Phi_n(z_0)-z_0\|\le r_n/2,
\]
and
\[
 \sup_{z\in B(z_0,r_n)}\|D\Phi_n(z)\|
 \le \|A_n\|r_n\sup_{\widehat\Omega_{a,n}}\|D^2F_n\|
 =o(1)<\tfrac12.
\]
Hence \(\Phi_n\) maps \(\overline{B(z_0,r_n)}\) into itself and is a
contraction. Its fixed point satisfies \(F_n=0\), with displacement
\[
\begin{aligned}
 |\xi_{a,n}^{\eps,\delta}-p_a|
 +|E_{a,n}^{\eps,\delta}-E_+|
 &\le2r_n\le4C_4n^{D_2/4}\eta_n\\
 &=4C_4^2\left(
 \eps n^{D_2/2}
 +n^{D_2/4}e^{-c\gamma m_n}\right)\\
 &\le C_4\left(
 \eps n^{D_2}+e^{-c'\gamma m_n}\right),
\end{aligned}
\]
where \(0<c'<c\) and the last inequality holds after increasing \(n_0\).
Relabeling \(c'\) as \(c\) gives
\eqref{eq:perturbed-fold-displacement-polynomial}.

For uniqueness, strict convexity gives at most one critical point at
each energy.  Two folds \((x_i,E_i)\), \(E_1<E_2\), would imply
\[
\begin{aligned}
 H(x,E_1)&\ge H(x_1,E_1)+H_x(x_1,E_1)(x-x_1)=0,\\
 H(x,E_2)&=H(x,E_1)+\int_{E_1}^{E_2}H_E(x,t)\,dt
 \ge\tfrac c2(E_2-E_1)>0
 \qquad(x\in\widetilde J_{a,n}),
\end{aligned}
\]
contradicting \(H(x_2,E_2)=0\).

\smallskip
\noindent\emph{Step (v): Normal form and separation.}
Write
\[
        \xi=\xi_{a,n}^{\eps,\delta},
        \qquad E_*=E_{a,n}^{\eps,\delta}.
\]
Since \(H(\xi,E_*)=H_x(\xi,E_*)=0\), the fundamental theorem of calculus
gives the exact identity
\[
\begin{aligned}
 H(x,E)
 ={}&
 (x-\xi)^2
 \int_0^1(1-t)
 H_{xx}\bigl(\xi+t(x-\xi),E_*\bigr)\,dt\\
 &+(E-E_*)
 \int_0^1
 H_E\bigl(x,E_*+t(E-E_*)\bigr)\,dt.
\end{aligned}
\]
These integrals are \(C_{1,a,n}^{\eps,\delta}\) and
\(C_{2,a,n}^{\eps,\delta}\) in
\eqref{eq:perturbed-recentered-normal-form}.  The bounds for \(H_{xx}\)
and \(H_E\) give
\eqref{eq:perturbed-recentered-coefficient-bounds} and
\eqref{eq:perturbed-central-transversality}.

Finally, the unperturbed exterior estimate
\eqref{fr:eq:exterior-cos-lower} and
\eqref{eq:polynomial-Schur-SVD-stability} give
\[
 |\cos G_{\mathcal A}^{\eps,\delta}|
 \ge
 cn^{-D_1}
 -C_4\eps n^{D_2}.
\]
By \eqref{eq:polynomial-perturbation-exponent-choice}--
\eqref{eq:polynomial-starting-scale-smallness},
\[
 C_4\eps n^{D_2}\le\tfrac12cn^{-D_1},
\]
which proves \eqref{eq:perturbed-exterior-mismatch-separation}.
\end{proof}
\subsection{First-return induction with multiple simple zeros}

We now induce near the simple zeros created by distinct fold energies.
The following notation and assumptions apply to the next five results
and their common proof.  Fix an integer \(N\ge2\), \(R>0\), and an integer
\(r_*\ge3\).  Let
\(C_I\) be the fixed return-window exponent, and let
\[
 B_2,B_1,D_1,D_2,D_3,
 M_{r_*+1},n_0
\]
be the constants supplied by
Proposition~\ref{prop:polynomial-starting-scale-perturbed-dichotomy} at
order \(r_*+1\).  Use the fold points and normal forms from
Proposition~\ref{prop:polynomial-starting-folds} and
Corollary~\ref{cor:polynomial-starting-fold-normal-form} obtained with
that same choice of constants.  Recall \(\gamma=\log\lambda\) and
\((\sigma_{\rm cmp},B,R_\tau)\) from
Proposition~\ref{fr:prop:AMO-edge-state} and
\eqref{B:eq:Rtau-definition}.
Let
\[
 \delta\in X,\qquad \|\delta\|_\rho\le R,\qquad
 n\in\mathbb N,\qquad n\ge n_0,\qquad
 0<\eps\le n^{-D_3}.
\]
Suppose, in addition, that for some fixed exponents
\(B_3,B_4>0\),
\begin{equation}\label{eq:multiwell-fold-input}
\begin{aligned}
 \min_{0\le a<b<N}|E_{a,n}^{\eps,\delta}-E_{b,n}^{\eps,\delta}|
 &\ge n^{-B_3},\\
 \max_{0\le a<N}\bigl(
 |\xi_{a,n}^{\eps,\delta}-p_a|
 +|E_{a,n}^{\eps,\delta}-E_+^{\eps,\delta}|
 \bigr)&\le n^{-B_4}.
\end{aligned}
\end{equation}
Assume
\begin{equation}\label{eq:multiwell-seed-containment-order}
 B_4>\max\{B_2,2B_1+4\},
 \qquad B_3>B_4+1.
\end{equation}
Let
\[
 B_5,B_6,B_7,B_8,B_9>0
\]
satisfy
\begin{equation}\label{eq:multiwell-polynomial-exponent-order}
\begin{gathered}
 B_5>\max\{B_3,B_2,B_4\}+4,
 \qquad
 B_1+1<B_6<\tfrac12B_4-1,\\
 B_7>\max\{B_6,\tfrac12B_5\}+4,
 \qquad
 B_8>B_7(\tau+1)+4,\\
 B_9>
 \max\{B_3,B_8+\tfrac12B_5\}+4.
\end{gathered}
\end{equation}
Write
\[
 \mathbf b:=(B_3,B_4,B_5,B_6,
 B_7,B_8,B_9).
\]
The constants below may depend on \((R,r_*,\mathbf b)\), in addition to
the fixed background data; this dependence is displayed when the
constants are chosen and then suppressed.
Set
\begin{equation}\label{eq:multiwell-eta-mw-explicit}
 \eta_{\rm mw}:=\min\left\{
 \frac12,
 \frac{C_I\sigma_{\rm cmp}}{2B_7\tau^2}
 \right\}>0.
\end{equation}
Put \(\eta_0:=C_I/(2\tau)>0\).

Let \((a_1,\ldots,a_N)\) be the permutation of
\(\{0,\ldots,N-1\}\) satisfying
\[
 E_{a_1,n}^{\eps,\delta}>
 E_{a_2,n}^{\eps,\delta}>\cdots>
 E_{a_N,n}^{\eps,\delta}
\]
and put \(h_n=n^{-B_5}\).
All phase inequalities, intervals, and factorizations attached to \(a_s\)
are written in the real lift \(\widetilde J_{a_s,n}\).

\begin{corollary}
\label{cor:polynomial-multiwell-K-reinduction}
Under \eqref{eq:multiwell-fold-input}--
\eqref{eq:multiwell-polynomial-exponent-order}, there exist constants
\[
 c_3=c_3(R,r_*,\mathbf b),\qquad C_5=C_5(R,r_*,\mathbf b),
 \qquad 0<c_3\le1\le C_5,
\]
exponents \(B_{16},D_4>0\), and a threshold
\(n_{\rm mw}=n_{\rm mw}(R,r_*,\mathbf b)\ge n_0\), depending only on
\((\mathfrak D_{\rm AMO}^{(r_*+1)},R,\mathbf b)\), and not on
\((n,\eps,\delta,k)\), such that this corollary and the four results below
hold with one common choice of these constants whenever \(n\ge n_{\rm mw}\).

For each \(1\le k<N\), there exists a
closed interval \(\mathcal I_{n,k}\) of length \(n^{-B_9}\)
contained in
\begin{equation}\label{eq:multiwell-intermediate-layer-carrier}
 [E_{a_{k+1},n}^{\eps,\delta}+h_n,
   E_{a_k,n}^{\eps,\delta}-h_n],
\end{equation}
and there exists \(\mathcal I_{n,N}\) of the same length contained in
\begin{equation}\label{eq:multiwell-bottom-layer-carrier}
 [E_{a_N,n}^{\eps,\delta}-2h_n-n^{-B_9},
   E_{a_N,n}^{\eps,\delta}-h_n].
\end{equation}
All these intervals lie in \(\mathcal E_n\) and are common to the
four results below.
\end{corollary}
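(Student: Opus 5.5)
The assertion proved here is only the existence and location of the energy intervals \(\mathcal I_{n,k}\). The constants \(c_3,C_5,B_{16},D_4\) are fixed in the proofs of the four subsequent results. Those proofs may only enlarge the threshold \(n_{\rm mw}\), so it suffices to show that the interval statements hold for all \(n\ge n_{\rm mw}\) with \(n_{\rm mw}\) depending only on \((\mathfrak D_{\rm AMO}^{(r_*+1)},R,\mathbf b)\). The argument is pure exponent bookkeeping, using \eqref{eq:multiwell-fold-input}, the ordering \eqref{eq:multiwell-polynomial-exponent-order}, and the edge margin \eqref{eq:perturbed-edge-polynomial-margin} of Corollary~\ref{cor:polynomial-starting-overlap-stability}.

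\emph{Intermediate layers, \(1\le k<N\).} By the first line of \eqref{eq:multiwell-fold-input}, the carrier \eqref{eq:multiwell-intermediate-layer-carrier} has length at least \(n^{-B_3}-2n^{-B_5}\). Since \(B_5>B_3+4\), this is \(\ge\frac12n^{-B_3}\) for large \(n\). Since \(B_9>B_3+4\), we get \(\frac12n^{-B_3}>n^{-B_9}\) once \(n\) is large. So we may take \(\mathcal I_{n,k}\) to be the closed interval of length \(n^{-B_9}\) centred at the midpoint of the carrier.

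\emph{Bottom layer.} First check that \(B_9>B_5\). From \eqref{eq:multiwell-polynomial-exponent-order} we have \(B_7>\frac12B_5+4\) and \(B_8>B_7(\tau+1)+4\). Because \(\tau>1\), these give \(B_8>B_5\), and then \(B_9>B_8+\frac12B_5+4>B_5\). Hence \(n^{-B_9}\le h_n\). Set
\[
 \mathcal I_{n,N}:=[E_{a_N,n}^{\eps,\delta}-h_n-n^{-B_9},\,E_{a_N,n}^{\eps,\delta}-h_n].
\]
Its left endpoint is \(\ge E_{a_N,n}^{\eps,\delta}-2h_n-n^{-B_9}\), so it lies in \eqref{eq:multiwell-bottom-layer-carrier}.

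\emph{Containment in \(\mathcal E_n\).} Every constructed interval lies in
\[
 [\min_aE_{a,n}^{\eps,\delta}-2h_n-n^{-B_9},\ \max_aE_{a,n}^{\eps,\delta}].
\]
By the second line of \eqref{eq:multiwell-fold-input}, each fold energy lies within \(n^{-B_4}\) of \(E_+^{\eps,\delta}\). By \eqref{eq:perturbed-edge-polynomial-margin}, \(|E_+^{\eps,\delta}-E_+|\le\frac14n^{-B_2}\). Therefore every point of every \(\mathcal I_{n,k}\) is within
\[
 \tfrac14n^{-B_2}+n^{-B_4}+3n^{-B_5}
\]
of \(E_+\). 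Since \(B_4>B_2\) by \eqref{eq:multiwell-seed-containment-order} and \(B_5>B_2+4\), this is \(\le n^{-B_2}\) for large \(n\). Hence all \(\mathcal I_{n,k}\subset\mathcal E_n\).

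The finitely many largeness conditions used above depend only on the listed exponents. They are absorbed into \(n_{\rm mw}\ge n_0\), which is independent of \((n,\eps,\delta,k)\). This step carries no real difficulty; the substantive work of the corollary is deferred to the four results that reuse these intervals.
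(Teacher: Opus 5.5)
Your proposal is correct and is essentially the paper's own argument (Step~(i) of the joint proof). You bound the carrier length by \(E_{a_k,n}^{\eps,\delta}-E_{a_{k+1},n}^{\eps,\delta}-2h_n\ge n^{-B_3}-2n^{-B_5}\ge\frac12n^{-B_3}>n^{-B_9}\). You then use the triangle inequality with the fold-cluster bound and the edge margin \eqref{eq:perturbed-edge-polynomial-margin} to place every interval in \(\mathcal E_n\), and the remaining constants are fixed later in the joint proof, as in the paper.

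Your check \(B_9>B_5\) is not needed for the bottom-layer containment itself. It is, however, a valid way to absorb the \(n^{-B_9}\) term in the \(\mathcal E_n\) estimate, where the paper simply writes \(C(n^{-B_4}+n^{-B_5}+n^{-B_9})\).
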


On these intervals, the short overlaps have the following simple zeros.

\begin{lemma}
\label{lem:polynomial-multiwell-short-roots}
The following hold on each interval \(\mathcal I_{n,k}\).
\begin{enumerate}
\item For every \(E\in\mathcal I_{n,k}\) and \(1\le s\le k\), the function
\(H_{a_s,n}^{\eps,\delta}(\,\cdot\,,E)\) has exactly two zeros in
\(\widetilde J_{a_s,n}\), denoted by
\[
 c_{n,k,s,+}(E)<\xi_{a_s,n}^{\eps,\delta}
 <c_{n,k,s,-}(E).
\]
For \(k<s\le N\), the function
\(H_{a_s,n}^{\eps,\delta}(\,\cdot\,,E)\) has no zero in
\(\widetilde J_{a_s,n}\).
\item For \(1\le s\le k\), in the common upper-edge orientation,
uniformly on \(\mathcal I_{n,k}\),
\begin{align}
 c_3 n^{-B_5/2}
 &\le
 |\partial_xH_{a_s,n}^{\eps,\delta}
   (c_{n,k,s,\pm}(E),E)|
 \le C_5 n^{-B_4/2},
 \label{eq:multiwell-short-root-slopes}\\
 c_3 n^{B_4/2}
 &\le|\partial_Ec_{n,k,s,\pm}(E)|
 \le C_5n^{B_5/2},
 \qquad
 \partial_Ec_{n,k,s,+}>0>\partial_Ec_{n,k,s,-}.
 \label{eq:multiwell-short-root-velocities}
\end{align}
\end{enumerate}
\end{lemma}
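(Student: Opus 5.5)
The plan is to work entirely from the exact recentered normal form of Corollary~\ref{cor:polynomial-starting-fold-normal-form}, written in the real lift \(\widetilde J_{a_s,n}\):
\[
 H(x,E)=C_1(x)(x-\xi)^2+C_2(x,E)(E-E_*),
\]
with \(\xi=\xi_{a_s,n}^{\eps,\delta}\), \(E_*=E_{a_s,n}^{\eps,\delta}\), and coefficients bounded between \(c\) and \(C\). I will also use the global facts of Proposition~\ref{prop:polynomial-starting-folds} on \(\widehat\Omega_{a_s,n}\): strict convexity in \(x\), \(\partial_EH\ge c\), and uniqueness of the fold. The intervals \(\mathcal I_{n,k}\) are chosen so that every \(E\in\mathcal I_{n,k}\) satisfies \(E_*-E\ge h_n=n^{-B_5}\) for \(s\le k\), and \(E-E_*\ge h_n\) for \(s>k\). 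In both cases \(|E-E_*|\le Cn^{-B_4}\), by \eqref{eq:multiwell-fold-input}, the length \(n^{-B_9}\), and the carrier choices. By \eqref{eq:multiwell-seed-containment-order}, \(n^{-B_4}\ll\frac14n^{-B_2}\), so \(E\) stays in the energy window of \(\Omega_{a_s,n}^{\eps,\delta}\).

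\emph{Counting zeros.} Fix \(E\) and minimize \(H(\cdot,E)\) over \(\widetilde J_{a_s,n}\).

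For \(s>k\), the normal form gives \(H\ge c\,h_n>0\) on the phase window of \(\Omega_{a_s,n}^{\eps,\delta}\), since both terms are nonnegative and the second is at least \(c h_n\). Outside that window I will use the energy monotonicity argument from the uniqueness step. Since \(H(\cdot,E_*)\ge0\) by convexity and tangency at \(\xi\), integrating \(\partial_EH\ge c\) shows \(H(x,E)\ge c(E-E_*)>0\) for all \(x\in\widetilde J_{a_s,n}\). So there are no zeros.

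For \(s\le k\), \(H(\xi,E)=C_2(E-E_*)<0\). On the boundary of the small phase window, \(|x-\xi|=\frac1{4N}n^{-B_1}\), the quadratic term is at least \(c n^{-2B_1}\). This dominates \(C n^{-B_4}\) because \(B_4>2B_1+4\), so \(H>0\) there. Strict convexity then gives exactly two zeros \(c_\pm\) in \(\widetilde J_{a_s,n}\), one on each side of \(\xi\), and positivity beyond them. The case where a zero escapes the small window is excluded because convexity makes \(H\) positive outside once it is positive on both sides.

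\emph{Slopes and velocities.} At a zero, \(H(c_\pm,E)=0\), so the normal form gives
\[
 (c_\pm-\xi)^2=\frac{C_2}{C_1}(E_*-E)\in\bigl[c\,n^{-B_5},\,C n^{-B_4}\bigr].
\]
Hence \(|c_\pm-\xi|\) lies between \(c n^{-B_5/2}\) and \(Cn^{-B_4/2}\). Write
\[
 \partial_xH(x,E)=\int_0^1\partial_x^2H\bigl(\xi+t(x-\xi),E\bigr)\,dt\cdot(x-\xi)+\partial_xH(\xi,E).
\]
Here \(\partial_xH(\xi,E)=\partial_xH(\xi,E)-\partial_xH(\xi,E_*)\) is \(O(n^{D_2/4}|E-E_*|)=O(n^{D_2/4-B_4})\). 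This error is negligible against \(n^{-B_5/2}\) provided \(B_5/2<B_4-D_2/4\), which is not guaranteed by \eqref{eq:multiwell-polynomial-exponent-order}. This is the main obstacle I anticipate.

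To get around it, I will instead use that \(H(\cdot,E)\) is convex with minimum value \(-m\), where \(m\asymp E_*-E\). The minimum is attained at some \(\xi(E)\) with \(|\xi(E)-\xi|=O(n^{D_2/4}|E-E_*|)\) by the implicit function theorem. The elementary convexity bounds \(c\le\partial_x^2H\le C\) give \(|\partial_xH(c_\pm)|\asymp\sqrt{m}\), directly from \(\sqrt{2c\,m}\le|\partial_xH|\le\sqrt{2Cm}\) at a zero of a function with second derivative in \([c,C]\). This yields \eqref{eq:multiwell-short-root-slopes} with the constants \(c_3,C_5\).

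The sign of \(\partial_xH\) is negative at \(c_+\) and positive at \(c_-\). Implicit differentiation gives \(\partial_Ec_\pm=-\partial_EH/\partial_xH\), with \(\partial_EH\in[c,C]\). This gives the magnitudes in \eqref{eq:multiwell-short-root-velocities} and the signs \(\partial_Ec_+>0>\partial_Ec_-\). The chosen constants are uniform in \((n,\eps,\delta,k)\) since only the bounds of Proposition~\ref{prop:polynomial-starting-folds} and the exponent order are used.
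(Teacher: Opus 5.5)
Your proposal is correct and is essentially the paper's argument. It uses the recentered normal form plus strict convexity to count zeros on active peaks, $H(\cdot,E_*)\ge0$ together with $\partial_EH\ge c$ to exclude zeros on inactive peaks, and implicit differentiation with $c\le\partial_EH\le C$ for the root velocities and their signs.

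The only variation is the slope step. You use $\tfrac12(\partial_xH)^2(c_\pm,E)=\int\partial_xH\,\partial_x^2H$ over the segment from the true minimizer $x_E$, with depth $m\asymp E_*-E$; this depth bound follows directly from the normal form, so your implicit-function estimate for the minimizer is unnecessary. The paper instead shows $|c_\pm-x_E|\asymp\sqrt{E_*-E}$ and integrates $\partial_x^2H$. Both routes avoid the uncontrolled term $\partial_xH(\xi,E)$ that you correctly flagged.
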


For the intervals and short zero branches just obtained, set
\(E_{n,k}^\circ:=\operatorname{mid}(\mathcal I_{n,k})\).
The next result chooses one inducing base for each of these intervals.
For a width \(w_{n,k}>0\), write
\begin{equation}\label{eq:multiwell-K-base}
\begin{aligned}
 U_{n,k,s,\pm}
 &:=[c_{n,k,s,\pm}(E_{n,k}^\circ)-w_{n,k},
     c_{n,k,s,\pm}(E_{n,k}^\circ)+w_{n,k}],\\
 \mathcal K_{n,k}
 &:=\bigcup_{s=1}^k(U_{n,k,s,-}\cup U_{n,k,s,+}).
\end{aligned}
\end{equation}

\begin{lemma}
\label{lem:polynomial-multiwell-common-base}
There exists
\(w_{n,k}\in[n^{-B_7},2n^{-B_7}]\) with the following
properties.
\begin{enumerate}
\item The \(2k\) intervals in \eqref{eq:multiwell-K-base} are disjoint.
\item For every \((s,\pm)\),
\(c_{n,k,s,\pm}(E)\in U_{n,k,s,\pm}\) for
\(E\in\mathcal I_{n,k}\), and the set
\(\{c_{n,k,s,\pm}(E):E\in\mathcal I_{n,k}\}\) is contained in the
interior of a single common \(\mathcal J_n\)-return atom, and
\begin{equation}\label{eq:multiwell-return-boundary-clearance}
 \operatorname{dist}_{\mathbb T}
 \bigl(c_{n,k,s,\pm}(E),
       \mathscr B_{\mathcal K_{n,k}}^{\rm ret}\bigr)
 \ge\tfrac12 n^{-B_8}.
\end{equation}
Here \(\mathscr B_{\mathcal K_{n,k}}^{\rm ret}\) is the boundary set from
Definition~\ref{fr:def:common-return-partition}; it is independent of
\(E\).

\item For every
\(x\in\mathcal K_{n,k}\cap
\operatorname{Reg}(\{\mathcal J_n,\mathcal K_{n,k}\})\), the
long first-return times satisfy
\begin{equation}\label{eq:multiwell-cross-return-bounds}
 c_3 n^{B_6/\tau}
 \le r_{\mathcal K_{n,k}}^\pm(x)
 \le C_5n^{B_7\tau}.
\end{equation}
\end{enumerate}
The widths can be chosen so that the two results below hold for these
same widths and bases.
\end{lemma}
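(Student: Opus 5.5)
The plan is to fix a target width $w\in[n^{-B_7},2n^{-B_7}]$, check properties (i) and (iii) for every such $w$, and then obtain the boundary clearance (ii) by discarding a small set of bad widths. Throughout I use the normal form of Corollary~\ref{cor:polynomial-starting-fold-normal-form}, the fold-input bounds \eqref{eq:multiwell-fold-input}, and the root bounds of Lemma~\ref{lem:polynomial-multiwell-short-roots}.

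\emph{Step 1: location and motion of the zeros.} Fix $s\le k$ and $E\in\mathcal I_{n,k}$. The carriers \eqref{eq:multiwell-intermediate-layer-carrier}--\eqref{eq:multiwell-bottom-layer-carrier} give $E\le E_{a_s,n}^{\eps,\delta}-h_n$. They also give $E\ge E_+^{\eps,\delta}-Cn^{-B_4}$, using $h_n\ll n^{-B_4}$ and \eqref{eq:multiwell-fold-input}. Hence
\[
 h_n\le E_{a_s,n}^{\eps,\delta}-E\le Cn^{-B_4}.
\]
In \eqref{eq:perturbed-recentered-normal-form} the coefficients $C_1,C_2$ are bounded above and below. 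The two zeros $c_{n,k,s,\pm}(E)$ therefore lie at distance between $cn^{-B_5/2}$ and $Cn^{-B_4/2}$ from $\xi_{a_s,n}^{\eps,\delta}$, on opposite sides. Consequently each zero is within $Cn^{-B_4/2}$ of $p_{a_s}$. By \eqref{eq:multiwell-short-root-velocities}, each branch moves over $\mathcal I_{n,k}$ by at most $C_5n^{B_5/2-B_9}$. This is $\le n^{-B_8-B_5/2-4}\ll n^{-B_7}$ by \eqref{eq:multiwell-polynomial-exponent-order}.

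This gives the containment $c_{n,k,s,\pm}(E)\in U_{n,k,s,\pm}$ for all $E\in\mathcal I_{n,k}$. It also gives disjointness (i). Within one well the two centers are separated by $\ge 2cn^{-B_5/2}\gg 4n^{-B_7}$, since $B_7>B_5/2+4$. Centers in different wells are about $1/N$ apart. Moreover $n^{-B_4/2}\ll n^{-B_1}$, since $B_4>2B_1+4$. So each $U_{n,k,s,\pm}$ lies well inside $\widetilde J_{a_s,n}\subset\mathcal A_{a_s,n}$, the central $\mathcal J_n$-atom. This settles the common $\mathcal J_n$-atom part of (ii).

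\emph{Step 2: return times (iii).} For the upper bound, a return to $\mathcal K_{n,k}$ is no later than a return to one component, of length $2w\ge 2n^{-B_7}$. Lemma~\ref{B:lem:return-time-poly-two-sided} then gives $r^\pm\le C(2w)^{-\tau}\le C_5n^{B_7\tau}$.

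For the lower bound, suppose $x$ and $T_\alpha^rx$ both lie in $\mathcal K_{n,k}$. By Step 1, $\pi_N x$ and $\pi_N T_\alpha^r x$ both lie within $CNn^{-B_4/2}$ of $0$. Hence $\|rN\alpha\|_{\mathbb T}\le Cn^{-B_4/2}$. The Diophantine bound \eqref{eq:effective-frequency-Diophantine} then forces $r\ge cn^{B_4/(2\tau)}$. This exceeds $n^{B_6/\tau}$ because $B_6<B_4/2-1$.

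\emph{Step 3: boundary clearance by exclusion in $w$.} This is the main obstacle. By Lemma~\ref{fr:lem:finite-common-return-partition} and its proof, $\mathscr B^{\rm ret}_{\mathcal K_{n,k}}$ is contained in the finite set $\{T^{m}e\}$. Here $e$ ranges over the $4k$ endpoints of the components and $|m|\le M\le C_5n^{B_7\tau}$; the bound on $M$ comes from Step 2. Each such point depends on $w$ as $\pm w+{\rm const}$, so it moves with unit speed in $w$.

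For a fixed pair $(m,e)$ and a fixed zero branch, consider the set of $w$ for which $T^me$ comes within $\tfrac12n^{-B_8}$ of the trajectory $\{c_{n,k,s,\pm}(E):E\in\mathcal I_{n,k}\}$. That trajectory has diameter $\ll n^{-B_8}$ by Step 1, so this set of $w$ has measure $\le 2n^{-B_8}$. Summing over all $(m,e)$ and branches, the bad set has measure $\le C_5 k^2 n^{B_7\tau-B_8}$. This is $\ll n^{-B_7}$ since $B_8>B_7(\tau+1)+4$. Hence some $w\in[n^{-B_7},2n^{-B_7}]$ gives \eqref{eq:multiwell-return-boundary-clearance}.

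The clearance also keeps each trajectory, an interval of length $\ll n^{-B_8}$, inside a single two-sided atom of $\mathcal K_{n,k}$. The endpoints $e$ themselves are at distance $\approx w\gg n^{-B_8}$ from the zeros, so they cause no trouble.

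The remaining delicate bookkeeping is the choice of $w$. The exclusion argument leaves a set of good $w$ of positive measure. I would therefore choose $w$ inside it subject to the finitely many further exclusions needed by the two subsequent results, of the same counting type.
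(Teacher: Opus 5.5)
Your proposal is correct and follows the paper's proof essentially step for step. It proves disjointness and containment in $U_{n,k,s,\pm}$ and in the central atom $\mathcal A_{a_s,n}$ from the root-distance and branch-displacement bounds, the upper return bound via Lemma~\ref{B:lem:return-time-poly-two-sided} on one component, the lower bound from the Diophantine condition at $Nq$, and the clearance by deleting from $[n^{-B_7},2n^{-B_7}]$ the $O(n^{B_7\tau})$ radii at which an orbit translate of an endpoint approaches a zero branch, at total cost $O(n^{B_7\tau-B_8})=o(n^{-B_7})$.

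There is one small slip: $B_9>B_8+\tfrac12B_5+4$ gives only the displacement bound $C_5n^{\frac12B_5-B_9}\le C_5n^{-B_8-4}$, not $n^{-B_8-\frac12B_5-4}$. This weaker bound still suffices. Also, no further exclusions in $w$ are needed for the two subsequent results, since they use only the range of $w_{n,k}$ and the clearance in (ii).
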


For \(\mathcal B\in\mathcal P_{\mathcal K_{n,k}}\), set
\(R_{\mathcal B}^+:=p_{\mathcal B}\) and
\(R_{\mathcal B}^-:=q_{\mathcal B}\).  Use the atomwise SVD data and
angle function from \eqref{fr:eq:complete-return-joining-convention} for
the base \(\mathcal K_{n,k}\).

\begin{proposition}
\label{prop:polynomial-multiwell-long-returns}
For the common intervals and bases fixed above, the following hold.
\begin{enumerate}
\item On \(\mathcal B\times\mathcal I_{n,k}\),
\begin{equation}\label{eq:multiwell-long-growth}
 \lambda_{\mathcal K_{n,k}}^\pm(x,E)
 \ge\exp\!\left\{
 \gamma R_{\mathcal B}^\pm
 -C_5(R_{\mathcal B}^\pm)^{1-\eta_{\rm mw}}
  (\log(R_{\mathcal B}^\pm+2))^B\right\}.
\end{equation}
\item For every \(\beta\in\mathbb N_0^2\) with \(1\le|\beta|\le r_*\), on
\(\mathcal B\times\mathcal I_{n,k}\),
\begin{align}
 |\partial_{x,E}^\beta
   \lambda_{\mathcal K_{n,k}}^\pm|
 &\le C_5(R_{\mathcal B}^\pm+2)^{B_{16}}
       \lambda_{\mathcal K_{n,k}}^\pm,
 \label{eq:multiwell-long-modulus-derivatives}\\
 |\partial_{x,E}^\beta u_{\mathcal K_{n,k}}^\pm|
 +|\partial_{x,E}^\beta s_{\mathcal K_{n,k}}^\pm|
 &\le C_5(R_{\mathcal B}^\pm+2)^{B_{16}},
 \label{eq:multiwell-long-angle-derivatives}\\
 \|\partial_{x,E}^\beta B_{\mathcal K_{n,k}}^\pm\|
 &\le C_5(R_{\mathcal B}^\pm+2)^{B_{16}}
       \lambda_{\mathcal K_{n,k}}^\pm.
 \label{eq:multiwell-long-matrix-derivatives}
\end{align}

\item For every return atom \(\mathcal B\in\mathcal P_{\mathcal K_{n,k}}\)
contained in \(U_{n,k,s,\pm}\),
\begin{equation}\label{eq:multiwell-long-short-Cr-tail}
 \|\cos G_{\mathcal K_{n,k},\mathcal B}
      -H_{a_s,n}^{\eps,\delta}\|_{C^{r_*}}
 \le C_5n^{D_4}
 e^{-c_3 n^{\eta_0}}
\end{equation}
on \(\mathcal B\times\mathcal I_{n,k}\).
All mixed derivatives of every atomwise long-return overlap coefficient
through order \(r_*\) are bounded by \(C_5n^{D_4}\) on its
phase--energy domain.
\end{enumerate}
\end{proposition}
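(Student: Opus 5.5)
The plan is to view each long return to \(\mathcal K_{n,k}\) as a chain of short returns to \(\mathcal J_n\) and apply Lemma~\ref{B:lem:polynomial-hyperbolic-chain} to that chain. For \(x\in\mathcal K_{n,k}\) regular for \(\{\mathcal J_n,\mathcal K_{n,k}\}\) and \(E\in\mathcal I_{n,k}\), I write the long forward or backward return matrix as the chronological product of \(L\) genuine \(\mathcal J_n\)-return blocks \(M_0,\dots,M_{L-1}\). This is the same factorization used in Definition~\ref{fr:def:Young-seed-package}(\ref{item:seed-factorization}). By \eqref{eq:multiwell-cross-return-bounds} and \eqref{eq:perturbed-return-length-polynomial}, the number of blocks satisfies \(L\le C_5n^{B_7\tau}/(cn^{C_I/\tau})\), and each block length is at least \(cn^{C_I/\tau}\). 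Lemma~\ref{lem:polynomial-starting-return-growth} and Corollary~\ref{cor:polynomial-starting-overlap-stability} then give \(\Lambda\ge\exp\{\gamma cn^{C_I/\tau}/2\}\) for each block. They also give \(Q_{r_*}\le C_4n^{D_2}(n^{C_I\tau})^{M_{r_*+1}}\).

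The main obstacle is the joint lower bound \(\varkappa\) at the intermediate junctions. The joint between \(M_{j-1}\) and \(M_j\) is \(\cos G^{\eps,\delta}_{\mathcal A}\) at the intermediate point, which lies in \(\mathcal J_n\setminus\mathcal K_{n,k}\). I would split into three regions.
\begin{enumerate}
\item Off the central windows \(\widetilde J_{a,n}\), the bound \eqref{eq:perturbed-exterior-mismatch-separation} gives \(\varkappa\ge cn^{-D_1}/2\).
\item In \(\widetilde J_{a_s,n}\) with \(s>k\), the function \(H_{a_s,n}\) has no zero by Lemma~\ref{lem:polynomial-multiwell-short-roots}. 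The normal form of Corollary~\ref{cor:polynomial-starting-fold-normal-form}, together with \(E\le E_{a_s,n}-h_n\), gives \(|H|\ge ch_n=cn^{-B_5}\).
\item In \(\widetilde J_{a_s,n}\) with \(s\le k\), the point is at distance \(\ge w_{n,k}-|\partial_Ec|\,|\mathcal I_{n,k}|\ge n^{-B_7}/2\) from both zeros. This uses \(B_9>B_8+B_5/2\) and the velocity bound \eqref{eq:multiwell-short-root-velocities}. The quadratic normal form together with the slope bound \eqref{eq:multiwell-short-root-slopes} then gives \(|H|\ge cn^{-B_5/2}n^{-B_7}\), up to quadratic corrections that I would control by convexity.
\end{enumerate}
In every case \(\varkappa\ge n^{-C}\) with \(C\) depending on \(\mathbf b\). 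The hypothesis of Lemma~\ref{B:lem:polynomial-hyperbolic-chain} holds because the polynomial factors are beaten by \(\Lambda^{-2}=e^{-cn^{C_I/\tau}}\) once \(n\ge n_{\rm mw}\).

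The remaining items then follow from that lemma. For (i), conclusion (i) of Lemma~\ref{B:lem:polynomial-hyperbolic-chain} gives
\[
\log\lambda_{\mathcal K_{n,k}}^\pm\ge\sum_j\log\lambda_j-L\log(2/\varkappa)\ge\gamma R-C\sum_jR_\tau(r_j+2)-CL\log n .
\]
Since every \(r_j\ge cn^{C_I/\tau}\), we have \(R_\tau(r_j)\le r_j\,r_j^{-\sigma_{\rm cmp}}\log^B\) and \(L\log n\le R\,n^{-C_I/\tau}\log n\). Using \(R\le C_5n^{B_7\tau}\), the total loss is at most \(R^{1-\eta_{\rm mw}}(\log(R+2))^B\), by the choice \eqref{eq:multiwell-eta-mw-explicit}. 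For (ii), conclusion (iii) of the lemma gives polynomial bounds in \(L,Q,\varkappa^{-1}\), hence in \(n\), and so in \(R^{B_{16}}\) since \(R\ge c_3n^{B_6/\tau}\). The bound on the matrix derivative follows from Leibniz's rule together with \(\|\partial M_j\|\le Q\lambda_j\) and conclusion (i) of the lemma.

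For (iii), the endpoint estimate \eqref{fr:eq:chain-fixed-order-endpoint} gives the following. The new input direction \(s^+_{\mathcal K}\) differs from \(s_0\), which is the \(\mathcal J_n\)-forward datum at \(x\), by \(\mathrm{poly}(n)\,e^{-cn^{C_I/\tau}}\) in \(C^{r_*}\). Likewise \(u^-_{\mathcal K}\) differs from the \(\mathcal J_n\)-backward output direction at \(x\) by the same amount. Hence \(\cos G_{\mathcal K_{n,k},\mathcal B}-\cos G_{\mathcal A}^{\eps,\delta}\) is bounded by \(C_5n^{D_4}e^{-c_3n^{\eta_0}}\) with \(\eta_0=C_I/(2\tau)\). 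On \(U_{n,k,s,\pm}\subset\widetilde J_{a_s,n}\), the overlap \(\cos G_{\mathcal A}^{\eps,\delta}\) is exactly \(H_{a_s,n}^{\eps,\delta}\). The final derivative bound follows by the same argument on every atom, combining \eqref{eq:perturbed-axis-angle-derivatives} with the endpoint estimate.
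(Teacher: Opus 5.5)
Your route is the paper's. You split each first return to \(\mathcal K_{n,k}\) into parent \(\mathcal J_n\)-blocks and bound the junction overlaps from below by the three-region analysis (the paper's \eqref{eq:multiwell-preliminary-safe-region}). You then apply Lemma~\ref{B:lem:polynomial-hyperbolic-chain} and read off the growth, the derivative bounds, and the endpoint comparison with \(\eta_0=C_I/(2\tau)\). Your treatment of \eqref{eq:multiwell-long-growth} and \eqref{eq:multiwell-long-short-Cr-tail} matches the paper.

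\textbf{The gap: the matrix-derivative bound.} Your argument for \eqref{eq:multiwell-long-matrix-derivatives} fails as written. Leibniz's rule with \(\|\partial^{\beta_j}M_j\|\le Q\lambda_j\) bounds \(\|\partial^\beta\mathcal M\|\) only by \(L^{|\beta|}Q^{|\beta|}\prod_j\lambda_j\). Conclusion~(i) of the chain lemma converts \(\prod_j\lambda_j\) into \(\|\mathcal M\|\) only at the cost of the factor \((2/\varkappa)^{L-1}\). Here \(\varkappa=n^{-C}\), and \(L\) is polynomially large in \(n\): by Kac, its average over \(\mathcal K_{n,k}\) is \(|\mathcal J_n|/|\mathcal K_{n,k}|\), which is of order \(n^{B_7-C_I}\). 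So this factor is \(\exp\{cL\log n\}\), not a power of \(R_{\mathcal B}^\pm\).

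The loss is not an artifact of the estimate. The ratio \(\prod_j\lambda_j/\|\mathcal M\|\) equals \(\prod_j|h_j|^{-1}\) up to a bounded factor, and this is typically exponentially large in \(L\). Submultiplicativity throws away exactly the junction structure that keeps \(\partial^\beta\mathcal M\) comparable to \(\mathcal M\).

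\textbf{The repair (as in the paper).} Differentiate the representation \(\mathcal M=\chi R_U\operatorname{diag}(\|\mathcal M\|,\|\mathcal M\|^{-1})R_{\pi/2-S}\) from conclusion~(ii). Every term is then a product of derivatives of \(R_U\), \(R_{\pi/2-S}\) and \(\|\mathcal M\|^{\pm1}\). Conclusion~(iii) and Fa\`a di Bruno bound these by \(\mathrm{poly}(n)\), respectively by \(\mathrm{poly}(n)\,\|\mathcal M\|^{\pm1}\). Since \(\|\mathcal M\|^{-1}\le\|\mathcal M\|\) and \(R_{\mathcal B}^\pm\ge c_3n^{B_6/\tau}\), this gives \(\|\partial^\beta\mathcal M\|\le C_5(R_{\mathcal B}^\pm+2)^{B_{16}}\|\mathcal M\|\).

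\textbf{Smaller points.}
\begin{enumerate}
\item In region~(2) the energy inequality is reversed: for \(s>k\) one has \(E\ge E_{a_s,n}^{\eps,\delta}+h_n\). Also, the recentered normal form of Corollary~\ref{cor:polynomial-starting-fold-normal-form} only covers \(|x-\xi_{a_s,n}^{\eps,\delta}|<\frac1{4N}n^{-B_1}\), not all of \(\widetilde J_{a_s,n}\). Argue instead, as the paper does, from \(H_{a_s,n}^{\eps,\delta}(\cdot,E_{a_s,n}^{\eps,\delta})\ge0\) on \(\widetilde J_{a_s,n}\) (convexity at the fold) together with \(\partial_EH_{a_s,n}^{\eps,\delta}\ge c\).
\item The \(\mathcal J_n\)-itinerary, and hence your chain, is fixed only on pieces of a genuine \(\mathcal K_{n,k}\)-atom. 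You should state why the piecewise bounds pass to all of \(\mathcal B\times\mathcal I_{n,k}\): the total fixed word, its simple singular projections, and the positively oriented endpoint axes are analytic across the internal cuts.
\end{enumerate}
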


By \eqref{eq:multiwell-return-boundary-clearance}, for each
\((s,\pm)\) there is a unique common return atom containing its short
zero branch.  Write
\(\mathcal B_{n,k,s,\pm}\) for this atom, so that
\[
 c_{n,k,s,\pm}(E)\in\mathcal B_{n,k,s,\pm},
 \qquad E\in\mathcal I_{n,k}.
\]

\begin{corollary}
\label{cor:polynomial-multiwell-induced-zeros}
For the same intervals, bases, and oriented overlaps, the following hold.
\begin{enumerate}
\item If
\(\mathcal B\subset U_{n,k,s,\pm}\) is any other atom, then
\begin{equation}\label{eq:multiwell-nondistinguished-nozero}
 \inf_{\mathcal B\times\mathcal I_{n,k}}
 |\cos G_{\mathcal K_{n,k},\mathcal B}|
 \ge c_3 n^{-D_4}.
\end{equation}
\item The distinguished atoms collectively have exactly \(2k\) zeros
\(\widehat c_{n,k,s,\pm}(E)\), and
\[
 |\widehat c_{n,k,s,\pm}(E)-c_{n,k,s,\pm}(E)|
 \le C_5n^{D_4}e^{-c_3n^{\eta_0}}.
\]
Moreover,
\begin{equation}\label{eq:multiwell-long-root-slopes}
 c_3 n^{-B_5/2}
 \le
 |\partial_x\cos G_{\mathcal K_{n,k},\mathcal B_{n,k,s,\pm}}
   (\widehat c_{n,k,s,\pm}(E),E)|
 \le C_5n^{-B_4/2}.
\end{equation}
\item On each distinguished atom there is the exact transverse
factorization
\begin{equation}\label{eq:multiwell-long-transverse-factorization}
 \cos G_{\mathcal K_{n,k},\mathcal B_{n,k,s,\pm}}(x,E)
 =\bigl(x-\widehat c_{n,k,s,\pm}(E)\bigr)
   \mathfrak a_{n,k,s,\pm}(x,E),
\end{equation}
where, on
\(\mathcal B_{n,k,s,\pm}\times\mathcal I_{n,k}\),
\begin{equation}\label{eq:multiwell-long-transverse-coefficient}
 c_3 n^{-B_5/2}
 \le|\mathfrak a_{n,k,s,\pm}(x,E)|
 \le C_5n^{-B_4/2},
 \qquad
 \|\mathfrak a_{n,k,s,\pm}\|_{C^{r_*-1}}
 \le C_5n^{D_4}.
\end{equation}
\item The induced zero branches satisfy
\begin{equation}\label{eq:multiwell-long-root-velocities}
 c_3 n^{B_4/2}
 \le|\partial_E\widehat c_{n,k,s,\pm}(E)|
 \le C_5n^{B_5/2},
 \qquad
 \partial_E\widehat c_{n,k,s,+}>0>
 \partial_E\widehat c_{n,k,s,-}.
\end{equation}
The zero branches also satisfy
\begin{equation}\label{eq:multiwell-root-high-order-bound}
 \|\widehat c_{n,k,s,\pm}\|_{C^{r_*}(\mathcal I_{n,k})}
 \le C_5n^{D_4}.
\end{equation}
\end{enumerate}
\end{corollary}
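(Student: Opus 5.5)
The four assertions of Corollary~\ref{cor:polynomial-multiwell-induced-zeros} will all be derived from the \(C^{r_*}\) tail estimate \eqref{eq:multiwell-long-short-Cr-tail} of Proposition~\ref{prop:polynomial-multiwell-long-returns}. That estimate says that on each atom \(\mathcal B\subset U_{n,k,s,\pm}\) the long-return overlap \(\cos G_{\mathcal K_{n,k},\mathcal B}\) differs from the short overlap \(H_{a_s,n}^{\eps,\delta}\) by at most
\[
 \tau_n:=C_5n^{D_4}e^{-c_3n^{\eta_0}}.
\]
This error is smaller than any negative power of \(n\). So every polynomial lower bound for \(H_{a_s,n}^{\eps,\delta}\) or \(\partial_xH_{a_s,n}^{\eps,\delta}\) passes to the long overlap once \(n_{\rm mw}\) is enlarged.

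\emph{Nondistinguished atoms.} Let \(\mathcal B\subset U_{n,k,s,\pm}\) be an atom other than \(\mathcal B_{n,k,s,\pm}\). By \eqref{eq:multiwell-return-boundary-clearance}, every \(x\in\mathcal B\) satisfies
\[
 |x-c_{n,k,s,\pm}(E)|\ge\tfrac12n^{-B_8}.
\]
Near the short zero, the slope bound \eqref{eq:multiwell-short-root-slopes} and a bound on \(\partial_x^2H\) of order \(n^{D_2}\) (Corollary~\ref{cor:polynomial-starting-overlap-stability}) give \(|H_{a_s,n}^{\eps,\delta}(x,E)|\gtrsim n^{-B_5/2-B_8}\) on a neighbourhood of size \(n^{-B_5/2-D_2}\). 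Outside that neighbourhood, I would use the normal form \eqref{eq:perturbed-recentered-normal-form}, or the exterior bound \eqref{eq:perturbed-exterior-mismatch-separation} if \(x\) leaves \(\widetilde J_{a_s,n}\). Together these give
\[
 |H_{a_s,n}^{\eps,\delta}|\ge n^{-D}\quad\text{on }U_{n,k,s,\pm}\setminus\mathcal B_{n,k,s,\pm}.
\]
Enlarging \(D_4\) beyond this \(D\) and subtracting \(\tau_n\) yields \eqref{eq:multiwell-nondistinguished-nozero}. Atoms of \(\mathcal K_{n,k}\) meeting no component carry no zeros, because \(\mathcal K_{n,k}\) is exactly the union of the \(U_{n,k,s,\pm}\).

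\emph{Distinguished atoms.} Fix \(E\) and set \(g(x):=\cos G_{\mathcal K_{n,k},\mathcal B_{n,k,s,\pm}}(x,E)\). On the ball of radius \(r_n:=n^{-B_5/2-D_2-1}\) about \(c_{n,k,s,\pm}(E)\), we have \(|\partial_xg|\ge\tfrac12c_3n^{-B_5/2}\), since the same holds for \(H\) up to an error \(\tau_n\). Combined with \(|g(c_{n,k,s,\pm}(E))|\le\tau_n\), the intermediate value theorem and monotonicity give a unique zero \(\widehat c_{n,k,s,\pm}(E)\) in that ball, with
\[
 |\widehat c_{n,k,s,\pm}(E)-c_{n,k,s,\pm}(E)|\le C\tau_n n^{B_5/2}.
\]
This is the stated displacement bound after renaming \(D_4\). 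Away from the ball, \(|g|\ge n^{-D}-\tau_n>0\) by the argument of the previous paragraph, so the distinguished atoms carry exactly \(2k\) zeros. Evaluating \(\partial_xg=\partial_xH+O(\tau_n)\) at the new zero, and using the bounds on \(\partial_x^2H\), transfers \eqref{eq:multiwell-short-root-slopes} to \eqref{eq:multiwell-long-root-slopes}.

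\emph{Factorization.} I would define the quotient by the integral formula
\[
 \mathfrak a(x,E)=\int_0^1\partial_xg\bigl(\widehat c+t(x-\widehat c),E\bigr)\,dt .
\]
The \(C^{r_*-1}\) bound on \(\mathfrak a\) follows from the \(C^{r_*}\) bounds of \(g\) together with the \(C^{r_*}\) bound on \(\widehat c\) proved below. The two-sided bound \eqref{eq:multiwell-long-transverse-coefficient} is clear near \(\widehat c\). Away from \(\widehat c\), write \(\mathfrak a=g/(x-\widehat c)\). The lower bound there uses \(|g|\ge c\,|x-\widehat c|\,n^{-B_5/2}\), which is inherited from the short overlap via its quadratic fold normal form. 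The upper bound uses \(|\partial_xH|\le C_5n^{-B_4/2}\) on the whole atom, which holds because the atom lies within \(n^{-B_7}\ll n^{-B_4/2}\) of the short zero, and that zero is itself within \(n^{-B_4/2}\)-scale of the fold.

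I expect the main obstacle to be the clean lower bound on \(\mathfrak a\) across the entire atom, rather than only near the zero. I would handle it through the fold normal form: both zeros \(c_{\pm}\) are simple, and the slope lower bound \eqref{eq:multiwell-short-root-slopes} persists on \(U_{n,k,s,\pm}\), because \(w_{n,k}\le2n^{-B_7}\) is much smaller than the root separation \(\gtrsim n^{-B_5/2}\). This ordering is guaranteed by \eqref{eq:multiwell-polynomial-exponent-order}, namely \(B_7>\tfrac12B_5+4\).

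\emph{Velocities.} By the implicit function theorem, \(\partial_E\widehat c=-\partial_Eg/\partial_xg\) at the zero. The numerator satisfies \(\partial_Eg=\partial_EH+O(\tau_n)\in[c/2,2C]\) by \eqref{eq:perturbed-central-transversality}. Together with the slope bounds this gives \eqref{eq:multiwell-long-root-velocities}, and the signs come from the sign of \(\partial_xH\) at \(c_{\pm}\), as in Lemma~\ref{lem:polynomial-multiwell-short-roots}. Differentiating the identity \(g(\widehat c(E),E)=0\) repeatedly through order \(r_*\) (Fa\`a di Bruno) gives \eqref{eq:multiwell-root-high-order-bound}. Each order costs a factor \(|\partial_xg|^{-1}\le Cn^{B_5/2}\) and a factor \(C_5n^{D_4}\), so \(D_4\) must be enlarged by a multiple of \(r_*(B_5+D_4)\). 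This enlargement is harmless, since the tail \(\tau_n\) beats every polynomial power of \(n\).
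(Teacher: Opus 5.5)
Your proposal is correct and follows essentially the paper's own route in Step~(v) of the joint proof. As there, you transfer the polynomial slope and separation bounds for \(H_{a_s,n}^{\eps,\delta}\) on \(U_{n,k,s,\pm}\) to the long overlap via the exponentially small tail \eqref{eq:multiwell-long-short-Cr-tail}, obtain the unique root from a sign change and monotonicity, define \(\mathfrak a\) by the integral formula, and get the velocities and higher root derivatives by implicit differentiation.

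The one bookkeeping difference is how \(D_4\) is handled. The paper fixes \(D_4\) in advance above \((2r_*-1)(r_*B_7\tau B_{16}+\tfrac12B_5)\), using the \(D_4\)-independent bound \(\|\cos G_{\mathcal K_{n,k},\mathcal B}\|_{C^{r_*}}\le Cn^{r_*B_7\tau B_{16}}\). You instead enlarge \(D_4\) afterwards, which is also legitimate because \(D_4\) enters every one of the five statements monotonically. This works provided the Fa\`a di Bruno recursion uses the original exponent and the enlarged one is only declared at the end.
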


\begin{proof}[Proof of Corollary~\ref{cor:polynomial-multiwell-K-reinduction}
and the four subsequent results]
In this proof, \(c=c(R,r_*,\mathbf b)\) and
\(C=C(R,r_*,\mathbf b)\) denote auxiliary bounds that may change from
one estimate to the next, with the fixed background dependence understood.
The constants \(c_3,C_5\) in the statements are chosen simultaneously
at the end.
\smallskip
\noindent\emph{Step (i): Energy intervals.}
Let \(B_{17}\) be the exponent supplied by
Lemma~\ref{B:lem:polynomial-hyperbolic-chain} at order \(r_*\).  Set
\[
\begin{gathered}
 D_{16}:=3+\max\{D_1,B_5,2B_7,
 B_8+\tfrac12B_5,r_*D_2\},\qquad
 \gamma_0:=\gamma/2,\\
 A_0>2+\max\{2\tau^2,2B_7\tau^2/C_I\},\qquad
 D_0>2+2\tau D_{16}/C_I,\\
 K_{\rm par}:=\max\{r_*M_{r_*+1},\tau D_2/C_I\},
\end{gathered}
\]
Choose
\[
\begin{gathered}
 B_{16}>r_*B_{17}[A_0(1+K_{\rm par})+D_0]+2,\\
 D_4>\max\left\{D_{16}+3,
 (2r_*-1)\left(r_*B_7\tau B_{16}+\tfrac12B_5\right)+1\right\},
\end{gathered}
\]
with \(D_4\) also larger than the finitely many fixed-order losses
in the chain, endpoint, coefficient, and overlap-coefficient
estimates below.

Write
\(T_s=E_{a_s,n}^{\eps,\delta}\).  For \(1\le k<N\),
\begin{equation}\label{eq:multiwell-intermediate-carrier-length}
\begin{aligned}
 |[T_{k+1}+h_n,T_k-h_n]|
 &=T_k-T_{k+1}-2h_n\\
 &\ge n^{-B_3}-2n^{-B_5}
 \ge\tfrac12n^{-B_3}
 >n^{-B_9}
\end{aligned}
\end{equation}
for all large \(n\), by
\(B_5>B_3\) and \(B_9>B_3\).
The bottom interval has length \(h_n+n^{-B_9}\), and adjacent
intervals have distance \(2h_n\). Thus choose
\[
 |\mathcal I_{n,k}|=n^{-B_9},\qquad
 \mathcal I_{n,k}\cap\mathcal I_{n,j}=\varnothing\quad(k\ne j)
\]
inside the corresponding intervals.

After increasing \(n_{\rm mw}\), the triangle inequality and
\eqref{eq:perturbed-edge-polynomial-margin} give, for every
\(E\in\mathcal I_{n,k}\),
\[
 |E-E_+|\le C(n^{-B_4}+n^{-B_5}+n^{-B_9})
              +\tfrac14n^{-B_2}<n^{-B_2},
\]
by \eqref{eq:multiwell-seed-containment-order}--
\eqref{eq:multiwell-polynomial-exponent-order}.  Hence
\(\mathcal I_{n,k}\subset\mathcal E_n\) before any central normal form is
used.  The same estimate places every \(T_s\) in the interval
\(\mathcal E_n\), so each segment joining \(E\) to \(T_s\) stays there.
The interval endpoints give \(T_s-E\ge h_n\) for \(s\le k\) and
\(E-T_s\ge h_n\) for \(s>k\), the latter being vacuous for \(k=N\).
Call \(a_s\) active if \(s\le k\) and inactive otherwise; its root
intervals, when present, are \(U_{n,k,s,+}\) and \(U_{n,k,s,-}\).

\smallskip
\noindent\emph{Step (ii): Zeros and separation.}
For the first part, use the exact perturbed normal form and its coefficient
bounds \eqref{eq:perturbed-recentered-normal-form}--
\eqref{eq:perturbed-recentered-coefficient-bounds}.  On an active branch set
\(D_{s}(E)=E_{a_s,n}^{\eps,\delta}-E\) and write
\(H=H_{a_s,n}^{\eps,\delta}\).  By construction of the energy intervals,
\begin{equation}\label{eq:multiwell-depth-range}
 h_n\le D_s(E)\le Cn^{-B_4},
\end{equation}
where the upper bound follows from
\eqref{eq:multiwell-fold-input},
\eqref{eq:multiwell-intermediate-layer-carrier}, and
\eqref{eq:multiwell-bottom-layer-carrier}.  By the choice of
\(B_4\),
\(D_s=o_{n\to\infty}(\min\{n^{-B_2},n^{-2B_1}\})\), and
\eqref{eq:perturbed-fold-displacement-polynomial}--
\eqref{eq:polynomial-starting-scale-smallness} give
\(|\xi_{a_s,n}^{\eps,\delta}-p_{a_s}|
\le(8N)^{-1}n^{-B_1}\).  Thus the interval
\[
 |x-\xi_{a_s,n}^{\eps,\delta}|
 \le r_0:=\frac1{8N}n^{-B_1}
\]
lies in both \(\Omega_{a_s,n}^{\eps,\delta}\) and
\(\widetilde J_{a_s,n}\), where the exact recentered form gives
\[
 H(\xi_{a_s,n}^{\eps,\delta},E)
 =-C_{2,a_s,n}^{\eps,\delta}D_s(E)<0,
 \qquad
 H(\xi_{a_s,n}^{\eps,\delta}\pm r_0,E)
 \ge cr_0^2-CD_s(E)>0.
\]
Since \(\partial_x^2H>0\) on \(\widehat\Omega_{a_s,n}\), there is exactly
one root on each side and no other root in \(\widetilde J_{a_s,n}\).
Comparing the two terms in the root equation and using
\eqref{eq:multiwell-depth-range} gives
\[
 c\sqrt{D_s(E)}\le
 |c_{n,k,s,\pm}(E)-\xi_{a_s,n}^{\eps,\delta}|
 \le C\sqrt{D_s(E)}.
\]
Put
\[
 c_L=c_{n,k,s,+}(E),\qquad c_R=c_{n,k,s,-}(E),
\]
and let \(x_E\in(c_L,c_R)\) be the unique critical point of
\(x\mapsto H_{a_s,n}^{\eps,\delta}(x,E)\).  If
\(r_L=x_E-c_L\) and \(r_R=c_R-x_E\), then
\(H(c_L,E)=H(c_R,E)=0\), \(H_x(x_E,E)=0\), and
\eqref{eq:perturbed-central-transversality} gives
\[
 \frac{c}{2}r_L^2\le-H(x_E,E)\le\frac{C}{2}r_L^2,
 \qquad
 \frac{c}{2}r_R^2\le-H(x_E,E)\le\frac{C}{2}r_R^2.
\]
Thus \(r_L\asymp r_R\).  Since \(\xi_{a_s,n}^{\eps,\delta}\) lies
between the two roots and both root distances from it are comparable to
\(\sqrt{D_s(E)}\), one has
\[
 r_L\asymp r_R\asymp c_R-c_L\asymp\sqrt{D_s(E)}.
\]
Integrating \(H_{xx}\) from \(x_E\) to either root now yields, separately,
\[
 |\partial_xH_{a_s,n}^{\eps,\delta}(c_L,E)|
 \asymp\sqrt{D_s(E)},
 \qquad
 |\partial_xH_{a_s,n}^{\eps,\delta}(c_R,E)|
 \asymp\sqrt{D_s(E)}.
\]
Together with \eqref{eq:multiwell-depth-range}, this proves
\eqref{eq:multiwell-short-root-slopes}.  Finally,
\eqref{eq:perturbed-central-transversality} gives
\[
 c\le
 \partial_EH_{a_s,n}^{\eps,\delta}
 (c_{n,k,s,\pm}(E),E)
 \le C,
\]
and the implicit formula
\[
 \partial_Ec_{n,k,s,\pm}(E)
 =-\frac{\partial_EH_{a_s,n}^{\eps,\delta}}
         {\partial_xH_{a_s,n}^{\eps,\delta}}
 \quad\text{at the root}
\]
gives \eqref{eq:multiwell-short-root-velocities}, including the signs.
The same formula and the preceding energy-derivative bound show that,
across one energy interval, every zero branch present there moves by at most
\[
 Cn^{B_5/2-B_9}
 \le\tfrac14n^{-B_8}.
\]
Since \(B_8>B_7\), this displacement is smaller than
\(w_{n,k}\), and hence
\(c_{n,k,s,\pm}(E)\in U_{n,k,s,\pm}\) throughout the energy interval.
For \(f''\ge c\) and \(f(c_L)=f(c_R)=0\), convexity of
\(f-\frac{c}{2}(x-c_L)(x-c_R)\), between the roots, and integration of
\(f'\), outside them, give
\begin{equation}\label{eq:multiwell-convex-product-bound}
 |f(x)|\ge\frac{c}{2}|x-c_L|\,|x-c_R|.
\end{equation}
Fix \(\mathcal A\in\mathcal P_{\mathcal J_n}\),
\(E\in\mathcal I_{n,k}\), and abbreviate
\(H_s=H_{a_s,n}^{\eps,\delta}\).  For the three parts of
\(\mathcal A\cap(\mathcal J_n\setminus\mathcal K_{n,k})\),
\[
\begin{aligned}
 &s>k,\quad x\in\widetilde J_{a_s,n}:\\
 &\qquad H_s(x,E)\ge H_s(x,T_s)+c(E-T_s)\ge ch_n;\\
 &s\le k,\quad
 x\in\widetilde J_{a_s,n}\setminus(U_{n,k,s,+}\cup U_{n,k,s,-}):\\
 &\qquad \min_{\varsigma=\pm}|x-c_{n,k,s,\varsigma}(E)|\ge w_{n,k}/2,\\
 &\qquad |H_s(x,E)|\ge(c/2)|x-c_L|\,|x-c_R|\ge(c/8)w_{n,k}^2;\\
 &x\notin\bigcup_s\widetilde J_{a_s,n}:\\
 &\qquad |\cos G_{\mathcal A}^{\eps,\delta}(x,E)|
 \ge(c/2)n^{-D_1}.
\end{aligned}
\]
The first case uses the fold equations and convexity at \(T_s\);
the second uses
\(Cn^{B_5/2-B_9}\le\frac14n^{-B_8}
=o(w_{n,k})\); the last is
\eqref{eq:perturbed-exterior-mismatch-separation}.
Each component of \(\mathcal K_{n,k}\) is compactly contained in its
central parent atom.  The same bounds hold on \(\partial\mathcal K_{n,k}\)
by continuity.
Since \(D_{16}>\max\{B_5,2B_7,D_1\}\), uniformly on every parent atom,
\begin{equation}\label{eq:multiwell-preliminary-safe-region}
 |\cos G_{\mathcal A}^{\eps,\delta}(x,E)|
 \ge c n^{-D_{16}},
 \quad
 x\in\mathcal A\cap
 \bigl(\mathcal J_n\setminus
       \operatorname{int}\mathcal K_{n,k}\bigr),
 \quad E\in\mathcal I_{n,k}.
\end{equation}
\smallskip
\noindent\emph{Step (iii): Return geometry.}
The within-peak root separation is at least
\(cn^{-B_5/2}\), whereas the radius is
\(O(n^{-B_7})\).  For \(a_s\ne a_t\) and
\(\varsigma,\varsigma'\in\{+,-\}\),
\[
 \operatorname{dist}_{\mathbb T}
 (U_{n,k,s,\varsigma},U_{n,k,t,\varsigma'})
 \ge N^{-1}-o_{n\to\infty}(1).
\]
Thus the intervals in
\eqref{eq:multiwell-K-base} are disjoint.  The same exponent ordering
places every \(U_{n,k,s,\pm}\) strictly inside the distinguished old
return atom \(\mathcal A_{a_s,n}\).  Fix a bijection
\[
 \nu\longleftrightarrow(s(\nu),\varsigma(\nu))
 \quad\text{between}\quad
 \{1,\ldots,2k\}\ \text{and}\ \{1,\ldots,k\}\times\{+,-\},
\]
and, for \(r\in[n^{-B_7},2n^{-B_7}]\), put
\[
\begin{aligned}
 z_\nu&:=c_{n,k,s(\nu),\varsigma(\nu)}(E_{n,k}^\circ),
 & a(\nu)&:=a_{s(\nu)},\\
 U_\nu(r)&:=[z_\nu-r,z_\nu+r],
 & K(r)&:=\bigcup_{\nu=1}^{2k}U_\nu(r).
\end{aligned}
\]
For \(1\le\nu\le2k\) and
\(x\in U_\nu(r)\cap\operatorname{Reg}(K(r))\), since
\(\partial U_\nu(r)\subset\partial K(r)\),
Lemma~\ref{B:lem:return-time-poly-two-sided} gives
\[
 r_{K(r)}^\pm(x)\le r_{U_\nu(r)}^\pm(x)
 \le\frac{4^\tau}{\kappa}|U_\nu(r)|^{-\tau}
 \le\frac{4^\tau}{\kappa}n^{B_7\tau}.
\]
Thus the common bound
\[
 R_n:=\left\lceil\frac{4^\tau}{\kappa}n^{B_7\tau}\right\rceil
\]
holds for both return times, uniformly in the radius interval.
Since a return time can change only when an orbit
point meets an endpoint,
\[
 \mathscr B_{K(r)}^{\rm ret}
 \subset
 K(r)\cap
 \bigcup_{|q|\le R_n}T_\alpha^{-q}\bigl(\partial K(r)\bigr).
\]
Consequently, a fixed center \(z_\nu\) can meet the return boundary only
at a radius satisfying
\[
 z_\nu+q\alpha
 \equiv z_\mu+\sigma r\pmod1,
 \qquad
 1\le\nu,\mu\le2k,\quad
 |q|\le R_n,\quad \sigma\in\{+1,-1\}.
\]
For each fixed \((\nu,\mu,q,\sigma)\), this congruence has at most one
solution in
\[
 [n^{-B_7}-2n^{-B_8},
  2n^{-B_7}+2n^{-B_8}]
\]
for all large \(n\).  Hence there are at most \(24N^2R_n\) relevant
candidate radii.  From the original radius interval remove the
intersection with the \(2n^{-B_8}\)-neighborhood of every such
candidate.  The total removed length is at most
\[
 Cn^{B_7\tau-B_8}
 =o_{n\to\infty}(n^{-B_7})
\]
by \eqref{eq:multiwell-polynomial-exponent-order}.  Thus one may choose
\(w_{n,k}\) outside all the removed neighborhoods and set
\[
 U_\nu:=U_\nu(w_{n,k}),\qquad
 \mathcal K_{n,k}:=K(w_{n,k}).
\]
For this choice,
\[
 \operatorname{dist}_{\mathbb T}
 (z_\nu,\mathscr B_{\mathcal K_{n,k}}^{\rm ret})
 \ge2n^{-B_8}.
\]
The preceding zero-displacement estimate is at most
\(\frac14n^{-B_8}\); therefore
\eqref{eq:multiwell-return-boundary-clearance} holds uniformly for
\(E\in\mathcal I_{n,k}\).
Connectedness of each zero graph and the clearance give its unique
atom \(\mathcal B_{n,k,s,\pm}\).

For the lower return bound, write
\(c_\nu:=z_\nu\).  The root estimate above
and \eqref{eq:multiwell-fold-input} imply
\begin{equation}\label{eq:multiwell-component-center-peak-alignment}
 \left\|c_\mu-c_\nu-
 \frac{a(\mu)-a(\nu)}N\right\|_{\mathbb T}
 \le Cn^{-B_6}.
\end{equation}
For centers satisfying
\eqref{eq:multiwell-component-center-peak-alignment}, if
\[
 x\in U_\nu\cap
 \operatorname{Reg}(\{\mathcal J_n,\mathcal K_{n,k}\}),
 \qquad x+q\alpha\in U_\mu,\qquad q\ne0,
\]
then
\[
 \left\|q\alpha-\frac{a(\mu)-a(\nu)}N\right\|_{\mathbb T}
 \le\tfrac12 C\bigl(n^{-B_6}+w_{n,k}\bigr)
 \le Cn^{-B_6}.
\]
Multiplication by the fixed integer \(N\) and the Diophantine inequality at
the integer \(Nq\) give
\[
 \kappa|Nq|^{-\tau}
 \le\|Nq\alpha\|_{\mathbb T}
 \le CNn^{-B_6},
\]
Thus
\[
 |q|\ge cn^{B_6/\tau}
 \quad(q\ne0),
\]
for same-peak and cross-peak returns in either direction.  Together with
the upper return bound \(R_n\), this proves
\eqref{eq:multiwell-cross-return-bounds}.

\smallskip
\noindent\emph{Step (iv): Induced matrices.}
Work first on the common regular set
\(\operatorname{Reg}(\{\mathcal J_n,\mathcal K_{n,k}\})\) and decompose a
first return to \(\mathcal K_{n,k}\) into consecutive returns to
\(\mathcal J_n\).  On each fixed itinerary atom, if the successive
\(\mathcal J_n\)-return lengths are \(p_1,\ldots,p_L\), then the cocycle
identity gives both the exact product decomposition and
\(R_{\mathcal B}^\pm=\sum_{j=1}^Lp_j\).  Every internal concatenation point
lies in the noncritical region covered by
\eqref{eq:multiwell-preliminary-safe-region}.  The short blocks satisfy
\eqref{eq:perturbed-return-growth-plus}--
\eqref{eq:perturbed-return-growth-minus} and
\eqref{eq:perturbed-return-derivative-plus}--
\eqref{eq:perturbed-return-derivative-minus}.
For either oriented chain, denote its chronological short-return matrices
by \(M_1,\ldots,M_L\), so that the complete long-return matrix is
\(\mathcal M:=M_L\cdots M_1\), and choose the coherent standard SVDs
\[
 M_i=R_{u_i}\operatorname{diag}(\lambda_i,\lambda_i^{-1})
 R_{\pi/2-s_i},
 \qquad \lambda_i>1.
\]
For \(1\le i<L\), define the internal overlap coefficients by
\[
 h_i:=\cos(\pi/2-s_{i+1}+u_i).
\]
If \(y_i\) is the intermediate base point after the first \(i\) short
returns in this chronological chain, let
\(\mathcal A_i\in\mathcal P_{\mathcal J_n}\) be the unique parent atom
containing \(y_i\).  Then
\[
 M_i=B_{\mathcal J_n}^+(y_{i-1},E)
     =B_{\mathcal J_n}^-(y_i,E).
\]
Thus, with the atomwise orientations fixed above, the scalar \(h_i\)
agrees up to one fixed sign with the genuine parent overlap at \(y_i\):
\[
 |h_i|=
 |\cos G_{\mathcal J_n,\mathcal A_i}^{\eps,\delta}(y_i,E)|.
\]
The backward chain uses the same chronological identity; hence
\eqref{eq:multiwell-preliminary-safe-region} bounds every internal overlap.
Write \(U,S\) for the expanding left and contracting right singular
directions of
\(\mathcal M\), and define the derivative bound for the chain
\[
 Q_{r_*}:=2+
 \max_{\substack{1\le i\le L\\1\le|\beta|\le r_*}}
 \left(
  \|\partial^\beta\log\lambda_i\|_\infty
  +\|\partial^\beta u_i\|_\infty
  +\|\partial^\beta s_i\|_\infty
 \right).
\]

Recall the auxiliary
\((A_0,D_0,\gamma_0)\) fixed before \(D_4\), and put
\[
 q_{\rm ch}:=n^{C_I/(2\tau)}.
\]
After increasing the fixed starting threshold, the \(\mathcal J_n\)-return bounds,
\eqref{eq:multiwell-cross-return-bounds}, and
\eqref{eq:multiwell-preliminary-safe-region} give
\[
 q_{\rm ch}\le p_i\le q_{\rm ch}^{A_0},\qquad
 L\le q_{\rm ch}^{A_0},\qquad
 \lambda_i\ge e^{\gamma_0p_i},
\]
while \eqref{eq:perturbed-axis-angle-derivatives} and
\(D_{16}\ge r_*D_2\), followed by Fa\`a di Bruno, give,
for every internal overlap coefficient,
\[
 |h_i|\ge q_{\rm ch}^{-D_0},\qquad
 \|h_i\|_{C^{r_*}}\le q_{\rm ch}^{D_0}.
\]
The parent modulus and singular-direction estimates, together with
\(p_i\ge cn^{C_I/\tau}\), give the order-\(r_*\) derivative bound
\begin{equation}\label{eq:multiwell-chain-Qr-budget}
 Q_{r_*}\le Cq_{\rm ch}^{A_0K_{\rm par}}.
\end{equation}
Set
\[
 \varkappa:=q_{\rm ch}^{-D_0},
 \qquad
 \Lambda:=e^{\gamma_0q_{\rm ch}}.
\]
With \(B_{17}\) from the chain lemma, the left-hand side of its
smallness condition is bounded by
\[
\begin{aligned}
 L^{B_{17}}Q_{r_*}^{B_{17}}
 \varkappa^{-B_{17}}\Lambda^{-2}
 &\le C q_{\rm ch}^{B_{17}
 [A_0(1+K_{\rm par})+D_0]}
 e^{-2\gamma_0q_{\rm ch}}\\
 &=o_{q_{\rm ch}\to\infty}(q_{\rm ch}^{-D_0}) =o_{q_{\rm ch}\to\infty}(\varkappa).
\end{aligned}
\]
Thus Lemma~\ref{B:lem:polynomial-hyperbolic-chain} applies after
increasing the threshold.  On each fixed-itinerary domain put
\(\mathsf R=\sum_{j=1}^Lp_j\); the parent return estimates give
\[
\begin{gathered}
 p_j\ge cn^{C_I/\tau},
 \qquad L\le C\mathsf R n^{-C_I/\tau},\\
 \frac{R_\tau(p_j+2)}{p_j}
 \le Cn^{-C_I\sigma_{\rm cmp}/\tau}(\log(n+2))^B.
\end{gathered}
\]
The norm conclusion of the chain lemma first gives the explicit
intermediate estimate
\[
\begin{aligned}
 \log\|\mathcal M\|
 &\ge\sum_{j=1}^L\log\lambda_j
      +(L-1)\log(\varkappa/2)\\
 &\ge\gamma\mathsf R
   -C\sum_{j=1}^LR_\tau(p_j+2)-CL\log(n+2).
\end{aligned}
\]
Consequently the total logarithmic loss is bounded by
\begin{equation}\label{eq:multiwell-explicit-accumulated-loss}
 C\mathsf R\left[
 n^{-C_I\sigma_{\rm cmp}/\tau}(\log(n+2))^B
 +n^{-C_I/\tau}\log(n+2)
 \right].
\end{equation}
On the other hand,
\[
 cn^{C_I/\tau}\le\mathsf R\le Cn^{B_7\tau}.
\]
Thus \(\log(n+2)\le C\log(\mathsf R+2)\).  With, for example,
the exponent \(\eta_{\rm mw}\) fixed in
\eqref{eq:multiwell-eta-mw-explicit},
both powers of \(n\) in
\eqref{eq:multiwell-explicit-accumulated-loss} are at most
\(C\mathsf R^{-\eta_{\rm mw}}\).  This proves
\eqref{eq:multiwell-long-growth} for both return directions.
The fixed-order log-norm and endpoint estimates in
Lemma~\ref{B:lem:polynomial-hyperbolic-chain}, together with
\eqref{eq:multiwell-chain-Qr-budget}, give, for
\(1\le|\beta|\le r_*\),
\[
 |\partial^\beta\log\|\mathcal M\||
 +|\partial^\beta U|+|\partial^\beta S|
 \le
 Cq_{\rm ch}^{B_{17}[A_0(1+K_{\rm par})+D_0]}
 \le C(\mathsf R+2)^{B_{16}/r_*}.
\]
Fa\`a di Bruno and differentiation of the singular value decomposition
of \(\mathcal M\) now give
\eqref{eq:multiwell-long-modulus-derivatives}--
\eqref{eq:multiwell-long-matrix-derivatives}.
By \eqref{fr:eq:complete-return-joining-convention},
\eqref{eq:multiwell-long-angle-derivatives},
\eqref{eq:multiwell-cross-return-bounds}, and Fa\`a di Bruno,
\begin{equation}\label{eq:multiwell-long-overlap-mixed-bound}
 \|\cos G_{\mathcal K_{n,k},\mathcal B}\|_{C^{r_*}}
 \le Cn^{r_*B_7\tau B_{16}}
 \le Cn^{D_4}.
\end{equation}
All constants are independent of \(n\), the itinerary atom, and the
component label.

Fix any \(\mathcal K_{n,k}\)-return atom \(\mathcal B\) contained in
\(U_{n,k,s,\pm}\).  In the forward chain, the rightmost factor adjacent
to \(x\) is the parent forward block on \(\mathcal A_{a_s,n}\); in the
backward chain, the leftmost factor adjacent to \(x\) is the parent backward
block on the same atom.  Apply the fixed-order endpoint contraction
\eqref{fr:eq:chain-fixed-order-endpoint} to these two chains.  The
derivative factors \(L^{B_{17}}Q_{r_*}^{B_{17}}
\varkappa^{-B_{17}}\) are polynomial in \(n\), whereas the inverse
squares of the adjacent parent singular values are bounded by
\[
 \exp\{-2\gamma m_n+CR_\tau(m_n+2)\}
 \le e^{-c n^{\eta_0}}
\]
because the adjacent \(\mathcal J_n\)-return length
\(m_n=r_{\mathcal J_n}^+(p_{a_s})
=r_{\mathcal J_n}^-(p_{a_s})\), independent of \(s\), has a fixed-power
lower bound.  Orient the long endpoint
directions by positive inner product with the corresponding parent
directions.  The
fixed-order product rule for their scalar products then gives
\eqref{eq:multiwell-long-short-Cr-tail}, uniformly for every such
\(\mathcal B\).  Across an internal boundary of the parent-itinerary partition, the
complete fixed-word matrices are analytic.  Their projections
\eqref{fr:eq:complete-K-return-projectors} satisfy
\[
 \|\mathscr P^{s,u}_{\rm long}-\mathscr P^{s,u}_{\rm parent}\|<1/10,
 \qquad
 v_{\rm long}
 =\frac{\mathscr P_{\rm long}v_{\rm parent}}
        {\|\mathscr P_{\rm long}v_{\rm parent}\|},
 \qquad v_{\rm long}^{\mathsf T}v_{\rm parent}>0.
\]
These unique analytic lifts agree on adjacent itinerary interiors.
Continuity of every fixed derivative and density of those interiors
extend \eqref{eq:multiwell-long-short-Cr-tail} to
\(\mathcal B\times\mathcal I_{n,k}\).

\smallskip
\noindent\emph{Step (v): Induced zeros.}
On \(U_{n,k,s,\pm}\), the distance to the other short
root of the same peak is at least \(cn^{-B_5/2}\), and
\[
 w_{n,k}+
 \sup_{E\in\mathcal I_{n,k}}
 |c_{n,k,s,\pm}(E)-c_{n,k,s,\pm}(E_{n,k}^\circ)|
 =o_{n\to\infty}(n^{-B_5/2}).
\]
Strict convexity therefore gives
\[
 |\partial_xH_{a_s,n}^{\eps,\delta}|
 \ge cn^{-B_5/2}
\]
with a constant sign throughout that component.  On the distinguished
atom, the separation from the return boundary and from the other short
root gives, at both atom endpoints,
\[
 |H_{a_s,n}^{\eps,\delta}|
 \ge c n^{-B_8-B_5/2}.
\]
Writing \(\overline{\mathcal B_{n,k,s,\pm}}=[b_-,b_+]\), the error in
\eqref{eq:multiwell-long-short-Cr-tail} is
\[
 o\!\left(\min\{n^{-B_8-B_5/2},
                       n^{-B_5/2}\}\right).
\]
It preserves \(H(b_-,E)H(b_+,E)<0\) and the constant derivative sign,
so the induced overlap has exactly one zero.  Moreover,
\[
\begin{aligned}
 |\widehat c_{n,k,s,\pm}(E)-c_{n,k,s,\pm}(E)|
 &\le Cn^{D_4}e^{-cn^{\eta_0}},\\
 c
 &\le
 \partial_E\cos G_{\mathcal K_{n,k},\mathcal B_{n,k,s,\pm}}
 (\widehat c_{n,k,s,\pm}(E),E)
 \le C.
\end{aligned}
\]
Write
\[
 F(x,E):=\cos
 G_{\mathcal K_{n,k},\mathcal B_{n,k,s,\pm}}(x,E),
 \qquad \widehat c(E):=\widehat c_{n,k,s,\pm}(E).
\]
More explicitly, the \(C^1\) comparison and the root-displacement estimate
give
\[
\begin{aligned}
 \left|F_x(\widehat c(E),E)
 -\partial_xH_{a_s,n}^{\eps,\delta}
   (c_{n,k,s,\pm}(E),E)\right|
 &\le Cn^{D_4}e^{-cn^{\eta_0}}\\
 &=o_{n\to\infty}(n^{-B_5/2}).
\end{aligned}
\]
The fundamental theorem of calculus gives the exact formula
\[
 \mathfrak a_{n,k,s,\pm}(x,E)
 :=\int_0^1
 F_x\bigl(\widehat c(E)+t(x-\widehat c(E)),E\bigr)\,dt.
\]
Both \(x\) and \(\widehat c(E)\) belong to the same distinguished return
atom \(\mathcal B_{n,k,s,\pm}\subset U_{n,k,s,\pm}\).  The \(C^2\)
comparison \eqref{eq:multiwell-long-short-Cr-tail} and
\eqref{eq:perturbed-central-transversality} give
\[
 |x-\widehat c(E)|\le 2w_{n,k},
 \qquad \|F_{xx}\|_{L^\infty}\le C.
\]
Consequently,
\[
 |F_x(x,E)-F_x(\widehat c(E),E)|
 \le\tfrac12 Cw_{n,k}\le Cn^{-B_7}
 =o_{n\to\infty}(n^{-B_5/2}),
\]
where \(B_7>B_5/2+4\).  Thus \(F_x\) keeps a constant
sign on the segment from \(\widehat c(E)\) to \(x\), and uniformly there
\[
 c n^{-B_5/2}\le |F_x|\le Cn^{-B_4/2}.
\]
The integral formula proves
\eqref{eq:multiwell-long-transverse-factorization}--
\eqref{eq:multiwell-long-transverse-coefficient}, including the claimed
decaying upper bound on the whole distinguished atom.  Differentiating the
root equation in \(E\) and using the preceding phase and energy derivative
bounds proves
\eqref{eq:multiwell-long-root-velocities}.
Put \(\mathscr Q_1:=F_E(\widehat c(E),E)\), so that
\(F_x\widehat c^{\,\prime}=-\mathscr Q_1\).
The higher derivatives of the root curve are
polynomially controlled.  For \(2\le j\le r_*\), let
\(\mathscr Q_j\) denote the universal Fa\`a di Bruno polynomial formed
by all terms not containing \(\partial_E^j\widehat c\).  Differentiating
\(F(\widehat c(E),E)=0\) exactly \(j\) times gives
\begin{equation}\label{eq:multiwell-root-derivative-recursion}
\begin{aligned}
 F_x(\widehat c(E),E)\,\partial_E^j\widehat c(E)
 &=-\mathscr Q_j\biggl(
  \partial_x^a\partial_E^bF(\widehat c(E),E),\\
 &\hspace{35mm}
  \partial_E^i\widehat c(E):
  a+b\le j,\ 1\le i<j
 \biggr).
\end{aligned}
\end{equation}
Use the first bound in \eqref{eq:multiwell-long-overlap-mixed-bound}
and put \(d:=r_*B_7\tau B_{16}\).  Then
\[
 \|F\|_{C^{r_*}}\le Cn^d,\qquad |F_x|\ge cn^{-B_5/2}.
\]
The recursion and induction on \(j\) give
\[
 |\partial_E^j\widehat c|
 \le Cn^{(2j-1)(d+B_5/2)},\qquad 1\le j\le r_*.
\]
Differentiating the integral formula for \(\mathfrak a\) gives, in turn,
\[
 \|\widehat c\|_{C^{r_*}}
 +\|\mathfrak a_{n,k,s,\pm}\|_{C^{r_*-1}}
 \le Cn^{(2r_*-1)(d+B_5/2)}.
\]
The initial choice of \(D_4\) therefore proves
\eqref{eq:multiwell-root-high-order-bound} and
\eqref{eq:multiwell-long-transverse-coefficient}.

If \(\mathcal B\) is any other atom in the same component
\(U_{n,k,s,\pm}\), then
\eqref{eq:multiwell-return-boundary-clearance} gives
\[
 \operatorname{dist}\bigl(\mathcal B,
 c_{n,k,s,\pm}(E)\bigr)\ge\tfrac12n^{-B_8}.
\]
The whole component \(U_{n,k,s,\pm}\) is separated from the other root
\(c_{n,k,s,\mp}(E)\) of the same fold by at least
 \(c n^{-B_5/2}\), as proved above.  Applying
\eqref{eq:multiwell-convex-product-bound} to these two roots gives
\[
 \inf_{\mathcal B\times\mathcal I_{n,k}}
 |H_{a_s,n}^{\eps,\delta}|
 \ge c n^{-B_8-B_5/2}.
\]
This polynomial quantity dominates the exponentially small right-hand
side of \eqref{eq:multiwell-long-short-Cr-tail}, proving
\eqref{eq:multiwell-nondistinguished-nozero}.
Choose \(c_3>0\) below all the finitely many lower-bound and decay
coefficients, and \(C_5\) above all the upper-bound coefficients needed
in the five statements.  Then choose \(n_{\rm mw}\ge n_0\) to dominate
all thresholds used above.  The return cutoff \(R_n\) and the phase
radius interval are independent of these coefficient choices.  This
proves the five statements with one common set of constants.
\end{proof}
\subsection{Limiting Schur-complement first variations}

Recall the covering points \(p_a=a/N\) from the Introduction and the even
upper-edge eigenfunction \(u\) from
Proposition~\ref{fr:prop:AMO-edge-state}.
Estimate~\eqref{eq:AMO-seed-edge-state-decay} makes the following series
absolutely convergent.  Define
\begin{align}
 \mathscr L_a^{\rm Sch}(\delta)
 &:=-\delta(p_a)
   -\sum_{j=1}^\infty u_j^2
    [\delta(p_a+j\alpha)+\delta(p_a-j\alpha)]
 \label{eq:polynomial-limiting-Schur-response}\\
 \mathfrak b_\infty^{\rm Sch}
 &:=1+2\sum_{j=1}^\infty u_j^2>0,
 \label{eq:polynomial-limiting-Schur-slope}\\
 \mathscr D_{a,b}^{\rm Sch}
 &:=\mathscr L_a^{\rm Sch}-\mathscr L_b^{\rm Sch}.
 \label{eq:polynomial-Schur-difference}
\end{align}
For every \(\delta\in X\),
\[
 |\mathscr L_a^{\rm Sch}(\delta)|
 \le \mathfrak b_\infty^{\rm Sch}\|\delta\|_\infty.
\]
Here \(X^*\) denotes the continuous dual of \(X\).  In particular,
\(\mathscr L_a^{\rm Sch}\in X^*\).  For
\(x\in\mathbb T\), let \(\operatorname{ev}_x\in X^*\) be the evaluation
functional \(\operatorname{ev}_x(\delta):=\delta(x)\).  Cauchy's estimate
in the fixed strip gives
\begin{equation}\label{eq:polynomial-evaluation-functional-Lipschitz}
 \|\operatorname{ev}_x-\operatorname{ev}_y\|_{X^*}
 \le C\|x-y\|_{\mathbb T}.
\end{equation}

\begin{definition}
\label{def:quantitative-Schur-classes}
Set
\begin{equation}\label{eq:explicit-Pstrat}
 \mathcal P_{\mathrm{strat}}
 :=\{\delta\in X:
 \mathscr D_{a,b}^{\rm Sch}(\delta)\ne0
 \text{ for every }0\le a<b<N\}
\end{equation}
and
\[
 \sigma_{\rm Sch}(\delta)
 :=\frac1{\mathfrak b_\infty^{\rm Sch}}
   \min_{0\le a<b<N}|\mathscr D_{a,b}^{\rm Sch}(\delta)|.
\]
For integers \(R,q\ge1\), put
\begin{equation*}
 \mathcal P_{\mathrm{strat}}(R,q)
 :=\{\delta\in X:\|\delta\|_\rho\le R,
                    \ \sigma_{\rm Sch}(\delta)\ge q^{-1}\}.
\end{equation*}
\end{definition}
\begin{proposition}
\label{prop:standard-prevalence-Schur}
The set \(\mathcal P_{\mathrm{strat}}\) of
Definition~\ref{def:quantitative-Schur-classes} is open and prevalent in \(X\).
\end{proposition}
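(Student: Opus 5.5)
Openness comes first because it is immediate. Each \(\mathscr L_a^{\rm Sch}\) lies in \(X^*\), with \(|\mathscr L_a^{\rm Sch}(\delta)|\le\mathfrak b_\infty^{\rm Sch}\|\delta\|_\infty\le\mathfrak b_\infty^{\rm Sch}\|\delta\|_\rho\). Hence every difference \(\mathscr D_{a,b}^{\rm Sch}\) is a continuous linear functional on \(X\). The set \(\mathcal P_{\rm strat}\) is the finite intersection over \(0\le a<b<N\) of the complements of the closed hyperplanes \(\ker\mathscr D_{a,b}^{\rm Sch}\). It is therefore open, and it is Borel, as Definition~\ref{def:prevalence} requires.

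For prevalence, the plan is to show that each \(\mathscr D_{a,b}^{\rm Sch}\) is a \emph{nonzero} functional. Suppose \(\mathscr D_{a,b}^{\rm Sch}(w_{ab})\ne0\) for some \(w_{ab}\in X\). Take \(\mu\) to be the law of \(w=\sum_{a<b}t_{ab}w_{ab}\), where the \(t_{ab}\) are independent and uniform on \([-1,1]\). This is a compactly supported Borel probability measure on the finite-dimensional span of the \(w_{ab}\). For fixed \(x\in X\) and a fixed pair \((a,b)\), the quantity \(\mathscr D_{a,b}^{\rm Sch}(x+w)\) is an affine function of \((t_{cd})\). Its coefficient on \(t_{ab}\) is nonzero, so the affine function is not constant. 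Its zero set is then a hyperplane in parameter space and has Lebesgue measure zero. A finite union of null sets is null, so \(\mu\{w:x+w\in\mathcal P_{\rm strat}\}=1\) for every \(x\).

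The remaining step, which I expect to be the main obstacle, is to exhibit \(w_{ab}\) with \(\mathscr D_{a,b}^{\rm Sch}(w_{ab})\ne0\). Unwinding \eqref{eq:polynomial-limiting-Schur-response}, we have \(\mathscr L_a^{\rm Sch}=-\operatorname{ev}_{p_a}-\sum_{j\ge1}u_j^2(\operatorname{ev}_{p_a+j\alpha}+\operatorname{ev}_{p_a-j\alpha})\). This is minus a positive, summable combination of point evaluations along the orbit \(p_a+\alpha\mathbb Z\). For \(a\ne b\), the orbits through \(p_a\) and \(p_b\) are disjoint: \(p_a-p_b=(a-b)/N\) is rational and nonzero, while \(j\alpha\notin\mathbb Q\) for \(j\ne0\). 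I would test against the trigonometric polynomials \(w_k(x)=e^{2\pi ikx}+e^{-2\pi ikx}\), which lie in \(X\), and also against their sine counterparts. Then
\[
 \mathscr L_a^{\rm Sch}(e^{2\pi ik\cdot})=-e^{2\pi ikp_a}\,\widehat\mu(k),\qquad
 \widehat\mu(k):=1+\sum_{j\ge1}u_j^2\bigl(e^{2\pi ikj\alpha}+e^{-2\pi ikj\alpha}\bigr)=1+2\sum_{j\ge1}u_j^2\cos(2\pi kj\alpha).
\]
It follows that \(\mathscr D_{a,b}^{\rm Sch}(e^{2\pi ik\cdot})=-\widehat\mu(k)\bigl(e^{2\pi ika/N}-e^{2\pi ikb/N}\bigr)\). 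Choose \(k=1\). The second factor is nonzero because \(0<|a-b|<N\). It remains to see that \(\widehat\mu(1)\ne0\).

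If \(\widehat\mu(1)=0\), I would replace \(k=1\) by \(k=1+mN\). The factor \(e^{2\pi ika/N}-e^{2\pi ikb/N}\) is unchanged, so it suffices that \(\widehat\mu(1+mN)\ne0\) for some \(m\). Here \(\widehat\mu\) is the Fourier transform of the positive finite measure \(\mu_\infty=\delta_0+\sum_{j\ge1}u_j^2(\delta_{j\alpha}+\delta_{-j\alpha})\) on \(\mathbb T\), which has total mass \(\mathfrak b_\infty^{\rm Sch}\). Suppose \(\widehat\mu(1+mN)=0\) for all \(m\in\mathbb Z\). Then the measure \(e^{-2\pi i x}\,d\mu_\infty\) has Fourier coefficients vanishing on \(N\mathbb Z\). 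Equivalently, its pushforward under \(x\mapsto Nx\) is the zero measure. That pushforward is \(e^{-2\pi i x}\) weighted by \(\mu_\infty\) and summed over the fibres of the \(N\)-fold cover. The points \(Nj\alpha\) are pairwise distinct by irrationality of \(\alpha\), so each fibre carries at most one atom of \(\mu_\infty\). The pushforward therefore carries the nonzero atom at \(0\) coming from \(\delta_0\), which is a contradiction. Taking the real or imaginary part of \(e^{2\pi ik\cdot}\) yields a real element of \(X\) on which \(\mathscr D_{a,b}^{\rm Sch}\) does not vanish. This completes the plan.
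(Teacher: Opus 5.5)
Your proof is correct and follows essentially the paper's route: openness from continuity of the finitely many functionals $\mathscr D_{a,b}^{\rm Sch}$, nonvanishing of each via Fourier uniqueness for atomic measures carried by disjoint $\alpha$-orbits, and a finite-dimensional Lebesgue probe for prevalence. The differences are cosmetic. The paper uses a one-dimensional probe, along a single direction $w$ outside the finite union of kernels, instead of your $\binom N2$-dimensional one. It also applies Fourier uniqueness directly to $\mu_a^{\rm Sch}-\mu_b^{\rm Sch}$, which is nonzero because of its atom at $p_a$; your orbit-disjointness observation already gives this, so you could skip the factorization through $\widehat\mu(k)$ and the $k=1+mN$ detour.
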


\begin{proof}
For \(x\in\mathbb T\), let \(\delta_x\) denote the unit Dirac point
mass at \(x\).  For \(0\le a<N\), define the finite positive measure
\[
 \mu_a^{\rm Sch}:=\delta_{p_a}
 +\sum_{j=1}^\infty u_j^2
   (\delta_{p_a+j\alpha}+\delta_{p_a-j\alpha}).
\]
Then
\[
 \mathscr L_a^{\rm Sch}(\delta)
 =-\int_{\mathbb T}\delta\,d\mu_a^{\rm Sch},
 \qquad
 \mu_a^{\rm Sch}(\mathbb T)
 =\mathfrak b_\infty^{\rm Sch}.
\]
For \(a\ne b\), irrationality gives
\[
\begin{aligned}
 p_a+j\alpha=p_b+\ell\alpha\pmod1
 &\Longrightarrow N(j-\ell)\alpha\in\mathbb Z\\
 &\Longrightarrow j=\ell,\quad a-b\in N\mathbb Z,
\end{aligned}
\]
a contradiction.  Hence
\[
 \mu_a^{\rm Sch}(\{p_a\})=1,\qquad
 \mu_b^{\rm Sch}(\{p_a\})=0
 \quad\Longrightarrow\quad
 \mu_a^{\rm Sch}\ne\mu_b^{\rm Sch}
 \quad\Longrightarrow\quad
 \mathscr D_{a,b}^{\rm Sch}\ne0,
\]
since trigonometric polynomials belong to \(X\) and determine finite
Borel measures.

There are only finitely many functionals
\(\mathscr D_{a,b}^{\rm Sch}\), and each is nonzero.  Hence their kernels,
being proper linear subspaces of the real vector space \(X\), do not cover
\(X\).  Choose
\[
 w\in X\setminus
 \bigcup_{0\le a<b<N}\ker\mathscr D_{a,b}^{\rm Sch}.
\]
Let \(\nu\) be the pushforward of normalized Lebesgue measure on
\([-1,1]\) under \(t\mapsto tw\).  Its support is the compact segment
\([-w,w]\).  For any \(\delta_0\in X\) and every \(a<b\),
\[
 \mathscr D_{a,b}^{\rm Sch}(\delta_0+tw)
 =\mathscr D_{a,b}^{\rm Sch}(\delta_0)
  +t\mathscr D_{a,b}^{\rm Sch}(w),
\]
so
\[
\begin{gathered}
 \#\{t\in[-1,1]:
       \mathscr D_{a,b}^{\rm Sch}(\delta_0+tw)=0\}\le1,\\
 \nu\{v:\delta_0+v\notin\mathcal P_{\mathrm{strat}}\}=0
 \qquad(\delta_0\in X),\\
 X\setminus\mathcal P_{\mathrm{strat}}
 =\bigcup_{a<b}\ker\mathscr D_{a,b}^{\rm Sch}
 \quad\text{is closed}.
\end{gathered}
\]
This proves prevalence as in Definition~\ref{def:prevalence} and openness.
\end{proof}

Since the number of pairs \((a,b)\) is finite, the quantitative classes
exhaust \(\mathcal P_{\mathrm{strat}}\):
\begin{equation}\label{eq:Pstrat-quantitative-exhaustion}
 \mathcal P_{\mathrm{strat}}
 =\bigcup_{R=1}^\infty\bigcup_{q=1}^\infty
   \mathcal P_{\mathrm{strat}}(R,q).
\end{equation}

For the next two lemmas, fix \(R>0\) and a finite derivative order \(r_*\ge3\), and recall the
fixed return-window exponent \(C_I\) from
Proposition~\ref{fr:prop:deterministic-AMO-seed}.  Let
\(B_2,B_1,D_3,n_0\) and
\((\xi_{a,n}^{\eps,\delta},E_{a,n}^{\eps,\delta})\) be supplied by
the common choice in Propositions~\ref{prop:polynomial-starting-scale-perturbed-dichotomy}
and~\ref{prop:polynomial-starting-folds}
with profile bound \(2R\) and derivative order \(r_*+2\).
Let \(m_n\) be as in
Proposition~\ref{fr:prop:AMO-central-normal-form}, and let \(H_{a,n}^0\) be as defined before
Proposition~\ref{prop:polynomial-starting-folds}.  Set
\begin{equation}\label{eq:polynomial-response-common-bidomain}
 \Omega_{a,n}^{\rm resp}:=
 \left\{
 |x-p_a|\le\frac1{8N}n^{-B_1},\quad
 |E-E_+|\le\frac18n^{-B_2}
 \right\}.
\end{equation}

The unperturbed folds are common reference points for the response estimates.

\begin{lemma}
\label{lem:unperturbed-tip-fold}
There are \(n_{\rm resp}=n_{\rm resp}(R,r_*)\ge\max\{2,n_0\}\),
\(c>0\), and \(C_6=C_6(R,r_*)>0\), with the fixed background
dependence understood, for which the following holds.
For every integer \(n\ge n_{\rm resp}\), there is a common energy \(E_n^0\) and,
for each \(0\le a<N\), a unique phase \(\xi_{a,n}^0\) with the following properties.
\begin{enumerate}
\item The point \((\xi_{a,n}^0,E_n^0)\) is the unique unperturbed fold in
\(\Omega_{a,n}^{\rm resp}\) and satisfies
\begin{equation}\label{eq:polynomial-unperturbed-fold-equations}
 H_{a,n}^0(\xi_{a,n}^0,E_n^0)=0,
 \qquad
 \partial_xH_{a,n}^0(\xi_{a,n}^0,E_n^0)=0.
\end{equation}
\item The folds satisfy
\begin{equation}\label{eq:polynomial-unperturbed-fold-location}
 \xi_{a,n}^0=p_a+\xi_{0,n}^0,
 \qquad
 |\xi_{a,n}^0-p_a|+|E_n^0-E_+|
 \le C_6e^{-c\gamma m_n}.
\end{equation}
\item At each fold,
\begin{equation}\label{eq:polynomial-unperturbed-fold-nondegeneracy}
 c\le\partial_EH_{a,n}^0(\xi_{a,n}^0,E_n^0)\le C_6,
 \qquad
 c\le\partial_x^2H_{a,n}^0(\xi_{a,n}^0,E_n^0)\le C_6.
\end{equation}
\end{enumerate}
\end{lemma}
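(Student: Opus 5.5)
The plan is to obtain everything from the unperturbed specialization of Proposition~\ref{prop:polynomial-starting-folds} and Corollary~\ref{cor:polynomial-starting-fold-normal-form}, together with the \(1/N\)-translation symmetry of \(V_0\). To stay within the stated hypotheses \(0<\eps\le n^{-D_3}\), I would take \(\delta=0\), which satisfies \(\|\delta\|_\rho\le 2R\), and any admissible \(\eps>0\). Then \(V_{\eps,0}=V_0\), so \(H_{a,n}^{\eps,0}=H_{a,n}^0\) with the orientation continued trivially from \(t=0\).

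For \(n\ge n_0\), Proposition~\ref{prop:polynomial-starting-folds} then gives, for each \(a\), a unique fold \((\xi_{a,n}^0,E_{a,n}^0)\in\widehat\Omega_{a,n}\). Since the \(\eps n^{D_2}\) term is absent, \eqref{eq:perturbed-fold-displacement-polynomial} gives
\[
|\xi_{a,n}^0-p_a|+|E_{a,n}^0-E_+|\le C_4e^{-c\gamma m_n}.
\]
The estimate \eqref{eq:perturbed-central-transversality} gives \(c\le\partial_EH^0_{a,n},\partial_x^2H^0_{a,n}\le C\) on all of \(\widehat\Omega_{a,n}\), and in particular at the fold, which is \eqref{eq:polynomial-unperturbed-fold-nondegeneracy}.

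Next I would check containment. We have \(\Omega_{a,n}^{\rm resp}\subset\widehat\Omega_{a,n}=(p_a+N^{-1}\widetilde I_n)\times\mathcal E_n\), since \((8N)^{-1}n^{-B_1}<N^{-1}n^{-B_1}\) and \(\frac18 n^{-B_2}<n^{-B_2}\). Also \(m_n\ge cN_n^{1/\tau}\) by Proposition~\ref{fr:prop:AMO-central-normal-form}, so \(C_4e^{-c\gamma m_n}\) is \(o(\min\{n^{-B_1},n^{-B_2}\})\). After enlarging the threshold to \(n_{\rm resp}\), the fold therefore lies in \(\Omega^{\rm resp}_{a,n}\). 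Uniqueness in \(\Omega^{\rm resp}_{a,n}\) is inherited from uniqueness in the larger set \(\widehat\Omega_{a,n}\).

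It remains to prove the common energy and the identity \(\xi^0_{a,n}=p_a+\xi^0_{0,n}\). The key point is the translation identity
\[
H^0_{a,n}(x+p_a,E)=H^0_{0,n}(x,E).
\]
To see this, note that \(V_0(\cdot+p_a)=V_0\) and \(\pi_N(x+p_a)=\pi_N(x)\). Hence \(T_\alpha^j(x+p_a)\in\mathcal J_n\) if and only if \(T_\alpha^jx\in\mathcal J_n\). So translation by \(p_a\) maps the central atom \(\mathcal A_{0,n}\) onto \(\mathcal A_{a,n}\), preserves both return times, and yields literally the same return matrices \(B^\pm_{\mathcal J_n}(x+p_a,E)=B^\pm_{\mathcal J_n}(x,E)\). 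The SVD data therefore agree up to the \(2\pi\) and simultaneous-\(\pi\) ambiguities of Proposition~\ref{fr:prop:deterministic-AMO-seed}. Those ambiguities can change \(\cos G\) at most by a global sign, and this sign is fixed on both atoms by the common upper-edge orientation \(\partial_E\cos G>0\). Consequently the translated fold \((p_a+\xi^0_{0,n},E^0_{0,n})\) is a fold of \(H^0_{a,n}\) in \(\widehat\Omega_{a,n}\), and uniqueness identifies it with \((\xi^0_{a,n},E^0_{a,n})\). Setting \(E^0_n:=E^0_{0,n}\) completes \eqref{eq:polynomial-unperturbed-fold-location}.

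I expect the main obstacle to be this orientation bookkeeping: one must make sure that the coherent lifts used in defining \(H^0_{a,n}\) really transport under translation, rather than differing by a sign. I would handle it exactly through the sign normalization by \(\partial_E\cos G>0\). The constant \(C_6\) is then the maximum of \(C_4\) and the upper transversality constants.
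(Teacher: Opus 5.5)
Your proposal is correct and follows the paper's own argument: specialize Proposition~\ref{prop:polynomial-starting-folds} to \(\delta=0\), drop the \(\eps n^{D_2}\) term, and obtain the common energy and \(\xi_{a,n}^0=p_a+\xi_{0,n}^0\) from \(H_{a,n}^0(p_a+x,E)=H_{0,n}^0(x,E)\) together with uniqueness. The paper justifies dropping \(\eps n^{D_2}\) by letting \(\eps\downarrow0\), which is valid because the fold is independent of \(\eps\); that is the precise justification for your claim that the term is ``absent,'' whereas your containment check for \(\Omega_{a,n}^{\rm resp}\) and your sign normalization via \(\partial_E\cos G>0\) simply spell out what the paper compresses into ``translation symmetry and uniqueness.''
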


Recall \(\mathfrak b_\infty^{\rm Sch}\) and
\(\mathscr D_{a,b}^{\rm Sch}\) from
\eqref{eq:polynomial-limiting-Schur-slope} and
\eqref{eq:polynomial-Schur-difference}.  The next lemma estimates the
perturbed folds supplied by the same initial-scale construction.

\begin{lemma}
\label{lem:differentiated-tip-response}
The constants in Lemma~\ref{lem:unperturbed-tip-fold} can be chosen
simultaneously with
\[
 D_5>0,\qquad 0<\eta_{\rm resp}<C_I/\tau,
\]
depending only on \((\mathfrak D_{\rm AMO}^{(r_*+2)},R)\), so that,
for every
\[
 \|\delta\|_\rho\le R,\qquad n\ge n_{\rm resp},\quad n\in\mathbb N,
 \qquad 0<\eps\le n^{-D_3},
\]
the following estimates hold.
\begin{enumerate}
\item The perturbed folds satisfy
\begin{equation}\label{eq:polynomial-fold-cluster}
 \max_{0\le a<N}\left(
 |\xi_{a,n}^{\eps,\delta}-p_a|
 +|E_{a,n}^{\eps,\delta}-E_+^{\eps,\delta}|
 \right)
 \le C_6\bigl(n^{D_5}\eps
              +e^{-cn^{\eta_{\rm resp}}}\bigr).
\end{equation}
\item For \(0\le a<b<N\),
\begin{equation}\label{eq:polynomial-fold-response}
 E_{a,n}^{\eps,\delta}-E_{b,n}^{\eps,\delta}
 =-\frac{\eps}{\mathfrak b_\infty^{\rm Sch}}
   \mathscr D_{a,b}^{\rm Sch}(\delta)
   +\mathcal R_{a,b,n}^{\eps,\delta},
\end{equation}
where
\begin{equation}\label{eq:polynomial-fold-response-error}
 |\mathcal R_{a,b,n}^{\eps,\delta}|
 \le C_6\left(
 n^{2D_5}\eps^2
 +n^{D_5}\eps e^{-cn^{\eta_{\rm resp}}}
 \right).
\end{equation}
\end{enumerate}
\end{lemma}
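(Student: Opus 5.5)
The plan is to linearize the fold equations
\[
F^{\eps,\delta}_{a,n}(x,E):=\bigl(H_{a,n}^{\eps,\delta}(x,E),\ \partial_xH_{a,n}^{\eps,\delta}(x,E)\bigr)=0
\]
in \(\eps\) around the unperturbed folds of Lemma~\ref{lem:unperturbed-tip-fold}. The Jacobian is the one already analyzed in Step~(iv) of the proof of Proposition~\ref{prop:polynomial-starting-folds}. At \((\xi_{a,n}^0,E_n^0)\) one has \(H_x=0\), so its determinant equals \(-H_EH_{xx}\), and \eqref{eq:polynomial-unperturbed-fold-nondegeneracy} bounds this below by \(c^2\). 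Hence \(DF^0_{a,n}\) is invertible with \(\|(DF^0_{a,n})^{-1}\|\le C\). By \eqref{eq:polynomial-Schur-SVD-stability} at order \(r_*+2\), applied with \(t\eps\) in place of \(\eps\), the map \(t\mapsto F^{t\eps,\delta}_{a,n}\) is \(C^1\) in \(t\) with values in \(C^2\). The implicit function theorem, or the same contraction as in Step~(iv), then gives
\[
E_{a,n}^{\eps,\delta}-E_n^0=\eps\,\partial_tE_{a,n}^{t\eps,\delta}\big|_{t=0}+O(n^{2D_5}\eps^2).
\]
The first-order term is \(-\partial_\eps H_{a,n}/\partial_EH_{a,n}\), evaluated at the unperturbed fold. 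The \(x\)-component drops out because \(\partial_xH=0\) there. The quadratic error comes from second derivatives in \((t,x,E)\), which are polynomially bounded by the resolvent estimates \eqref{eq:perturbed-resolvent-polynomial-derivatives}. Since \(\xi^0_{a,n}\) and \(E_n^0\) are within \(e^{-c\gamma m_n}\) of \(p_a\) and \(E_+\), and \(m_n\ge cn^{C_I/\tau}\), this same expansion already yields the cluster estimate \eqref{eq:polynomial-fold-cluster}. I would take any \(\eta_{\rm resp}<C_I/\tau\) and use \(|E_+^{\eps,\delta}-E_+|\le\eps R\).

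The main step is to compute the first variation \(\partial_\eps H_{a,n}\,/\,\partial_EH_{a,n}\) at the fold and show it equals \(\mathscr L_a^{\rm Sch}(\delta)/\mathfrak b^{\rm Sch}_\infty\) up to \(O(n^{D_5}e^{-cn^{\eta_{\rm resp}}})\). For this I would use the Schur-complement description of the central overlap referred to after Proposition~\ref{fr:prop:AMO-return-nondegeneracy}, namely Proposition~\ref{amo:prop:central-schur} and Lemma~\ref{amo:lem:schur-svd}. Near \(x=p_a\), the zero set of \(\cos G\) coincides, up to a positive factor, with the zero set of the scalar Schur complement
\[
S_n(x,E)=E-V(x)-G^+_{\rm left}-G^+_{\rm right},
\]
taken on the window \([-q,p-1]\) around the site \(0\). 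Here the two Green terms are the boundary entries of the resolvents of the forward and backward blocks, which the gap \eqref{eq:perturbed-resolvent-gap} controls. At the fold, Hellmann--Feynman for this finite-volume Schur problem gives two facts:
\[
\partial_\eps S_n=-\sum_j\psi_j^2\,\delta(x+j\alpha),\qquad \partial_ES_n=\sum_j\psi_j^2,
\]
where \(\psi\) is the finite-volume Schur eigenvector normalized by \(\psi_0=1\). Pulling back by \(\pi_N\) puts \(x+j\alpha\) at \(p_a\pm j\alpha\) to leading order. The next task is to compare \(\psi\) with the edge state \(u\) of Proposition~\ref{fr:prop:AMO-edge-state}. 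By \eqref{eq:AMO-seed-edge-state-decay}, both \(u\) and the Green-function decay make \(\psi_j-u_j\) and the tail of \(u\) beyond \(\min\{p,q\}\ge cn^{C_I/\tau}\) of size \(e^{-cn^{\eta_{\rm resp}}}\). Since \(u_{-j}=u_j\), this produces exactly \(\mathscr L_a^{\rm Sch}\) and \(\mathfrak b_\infty^{\rm Sch}\). The positive factor relating \(\cos G\) to \(S_n\) does not affect the ratio at a common zero, because its derivative terms multiply \(S_n=0\).

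The common part \(E_n^0\) cancels in the difference \(E_{a,n}-E_{b,n}\), and the expansion then gives \eqref{eq:polynomial-fold-response} with the error \eqref{eq:polynomial-fold-response-error}. In that error, the cross term \(n^{D_5}\eps e^{-cn^{\eta_{\rm resp}}}\) comes from the replacement of \(\psi\) by \(u\) and of the fold location by \(p_a\), together with the Lipschitz bound \eqref{eq:polynomial-evaluation-functional-Lipschitz}.

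I expect the main obstacle to be the identification of \(\partial_\eps\cos G\) with the Schur quantity. This requires that the SVD angle function and the Schur complement agree to first order in \(\eps\) with exponentially small discrepancy, uniformly in the perturbation. I would first try to extract this from the normalized transfer–Green formula \eqref{svd:eq:normalized-transfer-green}. There the overlap at the central atom, up to \(O(D^{-2})\) terms, is \(g_R^- - g_L^+\) plus the potential at \(0\) minus \(E\), which is the Schur complement itself. The \(O(D^{-2})\) terms are exponentially small by \eqref{eq:perturbed-principal-determinant-lower}.
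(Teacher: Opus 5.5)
Your proposal is correct and follows essentially the paper's route: implicit differentiation of the fold system at the common unperturbed fold (where \(H_x=0\) isolates \(E'(0)=-D_VH[\delta]/\partial_EH\)), the comparison \(H=\Xi\,\mathcal S+\mathcal E^{\rm svd}\) through the Weyl vectors of the normalized transfer--Green formula with \(\Xi\) cancelling in the ratio because \(\mathcal S\) is exponentially small at the fold, the Schur derivatives \(-\sum_j\psi_j^2\delta(x+j\alpha)\) and \(\sum_j\psi_j^2\), and a Green-column/edge-state comparison. The obstacle you flag is handled as you anticipate, by interpolating the determinant correction from \(0\) to \(\mathfrak D_\sigma^{-2}\) so that this factor survives one profile or energy derivative (since \(D_V\mathfrak D_\sigma^{-2}=-2\mathfrak D_\sigma^{-2}D_V\log\mathfrak D_\sigma\)). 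Three small points are harmless. The Green-column comparison rests on the polynomial resolvent bound times the exponentially small boundary value \(u_{L+1}\), not on Green-function decay. The \(t\)-derivatives of the overlap come from the resolvent identity, not from the Lipschitz stability estimate. Finally, \(\|(DF^0)^{-1}\|\) is only polynomially bounded, because \(H_{xE}\) is.
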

\begin{proof}[Proof of Lemmas~\ref{lem:unperturbed-tip-fold} and
\ref{lem:differentiated-tip-response}]
\medskip\noindent
\emph{Step (i).} We estimate profile derivatives on a common phase--energy domain.

Proposition~\ref{fr:prop:AMO-central-normal-form} gives
\(c n^{C_I/\tau}\le m_n\le n^{C_I}+1\).
Consequently, for every fixed
\(D>0\) and \(0<\eta<C_I/\tau\), after increasing the threshold
for \(n\) depending on \((D,\eta)\),
\begin{equation}\label{eq:polynomial-response-tail-scale}
 n^D e^{-c\gamma m_n}\le e^{-n^\eta}.
\end{equation}
In this proof, \(C=C(R,r_*)\) and \(D=D(R,r_*)\) denote auxiliary
bounds that may increase from one estimate to the next; the fixed
background dependence is understood.  They are independent of
\(n,a,\delta,t,\eps\).  The constants \(C_6\) and \(D_5\) in the
statements will be fixed after these estimates.  Enlarge \(n_{\rm resp}\)
so that all the required profile and resolvent estimates hold.
On
\[
 \{V\in X:\|V-V_0\|_\rho<2Rn^{-D_3}\}
 \times\Omega_{a,n}^{\rm resp},
\]
all
proper Dirichlet subblocks in the two fixed distinguished return matrices
are pole-free, and each matrix has a simple largest singular value.  Choose their
oriented singular directions by continuation from \(V_0\), write their
contracting right- and expanding left-singular angles as
\(s_{a,n}^+(V;x,E)\) and
\(u_{a,n}^-(V;x,E)\), and define
\[
\begin{aligned}
 G_{a,n}(V;x,E)
 &:=\frac\pi2-s_{a,n}^+(V;x,E)+u_{a,n}^-(V;x,E)
 \quad(\operatorname{mod}2\pi),\\
 H_{a,n}(V;x,E)
 &:=\cos G_{a,n}(V;x,E).
\end{aligned}
\]
Thus \(H_{a,n}(V_0;x,E)=H_{a,n}^0(x,E)\).
Here \(C^k_{x,E}:=C^k(\Omega_{a,n}^{\rm resp})\).  For a Banach space
\(Y\), let \(\mathcal L^k(X,Y)\) denote the bounded \(k\)-linear maps
\(X^k\to Y\), and put \(\mathcal L(X,Y):=\mathcal L^1(X,Y)\).
The map \(V\mapsto H_{a,n}(V)\) is twice Fr\'echet differentiable on
this neighborhood.  Uniformly for \(\|\delta\|_\rho\le R\) and
\(0<\eps\le n^{-D_3}\),
\begin{align}
 \sup_{0\le t\le1}
 \|D_VH_{a,n}(V_0+t\eps\delta)\|_{\mathcal L(X,C^2_{x,E})}
 &\le Cn^{D},
 \label{eq:polynomial-first-profile-map-bound}\\
 \sup_{0\le t\le1}
 \|D_V^2H_{a,n}(V_0+t\eps\delta)\|_{\mathcal L^2(X,C^1_{x,E})}
 &\le Cn^{2D}.
 \label{eq:polynomial-second-profile-map-bound}
\end{align}
For every proper Dirichlet subblock \(\Gamma\) occurring in a
return matrix, let \(\Id\) denote the identity on \(\ell^2(\Gamma)\) and
put
\[
 R_\Gamma(V):=(E\Id-H_{V,\Gamma}(x))^{-1},\qquad
 \mathcal D_{\Gamma,x}(h):=
 \operatorname{diag}_{j\in\Gamma}h(x+j\alpha).
\]
The required profile derivatives are
\begin{equation}\label{eq:polynomial-profile-resolvent-first}
 D_VR_\Gamma[h]
 =R_\Gamma\mathcal D_{\Gamma,x}(h)R_\Gamma,
\end{equation}
and
\begin{equation}\label{eq:polynomial-profile-resolvent-second}
\begin{aligned}
 D_V^2R_\Gamma[h_1,h_2]
 &={}R_\Gamma\mathcal D_{\Gamma,x}(h_1)R_\Gamma
       \mathcal D_{\Gamma,x}(h_2)R_\Gamma\\
 &\quad+R_\Gamma\mathcal D_{\Gamma,x}(h_2)R_\Gamma
       \mathcal D_{\Gamma,x}(h_1)R_\Gamma.
\end{aligned}
\end{equation}
Differentiating
\((E\Id-H_{V,\Gamma}(x))R_\Gamma=\Id\) gives
\eqref{eq:polynomial-profile-resolvent-first}--
\eqref{eq:polynomial-profile-resolvent-second}.  Together with
\eqref{eq:perturbed-resolvent-gap},
\eqref{eq:perturbed-resolvent-polynomial-derivatives}, and
\eqref{eq:perturbed-return-length-polynomial}, these identities give
polynomial bounds for both profile derivatives of every boundary Green
function.  For either complete return matrix, put
\(\widetilde B=P^{-1}B\), where \(P>0\) is its principal determinant.
The normalized Green identity and Jacobi's formula give
\[
 \|D_V\widetilde B\|_{\mathcal L(X,C^2_{x,E})}
 +\|D_V^2\widetilde B\|_{\mathcal L^2(X,C^1_{x,E})}
 \le Cn^{D}.
\]
For the determinant entry, Jacobi's formula gives
\[
\begin{aligned}
 D_V(P^{-2})[h]
 &=-2P^{-2}D_V\log P[h],\\
 D_V^2(P^{-2})[h_1,h_2]
 &=P^{-2}\bigl(
 4D_V\log P[h_1]D_V\log P[h_2]
 -2D_V^2\log P[h_1,h_2]\bigr).
\end{aligned}
\]
Since \(P>0\), the singular lines of \(B\) and \(\widetilde B\) agree.
The Gram gap \(3/4\), the explicit projector formula, and
\eqref{svd:eq:signed-axis-from-projector} transfer these bounds to the
axes; the chain rule for
\(H=\cos(\frac\pi2-s^++u^-)\) proves
\eqref{eq:polynomial-first-profile-map-bound}--
\eqref{eq:polynomial-second-profile-map-bound}.

Finally, \eqref{eq:perturbed-fold-displacement-polynomial}, uniformly for
\(0\le t\le1\) after increasing \(n_{\rm resp}\), gives
\[
\begin{aligned}
 |\xi_{a,n}^{t\eps,\delta}-p_a|
 &\le\frac1{16N}n^{-B_1},\\
 |E_{a,n}^{t\eps,\delta}-E_+|
 &\le\frac1{16}n^{-B_2},\\
 \|V_0+t\eps\delta-V_0\|_\rho
 &\le Rn^{-D_3}<2Rn^{-D_3}.
\end{aligned}
\]
Hence the \(1/8\)-widths in \eqref{eq:polynomial-response-common-bidomain}
fit inside the \(1/4\)-widths of every homotopy domain, where all preceding
resolvents and singular axes are defined.

We next locate the unperturbed fold.
For \(\delta=0\), \(V_{\eps,0}=V_0\).  Proposition
\ref{prop:polynomial-starting-folds}
therefore gives the same unique nondegenerate fold for every
\(0<\eps\le n^{-D_3}\), and
\[
 |\xi_{a,n}^0-p_a|+|E_{a,n}^0-E_+|
 \le C(\eps n^{D_2}+e^{-c\gamma m_n})
 \xrightarrow[\eps\downarrow0]{}
 Ce^{-c\gamma m_n}.
\]
Translation symmetry and uniqueness give
\[
 H_{a,n}^0(p_a+x,E)=H_{0,n}^0(x,E)
 \quad\Longrightarrow\quad
 (\xi_{a,n}^0,E_{a,n}^0)=(p_a+\xi_{0,n}^0,E_n^0).
\]
This proves \eqref{eq:polynomial-unperturbed-fold-equations}--
\eqref{eq:polynomial-unperturbed-fold-nondegeneracy}.

\medskip\noindent
\emph{Step (ii).} We compute the finite-volume Schur derivatives and pass to the half-line limit.

Reindex the distinguished two-sided return word as
\([-m_n,m_n-1]\cap\mathbb Z\), with the central cut at \(0\).  Its two
strict sides are
\[
 \Gamma_{a,n}^-=[-m_n,-1]\cap\mathbb Z,
 \qquad
 \Gamma_{a,n}^+=[1,m_n-1]\cap\mathbb Z.
\]
The indices are lattice displacements from the cut phase \(x\).  Put
\(\partial_-:=-1\) and \(\partial_+:=1\).  For
\(\sigma\in\{+,-\}\), let
\[
 e_{\partial_-}:=e_{-1}\in\ell^2(\Gamma_{a,n}^-),
 \qquad
 e_{\partial_+}:=e_{1}\in\ell^2(\Gamma_{a,n}^+)
\]
be the standard basis vectors adjacent to the cut.  On the pole-free
neighborhood over \(\Omega_{a,n}^{\rm resp}\) from Step~(i), set
\[
\begin{aligned}
 R_{a,n}^\sigma(V;x,E)
 &:=(E\Id-H_{V,\Gamma_{a,n}^\sigma}(x))^{-1},\\
 g_{a,n}^\sigma(V;x,E)
 &:=R_{a,n}^\sigma(V;x,E)e_{\partial_\sigma},\\
 h_{a,n}^\sigma(V;x,E)
 &:=\langle e_{\partial_\sigma},
 R_{a,n}^\sigma(V;x,E)e_{\partial_\sigma}\rangle
 =g_{a,n}^\sigma(V;x,E)(\partial_\sigma).
\end{aligned}
\]
The central Schur complement is
\begin{equation}\label{eq:polynomial-central-Schur-definition}
 \mathcal S_{a,n}(V;x,E)
 :=E-V(x)-h_{a,n}^-(V;x,E)-h_{a,n}^+(V;x,E).
\end{equation}
At the unperturbed fold we suppress the arguments and write
\[
\begin{aligned}
 g_{a,n}^\sigma
 &:=g_{a,n}^\sigma(V_0;\xi_{a,n}^0,E_n^0),\\
 h_{a,n}^\sigma
 &:=h_{a,n}^\sigma(V_0;\xi_{a,n}^0,E_n^0),
 \qquad \sigma\in\{+,-\},\\
 \mathcal S_{a,n}^0
 &:=\mathcal S_{a,n}(V_0;\xi_{a,n}^0,E_n^0).
\end{aligned}
\]
For \(L\ge1\), set
\(\Gamma_L^+:=[1,L]\) and \(\Gamma_L^-:=[-L,-1]\), and define
the reflected zero-extension maps, where \(\mathbb C^\Gamma\) denotes
the vector space of all functions \(\Gamma\to\mathbb C\) on a finite set
\(\Gamma\):
\[
 \iota_L^\sigma:\mathbb C^{\Gamma_L^\sigma}
 \longrightarrow\ell^2(\mathbb N),\qquad
 (\iota_L^\sigma w)_j
 :=\begin{cases}
       w(\sigma j),&1\le j\le L,\\
       0,&j>L,
    \end{cases}
 \qquad \sigma\in\{+,-\}.
\]
Whenever an \(\ell^2(\mathbb N)\)-norm is used below, a finite Green
column on \(\Gamma_L^\sigma\) is identified with its image under
\(\iota_L^\sigma\).

For the central Schur complement
\eqref{eq:polynomial-central-Schur-definition}, at
\((V,x,E)=(V_0,\xi_{a,n}^0,E_n^0)\), define
\[
 \mathscr L_{a,n}^{\rm Sch}
 :=\left.D_V\mathcal S_{a,n}(V;\xi_{a,n}^0,E_n^0)
   \right|_{V=V_0},
 \qquad
 \mathfrak b_{a,n}^{\rm Sch}
 :=\left.\partial_E\mathcal S_{a,n}(V_0;\xi_{a,n}^0,E)
   \right|_{E=E_n^0}.
\]
Then
\begin{align}
 \mathscr L_{a,n}^{\rm Sch}(\delta)
 &=-\delta(\xi_{a,n}^0)
   -\sum_{j=1}^{m_n}|g_{a,n}^-(-j)|^2
      \delta(\xi_{a,n}^0-j\alpha)\notag\\
 &\hspace{24mm}
   -\sum_{j=1}^{m_n-1}|g_{a,n}^+(j)|^2
      \delta(\xi_{a,n}^0+j\alpha),
 \label{eq:polynomial-finite-Schur-profile-derivative}\\
 \mathfrak b_{a,n}^{\rm Sch}
 &=1+\|g_{a,n}^-\|_2^2+\|g_{a,n}^+\|_2^2.
 \label{eq:polynomial-finite-Schur-energy-derivative}
\end{align}
For a strict side \(\Gamma\), write
\[
 h_\Gamma=\langle e_\partial,R_\Gamma e_\partial\rangle,
 \qquad g_\Gamma=R_\Gamma e_\partial.
\]
The first resolvent identity in
\eqref{eq:polynomial-profile-resolvent-first} and
\(\partial_ER_\Gamma=-R_\Gamma^2\) give
\[
 D_Vh_\Gamma[\delta]
 =\langle g_\Gamma,\mathcal D_{\Gamma,x}(\delta)g_\Gamma\rangle,
 \qquad
 \partial_Eh_\Gamma=-\|g_\Gamma\|_2^2.
\]
Substitution in
\(\mathcal S=E-V-h^--h^+\) proves
\eqref{eq:polynomial-finite-Schur-profile-derivative} and
\eqref{eq:polynomial-finite-Schur-energy-derivative}.

\smallskip\noindent
For the fixed backward word on \(\Gamma_{a,n}^-\) and the forward word
on \([0,m_n-1]\), put
\[
\begin{aligned}
 \mathfrak D_-(V;x,E)
 &:=\det(E-H_{V,\Gamma_{a,n}^-}(x)),\\
 \mathfrak D_+(V;x,E)
 &:=\det(E-H_{V,[0,m_n-1]}(x)).
\end{aligned}
\]
Both are positive, and
after increasing \(n_{\rm resp}\) their determinant lower bounds give
\(\mathfrak D_\sigma\ge2\), hence
\(\mathfrak D_\sigma^{-2}\le1/2\).  Define
\begin{equation}\label{eq:polynomial-finite-Weyl-vectors}
\begin{aligned}
 v_-^D(V;x,E)
 &:=\frac{(1,h_{a,n}^-(V;x,E))^{\mathsf T}}
 {\sqrt{1+h_{a,n}^-(V;x,E)^2}},\\
 v_+^D(V;x,E)
 &:=\frac{(E-V(x)-h_{a,n}^+(V;x,E),-1)^{\mathsf T}}
 {\sqrt{1+(E-V(x)-h_{a,n}^+(V;x,E))^2}}.
\end{aligned}
\end{equation}
Indeed, \eqref{svd:eq:normalized-transfer-green} gives these as the
rank-one output and input boundary directions after setting the determinant
correction to zero; on the forward side one also uses
\[
 \mathfrak D_+
 =\det(E-H_{V,\Gamma_{a,n}^+}(x))
   \bigl(E-V(x)-h_{a,n}^+\bigr)>0.
\]
Interpolate the determinant correction from \(0\) to
\(\mathfrak D_\sigma^{-2}\).
\par\noindent
The gap
\eqref{svd:eq:artificial-uniform-singular-gap} uniquely continues the
displayed orientations to the actual singular directions
\(v_-^{\rm svd}\) and \(v_+^{\rm svd}\).  Hence
\begin{equation}\label{eq:polynomial-SVD-joining-as-axis-product}
 H_{a,n}(V;x,E)
 =(v_+^{\rm svd}(V;x,E))^{\mathsf T}
   v_-^{\rm svd}(V;x,E).
\end{equation}
A direct scalar product in
\eqref{eq:polynomial-finite-Weyl-vectors}, together with
\eqref{svd:eq:XiD-explicit}, gives
\[
\begin{aligned}
 H_{a,n}^D(V;x,E)
 &:=(v_+^D(V;x,E))^{\mathsf T}v_-^D(V;x,E)\\
 &=\Xi^D\bigl(h_{a,n}^-(V;x,E),h_{a,n}^+(V;x,E),
              E-V(x)\bigr)
 \mathcal S_{a,n}(V;x,E).
\end{aligned}
\]

\smallskip\noindent
We next compare the finite Green columns with the upper-edge eigenfunction.
Uniformly in \(n\) and \(a\),
\begin{equation}\label{eq:polynomial-Green-column-edge-convergence}
 \|g_{a,n}^+-u^+\|_{\ell^2}
 +\|g_{a,n}^--u^-\|_{\ell^2}
 \le Cn^{D}e^{-c\gamma m_n},
\end{equation}
where \(u^+=(u_1,u_2,\ldots)\) and
\(u^-=(u_{-1},u_{-2},\ldots)\), and finite columns are extended by zero.
Consequently,
\begin{equation}\label{eq:polynomial-finite-infinite-Schur-comparison}
\begin{aligned}
 \|\mathscr L_{a,n}^{\rm Sch}-\mathscr L_a^{\rm Sch}\|_{X^*}
 &\le Cn^{D}e^{-c\gamma m_n},\\
 |\mathfrak b_{a,n}^{\rm Sch}
 -\mathfrak b_\infty^{\rm Sch}|
 &\le Cn^{D}e^{-c\gamma m_n}.
\end{aligned}
\end{equation}
Put \(L_+=m_n-1\), \(L_-=m_n\), and, for \(\sigma\in\{+,-\}\), set
\[
\begin{aligned}
 H_{L_\sigma}^\sigma
 &:=H_{V_0,\Gamma_{a,n}^\sigma}(p_a),\\
 R_{\rm edge}^\sigma
 &:=(E_+\Id-H_{L_\sigma}^\sigma)^{-1},\\
 g_{L_\sigma}^{\sigma,{\rm edge}}
 &:=R_{\rm edge}^\sigma e_{\partial_\sigma},\\
 u_{L_\sigma}^\sigma
 &:=(u_{\sigma j})_{1\le j\le L_\sigma}.
\end{aligned}
\]
Since \(Np_a=a\in\mathbb Z\) and
\(\widehat\alpha=N\alpha\pmod1\),
\[
 V_0(p_a+j\alpha)
 =2\lambda\cos(2\pi j\widehat\alpha),\qquad j\in\mathbb Z.
\]
Thus the state in
\eqref{eq:AMO-seed-edge-state} satisfies the upper-edge equation for the
two half-line restrictions of the original model cut at \(p_a\).
The edge equation, \(u_0=1\), and \(u_{-j}=u_j\) give
\[
\begin{aligned}
 (E_+\Id-H_{L_\sigma}^\sigma)u_{L_\sigma}^\sigma
 &=e_{\partial_\sigma}
   +u_{\sigma(L_\sigma+1)}e_{\sigma L_\sigma},\\
 g_{L_\sigma}^{\sigma,{\rm edge}}-u_{L_\sigma}^\sigma
 &=-u_{\sigma(L_\sigma+1)}
   R_{\rm edge}^\sigma e_{\sigma L_\sigma}.
\end{aligned}
\]
Consequently, the side-resolvent bound and
\eqref{eq:AMO-seed-edge-state-decay} imply
\[
\begin{aligned}
 \|\iota_{L_\sigma}^\sigma
       g_{L_\sigma}^{\sigma,{\rm edge}}-u^\sigma\|_2
 &\le
 |u_{\sigma(L_\sigma+1)}|
 \|R_{\rm edge}^\sigma\|
 +\|(u_{\sigma j})_{j>L_\sigma}\|_2\\
 &\le Cn^{D}
 \exp\{-\gamma L_\sigma+CR_\tau(L_\sigma+2)\}\\
 &\le Cn^{D} e^{-c\gamma m_n}.
\end{aligned}
\]
Here the last inequality follows from \(L_\sigma\ge m_n-1\) and
\(R_\tau(m)=o_{m\to\infty}(m)\).
At the fold, let \(M_{\rm fold}^\sigma\) and
\(M_{\rm edge}^\sigma\) be the side matrices
\[
 M_{\rm fold}^\sigma
 :=E_n^0\Id-H_{V_0,\Gamma_{a,n}^\sigma}(\xi_{a,n}^0),\qquad
 M_{\rm edge}^\sigma
 :=E_+\Id-H_{V_0,\Gamma_{a,n}^\sigma}(p_a).
\]
Put \(R_{\rm fold}^\sigma:=(M_{\rm fold}^\sigma)^{-1}\).  By
\eqref{eq:polynomial-unperturbed-fold-location},
\[
\begin{aligned}
 \|M_{\rm fold}^\sigma-M_{\rm edge}^\sigma\|
 &\le |E_n^0-E_+|
   +\|V_0'\|_\infty|\xi_{a,n}^0-p_a|\\
 &\le Ce^{-c\gamma m_n}.
\end{aligned}
\]
The resolvent identity then gives
\[
 R_{\rm fold}^\sigma-R_{\rm edge}^\sigma
 =-R_{\rm fold}^\sigma
   (M_{\rm fold}^\sigma-M_{\rm edge}^\sigma)R_{\rm edge}^\sigma .
\]
The polynomial bounds for the two resolvents therefore give
\[
\begin{aligned}
 \|g_{a,n}^\sigma-g_{L_\sigma}^{\sigma,{\rm edge}}\|_2
 &\le
 \|R_{\rm fold}^\sigma\|
 \|M_{\rm fold}^\sigma-M_{\rm edge}^\sigma\|
 \|R_{\rm edge}^\sigma\|\\
 &\le Cn^{D} e^{-c\gamma m_n}.
\end{aligned}
\]
Combining this estimate with the preceding zero-extension estimate and
summing over \(\sigma\in\{+,-\}\) proves
\eqref{eq:polynomial-Green-column-edge-convergence}.  Moreover,
Cauchy--Schwarz and
\eqref{eq:polynomial-evaluation-functional-Lipschitz} give
\[
\begin{aligned}
 \sum_{\sigma=\pm}\sum_{j\ge1}
 \bigl||g_{a,n}^\sigma(\sigma j)|^2-u_j^2\bigr|
 &\le Cn^{D}e^{-c\gamma m_n},\\
 \|\mathscr L_{a,n}^{\rm Sch}-\mathscr L_a^{\rm Sch}\|_{X^*}
 &\le C|\xi_{a,n}^0-p_a|
 \bigl(1+\|g_{a,n}^-\|_2^2\\
 &\hspace{37mm}+\|g_{a,n}^+\|_2^2\bigr)
 +Cn^{D}e^{-c\gamma m_n}.
\end{aligned}
\]
Together with \eqref{eq:polynomial-finite-Schur-profile-derivative},
\eqref{eq:polynomial-finite-Schur-energy-derivative}, and
\eqref{eq:polynomial-unperturbed-fold-location}, these are precisely
\eqref{eq:polynomial-finite-infinite-Schur-comparison}.

\medskip\noindent
\emph{Step (iii).} We differentiate the comparison between the Schur complement
and the singular directions.
Use the output line for \(\sigma=-\) and the input line for
\(\sigma=+\), oriented as in \eqref{eq:polynomial-finite-Weyl-vectors}.
The unperturbed fold satisfies
\begin{equation}\label{eq:polynomial-profile-differentiated-axis-tail}
 \|v_\sigma^{\rm svd}-v_\sigma^D\|
 +\|\partial_E(v_\sigma^{\rm svd}-v_\sigma^D)\|
 +\|D_V(v_\sigma^{\rm svd}-v_\sigma^D)
   \|_{\mathcal L(X,\mathbb R^2)}
 \le Cn^{D}e^{-c\gamma m_n}.
\end{equation}
For \(\sigma\in\{-,+\}\), put
\(\varepsilon_\sigma:=\mathfrak D_\sigma^{-2}\), where
\(\mathfrak D_+\) is the complete determinant on \([0,m_n-1]\).
Let \(P_\sigma(V,t)\) be the spectral projection of this left or right
Gram matrix associated with its larger eigenvalue, with the determinant
correction replaced by \(t\).  Thus
\[
 P_\sigma(V,\varepsilon_\sigma)
 =P_\sigma^{\rm svd}(V),\qquad
 P_\sigma(V,0)=P_\sigma^D(V).
\]
The uniform singular-value gap along the interpolation,
\eqref{eq:polynomial-profile-resolvent-first}--
\eqref{eq:polynomial-profile-resolvent-second}, and Jacobi's formula
\cite{HornJohnson2013} give,
for \(i+j\le2\),
\[
\begin{gathered}
 \|D_V^i\partial_t^jP_\sigma\|
 +\|\partial_ED_V^i\partial_t^jP_\sigma\|\le Cn^{D},\\
 \|D_V^i\varepsilon_\sigma\|
 +\|\partial_ED_V^i\varepsilon_\sigma\|
 \le Cn^{D}\varepsilon_\sigma,\qquad
 D_V\varepsilon_\sigma[h]
 =-2\varepsilon_\sigma D_V\log\mathfrak D_\sigma[h].
\end{gathered}
\]
Hence the interpolation identity
\[
 P_\sigma(V,\varepsilon_\sigma)-P_\sigma(V,0)
 =\varepsilon_\sigma\int_0^1
 \partial_tP_\sigma(V,s\varepsilon_\sigma)\,ds
\]
retains the factor \(\varepsilon_\sigma\) after one profile or energy
derivative.  Let \(\mathfrak D_{\sigma,*}\) be the infimum of
\(\mathfrak D_\sigma(V;x,E)\) over the common
domain in the profile, phase, and energy variables.  Since
\eqref{eq:perturbed-principal-determinant-lower} gives
\[
 \mathfrak D_{\sigma,*}
 \ge e^{\gamma m_n-CR_\tau(m_n+2)-1},\qquad
 n^{D}\mathfrak D_{\sigma,*}^{-2}
 \le Cn^{D}e^{-c\gamma m_n},
\]
the oriented-vector selection formula
\eqref{svd:eq:signed-axis-from-projector} proves
\eqref{eq:polynomial-profile-differentiated-axis-tail}.

\smallskip\noindent
For the normalized finite-scale first variation, put
\[
 \Lambda_{a,n}
 :=\left.D_VH_{a,n}(V;\xi_{a,n}^0,E_n^0)\right|_{V=V_0}
 \in X^*,
 \qquad
 \beta_n
 :=\partial_EH_{a,n}(V_0;\xi_{a,n}^0,E_n^0).
\]
By translation covariance and
\eqref{eq:polynomial-unperturbed-fold-nondegeneracy}, \(\beta_n\) is
independent of \(a\) and bounded away from zero.  The normalized first
variations satisfy
\begin{equation}\label{eq:polynomial-normalized-response-cancellation}
 \left\|
 \frac{\Lambda_{a,n}}{\beta_n}
 -\frac{\mathscr L_a^{\rm Sch}}
        {\mathfrak b_\infty^{\rm Sch}}
 \right\|_{X^*}
 \le Cn^{D}e^{-c\gamma m_n}.
\end{equation}
Near the unperturbed fold, put
\[
\begin{aligned}
 \Xi_{a,n}(V;x,E)
 &:=\Xi^D\bigl(h_{a,n}^-(V;x,E),
 h_{a,n}^+(V;x,E),E-V(x)\bigr),\\
 \mathcal E_{a,n}^{\rm svd}(V;x,E)
 &:=H_{a,n}(V;x,E)-H_{a,n}^D(V;x,E),\\
 H_{a,n}^D(V;x,E)
 &=\Xi_{a,n}(V;x,E)\mathcal S_{a,n}(V;x,E),\\
 0<c&\le\Xi_{a,n}\le C,\\
 \|D_V\Xi_{a,n}\|_{X^*}+|\partial_E\Xi_{a,n}|
 &\le Cn^{D} .
\end{aligned}
\]
In the remaining formulas of this step, suppressed arguments are evaluated
at \((V_0;\xi_{a,n}^0,E_n^0)\).
The exact singular-direction overlap identity
\eqref{eq:polynomial-SVD-joining-as-axis-product}
and \eqref{eq:polynomial-profile-differentiated-axis-tail} imply
\[
\begin{aligned}
 \mathcal E_{a,n}^{\rm svd}
 &=(v_+^{\rm svd}-v_+^D)^{\mathsf T}v_-^{\rm svd}
 +(v_+^D)^{\mathsf T}(v_-^{\rm svd}-v_-^D),\\
 \|D_V\mathcal E_{a,n}^{\rm svd}\|_{X^*}
 +|\partial_E\mathcal E_{a,n}^{\rm svd}|
 +|\mathcal E_{a,n}^{\rm svd}|
 &\le Cn^{D}e^{-c\gamma m_n}.
\end{aligned}
\]
Since \(H_{a,n}^0=0\) at the fold,
\(|\mathcal S_{a,n}^0|\le Cn^{D}e^{-c\gamma m_n}\); differentiation of
\(H=\Xi\mathcal S+\mathcal E^{\rm svd}\) gives
\[
 \|\Lambda_{a,n}-\Xi_{a,n}\mathscr L_{a,n}^{\rm Sch}\|_{X^*}
 +|\beta_n-\Xi_{a,n}\mathfrak b_{a,n}^{\rm Sch}|
 \le Cn^{D}e^{-c\gamma m_n}.
\]
The central recurrence \(E_+-2\lambda=2u_1\), obtained from
\eqref{eq:AMO-seed-edge-state} at the origin, the Green-column limit, and
\eqref{eq:polynomial-unperturbed-fold-location} yield
\[
 \Xi_\infty:=(1+u_1^2)^{-1},\qquad
 |\Xi_{a,n}-\Xi_\infty|\le Cn^{D}e^{-c\gamma m_n}.
\]
Using \eqref{eq:polynomial-finite-infinite-Schur-comparison},
\[
 \|\Lambda_{a,n}-\Xi_\infty\mathscr L_a^{\rm Sch}\|_{X^*}
 +|\beta_n-\Xi_\infty\mathfrak b_\infty^{\rm Sch}|
 \le Cn^{D}e^{-c\gamma m_n}.
\]
Since \(\beta_n\ge c\), cancellation is quantitative:
\[
 \frac{\Lambda_{a,n}}{\beta_n}
 -\frac{\mathscr L_a^{\rm Sch}}{\mathfrak b_\infty^{\rm Sch}}
 =
 \frac{\mathfrak b_\infty^{\rm Sch}
       (\Lambda_{a,n}-\Xi_\infty\mathscr L_a^{\rm Sch})
       -(\beta_n-\Xi_\infty\mathfrak b_\infty^{\rm Sch})
        \mathscr L_a^{\rm Sch}}
      {\beta_n\mathfrak b_\infty^{\rm Sch}}.
\]
This proves \eqref{eq:polynomial-normalized-response-cancellation}.

\medskip\noindent
\emph{Step (iv).} We differentiate the fold equation.
For \(0\le s\le\eps\), write
\(Z_a(s)=(\xi_{a,n}(s),E_{a,n}(s))\) and
\[
 \mathscr F_a(Z,s)
 :=
 \binom{H_{a,n}(V_0+s\delta;Z)}
       {\partial_xH_{a,n}(V_0+s\delta;Z)}.
\]
Thus \(\mathscr F_a(Z_a(s),s)=0\).
At \((V_0+s\delta;Z_a(s))\), the Step~(i) bounds and
\eqref{eq:perturbed-central-transversality} give
\[
 \mathscr F_{a,Z}
 =\begin{pmatrix}0&H_E\\H_{xx}&H_{xE}\end{pmatrix},
 \qquad
 \mathscr F_{a,Z}^{-1}
 =\begin{pmatrix}
 -H_{xE}/(H_EH_{xx})&1/H_{xx}\\
 1/H_E&0
 \end{pmatrix},
 \qquad H_E,H_{xx}\ge c.
\]
Consequently,
\[
\begin{gathered}
 Z_a'=-\mathscr F_{a,Z}^{-1}\mathscr F_{a,s},\qquad
 E_{a,n}'=-\frac{D_VH_{a,n}[\delta]}{\partial_EH_{a,n}},\\
 E_{a,n}'(0)=-\frac{\Lambda_{a,n}(\delta)}{\beta_n}.
\end{gathered}
\]
Differentiating the implicit equation again gives
\[
 Z_a''=-\mathscr F_{a,Z}^{-1}
 \bigl(\mathscr F_{a,ss}
       +2\mathscr F_{a,Zs}Z_a'
       +\mathscr F_{a,ZZ}[Z_a',Z_a']\bigr).
\]
For some auxiliary \(C,D>0\), uniformly along these folds,
\[
 \|\mathscr F_{a,Z}^{-1}\|+\|\mathscr F_{a,s}\|
 +\|\mathscr F_{a,ss}\|+\|\mathscr F_{a,Zs}\|
 +\|\mathscr F_{a,ZZ}\|\le Cn^D.
\]
The two derivative formulas therefore imply
\[
 \sup_{0\le s\le\eps}|Z_a'(s)|\le Cn^{2D},\qquad
 \sup_{0\le s\le\eps}|Z_a''(s)|\le Cn^{6D}.
\]
Now choose \(D_5\) larger than all the finitely many polynomial
exponents obtained above, including \(6D\).  Choose \(C_6\) to
dominate the resulting coefficients in the estimates below.  These
choices depend only on \((R,r_*)\) and the fixed background data.
Take \(\eta_{\rm resp}:=C_I/(2\tau)\) and increase \(n_{\rm resp}\)
to cover all thresholds above and in
\eqref{eq:polynomial-response-tail-scale} for these choices.
Taylor's theorem and
\eqref{eq:polynomial-normalized-response-cancellation} yield
\[
 E_{a,n}^{\eps,\delta}
 =E_n^0
 -\eps\frac{\mathscr L_a^{\rm Sch}(\delta)}
                 {\mathfrak b_\infty^{\rm Sch}}
 +O_{R,r_*}\!\left(
 n^{2D_5}\eps^2
 +n^{D_5}\eps e^{-c\gamma m_n}
 \right).
\]
Subtracting this formula for \(a,b\) and using
\eqref{eq:polynomial-response-tail-scale} proves
\eqref{eq:polynomial-fold-response} and
\eqref{eq:polynomial-fold-response-error}.

Finally, \eqref{eq:polynomial-unperturbed-fold-location} and
\(|E_+^{\eps,\delta}-E_+|\le\eps\|\delta\|_\infty\) give
\[
\begin{aligned}
 |\xi_{a,n}^{\eps,\delta}-p_a|
 +|E_{a,n}^{\eps,\delta}-E_+^{\eps,\delta}|
 &\le C\int_0^\eps |Z_a'(s)|\,ds
       +Ce^{-c\gamma m_n}+R\eps\\
 &\le C_6\bigl(n^{D_5}\eps
                       +e^{-cn^{\eta_{\rm resp}}}\bigr),
\end{aligned}
\]
where \eqref{eq:polynomial-response-tail-scale} applies because
\(0<\eta_{\rm resp}<C_I/\tau\).  This is
\eqref{eq:polynomial-fold-cluster}.
\end{proof}

\subsection{Pairwise transversality and uniform initial-scale data for
multiple nondegenerate maxima}
\leavevmode\par\noindent
For the next two lemmas, fix integers \(R,q\ge1\) and a finite derivative order \(r_*\ge3\).
Let \(B_2,B_1>0\) and
\(D_3=D_3(C_I,2R,r_*+4)>0\) be supplied by a single application
of Proposition~\ref{prop:polynomial-starting-scale-perturbed-dichotomy}
with profile bound \(2R\) at order \(r_*+4\).
All lower-order data are restrictions of this same application.
Use these data in Lemmas~\ref{lem:unperturbed-tip-fold} and
\ref{lem:differentiated-tip-response} at order \(r_*+2\), and their order-\((r_*+1)\) restriction in
Corollary~\ref{cor:polynomial-multiwell-K-reinduction} at order \(r_*\).
The exponents will be chosen to satisfy
\begin{equation}\label{fr:eq:Young-transversality-exponent-budget}
\begin{aligned}
 \varkappa_{\rm per}&>D_3,\\
 B_4&>\max\{B_2,2B_1+4\},\\
 B_3&>\max\{\tau\varkappa_{\rm per}+2,B_4+2\},\\
 B_5&>\max\{B_3,B_2,B_4\}+4,\\
 B_1+1&<B_6<\tfrac12B_4-1,\\
 B_6&>\tau(\tau+2),\\
 B_7&>\max\{B_6,\tfrac12B_5\}+4,\\
 B_8&>B_7(\tau+1)+4,\\
 D_6&>
 \max\{\tau\varkappa_{\rm per}-\tfrac32B_4,
          \tfrac12B_5,0\}+1,\\
 B_9&>
 \max\{B_3,B_8+\tfrac12B_5,D_6+2\}+4.
\end{aligned}
\end{equation}

For \(0<\varepsilon<1\), choose the continued-fraction scale by
\[
 n_\eps:=\left\lfloor\varepsilon^{-1/\varkappa_{\rm per}}\right\rfloor,
 \qquad Q=q_\ell:=\max\{q_j:q_j\le n_\eps\}.
\]
The first lemma gives a uniform choice of exponents and separates the folds.

\begin{lemma}
\label{fr:lem:multiwell-separated-folds}
There are exponents satisfying
\eqref{fr:eq:Young-transversality-exponent-budget} and thresholds
\[
 0<\varepsilon_0=\varepsilon_0(R,q,r_*)<1,
 \qquad Q_0=Q_0(R,q,r_*)\ge1,
\]
depending only on \((\mathfrak D_{\rm AMO}^{(r_*+4)},R,q)\), such that, for every
\(\delta\in\mathcal P_{\mathrm{strat}}(R,q)\) and
\(0<\varepsilon<\varepsilon_0\), the following conclusions hold.
\begin{enumerate}
\item The scale selected above satisfies
\(Q\ge Q_0\) and
\[
 Q\le n_\eps<q_{\ell+1}\le\kappa^{-1}Q^\tau.
\]
Moreover,
\begin{equation}\label{fr:eq:Young-epsilon-Q-comparison}
 (\kappa/2)^{\varkappa_{\rm per}}Q^{-\tau\varkappa_{\rm per}}
 \le\varepsilon\le Q^{-\varkappa_{\rm per}}
 \le Q^{-D_3}.
\end{equation}
\item The fold data supplied by
Proposition~\ref{prop:polynomial-starting-folds}
at \(n=Q\) satisfy
\[
\begin{aligned}
 \min_{0\le a<b<N}|E_{a,Q}^{\varepsilon,\delta}
                   -E_{b,Q}^{\varepsilon,\delta}|
 &\ge Q^{-B_3},\\
 \max_{0\le a<N}\left(
 |\xi_{a,Q}^{\varepsilon,\delta}-p_a|
 +|E_{a,Q}^{\varepsilon,\delta}-E_+^{\varepsilon,\delta}|
 \right)
 &\le Q^{-B_4}.
\end{aligned}
\]
\end{enumerate}
\end{lemma}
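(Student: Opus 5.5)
The plan is to fix the exponents in a nonconflicting order first, then prove part~(i) from elementary continued-fraction facts, and then prove part~(ii) by applying Lemma~\ref{lem:differentiated-tip-response} at \(n=Q\) with \(\sigma_{\rm Sch}(\delta)\ge q^{-1}\).

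\emph{Choice of exponents.} The data \(B_1,B_2,D_3\) come from the single application of Proposition~\ref{prop:polynomial-starting-scale-perturbed-dichotomy} with profile bound \(2R\) at order \(r_*+4\). The constants \(D_5,C_6,c,\eta_{\rm resp},n_{\rm resp}\) of Lemmas~\ref{lem:unperturbed-tip-fold}--\ref{lem:differentiated-tip-response} at order \(r_*+2\) depend only on \((\mathfrak D_{\rm AMO}^{(r_*+4)},R)\), and in particular not on any of the exponents still to be chosen. This is the point that breaks the apparent circularity in \eqref{fr:eq:Young-transversality-exponent-budget}. I would choose the remaining exponents in the following order.
\begin{enumerate}
\item Choose \(B_6>\max\{B_1+1,\tau(\tau+2)\}\).
\item Choose \(B_4>\max\{B_2,2B_1+4,2B_6+2\}\), so that \(B_6<\tfrac12B_4-1\).
\item Choose \(\varkappa_{\rm per}>\max\{D_3,\,2D_5+2,\,D_5+B_4+2\}\).
\item Choose \(B_3>\max\{\tau\varkappa_{\rm per}+2,B_4+2\}\).
\item Choose \(B_5,B_7,B_8,D_6,B_9\) successively, each as the corresponding strict lower bound plus one.
\end{enumerate}
Each inequality in \eqref{fr:eq:Young-transversality-exponent-budget} involves only exponents chosen earlier in this list, so the budget is satisfied.

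\emph{Part (i).} Take \(\varepsilon_0\) so small that \(n_\varepsilon\) exceeds \(q_1\). Then \(Q\) is well defined, and since \(Q\to\infty\) as \(\varepsilon\to0\), we get \(Q\ge Q_0\) for any prescribed \(Q_0\).
\begin{enumerate}
\item By maximality of \(\ell\), \(Q\le n_\varepsilon<q_{\ell+1}\).
\item The Diophantine condition together with \(\|q_\ell\alpha\|_{\mathbb T}<q_{\ell+1}^{-1}\) gives \(\kappa Q^{-\tau}\le\|q_\ell\alpha\|_{\mathbb T}<q_{\ell+1}^{-1}\), hence \(q_{\ell+1}\le\kappa^{-1}Q^\tau\).
\item From \(n_\varepsilon\le\varepsilon^{-1/\varkappa_{\rm per}}\) we get \(\varepsilon\le n_\varepsilon^{-\varkappa_{\rm per}}\le Q^{-\varkappa_{\rm per}}\), and \(Q^{-\varkappa_{\rm per}}\le Q^{-D_3}\) because \(\varkappa_{\rm per}>D_3\).
\item From \(n_\varepsilon+1>\varepsilon^{-1/\varkappa_{\rm per}}\) we get \(\varepsilon>(n_\varepsilon+1)^{-\varkappa_{\rm per}}\ge(2\kappa^{-1}Q^\tau)^{-\varkappa_{\rm per}}\), which is the lower bound in \eqref{fr:eq:Young-epsilon-Q-comparison}.
\end{enumerate}
In particular \(0<\varepsilon\le Q^{-D_3}\). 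After enlarging \(Q_0\ge n_{\rm resp}\), the hypotheses of Proposition~\ref{prop:polynomial-starting-folds} and Lemma~\ref{lem:differentiated-tip-response} hold at \(n=Q\).

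\emph{Part (ii).} For the cluster bound, \eqref{eq:polynomial-fold-cluster} gives
\[
 \max_a\bigl(|\xi_{a,Q}^{\varepsilon,\delta}-p_a|+|E_{a,Q}^{\varepsilon,\delta}-E_+^{\varepsilon,\delta}|\bigr)
 \le C_6\bigl(Q^{D_5-\varkappa_{\rm per}}+e^{-cQ^{\eta_{\rm resp}}}\bigr)\le Q^{-B_4}.
\]
This uses \(\varkappa_{\rm per}>D_5+B_4+2\) and \(Q\ge Q_0\).

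For the separation, \eqref{eq:polynomial-fold-response}, \eqref{eq:polynomial-fold-response-error} and \(\sigma_{\rm Sch}(\delta)\ge q^{-1}\) give
\[
 |E_{a,Q}^{\varepsilon,\delta}-E_{b,Q}^{\varepsilon,\delta}|
 \ge \frac{\varepsilon}{q}-C_6\varepsilon\bigl(Q^{2D_5}\varepsilon+Q^{D_5}e^{-cQ^{\eta_{\rm resp}}}\bigr).
\]
Since \(Q^{2D_5}\varepsilon\le Q^{2D_5-\varkappa_{\rm per}}\le Q^{-2}\), the bracket is at most \((2qC_6)^{-1}\) once \(Q_0\) is large in terms of \(q\). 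Hence \(|E_{a,Q}^{\varepsilon,\delta}-E_{b,Q}^{\varepsilon,\delta}|\ge\varepsilon/(2q)\). Combining with the lower bound on \(\varepsilon\) in \eqref{fr:eq:Young-epsilon-Q-comparison},
\[
 \frac{\varepsilon}{2q}\ge\frac{(\kappa/2)^{\varkappa_{\rm per}}}{2q}\,Q^{-\tau\varkappa_{\rm per}}\ge Q^{-B_3},
\]
where the last step uses \(B_3>\tau\varkappa_{\rm per}+2\) and enlarges \(Q_0\) depending on \(q\). This is where the dependence of the thresholds on \(q\) enters.

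\emph{Main obstacle.} The delicate step is the bookkeeping of the exponent order. The lower bound on \(\varepsilon\) is only \(Q^{-\tau\varkappa_{\rm per}}\), so \(B_3\) must come after \(\varkappa_{\rm per}\). On the other hand, \(\varkappa_{\rm per}\) must dominate \(B_4\) and \(D_5\), and \(B_4\) must dominate \(B_6\). Checking that \(D_5\) and \(C_6\) do not depend on \(\mathbf b\) is what makes this ordering consistent.
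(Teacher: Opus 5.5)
Your proposal is correct and follows the paper's proof. Part (i) comes from the continued-fraction/Diophantine comparison, and part (ii) from the fold-cluster bound together with the first-variation formula \eqref{eq:polynomial-fold-response} and $\sigma_{\rm Sch}(\delta)\ge q^{-1}$, giving $|E_{a,Q}^{\varepsilon,\delta}-E_{b,Q}^{\varepsilon,\delta}|\ge\varepsilon/(2q)\ge Q^{-B_3}$. The only difference is bookkeeping: the paper fixes $\varkappa_{\rm per}$ first and then places $B_4$ in a window below $\varkappa_{\rm per}-D_5-2$, also imposing $B_4>2D_5+4$. That extra condition is used only in the companion velocity lemma and is immaterial for the present statement.
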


The separated folds satisfy the common hypotheses of
Corollaries~\ref{cor:polynomial-multiwell-K-reinduction}
and~\ref{cor:polynomial-multiwell-induced-zeros}.  For each
\(1\le k\le N\), write the resulting energy interval and induced zero branches as
\[
 \mathcal I_{Q,k},\qquad \widehat c_{Q,k,s,\varsigma},
 \qquad 1\le s\le k,\quad\varsigma\in\{+,-\}.
\]
The following estimate is uniform for every distinct pair of these branches.

\begin{lemma}
\label{fr:lem:multiwell-pair-velocity-transversality}
The exponents and thresholds in
Lemma~\ref{fr:lem:multiwell-separated-folds} can be chosen so that,
for the same admissible \((\delta,\varepsilon,Q)\) and every
\(1\le k\le N\), the following conclusions hold.
\begin{enumerate}
\item All conclusions of
Corollary~\ref{cor:polynomial-multiwell-K-reinduction} and its four
subsequent results hold for the data above.
\item For distinct labels
\(\iota=(s,\varsigma),\iota'=(t,\varsigma')
\in\{1,\ldots,k\}\times\{+,-\}\), there is \(\sigma_{\iota,\iota'}\in\{+1,-1\}\), independent of \(E\),
such that
\begin{equation}\label{fr:eq:Young-long-pair-two-sided-velocity}
 2Q^{-D_6}
 \le
 \sigma_{\iota,\iota'}
 \partial_E\bigl(
  \widehat c_{Q,k,s,\varsigma}
  -\widehat c_{Q,k,t,\varsigma'}\bigr)(E)
 \le Q^{D_6},
 \qquad E\in\mathcal I_{Q,k}.
\end{equation}
\end{enumerate}
\end{lemma}

\begin{proof}[Proof of Lemmas~\ref{fr:lem:multiwell-separated-folds} and
\ref{fr:lem:multiwell-pair-velocity-transversality}]
\emph{Step (i).} We choose the exponents and estimate the first-variation remainder.
Throughout this proof, write
\[
 c_4=c_4(R,q,r_*)>0,\qquad C_7=C_7(R,q,r_*)\ge1
\]
for lower- and upper-bound coefficients.  These coefficients may change
between estimates; their only additional dependence is on the fixed
background data, and they are uniform in the admissible profiles, scales,
energies, and branch labels.  The implicit constants in \(O(\cdot)\)
and \(\asymp\) have the same dependence.
The threshold \(Q_0\) is enlarged as
needed below, and \(\varepsilon_0\) is then chosen so that the selected
denominator satisfies \(Q\ge Q_0\).  Recall
\(\eta_0\) from the setup preceding
Corollary~\ref{cor:polynomial-multiwell-K-reinduction}.  From the shared
application fixed above, retain \(D_5,\eta_{\rm resp}\), and enlarge
\(D_5\) once to dominate the centered-comparison loss below.
Choose
\begin{equation}\label{fr:eq:Young-kappa-per-choice}
\begin{aligned}
 \varkappa_{\rm per}>
 \max\left\{
 D_3,\,2D_5+2,\,
 D_5+B_2+3,\,
 D_5+2B_1+7,\right.\\
 \left.
 3D_5+7,\,
 D_5+2\tau(\tau+2)+7
 \right\},
\end{aligned}
\end{equation}
then choose
\begin{equation}\label{fr:eq:Young-edge-gap-exponent-choice}
\begin{aligned}
 \max\left\{
 \substack{B_2,\ 2B_1+4,\\
            2D_5+4,\ 2\tau(\tau+2)+4}
 \right\}
 &<B_4\\
 &<\varkappa_{\rm per}-D_5-2,\\
 B_3
 &>\max\{\tau\varkappa_{\rm per}+2,B_4+2\}.
\end{aligned}
\end{equation}
After \eqref{fr:eq:Young-edge-gap-exponent-choice}, choose
\(B_5,B_6,B_7,B_8\)
in the order given by the first two lines of
\eqref{eq:multiwell-polynomial-exponent-order}, taking in addition
\[
        B_6>\tau(\tau+2).
\]
The interval allowed for \(B_6\) is nonempty because of the last
lower bound imposed on \(B_4\).  Now fix
\begin{equation}\label{fr:eq:Young-Dvel-prior-choice}
 D_6>
 \max\left\{
 \tau\varkappa_{\rm per}-\frac32B_4,
 \frac12B_5,0\right\}+1,
\end{equation}
and only then choose \(B_9\) by the last inequality in
\eqref{fr:eq:Young-transversality-exponent-budget}.
Thus \eqref{eq:multiwell-polynomial-exponent-order} holds and
\(B_9>D_6+2\).  All exponents are fixed before
\(\varepsilon\) is decreased.

For the \(n_\eps\) and \(Q\) fixed in the statement, the
Diophantine inequality and the continued-fraction estimate give
\[
 \kappa Q^{-\tau}\le\|Q\alpha\|_{\mathbb T}<q_{\ell+1}^{-1},
 \qquad
 Q\le n_\eps<q_{\ell+1}\le\kappa^{-1}Q^\tau.
\]
For sufficiently small \(\varepsilon\),
\[
 2^{-\varkappa_{\rm per}}n_\eps^{-\varkappa_{\rm per}}
 \le\varepsilon
 \le n_\eps^{-\varkappa_{\rm per}}.
\]
Combining these bounds gives \eqref{fr:eq:Young-epsilon-Q-comparison},
so all polynomial initial-scale estimates apply at \(n=Q\).

Write \(T_a:=E_{a,Q}^{\varepsilon,\delta}\).  By
\eqref{eq:polynomial-fold-response} and
\eqref{eq:polynomial-fold-response-error},
\begin{equation}\label{fr:eq:Young-response-remainder-smallness}
\begin{aligned}
 \frac{
 |\mathcal R_{a,b,Q}^{\varepsilon,\delta}|}
 {\varepsilon/q}
 &\le C_7\left(
 Q^{2D_5}\varepsilon+
 Q^{D_5}e^{-c_4Q^{\eta_{\rm resp}}}\right)\\
 &\le C_7\left(
 Q^{2D_5-\varkappa_{\rm per}}+
 Q^{D_5}e^{-c_4Q^{\eta_{\rm resp}}}\right)
 \longrightarrow0
\end{aligned}
\end{equation}
The convergence in \eqref{fr:eq:Young-response-remainder-smallness} is
uniform in \(a<b\) and in the quantitative class.  Since
\(\sigma_{\rm Sch}(\delta)\ge q^{-1}\), for \(Q\ge Q_0\),
\begin{equation}\label{fr:eq:Young-actual-fold-gap}
 \Delta_Q^{\rm fold}
 :=\min_{0\le a<b<N}|T_a-T_b|
 \ge\frac{\varepsilon}{2q}
 \ge \frac{(\kappa/2)^{\varkappa_{\rm per}}}{2q}Q^{-\tau\varkappa_{\rm per}}
 \ge Q^{-B_3}.
\end{equation}
Moreover, \eqref{eq:polynomial-fold-cluster},
\eqref{fr:eq:Young-epsilon-Q-comparison}, and the upper restriction on
\(B_4\) give
\[
 \max_{0\le a<N}\bigl(
 |\xi_{a,Q}^{\varepsilon,\delta}-p_a|
 +|T_a-E_+^{\varepsilon,\delta}|\bigr)
 \le C_7Q^{D_5-\varkappa_{\rm per}}
      +e^{-c_4Q^{\eta_{\rm resp}}}
 \le Q^{-B_4}.
\]
Thus \eqref{eq:multiwell-fold-input} holds.
The common construction in
Corollary~\ref{cor:polynomial-multiwell-K-reinduction} through
Corollary~\ref{cor:polynomial-multiwell-induced-zeros}, with the data fixed above,
supplies \(\mathcal I_{Q,k}\), \(c_{Q,k,s,\varsigma}\), and
\(\widehat c_{Q,k,s,\varsigma}\).

\emph{Step (ii).} We estimate the root velocities for the unperturbed fold.
For every peak put
\[
 \xi_a:=\xi_{a,Q}^{\varepsilon,\delta},
 \qquad
 F_{a,Q}(y,D)
 :=H_{a,Q}^{\varepsilon,\delta}(\xi_a+y,T_a-D).
\]
Let \((\xi_{a,Q}^0,E_Q^0)\) be the unperturbed folds supplied by
Lemma~\ref{lem:unperturbed-tip-fold} at \(n=Q\).  Define the
corresponding recentered function by
\[
 F_Q^0(y,D)
 :=H_{a,Q}^{0}(\xi_{a,Q}^{0}+y,E_Q^0-D).
\]
Translation covariance makes the right-hand side independent of \(a\).
Choose once
\[
 \rho_Q\asymp Q^{-B_4}
\]
with the upper comparison constant large enough that every active depth
\(D=T_a-E\) occurring in the energy intervals \(\mathcal I_{Q,k}\) is at most
\(\rho_Q\). More explicitly, the energy-interval definitions and the fold
cluster bound give, for every active peak,
\[
 Q^{-B_5}=h_Q\le D=T_a-E
 \le\max_{0\le u,v<N}|T_u-T_v|+2h_Q+Q^{-B_9}
 \le C_7 Q^{-B_4}.
\]
Thus \(|y|\le C_7\sqrt{\rho_Q}\) and \(D\le\rho_Q\) contain all active
roots.  Since \(B_4/2>B_1+2\) and
\(B_4>B_2\), this recentered domain is contained in the common
normal-form domain.  Furthermore,
\[
 |\xi_a-\xi_{a,Q}^0|+|T_a-E_Q^0|
 \le C_7\bigl(\varepsilon Q^{D_5}
             +e^{-c_4Q^{\eta_0}}\bigr),
\]
by the perturbed and unperturbed fold-displacement estimates.  This
margin makes both terms in the telescoping comparison below well defined
on the same recentered domain.
On the common recentered domain one has the exact telescoping identity
\[
\begin{aligned}
 F_{a,Q}(y,D)-F_Q^0(y,D)
={}&
 \bigl(H_{a,Q}^{\varepsilon,\delta}-H_{a,Q}^{0}\bigr)
       (\xi_a+y,T_a-D)\\
&+H_{a,Q}^{0}(\xi_a+y,T_a-D)\\
&-H_{a,Q}^{0}(\xi_{a,Q}^{0}+y,E_Q^0-D).
\end{aligned}
\]
The stability estimate
\eqref{eq:polynomial-Schur-SVD-stability}, used at order three, and the
fold-displacement estimate
\eqref{eq:perturbed-fold-displacement-polynomial} give bounds on this
common recentered domain.  Define
\[
 \eta_Q:=\max_a\|F_{a,Q}-F_Q^0\|_{C^2},
 \qquad
 M_Q:=\max_a\bigl(\|F_{a,Q}\|_{C^3}+\|F_Q^0\|_{C^3}\bigr).
\]
Then
\begin{equation}\label{fr:eq:Young-centered-fold-comparison}
 \eta_Q\le C_7\bigl(
 \varepsilon Q^{D_5}+e^{-c_4Q^{\eta_0}}\bigr),
 \qquad
 M_Q\le C_7Q^{D_5}.
\end{equation}
Both functions and their first \(y\)-derivatives vanish at \((0,0)\).

By the choices above,
\begin{equation}\label{fr:eq:Young-fold-comparison-smallness}
 M_Q\sqrt{\rho_Q}=o_{Q\to\infty}(1),
 \qquad
 \frac{\eta_Q\rho_Q}{\Delta_Q^{\rm fold}}=o_{Q\to\infty}(1).
\end{equation}
Indeed, \eqref{fr:eq:Young-epsilon-Q-comparison} and
\eqref{fr:eq:Young-actual-fold-gap} give
\[
 \begin{aligned}
 M_Q\sqrt{\rho_Q}
 &\le C_7Q^{D_5-B_4/2}
 =o_{Q\to\infty}(1),\\
 \frac{\eta_Q\rho_Q}{\Delta_Q^{\rm fold}}
 &\le C_7\left(Q^{D_5-B_4}
 +Q^{\tau\varkappa_{\rm per}-B_4}e^{-c_4Q^{\eta_0}}\right)
 =o_{Q\to\infty}(1).
 \end{aligned}
\]

Define
\[
 a_Q:=\frac12\partial_y^2F_Q^0(0,0),
 \qquad
 b_Q:=-\partial_DF_Q^0(0,0).
\]
The transversality conclusion
\eqref{eq:perturbed-central-transversality}, applied with \(\delta=0\), gives
\[
        0<c_4\le a_Q,b_Q\le C_7.
\]
Taylor's formula at \((0,0)\), with \(D\) assigned weight two relative
to \(y\), gives the explicit normal form
\begin{equation}\label{fr:eq:Young-reference-fold-Taylor}
\begin{aligned}
 F_Q^0(y,D)
 &=a_Qy^2-b_QD+\mathcal R_Q(y,D),\\
 |\mathcal R_Q(y,D)|
 &\le C_7M_Q\bigl(
 |y|^3+|y|D+D^2\bigr).
\end{aligned}
\end{equation}
The corresponding differentiated remainder bounds are
\[
 |\partial_y\mathcal R_Q|
 \le C_7M_Q(y^2+D),
 \qquad
 |\partial_D\mathcal R_Q|
 \le C_7M_Q(|y|+D).
\]

Set \(D=t^2\), \(y=tz\), and, for \(t>0\), define
\[
 \Psi_Q(z,t):=t^{-2}F_Q^0(tz,t^2).
\]
The exact integral identity
\[
 \Psi_Q(z,t)
 =z^2\int_0^1(1-\theta)
       F_{Q,yy}^0(\theta tz,0)\,d\theta
  +\int_0^1F_{Q,D}^0(tz,\theta t^2)\,d\theta
\]
extends \(\Psi_Q\) as a \(C^1\) function to \(t=0\), where
\[
 \Psi_Q(z,0)=a_Qz^2-b_Q.
\]
More explicitly, on
\(\{|z|\le C_7,\ 0\le t\le\sqrt{\rho_Q}\}\),
\[
\begin{aligned}
 \Psi_Q(z,t)
 &=a_Qz^2-b_Q+\widetilde{\mathcal R}_Q(z,t),\\
 |\widetilde{\mathcal R}_Q|
 +|\partial_z\widetilde{\mathcal R}_Q|
 &\le C_7M_Qt,\\
 |\partial_t\widetilde{\mathcal R}_Q|
 &\le C_7M_Q.
\end{aligned}
\]
At \(t=0\), its two zeros
\(z=\sigma_\varsigma\sqrt{b_Q/a_Q}\), \(\varsigma\in\{+,-\}\), are uniformly simple,
where
\[
        \sigma_+:=-1,\qquad \sigma_-:=+1.
\]
This convention matches the global labels
\(c_{Q,k,s,+}<\xi_{a_s,Q}^{\varepsilon,\delta}
<c_{Q,k,s,-}\): the label \(+\) denotes the left root and hence the
negative recentered branch.
The quantitative implicit function theorem
\cite[Chapter~4]{Deimling1985}, together with
\(M_Q\sqrt{\rho_Q}=o_{Q\to\infty}(1)\), therefore gives two
\(C^1\) functions
\(z_{Q,\varsigma}\) on \(0\le t\le\sqrt{\rho_Q}\) such that
\begin{equation}\label{fr:eq:Young-reference-rescaled-root-asymptotics}
 z_{Q,\varsigma}(t)
 =\sigma_\varsigma\sqrt{\frac{b_Q}{a_Q}}+O(M_Qt),
 \qquad
 z_{Q,\varsigma}'(t)=O(M_Q).
\end{equation}
Thus, by \eqref{fr:eq:Young-reference-rescaled-root-asymptotics}, the
reference roots
\(y_{Q,\varsigma}^0(D):=
\sqrt D\,z_{Q,\varsigma}(\sqrt D)\) satisfy
\begin{equation}\label{fr:eq:Young-reference-root-square-root-expansion}
 y_{Q,\varsigma}^0(D)
 =\sigma_\varsigma\sqrt{\frac{b_Q}{a_Q}}\,D^{1/2}
 +O(M_QD).
\end{equation}

At the root in
\eqref{fr:eq:Young-reference-root-square-root-expansion},
\eqref{fr:eq:Young-reference-fold-Taylor} and its
differentiated form yield, with \(t=\sqrt D\),
\[
\begin{aligned}
 \partial_yF_Q^0(y_{Q,\varsigma}^0(D),D)
 &=t\bigl(2a_Qz_{Q,\varsigma}(t)+O(M_Qt)\bigr),\\
 \partial_DF_Q^0(y_{Q,\varsigma}^0(D),D)
 &=-b_Q+O(M_Qt).
\end{aligned}
\]
Define
\[
 U_\varsigma^0(D)
 :=\left(
 \frac{\partial_yF_Q^0}{\partial_DF_Q^0}
 \right)^2(y_{Q,\varsigma}^0(D),D).
\]
Then
\begin{equation}\label{fr:eq:Young-reference-inverse-square-expansion}
 U_\varsigma^0(D)
 =\frac{4a_Q}{b_Q}D+O(M_QD^{3/2}).
\end{equation}
Write \(z(t):=z_{Q,\varsigma}(t)\) and define
\[
\begin{aligned}
 A(t)&:=z(t)\int_0^1F_{Q,yy}^0(\theta tz(t),0)\,d\theta
       +t\int_0^1F_{Q,yD}^0(tz(t),\theta t^2)\,d\theta,\\
 B(t)&:=F_{Q,D}^0(tz(t),t^2),\qquad
 W_{Q,\varsigma}(t):=\bigl(A(t)/B(t)\bigr)^2.
\end{aligned}
\]
Then \(A(t)=t^{-1}F_{Q,y}^0(tz(t),t^2)\) for \(t>0\), so
\(U_\varsigma^0(t^2)=t^2W_{Q,\varsigma}(t)\). The integral formulas,
\(z'=O(M_Q)\), and \(M_Q\sqrt{\rho_Q}=o(1)\) give
\[
 |A|+|B|\le C_7,\qquad |B|\ge c_4,\qquad
 |A'|+|B'|\le C_7M_Q.
\]
Consequently,
\[
 \begin{gathered}
 W_{Q,\varsigma}(t)=\frac{4a_Q}{b_Q}+O(M_Qt),
 \qquad W_{Q,\varsigma}'(t)=O(M_Q),\\
 (U_\varsigma^0)'(D)
 =W_{Q,\varsigma}(\sqrt D)+\frac{\sqrt D}{2}W_{Q,\varsigma}'(\sqrt D)
 =\frac{4a_Q}{b_Q}+O(M_Q\sqrt D).
 \end{gathered}
\]
For \(Q\ge Q_0\), uniformly in \(Q\),
\begin{equation}\label{fr:eq:Young-reference-U-monotonicity}
 0<c_4\le(U_\varsigma^0)'(D)\le C_7,
 \qquad 0<D\le\rho_Q,\quad \varsigma\in\{+,-\}.
\end{equation}

\emph{Step (iii).} We compare the perturbed folds and separate the root velocities.
Put \(G_{a,Q}:=F_{a,Q}-F_Q^0\).  Since
\[
 G_{a,Q}(0,0)=\partial_yG_{a,Q}(0,0)=0,
\]
the \(C^2\) estimate in
\eqref{fr:eq:Young-centered-fold-comparison} and Taylor's formula give,
on the same recentered domain,
\begin{equation}\label{fr:eq:Young-centered-difference-Taylor}
\begin{aligned}
 |G_{a,Q}(y,D)|
 &\le C_7\eta_Q(D+y^2),\\
 |\partial_yG_{a,Q}(y,D)|
 &\le C_7\eta_Q(D+|y|),\\
 |\partial_DG_{a,Q}(y,D)|
 &\le C_7\eta_Q.
\end{aligned}
\end{equation}
Let \(y_{a,\varsigma}(D)\) be the corresponding root of \(F_{a,Q}\), and
define its phase coordinate by
\begin{equation}\label{fr:eq:Young-short-root-phase-coordinate}
 c_{a,\varsigma}(E)
 :=\xi_a+y_{a,\varsigma}(T_a-E).
\end{equation}
On the energy interval \(\mathcal I_{Q,k}\), if the peak \(a\) is the \(h\)-th active
peak \(a_h\), then this is exactly
\(c_{Q,k,h,\varsigma}(E)\).
For \(0<D\le\rho_Q\) in an active energy interval, at
\(y_{Q,\varsigma}^0(D)\), the first line of
\eqref{fr:eq:Young-centered-difference-Taylor} is \(O(\eta_QD)\),
whereas
\[
 |\partial_yF_Q^0(y_{Q,\varsigma}^0(D),D)|\asymp\sqrt D.
\]
Since \(\eta_Q=o_{Q\to\infty}(1)\), the same quantitative
implicit-function estimate is uniform despite the square-root slope.
Indeed, on active intervals \(D\ge h_Q>0\); throughout
\(|y-y_{Q,\varsigma}^0(D)|\le C_7\eta_Q\sqrt D\),
\eqref{fr:eq:Young-centered-difference-Taylor} and
\(M_Q\sqrt{\rho_Q}=o_{Q\to\infty}(1)\) give
\[
 |\partial_yF_{a,Q}(y,D)|\ge c_4\sqrt D,
 \qquad
 |F_{a,Q}(y_{Q,\varsigma}^0(D),D)|\le C_7\eta_QD.
\]
The mean-value theorem, together with the preserved sign change, gives
one and only one nearby root and therefore
\begin{equation}\label{fr:eq:Young-centered-root-comparison}
 |y_{a,\varsigma}(D)-y_{Q,\varsigma}^0(D)|
 \le C_7\eta_Q\sqrt D.
\end{equation}
By \eqref{fr:eq:Young-centered-difference-Taylor},
\eqref{fr:eq:Young-centered-root-comparison}, and
\(M_Q\sqrt{\rho_Q}=o(1)\),
\[
 \sup|\partial_y^2F_Q^0|
 =2a_Q+O(M_Q\sqrt{\rho_Q})=O(1).
\]
Writing \(y_a=y_{a,\varsigma}(D)\) and
\(y_0=y_{Q,\varsigma}^0(D)\), the mean-value theorem gives
\[
\begin{aligned}
 |F_{a,Q,y}(y_a,D)-F_{Q,y}^0(y_0,D)|
 &\le |G_{a,Q,y}(y_a,D)|
       +\sup|F_{Q,yy}^0|\,|y_a-y_0|\\
 &\le C_7\eta_Q\sqrt D,\\
 |F_{a,Q,D}(y_a,D)-F_{Q,D}^0(y_0,D)|
 &\le |G_{a,Q,D}(y_a,D)|
       +\sup|F_{Q,yD}^0|\,|y_a-y_0|\\
 &\le C_7\eta_Q(1+M_Q\sqrt D)
 \le C_7\eta_Q.
\end{aligned}
\]
In particular,
\[
 |\partial_yF_{a,Q}(y_{a,\varsigma}(D),D)|\asymp\sqrt D,
 \qquad
 |\partial_DF_{a,Q}(y_{a,\varsigma}(D),D)|\asymp1.
\]

By \eqref{fr:eq:Young-short-root-phase-coordinate}, since \(D=T_a-E\),
implicit differentiation of
\(F_{a,Q}(y_{a,\varsigma}(D),D)=0\) gives
\[
 \partial_Ec_{a,\varsigma}
 =\frac{\partial_DF_{a,Q}}{\partial_yF_{a,Q}}
 \quad\text{at }(y_{a,\varsigma}(D),D).
\]
Therefore, with
\[
 U_{a,\varsigma}(D):=|\partial_Ec_{a,\varsigma}|^{-2}
 =\left(\frac{\partial_yF_{a,Q}}
 {\partial_DF_{a,Q}}\right)^2(y_{a,\varsigma}(D),D),
\]
put \(A_a:=\partial_yF_{a,Q}(y_{a,\varsigma},D)\),
\(B_a:=\partial_DF_{a,Q}(y_{a,\varsigma},D)\), and define
\(A_{\rm ref},B_{\rm ref}\) analogously for the reference root.  Since
\[
 |A_a|+|A_{\rm ref}|\le C_7\sqrt D,\qquad
 |B_a|+|B_{\rm ref}|\le C_7,\qquad
 \min\{|B_a|,|B_{\rm ref}|\}\ge c_4,
\]
the elementary quotient estimate
\[
 \left|\frac{A_a^2}{B_a^2}-\frac{A_{\rm ref}^2}{B_{\rm ref}^2}\right|
 \le C_7\left(
 |A_a-A_{\rm ref}|(|A_a|+|A_{\rm ref}|)
 +D|B_a-B_{\rm ref}|\right)
 \le C_7\eta_QD
\]
gives
\begin{equation}\label{fr:eq:Young-inverse-square-comparison}
 |U_{a,\varsigma}(D)-U_\varsigma^0(D)|\le C_7\eta_QD,
 \qquad c_4D\le U_{a,\varsigma}(D)\le C_7D.
\end{equation}

Consider now two distinct peaks \(a\ne b\) and the same branch label
\(\varsigma\).  At a common energy \(E\), put
\(D_a=T_a-E\) and \(D_b=T_b-E\).  Then
\[
        D_a-D_b=T_a-T_b.
\]
On every energy interval on which both zeros are present,
\(0<D_a,D_b\le\rho_Q\).  By
\eqref{fr:eq:Young-reference-U-monotonicity},
\eqref{fr:eq:Young-inverse-square-comparison}, and
\eqref{fr:eq:Young-fold-comparison-smallness},
\[
\begin{aligned}
 |U_{a,\varsigma}(D_a)-U_{b,\varsigma}(D_b)|
 &\ge c_4|T_a-T_b|
      -C_7\eta_Q(D_a+D_b)\\
 &\ge\frac{c_4}{2}|T_a-T_b|.
\end{aligned}
\]
Since \(U_{a,\varsigma},U_{b,\varsigma}\le C_7\rho_Q\) and the two energy
derivatives have the same
sign,
\[
 \left|r^{-1/2}-s^{-1/2}\right|
 =\frac{|r-s|}
 {\sqrt{rs}(\sqrt r+\sqrt s)},
 \qquad r,s>0,
\]
gives
\begin{equation}\label{fr:eq:Young-same-sign-relative-velocity-lower}
\begin{aligned}
 |\partial_Ec_{a,\varsigma}
  -\partial_Ec_{b,\varsigma}|
 &\ge c_4|T_a-T_b|\rho_Q^{-3/2}\\
 &\ge c_4
 Q^{-\tau\varkappa_{\rm per}+\frac32B_4}.
\end{aligned}
\end{equation}
For opposite branch labels,
\[
 |\partial_Ec_{a,+}-\partial_Ec_{b,-}|
 =|\partial_Ec_{a,+}|+|\partial_Ec_{b,-}|
 \ge c_4Q^{B_4/2},
\]
including \(a=b\).  Combining this with
\eqref{fr:eq:Young-same-sign-relative-velocity-lower} and
\eqref{eq:multiwell-short-root-velocities}, every distinct pair satisfies
\[
\begin{aligned}
 |\partial_E(c_\iota-c_{\iota'})|
 &\ge c_4\min\{Q^{-\tau\varkappa_{\rm per}+3B_4/2},
                 Q^{B_4/2}\}
 \ge4Q^{-D_6},\\
 |\partial_E(c_\iota-c_{\iota'})|
 &\le2C_7Q^{B_5/2}\le\tfrac12Q^{D_6},
\end{aligned}
\]
by \eqref{fr:eq:Young-Dvel-prior-choice}, uniformly for \(Q\ge Q_0\).

It remains to pass to the distinguished induced zeros already constructed
by Corollary~\ref{cor:polynomial-multiwell-induced-zeros}.
Let \(F\) and \(\widehat F\) denote the parent and induced overlaps.
At their roots,
\[
 c_E=-\frac{F_E(c,E)}{F_x(c,E)},
 \qquad
 \widehat c_E=-\frac{\widehat F_E(\widehat c,E)}
                       {\widehat F_x(\widehat c,E)}.
\]
The endpoint and root estimates give
\[
 \|\widehat F-F\|_{C^2}+|\widehat c-c|
 \le C_7 Q^{D_4}e^{-2c_4Q^{\eta_0}},
 \qquad
 \min\{|F_x(c,E)|,|\widehat F_x(\widehat c,E)|\}
 \ge c_4Q^{-B_5/2}.
\]
Using \eqref{eq:multiwell-long-short-Cr-tail} and the polynomial derivative
bounds, we obtain, for some fixed \(D_{25}=D_{25}(R,q,r_*)>0\),
\[
\begin{aligned}
 |\widehat c_E-c_E|
 &\le
 \frac{|\widehat F_E(\widehat c,E)-F_E(c,E)|}
      {|\widehat F_x(\widehat c,E)|}\\
 &\quad+
 \frac{|F_E(c,E)|\,
       |\widehat F_x(\widehat c,E)-F_x(c,E)|}
      {|\widehat F_x(\widehat c,E)F_x(c,E)|}\\
 &\le C_7 Q^{D_{25}}\bigl(\|\widehat F-F\|_{C^2}+|\widehat c-c|\bigr)\\
 &\le C_7e^{-c_4Q^{\eta_0}}
 =o_{Q\to\infty}(Q^{-D_6}),
\end{aligned}
\]
uniformly on each \(\mathcal I_{Q,k}\).  Hence, for \(Q\ge Q_0\),
\[
 2Q^{-D_6}
 \le|\partial_E(\widehat c_\iota-\widehat c_{\iota'})|
 \le Q^{D_6}.
\]
Continuity and connectedness make its sign
\(\sigma_{\iota,\iota'}\) constant, proving
\eqref{fr:eq:Young-long-pair-two-sided-velocity}.
\end{proof}

For the next proposition, retain the fixed background data
\(N,\lambda,\rho,\alpha,\kappa,\tau\), with \(\tau>1\), and fix integers
\(r_*\ge3\) and \(R,q\ge1\).
The initial-scale parameters and \(D_{13}\) will be chosen to satisfy
\begin{equation}\label{fr:eq:multipeak-seed-propagation-budget}
\begin{aligned}
 B_{10}&>
 \max\{\tau^2+2,\ B_9+D_6+\tau+2,\
          B_{13}+2,\ D_8+D_7+4\},\\
 B_{11}&>
 B_{10}(\tau+1)+B_9+D_6+4,\\
 D_{13}&>
 (r_*+5)\bigl(D_8+D_7
                 +B_{10}+B_{11}+4\bigr).
\end{aligned}
\end{equation}

Retain the scale convention
\begin{equation}\label{fr:eq:regular-cor-starting-scale}
 n_\eps:=\left\lfloor\eps^{-1/\varkappa_{\rm per}}\right\rfloor,
 \qquad Q=q_\ell:=\max\{q_j:q_j\le n_\eps\}.
\end{equation}
The initial-scale parameters below have the form
\[
 \mathfrak D_{\rm seed}
 =(r_*+2,B,\tau,\log\lambda;\ldots),\qquad a_{\rm seed}>1,
\]
as in Definition~\ref{fr:def:Young-seed-package}.

\begin{proposition}
\label{fr:prop:prevalent-multiwell-seed}
There are constants
\[
\begin{aligned}
 \varepsilon_{\rm seed}&=\varepsilon_{\rm seed}(R,q,r_*)>0,
 &\qquad C_8&=C_8(R,q,r_*)\ge1,\\
 Q_0&=Q_0(R,q,r_*)\ge1,
\end{aligned}
\]
exponents \(\varkappa_{\rm per},D_3,B_9,D_{13}>0\),
and parameters \(\mathfrak D_{\rm seed}\) of the form above satisfying
\eqref{fr:eq:multipeak-seed-propagation-budget}, all depending only on
\((\mathfrak D_{\rm AMO}^{(r_*+6)},R,q)\), with the following property.
For every \(\delta\in\mathcal P_{\mathrm{strat}}(R,q)\) and
\(0<\eps<\varepsilon_{\rm seed}\), the following conclusions hold.
\begin{enumerate}
\item The scale
\eqref{fr:eq:regular-cor-starting-scale} satisfies \(Q\ge Q_0\) and
\begin{equation}\label{fr:eq:regular-cor-epsilon-Q-comparison}
\begin{gathered}
 Q\le n_\eps<q_{\ell+1}\le\kappa^{-1}Q^\tau,\\
 (\kappa/2)^{\varkappa_{\rm per}}Q^{-\tau\varkappa_{\rm per}}
 \le\eps\le Q^{-\varkappa_{\rm per}}\le Q^{-D_3}.
\end{gathered}
\end{equation}
\item For each \(1\le s\le N\), there is a nondegenerate compact interval
\(\mathcal J_{Q,s}^{\eps,\delta}\) with
\begin{equation}\label{fr:eq:multipeak-seed-window-measure}
 |\mathcal J_{Q,s}^{\eps,\delta}|\ge C_8^{-1}Q^{-B_9},
\end{equation}
on which the scale-\(Q\) data of \(V_{\eps,\delta}\) have \(2s\) simple
zeros and satisfy Definition~\ref{fr:def:Young-seed-package} with the
same parameters \(\mathfrak D_{\rm seed}\), uniformly in
\((\delta,\eps,s,Q)\).
\end{enumerate}
\end{proposition}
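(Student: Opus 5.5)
The plan is to assemble the initial-scale data of Definition~\ref{fr:def:Young-seed-package} directly from the multiwell induction at \(n=Q\). First, I fix the exponents. Lemma~\ref{fr:lem:multiwell-separated-folds} and Lemma~\ref{fr:lem:multiwell-pair-velocity-transversality} are applied at order \(r_*+2\), so the underlying Proposition~\ref{prop:polynomial-starting-scale-perturbed-dichotomy} is used at order \(r_*+6\). This fixes \(\varkappa_{\rm per},D_3,B_3,\ldots,B_9,D_6\) and the thresholds. I then choose \(D_7,D_8\) larger than \(D_4\) and the exponents \(B_5/2\), \(D_{16}\) appearing in \eqref{eq:multiwell-nondistinguished-nozero} and \eqref{eq:multiwell-long-transverse-coefficient}. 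Next come \(B_{13}>B_7+1\), then \(B_{10}\) and \(B_{11}\) as in \eqref{fr:eq:multipeak-seed-propagation-budget}, and finally \(D_{13}\). For the seed exponents I take \(a_{\rm seed}:=B_6/\tau\), which exceeds \(1\) because \(B_6>\tau(\tau+2)\), and \(\eta_{\rm seed}\) comparable to \(\eta_{\rm mw}\tau/B_6\). With these choices, the lower return bound \eqref{eq:multiwell-cross-return-bounds} gives \(R\ge Q^{a_{\rm seed}}\), and \eqref{eq:multiwell-long-growth} gives \(\omega_Q^{\rm seed}\le C_8Q^{-a_{\rm seed}\eta_{\rm seed}}(\log(Q+2))^B\). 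Part (i) is then exactly Lemma~\ref{fr:lem:multiwell-separated-folds}(i).

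For \(1\le s\le N\), I set \(\mathcal J_{Q,s}^{\eps,\delta}:=\mathcal I_{Q,s}\) from Corollary~\ref{cor:polynomial-multiwell-K-reinduction}. Its length is \(Q^{-B_9}\), which gives \eqref{fr:eq:multipeak-seed-window-measure}. I take \(\mathcal K_Q^0:=\mathcal K_{Q,s}\) with components \(\mathcal U_{Q,a}:=U_{Q,s,h,\varsigma}\). The zero maps \(c_{\ell,a}\) are the induced roots \(\widehat c_{Q,s,h,\varsigma}\), and their regularity is \eqref{eq:multiwell-root-high-order-bound}. The clearance \(Q^{-B_{13}}\) follows because the roots move by \(o(w)\) inside intervals of width \(w\asymp Q^{-B_7}\). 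The conditions then match as follows.
\begin{itemize}
\item Condition (\ref{item:seed-factorization}), the expansion and the derivative bounds: Proposition~\ref{prop:polynomial-multiwell-long-returns}(i),(ii), with \(M_*^{\rm seed}:=B_{16}\).
\item Condition (\ref{item:seed-transversality}), transversality: Corollary~\ref{cor:polynomial-multiwell-induced-zeros}(iii), with \(\nu_a:=\mathcal B_{Q,s,h,\varsigma}\).
\item Condition (\ref{item:seed-exterior}), the exterior bounds: \eqref{eq:multiwell-nondistinguished-nozero} together with the convex product bound \eqref{eq:multiwell-convex-product-bound}, transferred to the induced overlap.
\item Condition (\ref{item:seed-velocities}), velocity separation: Lemma~\ref{fr:lem:multiwell-pair-velocity-transversality}. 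The oscillation bound \(<1/8\) holds because \(|\partial_E f|\,|\mathcal I|\le Q^{D_6-B_9}\).
\end{itemize}
Finally, I take the number of parameter cells to be polynomial. The atoms of \(\mathcal K_Q^0\) and the cuts in energy come from finitely many orbit–endpoint coincidences, at most \(C Q^{B_7\tau}\) times the number of returns, which fits under \(C_8Q^{D_{11}}\).

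The main work is condition (\ref{item:seed-endpoint-error}), the second induction to the radius-\(r\) sets \(K_{q_\ell}^{(r)}(E)\) for \(r\in\mathscr W_\ell=[Q^{-B_{10}},2Q^{-B_{10}}]\). I first write the exact chronological factorization through returns to \(\mathcal K_Q^0\). Every internal concatenation point lies outside \(K^{(r)}\), at distance at least \(r\) from the roots. By condition (\ref{item:seed-exterior}), the overlap there is at least \(Q^{-D_7}r\ge Q^{-D_7-B_{10}}\). Lemma~\ref{B:lem:polynomial-hyperbolic-chain} then applies, because the factors have \(\lambda\ge e^{cQ^{a_{\rm seed}}}\), and the return counts \(L\) are polynomial in \(Q\) by Lemma~\ref{B:lem:return-time-poly-two-sided}. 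Its endpoint estimate \eqref{fr:eq:chain-fixed-order-endpoint} gives \(\|h^{(r)}_{\ell,\eta}-H_{{\rm seed},\nu(\eta)}\|_{C^{r_\dagger}}\le C_8Q^{D_9}e^{-C_8^{-1}Q^{a_{\rm seed}}}\). The orientation gluing across temporary cuts follows as in Step (iv) of the multiwell proof.

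The delicate point is the boundary-admissibility part of condition (\ref{item:seed-endpoint-error}). The required margin \(Q^{-(B_{11}+1)}\) must hold only on subcells, because the return boundaries of \(K^{(r)}(E)\) move with \(E\) relative to the roots. I would first try to handle this by a radius exclusion like the \(w_{n,k}\) choice in Step (iii) of the multiwell proof. The coincidences \(c_a(E)+q\alpha\equiv c_b(E)\pm r\), with \(|q|\le CQ^{B_{10}\tau}\), move with relative velocity at least \(2Q^{-D_6}\) by condition (\ref{item:seed-velocities}). This bounds the measure of the bad energies, and it is what drives the choice of \(B_{11}>B_{10}(\tau+1)+B_9+D_6+4\).
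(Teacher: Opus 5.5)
Your overall route is the paper's: apply the multiwell results at order $r_\dagger=r_*+2$ and $n=Q$, take $\mathcal K_Q^0=\mathcal K_{Q,s}$, $c_{\ell,a}=\widehat c_{Q,s,h,\varsigma}$ and $\nu_a=\mathcal B_{Q,s,h,\varsigma}$, read off the conditions, and get the endpoint error from Lemma~\ref{B:lem:polynomial-hyperbolic-chain} with connector bound $Q^{-(D_7+B_{10})}$. However, the point you single out as delicate is not something this proposition has to prove. In condition~(\ref{item:seed-endpoint-error}) of Definition~\ref{fr:def:Young-seed-package}, ``boundary-admissible'' only names the subcells on which the domains $\mathscr B_{\ell,a}$ are called distinguished. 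What must be verified is the endpoint estimate for every $r\in\mathscr W_\ell$, every cell and every fixed-itinerary domain. One must also check the domain clause $\widetilde I^{(r)}_{q_\ell,a}(E)\Subset\widetilde\nu_a$ of~(\ref{item:seed-transversality}); this follows from \eqref{eq:multiwell-return-boundary-clearance}, because the budget forces $B_{10}>B_8+2$. The $2Q^{-B_{11}}$ separation from the moving return boundaries is produced later, in the proof of Proposition~\ref{fr:prop:Young-CF-reinduction-exclusion}. There it comes from a radius exclusion for same-component collisions and the energy exclusion $\mathcal R_j^{\rm bd}$ for cross-component ones, and that is where the lower bound on $B_{11}$ is used. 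An energy exclusion at the present stage is unnecessary. It would also not fit the conclusion, which asks for the Definition to hold on a single compact interval of length $\ge C_8^{-1}Q^{-B_9}$.

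Conversely, the step you treat as routine, the polynomial count of parameter cells, needs an actual argument. The cells must freeze the return structure of the moving sets $K_{q_\ell}^{(r)}(E)$. The cuts are therefore the energies where two points of the translates $T_\alpha^{-j}\bigl(\partial\mathcal K_Q^0\cup\{c_{\ell,a}(E)+\sigma r:\sigma\in\{-1,0,1\}\}\bigr)$, $|j|\lesssim Q^{B_{10}\tau}$, collide. Each such collision equation must have at most one solution in $E$, and this uses four ingredients:
\begin{itemize}
\item for collisions with $\partial\mathcal K_Q^0$, the fixed sign of $\partial_E\widehat c_{Q,s,h,\varsigma}$ from \eqref{eq:multiwell-long-root-velocities};
\item for distinct components, the fixed sign of the relative velocity from~(\ref{item:seed-velocities});
\item for same-component collisions, the fact that the equation is independent of $E$;
\item the oscillation bound $<1/8$, which limits the integer shifts.
\end{itemize}
This gives $D_{11}=B_{10}\tau+1$.

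Two parameter choices also fail as written. With $a_{\rm seed}=B_6/\tau$, the requirement $Q^{a_{\rm seed}}\le R$ in~(\ref{item:seed-factorization}) does not follow from $R\ge c_3Q^{B_6/\tau}$, since $c_3\le1$. Take $a_{\rm seed}$ strictly smaller; the paper uses $B_6/(2\tau)$, and then $\eta_{\rm seed}=\eta_{\rm mw}$ works since $R\ge C_8^{-1}Q^{2a_{\rm seed}}$. Also, $M_*^{\rm seed}=B_{16}$ is too small. The bound \eqref{eq:multiwell-long-modulus-derivatives} controls $\partial^\beta\lambda/\lambda$, and passing to $\partial^\beta\log\lambda$ for $|\beta|\le r_\dagger$ by Fa\`a di Bruno multiplies up to $r_\dagger$ such factors. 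So you need $M_*^{\rm seed}=r_\dagger B_{16}$.
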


\begin{proof}[Proof of
Proposition~\ref{fr:prop:prevalent-multiwell-seed}]
\smallskip
\noindent\emph{Step (i).} We choose the starting scale and the initial-scale parameters.
We first choose the exponents.  The coefficient \(C_8=C_8(R,q,r_*)\ge1\)
will be fixed as in Step~(vi), before the final choice of \(Q_0\).
All estimates use this same coefficient.
Set \(r_\dagger:=r_*+2\).  Apply
Lemmas~\ref{fr:lem:multiwell-separated-folds} and
\ref{fr:lem:multiwell-pair-velocity-transversality} at order
\(r_\dagger\).  The derivative orders are
\[
 \begin{array}{c|c|c|c|c}
 \text{step}&\text{propagation}&\text{reinduction}
            &\text{first variation}&\text{AMO input}\\ \hline
 \text{order}&r_*&r_*+2&r_*+4&r_*+6 .
 \end{array}
\]
By \eqref{eq:fixed-background-data}, all are restrictions of one
application with profile bound \(2R\) and derivative order \(r_*+6\).
The lemmas give \(n_\eps,Q\),
\eqref{fr:eq:Young-epsilon-Q-comparison},
\eqref{fr:eq:Young-transversality-exponent-budget}, and all conclusions of
Corollary~\ref{cor:polynomial-multiwell-K-reinduction} and its four
subsequent results at order \(r_\dagger\).  After decreasing the
common amplitude threshold, these conclusions apply uniformly and give
\eqref{fr:eq:regular-cor-starting-scale}--
\eqref{fr:eq:regular-cor-epsilon-Q-comparison}.

Fix an interval index \(s\).  Take a concentric fixed-fraction subinterval
\(\mathcal J_{Q,s}^{\eps,\delta}\Subset\mathcal I_{Q,s}\).
The uniform interval-length estimate gives
\eqref{fr:eq:multipeak-seed-window-measure}, uniformly in the admissible data
and \(1\le s\le N\), by the choice of \(C_8\).
For compatible relative lifts,
\[
 \operatorname{osc}_{\mathcal J_{Q,s}^{\eps,\delta}}
   (\widetilde c_\iota-\widetilde c_{\iota'})
 \le|\mathcal J_{Q,s}^{\eps,\delta}|Q^{D_6}
 \le Q^{-B_9+D_6}<\tfrac18
\]
for sufficiently large \(Q\), since
\(B_9>D_6+2\).
Identify \(\{1,\ldots,2s\}\) with
\(\{1,\ldots,s\}\times\{+,-\}\), writing
\(a=(h,\varsigma)\).  To verify \ref{fr:def:Young-seed-package}\textup{(\ref{item:seed-factorization})--(\ref{item:seed-velocities})}, set
\[
\begin{aligned}
 \mathcal K_Q^0&:=\mathcal K_{Q,s},
 &\qquad \mathcal U_{Q,a}&:=U_{Q,s,h,\varsigma},\\
 c_{\ell,a}&:=\widehat c_{Q,s,h,\varsigma},
 & 1\le h&\le s,\quad \varsigma\in\{+,-\}.
\end{aligned}
\]
By \eqref{eq:multiwell-root-high-order-bound}, these maps belong to
\(C^{r_\dagger-1}(\mathcal J_{Q,s}^{\eps,\delta},\mathbb T)\) and are
independent of \(r\).  Choose their real lifts in the corresponding fixed
components of \eqref{eq:multiwell-K-base}; the itinerary partitions below only restrict
these global branches and lifts.  The bound for the
displacement of the zero branches
from \eqref{eq:multiwell-short-root-velocities}, the choice
\(B_9>B_7+B_5/2+2\), and the exponentially small
induced-zero displacement give
\[
 \operatorname{dist}_{\mathbb T}
 (\widehat c_{Q,s,h,\varsigma}(E),\partial U_{Q,s,h,\varsigma})
 \ge\tfrac12Q^{-B_7}
 \ge Q^{-(B_7+1)}
\]
on \(\mathcal J_{Q,s}^{\eps,\delta}\).  With \(\eta_{\rm mw}\) from
\eqref{eq:multiwell-eta-mw-explicit} and the exponent \(D_4\)
supplied by Corollary~\ref{cor:polynomial-multiwell-K-reinduction}, set
\[
 B_{13}:=B_7+1,\qquad
 a_{\rm seed}:=\frac{B_6}{2\tau}
 >\frac{\tau+2}{2}>1,
 \qquad
 \eta_{\rm seed}:=\eta_{\rm mw}.
\]
Set \(M_*^{\rm seed}:=r_\dagger B_{16}\) for this order-\(r_\dagger\)
application.  The multivariable Fa\`a di Bruno formula then converts
the relative modulus bounds into the logarithmic derivative bounds required in
the initial-scale definition.
Before choosing the radius of the smaller return set, fix
\begin{equation}\label{fr:eq:multiwell-prior-seed-exponents}
 \begin{aligned}
 D_7&>\max\{D_4,\tfrac12B_5\}+1,\\
 D_8&>
 \max\left\{D_4+2,D_7+1\right\}.
 \end{aligned}
\end{equation}
Choose \(B_{10}\) to satisfy the first line of
\eqref{fr:eq:multipeak-seed-propagation-budget} and
\(B_{10}>B_8+2\).  Next choose \(B_{11}\) and then \(D_{13}\)
to satisfy its second and third lines, respectively.
Use the smaller return sets \(K_{q_\ell}^{(r)}(E)\) of
Definition~\ref{fr:def:Young-seed-package}, with the branches and lifts
just fixed and
\[
 r\in\mathscr W_\ell=[Q^{-B_{10}},2Q^{-B_{10}}].
\]
All assertions below are uniform in \(r\in\mathscr W_\ell\).

\smallskip
\noindent\emph{Step (ii).} We construct the parameter cells and verify \ref{fr:def:Young-seed-package}\textup{(\ref{item:seed-transversality})}.
For fixed \(r\in\mathscr W_\ell\),
Lemma~\ref{B:lem:return-time-poly-two-sided}, applied to one component of
radius \(r\), gives
\(r_{K_{q_\ell}^{(r)}}^\pm\le Cr^{-\tau}\le C_8Q^{B_{10}\tau}\),
because return to the union is no later.
Equation~\eqref{eq:multiwell-long-root-velocities} gives each induced zero a
fixed derivative sign and
\(C_8^{-1}Q^{B_4/2}\le|\partial_E\widehat
c_{Q,s,h,\varsigma}|\le C_8Q^{B_5/2}\).
Since \(|\mathcal J_{Q,s}^{\eps,\delta}|\le Q^{-B_9}\), every
individual lift satisfies
\[
 \operatorname{osc}_{\mathcal J_{Q,s}^{\eps,\delta}}
 \widetilde c_{\ell,a}
 \le C_8 Q^{-B_9+B_5/2}<\tfrac18.
\]
The compatible relative lifts have oscillation less than \(1/8\) by the
pairwise estimate already imposed above.

Put \(R_Q:=\lceil4^\tau\kappa^{-1}Q^{B_{10}\tau}\rceil\) and
\[
 \mathscr E_r(E):=
 \bigcup_{|j|\le R_Q}T_\alpha^{-j}
 \left(\partial\mathcal K_Q^0\cup
 \{c_{\ell,a}(E)+\sigma r:
          1\le a\le2s,\ \sigma\in\{-1,0,1\}\}\right).
\]
The ordering of these points determines the two-sided return partition,
the intermediate \(\mathcal K_Q^0\)-itinerary, and the positions of the
zero graphs. It changes only when two points of \(\mathscr E_r(E)\)
coincide. Subtracting their translation indices gives
\(|k|\le2R_Q\). Collisions between two fixed endpoints are independent
of \(E\); every remaining collision equation has one of the forms
\[
 \widetilde c_{\ell,a}(E)+\sigma r+k\alpha=\widetilde b+m,
 \qquad
 b\in\partial\mathcal K_Q^0,\quad
 \sigma\in\{-1,0,1\},\quad m\in\mathbb Z,
\]
where \(\widetilde b\) is a lift of \(b\), or
\[
 (\widetilde c_{\ell,a}-\widetilde c_{\ell,b})(E)
 =-k\alpha+(\sigma'-\sigma)r+m,
 \qquad
 \sigma,\sigma'\in\{-1,0,1\},\quad m\in\mathbb Z.
\]
Here \(\sigma=0\) denotes a center and \(\sigma=\pm1\) an endpoint.
The fixed signs of the individual and pairwise derivatives give
\[
\begin{array}{c|c}
 \text{equation}&\text{number of energy solutions}\\ \hline
 \text{first type}&\le1\\
 \text{second type},\ a\ne b&\le1\\
 \text{second type},\ a=b&\text{independent of }E .
\end{array}
\]
The lift oscillations \(<1/8\) allow only \(O(1)\) integers \(m\).
Thus, for the finite cut set \(\mathscr C_r\),
\[
 \#\mathscr C_r\le CQ^{B_{10}\tau},\qquad
 D_{11}:=B_{10}\tau+1,\qquad
 \#\{\overline{\operatorname{comp}
   (\mathcal J_{Q,s}^{\eps,\delta}\setminus\mathscr C_r)}\}
 \le C_8Q^{D_{11}}.
\]
The resulting compact cells have disjoint interiors, and fixed-word
matrices extend continuously to each cell closure.  Collisions involving
endpoint translates lie outside the regular dynamics; center--center
cuts require no additional phase exclusions.  The restricted
global \(C^{r_\dagger-1}\) zero branches and lifts satisfy
\[
 \partial_E^j c_{\ell,a}|_{\mathcal C_-}
 =\partial_E^j c_{\ell,a}|_{\mathcal C_+}
 \quad\text{at a common endpoint},\qquad 0\le j\le r_\dagger-1.
\]

Fix one resulting parameter cell \(\mathcal C\).  For
\(a=(h,\varsigma)\) under the preceding identification, set
\[
 \nu_a:=\mathcal B_{Q,s,h,\varsigma},\qquad
 \mathscr U_{Q,a}^{\rm seed}:=\nu_a\times\mathcal C,
\]
where \(\mathcal B_{Q,s,h,\varsigma}\) is the fixed
\(\mathcal K_{Q,s}\)-return atom containing the induced-zero graph.  The
factorization
\eqref{eq:multiwell-long-transverse-factorization} restricted to this
phase--energy domain is precisely the initial simple-zero factorization
required in \ref{fr:def:Young-seed-package}\textup{(\ref{item:seed-transversality})}, with the derivative bounds in
\eqref{eq:multiwell-root-high-order-bound}.

By \eqref{eq:multiwell-return-boundary-clearance} and the exponentially
small induced-zero displacement, every
\(\widehat c_{Q,s,h,\varsigma}(E)\) stays at distance at least
\(C_8^{-1}Q^{-B_8}\) from the boundary of its distinguished
\(\mathcal K_{Q,s}\)-return atom.  Since
\(B_{10}>B_8+2\), every candidate child interval satisfies
\(I_{q_\ell,a}^{(r)}(E)\Subset
\nu_a=(\mathscr U_{Q,a}^{\rm seed})_E\) for
\((E,r)\in\mathcal C\times\mathscr W_\ell\).
This verifies the domain clause in \ref{fr:def:Young-seed-package}\textup{(\ref{item:seed-transversality})}.  Whenever a subcell is
boundary-admissible in the sense of \ref{fr:def:Young-seed-package}\textup{(\ref{item:seed-endpoint-error})}, its distinguished
first-return domains lie in \(\mathscr U_{Q,a}^{\rm seed}\).  The endpoint
estimate holds for every candidate radius; only the designation of these
domains requires boundary admissibility.

\smallskip
\noindent\emph{Step (iii).} We verify \ref{fr:def:Young-seed-package}\textup{(\ref{item:seed-factorization})}.
Decompose the first returns to
\(K_{q_\ell}^{(r)}(E)\) into consecutive \(\mathcal K_Q^0\)-returns.
The cocycle identity gives the exact chronological factorizations, while
\eqref{eq:multiwell-cross-return-bounds} and
\eqref{eq:multiwell-long-growth} give, uniformly in
\(r\in\mathscr W_\ell\), writing \(\mathsf R\) for the lattice length
of the factor,
\[
\begin{aligned}
 Q^{a_{\rm seed}}
 &\le C_8^{-1}Q^{B_6/\tau}\le\mathsf R
 \le C_8Q^{B_7\tau}\le C_8Q^{B_{13}\tau},\\
 C_8\mathsf R^{-\eta_{\rm mw}}(\log(\mathsf R+2))^B
 &\le C_8 Q^{-a_{\rm seed}\eta_{\rm mw}}(\log(Q+2))^B.
\end{aligned}
\]
The second inequality uses
\(\mathsf R\ge C_8^{-1}Q^{B_6/\tau}
=C_8^{-1}Q^{2a_{\rm seed}}\).
Together with \eqref{eq:multiwell-long-matrix-derivatives}, this proves
\ref{fr:def:Young-seed-package}\textup{(\ref{item:seed-factorization})} with
\[
 \omega_Q^{\rm seed}
 \le C_8 Q^{-a_{\rm seed}\eta_{\rm mw}}(\log(Q+2))^B.
\]
Since \(B_6/\tau>\tau+2\), these return lengths exceed
\(q_{\ell+1}\) for large \(Q\).

\smallskip
\noindent\emph{Step (iv).} We verify \ref{fr:def:Young-seed-package}\textup{(\ref{item:seed-endpoint-error})}.
Fix \(r\in\mathscr W_\ell\) and apply
\eqref{fr:eq:chain-fixed-order-endpoint} on the interior of every
fixed-itinerary domain \(\mathscr B_{\ell,\eta}\) to the two chains of
\(\mathcal K_Q^0\)-return blocks meeting at its intermediate return point.
At every internal concatenation point, either
\eqref{eq:multiwell-nondistinguished-nozero} applies or the factorization
\eqref{eq:multiwell-long-transverse-factorization}--
\eqref{eq:multiwell-long-transverse-coefficient} applies at distance at
least \(r\ge Q^{-B_{10}}\) from its zero.  Hence
\(|H_{{\rm seed},\nu}|\ge Q^{-(D_7+B_{10})}\).
For either chain, let \(L_{\rm ch}\) be its factor count, let
\(Q_{\rm ch}\) be the quantity \(Q_s\) in
Lemma~\ref{B:lem:polynomial-hyperbolic-chain} with \(s=r_\dagger\) and
norms on \(\mathscr B_{\ell,\eta}\), and let \(\Lambda_{\rm ch}\) be the
minimum there of the factor top singular values.  For a fixed \(D_{26}=D_{26}(R,q,r_*)>0\),
the return and order-\(r_\dagger\) bounds, the adjacent length bound
\(C_8^{-1}Q^{B_6/\tau}\), and \(\omega_Q^{\rm seed}\to0\) give
\[
\begin{aligned}
 \varkappa_Q&:=Q^{-(D_7+B_{10})},
 &\qquad L_{\rm ch}&\le R_Q\le C_8Q^{B_{10}\tau},\\
 Q_{\rm ch}&\le C_8Q^{D_{26}},
 & \Lambda_{\rm ch}&\ge\exp\{C_8^{-1}Q^{B_6/\tau}\}.
\end{aligned}
\]
Writing \(A:=D_0(r_\dagger)\) for the exponent in
that lemma, one obtains
\[
\begin{aligned}
 L_{\rm ch}^{A}Q_{\rm ch}^{A}
 \varkappa_Q^{-A}\Lambda_{\rm ch}^{-2}
 &\le C_8^{2A}Q^{A(B_{10}\tau+D_{26}+D_7+B_{10})}
        e^{-2Q^{B_6/\tau}/C_8}\\
 &\le10^{-3}\varkappa_Q
\end{aligned}
\]
after increasing \(Q_0\).  Thus the lemma applies to the forward and
backward chains.  Its two endpoint estimates, followed by the fixed-order
product rule for the scalar product defining the overlap, give the stated
\(C^{r_\dagger}\) tail.  Choose
\(D_9>0\) to absorb its polynomial losses.  Then
\[
 \|h_{\ell,\eta}^{(r)}
   -H_{{\rm seed},\nu(\eta)}\|
   _{C^{r_\dagger}(\mathscr B_{\ell,\eta})}
 \le C_8Q^{D_9}e^{-Q^{B_6/\tau}/C_8}
 \le C_8Q^{D_9}e^{-Q^{a_{\rm seed}}/C_8}.
\]
The fixed-word matrices and their derivatives through order
\(r_\dagger\) extend continuously to the chosen closure, so the same
estimate holds on all of \(\mathscr B_{\ell,\eta}\).

\smallskip
\noindent\emph{Step (v).} We verify \ref{fr:def:Young-seed-package}\textup{(\ref{item:seed-exterior})}.
Nondistinguished atoms satisfy
\eqref{eq:multiwell-nondistinguished-nozero}.  On a distinguished atom
outside the child interval,
\eqref{eq:multiwell-long-transverse-factorization}--
\eqref{eq:multiwell-long-transverse-coefficient} give
\[
 |\cos G_{\mathcal K_Q^0,\mathcal B_{Q,s,h,\varsigma}}(x,E)|
 \ge C_8^{-1}Q^{-B_5/2}
      \operatorname{dist}(x,\widehat c_{Q,s,h,\varsigma}(E)).
\]
The prior choice \eqref{fr:eq:multiwell-prior-seed-exponents} gives,
after increasing \(Q_0\),
\[
 C_8^{-1}Q^{-D_4}\ge Q^{-D_7},
 \qquad
 C_8^{-1}Q^{-B_5/2}\ge Q^{-D_7}.
\]
Together with the nondistinguished bound, this proves \ref{fr:def:Young-seed-package}\textup{(\ref{item:seed-exterior})} uniformly outside every candidate child
interval.  The all-atom mixed-derivative conclusion of
Proposition~\ref{prop:polynomial-multiwell-long-returns} and the choice
\(D_8>D_4+2\) give
\[
 C_8 Q^{D_4}\le\tfrac14Q^{D_8}
\]
for large \(Q\), hence both the required
\(C^{r_\dagger}\) bounds and, after enlarging the starting threshold, the
factor-\(1/4\) margin in
\eqref{fr:eq:Young-seed-angle-bootstrap-margin}.  The resulting
\(D_{11}\) and \(D_9\) are
uniform and do not enter the inequalities in
\eqref{fr:eq:multipeak-seed-propagation-budget}.

\smallskip
\noindent\emph{Step (vi).} We verify \ref{fr:def:Young-seed-package}\textup{(\ref{item:seed-velocities})} and choose uniform constants.
Lemma~\ref{fr:lem:multiwell-pair-velocity-transversality} is exactly
\ref{fr:def:Young-seed-package}\textup{(\ref{item:seed-velocities})}, including its factor-two margin; the preceding oscillation
estimate supplies the required lift domain.  After fixing the exponents,
choose \(C_8\) to dominate the finitely many upper-bound coefficients
and reciprocals of the lower-bound coefficients above.  Increase
\(Q_0\) so that all preceding estimates hold, and define
\[
\begin{aligned}
 \mathfrak D_{\rm seed}:=\bigl(&
 r_\dagger,B,\tau,\log\lambda;\,
 B_{10},B_{11},B_{13},a_{\rm seed},\eta_{\rm seed},\\
 &D_7,D_9,D_6,D_{11},
 D_8,M_*^{\rm seed},C_8\bigr).
\end{aligned}
\]
Thus the scale-\(Q\) data have \(2s\) simple zeros and satisfy
Definition~\ref{fr:def:Young-seed-package} with parameters
\(\mathfrak D_{\rm seed}\).

Since the number of energy intervals is the fixed number \(N\), all exponent and
threshold choices are simultaneous.  The minimum of the finitely many
common amplitude thresholds used above is
\(\varepsilon_{\rm seed}(R,q,r_*)\).  This proves the proposition.
\end{proof}
\section{Positive-measure selection and real--holomorphic propagation along
continued-fraction scales}
\label{sec:real-CF-propagation}

\noindent We propagate the initial-scale data along the continued-fraction
denominators, using summable energy exclusions to retain a set of
positive measure with \(2s\) simple return zeros at every scale.  At each
retained energy, we extend the real return data to complex neighborhoods
and verify the divisor estimates without further energy exclusions.
The spectral inclusion and
Theorem~\ref{thm:persistent-divisor-acceleration} then give
positive-measure spectral sets with acceleration \(s\), completing the
proof of Proposition~\ref{prop:perturbative-hierarchy-package}.

\subsection{Real selection and fiberwise holomorphic realization}

At fixed energy, we use the finite two-sided return-time partition of
Definition~\ref{fr:def:common-return-partition}, whose finiteness is
Lemma~\ref{fr:lem:finite-common-return-partition}, together with the
fixed-word extension convention preceding
\eqref{fr:eq:complete-return-joining-convention}, for \(T_\alpha\).

\begin{definition}
\label{fr:def:Young-2s-regular}
Let \(r_*\ge3\) and \(s\ge1\) be integers, let
\(V\in C^{r_*}(\mathbb T,\mathbb R)\), let
\(\alpha\notin\mathbb Q\) have continued-fraction denominators
\((q_j)_{j\ge1}\), and fix \(E\in\mathbb R\) and \(\gamma>0\).  Relative
to \(\gamma\),
a triple \((V,\alpha,E)\) is called \emph{first-return regular with
\(2s\) simple zeros through order \(r_*\)} if there exist an integer
\(j_0\ge1\), exponents
\[
 B_{10},D_{13},B_{11},M_*>0,
\]
a constant \(C_0>0\), and a growth-rate deficit
\(0<\Omega_{j_0}<\gamma/4\) for which the following conditions hold.

Set \(w_j=q_j^{-B_{10}}\).  For every \(j\ge j_0\), there exist a
radius \(\widehat w_j\) and centers
\(\xi_{j,a}(E)\in\mathbb T\), \(1\le a\le2s\), such that
\[
 w_{j_0}\le\widehat w_{j_0}\le2w_{j_0},\qquad
 w_j\le\widehat w_j\le
 w_j+\min\left\{w_j,\frac14(w_{j-1}-w_j)\right\}
 \quad(j>j_0),
\]
and the intervals
\[
 I_{j,a}(E):=[\xi_{j,a}(E)-\widehat w_j,
                  \xi_{j,a}(E)+\widehat w_j],
 \qquad
 K_{q_j}(E):=\bigcup_{a=1}^{2s}I_{j,a}(E)
\]
are pairwise disjoint.  Put
\[
 \operatorname{Reg}^{\rm ph}(E)
 :=\operatorname{Reg}\bigl(\{K_{q_j}(E):j\ge j_0\}\bigr)
 \subset\mathbb T.
\]
All return maps below are restricted to
\(K_{q_j}(E)\cap\operatorname{Reg}^{\rm ph}(E)\), and all matrix, singular-value,
and overlap-coefficient data on an atom denote their fixed-word extensions.

For every \(j\ge j_0\), the following conditions hold.

\begin{enumerate}
\item\label{item:regular-returns} The bases are nested,
\(K_{q_{j+1}}(E)\Subset K_{q_j}(E)\), and
\[
 r_{K_{q_j}(E)}^\pm(x)>q_{j+1},
 \qquad x\in K_{q_j}(E)\cap\operatorname{Reg}^{\rm ph}(E).
\]

\item\label{item:regular-expansion} On every two-sided return-time atom
\(\mathcal B\in\mathcal P_{K_{q_j}(E)}\), let
\(R_{\mathcal B}^+=p_{\mathcal B}\) and
\(R_{\mathcal B}^-=q_{\mathcal B}\).  The atomwise fixed-word blocks have
coherently oriented standard SVD data satisfying
\[
 \lambda_{K_{q_j}}^\pm(x,E)
 \ge
 \exp\!\left\{(\gamma-\Omega_{j_0})R_{\mathcal B}^\pm\right\},
 \qquad x\in\mathcal B.
\]
For \(1\le r\le r_*\),
\[
\begin{aligned}
 &|\partial_x^r\log\lambda_{K_{q_j}}^\pm|
 +|\partial_x^r u_{K_{q_j}}^\pm|
 +|\partial_x^r s_{K_{q_j}}^\pm|\\
 &\qquad +(\lambda_{K_{q_j}}^\pm)^{-1}
   \|\partial_x^r B_{K_{q_j}}^\pm\|
 \le C_0(R_{\mathcal B}^\pm+2)^{M_*},
 \qquad x\in\mathcal B.
\end{aligned}
\]

\item\label{item:regular-zeros} On each atom, use the first-return angle function
\(G_{K_{q_j},\mathcal B}\) from
\eqref{fr:eq:complete-return-joining-convention}.
There are pairwise distinct atoms
\(\mathcal B_{j,a}\in\mathcal P_{K_{q_j}(E)}\), \(1\le a\le2s\).
Recall the quotient map \(\pi_{\mathbb T}:\mathbb R\to\mathbb T\).  For
each \(a\), fix a connected component
\(\widetilde{\mathcal B}_{j,a}\) of
\(\pi_{\mathbb T}^{-1}(\mathcal B_{j,a})\).  There is a real number
\(r_{j,a}\in\widetilde{\mathcal B}_{j,a}\) such that
\[
 \pi_{\mathbb T}(r_{j,a})=\xi_{j+1,a}(E),
 \qquad 1\le a\le2s,
\]
after relabelling the components of \(K_{q_{j+1}}(E)\).  Thus every
real zero of the overlap coefficient is the center of exactly one child component.
In this lifted coordinate, the overlap coefficient has the exact factorization
\begin{equation}\label{fr:eq:Young-real-simple-factorization}
\begin{aligned}
 \cos G_{K_{q_j},\mathcal B_{j,a}}(\pi_{\mathbb T}x,E)
 &=(x-r_{j,a})\mathfrak a_{j,a}^{\mathbb R}(x,E),\\
 q_{j_0}^{-D_{13}}
 &\le|\mathfrak a_{j,a}^{\mathbb R}|
 \le q_{j_0}^{D_{13}},
 \qquad x\in\widetilde{\mathcal B}_{j,a}.
\end{aligned}
\end{equation}
The points \(\pi_{\mathbb T}(r_{j,a})\), \(1\le a\le2s\), are all the
real zeros of the atomwise overlap coefficients on \(K_{q_j}(E)\), and
each zero is simple.
Moreover,
\begin{equation}\label{fr:eq:Young-real-root-clearance}
 \min_{1\le a\le2s}
 \operatorname{dist}(r_{j,a},\partial\widetilde{\mathcal B}_{j,a})
 \ge q_j^{-B_{11}},
\end{equation}
and every atomwise overlap coefficient satisfies
\[
 \|\cos G_{K_{q_j},\mathcal B}(\,\cdot\,,E)\|_{C^{r_*}}
 \le q_{j_0}^{D_{13}}.
\]
Set
\[
 \mathcal P_{K_{q_j}}^{\rm dist}
 :=\{\mathcal B_{j,a}:1\le a\le2s\},
 \qquad
 \mathcal P_{K_{q_j}}^{\rm nd}
 :=\mathcal P_{K_{q_j}(E)}\setminus
    \mathcal P_{K_{q_j}}^{\rm dist}.
\]
For every \(\mathcal B\in\mathcal P_{K_{q_j}(E)}\),
\[
 |\cos G_{K_{q_j},\mathcal B}(x,E)|
 \ge q_{j+1}^{-D_{13}},
\]
for all \(x\in\mathcal B\) if
\(\mathcal B\in\mathcal P_{K_{q_j}}^{\rm nd}\), and for all
\(x\in\mathcal B\setminus\operatorname{int}K_{q_{j+1}}(E)\) if
\(\mathcal B\in\mathcal P_{K_{q_j}}^{\rm dist}\).
\end{enumerate}
\end{definition}
\begin{proposition}
\label{fr:prop:Young-regular-is-spectral}
Let \(r_*\ge3\) and \(s\ge1\) be integers, let
\(V\in C^{r_*}(\mathbb T,\mathbb R)\), let \(\alpha\notin\mathbb Q\), and
let \(E\in\mathbb R\).  If, for some \(\gamma>0\),
\((V,\alpha,E)\) is first-return regular with \(2s\) simple
zeros through order \(r_*\) in the sense of
Definition~\ref{fr:def:Young-2s-regular}, then
\[
 E\in\Sigma(V,\alpha),
\]
where \(\Sigma(V,\alpha)=\operatorname{spec}H_{x,V,\alpha}\) is the
common phase-independent spectrum.
\end{proposition}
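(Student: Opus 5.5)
Since the spectrum is closed and phase-independent, it is enough to show that \(E\) is not in the resolvent set. I would do this through the uniform-hyperbolicity characterization: for real energies, \(E\notin\Sigma(V,\alpha)\) if and only if the Schr\"odinger cocycle \((\alpha,A_E^V)\) is uniformly hyperbolic (Johnson's theorem). So I assume, for contradiction, that \((\alpha,A_E^V)\) is uniformly hyperbolic. Then there are continuous invariant splittings \(E^u(x)\oplus E^s(x)\), and constants \(C,\mu>0\) with \(\|A_m^V(x,E)|_{E^s(x)}\|\le Ce^{-\mu m}\) for \(m\ge0\), and the mirror estimate on \(E^u\). Continuity on the compact circle gives a uniform lower bound \(\theta_0>0\) for the angle between \(E^u(x)\) and \(E^s(x)\).

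The first step is to compare the induced first-return blocks with the hyperbolic splitting. For \(x\) in the regular part of an atom \(\mathcal B\in\mathcal P_{K_{q_j}(E)}\), the forward block \(B^+_{K_{q_j}}(x,E)\) has length \(R^+_{\mathcal B}>q_{j+1}\) by Definition~\ref{fr:def:Young-2s-regular}(\ref{item:regular-returns}), so this length tends to infinity with \(j\). Its top singular value is at least \(e^{(\gamma-\Omega_{j_0})R}\) with \(\gamma-\Omega_{j_0}>3\gamma/4\), by (\ref{item:regular-expansion}). A standard \(2\times2\) estimate then shows that the contracting right singular direction, angle \(s^+\), is \(O(\lambda^{-2}\theta_0^{-1}C)\)-close to \(E^s(x)\). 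In the same way, the expanding left singular direction of the backward block arriving at \(x\), angle \(u^-\), is exponentially close to \(E^u(x)\). This comparison needs the cocycle's uniform hyperbolicity constants to be independent of \(j\), which they are.

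The second step is to read off the overlap. By \eqref{fr:eq:complete-return-joining-convention}, \(\cos G_{K_{q_j},\mathcal B}\) is the inner product between the unit vector at angle \(u^-\) and the direction orthogonal to \(s^+\). So \(|\cos G|\) is, up to exponentially small errors, the sine of the angle between \(E^u(x)\) and \(E^s(x)\), and hence is at least \(\sin\theta_0/2\) for all large \(j\). But (\ref{item:regular-zeros}) supplies real zeros \(r_{j,a}\) of \(\cos G_{K_{q_j},\mathcal B_{j,a}}\) inside the atoms. These zeros can be approached by regular phases, and all the functions involved are continuous on the atom, so this is a contradiction. Hence \(E\in\Sigma(V,\alpha)\).

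The main obstacle is the first step. One has to check that the fixed-word SVD data on an atom, including at the zero \(r_{j,a}\), really are the singular data of the actual return blocks at nearby regular phases. One also has to make the stable-direction comparison quantitative with constants uniform in \(j\). For the comparison, I would take the cone-field form of uniform hyperbolicity and iterate the block: any vector outside a fixed cone around \(E^s\) grows at rate at least \(c e^{\mu R}\), so the contracted direction must lie inside the cone, and its aperture shrinks like \(\lambda^{-2}\). Only this step uses anything beyond the definitions; the rest is bookkeeping.
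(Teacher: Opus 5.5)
Your proposal is correct and follows essentially the same route as the paper. Johnson's theorem gives uniform hyperbolicity, the expanding output line of $A_q(x-q\alpha)$ and the expanding input line of $A_p(x)$ are compared with $E^u(x)$ and $E^s(x)^\perp$, so $|\cos G|$ is bounded below by a multiple of the splitting angle for long blocks, which contradicts the zeros $r_{j,a}$ because $\min\{p,q\}>q_{j+1}\to\infty$. The step you flag as the main obstacle is lighter than you expect: the hyperbolic comparison holds at every phase for all sufficiently long block lengths, so the paper evaluates the fixed-word matrices directly at $\pi_{\mathbb T}(r_{j,a})$ instead of approximating by regular phases.
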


\begin{proof}
Suppose for contradiction that \(E\notin\Sigma(V,\alpha)\).  By Johnson's theorem for
dynamically defined discrete Schr\"odinger operators~\cite{ZhangUH2020},
the cocycle \((\alpha,A_E^V)\) is uniformly hyperbolic.  Let
 \(E^u(x),E^s(x)\in\mathbb P^1(\mathbb R)\) be its invariant unstable
and stable lines, where \(\mathbb P^1(\mathbb R)\) is the real
projective line.  Uniform hyperbolicity and compactness give
positive constants \(C_0,\kappa_0,\delta_0\), depending on
\((V,\alpha,E)\), such that
\[
 \angle(E^u(x),E^s(x))\ge\delta_0
\]
and, for local unit representatives \(u_x\in E^u(x)\) and
\(s_x\in E^s(x)\),
\[
 \|A_n^V(x,E)u_x\|\ge C_0^{-1}e^{\kappa_0n},
 \qquad
 \|A_n^V(x,E)s_x\|\le C_0e^{-\kappa_0n}.
\]
In the estimates below,
\(C_{23}=C_{23}(V,\alpha,E)>0\) is an auxiliary constant.
Set \(Q_x=(u_x\;s_x)\).  The angle bound makes
\(Q_x\) and \(Q_x^{-1}\) uniformly bounded.  Invariance gives, for
suitable nonzero scalars \(a_n(x),b_n(x)\),
\[
 A_n^V(x,E)Q_x
 =Q_{x+n\alpha}
  \begin{pmatrix}a_n(x)&0\\0&b_n(x)\end{pmatrix},
 \qquad
 \left|\frac{b_n(x)}{a_n(x)}\right|
 \le C_{23}e^{-2\kappa_0n}.
\]
Thus
\[
 \frac{A_n^V(x,E)}{a_n(x)}
 =u_{x+n\alpha}e_1^{\mathsf T}Q_x^{-1}
 +\frac{b_n(x)}{a_n(x)}
  s_{x+n\alpha}e_2^{\mathsf T}Q_x^{-1}.
\]
The first term is rank one with its nonzero singular value bounded above
and below uniformly.  For a matrix with a simple largest singular value, denote by
\(e^{u,{\rm out}}\) and \(e^{u,{\rm in}}\) its expanding output and input
singular lines.  Applying \eqref{svd:eq:explicit-projector-formula} to the
right and left Gram matrices gives
\begin{align*}
 \operatorname{dist}_{\mathbb P^1(\mathbb R)}
 \bigl(e^{u,{\rm out}}_{A_n^V(x,E)},E^u(x+n\alpha)\bigr)
 &\le C_{23}e^{-2\kappa_0n},\\
 \operatorname{dist}_{\mathbb P^1(\mathbb R)}
 \bigl(e^{u,{\rm in}}_{A_n^V(x,E)},E^s(x)^\perp\bigr)
 &\le C_{23}e^{-2\kappa_0n}.
\end{align*}
These estimates are projective and hence independent of the local signs
of the representatives.  Since
\[
 |\langle u_x,s_x^\perp\rangle|
 =|\det(u_x,s_x)|\ge\sin\delta_0,
\]
there are \(n_0\) and \(c_7=c_7(V,\alpha,E)>0\) such that, for all \(p,q\ge n_0\),
\[
 \left|
 \left\langle e^{u,{\rm out}}_{A_q^V(x-q\alpha,E)},
                e^{u,{\rm in}}_{A_p^V(x,E)}\right\rangle
 \right|\ge c_7,
 \qquad x\in\mathbb T.
\]
Write \(s_p^+(x)\) for the contracting right-singular angle of
\(A_p^V(x,E)\) and \(u_q^-(x)\) for the expanding left-singular angle of
\(A_q^V(x-q\alpha,E)\), with arbitrary compatible local lifts.  For the
standard SVD convention this scalar product is precisely
\[
 \left|\cos\bigl(\tfrac\pi2-s_p^+(x)+u_q^-(x)\bigr)\right|.
\]

Choose \(a_j\in\{1,\ldots,2s\}\) and let \(p_j,q_j^-\) be the
forward and backward labels of \(\mathcal B_{j,a_j}\).
Its regular part is nonempty; \ref{fr:def:Young-2s-regular}\textup{(\ref{item:regular-returns})--(\ref{item:regular-zeros})} and the fixed-word
convention yield
\[
\begin{gathered}
 x_j:=\pi_{\mathbb T}(r_{j,a_j})
 \in\mathcal B_{j,a_j}\cap\operatorname{int}K_{q_{j+1}}(E),\\
 \min\{p_j,q_j^-\}>q_{j+1}\longrightarrow\infty,\\
 0=|\cos G_{K_{q_j},\mathcal B_{j,a_j}}(x_j,E)|\\
 =\left|\left\langle
 e^{u,{\rm out}}_{A_{q_j^-}^V(x_j-q_j^-\alpha,E)},
 e^{u,{\rm in}}_{A_{p_j}^V(x_j,E)}
 \right\rangle\right|\ge c_7>0
 \qquad(j\gg1),
\end{gathered}
\]
a contradiction.
\end{proof}

For the next lemma, let \(n\ge0\) be an integer and let
\(\mathcal J=\mathcal G_{-1}\supset\mathcal G_0\supset\cdots
\supset\mathcal G_n\), where \(\mathcal J\) is a compact interval and,
for \(0\le j\le n\),
\(\mathcal G_j\) is a nonempty finite union of compact intervals.  Each component
of \(\mathcal G_j\) is contained in a unique component of
\(\mathcal G_{j-1}\).  For \(-1\le j\le n\) and every component \(C\)
of \(\mathcal G_j\), let \(f_{j,C}:C\to\mathbb R\) be absolutely
continuous.  Define
\(f_j:\mathcal G_j\to\mathbb R\) by \(f_j|_C:=f_{j,C}\).
Fix \(\sigma\in\{\pm1\}\), \(\nu>0\), nonnegative numbers
\(\eta_{-1},\ldots,\eta_{n-1}\), and positive numbers
\(b_0,\ldots,b_n\).  Consider the bounds
\begin{align}
 \sigma\partial_E f_{j,C}&\ge\nu\quad\text{a.e. on }C,
 \qquad -1\le j\le n,
 \label{fr:eq:buffered-tree-slope}\\
 \|f_{j,C}-f_{j-1,P}\|_{L^\infty(C)}&\le\eta_{j-1}
 \quad(C\subset P),\qquad 0\le j\le n,
 \label{fr:eq:buffered-tree-shadow}\\
 4\sum_{t=j-1}^{n-1}\eta_t&\le\nu b_j,
 \qquad 0\le j\le n.
 \label{fr:eq:buffered-tree-margin}
\end{align}
For distinct components \(C_1,C_2\) of \(\mathcal G_j\) with the
same parent component in \(\mathcal G_{j-1}\), the separation condition is
\begin{equation}\label{fr:eq:buffered-tree-separation}
 \operatorname{dist}(C_1,C_2)\ge2b_j,\qquad 0\le j\le n.
\end{equation}

\begin{lemma}
\label{fr:lem:buffered-parameter-tree}
For the nested sets and absolutely continuous functions above, assume
\eqref{fr:eq:buffered-tree-slope}--\eqref{fr:eq:buffered-tree-margin}
and \eqref{fr:eq:buffered-tree-separation}.  Then \(f_n\)
has a continuous extension
\(\widetilde f_n:\operatorname{conv}\mathcal G_n\to\mathbb R\), equal
to \(f_n\) on \(\mathcal G_n\) and affine on every complementary gap,
with the following properties.
\begin{enumerate}
\item The extension satisfies
\begin{equation}\label{fr:eq:buffered-tree-global-slope}
 \sigma\bigl(\widetilde f_n(E_2)-\widetilde f_n(E_1)\bigr)
 \ge\frac\nu2(E_2-E_1),\qquad E_1<E_2.
\end{equation}
\item For every interval \(Z\subset\mathbb R\),
\((\widetilde f_n)^{-1}(Z)\) is an interval, whose intersection with
\(\mathcal G_n\) introduces at most two new cut points.
\item For every measurable \(Z\subset\mathbb R\),
\begin{equation}\label{fr:eq:buffered-tree-preimage}
 \bigl|\{E\in\mathcal G_n:f_n(E)\in Z\}\bigr|
 \le2\nu^{-1}|Z|.
\end{equation}
\end{enumerate}
\end{lemma}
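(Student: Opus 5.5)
The proof compares \(f_n\) with the coarsest function \(f_{-1}\) along the component tree; the extension and its three properties are then elementary. First, telescope \eqref{fr:eq:buffered-tree-shadow} along the chain of ancestors. For \(E\in C\subset\mathcal G_n\), let \(C=C_n\subset C_{n-1}\subset\cdots\subset C_{-1}=\mathcal J\) be its ancestors, and more generally put \(\Delta_j:=\sum_{t=j-1}^{n-1}\eta_t\). Then
\[
 |f_n(E)-f_{j-1,C_{j-1}}(E)|\le\Delta_j\le\tfrac14\nu b_j ,
\]
using \eqref{fr:eq:buffered-tree-margin}.

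Next, show that \(f_n\) is \(\nu/2\)-monotone in the sense of \eqref{fr:eq:buffered-tree-global-slope} for any two points \(E_1<E_2\) of \(\mathcal G_n\). If both lie in one component of \(\mathcal G_n\), the bound with \(\nu\) in place of \(\nu/2\) follows from \eqref{fr:eq:buffered-tree-slope} by absolute continuity. Otherwise, let \(j\) be the first level at which their ancestors differ. Then \(C_{j}(E_1)\ne C_j(E_2)\) share the parent \(P=C_{j-1}\), so \eqref{fr:eq:buffered-tree-separation} gives \(E_2-E_1\ge2b_j\). On \(P\), \(f_{j-1,P}\) is absolutely continuous with slope \(\sigma\partial_E\ge\nu\), hence
\[
 \sigma\bigl(f_{j-1,P}(E_2)-f_{j-1,P}(E_1)\bigr)\ge\nu(E_2-E_1).
\]
By the first step, replacing \(f_{j-1,P}\) by \(f_n\) costs at most \(2\Delta_j\le\tfrac12\nu b_j\le\tfrac14\nu(E_2-E_1)\). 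This yields \(\sigma(f_n(E_2)-f_n(E_1))\ge\tfrac34\nu(E_2-E_1)\).

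Now define \(\widetilde f_n\) by affine interpolation across each complementary gap of \(\operatorname{conv}\mathcal G_n\). Its endpoint values satisfy the \(\tfrac34\nu\) bound, so the slope on each gap is at least \(\tfrac34\nu\) in the \(\sigma\)-direction. Combining gap pieces with component pieces by additivity along any \(E_1<E_2\) gives \eqref{fr:eq:buffered-tree-global-slope}. Continuity is immediate, since \(\mathcal G_n\) has finitely many components. Item (ii) follows because \(\widetilde f_n\) is continuous and strictly monotone, so the preimage of an interval is an interval; intersecting it with \(\mathcal G_n\) adds at most its two endpoints as cuts. For item (iii), a strictly monotone function with \(|\widetilde f_n(E_2)-\widetilde f_n(E_1)|\ge\frac\nu2|E_2-E_1|\) has inverse that is \(2/\nu\)-Lipschitz on its image. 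Hence \(|\widetilde f_n^{-1}(Z)|\le2\nu^{-1}|Z|\) for measurable \(Z\), via outer measure and covering by intervals, and restricting to \(\mathcal G_n\) gives \eqref{fr:eq:buffered-tree-preimage}.

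The main point needing care is the cross-component comparison in the second step. The level \(j\) must be the level where the two ancestries split, so that the separation \(b_j\) is the one matched to the accumulated shadow error \(\Delta_j\) in \eqref{fr:eq:buffered-tree-margin}; the telescoping sum must start at \(\eta_{j-1}\), the comparison with the common parent. There is also a small bookkeeping point: if the ancestries split only at level \(j=0\), the common parent is \(\mathcal J=\mathcal G_{-1}\) itself, which is covered by the \(j=-1\) case of \eqref{fr:eq:buffered-tree-slope}.
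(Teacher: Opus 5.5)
Your proposal is correct and takes essentially the same route as the paper. Both arguments telescope the shadow bounds down to the first level at which the two ancestries split, combine the sibling separation \(E_2-E_1\ge 2b_j\) with the margin condition, interpolate affinely across the gaps, and use the \(2\nu^{-1}\)-Lipschitz inverse for the preimage bound. The only difference is that you keep the sharper cross-component constant \(\tfrac34\nu\) where the paper records \(\tfrac12\nu\); either one yields \eqref{fr:eq:buffered-tree-global-slope}.
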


\begin{proof}
Fix \(E_1<E_2\) in \(\mathcal G_n\).  If they lie in one level-\(n\)
component, integrate the a.e.
\eqref{fr:eq:buffered-tree-slope}.  Otherwise, let \(j\) be the first
level at which their descendants belong to different children of one
level-\((j-1)\) component \(P\).  The separation hypothesis gives
\(E_2-E_1\ge2b_j\).  By \eqref{fr:eq:buffered-tree-shadow},
\[
 |f_n(E_i)-f_{j-1,P}(E_i)|
 \le\sum_{t=j-1}^{n-1}\eta_t,
 \qquad i=1,2.
\]
Hence \eqref{fr:eq:buffered-tree-margin} gives
\[
 \sigma(f_n(E_2)-f_n(E_1))
 \ge\nu(E_2-E_1)-2\sum_{t=j-1}^{n-1}\eta_t
 \ge\frac\nu2(E_2-E_1).
\]
Set \(\widetilde f_n=f_n\) on \(\mathcal G_n\) and interpolate affinely
on each gap \((a,b)\).  Then
\[
\begin{gathered}
 \sigma\widetilde f_n'
 =\sigma\frac{f_n(b)-f_n(a)}{b-a}\ge\frac\nu2
 \quad\text{on }(a,b),\qquad
 \sigma\widetilde f_n'\ge\nu
 \quad\text{a.e. on }\mathcal G_n,\\
 \sigma\bigl(\widetilde f_n(E_2)-\widetilde f_n(E_1)\bigr)
 \ge\frac\nu2(E_2-E_1),\qquad
 \operatorname{Lip}(\widetilde f_n^{-1})\le2\nu^{-1}.
\end{gathered}
\]
Monotonicity gives the interval and cut-point claims.  More precisely,
\[
 \left|\widetilde f_n^{-1}(z_2)
       -\widetilde f_n^{-1}(z_1)\right|
 \le2\nu^{-1}|z_2-z_1|,
 \qquad z_1,z_2\in\widetilde f_n(\operatorname{conv}\mathcal G_n).
\]
The one-dimensional Lipschitz measure estimate therefore gives
\[
 \bigl|\{E\in\mathcal G_n:f_n(E)\in Z\}\bigr|
 =\bigl|\mathcal G_n\cap\widetilde f_n^{-1}(Z)\bigr|
 \le2\nu^{-1}|Z|.
\]
\end{proof}

We next consider stability of a simple-zero factorization.  Let
\(r\ge3\) be an integer, let \(\mathcal J\subset\mathbb R\) be an interval, and
let \(\mathscr A\subset\mathbb R\times\mathcal J\) have interval
fibers in the \(x\)-variable.  Suppose that \(h,h^\sharp\) are \(C^r\)
on a neighborhood of \(\mathscr A\),
\(c,c^\sharp\in C^{r-1}(\mathcal J)\), and
\[
 \{(c(E),E):E\in\mathcal J\}
 \cup\{(c^\sharp(E),E):E\in\mathcal J\}\subset\mathscr A.
\]
For \(E\in\mathcal J\), put
\(\mathscr A_E:=\{x\in\mathbb R:(x,E)\in\mathscr A\}\).
Assume further that
\(h(c(E),E)=h^\sharp(c^\sharp(E),E)=0\), and define the quotient in the
simple-zero factorization
\[
 \mathfrak a(x,E):=\int_0^1\partial_xh
 \bigl(c(E)+t(x-c(E)),E\bigr)\,dt.
\]
Put
\[
 \eta_1:=\|\partial_x(h^\sharp-h)\|_{L^\infty(\mathscr A)},\qquad
 d_0:=\|c^\sharp-c\|_{L^\infty(\mathcal J)},\qquad
 H_2:=\|\partial_x^2h\|_{L^\infty(\mathscr A)}.
\]
Set
\[
 \Lambda_r:=1+\|h\|_{C^r(\mathscr A)}
 +\|h^\sharp\|_{C^r(\mathscr A)}
 +\|c\|_{C^{r-1}(\mathcal J)}
 +\|c^\sharp\|_{C^{r-1}(\mathcal J)}.
\]
For \(0<\mu\le A<\infty\), consider the bounds
\begin{equation}\label{fr:eq:stable-transverse-hypotheses}
 \mu\le|\mathfrak a|\le A\quad\text{on }\mathscr A,
 \qquad \eta_1+H_2d_0\le\frac\mu2.
\end{equation}

\begin{lemma}
\label{fr:lem:transverse-factor-stability}
In the preceding setting, assume
\eqref{fr:eq:stable-transverse-hypotheses}.  There is \(C_{21}=C_{21}(r)>0\) such that the following hold.
\begin{enumerate}
\item The factorizations are
\begin{equation}\label{fr:eq:stable-transverse-factor}
\begin{aligned}
 h(x,E)&=(x-c(E))\mathfrak a(x,E),\\
 h^\sharp(x,E)&=(x-c^\sharp(E))\mathfrak a^\sharp(x,E),\\
 \mathfrak a^\sharp(x,E)
 &=\int_0^1\partial_xh^\sharp
 \bigl(c^\sharp(E)+t(x-c^\sharp(E)),E\bigr)\,dt.
\end{aligned}
\end{equation}
\item The factors satisfy
\begin{equation}\label{fr:eq:stable-transverse-C0}
 \|\mathfrak a^\sharp-\mathfrak a\|_{L^\infty(\mathscr A)}
 \le\eta_1+\frac12H_2d_0,
 \qquad \frac\mu2\le|\mathfrak a^\sharp|\le2A.
\end{equation}
For every \(E\in\mathcal J\), \(c^\sharp(E)\) is the only zero of
\(h^\sharp(\,\cdot\,,E)\) in \(\mathscr A_E\).
\item Their derivatives satisfy
\begin{align*}
 \|\mathfrak a^\sharp-\mathfrak a\|_{C^{r-2}(\mathscr A)}
 &\le C_{21}\Lambda_r^r
 \bigl(\|h^\sharp-h\|_{C^{r-1}(\mathscr A)}
       +\|c^\sharp-c\|_{C^{r-2}(\mathcal J)}\bigr),\\
 \|\mathfrak a^\sharp\|_{C^{r-1}(\mathscr A)}
 &\le C_{21}\|h^\sharp\|_{C^r(\mathscr A)}
 (1+\|c^\sharp\|_{C^{r-1}(\mathcal J)})^{r-1}.
\end{align*}
\end{enumerate}
\end{lemma}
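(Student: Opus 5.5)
The plan is to work fiberwise in \(E\), using only the integral representation of the quotients. Part (i) for \(h\) is the fundamental theorem of calculus along the segment from \(c(E)\) to \(x\). Since the fibers \(\mathscr A_E\) are intervals containing \(c(E)\), this segment stays in \(\mathscr A\), and \(h(c(E),E)=0\) gives
\[
 h(x,E)=(x-c(E))\mathfrak a(x,E).
\]
The same argument, with \(h^\sharp(c^\sharp(E),E)=0\), gives the factorization of \(h^\sharp\) with \(\mathfrak a^\sharp\) as displayed in \eqref{fr:eq:stable-transverse-factor}. The segment from \(c^\sharp(E)\) to \(x\) also lies in the interval fiber, because \(c^\sharp(E)\in\mathscr A_E\) by hypothesis.

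For part (ii), I will split
\[
 \mathfrak a^\sharp-\mathfrak a
 =\int_0^1\partial_x(h^\sharp-h)(\gamma^\sharp_t,E)\,dt
 +\int_0^1\bigl(\partial_xh(\gamma^\sharp_t,E)-\partial_xh(\gamma_t,E)\bigr)\,dt,
\]
where \(\gamma_t=c+t(x-c)\) and \(\gamma^\sharp_t=c^\sharp+t(x-c^\sharp)\).
\begin{itemize}
\item The first integral is bounded by \(\eta_1\).
\item In the second, \(|\gamma^\sharp_t-\gamma_t|=(1-t)|c^\sharp-c|\), so the mean value theorem in \(x\) bounds it by \(H_2d_0\int_0^1(1-t)\,dt=\tfrac12H_2d_0\). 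The intermediate points lie on a segment inside the interval fiber.
\end{itemize}
This gives the first bound in \eqref{fr:eq:stable-transverse-C0}. Combined with \(\eta_1+H_2d_0\le\mu/2\) and \(\mu\le|\mathfrak a|\le A\), it yields \(|\mathfrak a^\sharp|\ge\mu/2\) and \(|\mathfrak a^\sharp|\le A+\mu/2\le2A\). Uniqueness of the zero then follows: for \(x\ne c^\sharp(E)\) in \(\mathscr A_E\), both factors in \(h^\sharp=(x-c^\sharp)\mathfrak a^\sharp\) are nonzero.

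For part (iii), I will differentiate under the integral sign. Write \(\Phi(t,x,E)=(c(E)+t(x-c(E)),E)\). Then \(\mathfrak a=\int_0^1(\partial_xh)\circ\Phi\,dt\), and \(\Phi\) is affine in \(x\) with coefficients built from \(c\).
\begin{itemize}
\item \emph{Second estimate.} Fa\`a di Bruno applied to \((\partial_xh^\sharp)\circ\Phi^\sharp\) through order \(r-1\) involves derivatives of \(h^\sharp\) up to order \(r\). It also involves products of at most \(r-1\) derivatives of \(c^\sharp\) up to order \(r-1\), each inner derivative being bounded by \(1+\|c^\sharp\|_{C^{r-1}}\) since \(t\in[0,1]\) and \(x\) is bounded on the relevant domain. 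This gives \(C_{21}\|h^\sharp\|_{C^r}(1+\|c^\sharp\|_{C^{r-1}})^{r-1}\). If \(x\)-boundedness is an issue, I would note that the \(x\)-dependence enters only linearly through \(t(x-c)\), so every \(E\)-derivative of \(\Phi\) carries a factor \((1-t)\) or \(\partial_E^kc\) times bounded coefficients.
\item \emph{First estimate (main obstacle).} I will telescope as before,
\[
 (\partial_xh^\sharp)\circ\Phi^\sharp-(\partial_xh)\circ\Phi
 =\bigl(\partial_x(h^\sharp-h)\bigr)\circ\Phi^\sharp
 +\bigl((\partial_xh)\circ\Phi^\sharp-(\partial_xh)\circ\Phi\bigr),
\]
and differentiate through order \(r-2\). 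The first term is controlled by \(\|h^\sharp-h\|_{C^{r-1}}\) times a polynomial in \(\|c^\sharp\|_{C^{r-2}}\). For the second term, I will write
\[
 (\partial_xh)\circ\Phi^\sharp-(\partial_xh)\circ\Phi
 =\int_0^1 D(\partial_xh)\bigl(\Phi+\theta(\Phi^\sharp-\Phi)\bigr)[\Phi^\sharp-\Phi]\,d\theta,
\]
with \(\Phi^\sharp-\Phi=((1-t)(c^\sharp-c),0)\). Taking \(r-2\) derivatives of this uses derivatives of \(h\) up to order \(r\) and of \(c,c^\sharp\) up to order \(r-2\). It produces at most \(r\) factors bounded by \(\Lambda_r\), multiplied by one factor \(\partial^k(c^\sharp-c)\) with \(k\le r-2\). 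Collecting the Fa\`a di Bruno terms gives the bound \(C_{21}\Lambda_r^r(\|h^\sharp-h\|_{C^{r-1}}+\|c^\sharp-c\|_{C^{r-2}})\). The convex-combination points \(\Phi+\theta(\Phi^\sharp-\Phi)\) again stay in \(\mathscr A\) by fiber convexity.

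The step I expect to be the main obstacle is this bookkeeping. I need to check that the exponent of \(\Lambda_r\) is at most \(r\) and that no derivative of order \(r-1\) of \(c\) or \(c^\sharp\) appears. This holds because only \(r-2\) derivatives are taken on the composite, so the inner functions are differentiated at most \(r-2\) times.
\end{itemize}
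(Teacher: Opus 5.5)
Your proposal is correct and follows essentially the same route as the paper's proof. Both use the fundamental-theorem-of-calculus factorization along segments kept inside the interval fibers, the same two-term split yielding \(\eta_1+\tfrac12H_2d_0\), and differentiation under the integral sign combined with the multivariable Fa\`a di Bruno formula for part (iii). Your mean-value representation of \((\partial_xh)\circ\Phi^\sharp-(\partial_xh)\circ\Phi\) only spells out bookkeeping that the paper leaves implicit; there the intermediate root \((1-\theta)c+\theta c^\sharp\) keeps all evaluation points in \(\mathscr A\). Your worry about \(x\)-boundedness is moot: \(\partial_x\Phi_1=t\) and \(\partial_E^k\Phi_1=(1-t)\partial_E^kc\), so no factor of \(x\) ever appears.
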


\begin{proof}
Set \(p_t=c(E)+t(x-c(E))\) and
\(p_t^\sharp=c^\sharp(E)+t(x-c^\sharp(E))\).  The interval-fiber
hypothesis gives
\[
 p_t,p_t^\sharp\in\mathscr A_E,\qquad
 |p_t^\sharp-p_t|\le(1-t)d_0.
\]
The fundamental theorem of calculus gives
\eqref{fr:eq:stable-transverse-factor} and
\[
\begin{aligned}
 |\mathfrak a^\sharp-\mathfrak a|
 &\le\int_0^1\left(
 |\partial_x(h^\sharp-h)(p_t^\sharp,E)|
 +|\partial_xh(p_t^\sharp,E)-\partial_xh(p_t,E)|\right)dt\\
 &\le\eta_1+H_2d_0\int_0^1(1-t)\,dt
 =\eta_1+\tfrac12H_2d_0.
\end{aligned}
\]
This proves \eqref{fr:eq:stable-transverse-C0}.  In particular,
\[
 h^\sharp(x,E)=0
 \quad\Longleftrightarrow\quad x=c^\sharp(E).
\]
Differentiation under the integral sign and the multivariable
Fa\`a di Bruno formula give, for \(|\beta|\le r-2\),
\[
 \left|\partial^\beta(\mathfrak a^\sharp-\mathfrak a)\right|
 \le C_{21}\Lambda_r^r
 \left(\|h^\sharp-h\|_{C^{r-1}}
       +\|c^\sharp-c\|_{C^{r-2}}\right),
\]
and, for \(|\beta|\le r-1\),
\[
 |\partial^\beta\mathfrak a^\sharp|
 \le C_{21}\|h^\sharp\|_{C^r}
       (1+\|c^\sharp\|_{C^{r-1}})^{r-1}.
\]
\end{proof}

Fix an integer \(r\ge3\), numbers \(A_0,D_0>0\), and
\(\gamma_0>0\).  We describe a class of chains depending on increasing
exponents \(M_1,\ldots,M_r>1\); these exponents will be chosen in the
next lemma before the chain and its bounds \(\mathbf B_t\) are given.
Let \(\Omega\subset\mathbb R^2\) be a connected open parameter domain
with coordinates \((x,E)\),
let \(L\in\mathbb N\), \(q\ge1\), let
\(R_0,\ldots,R_{L-1}\ge1\), and let
\(\mathbf B_1,\ldots,\mathbf B_r\ge0\).
Let \(\mathsf A_0,\ldots,\mathsf A_{L-1}:\Omega\to\mathrm{SL}(2,\mathbb R)\) be a
\(C^r\) chain with coherently oriented singular value decompositions
\[
 \mathsf A_i=R_{u_i}\operatorname{diag}(\lambda_i,\lambda_i^{-1})
 R_{\pi/2-s_i},\qquad \lambda_i>1.
\]
Put
\[
 \mathcal M:=\mathsf A_{L-1}\cdots\mathsf A_0,\qquad
 h_i:=\cos(\pi/2-s_i+u_{i-1})\quad(1\le i<L).
\]
Throughout \(\Omega\), the expansion and overlap bounds are
\begin{equation}\label{fr:eq:fixed-order-chain-hypotheses}
\begin{gathered}
 q\le R_i\le q^{A_0},\quad
 \lambda_i\ge e^{\gamma_0R_i}\quad(0\le i<L),\quad L\le q^{A_0},\\
 |h_i|\ge q^{-D_0},\quad
 \|h_i\|_{C^r}\le q^{D_0}\quad(1\le i<L).
\end{gathered}
\end{equation}
For \(0\le\mathcal C_t\le\mathbf B_t\), \(1\le t\le r\), consider the
normalized derivative bounds on \(\Omega\), valid for \(0\le i<L\) and
\(\beta\in\mathbb N_0^2\), \(1\le|\beta|\le r\),
\begin{equation}\label{fr:eq:fixed-order-old-bound}
 |\partial^\beta\log\lambda_i|+|\partial^\beta u_i|
 +|\partial^\beta s_i|
 \le\mathcal C_{|\beta|}R_i^{M_{|\beta|}}.
\end{equation}
Set
\[
 R:=\sum_{0\le i<L}R_i,\qquad
 \mathfrak B_t:=1+\sum_{v=1}^t\mathbf B_v.
\]

\begin{lemma}
\label{fr:lem:level-independent-fixed-order-propagation}
There are increasing exponents \(M_1,\ldots,M_r>1\) and constants
\[
 c_6=c_6(r,A_0,D_0,\gamma_0)>0,\qquad
 C_{22}=C_{22}(r,A_0,D_0,\gamma_0)>0,
\]
and \(\Gamma=\Gamma(r,A_0,D_0,\gamma_0)>0\), with the following property.  For every choice of bounds
\(\mathbf B_1,\ldots,\mathbf B_r\ge0\), there is a threshold
\[
 q_*=q_*(r,A_0,D_0,\gamma_0,
          \mathbf B_1,\ldots,\mathbf B_r)
\]
such that every chain above satisfying
\eqref{fr:eq:fixed-order-chain-hypotheses},
\eqref{fr:eq:fixed-order-old-bound}, and \(q\ge q_*\) has expanding
left and contracting right endpoint singular directions with coherent angle lifts
\(U,S:\Omega\to\mathbb R\), uniquely normalized by
\(U-u_{L-1},S-s_0\in(-\pi/2,\pi/2)\) on \(\Omega\).
For these lifts, the following estimates hold.
\begin{enumerate}
\item For every \(\beta\in\mathbb N_0^2\) with
\(1\le|\beta|\le r\),
\begin{equation}\label{fr:eq:fixed-order-new-bound}
 |\partial^\beta\log\|\mathcal M\||+|\partial^\beta U|
 +|\partial^\beta S|
 \le\bigl(\mathcal C_{|\beta|}
 +C_{22}\mathfrak B_{|\beta|}^{\Gamma}q^{-2}\bigr)
 R^{M_{|\beta|}}.
\end{equation}
\item For every \(\beta\in\mathbb N_0^2\) with
\(1\le|\beta|\le r\),
\begin{equation}
 \|\partial^\beta\mathcal M\|
 \le C_{22}\mathfrak B_r^\Gamma(R+2)^{M_{|\beta|}}\|\mathcal M\|.
 \label{fr:eq:fixed-order-new-matrix}
\end{equation}
\item The endpoint directions satisfy
\begin{equation}
 \|U-u_{L-1}\|_{C^r}+\|S-s_0\|_{C^r}
 \le C_{22}\mathfrak B_r^\Gamma e^{-c_6q}.
 \label{fr:eq:fixed-order-endpoint-drift}
\end{equation}
\end{enumerate}
\end{lemma}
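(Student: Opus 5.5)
The plan is to reduce to the normalized product structure already analyzed in Lemma~\ref{B:lem:differentiated-finite-two-state-product}, Corollary~\ref{B:cor:finite-two-state-frames} and Lemma~\ref{B:lem:polynomial-hyperbolic-chain}. The new ingredient is bookkeeping: the derivative exponents $M_t$ must be chosen so that all polynomial losses in $L$, $q$ and $q^{D_0}$ are absorbed by the gain $R^{M_t}/\max_iR_i^{M_t}$. This is not possible in general. Instead, we exploit the fact that the leading term of $\partial^\beta\log\|\mathcal M\|$ is additive.

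We write $\mathcal M=\mathscr L R_{u_{L-1}}\mathscr M_LR_{\pi/2-s_0}$ with $g_t=\pi/2-s_t+u_{t-1}$, $H_t=h_t$, $\epsilon_t=\lambda_t^{-2}$, exactly as in the proof of Lemma~\ref{B:lem:polynomial-hyperbolic-chain}. The multiplier expansion then gives
\[
\log\|\mathcal M\|=\sum_i\log\lambda_i+\sum_{t\ge1}\log|h_t|+\mathcal E,
\]
where $\|\mathcal E\|_{C^r}$ is bounded by a polynomial in $L$, $q^{D_0}$ and the input jet bounds, times $e^{-2\gamma_0q}$. The endpoint directions satisfy $U=u_{L-1}+\delta_U$ and $S=s_0+\delta_S$. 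Lemma~\ref{B:lem:polynomial-hyperbolic-chain}(ii) bounds $\delta_U,\delta_S$ by $(\inf\lambda_{L-1})^{-2}$ and $(\inf\lambda_0)^{-2}$ times the same polynomial factors.

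The jet inputs are bounded by Fa\`a di Bruno from \eqref{fr:eq:fixed-order-old-bound}: they are at most $\mathfrak B_r^{C}q^{A_0M_r}$, hence at most $\mathfrak B_r^Cq^{C(r,A_0)M_r}$. Because $e^{-2\gamma_0 q}$ beats any fixed power of $q$, and the threshold $q_*$ may depend on $\mathbf B$, the smallness hypothesis of Lemma~\ref{B:lem:polynomial-hyperbolic-chain} holds for $q\ge q_*$. This gives \eqref{fr:eq:fixed-order-endpoint-drift} with $c_6=\gamma_0$, after absorbing $\mathfrak B_r^\Gamma$ and half the exponential. The $\mathfrak B$-dependence must be kept polynomial with an exponent $\Gamma$ independent of $\mathbf B$. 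This is why we track the $\mathbf B$-dependence rather than absorbing it into the threshold.

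For \eqref{fr:eq:fixed-order-new-bound}, we differentiate the identity for $\log\|\mathcal M\|$. The term $\sum_i\partial^\beta\log\lambda_i$ is bounded by $\mathcal C_{|\beta|}\sum_iR_i^{M}\le\mathcal C_{|\beta|}R^{M}$, using superadditivity of $t\mapsto t^{M}$ for $M>1$. The term $\sum_t\partial^\beta\log|h_t|$ is at most $L q^{D_0(r+1)}$ times a constant. Since $R\ge Lq$, we have $R^M\ge L^M q^M\ge Lq^{2}\cdot q^{D_0(r+1)}$ as soon as $M_1>D_0(r+1)+3$. Hence this term is bounded by $C_{22}q^{-2}R^{M}$. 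The $\mathcal E$ contribution is exponentially small. The angle derivatives $\partial^\beta U=\partial^\beta u_{L-1}+\partial^\beta\delta_U$ are handled the same way, since $R_{L-1}^M\le R^M$.

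The increasing choice of $M_1<\dots<M_r$, with gaps dominating the Fa\`a di Bruno degrees, is what keeps the estimate linear in $\mathcal C_{|\beta|}$ with coefficient exactly one. Then \eqref{fr:eq:fixed-order-new-matrix} follows by differentiating the SVD $\mathcal M=\pm R_U\operatorname{diag}(\|\mathcal M\|,\|\mathcal M\|^{-1})R_{\pi/2-S}$ with Fa\`a di Bruno, using the already proved jet bounds. This costs a power $\mathfrak B_r^\Gamma$ and raises $R^{M_{|\beta|}}$ only by factors that can be absorbed once more, after choosing the $M_t$ suitably.

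The main obstacle is the \emph{level-independence}. The coefficient in front of $\mathcal C_{|\beta|}$ must stay exactly one, and the error must carry the factor $q^{-2}$, so that iterating along scales $q_j$ gives a summable drift of the constants. The crucial point is that we never feed $\mathcal C$ through the nonlinear chain estimate. We use the exact additive structure of the log-norm and of the endpoint lifts, and apply the chain lemma only to the exponentially small corrections $\mathcal E,\delta_U,\delta_S$. The delicate part is verifying that the input-bound polynomial in those corrections has $\mathbf B$-exponent independent of $\mathbf B$ itself.
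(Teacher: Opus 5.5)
Your overall route matches the paper's. You strip the exterior rotations and use the exact additive identity $\log\|\mathcal M\|=\sum_i\log\lambda_i+\sum_{t\ge1}\log|h_t|+\mathcal E$, together with $U=u_{L-1}+\delta_U$ and $S=s_0+\delta_S$. You bound $\mathcal E,\delta_U,\delta_S$ by an exponentially small quantity times a polynomial in $\mathfrak B$ of $\mathbf B$-independent degree, and absorb $\sum_t\partial^\beta\log|h_t|$ into $q^{-2}R^{M_t}$ via $R\ge Lq$. However, the step that carries the content of the lemma, namely the coefficient \emph{exactly one} in front of $\mathcal C_{|\beta|}$, fails as you wrote it.

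You bound $\sum_i\partial^\beta\log\lambda_i$ by $\mathcal C_{|\beta|}\sum_iR_i^{M}$. You then treat $\partial^\beta u_{L-1}$ and $\partial^\beta s_0$ separately, ``since $R_{L-1}^M\le R^M$.'' Summing gives $\mathcal C_{|\beta|}\bigl(\sum_iR_i^M+R_{L-1}^M+R_0^M\bigr)$, which is not bounded by $\mathcal C_{|\beta|}R^M$. For instance, take $L=2$, $R_0=q^{A_0}$, $R_1=q$ with $A_0>1$. Then $R^M=q^{A_0M}(1+q^{1-A_0})^M\approx q^{A_0M}$, while your sum is $\approx2q^{A_0M}$. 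A factor larger than one in front of $\mathcal C_{t,j}$ breaks the recurrence $\mathcal C_{t,j+1}\le\mathcal C_{t,j}+C_{18}\mathfrak B_t^{D_{23}}q_{j+1}^{-2}$ used in the proof of Proposition~\ref{fr:prop:Young-CF-reinduction-exclusion}, because the constants then grow geometrically in $j$. Coefficient one is also not produced by the increasing choice of the $M_t$, as you claim. The identities for $\log\|\mathcal M\|$, $U$, $S$ are additive and involve no products of lower-order jets, so the gaps $M_t>tM_{t-1}+\cdots$ are needed only for the matrix bound~(ii).

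The fix is the regrouping the paper uses. Hypothesis \eqref{fr:eq:fixed-order-old-bound} controls, for each $i$, the \emph{combined} quantity $|\partial^\beta\log\lambda_i|+|\partial^\beta u_i|+|\partial^\beta s_i|$. The endpoint angles are therefore absorbed into the $i=L-1$ and $i=0$ summands:
\[
 \sum_i|\partial^\beta\log\lambda_i|+|\partial^\beta u_{L-1}|+|\partial^\beta s_0|
 \le\sum_i\bigl(|\partial^\beta\log\lambda_i|+|\partial^\beta u_i|+|\partial^\beta s_i|\bigr)
 \le\mathcal C_{|\beta|}\sum_iR_i^{M_{|\beta|}}
 \le\mathcal C_{|\beta|}R^{M_{|\beta|}}.
\]
Everything else goes into $C_{22}\mathfrak B_{|\beta|}^{\Gamma}q^{-2}R^{M_{|\beta|}}$ exactly as you describe; those terms are $\sum_t\partial^\beta\log|h_t|$, $\partial^\beta\mathcal E$, $\partial^\beta\delta_U$ and $\partial^\beta\delta_S$.

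One minor slip: the quotient rule gives $|\partial^\beta\log|h_t||\lesssim q^{2|\beta|D_0}$, not $q^{D_0(r+1)}$. This is harmless, since any fixed exponent $K(r,D_0)$ works once $M_1>K+2$.
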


\begin{proof}
Here \(C=C(r,A_0,D_0,\gamma_0)\) denotes an auxiliary constant,
which may change between estimates.  The constants \(C_{22}\) and
\(c_6\) are chosen, respectively, sufficiently large and small to
cover the finitely many output bounds below, before \(q_*\) is fixed.
For \(1\le t\le r\), the quotient rule gives an exponent
\(K_t=K(t,D_0)>0\) such that every overlap coefficient in the statement
satisfies \(\|\log|h_i|\|_{C^t}\le C_{22}q^{K_t}\).
Choose the increasing exponents recursively so that
\[
 M_t>K_t+A_0+3,\qquad
 M_t>tM_{t-1}+A_0+3\quad(t\ge2).
\]
The case \(L=1\) now follows directly from the assumed SVD bounds, after
enlarging \(C_{22}\).  Hence assume \(L\ge2\).
Set \(g_i:=\frac\pi2-s_i+u_{i-1}\), \(H_i:=h_i\), and
\(\Theta_i:=\sin g_i\) for \(1\le i<L\), and set
\(\epsilon_i:=\lambda_i^{-2}\) for \(0\le i<L\).
Work componentwise on the parameter domains with fixed orientations, using
\(\log\epsilon_i=-2\log\lambda_i\) and \(\log|H_i|\); the sign of
\(H_i\) is constant on each such domain.
Removing the exterior rotations writes \(\mathcal M\) in the form of
Lemma~\ref{B:lem:differentiated-finite-two-state-product}, with scalar
\(\mathscr L_0=\prod_{0\le i<L}\lambda_i\).  For each \(t\le r\), the
bounds for its input derivatives through order \(t\) are at most a fixed
power of \(q\) and of
\(\mathfrak B_t\), while
\[
 \max_{0\le i<L}|\epsilon_i|\le e^{-2\gamma_0q},\qquad
 \min_{1\le i<L}|H_i|\ge q^{-D_0},\qquad L\le q^{A_0}.
\]
Consequently, for all sufficiently large \(q\), the multiplier and
endpoint-frame estimates and the affine highest-order conclusions of
that lemma and Corollary~\ref{B:cor:finite-two-state-frames} give a remainder \(\mathcal E\) such that
\begin{equation}\label{fr:eq:fixed-order-log-product}
 \log\|\mathcal M\|
 =\sum_{0\le i<L}\log\lambda_i+
   \sum_{i=1}^{L-1}\log|h_i|+\mathcal E,
 \qquad
 \|\mathcal E\|_{C^r}
 \le C_{22}\mathfrak B_r^\Gamma e^{-c_6q}.
\end{equation}
After conjugation by the two exterior rotations, the smooth map from
endpoint spectral projections to their angle coordinates converts the projection
bounds into angle bounds.  Thus, for \(|\beta|=t\le r\),
\[
 \|\partial^\beta\mathcal E\|_\infty
 +\|\partial^\beta(U-u_{L-1})\|_\infty
 +\|\partial^\beta(S-s_0)\|_\infty
 \le C_{22}\mathfrak B_t^\Gamma e^{-c_6q}.
\]
The endpoint clause gives the same estimate for \(|\beta|=0\).
The endpoint estimate immediately gives
\eqref{fr:eq:fixed-order-endpoint-drift}.  If \(|\beta|=t\), then
\[
 \sum_{0\le i<L}R_i^{M_t}\le R^{M_t},\qquad
 \sum_{i=1}^{L-1}\|\log|h_i|\|_{C^t}
 \le CLq^{K_t}\le C_{22}q^{-2}R^{M_t}.
\]
Combining \eqref{fr:eq:fixed-order-old-bound} with
\eqref{fr:eq:fixed-order-log-product} and the two endpoint corrections
therefore yields \eqref{fr:eq:fixed-order-new-bound}.  The coefficient of
\(\mathcal C_t\) remains one because
\[
\begin{aligned}
 &\sum_{0\le i<L}|\partial^\beta\log\lambda_i|
   +|\partial^\beta u_{L-1}|+|\partial^\beta s_0|\\
 &\quad\le\sum_{0\le i<L}
 (|\partial^\beta\log\lambda_i|+|\partial^\beta u_i|
 +|\partial^\beta s_i|)
 \le\mathcal C_t\sum_{0\le i<L}R_i^{M_t}.
\end{aligned}
\]
Finally, differentiate the standard singular value decomposition of
\(\mathcal M\).  The order-\(t\) terms are affine in the order-\(t\)
derivatives of \(\log\|\mathcal M\|,U,S\), and all remaining products
contain only lower-order derivatives.  The recursive choice of \(M_t\)
and \(\mathcal C_v\le\mathbf B_v\) absorb these products and give
\eqref{fr:eq:fixed-order-new-matrix}.
\end{proof}

For the continued-fraction selection, fix integers \(s\ge1\) and \(r_*\ge3\), a frequency
\(\alpha\in\DC(\kappa,\tau)\) with \(\kappa>0\) and \(\tau>1\), and a
reference rate \(\gamma>0\).  Put \(r_\dagger:=r_*+2\), let \((q_j)\)
be the continued-fraction denominators of \(\alpha\), and fix
\(c_{11},B_{12},D_{13}>0\) and initial-scale parameters
\(\mathfrak D_{\rm seed}\) whose first, third, and fourth entries are
\(r_\dagger,\tau,\gamma\), respectively.  Require its entries and
\(D_{13}\) to satisfy
\begin{equation}\label{fr:eq:CF-propagation-exponent-budget}
\begin{aligned}
 B_{10}&>
 \max\{\tau^2+2,\ B_{12}+D_6+\tau+2,\
          B_{13}+2,\ D_8+D_7+4\},\\
 B_{11}&>
 B_{10}(\tau+1)+B_{12}+D_6+4,\\
 D_{13}&>
 (r_\dagger+3)\bigl(D_8+D_7
                 +B_{10}+B_{11}+4\bigr).
\end{aligned}
\end{equation}
Set
\[
 \mathfrak D_{\rm CF}:=(
 s,r_*,\alpha,\kappa,
 c_{11},B_{12},\mathfrak D_{\rm seed},D_{13}).
\]
For a denominator \(Q=q_\ell\), let \(V_Q:\mathbb T\to\mathbb R\) be
real analytic, and fix any \(\varrho_Q>0\) such that
\(V_Q\in C^\omega_{\varrho_Q}(\mathbb T,\mathbb R)\); no lower bound on
\(\varrho_Q\) is assumed.  Let \(\mathcal J_Q\) be a nondegenerate compact
interval satisfying
\begin{equation}\label{fr:eq:CF-selection-window}
 |\mathcal J_Q|\ge c_{11}Q^{-B_{12}}.
\end{equation}
Assume that the scale-\(Q\) data of \(V_Q\) on \(\mathcal J_Q\) are
admissible initial-scale data with \(2s\) simple zeros and parameters
\(\mathfrak D_{\rm seed}\).

\begin{proposition}
\label{fr:prop:Young-CF-reinduction-exclusion}
There are
\[
 Q_*=Q_*(\mathfrak D_{\rm CF})\ge1,\qquad
 C_{11}=C_{11}(\mathfrak D_{\rm CF})\ge1,
\]
such that, for every admissible
\(Q\ge Q_*\), \(V_Q\), \(\varrho_Q\), and \(\mathcal J_Q\) above,
there are an energy-independent radius
\[
 \widehat w_\ell\in[Q^{-B_{10}},2Q^{-B_{10}}]
\]
and a compact set \(\mathcal Y_Q^{(s)}\subset\mathcal J_Q\) with
the following properties.
\begin{enumerate}
\item The measure estimates are
\begin{align*}
 |\mathcal J_Q\setminus\mathcal Y_Q^{(s)}|
 &\le C_{11}\left(
 Q^{D_6+\tau-B_{10}}
 +Q^{D_6+B_{10}\tau-B_{11}}
 \right),\\
 |\mathcal Y_Q^{(s)}|
 &\ge\frac12|\mathcal J_Q|
 \ge\frac{c_{11}}2Q^{-B_{12}}.
\end{align*}
\item For every \(E\in\mathcal Y_Q^{(s)}\), there are bases
\(K_{q_j}(E)\), \(j\ge\ell\), with
\[
 K_{q_\ell}(E)=K_{q_\ell}^{(\widehat w_\ell)}(E),
\]
for which \((V_Q,\alpha,E)\) is first-return regular with \(2s\)
simple zeros through order \(r_*\), relative to \(\gamma\), with
\(j_0=\ell\).
\end{enumerate}
The exponents \(B_{10},B_{11},D_{13}\)
are those prescribed above, and the constants \(M_*\) and \(C_0\) in
\ref{fr:def:Young-2s-regular}\textup{(\ref{item:regular-returns})--(\ref{item:regular-zeros})} depend only on \(\mathfrak D_{\rm CF}\).
\end{proposition}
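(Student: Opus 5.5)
The plan is an induction along the continued-fraction denominators \(q_j\), \(j\ge\ell\), in the style of Young and Wang--Zhang. The energy set \(\mathcal J_Q\) is shrunk to a decreasing family of finite unions of compact intervals \(\mathcal G_j\). On each component the return words, itineraries and oriented SVD lifts are fixed, and the \(2s\) zero branches \(c_{j,a}\) are globally \(C^{r_\dagger-1}\). The compact set is then \(\mathcal Y_Q^{(s)}:=\bigcap_j\mathcal G_j\).

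\emph{Step 0.} The radius \(\widehat w_\ell\in\mathscr W_\ell\) is chosen as in the proof of Lemma~\ref{lem:polynomial-multiwell-common-base}: among at most \(CQ^{B_{10}\tau}\) bad radii, remove a \(Q^{-B_{11}}\)-neighbourhood of each. After that, every branch with a fixed pair \(c_{\ell,a}\pm\widehat w_\ell\) avoids return-boundary collisions. The first excluded set consists of energies where some \(c_{\ell,a}(E)\) comes within \(Q^{-(B_{11}+1)}\) of a genuine atom boundary, i.e. where the subcell fails to be boundary-admissible in Definition~\ref{fr:def:Young-seed-package}\textup{(\ref{item:seed-endpoint-error})}. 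The boundary points are translates \(b+k\alpha\) of endpoints or of \(c_{\ell,b}\pm\widehat w_\ell\) with \(|k|\lesssim Q^{B_{10}\tau}\). The transversality of Definition~\ref{fr:def:Young-seed-package}\textup{(\ref{item:seed-velocities})} (relative velocity at least \(2Q^{-D_6}\)), together with the individual velocity lower bounds, lets Lemma~\ref{fr:lem:buffered-parameter-tree} convert each such condition into an energy set of measure \(\lesssim Q^{D_6}Q^{-B_{11}}\). Summing gives the second term \(Q^{D_6+B_{10}\tau-B_{11}}\). The first term comes from centre-to-child-edge coincidences of size \(Q^{-B_{10}}\) repeated \(\lesssim Q^{\tau}\) times. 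This yields Definition~\ref{fr:def:Young-2s-regular} at \(j=\ell\). Expansion and derivative bounds come from item (\ref{item:seed-factorization}), the factorization with \(q_{j_0}^{\pm D_{13}}\) from item (\ref{item:seed-transversality}) with \(D_7\), the exterior bound from item (\ref{item:seed-exterior}), and nesting with \(r^\pm>q_{\ell+1}\) from \(a_{\rm seed}>\tau\).

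\emph{Inductive step.} This step is \(j\to j+1\). Given regularity at level \(j\) on a component \(C\) of \(\mathcal G_j\), set \(K_{q_{j+1}}(E):=\bigcup_a[c_{j,a}(E)\pm\widehat w_{j+1}]\), with \(\widehat w_{j+1}\) as in the definition. First returns to \(K_{q_{j+1}}\) are chains of \(K_{q_j}\)-returns. Every internal concatenation point lies outside \(\operatorname{int}K_{q_{j+1}}\), so the overlap there is at least \(q_{j+1}^{-D_{13}}\). Lemma~\ref{fr:lem:level-independent-fixed-order-propagation} then applies with \(q=q_{j+1}\) and \(L\le q_{j+1}^{A_0}\), using Lemma~\ref{B:lem:return-time-poly-two-sided} and the Diophantine bound \(q_{j+2}\le\kappa^{-1}q_{j+1}^\tau\). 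It gives the new expansion, where the deficit increases by a summable \(O(q_j^{-1})\) so that \(\Omega_{j_0}<\gamma/4\) survives. It gives the derivative bounds with level-independent exponents \(M_t\); here \(\mathcal C_t\) grows only by a summable \(q^{-2}\) factor, which is why \(M_*,C_0\) stay uniform. It also gives endpoint drift \(e^{-c_6q_j}\), and hence the new overlap is \(e^{-cq_j}\)-close in \(C^{r_*}\) to the parent one. Lemma~\ref{fr:lem:transverse-factor-stability} then produces the unique simple zero \(c_{j+1,a}\) with \(|c_{j+1,a}-c_{j,a}|\lesssim e^{-cq_j}\) and quotient bounds within a factor \(2\), which is the source of the uniform \(q_{j_0}^{\pm D_{13}}\). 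The velocity and transversality lower bounds are preserved up to summable errors; these are exactly the hypotheses \eqref{fr:eq:buffered-tree-slope}--\eqref{fr:eq:buffered-tree-margin} with \(\eta_j\sim e^{-cq_j}\). One then excludes the energies where a new zero falls within \(q_{j+1}^{-B_{11}}\) of a new atom boundary, and cuts at itinerary changes. By Lemma~\ref{fr:lem:buffered-parameter-tree}, the excluded measure at level \(j+1\) is \(\lesssim q_{j+1}^{D_6+B_{10}\tau-B_{11}}\), and this is summable.

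\emph{Conclusion and main difficulty.} Summing the exclusions gives item (i); for \(Q\ge Q_*\) the total is \(\le\frac12|\mathcal J_Q|\) by \eqref{fr:eq:CF-selection-window} and \eqref{fr:eq:CF-propagation-exponent-budget} (\(B_{10}>B_{12}+D_6+\tau+2\), etc.). I expect the main obstacle to be the bookkeeping across levels. The derivative exponents and the quotient bounds must be level-independent, and the cut sets must satisfy the buffered separation \eqref{fr:eq:buffered-tree-separation}. Otherwise the measure estimates cannot be obtained uniformly in \(j\).
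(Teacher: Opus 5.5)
Your architecture is the paper's: nested energy sets $\mathcal G_j$, pairwise transversality fed into Lemma~\ref{fr:lem:buffered-parameter-tree}, chains controlled by Lemma~\ref{fr:lem:level-independent-fixed-order-propagation}, and roots by Lemma~\ref{fr:lem:transverse-factor-stability}. As written, however, the induction does not maintain condition \ref{fr:def:Young-2s-regular}\textup{(\ref{item:regular-returns})}, namely $r^\pm_{K_{q_j}(E)}>q_{j+1}$.

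\emph{Same-component returns} are excluded for free by the Diophantine bound and $B_{10}>\tau^2+2$.

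\emph{Cross-component returns} are the problem. A return $x\in I_{j,a}$, $x+m\alpha\in I_{j,b}$ with $q_j<|m|\le q_{j+1}$ is not prevented by anything inherited from the previous level, which only gives $r^\pm>q_j$. Since $f_{j,a,b}$ moves with speed at least $Q^{-D_6}$, such returns are generally realized on a small nonempty set of energies. These energies must be removed at \emph{every} level by excluding
$\{E\in\mathcal G_{j-1}:\|f_{j,a,b}(E)+m\alpha\|_{\mathbb T}<8w_j,\ |m|\le q_{j+1}\}$, which has measure $\lesssim Q^{D_6}q_j^{\tau-B_{10}}$. Summing over $j$ is exactly what produces the first term $Q^{D_6+\tau-B_{10}}$ in (i).

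Your inductive step excludes only boundary resonances. At the base you derive $r^\pm>q_{\ell+1}$ from $a_{\rm seed}>\tau$, which is not a hypothesis: $\mathfrak D_{\rm seed}$ only has $a_{\rm seed}>1$, and the seed of Proposition~\ref{fr:prop:prevalent-multiwell-seed} has $a_{\rm seed}=B_6/(2\tau)$, which need not exceed $\tau$. Without this exclusion, condition (i) is unproved. Moreover, at the next transition Lemma~\ref{fr:lem:level-independent-fixed-order-propagation} cannot be applied with $q=q_{j+2}$, because its hypothesis $R_i\ge q$ fails.

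Two further ingredients are missing.

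\emph{Radius selection at every level.} For $j>\ell$ the radius must be \emph{selected}, as you did at the base, not merely taken from the window in Definition~\ref{fr:def:Young-2s-regular}. A boundary point of the zero's atom coming from its own component, $c_{j,a}\pm\widehat w_j-m\alpha$, lies within $2q_j^{-B_{11}}$ of the centre (hence of the zero) iff $\|m\alpha\mp\widehat w_j\|_{\mathbb T}<2q_j^{-B_{11}}$. This condition is independent of $E$ and invisible to pairwise transversality, so an energy exclusion could remove all of $\mathcal G_{j-1}$. Pick $\widehat w_j\in[w_j,w_j+\delta_j]$ off these $\lesssim q_j^{B_{10}\tau}$ bad intervals. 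This works because the $B_{11}$ budget gives $\delta_j\gtrsim q_j^{-B_{10}-1+1/\tau}\gg q_j^{B_{10}\tau-B_{11}}$. The remaining boundaries are cross-component and need only $f_{j,a,b}$. The individual velocity bounds you invoke at the base are not hypotheses, and translates of $\partial\mathcal K_Q^0$ are not genuine atom boundaries of $K_{q_\ell}$.

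\emph{The buffers.} The buffered separation you flag as the main obstacle has to be built.
\begin{enumerate}
\item Delete $b_j$-neighbourhoods, $b_j=e^{-c_5\gamma q_{j-1}/8}$, of newly created genuine cell boundaries and resonance endpoints. Do not buffer temporary itinerary cuts, across which the fixed-word data glue.
\item Verify \eqref{fr:eq:buffered-tree-margin} using Fibonacci growth.
\item Bound $\sum_j|\mathcal B_j|$ by counting cells, $\mathcal N_j\le\exp\{C\sum_{t\le j}\log(q_t+2)\}$.
\end{enumerate}
Without buffers, a cellwise preimage estimate carries the factor $\mathcal N_j$, which can grow faster than any power of $q_j$, and the exclusion sum then diverges.

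Two smaller points.
\begin{enumerate}
\item The level-$\ell$ objects in Definition~\ref{fr:def:Young-2s-regular} are the $K_{q_\ell}$-returns, not the $\mathcal K_Q^0$-returns. So the base already needs the chain lemma for expansion. It also needs item \textup{(\ref{item:seed-endpoint-error})} of Definition~\ref{fr:def:Young-seed-package} together with Lemma~\ref{fr:lem:transverse-factor-stability} to move the zeros off $H_{\rm seed}$.
\item The uniform quotient lower bound cannot come from reapplying ``within a factor $2$'' at each level, since that degrades geometrically. Instead, sum the additive $L^\infty$ errors of Lemma~\ref{fr:lem:transverse-factor-stability}\textup{(ii)} against a reserved margin such as $\tfrac14Q^{-D_{12}}$.
\end{enumerate}
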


The same construction gives the following bound on the loss of the
expansion rate.  Let \(\omega_Q^{\rm seed}\) be the initial deficit
in \ref{fr:def:Young-seed-package}\textup{(\ref{item:seed-factorization})}.

\begin{corollary}
\label{fr:cor:Young-CF-rate-loss}
After increasing \(C_{11}\) once, there is
\(C_{12}=C_{12}(\kappa,\tau)>0\) such that the bases constructed in
Proposition~\ref{fr:prop:Young-CF-reinduction-exclusion} have growth-rate
deficit
\begin{equation}\label{fr:eq:Young-rate-loss}
 \Omega_Q
 :=\omega_Q^{\rm seed}
 +C_{11}\sum_{j\ge\ell}
       \frac{\log(q_{j+1}+2)}{q_j}
 \le\omega_Q^{\rm seed}
   +C_{12}C_{11}\frac{\log(Q+2)}Q.
\end{equation}
After increasing the structural threshold \(Q_*\), if necessary,
\[
 \Omega_Q=o_{Q\to\infty}(1),\qquad \Omega_Q<\frac\gamma4,
\]
uniformly over all admissible choices of \(V_Q\).
\end{corollary}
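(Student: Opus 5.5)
The plan is to track the expansion-rate deficit level by level through the inductive construction of Proposition~\ref{fr:prop:Young-CF-reinduction-exclusion}, and then to sum the per-level losses using the continued-fraction growth of \((q_j)\).

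At level \(j\ge\ell\), each return block to \(K_{q_{j+1}}(E)\) is a chain of \(K_{q_j}(E)\)-return blocks \(\mathsf A_0,\dots,\mathsf A_{L-1}\). By Definition~\ref{fr:def:Young-2s-regular}\textup{(\ref{item:regular-returns})}, every factor has length \(R_i>q_{j+1}\). The internal overlaps satisfy \(|h_i|\ge q_{j+1}^{-D_{13}}\), because the concatenation points lie outside \(\operatorname{int}K_{q_{j+1}}(E)\) or on nondistinguished atoms. Lemma~\ref{B:lem:polynomial-hyperbolic-chain}\textup{(i)} (or \eqref{fr:eq:fixed-order-log-product}) then gives
\[
 \log\|\mathcal M\|\ge\sum_i\log\lambda_i-(L-1)\bigl(D_{13}\log q_{j+1}+\log 2\bigr).
\]
Suppose inductively that \(\lambda_i\ge\exp\{(\gamma-\Omega^{(j)})R_i\}\). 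Since \(L-1\le\sum_iR_i/q_{j+1}\le\mathsf R/q_j\), we get
\[
 \log\|\mathcal M\|\ge\bigl(\gamma-\Omega^{(j)}-C\log(q_{j+1}+2)/q_j\bigr)\mathsf R.
\]
Starting from \(\Omega^{(\ell)}=\omega_Q^{\rm seed}\), which is \ref{fr:def:Young-seed-package}\textup{(\ref{item:seed-factorization})} applied at the first level, the deficit after all levels is at most the series \(\Omega_Q\) in \eqref{fr:eq:Young-rate-loss}, with \(C_{11}\) enlarged to absorb \(D_{13}+1\). The first level is treated with the same chain estimate, using the overlap lower bound \(Q^{-(D_7+B_{10})}\) from Step~(iv) of the proof of Proposition~\ref{fr:prop:prevalent-multiwell-seed}.

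Next I bound the series. The denominators satisfy \(q_{j+1}\le\kappa^{-1}q_j^\tau\), so \(\log(q_{j+1}+2)\le C(\kappa,\tau)\log(q_j+2)\). They also grow at least like Fibonacci numbers, \(q_{j+2}\ge2q_j\). Hence
\[
 \sum_{j\ge\ell}\frac{\log(q_j+2)}{q_j}
\]
is dominated by a geometric series, because \(t\mapsto\log(t+2)/t\) is decreasing for \(t\ge1\) and the \(q_j\) double every two steps. This gives the bound \(C\log(Q+2)/Q\), which is \eqref{fr:eq:Young-rate-loss} with \(C_{12}=C_{12}(\kappa,\tau)\).

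For the final claims, recall \(\omega_Q^{\rm seed}\le C_8Q^{-a_{\rm seed}\eta_{\rm seed}}(\log(Q+2))^B\to0\), with constants fixed by \(\mathfrak D_{\rm CF}\). Therefore \(\Omega_Q=o(1)\) uniformly in \(V_Q\), and increasing \(Q_*\) gives \(\Omega_Q<\gamma/4\).

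The main obstacle I expect is that the induction must be uniform in the level. The exponent \(D_{13}\) and the constant in the chain estimate must not grow with \(j\). This requires the level-independent derivative bounds of Lemma~\ref{fr:lem:level-independent-fixed-order-propagation}, which keep the chain-lemma smallness hypothesis valid at every level. It also requires the deficit bound \(\Omega^{(j)}<\gamma/4\) to be maintained, so that \(\lambda_i\ge e^{\gamma_0R_i}\) with a fixed \(\gamma_0\).
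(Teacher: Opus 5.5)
Your proposal is correct and follows essentially the same route as the paper's Step~(v). The chain estimate, with connector loss $(L-1)\log(\varkappa/2)$ and $L\le R/q_{j+1}$, gives a per-level loss $C\log(q_{j+1}+2)/q_j$. The initial concatenation, with factor lengths $\ge Q^{a_{\rm seed}}$ and overlaps $\ge Q^{-(D_7+B_{10})}$, costs at most $C\log(Q+2)/Q^{a_{\rm seed}}\le C\log(q_{\ell+1}+2)/q_\ell$; you should say explicitly that this is absorbed into the $j=\ell$ term by enlarging $C_{11}$, as the paper does by doubling that term in $\widetilde\Omega_j$. The series is then summed using $q_{j+1}\le\kappa^{-1}q_j^\tau$, Fibonacci growth, and monotonicity of $\log(t+2)/t$, exactly as in the paper.
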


We next realize these real return data on complex neighborhoods.  For
the next two results, fix \(E\in\mathcal Y_Q^{(s)}\) and use the
bases of Proposition~\ref{fr:prop:Young-CF-reinduction-exclusion} with
the deficit in Corollary~\ref{fr:cor:Young-CF-rate-loss}.  Put
\[
\begin{aligned}
 K_j&:=K_{q_j}(E),&\qquad S_j&:=S_{K_j},\\
 R_{j,\mathcal B}&:=r_{K_j}^+|_{\mathcal B},&
 m_j:=\underline R_j&:=\min_{\mathcal B\in\mathcal P_{K_j}}
 R_{j,\mathcal B},\\
 \chi&:=g_Q:=\tfrac12(\gamma-\Omega_Q)>0.
\end{aligned}
\]
Then \ref{fr:def:Young-2s-regular}\textup{(\ref{item:regular-returns})} gives \(m_j>q_{j+1}\to\infty\).  We use
\(\operatorname{DivAdm}(P;h,d,\kappa)\) for the admissibility class
in Definition~\ref{def:moving-divisors-persistent-total}.

\begin{proposition}
\label{fr:prop:Young-CF-holomorphic-realization}
For these fixed data, there are heights \(0<4h_j<\varrho_Q\), numbers
\[
 D_j\ge4,\qquad \mathscr A_j\ge2,\qquad
 d_j>0,\qquad 0<\kappa_j\le1,
\]
and level-\(j\) first-return zero-divisor data with the following
properties.
\begin{enumerate}
\item The data are admissible:
\begin{equation}\label{fr:eq:CF-holomorphic-admissibility}
 \mathscr D_j\in
 \operatorname{DivAdm}(2s;h_j,d_j,\kappa_j),\qquad j\ge\ell.
\end{equation}
Their distinguished roots are precisely those in \ref{fr:def:Young-2s-regular}\textup{(\ref{item:regular-zeros})}:
\[
 \nu_j=2s,\qquad P_{j,a}=1,\qquad
 c_{j,a,1}=r_{j,a},\qquad 1\le a\le2s.
\]
There are no other atomwise real zeros, and \(P_{\rm loc}=2s\).
\item The normal-form data satisfy
\begin{align}
 &\sup_{|y|\le h_j}\operatorname*{ess\,sup}_{x\in K_j}
 \bigl(\|U_j(x,y)\|+\|U_j(x,y)^{-1}\|
 +\|V_j(x,y)\|+\|V_j(x,y)^{-1}\|\bigr)
 \le D_j,
 \label{fr:eq:CF-holomorphic-frame-bound}\\
 &\operatorname*{ess\,inf}_{x\in K_j}|\Lambda_j(x,y)|
 \ge e^{\chi m_j},\qquad |y|\le h_j,
 \label{fr:eq:CF-holomorphic-expansion}
\end{align}
together with the three estimates in
\eqref{eq:moving-pair-logarithmic-bounds} for \(h_j\) and
\(\mathscr A_j\).
\item The constants satisfy
\begin{equation}\label{fr:eq:CF-holomorphic-subexponential-data}
 \frac{\log D_j+\log\mathscr A_j+\log^+\kappa_j^{-1}
 +\log^+d_j^{-1}+\log^+h_j^{-1}}{m_j}
 \longrightarrow0.
\end{equation}
\end{enumerate}
This construction applies to every \(E\in\mathcal Y_Q^{(s)}\), with
no further energy exclusion.  The initial complex height, the sequence
\((h_j)\), and the rate of convergence may depend on
\((Q,V_Q,\varrho_Q,E,s)\).
\end{proposition}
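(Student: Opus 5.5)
We will build the holomorphic data level by level from the real first-return regular data of Definition~\ref{fr:def:Young-2s-regular}. At level \(j\), the two-sided return-time partition \(\mathcal P_{K_j}\) is finite by Lemma~\ref{fr:lem:finite-common-return-partition}, and on each atom \(\mathcal B\) the return matrix is a single fixed word \(A^{V_Q}_{R_{j,\mathcal B}}\). This word is entire in the phase on the strip of \(V_Q\). For each atom we will produce the algebraic SVD of Definition~\ref{def:complex-orthogonal-rank-one-normal-form} using the holomorphic Riesz projections of the bilinear Gram matrices. Concretely, we first write the real block in standard SVD form. We then normalize by the real data, \(\widehat B:=\Lambda^{-1}R_{-u}BR_{\pi/2-s}\), so that at real points \(\widehat B=\operatorname{diag}(1,\lambda^{-2})\). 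For \(z\) in a thin rectangle \(\mathcal B[4h_j]\), the bound \(\|\widehat B(z)-e_1e_1^{\mathsf T}\|\le1/10\) follows from a Cauchy/mean-value estimate in \(y\). Here we use the real derivative bounds \((R+2)^{M_*}\) of Definition~\ref{fr:def:Young-2s-regular}\textup{(\ref{item:regular-expansion})} and choose \(h_j:=c\,(m_j+2)^{-M_*-2}\). Criterion~\ref{crit:rank-one-spectral-separation} together with formulas \eqref{svd:eq:complex-signed-axis-from-projector} then gives real-symmetric holomorphic \(U_j,V_j\in{\rm SO}(2,\mathbb C)\) and \(\Lambda_j\), with \(D_j\le C\). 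Finally, the factorization \eqref{eq:moving-divisor-rank-one-normal-form} follows by Corollary~\ref{B:cor:finite-two-state-frames}\textup{(iii)} with \(\det=1\). Gluing: the incoming germ \(U_j(S_{j,\mathcal B}z)\) is defined as the output frame of the word on \(\mathcal B\), which matches the fixed-word convention. This is what makes \(H_j=v_1^{\mathsf T}u_1\) equal to the holomorphic extension of \(\cos G_{K_j,\mathcal B}\).

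Expansion \eqref{fr:eq:CF-holomorphic-expansion} will follow from \(|\Lambda_j(z)|\ge\tfrac12\lambda(x)\ge\tfrac12e^{(\gamma-\Omega_Q)R_{j,\mathcal B}}\ge e^{\chi m_j}\), using \(\chi=\tfrac12(\gamma-\Omega_Q)\) and Corollary~\ref{fr:cor:Young-CF-rate-loss}.

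For the divisor conditions we take \(P_{j,a}=1\) and \(c_{j,a,1}=r_{j,a}\). On a distinguished atom we apply Lemma~\ref{lem:real-cluster-complex-divisor} with \(P=1\), \(\mu_\pm=q_{j_0}^{\mp D_{13}}\) from \eqref{fr:eq:Young-real-simple-factorization}, and \(J_1,J_2\) bounded by a Cauchy estimate on \(\mathcal B[4h_j]\). These bounds are \(\le e^{C\log m_j}\) because \(|H_j|\le D_j^2\) holomorphically. After shrinking \(h_j\) polynomially in \(m_j\), this gives \(\mathfrak a_{j,a}\) bounded below and \(|\partial_z^2\log\mathfrak a_{j,a}|\le\mathscr A_j\) with \(\mathscr A_j\) polynomial in \(m_j\). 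The boundary separation \(d_j=q_j^{-B_{11}}\) comes from \eqref{fr:eq:Young-real-root-clearance}. On nondistinguished atoms the real lower bound \(q_{j+1}^{-D_{13}}\), combined with the \(C^1\) bound, persists to \(|y|\le h_j\) after a further shrink of \(h_j\). This yields \(\kappa_j=\tfrac12q_{j+1}^{-D_{13}}\) and the third bound in \eqref{eq:moving-pair-logarithmic-bounds} by Cauchy again. No new real zeros appear, since Definition~\ref{fr:def:Young-2s-regular}\textup{(\ref{item:regular-zeros})} lists all real zeros.

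Subexponentiality \eqref{fr:eq:CF-holomorphic-subexponential-data} then reduces to \(\log q_{j+1}=o(m_j)\). This holds because \(m_j>q_{j+1}\) by Definition~\ref{fr:def:Young-2s-regular}\textup{(\ref{item:regular-returns})}, and all other quantities are polynomial in \(m_j\) or \(q_{j+1}\), with \(q_{j+1}\le\kappa^{-1}q_j^\tau\). We also need \(4h_j<\varrho_Q\), which holds for large \(j\); the finitely many remaining levels are handled by an initial small height, which is allowed to depend on \(Q,V_Q,\varrho_Q\).

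The main obstacle is the first step: passing from real derivative bounds on the exponentially large word to a uniform complex neighborhood on which \(\widehat B\) stays near \(e_1e_1^{\mathsf T}\). The real bounds control only finitely many \(x\)-derivatives, not \(y\)-growth. We would handle this by Taylor-expanding in \(y\) up to order \(r_*\) using analyticity, bounding the remainder through the relative matrix derivative bound \(\lambda^{-1}\|\partial^rB\|\le C_0(R+2)^{M_*}\) at order \(r_*\) together with a crude Cauchy bound \(e^{CR}\) at higher order. If this crude bound is too large, we would instead take \(h_j=e^{-CR}\)-type heights on the top atoms. Such heights would fail the subexponential condition, however, so we expect to need the polynomial Taylor bound with \(h_j\sim m_j^{-M_*-2}\).
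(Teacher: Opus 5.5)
There is a genuine gap, and it sits exactly where you locate the main obstacle. Nothing in your argument gives complex control of the level-$j$ return words on a height $h_j$ with $\log h_j^{-1}=o(m_j)$, uniformly over all atoms of $\mathcal P_{K_j}$.

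Definition~\ref{fr:def:Young-2s-regular}\textup{(\ref{item:regular-expansion})} provides only real bounds on $\partial_x^r$, for $r\le r_*$ and $x\in\mathcal B$. A mean-value estimate in $y$ needs $\partial_x\widehat B$ at the complex points $x+ity$, which is what you are trying to prove, so it does not close on its own.

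The fallback you describe fails quantitatively. Put $C_V:=\log\sup_{|\operatorname{Im}z|\le\varrho_Q}\|A_E^{V_Q}(z)\|$; this exceeds $\gamma$ here. The crude Cauchy remainder of order $r_*+1$, divided by $\lambda\ge e^{(\gamma-\Omega_Q)R}$, is of size $e^{(C_V-\gamma+\Omega_Q)R}(h/\varrho_Q)^{r_*+1}$. That is small only when $h\le e^{-cR}$. Since $R\ge m_j$, this forces $\log h_j^{-1}\gtrsim m_j$ and violates \eqref{fr:eq:CF-holomorphic-subexponential-data}, as you note yourself.

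A two-constants (harmonic measure) argument could turn the real bounds plus the crude strip bound into polynomial heights in the interior of long atoms. But it needs real control on a neighbourhood of size about $\overline R_j^{-M_*-1}$ around every point of $\overline{\mathcal B}$. Near the endpoints of short atoms the definition supplies no such control. For a fixed retained $E$, the exclusions only keep return boundaries away from the roots, not from each other, so atoms may be arbitrarily short.

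Everything downstream in your proposal rests on this missing step. That includes $|\Lambda_j(z)|\ge\frac12\lambda(x)$, $D_j\le C$, the Cauchy bounds for $J_1,J_2$ in Lemma~\ref{lem:real-cluster-complex-divisor}, and the complex lower bound on nondistinguished atoms.

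The missing idea is to propagate holomorphy through the return hierarchy rather than rebuild it independently at each level.

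\begin{itemize}
\item The paper uses compactness only once, at level $\ell$. There are finitely many atoms and $\Delta\ne0$ on their closures, which gives a height $h_\ell^{\rm aux}$ depending on $(Q,V_Q,\varrho_Q,E)$. It enters only through the fixed constant $J_2$.
\item For $j\ge\ell$, each level-$(j+1)$ word is written as the exact chain \eqref{fr:eq:CF-exact-reinduction} of level-$j$ words. Their complex normal forms are already available on $\mathcal B_t[4h_j^{\rm aux}]$ by \eqref{fr:eq:CF-holomorphic-shifted-domain-inclusion}.
\item The internal-node exclusion \eqref{fr:eq:CF-internal-node-exclusion} gives real connectors with $|H_{j,t}|\ge q_{j+1}^{-D_{13}}$.
\item Lemmas~\ref{fr:lem:graded-holomorphic-product-inputs}--\ref{fr:lem:graded-holomorphic-jet-propagation}, built on Lemma~\ref{B:lem:differentiated-finite-two-state-product}, then give the new frames, the multiplier bound $e^{\Gamma_{j+1}R}$, and the complex jets. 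The strip-safety condition \eqref{fr:eq:graded-holomorphic-strip-safety-input} costs only a factor $(q_{j+1}+2)^{-D_{14}}$ in height per level. The graded ledger $\mathcal K_{j,r}\le e^{J_1+A_r\mathcal S_j}$ is maintained throughout.
\item Consequently $\log(h_j^{\rm aux})^{-1}\le J_2+C(\log q_j)^2=o(m_j)$, independently of atom lengths.
\end{itemize}

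Only after this are the complex divisors, $\kappa_j$, $d_j$ and $\mathscr A_j$ built, essentially along the lines you sketch.
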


For the data just constructed, the preceding bounds also hold at
exponentially small imaginary shifts.

\begin{corollary}
\label{fr:cor:Young-CF-acceleration-data}
The holomorphic data have the following properties.
\begin{enumerate}
\item For every fixed \(\beta>0\), eventually
\(e^{-\beta m_j}\le h_j\), and
\begin{equation}\label{fr:eq:CF-holomorphic-vertical-variation}
 \sup_{0\le y\le e^{-\beta m_j}}
 \int_{K_j}
 \left|\partial_y\log|\Lambda_j(x+iy,E)|\right|\,dx
 =o_{j\to\infty}(1).
\end{equation}
\item With
\begin{equation}\label{fr:eq:CF-holomorphic-beta-choice}
 \beta_+=\frac{\chi}{16s},\qquad
 \beta_-=\frac{\chi}{8s},
\end{equation}
there is
\(j_{\rm hol}=j_{\rm hol}(Q,V_Q,\varrho_Q,E,s;\beta_+,\beta_-)\ge\ell\)
such that the data \((\mathscr D_j)_{j\ge j_{\rm hol}}\) satisfy
all hypotheses of Theorem~\ref{thm:persistent-divisor-acceleration}.
\end{enumerate}
The threshold and the little-\(o\) term for a fixed \(\beta\) may
depend on \((Q,V_Q,\varrho_Q,E,s,\beta)\).
\end{corollary}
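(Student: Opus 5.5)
The plan is to verify the hypotheses of Theorem~\ref{thm:persistent-divisor-acceleration} directly from Proposition~\ref{fr:prop:Young-CF-holomorphic-realization}, treating the two parts of Corollary~\ref{fr:cor:Young-CF-acceleration-data} in order. For part (i), I would first show that \(h_j\ge e^{-\beta m_j}\) eventually. This follows from \eqref{fr:eq:CF-holomorphic-subexponential-data}, which gives \(\log^+h_j^{-1}=o(m_j)\).

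For the vertical variation \eqref{fr:eq:CF-holomorphic-vertical-variation}, I would use real symmetry of \(\Lambda_j\) on each atom. Since \(\Lambda_j\) is holomorphic, real-symmetric and zero-free on \(\mathcal B[h_j]\) (by \eqref{fr:eq:CF-holomorphic-expansion}), choose a holomorphic logarithm \(\ell_j\) there. Then \(\partial_y\log|\Lambda_j(x+iy)|=-\operatorname{Im}\ell_j'(x+iy)\), and this vanishes at \(y=0\) because \(\ell_j'\) is real on the real axis. This is the same device as \eqref{eq:moving-pair-regular-factor-vertical-bound}, and it gives
\[
 |\partial_y\log|\Lambda_j(x+iy)||\le y\sup_{\mathcal B[h_j/2]}|\ell_j''|.
\]
I would bound \(\ell_j''\) on the half-height rectangle by Cauchy estimates. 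The real part \(\log|\Lambda_j|\) is bounded above by \(\log D_j^2+\log\sup\|B_{j,\mathcal B}\|\le C R_{j,\mathcal B}+2\log D_j\), using that the strip norm of \(A_E^V\) is bounded on a fixed strip. It is bounded below by \(\chi m_j\). A Borel--Carathéodory estimate then yields \(|\ell_j''|\le C h_j^{-2}(R_{j,\mathcal B}+\log D_j)\) on \(\mathcal B[h_j/2]\). Integrating over \(K_j\) with weight \(|\mathcal B|\), and using \(\sum_{\mathcal B}R_{j,\mathcal B}|\mathcal B|=1\) by Kac (Lemma~\ref{fr:lem:regular-phases-induced-map}), the left side of \eqref{fr:eq:CF-holomorphic-vertical-variation} is at most
\[
 C\,e^{-\beta m_j}h_j^{-2}(1+\log D_j)=e^{-\beta m_j+o(m_j)}\to0 .
\]

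\textbf{Main obstacle.} The delicate point is the upper bound on \(\log|\Lambda_j|\). The Cauchy step needs an a priori bound on \(|\Lambda_j|\) in terms of the return length, but the real part alone is not controlled by the oscillation. I would control it through the normal form \(\Lambda_j=e_1^{\mathsf T}U_j(S z)^{-1}B_{j,\mathcal B}(z)V_j(z)^{-\mathsf T}e_1\), together with the frame bound \eqref{fr:eq:CF-holomorphic-frame-bound}. This gives \(|\Lambda_j|\le D_j^2\|B\|\), so the bound is linear in \(R_{j,\mathcal B}\) and the Kac normalization closes the estimate.

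For part (ii), I would check the hypotheses of Theorem~\ref{thm:persistent-divisor-acceleration} one at a time.
\begin{itemize}
\item Admissibility with \(P_{\rm loc}=2s\), the frame bound \eqref{eq:moving-criterion-frame-conditioning}, the subexponential condition \eqref{eq:moving-criterion-subexponential-data}, the expansion \eqref{eq:moving-criterion-expansion} and the logarithmic bounds \eqref{eq:moving-pair-logarithmic-bounds} are exactly parts (i)--(iii) of Proposition~\ref{fr:prop:Young-CF-holomorphic-realization}, with the same \(\chi\).
\item The budget \eqref{eq:moving-criterion-beta-budget} holds for the choice \eqref{fr:eq:CF-holomorphic-beta-choice}. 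Clearly \(\beta_+<\beta_-\), and
\[
 2\chi-2(2s)\beta_- -\beta_+=2\chi-\tfrac{\chi}{2}-\tfrac{\chi}{16s}>0 .
\]
\item Condition \eqref{eq:moving-criterion-modulus-smallness} is part (i) applied with \(\beta=\beta_-\). The supremum over \([y_j^-,y_j^+]\) is dominated by the supremum over \([0,e^{-\beta_+ m_j}]\), and part (i) holds for that \(\beta\) as well.
\end{itemize}
Finally, I would take \(j_{\rm hol}\) large enough that \(y_j^+\le h_j\) and all the eventual statements above hold.
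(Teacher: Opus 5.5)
Your part (ii), the eventual inequality $e^{-\beta m_j}\le h_j$, and the reduction $|\partial_y\log|\Lambda_j(x+iy)||=|\operatorname{Im}(\ell_j'(x+iy)-\ell_j'(x))|\le y\sup|\partial_z^2\log\Lambda_j|$ along the vertical segment all match the paper. For $\mathcal V_j$ you need $\beta=\beta_+$, as you end up saying.

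The gap is the step that bounds $\partial_z^2\log\Lambda_j$ by Cauchy/Borel--Carath\'eodory estimates from $\chi m_j\le\log|\Lambda_j|\le CR_{j,\mathcal B}+2\log D_j$. Those bounds hold only on the closed atom rectangle $\overline{\mathcal B}+i[-h_j,h_j]$. For $x$ within $h_j/2$ of an endpoint of $\mathcal B$, the disc of radius $h_j/2$ about $x+it$ leaves this rectangle. Outside it, $\Lambda_j$ is only the continuation of this atom's fixed-word germ, on an unquantified neighbourhood, with no expansion or frame bound. The adjacent atom carries a different germ, so you cannot borrow its bounds. Nothing in the construction keeps atoms of $\mathcal P_{K_j}$ longer than $h_j$ either.

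This is not a technicality, because $L^\infty$ bounds on the rectangle do not control $\int|\partial_y\log|\Lambda_j||\,dx$. Let $x_0$ be the left endpoint of $\mathcal B$ and $\eta\ll\epsilon<e^{-\beta m_j}$. Let $u-\chi m_j$ be $M$ times the harmonic measure, in the half-plane $\{x>x_0-\eta\}$, of the segment $\{x_0-\eta\}\times[-\epsilon,\epsilon]$. Then $u$ is even in $y$, harmonic near the closed rectangle, and obeys the same two-sided bounds. Yet $\int_{\mathcal B}|\partial_y u(x,\epsilon)|\,dx$ is of order $M\log(\epsilon/\eta)$, which is unbounded as $\eta\to0$. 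So the stated conclusions of Proposition~\ref{fr:prop:Young-CF-holomorphic-realization} do not by themselves imply \eqref{fr:eq:CF-holomorphic-vertical-variation}. The upper bound on $\log|\Lambda_j|$, which you single out as the main obstacle, is not where the difficulty lies.

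What you need is derivative control valid up to the vertical sides of each atom rectangle, and the construction supplies it. The holomorphic induction in Step~(ix) maintains the graded jet bound $\mathcal K_{j,2}$: $|\partial_z^2\log\Lambda_{j,\mathcal B}|\le(R_{j,\mathcal B}+2)\mathcal K_{j,2}$ on all of $\mathcal B[4h_j^{\rm aux}]\supset\mathcal B[h_j]$. This bound is propagated by the product and chain rules of Lemma~\ref{fr:lem:graded-holomorphic-jet-propagation}, not by Cauchy estimates, so it does not depend on atom width. Moreover, \eqref{fr:eq:CF-holomorphic-jet-subexponential} gives $\log\mathcal K_{j,2}=o(m_j)$.

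Inserting this into your segment bound gives $|\partial_y\log|\Lambda_j(x+iy)||\le y(R_{j,\mathcal B}+2)\mathcal K_{j,2}$. Integrate using your Kac identity $\sum_{\mathcal B}R_{j,\mathcal B}|\mathcal B|=1$, or the paper's $\overline R_j\le C_{20}q_j^{B_{10}\tau}$. The integral is at most $3\mathcal K_{j,2}e^{-\beta m_j}=\exp\{-\beta m_j+o(m_j)\}$. With this replacement, which is the paper's argument, the rest of your proof goes through.
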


We now fix the structural data for holomorphic reinduction: an integer \(r_*\ge3\), numbers
\(a_L,a_R,D>0\), \(C_{16}\ge1\), \(C_{17}>16\), and \(g>0\).  Let
\(B_{18}:=C_2(r_*)\) abbreviate the order-\(r_*\) exponent in
Lemma~\ref{B:lem:differentiated-finite-two-state-product}, and use
its smallness constant \(c_1(r_*)\).  This choice also applies to
Corollary~\ref{B:cor:finite-two-state-frames}.

The constants \(d_1,d_2,C_{13}\)
in the conditions below will be chosen in
Lemma~\ref{fr:lem:graded-holomorphic-product-inputs}, using only
these structural data and before any of the following input data.
Let \(q\ge2\), let \(I\subset\mathbb R\) be an interval, and
put \(\Omega^+:=I+i(-4h^+,4h^+)\), where \(h^+>0\).  Suppose that on a
neighborhood of \(\overline{\Omega^+}\) there is an exact chain
\begin{equation}\label{fr:eq:graded-holomorphic-chain-hypotheses}
\begin{aligned}
 B^+&=B_{L-1}\cdots B_0,
 &\qquad R&:=\sum_{t=0}^{L-1}R_t,\\
 R_t&>q,
 & L&\le C_{16}(q+2)^{a_L},\\
 \overline R&:=\max_tR_t
 & &\le C_{16}(q+2)^{a_R}.
\end{aligned}
\end{equation}
Assume that every child has a real-symmetric holomorphic normal form
\begin{equation}\label{fr:eq:graded-holomorphic-child-normal-form}
\begin{aligned}
 B_t&=U_t^+
 \begin{pmatrix}\Lambda_t&0\\0&\Lambda_t^{-1}\end{pmatrix}
 V_t^{\mathsf T},\\
 U_t^+,V_t&\in{\rm SO}(2,\mathbb C),\\
 |\Lambda_t|&\ge e^{\Gamma R_t},\qquad \Gamma\ge g,
\end{aligned}
\end{equation}
where \(\Lambda_t\) is nonvanishing and the sum of the norms of
\(U_t^+,V_t\) and their inverses is at most \(C_{17}\).  Put
\[
 \Pi_t^{\rm in}:=V_te_1e_1^{\mathsf T}V_t^{\mathsf T},
 \qquad
 \Pi_t^{\rm out}:=U_t^+e_1e_1^{\mathsf T}(U_t^+)^{\mathsf T}.
\]
For numbers \(\mathcal K_r\ge2\), assume
\begin{equation}\label{fr:eq:graded-holomorphic-child-jets}
\begin{aligned}
 \max_t\sup_{z\in\Omega^+}\Bigl(&
 \|\partial_z^r\Pi_t^{\rm in}\|
 +\|\partial_z^r\Pi_t^{\rm out}\|\\
 &+(R_t+2)^{-1}|\partial_z^r\log\Lambda_t|\Bigr)
 \le\mathcal K_r,\qquad1\le r\le r_*.
\end{aligned}
\end{equation}
For \(1\le t<L\), write
\[
 W_t:=V_t^{\mathsf T}U_{t-1}^+
 =\begin{pmatrix}H_t&-\Theta_t\\
                  \Theta_t&H_t\end{pmatrix}
 \in{\rm SO}(2,\mathbb C)
\]
and suppose that \(|H_t(x)|\ge q^{-D}\) for \(x\in I\).  The
strip-width condition is
\begin{equation}\label{fr:eq:graded-holomorphic-strip-safety-input}
 8C_{13}h^+q^D(1+\mathcal K_1)^{d_2}\le1.
\end{equation}
Let \(A_1<\cdots<A_{r_*}\), \(J_1\ge1\), and
\(\mathcal S\ge1\) satisfy
\begin{equation}\label{fr:eq:graded-holomorphic-input-ledger}
 \mathcal K_r\le e^{J_1+A_r\mathcal S},\qquad1\le r\le r_*,
\end{equation}
and the explicit product budget
\begin{equation}\label{fr:eq:graded-holomorphic-product-budget}
 d_2(J_1+A_{r_*}\mathcal S)
 +C_{13}\log(q+2)\le\Gamma q.
\end{equation}

Put
\[
 \epsilon^{\rm prod}:=e^{-2\Gamma q},\qquad
 \kappa^{\rm prod}:=\tfrac12q^{-D},\qquad
 K_0^{\rm prod}:=1+C_{17}^2,
\]
and, for \(1\le r\le r_*\),
\begin{equation}\label{fr:eq:graded-holomorphic-product-jets}
 K_r^{\rm prod}:=C_{13}(\overline R+2)
 \left(1+\sum_{\nu=1}^r\mathcal K_\nu\right)^{d_1}.
\end{equation}
Also put
\[
 \mathcal L^+:=\prod_{t=0}^{L-1}\Lambda_t
                 \prod_{t=1}^{L-1}H_t.
\]
The next three lemmas use a common collection of integers
\(d_1,d_2\ge1\), with
\begin{equation}\label{fr:eq:graded-holomorphic-degree-hierarchy}
 d_2\ge B_{18}d_1,
\end{equation}
and constants
\[
 C_{13}\ge1,\qquad c_9>0,\qquad q_{\rm hol}\ge2,
\]
and polynomials \(\mathcal P_r\),
\(1\le r\le r_*\), satisfying
\[
 \mathcal P_r\in\mathbb R_{\ge0}[X_1,\ldots,X_{r-1}],\qquad
 \deg\mathcal P_r\le d_2,
\]
where \(\mathcal P_1\) is constant.  The integer \(d_2\) bounds
the degrees after composition in
the product smallness, Riesz projection, frame, multiplier, and endpoint
conjugation estimates; \(d_1\) bounds the degrees in axis
selection and connector differentiation.  All these objects depend only on the displayed structural data; in
particular,
\[
 C_{13}=C_{13}(r_*,a_L,a_R,D,C_{16},C_{17},g).
\]
A common upper bound is used for the coefficients in the input and
output estimates and for the structural exponent in the product budget.

\begin{lemma}
\label{fr:lem:graded-holomorphic-product-inputs}
The common collection can be chosen so that this lemma and
Lemmas~\ref{fr:lem:graded-holomorphic-reinduction}
and~\ref{fr:lem:graded-holomorphic-jet-propagation} hold.
For every chain in the preceding setting satisfying
\eqref{fr:eq:graded-holomorphic-chain-hypotheses}--\eqref{fr:eq:graded-holomorphic-product-budget}
and \(q\ge q_{\rm hol}\), the following estimates hold.
\begin{enumerate}
\item The overlap coefficients satisfy
\begin{equation}\label{fr:eq:graded-holomorphic-connector-output}
 |H_t(z)|\ge\frac12q^{-D},\qquad
 z\in\Omega^+,\qquad1\le t<L.
\end{equation}
\item The normalized-product smallness bound holds with the margin
\begin{equation}\label{fr:eq:graded-holomorphic-product-smallness}
\begin{aligned}
 &\epsilon^{\rm prod}L^{B_{18}}
 (\kappa^{\rm prod})^{-B_{18}}
 \left(2+K_0^{\rm prod}+\sum_{r=1}^{r_*}K_r^{\rm prod}\right)^{B_{18}}\\
 &\qquad\le e^{-\Gamma q}\le e^{-gq}\le c_1(r_*).
\end{aligned}
\end{equation}
\end{enumerate}
The connector assertions are vacuous when \(L=1\).
\end{lemma}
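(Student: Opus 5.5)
The proof will choose the structural constants in a definite order and then check the two displayed conclusions directly. First fix \(d_1\) large enough to dominate the polynomial degrees arising from axis selection \eqref{svd:eq:complex-signed-axis-from-projector} and connector differentiation. Then set \(d_2:=\max\{B_{18}d_1,\,d_1+2\}\), so that \eqref{fr:eq:graded-holomorphic-degree-hierarchy} holds. Next choose \(C_{13}\) larger than every structural coefficient. Finally take \(q_{\rm hol}\) large.

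For the connector bound \eqref{fr:eq:graded-holomorphic-connector-output}, the plan is as follows.
\begin{enumerate}
\item Write \(H_t=e_1^{\mathsf T}V_t^{\mathsf T}U_{t-1}^+e_1\). Express the frame columns through the projections \(\Pi_t^{\rm in}\) and \(\Pi_{t-1}^{\rm out}\), using the frame bound \(C_{17}\). This gives
\[
|\partial_zH_t|\le C(1+\mathcal K_1)^{d_2}.
\]
\item On the real interval \(I\) we have \(|H_t|\ge q^{-D}\). Moving a vertical distance at most \(4h^+\), the mean value inequality gives
\[
|H_t(x+iy)-H_t(x)|\le 4h^+C(1+\mathcal K_1)^{d_2}.
\]
\item By the strip condition \eqref{fr:eq:graded-holomorphic-strip-safety-input}, with \(C_{13}\ge C\), this difference is at most \(\tfrac12q^{-D}\). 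That yields \(|H_t|\ge\tfrac12 q^{-D}=\kappa^{\rm prod}\) on \(\Omega^+\).
\end{enumerate}

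For the smallness bound \eqref{fr:eq:graded-holomorphic-product-smallness}, bound each factor on its left-hand side.
\begin{itemize}
\item[] \(L^{B_{18}}\le C(q+2)^{a_LB_{18}}\).
\item[] \((\kappa^{\rm prod})^{-B_{18}}=2^{B_{18}}q^{DB_{18}}\).
\item[] By \eqref{fr:eq:graded-holomorphic-product-jets} and \eqref{fr:eq:graded-holomorphic-input-ledger},
\[
\sum_rK_r^{\rm prod}\le C_{13}r_*(\overline R+2)\bigl(1+r_*e^{J_1+A_{r_*}\mathcal S}\bigr)^{d_1}.
\]
Hence its \(B_{18}\)-th power is at most \(C(q+2)^{a_RB_{18}}e^{B_{18}d_1(J_1+A_{r_*}\mathcal S)}\).
\end{itemize}
The exponential factor is at most \(e^{d_2(J_1+A_{r_*}\mathcal S)}\), by the degree hierarchy. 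Multiplying by \(\epsilon^{\rm prod}=e^{-2\Gamma q}\), the left-hand side is at most
\[
\exp\bigl\{-2\Gamma q+d_2(J_1+A_{r_*}\mathcal S)+C'\log(q+2)\bigr\},
\]
where \(C'\) depends only on \(a_L,a_R,D,B_{18},C_{16},C_{17}\) and the absorbed constants. Choosing \(C_{13}\ge C'\), the product budget \eqref{fr:eq:graded-holomorphic-product-budget} bounds the exponent by \(-\Gamma q\). Then \(e^{-\Gamma q}\le e^{-gq}\) since \(\Gamma\ge g\), and \(e^{-gq}\le c_1(r_*)\) once \(q\ge q_{\rm hol}\ge g^{-1}\log c_1(r_*)^{-1}\).

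The main obstacle is the bookkeeping of the constants. The absolute constants hidden in \(K_0^{\rm prod}\), in the powers of \(r_*\), and in the \(2^{B_{18}}\) must be absorbed without circularity, so that \(C_{13}\) depends only on the structural data. This is why \(C_{13}\) is fixed after \(d_1,d_2\) but before any chain is given.

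The remaining work is to pick the polynomials \(\mathcal P_r\) and the constant \(c_9\) for the two subsequent lemmas. These only require enlarging \(d_2\), \(C_{13}\) and \(q_{\rm hol}\) further. Such enlargements preserve the two estimates above, because both arguments are monotone in \(C_{13}\) and \(d_2\): enlarging them only strengthens the hypotheses \eqref{fr:eq:graded-holomorphic-strip-safety-input} and \eqref{fr:eq:graded-holomorphic-product-budget}. When \(L=1\), the connector part is vacuous and \(L^{B_{18}}=1\).
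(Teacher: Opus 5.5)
Your proposal follows the paper's proof essentially step for step. It uses the same first-derivative bound $|\partial_zH_t|\le C(1+\mathcal K_1)^{d_1}\le C_{13}(1+\mathcal K_1)^{d_2}$, obtained from $\partial_zv=(\partial_z\Pi)v$ and the frame bound $C_{17}$. It uses the same integration along vertical segments against \eqref{fr:eq:graded-holomorphic-strip-safety-input}. And it uses the same factor-by-factor logarithmic bound on the left side of \eqref{fr:eq:graded-holomorphic-product-smallness}, closed by \eqref{fr:eq:graded-holomorphic-product-budget} and $d_2\ge B_{18}d_1$.

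One step needs repair. It is exactly the circularity you anticipate, but you attribute it to the wrong constants. $K_0^{\rm prod}$, the powers of $r_*$, and $2^{B_{18}}$ are harmless. The problem is that \eqref{fr:eq:graded-holomorphic-product-jets} builds $C_{13}$ itself into every $K_r^{\rm prod}$. The left side of \eqref{fr:eq:graded-holomorphic-product-smallness} therefore carries a factor $C_{13}^{B_{18}}$, so your $C'$ depends on $C_{13}$, and ``choose $C_{13}\ge C'$'' is circular as written.

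The paper handles this by writing the extra term as $C\log(2+C_{13})$ and requiring $q_{\rm hol}\ge C_{13}+2$. The term is then at most $C\log(q+2)$, and $C_{13}\ge2C$ suffices. Equivalently, $C'=C'_0+O(B_{18}\log C_{13})$ with $C'_0$ structural, so $C_{13}\ge C'$ holds once $C_{13}$ is large.

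The same point qualifies your closing remark. A later enlargement of $C_{13}$ does not merely strengthen hypotheses, because conclusion (ii) itself grows with $C_{13}$, so the absorption has to be rechecked. This is why the paper fixes all raw constants of the three lemmas first, and only then chooses $C_{13}$ and finally $q_{\rm hol}$.
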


Under the same hypotheses, the product admits an exact normal form.
The next two lemmas use the same frames and multiplier.

\begin{lemma}
\label{fr:lem:graded-holomorphic-reinduction}
There are real-symmetric holomorphic
\(U^+,V^+\in{\rm SO}(2,\mathbb C)\) and a nonvanishing \(\Lambda^+\)
with the following properties.
\begin{enumerate}
\item The product has the normal form
\begin{equation}\label{fr:eq:graded-holomorphic-new-normal-form}
 B^+=U^+
 \begin{pmatrix}\Lambda^+&0\\0&(\Lambda^+)^{-1}\end{pmatrix}
 (V^+)^{\mathsf T}.
\end{equation}
\item For the leading multiplier \(\mathcal L^+\),
\begin{equation}\label{fr:eq:graded-holomorphic-multiplier-error}
 \left\|\log\frac{\Lambda^+}{\mathcal L^+}\right\|_{C^{r_*}(\Omega^+)}
 \le e^{-c_9q},
\end{equation}
where the logarithm is the branch normalized by the product lemma.
\item The frames satisfy
\begin{equation}\label{fr:eq:graded-holomorphic-frame-output}
 \|U^+-U_{L-1}^+\|_{C^{r_*}(\Omega^+)}
 +\|V^+-V_0\|_{C^{r_*}(\Omega^+)}
 \le e^{-c_9q},
\end{equation}
and the same bound holds for the inverse frames.

\item With
\begin{equation}\label{fr:eq:graded-holomorphic-rate-output}
 \Gamma^+:=\Gamma-(D+2)\frac{\log(q+2)}q,
\end{equation}
the new multiplier satisfies
\begin{equation}\label{fr:eq:graded-holomorphic-expansion-output}
 |\Lambda^+(z)|\ge e^{\Gamma^+R},\qquad z\in\Omega^+.
\end{equation}
\end{enumerate}
\end{lemma}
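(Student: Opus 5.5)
The plan is to reduce Lemma~\ref{fr:lem:graded-holomorphic-reinduction} to Lemma~\ref{B:lem:differentiated-finite-two-state-product} and Corollary~\ref{B:cor:finite-two-state-frames}, applied in the holomorphic setting on \(\Omega^+\). Lemma~\ref{fr:lem:graded-holomorphic-product-inputs} already supplies the needed connector lower bound and the smallness \eqref{fr:eq:graded-holomorphic-product-smallness}.

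\emph{Step 1: diagonal form.} Write the chain as
\[
 B^+=U_{L-1}^+\,\mathscr L_0\,\mathscr M_L\,V_0^{\mathsf T},
 \qquad \mathscr L_0:=\prod_{t=0}^{L-1}\Lambda_t,
 \qquad \mathscr M_L:=D_{L-1}W_{L-1}\cdots D_1W_1D_0,
\]
with \(D_t=\operatorname{diag}(1,\Lambda_t^{-2})\) and \(W_t=V_t^{\mathsf T}U_{t-1}^+\in{\rm SO}(2,\mathbb C)\). This is direct multiplication of \eqref{fr:eq:graded-holomorphic-child-normal-form}, using \(U^{\mathsf T}=U^{-1}\).

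\emph{Step 2: check the inputs.} With \(\epsilon_t=\Lambda_t^{-2}\), the input bounds are
\[
 |\epsilon_t|\le e^{-2\Gamma q}=\epsilon^{\rm prod},\qquad
 |H_t|\ge\kappa^{\rm prod}\ \text{(by \eqref{fr:eq:graded-holomorphic-connector-output})},\qquad
 |\Theta_t|\le\|W_t\|\le C_{17}^2\le K_0^{\rm prod}.
\]
The jets of \(H_t,\Theta_t\) are entries of \(W_t\). Their \(z\)-derivatives are expressed through the projection jets \eqref{fr:eq:graded-holomorphic-child-jets}, via the selection formula \eqref{svd:eq:complex-signed-axis-from-projector} for \(v_t=V_te_1\) and \(u_{t-1}=U_{t-1}^+e_1\). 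This is where the degree \(d_1\) enters. Also \(\partial^r\log\epsilon_t=-2\partial^r\log\Lambda_t\) is bounded by \(2(R_t+2)\mathcal K_r\). Both bounds are dominated by \(K_r^{\rm prod}\) from \eqref{fr:eq:graded-holomorphic-product-jets}, with \(\overline R\) absorbing \(R_t\). Hence \eqref{B:eq:finite-two-state-smallness} holds by \eqref{fr:eq:graded-holomorphic-product-smallness}.

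\emph{Step 3: read off the conclusions.} The corollary gives canonical frames \(U_M,V_M\) and multiplier \(\Lambda_M\) for \(M=\mathscr L_0\mathscr M_L\). Since \(\det B^+=1\) and hence \(\det M=1\), this is an algebraic SVD, real-symmetric by Corollary~\ref{B:cor:finite-two-state-frames}(iv). Set
\[
 U^+:=U_{L-1}^+U_M,\qquad V^+:=V_0V_M,\qquad \Lambda^+:=\Lambda_M,
\]
which gives \eqref{fr:eq:graded-holomorphic-new-normal-form}. The multiplier estimate in \eqref{B:eq:finite-two-state-projector-output} bounds \(\log(\Lambda^+/\mathcal L^+)\) in \(C^{r_*}\) by \(C_2\epsilon L^{C_2}\kappa^{-C_2}\mathcal B_r\). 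By \eqref{fr:eq:graded-holomorphic-product-smallness} and the budget \eqref{fr:eq:graded-holomorphic-product-budget} this is at most \(e^{-\Gamma q}\) times polynomial factors in \(K^{\rm prod}_r\). The budget was designed exactly so that these factors, of degree \(\le d_2\) in the \(\mathcal K_r\) via \eqref{fr:eq:graded-holomorphic-input-ledger}, cost at most \(e^{\Gamma q/2}\). This yields \(e^{-c_9q}\) with, say, \(c_9=g/4\), since \(\Gamma\ge g\).

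For the frames, write \(U^+-U_{L-1}^+=U_{L-1}^+(U_M-I_2)\) and apply Leibniz together with the child frame jets. Those jets are bounded by \(C_{17}\) at order \(0\) and polynomially in \(\mathcal K_r\) via Cauchy estimates on the slightly larger strip, which gives \eqref{fr:eq:graded-holomorphic-frame-output}. The inverse frames follow since \(U^{-1}=U^{\mathsf T}\).

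\emph{Step 4: expansion.} For the lower bound, estimate
\[
 |\Lambda^+|\ge\tfrac12|\mathcal L^+|\ge\tfrac12 e^{\Gamma R}(\tfrac12q^{-D})^{L-1}.
\]
Since each \(R_t>q\), we have \(L-1\le R/q\). Hence the loss is at most
\[
 \frac Rq\bigl(D\log q+\log 2\bigr)+\log 2\le (D+2)\frac{\log(q+2)}q\,R
\]
for \(q\ge q_{\rm hol}\), which is \eqref{fr:eq:graded-holomorphic-expansion-output}.

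\emph{Main obstacle.} The delicate point is the bookkeeping in Step 3. One must show that the polynomial growth of \(\mathcal B_r\) in \(K^{\rm prod}_r\), and hence in \(e^{J_1+A_r\mathcal S}\), is genuinely absorbed by \(e^{-\Gamma q}\) with a uniform rate \(c_9\). One must also check that the degree hierarchy \eqref{fr:eq:graded-holomorphic-degree-hierarchy} is enough for this. I would first fix \(d_2\) as the maximal composed degree, then verify the budget inequality term by term.
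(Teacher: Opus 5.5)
Your proposal follows the paper's proof essentially step for step: you strip off $U_{L-1}^+$ and $V_0$, apply Lemma~\ref{B:lem:differentiated-finite-two-state-product} and the determinant-one case of Corollary~\ref{B:cor:finite-two-state-frames} to $D_{L-1}W_{L-1}\cdots D_1W_1D_0$ with $\mathscr L_0=\prod_t\Lambda_t$, set $U^+=U_{L-1}^+U_M$, $V^+=V_0V_M$, $\Lambda^+=\Lambda_M$, and get the rate from $\log|\Lambda^+|\ge\Gamma R-(L-1)(D\log q+\log 2)-e^{-c_9q}$ with $L-1<R/q$. Two bookkeeping points need tightening. First, your split ``$e^{-\Gamma q}$ times factors costing $e^{\Gamma q/2}$'' only works if $d_2$ is at least twice the degree of the extra factors; the paper instead lets $d_2$ dominate the total composed degree, including $\mathcal B_r$, and works directly from $\epsilon^{\rm prod}=e^{-2\Gamma q}$, obtaining $e^{-2\Gamma q}e^{\Gamma q}$ up to powers of $q$. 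Second, the child-frame jets in your Leibniz step should be bounded through the axis recursion $\partial_z u=(\partial_z\Pi)u$, as in your Step~2, rather than by Cauchy estimates, for which no larger bounded domain is available.
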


The derivative estimates retain affine dependence on the highest-order
child jet.  For the same normal form, define
\(\Pi^{+,\rm in}\), \(\Pi^{+,\rm out}\), and \(\mathcal K_r^+\)
from \((U^+,V^+,\Lambda^+,R)\) as in \eqref{fr:eq:graded-holomorphic-child-jets}.

\begin{lemma}
\label{fr:lem:graded-holomorphic-jet-propagation}
For the common collection and the normal form of
Lemma~\ref{fr:lem:graded-holomorphic-reinduction},
\begin{equation}\label{fr:eq:graded-holomorphic-jet-output}
 \mathcal K_r^+\le(q+2)^{C_{13}}
 \left(\mathcal K_r+
 \mathcal P_r(\mathcal K_1,\ldots,\mathcal K_{r-1})\right),
 \qquad1\le r\le r_*.
\end{equation}
\end{lemma}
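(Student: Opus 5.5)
The plan is to read off every jet of the new normal form from the finite-product analysis of Lemma~\ref{B:lem:differentiated-finite-two-state-product} and Corollary~\ref{B:cor:finite-two-state-frames}, applied as in the proof of Lemma~\ref{fr:lem:graded-holomorphic-reinduction}. Remove the exterior frames: write $B^+=U_{L-1}^+\,\mathscr L_0\,\mathscr M_L\,V_0^{\mathsf T}$ with $\mathscr L_0=\prod_t\Lambda_t$, $\epsilon_t=\Lambda_t^{-2}$, and internal connectors $W_t$. Then
\[
 \Pi^{+,\rm in}=V_0\Pi_M^{\rm in}V_0^{\mathsf T},\qquad
 \Pi^{+,\rm out}=U_{L-1}^+\Pi_M^{\rm out}(U_{L-1}^+)^{\mathsf T},
\]
\[
 \log\Lambda^+=\sum_{t=0}^{L-1}\log\Lambda_t+\sum_{t=1}^{L-1}\log H_t+\log\nu_M^{1/2}.
\]
I would bound the order-$r$ derivative of each of the three pieces separately. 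In each case I aim for an expression that is affine in the order-$r$ child data, with a coefficient that is a power of $(q+2)$, plus a polynomial in the lower-order $\mathcal K_1,\ldots,\mathcal K_{r-1}$.

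\emph{Step 1: projections.} Leibniz's rule expands $\partial_z^r\Pi^{+,\rm in}$ into three kinds of terms. The term with all derivatives on $\Pi_0^{\rm in}=V_0e_1e_1^{\mathsf T}V_0^{\mathsf T}$ is bounded by $\mathcal K_r$. The mixed terms involve lower derivatives of $V_0$, which are polynomial in $\mathcal K_1,\ldots,\mathcal K_{r-1}$ via the axis formula \eqref{svd:eq:complex-signed-axis-from-projector}, times derivatives of $\Pi_M^{\rm in}-e_1e_1^{\mathsf T}$. The remaining term is $\partial^r(\Pi_M^{\rm in}-e_1e_1^{\mathsf T})$ itself. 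By \eqref{B:eq:finite-two-state-projector-output} and the affine highest-order clause of Lemma~\ref{B:lem:differentiated-finite-two-state-product}, this last term is at most
\[
 C_2\,\epsilon^{\rm prod}L^{C_2}(\kappa^{\rm prod})^{-C_2}\bigl(1+K_r^{\rm prod}+\mathcal P_r(K_1^{\rm prod},\ldots,K_{r-1}^{\rm prod})\bigr),
\]
and $K_r^{\rm prod}$ enters only linearly. Here $K_r^{\rm prod}$ is given by \eqref{fr:eq:graded-holomorphic-product-jets}, and the order-$r$ jets of $H_t,\Theta_t,\log\epsilon_t$ are affine in the order-$r$ child jets: differentiate $W_t=V_t^{\mathsf T}U_{t-1}^+$ and use $\partial^r\log\epsilon_t=-2\partial^r\log\Lambda_t$, with $|\partial^r\log\Lambda_t|\le(\overline R+2)\mathcal K_r$. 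The exponentially small prefactor is controlled by \eqref{fr:eq:graded-holomorphic-product-smallness}, and the product budget \eqref{fr:eq:graded-holomorphic-product-budget} absorbs the polynomial growth of the lower-order products through \eqref{fr:eq:graded-holomorphic-input-ledger}. The output projection is treated identically.

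\emph{Step 2: multiplier.} For the first sum, $\sum_t|\partial^r\log\Lambda_t|\le\mathcal K_r\sum_t(R_t+2)\le 3R\,\mathcal K_r$, since $R_t>q\ge2$; dividing by $R+2$ gives a bounded multiple of $\mathcal K_r$. For the connector sum, Lemma~\ref{fr:lem:graded-holomorphic-product-inputs} gives $|H_t|\ge\frac12q^{-D}$ on $\Omega^+$. Fa\`a di Bruno then bounds $\partial^r\log H_t$ by $q^{rD}$ times a term linear in the order-$r$ jets of $V_t$ and $U_{t-1}^+$, plus a polynomial in lower jets. The frame jets come from the projections with degree $d_1$. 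Summing over $L\le C_{16}(q+2)^{a_L}$ connectors and dividing by $R+2\ge q$ yields $(q+2)^{C_{13}}(\mathcal K_r+\mathcal P_r)$ once $C_{13}$ exceeds $a_L+rD$ plus the degree losses. The correction $\log\nu_M^{1/2}$ is handled by the third line of \eqref{B:eq:finite-two-state-projector-output}, exactly as in Step~1.

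\emph{Main obstacle.} The delicate point is keeping the highest order affine with coefficient only a power of $(q+2)$. The order-$r$ child jets must never appear multiplied by other order-$r$ jets, and must never be multiplied by powers of $\mathcal K_1$ that cannot be absorbed into the coefficient $(q+2)^{C_{13}}$. I would handle this by invoking part~(iii) of Lemma~\ref{B:lem:differentiated-finite-two-state-product} and the analogous clause of Corollary~\ref{B:cor:finite-two-state-frames}, which bound the linear and constant parts separately. The constant part is then put into $\mathcal P_r$ with degree at most $d_2$, using \eqref{fr:eq:graded-holomorphic-degree-hierarchy}. In the linear part, the coefficient carries the factor $\epsilon^{\rm prod}$, which beats every polynomial factor in $\mathcal K_1,\ldots,\mathcal K_{r-1}$ via \eqref{fr:eq:graded-holomorphic-product-budget}. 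The exception is the trivial term, which has coefficient one or $q^{O(1)}$. All constants are fixed in terms of the structural data before $q_{\rm hol}$, which completes \eqref{fr:eq:graded-holomorphic-jet-output}.
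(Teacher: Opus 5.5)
Your proposal is correct and follows the paper's route: you remove the exterior frames, apply Lemma~\ref{B:lem:differentiated-finite-two-state-product} and Corollary~\ref{B:cor:finite-two-state-frames} to the middle product with input jets that are affine in the child jets, recover $\Pi^{+,\rm in}=V_0\Pi_M^{\rm in}V_0^{\mathsf T}$ and $\Pi^{+,\rm out}=U_{L-1}^+\Pi_M^{\rm out}(U_{L-1}^+)^{\mathsf T}$ by endpoint conjugation, and split $\log\Lambda^+$ into the child multipliers, the connector logarithms, and $\log\nu_M^{1/2}$, each normalized by $R$. The only difference is bookkeeping: the paper feeds the raw affine bounds \eqref{fr:eq:graded-holomorphic-affine-input-jets} into the product lemma and uses $K_r^{\rm prod}$ only to verify smallness, so the output is already affine in $\mathcal K_r$ and needs no absorption, whereas inserting $K_r^{\rm prod}$ (which contains $\mathcal K_r^{d_1}$) forces your budget absorption via \eqref{fr:eq:graded-holomorphic-input-ledger}--\eqref{fr:eq:graded-holomorphic-product-budget}, which works under the stated hypotheses but is unnecessary.
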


\begin{proof}[Proof of Lemmas~\ref{fr:lem:graded-holomorphic-product-inputs},
\ref{fr:lem:graded-holomorphic-reinduction}, and
\ref{fr:lem:graded-holomorphic-jet-propagation}]
In this proof,
\[
 C=C(r_*,a_L,a_R,D,C_{16},C_{17},g)
\]
denotes auxiliary constants in the raw derivative estimates, before
\(C_{13}\) is chosen.  These constants, the degree bounds, and the
coefficient polynomials obtained from the product and frame estimates
are independent of \(C_{13}\).  After taking a common upper bound
for this finite collection of constants, choose \(C_{13}\ge2C\);
the threshold \(q_{\rm hol}\) is chosen afterwards.

Write \(v_t\) and \(u_t\) for the normalized oriented first columns of
\(V_t\) and \(U_t^+\), respectively.  Differentiating
\(\Pi v=v\) and \(v^{\mathsf T}v=1\) gives
\(\partial_zv=(\partial_z\Pi)v\); iteration bounds every derivative of
an oriented axis by a universal polynomial in the projection derivatives.
By the choice of \(d_1\), this gives
\[
 \|\partial_zH_t\|_\infty
 \le C(1+\mathcal K_1)^{d_1}
 \le C_{13}(1+\mathcal K_1)^{d_2}.
\]
For \(z=x+iy\in\Omega^+\),
\eqref{fr:eq:graded-holomorphic-strip-safety-input} gives
\[
 \begin{aligned}
 |H_t(z)|
 &\ge |H_t(x)|-\left|\int_0^y iH_t'(x+iv)\,dv\right|\\
 &\ge q^{-D}-4h^+C_{13}(1+\mathcal K_1)^{d_2}
 \ge\tfrac12q^{-D}.
 \end{aligned}
\]

Put \(\epsilon_t:=\Lambda_t^{-2}\) and
\(D_t:=\operatorname{diag}(1,\epsilon_t)\).  The expansion in
\eqref{fr:eq:graded-holomorphic-child-normal-form} gives
\(|\epsilon_t|\le e^{-2\Gamma q}\).  The preceding axis estimates and
\(\partial_z^r\log\epsilon_t=-2\partial_z^r\log\Lambda_t\) show that
\eqref{fr:eq:graded-holomorphic-product-jets} controls the derivatives
of \(H_t,\Theta_t\), and \(\log\epsilon_t\) through order \(r_*\).
More precisely, the higher-order recursion obtained from
\(\partial_zv=(\partial_z\Pi)v\) gives polynomials \(\mathcal Q_r\),
involving only orders below \(r\), such that
\begin{equation}\label{fr:eq:graded-holomorphic-affine-input-jets}
\begin{aligned}
 &\max_t\Bigl(
 \|\partial_z^rH_t\|_\infty
 +\|\partial_z^r\Theta_t\|_\infty
 +\|\partial_z^r\log\epsilon_t\|_\infty\Bigr)\\
 &\qquad\le C(\overline R+2)
 \left(\mathcal K_r+
 \mathcal Q_r(\mathcal K_1,\ldots,\mathcal K_{r-1})\right).
\end{aligned}
\end{equation}
Thus the dependence on the order-\(r\) child jet is affine.  The
product and frame output estimates are applied with these raw bounds;
\eqref{fr:eq:graded-holomorphic-product-jets} is used only to verify
smallness.
By \eqref{fr:eq:graded-holomorphic-input-ledger},
\[
 1+\sum_{\nu=1}^{r_*}\mathcal K_\nu
 \le (r_*+1)\exp\{J_1+A_{r_*}\mathcal S\},
\]
direct substitution of the bounds for \(L,\overline R\), and
\((\kappa^{\rm prod})^{-1}=2q^D\) gives
\begin{align*}
 \log\!\left[\text{left-hand side of
 \eqref{fr:eq:graded-holomorphic-product-smallness}}\right]
 &\le-2\Gamma q+d_2(J_1+A_{r_*}\mathcal S)\\
 &\quad+C\log(q+2)+C\log(2+C_{13})\\
 &\le-2\Gamma q+d_2(J_1+A_{r_*}\mathcal S)
       +C_{13}\log(q+2),
\end{align*}
where the last inequality follows from \(C_{13}\ge2C\) after
requiring \(q_{\rm hol}\ge C_{13}+2\).
Here \eqref{fr:eq:graded-holomorphic-degree-hierarchy} absorbs the factor
\(B_{18}\) multiplying the raw exponent \(d_1\).
The degree \(d_2\) also dominates the degrees of all composed
multiplier, projector, and frame output polynomials.  Their raw
coefficients have already been included in the choice of \(C\).
By \eqref{fr:eq:graded-holomorphic-product-budget}, enlarge
\(q_{\rm hol}\) so that
\[
 \text{left-hand side of
 \eqref{fr:eq:graded-holomorphic-product-smallness}}
 \le e^{-\Gamma q}\le e^{-gq}\le c_1(r_*).
\]

Remove the two exterior frames from the exact chain.  The remaining
normalized middle product is
\[
 D_{L-1}W_{L-1}\cdots D_1W_1D_0,
\]
with the one-factor convention when \(L=1\).  With
\(\mathscr L_0:=\prod_{t=0}^{L-1}\Lambda_t\), one has
\[
 \mathscr L_0^2\prod_{t=0}^{L-1}\epsilon_t\equiv1.
\]
Thus Lemma~\ref{B:lem:differentiated-finite-two-state-product} and the
determinant-one case of Corollary~\ref{B:cor:finite-two-state-frames}
apply.  Denote the canonical middle output by
\((U_M,V_M,\Lambda_M)\), and define
\[
 U^+:=U_{L-1}^+U_M,\qquad
 V^+:=V_0V_M,\qquad
 \Lambda^+:=\Lambda_M.
\]
The leading scalar is \(\mathcal L^+\), and the specialization gives
\eqref{fr:eq:graded-holomorphic-new-normal-form} and
\eqref{fr:eq:graded-holomorphic-multiplier-error}.  Its endpoint-frame
estimates, followed by Leibniz' rule for
\(U_{L-1}^+U_M\) and \(V_0V_M\), give
\eqref{fr:eq:graded-holomorphic-frame-output}; the polynomial factors are
absorbed by the exponential margin above.  The affine highest-order
clauses of the lemma and corollary, combined with
\eqref{fr:eq:graded-holomorphic-affine-input-jets}, imply that an
order-\(r\) child jet occurs linearly, while all remaining terms contain
only derivatives of orders less than \(r\).  Since
\[
 \frac1R\sum_{t=0}^{L-1}(R_t+2)\le1+\frac2q,
 \qquad \frac{L-1}{R}<\frac1q,
\]
the multiplier identity and the two endpoint conjugations first give
\[
 \mathcal K_r^+\le(q+2)^C
 \left(\mathcal K_r+
 \mathcal P_r(\mathcal K_1,\ldots,\mathcal K_{r-1})\right).
\]
Since \(C\le C_{13}\), this proves
\eqref{fr:eq:graded-holomorphic-jet-output}.  Finally, with
\(\Gamma^+\) from \eqref{fr:eq:graded-holomorphic-rate-output},
\eqref{fr:eq:graded-holomorphic-connector-output} and
\eqref{fr:eq:graded-holomorphic-multiplier-error} yield
\[
 \log|\Lambda^+|
 \ge\Gamma R-(L-1)(D\log q+\log2)-e^{-c_9q}
 \ge\Gamma^+R,
\]
after enlarging \(q_{\rm hol}\).
\end{proof}

\begin{proof}[Proof of
Propositions~\ref{fr:prop:Young-CF-reinduction-exclusion}
and~\ref{fr:prop:Young-CF-holomorphic-realization}, and
Corollaries~\ref{fr:cor:Young-CF-rate-loss}
and~\ref{fr:cor:Young-CF-acceleration-data}]
Steps~(i)--(vii) are uniform on the retained real parameter cells.
In Steps~(viii)--(x), \(E\in\mathcal Y_Q^{(s)}\) is fixed, the same return
words are complexified, and the heights may depend on \(E\).
For the auxiliary estimates, write
\[
 c_{20}=c_{20}(\mathfrak D_{\rm CF})>0,\qquad
 C_{20}=C_{20}(\mathfrak D_{\rm CF})\ge1.
\]
Their values may change between auxiliary estimates.  Each finite
collection of bounds is fixed before increasing \(Q_*\).  The
constants selected for the conclusions and the displayed induction
budgets, including \(C_{11}\), \(C_{13}\), and \(c_5\), remain fixed.

\smallskip
\noindent\emph{Step (i).} We choose the constants and initial base.
Increase \(Q_*\) once so that \(Q=q_\ell\ge Q_*\) implies \(\ell\ge2\).
Fix such a \(Q=q_\ell\) and write
\[
 w_j:=q_j^{-B_{10}},\qquad
 D_{12}:=D_7+1.
\]
Let \(F_0=0\), \(F_1=1\), and
\(F_{n+1}=F_n+F_{n-1}\) for \(n\ge1\).  Then
\begin{equation}\label{fr:eq:CF-Fibonacci-denominator-growth}
 q_{\ell+n}\ge F_{n+1}Q,\qquad n\ge0.
\end{equation}
Fix
\[
 A_0>\max\{B_{13}\tau,B_{10}\tau\}+2,
 \qquad
 D_0>\max\{D_{13},D_7+B_{10},D_8\}+2.
\]
Invoke
Lemma~\ref{fr:lem:level-independent-fixed-order-propagation} on the
interior of each connected lifted fixed-itinerary phase--energy domain,
at order \(r_\dagger\) and with \(\gamma_0=3\gamma/4\); enlarge its
exponents, if necessary, so that \(M_t\ge M_*^{\rm seed}\).
Denote its constants at \((r_\dagger,A_0,D_0,3\gamma/4)\) by
\(c_{14},C_{18},D_{23}\), and set, for
\(1\le t\le r_\dagger\),
\[
\begin{aligned}
 C_t^{\rm seed}&:=3^{M_t}C_8,
 &\qquad \mathbf B_t&:=2\max\{1,C_t^{\rm seed}\},\\
 \mathfrak B_t&:=1+\sum_{v=1}^t\mathbf B_v,
 & M_*&:=M_{r_*}.
\end{aligned}
\]
Choose
\(c_5=c_5(\mathfrak D_{\rm CF})>0\) so that, for all sufficiently
large \(Q\) and \(q\),
\[
\begin{aligned}
 C_8Q^{D_9}e^{-C_8^{-1}Q^{a_{\rm seed}}}
 &\le e^{-4c_5\gamma Q},\\
 C_{18}\mathfrak B_{r_\dagger}^{D_{23}}e^{-c_{14}q}
 &\le e^{-4c_5\gamma q}.
\end{aligned}
\]
All structural constants below depend only on \(\mathfrak D_{\rm CF}\).

\smallskip
We first bound the polynomial losses in the zero-persistence argument.
Let \(d_{\rm IFT}=d_{\rm IFT}(r_\dagger)\) and
\(d_{\rm fac}=d_{\rm fac}(r_\dagger)\) dominate, respectively, the
degrees of the universal Fa\`a di Bruno polynomials obtained by
differentiating the quantitative implicit-function formula and the
factorization formulas in
Lemma~\ref{fr:lem:transverse-factor-stability}.  Define
\begin{align*}
 D_{17}
 &:=1+r_\dagger A_0M_{r_\dagger}+D_0,\\
 D_{18}
 &:=D_{17}
   +d_{\rm IFT}(D_{12}+D_8+1),\\
 D_{19}
 &:=1+\max\{D_{17},D_{18}\}
   +d_{\rm fac}(D_8+D_{18}+1),\\
 D_{20}&:=D_8+D_{18},
 \qquad
 D_*:=1+\max\{D_{17},D_{18},D_{19},D_{20}\}.
\end{align*}
For each overlap \(h\) and root \(c(E)\) being compared, and each
later scale \(q\ge Q\),
the endpoint product rule, implicit-function formula, and
Lemma~\ref{fr:lem:transverse-factor-stability} use
\[
 |\partial_xh(c(E),E)|^{-1}\le4Q^{D_{12}},\qquad
 \|h\|_{C^{r_\dagger}}\le Q^{D_8}.
\]
The resulting losses are
\[
\begin{array}{c|c}
 \text{comparison}&\text{polynomial loss}\\ \hline
 \text{endpoint directions and overlap}&q^{D_{17}}\\
 \text{root displacement}&q^{D_{18}}\\
 \text{nonvanishing factor}&q^{D_{19}}\\
 H_2d_0& q^{D_{20}}
\end{array}
\]
Thus \(D_*\) depends only on \(\mathfrak D_{\rm CF}\) and the already
fixed \(\mathbf B_t\).

Put
\[
\begin{aligned}
 \mathfrak e_{\rm base}&:=C_*Q^{D_*}e^{-2c_5\gamma Q},\\
 \mathfrak e_j&:=C_*q_{j+1}^{D_*}
 e^{-2c_5\gamma q_{j+1}},\qquad j\ge\ell,
\end{aligned}
\]
where the structural constant \(C_*\ge1\) dominates the sum of the
constants in the four comparisons just listed and all fixed constants
in the spatial quotient comparison.  In particular, the sum
of their \(C^{r_\dagger}\), \(C^{r_\dagger-1}\), or \(C^{r_*}\)
errors, as appropriate, is bounded by \(\mathfrak e_j\).

The continued-fraction recurrence
and the Diophantine denominator bound
give a structural constant
\[
 c_{10}=c_{10}(\alpha,\kappa,\tau,B_{10})\in(0,1]
\]
such that
\[
 w_{j-1}-w_j
 \ge c_{10}q_{j-1}^{-B_{10}-1+1/\tau},
 \qquad j\ge\ell+1.
\]
Choose
\(0<c_{12}<\min\{1/4,3c_{10}/16\}\), and set
\[
\begin{aligned}
 c_{13}&:=D_{13}2^{-D_{13}-1}\kappa^{1/\tau}>0,\\
 \Delta_{\rm sc}&:=\min\left\{
 \begin{array}{l}
 B_{10}-D_8-D_{12},\\
 D_{13}-D_{12}-B_{11},\\
 D_{13}-D_{12}-B_{10}
 \end{array}\right\}>0.
\end{aligned}
\]
The positivity follows directly from
\eqref{fr:eq:CF-propagation-exponent-budget} and
\(D_{12}=D_7+1\).  Increase \(Q_*\) once so that, for
every \(Q=q_\ell\ge Q_*\) and every \(j\ge\ell\),
\begin{equation}\label{fr:eq:CF-scale-threshold-ledger}
\begin{aligned}
\textup{(i)}\quad
 &\mathfrak e_{\rm base}\le\min\left\{
 \tfrac18Q^{-D_{12}-B_{11}},
 \frac{Q^{-D_{12}}}{8(1+Q^{D_8})},
 e^{-c_5\gamma Q},\tfrac18Q^{-D_7}\right\},\\
 &Q^{-D_7}-\mathfrak e_{\rm base}
 -\sum_{t\ge\ell}\mathfrak e_t
 -C_*Q^{D_8-B_{10}}
 \ge\tfrac14Q^{-D_{12}},\\
\textup{(ii)}\quad
 &\min\left\{\tfrac12Q^{-D_7},
 Q^{-D_7-B_{11}},
 \tfrac14Q^{-D_{12}}q_{\ell+1}^{-B_{10}}\right\}
 \ge q_{\ell+1}^{-D_{13}},\\
\textup{(iii)}\quad
 &\tfrac14Q^{-D_{12}}q_{j+2}^{-B_{10}}
 \ge2q_{j+2}^{-D_{13}},
 \qquad
 \tfrac18Q^{-D_{12}}q_{j+1}^{-B_{11}}
 \ge q_{j+2}^{-D_{13}},\\
\textup{(iv)}\quad
 &\mathfrak e_j\le\min\left\{
 \begin{aligned}
  &\tfrac18Q^{-D_{12}}q_{j+1}^{-B_{11}},
  &&\frac{Q^{-D_{12}}}{8(1+Q^{D_8})},\\
  &\tfrac12c_{13} q_{j+1}^{-D_{13}-1+1/\tau},
  &&e^{-c_5\gamma q_{j+1}}
 \end{aligned}\right\},\\
\textup{(v)}\quad
 &\sum_{t\ge\ell}\mathfrak e_t\le\tfrac18Q^{-D_{12}},
 \qquad
 C_*Q^{D_8}
 \sup_{t\ge\ell}q_t^{-B_{10}}
 \le\tfrac18Q^{-D_{12}},\\
\textup{(vi)}\quad
 &e^{-c_5\gamma q}
 \le\min\left\{q^{-B_{11}},
 \tfrac12c_{12}q^{-B_{10}-1+1/\tau}\right\}
 \qquad(q\ge Q).
\end{aligned}
\end{equation}
To verify the bounds, use
\(Q\le q_j\le q_{j+1}\le q_{j+2}\), the strict polynomial margins
measured by \(\Delta_{\rm sc}\), and
\(q_j\ge\kappa^{1/\tau}q_{j+1}^{1/\tau}\).  The last inequality and
\(1-(1+x)^{-D_{13}}\ge D_{13}2^{-D_{13}-1}x\) for
\(0\le x\le1\) give
\begin{equation}\label{fr:eq:CF-denominator-lower-bound-margin}
 q_{j+1}^{-D_{13}}-q_{j+2}^{-D_{13}}
 \ge c_{13} q_{j+1}^{-D_{13}-1+1/\tau}.
\end{equation}
The remaining verifications reduce to
\[
\begin{aligned}
 \Delta_{\rm sc}>0
 &\Longrightarrow \eqref{fr:eq:CF-scale-threshold-ledger}\textup{(ii)--(iii)},\ \eqref{fr:eq:CF-scale-threshold-ledger}\textup{(v)}_{\rm second},\\
 \sup_{q\ge Q}q^Me^{-c_{20}\gamma q}\longrightarrow0
 &\Longrightarrow \eqref{fr:eq:CF-scale-threshold-ledger}\textup{(i)}_{\rm first},\
                  \eqref{fr:eq:CF-scale-threshold-ledger}\textup{(iv)},\ \eqref{fr:eq:CF-scale-threshold-ledger}\textup{(vi)},\\
 \sum_{t\ge\ell}\mathfrak e_t\le C_{20}e^{-c_{20}\gamma Q}
 &\Longrightarrow\eqref{fr:eq:CF-scale-threshold-ledger}\textup{(v)}_{\rm first},\\
 Q^{-D_7}=Q\,Q^{-D_{12}}
 &\Longrightarrow\eqref{fr:eq:CF-scale-threshold-ledger}\textup{(i)}_{\rm second}.
\end{aligned}
\]
Here \(M\) is any fixed polynomial exponent, and the summation uses
\eqref{fr:eq:CF-Fibonacci-denominator-growth}.

\smallskip
To choose the initial radius, note that, for \(r\in[w_\ell,2w_\ell]\),
a return from a component of radius
\(r\) to itself occurs within \(Cw_\ell^{-\tau}\) iterates by
Lemma~\ref{B:lem:return-time-poly-two-sided}.  Define
\[
 \mathfrak R_\ell^{\rm rad}:=
 \bigcup_{\substack{0<|m|\le Cw_\ell^{-\tau}\\
                    \sigma\in\{\pm1\}}}
 \{r\in[w_\ell,2w_\ell]:
       \|m\alpha-\sigma r\|_{\mathbb T}<2Q^{-B_{11}}\}.
\]
After increasing \(Q_*\), the exponent assumptions give
\[
 |\mathfrak R_\ell^{\rm rad}|
 \le C_{20}Q^{B_{10}\tau-B_{11}}
 \le\frac12Q^{-B_{10}}.
\]
Choose
\(\widehat w_\ell\in[w_\ell,2w_\ell]\setminus
\mathfrak R_\ell^{\rm rad}\), independently of \(E\), and put
\(K_{q_\ell}(E)=K_{q_\ell}^{(\widehat w_\ell)}(E)\).

We construct nested finite unions of compact intervals
\[
 \mathcal G_{\ell-1}:=\mathcal J_Q\supset
 \mathcal G_\ell\supset\mathcal G_{\ell+1}\supset\cdots
\]
and, on their parameter cells, compatible real lifts of the auxiliary
simple-zero branches
\(\widetilde c_{j,a}\), \(1\le a\le2s\).  At the initial level these are
the fixed lifts in the \(\mathfrak D_{\rm seed}\)-admissible initial-scale data.  Throughout
the induction, put
\[
 c_{j,a}:=\pi_{\mathbb T}(\widetilde c_{j,a}),
 \qquad \xi_{j,a}:=c_{j,a}.
\]
Once \(\widetilde c_{j,a}\) and \(\widehat w_j\) are fixed, write
\[
 I_{j,a}(E):=\pi_{\mathbb T}
 [\widetilde c_{j,a}(E)-\widehat w_j,
  \widetilde c_{j,a}(E)+\widehat w_j],
 \qquad K_{q_j}(E):=\bigcup_{a=1}^{2s}I_{j,a}(E).
\]
For \(j>\ell\), the common radius \(\widehat w_j\) is selected
immediately after the lifts \(\widetilde c_{j,a}\) are constructed at the preceding
step and before the level-\(j\) return-itinerary partition is formed.

The finite induction at level \(j\) has the order
\[
\begin{aligned}
 (\mathcal G_{j-1},\widetilde c_{j,a},\widehat w_j,K_{q_j})
 &\longrightarrow \mathcal G_j
 \longrightarrow \text{level-}j\text{ return estimates}\\
 &\longrightarrow\widetilde c_{j+1,a}
 \longrightarrow
 \|\widetilde c_{j+1,a}-\widetilde c_{j,a}\|_{C^{r_\dagger-1}}\\
 &\longrightarrow\widehat w_{j+1}
 \longrightarrow K_{q_{j+1}} .
\end{aligned}
\]
The initial input is the initial-scale data; the first arrow removes short-return,
boundary-resonance, and buffer sets.

\smallskip
\noindent\emph{Step (ii).} We establish relative transversality and the
preimage estimate.
The inductive estimate to be established at level \(j\) is
\begin{equation}\label{fr:eq:internal-root-contraction}
 \|\widetilde c_{j+1,a}-\widetilde c_{j,a}\|_{C^{r_\dagger-1}}
 \le e^{-c_5\gamma q_j},\qquad j\ge\ell.
\end{equation}
For \(j>\ell\), continue the fixed initial lifts on each descendant cell
by the normalization
\(\|\widetilde c_{j,a}-\widetilde c_{j-1,a}\|_\infty<1/4\).
For \(j\ge\ell\) and \(a\ne b\), put
\(f_{j,a,b}:=\widetilde c_{j,a}-\widetilde c_{j,b}\).  Then
\[
\|\partial_E(f_{j,a,b}-f_{\ell,a,b})\|_\infty
 \le2\sum_{t=\ell}^{j-1}e^{-c_5\gamma q_t}
 \le2\sum_{t\ge\ell}e^{-c_5\gamma q_t}
 \le C_{20}e^{-c_5\gamma Q}.
\]
For large \(Q\), \ref{fr:def:Young-seed-package}\textup{(\ref{item:seed-velocities})} therefore implies
\begin{equation}\label{fr:eq:internal-relative-transversality}
 Q^{-D_6}
 \le\sigma_{ab}\partial_Ef_{j,a,b}
 \le2Q^{D_6}.
\end{equation}
Moreover, on every child component of \(\mathcal G_k\),
\begin{equation}\label{fr:eq:internal-relative-shadow}
 \|f_{k+1,a,b}-f_{k,a,b}\|_\infty
 \le2e^{-c_5\gamma q_k},\qquad k\ge\ell.
\end{equation}

Put \(b_j:=e^{-c_5\gamma q_{j-1}/8}\).
At level \(j\), delete the open \(b_j\)-neighborhood of every newly
created parameter boundary of the genuine level-\(j\) return partition
and of every endpoint of a newly deleted
resonance interval; at \(j=\ell\), also buffer the interior initial-cell
boundaries.  Denote the union of these buffer neighborhoods by
\(\mathcal B_j\).  Temporary refinements used to decompose a fixed
complete return word enter neither this deletion nor the cell count
below: fixed-word data and their jets glue across them, as shown in
Steps~(v) and~(vi).  Thus distinct level-\(j\) children in one
level-\((j-1)\) parent are separated by at least \(2b_j\).  Fibonacci
growth gives, for large \(Q\),
\(8\sum_{v\ge k}e^{-c_5\gamma q_v}
\le Q^{-D_6}b_k\) for \(k\ge\ell\).
For \(j=\ell\), the initial zero branches are global on \(\mathcal J_Q\); put
\(\widetilde f_{\ell,a,b}:=f_{\ell,a,b}\), and use \ref{fr:def:Young-seed-package}\textup{(\ref{item:seed-velocities})} for
the required preimage bound.
For \(j>\ell\), put \(n:=j-\ell-1\) and apply
Lemma~\ref{fr:lem:buffered-parameter-tree} with
\(\mathcal H_m:=\mathcal G_{\ell+m}\),
\(\widehat f_m:=f_{\ell+m+1,a,b}\)
\((-1\le m\le n)\),
\[
 \eta_{m-1}:=2e^{-c_5\gamma q_{\ell+m}},
 \qquad 0\le m\le n,
\]
and buffer \(b_{\ell+m}\) at level \(m\).  Equations
\eqref{fr:eq:internal-relative-transversality} and
\eqref{fr:eq:internal-relative-shadow}, the sibling separation, and the
tail bound verify its hypotheses.  Denote its affine-gap extension by
\(\widetilde f_{j,a,b}\).  Applied to each ordered pair before
the level-\(j\) deletion, it yields
\begin{equation}\label{fr:eq:internal-global-preimage}
 |\{E\in\mathcal G_{j-1}:f_{j,a,b}(E)\in Z\}|
 \le2Q^{D_6}|Z|
\end{equation}
for every measurable \(Z\subset\mathbb R\).  In particular, no factor
equal to the number of parameter cells occurs.
The oscillation clause in \ref{fr:def:Young-seed-package}\textup{(\ref{item:seed-velocities})} and
the sum of \eqref{fr:eq:internal-relative-shadow} also give
\[
 \operatorname{osc}f_{j,a,b}<\frac14.
\]
Thus every torus target below has at most two representatives in the
common real lift domain, with the same order of total length.

\smallskip
\noindent\emph{Step (iii).} We exclude short returns.
For \(a\ne b\), define
\[
 \mathcal R_{j,a,b}^{\rm ret}:=
 \bigcup_{|m|\le q_{j+1}}
 \{E\in\mathcal G_{j-1}:
   \|f_{j,a,b}(E)+m\alpha\|_{\mathbb T}<8w_j\}.
\]
Set
\[
 \mathcal R_j^{\rm ret}
 :=\bigcup_{a\ne b}\mathcal R_{j,a,b}^{\rm ret}.
\]
The term \(m=0\) only keeps the fixed neighborhoods of the auxiliary zeros disjoint; the
return-time conclusion uses \(m\ne0\).
The target union has length at most \(C_{20}q_{j+1}w_j\).  Thus
\eqref{fr:eq:internal-global-preimage} and
\(q_{j+1}\le\kappa^{-1}q_j^\tau\) give
\begin{equation}\label{fr:eq:internal-cross-return-loss}
 |\mathcal R_j^{\rm ret}|
 \le C_{11}Q^{D_6}q_j^{\tau-B_{10}}.
\end{equation}
For \(E\notin\mathcal R_j^{\rm ret}\),
\[
 \|c_{j,a}-c_{j,b}\|_{\mathbb T}\ge8w_j>2\widehat w_j
 \quad(a\ne b)
 \quad\Longrightarrow\quad I_{j,a}\cap I_{j,b}=\varnothing.
\]
A return \(x\in I_{j,a}\), \(x+m\alpha\in I_{j,b}\),
\(0<|m|\le q_{j+1}\), would give one of the contradictions
\[
\begin{array}{ll}
 a=b:&
 4q_j^{-B_{10}}
 \ge\|m\alpha\|_{\mathbb T}
 \ge\kappa q_{j+1}^{-\tau}
 \ge\kappa^{\tau+1}q_j^{-\tau^2}
 >4q_j^{-B_{10}},\\
 a\ne b:&
 8w_j\le\|f_{j,a,b}+m\alpha\|_{\mathbb T}\le4w_j.
\end{array}
\]
Here the last inequality in the first line holds for \(Q\ge Q_*\)
because \(B_{10}>\tau^2+2\).  Hence
\begin{equation}\label{fr:eq:internal-long-first-return}
 r_{K_{q_j}(E)}^\pm(x)>q_{j+1}
\end{equation}
for every regular \(x\in K_{q_j}(E)\).

\smallskip
\noindent\emph{Step (iv).} We choose the radii and obtain boundary
clearance on the retained parameter cells.
Before the level-\(j\) itinerary partition is formed, its radius is fixed.
For \(j>\ell\), it is selected in
\[
 [w_j,w_j+\delta_j],\qquad
 \delta_j:=\min\{w_j,\tfrac14(w_{j-1}-w_j)\}.
\]
The structural radius estimate recorded in Step~(i) and
\(c_{10}\le1\) give
\begin{equation*}
 \delta_j\ge\frac14c_{10}
 q_j^{-B_{10}-1+1/\tau}.
\end{equation*}
Same-component collisions with a return boundary exclude at most
\(C_{20}q_j^{B_{10}\tau}\) radius intervals of length
\(8q_j^{-B_{11}}\).
By increasing \(Q_*\), the exponent assumptions ensure, simultaneously
for every \(j\ge\ell+1\),
\[
 C_{20}q_j^{B_{10}\tau-B_{11}}\le\frac12\delta_j.
\]
Thus one common radius \(\widehat w_j\) remains; it is independent of
the parameter cell and component label.
Moreover,
\begin{equation}\label{fr:eq:internal-parent-child-radius-margin}
 \widehat w_{j-1}-\widehat w_j
 \ge\frac34(w_{j-1}-w_j)
 \ge\frac34c_{10}
 q_{j-1}^{-B_{10}-1+1/\tau}.
\end{equation}

For different components, define
\[
 \mathcal R_j^{\rm bd}:=
 \bigcup_{\substack{a\ne b,\ 0<|m|\le Cw_j^{-\tau}\\
                    \sigma\in\{\pm1\}}}
 \left\{E\in\mathcal G_{j-1}:
 \|f_{j,a,b}(E)+m\alpha-\sigma\widehat w_j\|_{\mathbb T}
 <2q_j^{-B_{11}}\right\}.
\]
By \eqref{fr:eq:internal-global-preimage},
\begin{equation}\label{fr:eq:internal-boundary-loss}
 |\mathcal R_j^{\rm bd}|
 \le C_{11}Q^{D_6}q_j^{B_{10}\tau-B_{11}}.
\end{equation}
Outside this set, every auxiliary root is at distance at least
\(2q_j^{-B_{11}}\) from the relevant return-partition boundary.
Indeed, a boundary of the two-sided return atom containing
\(c_{j,a}(E)\) can occur only if, for some
\(0<|m|\le Cw_j^{-\tau}\),
\[
 \widetilde c_{j,a}(E)+m\alpha
 \equiv \widetilde c_{j,b}(E)+\sigma\widehat w_j\pmod1,
 \qquad \sigma\in\{\pm1\}.
\]
For \(b=a\), this is a same-component collision excluded by the choice
of \(\widehat w_j\).  For \(b\ne a\), a boundary within
\(2q_j^{-B_{11}}\) is equivalent to
\[
 \|f_{j,a,b}(E)+m\alpha-\sigma\widehat w_j\|_{\mathbb T}
 <2q_j^{-B_{11}},
\]
and is excluded by \(\mathcal R_j^{\rm bd}\).  These exhaust the
possibilities because every return time is at most \(Cw_j^{-\tau}\).

Let \(\mathcal N_j\) be the number of parameter-cell boundaries after
the level-\(j\) exclusions and buffers; the initial-scale construction gives
\(\mathcal N_{\ell-1}\le C_{20}Q^{D_{11}}\).
On an old cell, a return itinerary can change only at a solution of
\[
\begin{aligned}
 \widetilde c_{j,a}(E)+\sigma\widehat w_j+m\alpha
 &\equiv
 \widetilde c_{j,b}(E)+\sigma'\widehat w_j\pmod1,\\
 |m|&\le Cw_j^{-\tau},
 &\sigma,\sigma'&\in\{-1,0,1\}.
\end{aligned}
\]
For \(a=b\) the equation is independent of \(E\), while for \(a\ne b\)
\eqref{fr:eq:internal-relative-transversality} makes its left-hand side
strictly monotone.  Hence the itinerary refinement contributes at most
\(C_{11}q_j^{B_{10}\tau}\) cuts per old cell.  For each connected real
representative of a short-return or boundary-resonance target, its inverse
under \(\widetilde f_{j,a,b}\) is one interval.  Intersecting it with
\(\mathcal G_{j-1}\) creates at most its two endpoint cuts; all intervening
gap endpoints are already old cell boundaries.  Thus the new cuts number,
respectively, at most \(C_{11}q_{j+1}\) and
\(C_{11}q_j^{B_{10}\tau}\).  Therefore
\begin{align*}
 \mathcal N_j+1
 &\le(\mathcal N_{j-1}+1)(1+C_{11}q_j^{B_{10}\tau})
      +C_{11}q_{j+1}+C_{11}q_j^{B_{10}\tau}\\
 &\le\exp\!\left\{C_{11}\sum_{t=\ell}^j\log(q_t+2)\right\}.
\end{align*}
Consequently the total level-\(j\) buffer \(\mathcal B_j\) satisfies
\begin{equation}\label{fr:eq:internal-buffer-loss}
 |\mathcal B_j|
 \le C_{20}\exp\!\left\{
 C_{11}\sum_{t=\ell}^j\log(q_t+2)
 -c_5\gamma q_{j-1}/8\right\},
 \qquad
 \sum_{j\ge\ell}|\mathcal B_j|
 \le C_{20}e^{-c_{20}\gamma Q^{1/\tau}}.
\end{equation}
Here the Diophantine bound and Fibonacci growth give
\[
 \log q_j\le C_{20}\log(q_{j-1}+2),\qquad
 \sum_{t=\ell}^j\log(q_t+2)
 \le C_{20}\bigl(\log(q_{j-1}+2)\bigr)^2,
\]
so the negative exponential in \(q_{j-1}\) dominates the cell count;
also \(q_{\ell-1}\ge c_{20}Q^{1/\tau}\).
Define \(\mathcal G_j\) by removing
\(\mathcal R_j^{\rm ret}\cup\mathcal R_j^{\rm bd}\cup\mathcal B_j\)
from \(\mathcal G_{j-1}\) and retaining the resulting closed components.
At \(j=\ell\), the separation \(2Q^{-B_{11}}\)
makes every connected component of \(\mathcal G_\ell\) a
boundary-admissible subcell of the initial parameter cell containing it, in
the sense of \ref{fr:def:Young-seed-package}\textup{(\ref{item:seed-endpoint-error})}.

\smallskip
\noindent\emph{Step (v).} We propagate expansion and the fixed-order
singular data.
We prove \eqref{fr:eq:internal-root-contraction} and
\ref{fr:def:Young-2s-regular}\textup{(\ref{item:regular-expansion})--(\ref{item:regular-zeros})} by simultaneous induction with the normalized
singular-data and overlap bounds through order \(r_\dagger=r_*+2\).
The recurrences for \(\mathcal C_{t,j}\) and the overlap coefficients
below close these additional bounds.  By \ref{fr:def:Young-seed-package}\textup{(\ref{item:seed-factorization})--(\ref{item:seed-exterior})}, the
initial overlap coefficient for the first return to the smaller set
differs from its initial-scale overlap coefficient by at most
\(C_{20}Q^{D_9}e^{-c_{20}Q^{a_{\rm seed}}}\) in
\(C^{r_\dagger}\).  After the endpoint, implicit-function,
transverse-factor, and evaluation losses are included, the base error is
at most \(\mathfrak e_{\rm base}\).  The implicit function theorem and
Lemma~\ref{fr:lem:transverse-factor-stability} give one simple real root
in each distinguished initial return-time atom, no root in any other atom, and
\[
 \|\widetilde c_{\ell+1,a}-\widetilde c_{\ell,a}\|_{C^{r_\dagger-1}}
 \le\mathfrak e_{\rm base}
 \le e^{-c_5\gamma Q},
\]
where the last inequality is
\eqref{fr:eq:CF-scale-threshold-ledger}\textup{(i)}.
The level-\(\ell\) buffers separate the retained descendants of distinct
initial parameter cells.  On each retained component,
\(\widetilde c_{\ell+1,a}\) is the unique lift within distance \(1/4\)
of the fixed initial lift.
At the initial zero the nonvanishing factor is at least
\(Q^{-D_7}\).  The second line of
\eqref{fr:eq:CF-scale-threshold-ledger}\textup{(i)} reserves, after
the base error, all later interlevel errors, and spatial evaluation,
the lower margin \(\frac14Q^{-D_{12}}\).  Hence the initial tail
estimate and these bounds give, on every distinguished atom,
\begin{equation}\label{fr:eq:internal-quotient-bootstrap}
\begin{aligned}
 \cos G_{K_{q_j},\mathcal B_{j,a}}(\pi_{\mathbb T}x,E)
 &=(x-\widetilde c_{j+1,a}(E))
   \mathfrak a_{j,a}^{\mathbb R}(x,E),\\
 \frac14Q^{-D_{12}}
 &\le|\mathfrak a_{j,a}^{\mathbb R}|
 \le Q^{D_{13}},
\end{aligned}
\end{equation}
and \(\|\mathfrak a_{j,a}^{\mathbb R}\|_{C^{r_*}}\le Q^{D_{13}}\).

Apply the radius-selection step with index \(\ell+1\) to the newly
constructed lifts \(\widetilde c_{\ell+1,a}\).  This gives a common radius
\(\widehat w_{\ell+1}\) and defines \(K_{q_{\ell+1}}(E)\) before any
level-\((\ell+1)\) return partition is used.  We also record the uniform
nonvanishing bound at the initial level.  On an atom for the first return
to the smaller set whose
central initial return-time atom is not among \(\nu_1,\ldots,\nu_{2s}\),
\ref{fr:def:Young-seed-package}\textup{(\ref{item:seed-exterior})} and the endpoint-error estimate give
\[
 |\cos G_{K_{q_\ell}}|\ge \tfrac12Q^{-D_7}.
\]
If the central initial return-time atom is \(\nu_a\), but this new atom is not
the distinguished atom containing \(c_{\ell+1,a}\), the level-\(\ell\)
boundary separation and \ref{fr:def:Young-seed-package}\textup{(\ref{item:seed-transversality})--(\ref{item:seed-endpoint-error})} give
\[
 |\cos G_{K_{q_\ell}}|
 \ge Q^{-D_7-B_{11}}.
\]
Finally, on the distinguished atom but outside
\(K_{q_{\ell+1}}(E)\), \eqref{fr:eq:internal-quotient-bootstrap} gives
\[
 |\cos G_{K_{q_\ell}}|
 \ge \tfrac14Q^{-D_{12}}\widehat w_{\ell+1}.
\]
Since \(\widehat w_{\ell+1}\ge q_{\ell+1}^{-B_{10}}\),
\eqref{fr:eq:CF-scale-threshold-ledger}\textup{(ii)} makes all three
right-hand sides at least \(q_{\ell+1}^{-D_{13}}\).  Thus the
nonvanishing clause of \ref{fr:def:Young-2s-regular}\textup{(\ref{item:regular-zeros})} holds at level \(\ell\).

Assume these assertions through level \(j\).  We first record the exact
return chain, which will be used by both the real and holomorphic estimates.
Write \(S_j:=S_{K_{q_j}}\).  On one fixed-itinerary phase--energy cell
\(\mathcal C\) for the first return to \(K_{q_{j+1}}\), put
\[
\begin{aligned}
 L&:=\min\{n\ge1:S_j^nx\in K_{q_{j+1}}\},
 &x_t&:=S_j^tx,\\
 n_t&:=\sum_{v=0}^{t-1}r_{K_{q_j}}^+(x_v),
 &n_0&:=0,
\end{aligned}
\]
and let \(\mathcal B_t\in\mathcal P_{K_{q_j}}\) be the atom containing
\(x_t\).  Write
\[
 R_{j,\mathcal B}:=r_{K_{q_j}}^+|_{\mathcal B},\qquad
 B_{j,\mathcal B}(z,E):=
 A_{R_{j,\mathcal B}}^{V_Q}(z,E),
\]
and denote by \(R_{j+1,\mathcal C}\) the fixed return length on
\(\mathcal C\), and put
\[
 B_{j+1,\mathcal C}(z,E)
 :=A_{R_{j+1,\mathcal C}}^{V_Q}(z,E).
\]
Then
the cocycle identity and fixed-word continuation give
\begin{equation}\label{fr:eq:CF-exact-reinduction}
 B_{j+1,\mathcal C}(z,E)
 =B_{j,\mathcal B_{L-1}}(z+n_{L-1}\alpha,E)
  \cdots B_{j,\mathcal B_0}(z,E).
\end{equation}
Because this is the first \(S_j\)-return to \(K_{q_{j+1}}\),
\begin{equation}\label{fr:eq:CF-internal-node-exclusion}
 x_t\in K_{q_j}\setminus\operatorname{int}K_{q_{j+1}},
 \qquad 1\le t<L.
\end{equation}
Lemma~\ref{B:lem:return-time-poly-two-sided}, applied to the component
of \(K_{q_{j+1}}\) containing the starting point, gives
\[
 R_{j+1,\mathcal C}
 \le C_{20}\widehat w_{j+1}^{-\tau}
 \le C_{20}q_{j+1}^{B_{10}\tau}.
\]
On the other hand, \eqref{fr:eq:internal-long-first-return} gives
\(R_{j,\mathcal B_t}>q_{j+1}\) for every child.  Consequently,
\begin{equation}\label{fr:eq:CF-child-count}
 R_{j+1,\mathcal C}
 =\sum_{t=0}^{L-1}R_{j,\mathcal B_t}>Lq_{j+1},
 \qquad
 L\le C_{20}q_{j+1}^{B_{10}\tau-1}.
\end{equation}
The backward chain is identical after reversing the itinerary.  For the
forward chain and \(1\le t<L\), let \(v_{j,t}^{\rm in}\) be the
coherently oriented expanding input vector of
\(B_{j,\mathcal B_t}(x_t,E)\), and let \(u_{j,t-1}^{\rm out}\) be the
expanding output vector of
\(B_{j,\mathcal B_{t-1}}(x_{t-1},E)\).  The predecessor factor is
exactly the backward-return word ending at \(x_t\).  Thus, independently
of the coherent signs chosen in \ref{fr:def:Young-2s-regular}\textup{(\ref{item:regular-expansion})},
\[
\begin{aligned}
 H_{j,t}^{\mathbb R}
 &:=(v_{j,t}^{\rm in})^{\mathsf T}u_{j,t-1}^{\rm out},\\
 |H_{j,t}^{\mathbb R}|
 &=\left|\cos\!\left(\frac\pi2-s_{K_{q_j}}^+(x_t,E)
                    +u_{K_{q_j}}^-(x_t,E)\right)\right|
 \\
 &=|\cos G_{K_{q_j},\mathcal B_t}(x_t,E)|.
\end{aligned}
\]
At every connecting point, \eqref{fr:eq:CF-internal-node-exclusion}
places the point outside \(\operatorname{int}K_{q_{j+1}}\).
Both the distinguished and nondistinguished alternatives in
\ref{fr:def:Young-2s-regular}\textup{(\ref{item:regular-zeros})} therefore give
\[
 |H_{j,t}^{\mathbb R}|\ge q_{j+1}^{-D_{13}},\qquad 1\le t<L,
\]
and the reversed chain has the same connector bound.
For either chain, write \(L_{\rm ch}\) for its factor count,
\(Q_{\rm ch}:=Q_{r_\dagger}\) for the derivative quantity in
Lemma~\ref{B:lem:polynomial-hyperbolic-chain}, and \(\Lambda_{\rm ch}\)
for its minimum factor top singular value.  Put \(q=Q\) initially and
\(q=q_{j+1}\) thereafter.  The initial and inductive bounds give, for fixed
\(D_{22}>0\),
\[
 L_{\rm ch}\le q^{A_0},\qquad
 Q_{\rm ch}\le C_{20}q^{D_{22}},\qquad
 \Lambda_{\rm ch}\ge e^{\gamma_0q}.
\]
Let \(A\) be the order-\(r_\dagger\) exponent in
Lemma~\ref{B:lem:polynomial-hyperbolic-chain}.  With
\(\varkappa:=q^{-D_{13}}\), after increasing \(Q_*\) one has
\[
 L_{\rm ch}^{A}Q_{\rm ch}^{A}
 \varkappa^{-A}\Lambda_{\rm ch}^{-2}
 \le C_{20}q^{A(A_0+D_{22}+D_{13})}e^{-2\gamma_0q}
 \le10^{-3}q^{-D_{13}}=10^{-3}\varkappa.
\]
Thus the lemma applies to all forward and backward chains.  Let
\(C_{24}=C_{24}(\mathfrak D_{\rm CF})\ge D_{13}+1\) absorb their connection loss, and define
\[
 \widetilde\Omega_j:=\omega_Q^{\rm seed}
 +2C_{24}\frac{\log(q_{\ell+1}+2)}{q_\ell}
 +2C_{24}\sum_{t=\ell}^{j-1}
       \frac{\log(q_{t+1}+2)}{q_t}.
\]
For \(j=\ell\), \ref{fr:def:Young-seed-package}\textup{(\ref{item:seed-factorization})}, \ref{fr:def:Young-seed-package}\textup{(\ref{item:seed-transversality})}, and \ref{fr:def:Young-seed-package}\textup{(\ref{item:seed-exterior})}
together with the same chain estimate give growth rate at least
\(\gamma-\widetilde\Omega_\ell\) for the complete
\(K_{q_\ell}\)-return matrices.  This initializes the rate induction.
For one subsequent concatenation, let \(R_0,\ldots,R_{L-1}\) be the
factor lengths and let
\[
 R:=\sum_{i=0}^{L-1}R_i,
\]
 be the total return length.  Since \(R_i>q_{j+1}\), one has
\(L\le R/q_{j+1}\).  With
\(\varkappa=q_{j+1}^{-D_{13}}\), the product estimate gives
\[
\begin{aligned}
 \log\lambda_{K_{q_{j+1}}}^{\pm}
 &\ge\sum_{i=0}^{L-1}\log\lambda_i
      +(L-1)\log(\varkappa/2)\\
 &\ge(\gamma-\widetilde\Omega_j)R
   -C_{20}\frac{\log(q_{j+1}+2)}{q_{j+1}}R\\
 &\ge(\gamma-\widetilde\Omega_j)R
   -C_{20}\frac{\log(q_{j+1}+2)}{q_j}R.
\end{aligned}
\]
At the initial transition, \ref{fr:def:Young-seed-package}\textup{(\ref{item:seed-factorization})} gives
\(R_i\ge Q^{a_{\rm seed}}\), while \ref{fr:def:Young-seed-package}\textup{(\ref{item:seed-transversality})--(\ref{item:seed-exterior})} give
\(\varkappa\ge Q^{-(D_7+B_{10})}\); hence its connection
loss is at most
\[
 C_{20}\frac{\log(Q+2)}{Q^{a_{\rm seed}}}R
 \le C_{20}\frac{\log(q_{\ell+1}+2)}{q_\ell}R.
\]
Thus the first correction term in \(\widetilde\Omega_j\) covers the
initial concatenation.  Hence, after taking
  \(C_{11}\ge4C_{24}\), every partial loss is bounded by \(\Omega_Q\)
in \eqref{fr:eq:Young-rate-loss}.  By
\eqref{fr:eq:CF-Fibonacci-denominator-growth} and
\(q_{j+1}\le\kappa^{-1}q_j^\tau\),
\[
\begin{aligned}
 \sum_{j\ge\ell}\frac{\log(q_{j+1}+2)}{q_j}
 &\le C_{20}\sum_{n\ge0}
 \frac{\log(Q+2)+\log(F_{n+1}+1)}{F_{n+1}Q}\\
 &\le C_{12}\frac{\log(Q+2)}Q,
 \qquad
 \sum_{n\ge0}\frac{1+\log(F_{n+1}+1)}{F_{n+1}}<\infty.
\end{aligned}
\]
Together with \ref{fr:def:Young-seed-package}\textup{(\ref{item:seed-factorization})}, this proves
\(\Omega_Q=o_{Q\to\infty}(1)\) and
\(\Omega_Q<\gamma/4\) for \(Q\ge Q_*\).

Apply the fixed-order lemma with the exponents and bounds fixed at the
start of the proof.  For \(1\le t\le r_\dagger\), let
\(\mathcal C_{t,j}\) be the least constant in the normalized
order-\(t\) bounds for
\(\log\lambda_{K_{q_j}}^\pm,u_{K_{q_j}}^\pm,s_{K_{q_j}}^\pm\), after
removing the factor \((R_{\mathcal B}^\pm)^{M_t}\).
This is the normalization in \eqref{fr:eq:fixed-order-old-bound}.
For the initial \(K_{q_\ell}\)-returns, use \(q=Q\) and
\ref{fr:def:Young-seed-package}\textup{(\ref{item:seed-factorization})}.  After enlarging \(Q_*\), the initial chains satisfy
\[
\begin{gathered}
 Q\le Q^{a_{\rm seed}}\le R_i\le Q^{A_0},\qquad L\le Q^{A_0},\\
 |h_i|\ge Q^{-(D_7+B_{10})},\qquad
 \|h_i\|_{C^{r_\dagger}}\le Q^{D_8},\qquad
 \gamma-\Omega_Q\ge\tfrac34\gamma=\gamma_0 .
\end{gathered}
\]
Since \(R_i+2\le3R_i\), the initial bounds with
\((R_i+2)^{M_t}\) are bounded in this normalization by the coefficient
\(C_t^{\rm seed}=3^{M_t}C_8\).  Thus
Lemma~\ref{fr:lem:level-independent-fixed-order-propagation}, with
\(q=Q\), gives
\[
 \mathcal C_{t,\ell}
 \le C_t^{\rm seed}
   +C_{18}\mathfrak B_t^{D_{23}}Q^{-2},
 \qquad 1\le t\le r_\dagger.
\]
At the transition from level \(j\) to level \(j+1\), apply the same
lemma with \(q=q_{j+1}\); the overlap-coefficient bound is preserved by the
endpoint estimate below.
Increase \(Q_*\) so that the lemma's threshold determined by these bounds is met
and
\[
 C_{18}\mathfrak B_t^{D_{23}}
 \left(Q^{-2}+\sum_{j\ge\ell}q_{j+1}^{-2}\right)
 \le\frac12\mathbf B_t
\]
for every \(t\).  The recurrence closes as
\[
\begin{aligned}
 \mathcal C_{t,j+1}
 &\le\mathcal C_{t,j}
   +C_{18}\mathfrak B_t^{D_{23}}q_{j+1}^{-2}\\
 &\le C_t^{\rm seed}
   +C_{18}\mathfrak B_t^{D_{23}}
       \left(Q^{-2}+\sum_{v=\ell}^jq_{v+1}^{-2}\right)\\
 &\le\tfrac12\mathbf B_t+\tfrac12\mathbf B_t=\mathbf B_t .
\end{aligned}
\]
For \(t\le r_*\), use \(M_t\le M_{r_*}=M_*\) and choose \(C_{20}\) to
dominate \(\max_{t\le r_*}\mathbf B_t\) and the matrix constant in
\eqref{fr:eq:fixed-order-new-matrix}. Since
\((R_{\mathcal B}^{\pm})^{M_t}\le(R_{\mathcal B}^{\pm}+2)^{M_*}\),
this proves \ref{fr:def:Young-2s-regular}\textup{(\ref{item:regular-expansion})}.

On each fixed itinerary,
Lemma~\ref{fr:lem:level-independent-fixed-order-propagation} compares
the forward contracting direction with that of the first child and the
backward expanding direction with that of the last child.  The chain
rule and the error bounds from Step~(i) therefore give
\begin{equation}\label{fr:eq:internal-joining-drift}
\begin{aligned}
 &\|s_{K_{q_{j+1}}}^{+}-s_{K_{q_j}}^{+}\|_{C^{r_\dagger}}
 +\|u_{K_{q_{j+1}}}^{-}-u_{K_{q_j}}^{-}\|_{C^{r_\dagger}}
 \le\mathfrak e_j,\\
 &\|\cos G_{K_{q_{j+1}}}-\cos G_{K_{q_j}}\|_{C^{r_\dagger}}
 \le\mathfrak e_j.
\end{aligned}
\end{equation}
The old branch in \eqref{fr:eq:internal-joining-drift} is the branch
selected by that fixed itinerary.  The new return matrices on the fixed
itinerary and their Riesz spectral projections are analytic across the
temporary subdomain boundaries,
so the uniform subdomainwise derivative bounds extend to their closures.
Taking suprema over the new atoms and their itinerary subdomains,
and summing the errors, \eqref{fr:eq:Young-seed-angle-bootstrap-margin}
gives
\begin{equation}\label{fr:eq:internal-all-atom-angle-bound}
 \sup_{j\ge\ell}\sup_{\mathcal B}
 \|\cos G_{K_{q_j},\mathcal B}\|_{C^{r_\dagger}}
 \le Q^{D_8}
 \le Q^{D_{13}}.
\end{equation}

\smallskip
\noindent\emph{Step (vi).} We propagate the simple roots and exclude
additional real zeros.
Fix \(1\le a\le2s\). Boundary separation places the lifted
\(q_{j+1}^{-B_{11}}\)-tube around the old zero graph inside one genuine
level-\((j+1)\) two-sided return atom and the old distinguished atom.
The fixed return words and their simple singular projections continue
across temporary itinerary cuts. Orient the new forward input and
backward output axes by positive inner product with the corresponding
old endpoint axes. Then \eqref{fr:eq:internal-joining-drift} holds on the whole tube.
There
\[
 |\partial_x\cos G_{K_{q_j},\mathcal B_{j,a}}
   (c_{j+1,a}(E),E)|
 \ge\tfrac14Q^{-D_{12}}.
\]
By \eqref{fr:eq:CF-scale-threshold-ledger}\textup{(iv)},
\[
 \mathfrak e_j
 \le\tfrac18Q^{-D_{12}}q_{j+1}^{-B_{11}}.
\]
Hence the old overlap has opposite signs at the endpoints of the lifted
tube, and these signs are preserved by the new overlap.
The quantitative implicit function theorem, applied to
\eqref{fr:eq:internal-quotient-bootstrap} and
\eqref{fr:eq:internal-joining-drift}, produces a unique local simple
root branch \(\widetilde c_{j+2,a}\) in that tube and gives
\[
 \|\widetilde c_{j+2,a}-\widetilde c_{j+1,a}\|_{C^{r_\dagger-1}}
 \le\mathfrak e_j
 \le e^{-c_5\gamma q_{j+1}}.
\]
For Lemma~\ref{fr:lem:transverse-factor-stability}, take
\(\mu:=\frac14Q^{-D_{12}}\).  The joining drift, root displacement,
and \eqref{fr:eq:internal-all-atom-angle-bound} give
\[
 \eta_1+H_2d_0
 \le(1+Q^{D_8})\mathfrak e_j
 \le\frac18Q^{-D_{12}}=\frac\mu2
\]
by \eqref{fr:eq:CF-scale-threshold-ledger}\textup{(iv)}.
Thus Lemma~\ref{fr:lem:transverse-factor-stability} applies to the old
and new simple-zero factorizations.
Denote the level-\((j+1)\) atom containing the tube by
\(\mathcal B_{j+1,a}\).
At a temporary parameter-cell boundary the fixed-word functions and
their jets coincide; uniqueness in the lifted tube therefore glues the
roots and their derivatives through order \(r_\dagger-1\), with the
unique compatible lift within distance \(1/4\) of the preceding lift.
After these zero branches are obtained, apply the preceding
radius-selection argument with index \(j+2\) to choose
\(\widehat w_{j+2}\) and hence define \(K_{q_{j+2}}(E)\).
The two margins in \eqref{fr:eq:CF-scale-threshold-ledger}\textup{(v)} control, respectively, interlevel
quotient/root-jet errors and spatial variation.  To extend the local
comparison to the whole new atom, put
\[
 h_{j+1,a}:=\cos G_{K_{q_{j+1}},\mathcal B_{j+1,a}}
\]
and, in the fixed lift of its interval fiber, define
\begin{equation}\label{fr:eq:internal-global-quotient}
 \mathfrak a_{j+1,a}^{\mathbb R}(x,E)
 :=\int_0^1\partial_xh_{j+1,a}
 \bigl(\widetilde c_{j+2,a}(E)
       +t(x-\widetilde c_{j+2,a}(E)),E\bigr)\,dt.
\end{equation}
Analyticity of the complete return word and its Riesz projections
across temporary old-itinerary boundaries makes
\eqref{fr:eq:internal-global-quotient} global on the new atom.  Hence
\[
 h_{j+1,a}(x,E)
 =(x-\widetilde c_{j+2,a}(E))
   \mathfrak a_{j+1,a}^{\mathbb R}(x,E).
\]
The quotient/root-jet error sum, the fiber radius
\(\le2q_{j+1}^{-B_{10}}\), and
\eqref{fr:eq:internal-all-atom-angle-bound} meet the slope budget in
\eqref{fr:eq:CF-scale-threshold-ledger}\textup{(i)}. Hence
\begin{equation*}
 \frac14Q^{-D_{12}}
 \le|\mathfrak a_{j+1,a}^{\mathbb R}|
 \le Q^{D_{13}},\qquad
 \|\mathfrak a_{j+1,a}^{\mathbb R}\|_{C^{r_*}}
 \le Q^{D_{13}}.
\end{equation*}
Indeed, Lemma~\ref{fr:lem:transverse-factor-stability} bounds the
interlevel differences of the quotient derivatives at the root by
\(\mathfrak e_j\).  Differentiating
\eqref{fr:eq:internal-global-quotient}, with the two additional
derivatives in \(r_\dagger=r_*+2\), bounds their variation across the
fiber.  The choice of \(D_{13}\) in
\eqref{fr:eq:CF-propagation-exponent-budget} absorbs the resulting
fixed Fa\`a di Bruno powers.  Hence
\eqref{fr:eq:internal-quotient-bootstrap} propagates on the entire
distinguished atom.

For \(x\in\mathcal B_{j+1,a}\setminus K_{q_{j+2}}(E)\),
\(|x-\widetilde c_{j+2,a}(E)|\ge\widehat w_{j+2}
\ge q_{j+2}^{-B_{10}}\).  Hence the factorization, the quotient
lower bound, and
\eqref{fr:eq:CF-scale-threshold-ledger}\textup{(iii)} give
\[
 |h_{j+1,a}(x,E)|
 \ge\tfrac14Q^{-D_{12}}q_{j+2}^{-B_{10}}
 \ge q_{j+2}^{-D_{13}}.
\]
Every remaining new atom is
therefore either attached to an old nondistinguished or exterior branch,
or is a root-free subatom of an old distinguished atom.  Fix a
fixed-itinerary subdomain of
one such atom.  If the old branch selected by this fixed itinerary was
nondistinguished, or lay outside
\(K_{q_{j+1}}(E)\), the previous nonvanishing bound,
\eqref{fr:eq:internal-joining-drift},
\eqref{fr:eq:CF-denominator-lower-bound-margin}, and
\eqref{fr:eq:CF-scale-threshold-ledger}\textup{(iv)} give
\[
 |\cos G_{K_{q_{j+1}}}|
 \ge q_{j+1}^{-D_{13}}-\mathfrak e_j
 \ge q_{j+2}^{-D_{13}}
   +\tfrac12c_{13} q_{j+1}^{-D_{13}-1+1/\tau}
 >q_{j+2}^{-D_{13}}.
\]
If instead the new atom lies
inside an old distinguished atom and does not contain its root, the
level-\((j+1)\) boundary separation and
\eqref{fr:eq:internal-quotient-bootstrap} give on the old branch
\[
 |\cos G_{K_{q_j},\mathcal B_{j,a}}|
 \ge \tfrac14Q^{-D_{12}}q_{j+1}^{-B_{11}}.
\]
Consequently,
\eqref{fr:eq:CF-scale-threshold-ledger}\textup{(iii)--(iv)} gives
\[
 |\cos G_{K_{q_{j+1}}}|
 \ge\tfrac14Q^{-D_{12}}q_{j+1}^{-B_{11}}-\mathfrak e_j
 \ge\tfrac18Q^{-D_{12}}q_{j+1}^{-B_{11}}
 \ge q_{j+2}^{-D_{13}}.
\]
Continuity across temporary itinerary boundaries extends these bounds
to every complete new atom.  Therefore
\[
\begin{gathered}
 Z_{\mathbb R}(h_{j+1,a})
 =\{\widetilde c_{j+2,a}\},\qquad
 Z_{\mathbb R}(\cos G_{K_{q_{j+1}},\mathcal B})=\varnothing
 \quad(\mathcal B\text{ nondistinguished}),\\
 \operatorname{dist}_{\mathbb T}
 (c_{j+2,a},\partial\mathcal B_{j+1,a})
 \ge2q_{j+1}^{-B_{11}}-e^{-c_5\gamma q_{j+1}}
 \ge q_{j+1}^{-B_{11}} .
\end{gathered}
\]
Here \(Z_{\mathbb R}\) denotes the zero set on the indicated real
atom; the last inequality is \eqref{fr:eq:CF-scale-threshold-ledger}\textup{(vi)}.  This proves
\eqref{fr:eq:Young-real-root-clearance} at level \(j+1\) and closes
the simultaneous induction with \eqref{fr:eq:internal-root-contraction}.

Finally, \eqref{fr:eq:internal-parent-child-radius-margin} with index
\(j+1\), together with \eqref{fr:eq:internal-root-contraction}, gives,
by \eqref{fr:eq:CF-scale-threshold-ledger}\textup{(vi)},
\[
 \|\widetilde c_{j+1,a}-\widetilde c_{j,a}\|_\infty+\widehat w_{j+1}
 \le\widehat w_j-\frac38c_{10}
 q_j^{-B_{10}-1+1/\tau},
 \qquad
 K_{q_{j+1}}(E)\Subset K_{q_j}(E).
\]
Thus \ref{fr:def:Young-2s-regular}\textup{(\ref{item:regular-returns})--(\ref{item:regular-zeros})} hold at every level on \(\mathcal G_j\).

\smallskip
\noindent\emph{Step (vii).} We sum the excluded measures and pass to the
limiting set.
The fixed choice of \(C_{11}\) includes a factor-two margin in the
summed polynomial losses.  By the same Fibonacci lower bound, equations
\eqref{fr:eq:internal-cross-return-loss},
\eqref{fr:eq:internal-boundary-loss}, and
\eqref{fr:eq:internal-buffer-loss} give
\begin{equation}\label{fr:eq:internal-total-loss}
 \sum_{j\ge\ell}|\mathcal G_{j-1}\setminus\mathcal G_j|
 \le\frac{C_{11}}2\left(
 Q^{D_6+\tau-B_{10}}
 +Q^{D_6+B_{10}\tau-B_{11}}
 \right)+C_{20}e^{-c_{20}\gamma Q^{1/\tau}}.
\end{equation}
After increasing \(Q_*\), the exponential term is at most the other
half of this polynomial bound.  This proves the first estimate in
Proposition~\ref{fr:prop:Young-CF-reinduction-exclusion}.
Since the \(\mathcal G_j\) are decreasing finite unions of compact
intervals,
\[
 \mathcal Y_Q^{(s)}:=\bigcap_{j\ge\ell}\mathcal G_j
 \quad\text{is compact},\qquad
 |\mathcal Y_Q^{(s)}|
 \ge|\mathcal J_Q|-\sum_{j\ge\ell}
       |\mathcal G_{j-1}\setminus\mathcal G_j|.
\]
By \eqref{fr:eq:CF-propagation-exponent-budget},
\[
\begin{aligned}
 Q^{B_{12}}
 \sum_{j\ge\ell}|\mathcal G_{j-1}\setminus\mathcal G_j|
 &\le C_{11}\left(
 Q^{B_{12}+D_6+\tau-B_{10}}
 +Q^{B_{12}+D_6+B_{10}\tau-B_{11}}\right)\\
 &\quad+C_{20}Q^{B_{12}}e^{-c_{20}\gamma Q^{1/\tau}}
 \longrightarrow0,\\
 |\mathcal Y_Q^{(s)}|
 &\ge|\mathcal J_Q|-\tfrac12c_{11}Q^{-B_{12}}
 \ge\tfrac12|\mathcal J_Q|.
\end{aligned}
\]
Fix \(E\in\mathcal Y_Q^{(s)}\).  Adjacent parameter-cell choices at a
common endpoint give the same fixed-word data and glued roots.  The points
\(c_{j+1,a}(E)\) lie in pairwise distinct atoms \(\mathcal B_{j,a}\).
Set \(r_{j,a}:=\widetilde c_{j+1,a}(E)\).  Then
\(\pi_{\mathbb T}(r_{j,a})=\xi_{j+1,a}(E)\), and
\eqref{fr:eq:internal-quotient-bootstrap} gives
\eqref{fr:eq:Young-real-simple-factorization}, with
\[
 \partial_x\cos G_{K_{q_j},\mathcal B_{j,a}}
   (\pi_{\mathbb T}r_{j,a},E)
 =\mathfrak a_{j,a}^{\mathbb R}(r_{j,a},E)\ne0.
\]
Since \(\frac14Q^{-D_{12}}\ge Q^{-D_{13}}\) for large \(Q\), the
quotient bound has the strength required in \ref{fr:def:Young-2s-regular}\textup{(\ref{item:regular-zeros})}.  The
nonvanishing and boundary-clearance estimates proved in the induction
exclude every other atomwise zero.  Thus \ref{fr:def:Young-2s-regular}\textup{(\ref{item:regular-returns})--(\ref{item:regular-zeros})} hold and
the first-return regularity conclusion of
Proposition~\ref{fr:prop:Young-CF-reinduction-exclusion} and the rate
bound in Corollary~\ref{fr:cor:Young-CF-rate-loss} follow.

\smallskip
\noindent\emph{Step (viii).} We initialize the holomorphic construction
at a fixed retained energy.
Fix \(E\in\mathcal Y_Q^{(s)}\), retain the notation \(K_j,S_j\)
from the setup of Proposition~\ref{fr:prop:Young-CF-holomorphic-realization},
and put
\[
 \overline R_j:=\max_{\mathcal B\in\mathcal P_{K_j}}R_{j,\mathcal B}.
\]
We suppress \(E\) when convenient.
The real objects are first defined on
\(K_j\cap\operatorname{Reg}^{\rm ph}(E)\).  Its complement in
\(K_j^{\rm reg}\) is countable; fixed-word germs extend the data
measurably across it without changing any essential supremum or integral.
The real construction and the componentwise return bound give
\begin{equation}\label{fr:eq:CF-holomorphic-return-lengths}
 q_{j+1}<m_j\le\overline R_j\le C_{20}q_j^{B_{10}\tau},
 \qquad m_j\longrightarrow\infty.
\end{equation}

For each of the finitely many initial words \(B=B_{\ell,\mathcal B}\),
put
\[
 M:=B^{\mathsf T}B,\qquad
 \Delta:=(\operatorname{tr}M)^2-4,\qquad
 \Delta(x)=(\lambda(B(x,E))^2-\lambda(B(x,E))^{-2})^2>0.
\]
Compactness gives
\[
\begin{gathered}
 h_\ell^{\rm aux}=h_\ell^{\rm aux}(Q,V_Q,\varrho_Q,E)>0,\qquad
 h_\ell^{\rm aux}\le\tfrac14,\quad4h_\ell^{\rm aux}<\varrho_Q,\\
 \Delta(z)\ne0
 \quad\text{on every }\mathcal B[4h_\ell^{\rm aux}].
\end{gathered}
\]
On each simply connected lifted atom strip, choose the real-normalized
branches
\[
 \mu^\pm=\frac{\operatorname{tr}M\pm\sqrt\Delta}{2},\qquad
 \Pi=\frac{M-\mu^-I}{\mu^+-\mu^-},\qquad
 \Pi v=v,\quad v^{\mathsf T}v=1,\quad \Lambda^2=\mu^+.
\]
The normalized axes form a two-sheeted holomorphic cover.
The coherent real axes \(v^{\mathbb R},u^{\mathbb R}\) from
\ref{fr:def:Young-2s-regular}\textup{(\ref{item:regular-expansion})} select its sheet and the sign
\(Bv^{\mathbb R}=\Lambda^{\mathbb R}u^{\mathbb R}\).
Simple connectivity and uniqueness of continuation glue the local
formulas \eqref{svd:eq:complex-signed-axis-from-projector} and make
\(v_{\ell,\mathcal B},\Lambda_{\ell,\mathcal B}\) real-symmetric.
With \(\mathcal J_0\) from
Definition~\ref{def:complex-special-orthogonal-group}, set
\[
\begin{aligned}
 u_{\ell,\mathcal B}(S_{\ell,\mathcal B}z)
 &:=\Lambda_{\ell,\mathcal B}(z)^{-1}
    B_{\ell,\mathcal B}(z)v_{\ell,\mathcal B}(z),\\
 V_{\ell,\mathcal B}(z)&:=(v_{\ell,\mathcal B}(z),
                 \mathcal J_0v_{\ell,\mathcal B}(z)),\\
 U_{\ell,\mathcal B}(S_{\ell,\mathcal B}z)&:=
 (u_{\ell,\mathcal B}(S_{\ell,\mathcal B}z),
  \mathcal J_0u_{\ell,\mathcal B}(S_{\ell,\mathcal B}z)).
\end{aligned}
\]
These are real-symmetric holomorphic \({\rm SO}(2,\mathbb C)\) frames and
\[
 B_{\ell,\mathcal B}(z,E)
 =U_{\ell,\mathcal B}(S_{\ell,\mathcal B}z,E)
  \operatorname{diag}(\Lambda_{\ell,\mathcal B}(z,E),
                      \Lambda_{\ell,\mathcal B}(z,E)^{-1})
 V_{\ell,\mathcal B}(z,E)^{\mathsf T}.
\]
For these frames, and for every descendant frame constructed below, use
the input-coordinate convention
\[
 \Pi_{j,\mathcal B}^{\rm in}(z)
 :=v_{j,\mathcal B}(z)v_{j,\mathcal B}(z)^{\mathsf T},\qquad
 \Pi_{j,\mathcal B}^{\rm out}(z)
 :=u_{j,\mathcal B}(S_{j,\mathcal B}z)
   u_{j,\mathcal B}(S_{j,\mathcal B}z)^{\mathsf T}.
\]
After shrinking the height, the two frame pairs separately satisfy
\begin{equation}\label{fr:eq:CF-holomorphic-initial-pair-frames}
\begin{aligned}
 &\|U_{\ell,\mathcal B}(S_{\ell,\mathcal B}z)\|
 +\|U_{\ell,\mathcal B}(S_{\ell,\mathcal B}z)^{-1}\|\le4,\\
 &\|V_{\ell,\mathcal B}(z)\|
 +\|V_{\ell,\mathcal B}(z)^{-1}\|\le4,
\end{aligned}
\qquad z\in\mathcal B[4h_\ell^{\rm aux}],
\end{equation}
and
\[
 |\Lambda_{\ell,\mathcal B}(z,E)|
 \ge\exp\!\left\{\frac34(\gamma-\Omega_Q)
                    R_{\ell,\mathcal B}\right\},
 \qquad z\in\mathcal B[4h_\ell^{\rm aux}].
\]
By the real derivative bounds and continuity, the height can be chosen
so that the following also holds for a fixed
\(B_{19}=B_{19}(\mathfrak D_{\rm CF})\ge1\),
\[
\begin{aligned}
 &\max_{1\le r\le r_*}\max_{\mathcal B}
 \sup_{z\in\mathcal B[4h_\ell^{\rm aux}]}\\
 &\quad
 \left(\|\partial_z^r\Pi_{\ell,\mathcal B}^{\rm in}\|
 +\|\partial_z^r\Pi_{\ell,\mathcal B}^{\rm out}\|
 +(R_{\ell,\mathcal B}+2)^{-1}
  |\partial_z^r\log\Lambda_{\ell,\mathcal B}|\right)
 \le(Q+2)^{B_{19}}.
\end{aligned}
\]
Keep the two initial costs separate:
\[
 J_1:=1+B_{19}\log(Q+2),\qquad
 J_2:=\log^+(h_\ell^{\rm aux})^{-1}.
\]
Thus
\[
 J_1=J_1(\mathfrak D_{\rm CF},Q),\qquad
 J_2=J_2(Q,V_Q,\varrho_Q,E);
\]
no uniform bound on \(J_2\) is required.

Let \(C_{16}=C_{16}(\mathfrak D_{\rm CF})\ge1\) dominate the structural constants in
\eqref{fr:eq:CF-child-count} and
\eqref{fr:eq:CF-holomorphic-return-lengths}, and fix
\(C_{17}>16\).  Apply
Lemmas~\ref{fr:lem:graded-holomorphic-product-inputs}--
\ref{fr:lem:graded-holomorphic-jet-propagation} with
\[
 a_L=B_{10}\tau-1,\qquad a_R=B_{10}\tau,\qquad
 D=D_{13},\qquad g=3\gamma/8,
\]
and with the preceding fixed value of \(C_{16}\),
and retain their common structural constants
\(d_1,\allowbreak d_2,\allowbreak C_{13},\allowbreak c_9,\allowbreak q_{\rm hol}\)
and the polynomials \(\mathcal P_r\).  These are independent of
\(Q,V_Q,\varrho_Q,E\): indeed,
\(g_Q=\frac12(\gamma-\Omega_Q)>3\gamma/8\).
Choose \(A_1<\cdots<A_{r_*}\) recursively so that
\[
 A_r>C_{13}+d_2\!\left(B_{19}+1+
          \sum_{1\le\nu<r}A_\nu\right),\qquad1\le r\le r_*,
\]
and then choose
\[
 D_{14}>d_2A_{r_*}+d_2B_{19}
              +3D_{13}+B_{10}\tau+20.
\]
Set
\begin{equation}\label{fr:eq:CF-holomorphic-strip-recursion}
 h_{j+1}^{\rm aux}:=
 \frac{h_j^{\rm aux}}{64(q_{j+1}+2)^{D_{14}}},
 \qquad
 \mathcal S_j:=1+\sum_{t=\ell+1}^j\log(q_t+2).
\end{equation}
For \(1\le r\le r_*\), put
\[
\begin{aligned}
 \mathcal K_{j,r}
 &:=\max\!\Biggl\{2,\\[-2pt]
 &\qquad\max_{\mathcal B\in\mathcal P_{K_j}}
 \sup_{z\in\mathcal B[4h_j^{\rm aux}]}\\[-2pt]
 &\qquad\left(\|\partial_z^r\Pi_{j,\mathcal B}^{\rm in}\|
 +\|\partial_z^r\Pi_{j,\mathcal B}^{\rm out}\|
 +(R_{j,\mathcal B}+2)^{-1}
  |\partial_z^r\log\Lambda_{j,\mathcal B}|\right)\Biggr\}.
\end{aligned}
\]
Define
\[
 \Gamma_{\ell}:=\frac34(\gamma-\Omega_Q),\qquad
 \Gamma_{j+1}:=\Gamma_j-
 (D_{13}+2)\frac{\log(q_{j+1}+2)}{q_{j+1}}.
\]
After enlarging \(Q_*\), summability gives
\(\Gamma_j\ge g_Q=\frac12(\gamma-\Omega_Q)\).  We maintain
simultaneously, for every \(\mathcal B\in\mathcal P_{K_j}\),
\begin{equation}\label{fr:eq:CF-holomorphic-induction-ledger}
\begin{gathered}
 \Gamma_j\ge g_Q,\qquad
 |\Lambda_{j,\mathcal B}(z,E)|
 \ge e^{\Gamma_jR_{j,\mathcal B}},
 \quad z\in\mathcal B[4h_j^{\rm aux}],\\
 \mathcal K_{j,r}\le
 \exp\{J_1+A_r\mathcal S_j\},
 \qquad1\le r\le r_*.
\end{gathered}
\end{equation}
The strip recursion has the accumulated form
\begin{equation}\label{fr:eq:CF-holomorphic-accumulated-strip}
 h_{j+1}^{\rm aux}
 =h_\ell^{\rm aux}64^{-(j+1-\ell)}
 \exp\!\left\{-D_{14}
 \sum_{t=\ell+1}^{j+1}\log(q_t+2)\right\}.
\end{equation}
Increase \(Q_*\) once more so that, with \(q:=q_{j+1}\), every
\(j\ge\ell\) satisfies the following common holomorphic threshold
bounds:
\begin{equation}\label{fr:eq:CF-holomorphic-scale-threshold-ledger}
\begin{aligned}
\textup{(i)}\quad
 &q\ge q_{\rm hol},\qquad
 8C_{13}h_{j+1}^{\rm aux}q^{D_{13}}
 \left(1+e^{J_1+A_1\mathcal S_j}\right)^{d_2}
 \le1,\\
\textup{(ii)}\quad
 &d_2(J_1+A_{r_*}\mathcal S_j)
 +C_{13}\log(q+2)\le\Gamma_jq,\\
\textup{(iii)}\quad
 &\begin{aligned}
 &(q+2)^{C_{13}}\Bigl[e^{J_1+A_r\mathcal S_j}\\
 &\qquad+\mathcal P_r\Bigl(e^{J_1+A_1\mathcal S_j},\ldots,
 e^{J_1+A_{r-1}\mathcal S_j}\Bigr)\Bigr]\\
 &\qquad\le e^{J_1+A_r\mathcal S_{j+1}},
 \qquad1\le r\le r_*,
 \end{aligned}\\
\textup{(iv)}\quad
 &4\sum_{t\ge\ell}e^{-c_9q_{t+1}}
 \le C_{17}-8.
\end{aligned}
\end{equation}
For \(r=1\), the polynomial in \eqref{fr:eq:CF-holomorphic-scale-threshold-ledger}\textup{(iii)} is understood as its
constant value.  We verify that the choice is uniform.  From
\eqref{fr:eq:CF-holomorphic-accumulated-strip},
\(h_\ell^{\rm aux}\le1/4\), and \(q\ge Q\), the logarithm of the
nonconstant left-hand side in \eqref{fr:eq:CF-holomorphic-scale-threshold-ledger}\textup{(i)} is at most
\[
 C_{20}+d_2J_1
 -(D_{14}-d_2A_1)\mathcal S_j
 -(D_{14}-D_{13})\log(q+2).
\]
Substitute \(J_1=1+B_{19}\log(Q+2)\),
\(Q\le q\), and the choice of \(D_{14}\) to obtain
\eqref{fr:eq:CF-holomorphic-scale-threshold-ledger}\textup{(i)}.  Next,
\(\mathcal S_j\le C_{20}(\log(q_j+2))^2\) gives
\[
 \sup_{j\ge\ell}
 \frac{J_1+\mathcal S_j+\log(q_{j+1}+2)}{q_{j+1}}
 \le C_{20}\frac{(\log(Q+2))^2}{Q}\longrightarrow0.
\]
Since \(\Gamma_j\ge g_Q>3\gamma/8\), this proves \eqref{fr:eq:CF-holomorphic-scale-threshold-ledger}\textup{(ii)}.
For \eqref{fr:eq:CF-holomorphic-scale-threshold-ledger}\textup{(iii)}, write \(T_{j,r}:=e^{J_1+A_r\mathcal S_j}\).
For each monomial \(a\prod_{\nu<r}T_{j,\nu}^{m_\nu}\) of
\(\mathcal P_r\), \(\sum_{\nu<r}m_\nu\le d_2\), and
\[
\begin{aligned}
 \frac{(q+2)^{C_{13}}T_{j,r}}{T_{j+1,r}}
 &=(q+2)^{C_{13}-A_r}\longrightarrow0,\\
 \log\frac{(q+2)^{C_{13}}a\prod_{\nu<r}T_{j,\nu}^{m_\nu}}
                {T_{j+1,r}}
 &\le C_{20}+(d_2-1)J_1\\
 &\quad+\left(d_2\sum_{\nu<r}A_\nu-A_r\right)\mathcal S_j
 +(C_{13}-A_r)\log(q+2)\longrightarrow-\infty .
\end{aligned}
\]
Both limits are uniform in \(j\ge\ell\) as \(Q\to\infty\), by the
recursive choice of \(A_r\) and \(Q\le q\).  Summing the finitely many
monomials proves \eqref{fr:eq:CF-holomorphic-scale-threshold-ledger}\textup{(iii)}; Fibonacci growth proves \eqref{fr:eq:CF-holomorphic-scale-threshold-ledger}\textup{(iv)}.
Choose the constants in the following order:
\[
 (d_1,d_2,C_{13})
 \longrightarrow(A_1,\ldots,A_{r_*})
 \longrightarrow D_{14}\longrightarrow Q_*.
\]
The initial multiplier and jet estimates, together with
\(\Gamma_\ell\ge g_Q\), initialize
\eqref{fr:eq:CF-holomorphic-induction-ledger}.
Simultaneously maintain the two separate bounds
\begin{equation}\label{fr:eq:CF-holomorphic-pairwise-frame-ledger}
\begin{aligned}
 &\max_{\mathcal B}\sup_{z\in\mathcal B[4h_j^{\rm aux}]}
 \bigl(\|U_{j,\mathcal B}(S_{j,\mathcal B}z)\|
 +\|U_{j,\mathcal B}(S_{j,\mathcal B}z)^{-1}\|\bigr)
 \le\frac12C_{17},\\
 &\max_{\mathcal B}\sup_{z\in\mathcal B[4h_j^{\rm aux}]}
 \bigl(\|V_{j,\mathcal B}(z)\|
 +\|V_{j,\mathcal B}(z)^{-1}\|\bigr)
 \le\frac12C_{17}.
\end{aligned}
\end{equation}
By bijectivity of \(S_j\), the assembled source \(U_j(z)\) is supplied
by the backward-return germ over \(S_j^{-1}z\).  Thus, at a common source
point, the \(U_j\)-pair and the \(V_j\)-pair are each bounded by
\(C_{17}/2\), so the four terms on the left of
\eqref{fr:eq:CF-holomorphic-frame-bound} are bounded by
\(C_{17}\).  By
\eqref{fr:eq:CF-holomorphic-initial-pair-frames}, each frame pair starts
at \(4\) and increases by at
most \(2e^{-c_9q_{j+1}}\) at one reinduction.  Therefore
\eqref{fr:eq:CF-holomorphic-scale-threshold-ledger}\textup{(iv)} gives
\[
 4+2\sum_{t\ge\ell}e^{-c_9q_{t+1}}
 \le\frac12C_{17},
\]
and Lemma~\ref{fr:lem:graded-holomorphic-reinduction} closes the two
pairwise bounds.

\smallskip
\noindent\emph{Step (ix).} We propagate the holomorphic normal forms
and their derivatives.
For \(x\in K_j^{\rm reg}\), put \(p=r_{K_j}^+(x)\) and \(x'=S_jx\).
The inverse return and cocycle identities give
\[
 r_{K_j}^-(x')=p,\qquad
 B_{K_j}^-(x')=A_p^{V_Q}(x'-p\alpha,E)
             =A_p^{V_Q}(x,E)=B_{K_j}^+(x).
\]
Choose continuous orientations of the singular axes on each connected
domain of a fixed one-sided return word, before restricting to two-sided
atoms.  A cut changing only the forward return time therefore leaves
the backward source germ unchanged.
Define the source germ of \(U_j\) by the expanding output axis of
the backward-return word.  It is the return-image germ of the forward word.
The sign of the real, nonzero multiplier is chosen by
\[
 B_{j,\mathcal B}(x,E)v_{j,\mathcal B}(x,E)
 =\Lambda_{j,\mathcal B}(x,E)u_j(S_jx,E),\qquad
 |\Lambda_{j,\mathcal B}(x,E)|=\|B_{j,\mathcal B}(x,E)\|.
\]
This identity extends holomorphically.  On temporary refinements the
fixed word is unchanged; uniqueness of the oriented continuation gives
the same germ.  Distinct genuine return atoms retain separate germs.
The real orientations are those in \ref{fr:def:Young-2s-regular}\textup{(\ref{item:regular-expansion})}, so
\begin{equation}\label{fr:eq:CF-real-complex-overlap-identity}
 H_j(x,E):=e_1^{\mathsf T}V_j(x,E)^{\mathsf T}U_j(x,E)e_1
 =\cos G_{K_{q_j},\mathcal B}(x,E),\qquad x\in\mathcal B.
\end{equation}

Use the exact chain \eqref{fr:eq:CF-exact-reinduction}.  At the fixed
energy, choose a lifted open phase fiber
\[
 I_{\mathcal C,E}:=\operatorname{int}
 \{x\in\mathbb R:(x,E)\in\mathcal C\},
 \qquad
 \Omega_{\mathcal C,E}^+
 :=I_{\mathcal C,E}+i(-4h_{j+1}^{\rm aux},4h_{j+1}^{\rm aux}).
\]
Because the itinerary is fixed on \(I_{\mathcal C,E}\) and
\(h_{j+1}^{\rm aux}<h_j^{\rm aux}\), fixed-word continuation gives
\begin{equation}\label{fr:eq:CF-holomorphic-shifted-domain-inclusion}
 z\in\Omega_{\mathcal C,E}^+
 \quad\Longrightarrow\quad
 z+n_t\alpha\in\mathcal B_t[4h_j^{\rm aux}],
 \qquad0\le t<L.
\end{equation}
For
\(1\le t<L\), let
\[
 H_{j,t}(z):=
 v_{j,\mathcal B_t}(z+n_t\alpha)^{\mathsf T}
 u_{j,\mathcal B_{t-1}}(z+n_t\alpha).
\]
Here the second vector is the return-image germ of the preceding word:
\(S_{j,\mathcal B_{t-1}}(z+n_{t-1}\alpha)=z+n_t\alpha\), so both
vectors are evaluated at the same connecting point.
The corresponding connector is
\[
 W_{j,t}(z):=
 V_{j,\mathcal B_t}(z+n_t\alpha)^{\mathsf T}
 U_{j,\mathcal B_{t-1}}(z+n_t\alpha)
 =\begin{pmatrix}H_{j,t}&-\Theta_{j,t}\\
                  \Theta_{j,t}&H_{j,t}\end{pmatrix}
 \in{\rm SO}(2,\mathbb C).
\]
Put \(q:=q_{j+1}\), and pull every child normal form back to the common
itinerary coordinate by setting, for \(0\le t<L\),
\[
\begin{aligned}
 \widetilde B_t(z)&:=B_{j,\mathcal B_t}(z+n_t\alpha,E),\\
 V_t(z)&:=V_{j,\mathcal B_t}(z+n_t\alpha,E),\\
 U_t^+(z)&:=U_{j,\mathcal B_t}(z+n_{t+1}\alpha,E),\\
 \Lambda_t(z)&:=\Lambda_{j,\mathcal B_t}(z+n_t\alpha,E).
\end{aligned}
\]
Then
\[
 \widetilde B_t=U_t^+
 \operatorname{diag}(\Lambda_t,\Lambda_t^{-1})V_t^{\mathsf T},
 \qquad V_t^{\mathsf T}U_{t-1}^+=W_{j,t}\quad(1\le t<L).
\]
By \eqref{fr:eq:CF-internal-node-exclusion} and \ref{fr:def:Young-2s-regular}\textup{(\ref{item:regular-zeros})},
\(|H_{j,t}(x)|\ge q^{-D_{13}}\) on the real itinerary cell.
Moreover, \eqref{fr:eq:internal-long-first-return} gives
\[
 R_t=R_{j,\mathcal B_t}>q_{j+1}=q,\qquad0\le t<L,
\]
and \eqref{fr:eq:CF-child-count} gives
\(L\le C_{20}(q+2)^{B_{10}\tau-1}\), while
\eqref{fr:eq:CF-holomorphic-return-lengths} gives
\(\overline R_j\le C_{20}(q+2)^{B_{10}\tau}\).  The exact chain,
\eqref{fr:eq:CF-holomorphic-shifted-domain-inclusion}, the child normal forms,
\eqref{fr:eq:CF-holomorphic-induction-ledger}, and
\eqref{fr:eq:CF-holomorphic-pairwise-frame-ledger} together with
\eqref{fr:eq:CF-holomorphic-scale-threshold-ledger}\textup{(i)--(ii)}
therefore verify all hypotheses of
Lemmas~\ref{fr:lem:graded-holomorphic-product-inputs}--
\ref{fr:lem:graded-holomorphic-jet-propagation}, with
\[
 (a_L,a_R,D,\Gamma,h^+)
 =(B_{10}\tau-1,B_{10}\tau,D_{13},
   \Gamma_j,h_{j+1}^{\rm aux}).
\]

Apply Lemmas~\ref{fr:lem:graded-holomorphic-product-inputs}--
\ref{fr:lem:graded-holomorphic-jet-propagation} on
\(\Omega_{\mathcal C,E}^+\) and extend the output to the closed strip by
continuity.  The common output defines the level-\((j+1)\) multiplier
and frames and, by
\eqref{fr:eq:graded-holomorphic-connector-output}, gives
\begin{equation}\label{fr:eq:CF-intermediate-complex-safety}
 |H_{j,t}(z)|\ge\frac12q_{j+1}^{-D_{13}},
 \qquad1\le t<L,\qquad z\in\overline{\Omega_{\mathcal C,E}^+}.
\end{equation}
Thus \eqref{fr:eq:CF-intermediate-complex-safety} gives complex
nonvanishing of the internal connectors in the reinduction.
In the original coordinates, the endpoint-frame conclusion is
\begin{equation}\label{fr:eq:CF-holomorphic-frame-drift}
\begin{aligned}
 \min_{\epsilon_U,\epsilon_V\in\{\pm1\}}\Bigl(&
 \|U_{j+1,\mathcal C}(\,\cdot+n_L\alpha)
      -\epsilon_U U_{j,\mathcal B_{L-1}}(\,\cdot+n_L\alpha)\|_{C^{r_*}}\\
 &+\|V_{j+1,\mathcal C}-\epsilon_V V_{j,\mathcal B_0}\|_{C^{r_*}}
 \Bigr)\le e^{-c_9q_{j+1}},
\end{aligned}
\end{equation}
where the norms are taken on
\(\overline{\Omega_{\mathcal C,E}^+}\);
the signs are constant on each itinerary strip, and the same estimate
holds for the inverse frames. Choose continuous real axes for each
complete return word and match the local continuations by
\[
 (U,V,\Lambda)\longmapsto
 (\epsilon_U U,\epsilon_V V,\epsilon_U\epsilon_V\Lambda).
\]
This preserves the normal form, frame norms, projections, multiplier
modulus, and positive-order logarithmic derivatives.
Across a temporary common boundary, shrink to a common complex
neighborhood and label the two descriptions by \(1,2\).  The total
fixed word and its dominant Riesz projections satisfy
\[
 B^{(1)}=B^{(2)}=B_{\rm tot},\qquad
 \Pi^{{\rm in},(1)}=\Pi^{{\rm in},(2)},\qquad
 \Pi^{{\rm out},(1)}=\Pi^{{\rm out},(2)}.
\]
The real orientation \ref{fr:def:Young-2s-regular}\textup{(\ref{item:regular-expansion})} and uniqueness of continuation give
\[
\begin{gathered}
 v^{(1)}=v^{(2)},\quad u^{(1)}=u^{(2)},\qquad
 B_{\rm tot}v^{(i)}=\Lambda^{(i)}u^{(i)}
 \ \Longrightarrow\ \Lambda^{(1)}=\Lambda^{(2)},\\
 \partial_z^r(U^{(1)},V^{(1)},\Lambda^{(1)})
 =\partial_z^r(U^{(2)},V^{(2)},\Lambda^{(2)}),
 \qquad0\le r\le r_* .
\end{gathered}
\]
This gluing also preserves a neighborhood of the full closed atom strip.
Indeed, on each closed itinerary strip,
\[
 \Delta_{\rm tot}
 :=\bigl(\operatorname{tr}(B_{\rm tot}^{\mathsf T}B_{\rm tot})\bigr)^2-4
 =\bigl(\Lambda_{\rm tot}^{2}-\Lambda_{\rm tot}^{-2}\bigr)^2\ne0,
 \qquad |\Lambda_{\rm tot}|>1.
\]
The holomorphic eigenprojections and normalized axes therefore extend
across every boundary point.  There are finitely many itinerary cells,
and their closed real fibers cover the closure of the complete return atom;
the preceding equalities glue their extensions on
\(\mathcal B[4h_{j+1}^{\rm aux}]\) and a neighborhood of it.
The same argument applies to the backward word defining the source frame.
Taking the maximum over the itinerary cells gives
\begin{equation}\label{fr:eq:CF-holomorphic-jet-recurrence}
 \mathcal K_{j+1,r}
 \le(q_{j+1}+2)^{C_{13}}
 \left(\mathcal K_{j,r}
 +\mathcal P_r(\mathcal K_{j,1},\ldots,
                       \mathcal K_{j,r-1})\right),
 \qquad1\le r\le r_*.
\end{equation}
The recurrence \eqref{fr:eq:CF-holomorphic-jet-recurrence} and
\eqref{fr:eq:CF-holomorphic-scale-threshold-ledger}\textup{(iii)} close
the graded derivative bound, while
\eqref{fr:eq:CF-holomorphic-frame-drift} and
\eqref{fr:eq:CF-holomorphic-scale-threshold-ledger}\textup{(iv)} close
the \(C_{17}\)-bound.  Finally,
\eqref{fr:eq:graded-holomorphic-expansion-output} gives
\begin{equation}\label{fr:eq:CF-holomorphic-rate-update}
\begin{aligned}
 |\Lambda_{j+1,\mathcal C}(z,E)|
 &\ge\exp\!\left\{\left(\Gamma_j
 -(D_{13}+2)\frac{\log(q_{j+1}+2)}{q_{j+1}}\right)
 R_{j+1,\mathcal C}\right\}\\
 &=e^{\Gamma_{j+1}R_{j+1,\mathcal C}},
 \qquad z\in\overline{\Omega_{\mathcal C,E}^+}.
\end{aligned}
\end{equation}
This closes the expansion induction. Since \(\Gamma_j\ge g_Q\),
\eqref{fr:eq:CF-holomorphic-induction-ledger} gives
\begin{equation}\label{fr:eq:CF-holomorphic-block-expansion}
 |\Lambda_{j,\mathcal B}(z,E)|
 \ge e^{g_QR_{j,\mathcal B}},
 \qquad z\in\mathcal B[4h_j^{\rm aux}].
\end{equation}
The bound on \(\mathcal S_j\) used in
\eqref{fr:eq:CF-holomorphic-scale-threshold-ledger} gives
\begin{equation}\label{fr:eq:CF-holomorphic-jet-subexponential}
 \log\mathcal K_{j,r}
 \le J_1+C_{20}(\log(q_j+2))^2
 =o_{j\to\infty}(m_j).
\end{equation}

\smallskip
\noindent\emph{Step (x).} We construct the complex divisors and
verify the acceleration criterion.
Writing \(H_{j,\mathcal B}\) for the atom germ of \(H_j\), define
\[
 \mathscr J_j:=2+
 \max_{\mathcal B\in\mathcal P_{K_j}}\max_{1\le r\le r_*}
 \sup_{z\in\mathcal B[4h_j^{\rm aux}]}
 |\partial_z^rH_{j,\mathcal B}(z,E)|.
\]
The frame construction and
\eqref{fr:eq:CF-holomorphic-jet-subexponential} give
\(\log\mathscr J_j=o_j(m_j)\).  Define
\begin{equation}\label{fr:eq:CF-holomorphic-divisor-height}
 h_j:=\frac1{64}\min\left\{
 h_{j+1}^{\rm aux},\ Q^{-D_{13}}\mathscr J_j^{-1},\
 q_{j+1}^{-D_{13}}\mathscr J_j^{-1}\right\}.
\end{equation}
On a distinguished lift, \ref{fr:def:Young-2s-regular}\textup{(\ref{item:regular-zeros})} and
\eqref{fr:eq:CF-real-complex-overlap-identity} give the complete real
divisor \(r_{j,a}\) and the quotient bounds
\(Q^{-D_{13}}\le|\mathfrak a_{j,a}^{\mathbb R}|\le Q^{D_{13}}\).
Apply Lemma~\ref{lem:real-cluster-complex-divisor} with
\[
 I=\overline{\widetilde{\mathcal B}_{j,a}},\quad P=1,\quad
 h_0=h_{j+1}^{\rm aux},\quad
 (\mu_-,\mu_+)=(Q^{-D_{13}},Q^{D_{13}}).
\]
Since \(P=1\) and \(r_*\ge3\), its \(J_1,J_2\) are at most
\(\mathscr J_j\), and its reduced height satisfies
\[
 \widehat h\ge\frac14\min\{h_{j+1}^{\rm aux},
          2Q^{-D_{13}}\mathscr J_j^{-1}\}\ge4h_j
\]
by \eqref{eq:real-cluster-reduced-height} and
\eqref{fr:eq:CF-holomorphic-divisor-height}.  Hence
\[
 H_j(z,E)=(z-r_{j,a})\mathfrak a_{j,a}^{\mathbb C}(z,E),
 \qquad
 \frac12Q^{-D_{13}}
 \le|\mathfrak a_{j,a}^{\mathbb C}|
 \le2Q^{D_{13}},
\]
where the quotient is holomorphic, nonvanishing, and real-symmetric.
The same lemma controls
\(\partial_z^2\log\mathfrak a_{j,a}^{\mathbb C}\) there.
On a nondistinguished atom, \ref{fr:def:Young-2s-regular}\textup{(\ref{item:regular-zeros})} and
\eqref{fr:eq:CF-holomorphic-divisor-height} give
\[
 |H_j(x+iy,E)|
 \ge q_{j+1}^{-D_{13}}-4h_j\mathscr J_j
 \ge\frac12q_{j+1}^{-D_{13}},
 \qquad |y|\le4h_j.
\]
Set \(\kappa_j:=\frac12q_{j+1}^{-D_{13}}\) when nondistinguished
atoms occur, and \(\kappa_j:=1\) otherwise; take
\[
 d_j:=\min_{1\le a\le2s}
 \operatorname{dist}(r_{j,a},\partial\widetilde{\mathcal B}_{j,a})
 \ge q_j^{-B_{11}}.
\]
Let \(D_j:=2C_{17}\).  Since \(h_j\le h_j^{\rm aux}\),
the maintained four-frame estimate gives
\eqref{fr:eq:CF-holomorphic-frame-bound} with this \(D_j\).  Likewise,
\eqref{fr:eq:CF-holomorphic-block-expansion},
\(R_{j,\mathcal B}\ge m_j\), and \(\chi=g_Q\) give
\eqref{fr:eq:CF-holomorphic-expansion}.  Fix one value of \(C_{20}\),
independent of \(j\), large enough for the divisor and logarithmic-derivative
bounds below, and set
\[
\begin{aligned}
 \mathscr A_j:=2+2Q^{D_{13}}+C_{20}\bigl(&Q^{D_{13}}\mathscr J_j
 {}+Q^{2D_{13}}\mathscr J_j^2\\
 &+\kappa_j^{-1}\mathscr J_j
 {}+\kappa_j^{-2}\mathscr J_j^2\bigr).
\end{aligned}
\]
The first line of terms controls the distinguished quotients by the
divisor lemma; the second controls
\(\partial_z^2\log H_j=H_j''/H_j-(H_j'/H_j)^2\) on nondistinguished
atoms.  In particular, \mbox{\(\log\mathscr A_j=o_j(m_j)\)}.
The factorizations above contain no further zeros on distinguished
atoms, and the nondistinguished lower bound excludes all others.  With
\(\mathcal P_j:=\mathcal P_{K_j}\), the source--image germ convention now
gives
\[
 \mathscr D_j\in\operatorname{DivAdm}(2s;h_j,d_j,\kappa_j),\qquad
 \nu_j=2s,\quad P_{j,a}=1,\quad c_{j,a,1}=r_{j,a}.
\]

Iteration of \eqref{fr:eq:CF-holomorphic-strip-recursion} gives
\[
 \log(h_j^{\rm aux})^{-1}
 \le J_2+C_{20}\mathcal S_j
 \le J_2+C_{20}(\log(q_j+2))^2.
\]
Together with \eqref{fr:eq:CF-holomorphic-return-lengths},
\eqref{fr:eq:CF-holomorphic-jet-subexponential}, and
\eqref{fr:eq:CF-holomorphic-divisor-height}, this proves
\eqref{fr:eq:CF-holomorphic-subexponential-data}; it also implies
\(e^{-\beta m_j}\le h_j\) eventually for every fixed \(\beta>0\).

Finally, real symmetry and nonvanishing give, on each atom,
\[
 F_j:=\frac{\Lambda_j'}{\Lambda_j},\qquad
 F_j(x)\in\mathbb R,\qquad
 \partial_y\log|\Lambda_j(x+iy)|=-\operatorname{Im}F_j(x+iy).
\]
Consequently, for \(0\le y\le h_j\),
\[
\begin{aligned}
 |\partial_y\log|\Lambda_j(x+iy)||
 &\le|F_j(x+iy)-F_j(x)|\\
 &\le\int_0^y|F_j'(x+it)|\,dt
 \le y(R_{j,\mathcal B}+2)\mathcal K_{j,2}.
\end{aligned}
\]
Integration gives
\[
 \int_{K_j}\left|\partial_y\log|\Lambda_j(x+iy,E)|\right|\,dx
 \le C_{20}|K_j|y(\overline R_j+2)\mathcal K_{j,2}.
\]
For every fixed \(\beta>0\), the subexponential bounds imply, eventually,
\[
\begin{aligned}
 \sup_{0\le y\le e^{-\beta m_j}}
 \int_{K_j}|\partial_y\log|\Lambda_j(x+iy,E)||\,dx
 &\le C_{20}|K_j|(\overline R_j+2)\mathcal K_{j,2}e^{-\beta m_j}\\
 &\le \exp\{-\beta m_j+o_j(m_j)\}\longrightarrow0 .
\end{aligned}
\]
The threshold and little-\(o\) may depend on \(\beta\).
For \eqref{fr:eq:CF-holomorphic-beta-choice},
\[
 2\chi-2P_{\rm loc}\beta_- -\beta_+
 =\left(\frac32-\frac1{16s}\right)\chi>0.
\]
Together with
\eqref{fr:eq:CF-holomorphic-admissibility}--
\eqref{fr:eq:CF-holomorphic-vertical-variation}, this verifies the
hypotheses of Theorem~\ref{thm:persistent-divisor-acceleration} after
removing a finite initial segment that may depend on the fixed data.
This proves Proposition~\ref{fr:prop:Young-CF-holomorphic-realization}
and Corollary~\ref{fr:cor:Young-CF-acceleration-data}, and completes
the joint proof.
\end{proof}

We apply the selection in the setting of
Proposition~\ref{fr:prop:prevalent-multiwell-seed}, with parameters
\(r_*,R,q\).  Retain its constants \(\varepsilon_{\rm seed},C_8,B_9\).

\begin{corollary}
\label{fr:cor:prevalent-multiwell-Young-regular}
With the preceding notation, there exists
\[
 \varepsilon_{\rm Y}=\varepsilon_{\rm Y}(R,q,r_*)>0
\]
such that the following holds.  Let
\[
 \delta\in\mathcal P_{\mathrm{strat}}(R,q),\qquad
 0<\varepsilon<\varepsilon_{\rm Y},
\]
and define \(Q=q_\ell\) by
\eqref{fr:eq:regular-cor-starting-scale}.  For every \(1\le s\le N\),
there is a compact set
\[
 \mathcal Y_{Q,s}^{\varepsilon,\delta}
 \subset\mathcal J_{Q,s}^{\varepsilon,\delta}
\]
such that
\begin{equation}\label{fr:eq:regular-spectral-layer-measure}
 \mathcal Y_{Q,s}^{\varepsilon,\delta}
 \subset
 \left\{E\in\Sigma(V_{\varepsilon,\delta},\alpha):
 \omega(\alpha,E;V_{\varepsilon,\delta})=s\right\},
 \qquad
 |\mathcal Y_{Q,s}^{\varepsilon,\delta}|
 \ge\frac{C_8^{-1}}2Q^{-B_9}>0.
\end{equation}
\end{corollary}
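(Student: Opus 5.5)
The plan is to chain the seed construction, the continued-fraction selection, spectral inclusion and the acceleration criterion, keeping all thresholds uniform in $s$. Set $\varepsilon_{\rm Y}:=\varepsilon_{\rm seed}(R,q,r_*)$, decreased further below. Fix $\delta\in\mathcal P_{\mathrm{strat}}(R,q)$, $0<\varepsilon<\varepsilon_{\rm Y}$, and $1\le s\le N$.

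First, Proposition~\ref{fr:prop:prevalent-multiwell-seed} gives the scale $Q=q_\ell$ with $Q\ge Q_0$. It also gives an interval $\mathcal J_{Q,s}^{\varepsilon,\delta}$ with $|\mathcal J_{Q,s}^{\varepsilon,\delta}|\ge C_8^{-1}Q^{-B_9}$, carrying scale-$Q$ initial data for $V_Q:=V_{\varepsilon,\delta}$ with $2s$ simple zeros and parameters $\mathfrak D_{\rm seed}$. These parameters satisfy \eqref{fr:eq:multipeak-seed-propagation-budget}, and this budget coincides with \eqref{fr:eq:CF-propagation-exponent-budget} once we set $B_{12}:=B_9$ and $c_{11}:=C_8^{-1}$; the orders agree because $r_\dagger=r_*+2$ in both. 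Hence $\mathfrak D_{\rm CF}$ is determined by $(s,r_*,\alpha,\kappa,R,q)$. Take the strip width to be $\varrho_Q:=\rho$.

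Next, Proposition~\ref{fr:prop:Young-CF-reinduction-exclusion} applies provided $Q\ge Q_*(\mathfrak D_{\rm CF})$. This is the one place requiring care: $Q_*$ depends on $s$, but since $s$ ranges over the finite set $\{1,\dots,N\}$ we may take the maximum. We then shrink $\varepsilon_{\rm Y}$ so that the scale chosen by \eqref{fr:eq:regular-cor-starting-scale} exceeds this maximum, which is possible because $Q\to\infty$ as $\varepsilon\to0$ uniformly over the quantitative class. Define $\mathcal Y_{Q,s}^{\varepsilon,\delta}:=\mathcal Y_Q^{(s)}$. This set is compact, it lies in $\mathcal J_{Q,s}^{\varepsilon,\delta}$, and its measure satisfies
\[
 |\mathcal Y_Q^{(s)}|\ge\tfrac12|\mathcal J_{Q,s}^{\varepsilon,\delta}|\ge\tfrac12C_8^{-1}Q^{-B_9},
\]
which is the measure bound in \eqref{fr:eq:regular-spectral-layer-measure}.

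For each $E\in\mathcal Y_Q^{(s)}$ we then do two things. First, $(V_{\varepsilon,\delta},\alpha,E)$ is first-return regular with $2s$ simple zeros. Its deficit satisfies $\Omega_Q<\gamma/4$ by Corollary~\ref{fr:cor:Young-CF-rate-loss}. Proposition~\ref{fr:prop:Young-regular-is-spectral} therefore gives $E\in\Sigma(V_{\varepsilon,\delta},\alpha)$. Second, Proposition~\ref{fr:prop:Young-CF-holomorphic-realization} and Corollary~\ref{fr:cor:Young-CF-acceleration-data} supply the data $(K_j,m_j,\mathscr D_j,D_j,\mathscr A_j)_{j\ge j_{\rm hol}}$. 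These satisfy every hypothesis of Theorem~\ref{thm:persistent-divisor-acceleration} with $P_{\rm loc}=2s$ and $\chi=g_Q$, so $\omega(\alpha,E;V_{\varepsilon,\delta})=s$.

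No further energy exclusion is needed at this last step, since the holomorphic realization holds at every retained energy. The main obstacle is therefore only the bookkeeping of uniformity: the thresholds from the seed proposition and the CF proposition must be taken as a common minimum over $s$. Both depend only on $(R,q,r_*)$ and the fixed background data, so this minimum is positive.
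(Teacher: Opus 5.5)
Your proposal is correct and follows the paper's own proof essentially step for step. You obtain the seed from Proposition~\ref{fr:prop:prevalent-multiwell-seed} and feed it into Proposition~\ref{fr:prop:Young-CF-reinduction-exclusion} with $(c_{11},B_{12})=(C_8^{-1},B_9)$ and $\varrho_Q=\rho$, shrinking $\varepsilon_{\rm Y}$ once to exceed the finitely many $s$-dependent thresholds $Q_*$; spectral inclusion then comes from Proposition~\ref{fr:prop:Young-regular-is-spectral}, and $\omega=s$ from the holomorphic realization together with Theorem~\ref{thm:persistent-divisor-acceleration}. The only point you leave implicit, which the paper states explicitly, is that the reference rate must be taken as $\gamma=\log\lambda$ so that it matches the fourth entry of $\mathfrak D_{\rm seed}$.
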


The same spectral sets carry the quantitative divisor data used above.

\begin{corollary}
\label{fr:cor:prevalent-multiwell-divisor-hierarchy}
The sets in Corollary~\ref{fr:cor:prevalent-multiwell-Young-regular}
can be chosen so that every
\(E\in\mathcal Y_{Q,s}^{\varepsilon,\delta}\) has a data-dependent index
\(j_{\rm hol}\ge\ell\)
with the following properties.
\begin{enumerate}
\item The data supplied by
Proposition~\ref{fr:prop:Young-CF-holomorphic-realization} and
Corollary~\ref{fr:cor:Young-CF-acceleration-data},
restricted to \(j\ge j_{\rm hol}\), satisfy all hypotheses of
Theorem~\ref{thm:persistent-divisor-acceleration} with
\[
 m_j=\underline R_j,\qquad
 \chi=g_Q:=\frac12(\log\lambda-\Omega_Q)>0,\qquad
 P_{\rm loc}=2s,
\]
and
\[
 \beta_+=\frac{\chi}{16s},\qquad
 \beta_-=\frac{\chi}{8s}.
\]
\item At every such level,
\(\nu_j=2s\) and \(P_{j,a}=1\), \(1\le a\le2s\).  Thus the
distinguished atoms contain exactly \(2s\) simple real zeros in total;
the atomwise factorizations are complete and there are no other local
zeros.
\end{enumerate}
\end{corollary}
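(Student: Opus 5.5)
The plan is to obtain Corollary~\ref{fr:cor:prevalent-multiwell-divisor-hierarchy} essentially by bookkeeping. We take the sets \(\mathcal Y_{Q,s}^{\varepsilon,\delta}\) to be exactly the sets \(\mathcal Y_Q^{(s)}\) produced by Proposition~\ref{fr:prop:Young-CF-reinduction-exclusion}. That proposition is applied to \(V_Q:=V_{\varepsilon,\delta}\), to any fixed \(\varrho_Q\in(0,\rho)\), and to the window \(\mathcal J_Q:=\mathcal J_{Q,s}^{\varepsilon,\delta}\) supplied by Proposition~\ref{fr:prop:prevalent-multiwell-seed}. This is presumably also how Corollary~\ref{fr:cor:prevalent-multiwell-Young-regular} was obtained, so the same sets serve both corollaries.

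First, verify the inputs. By Proposition~\ref{fr:prop:prevalent-multiwell-seed}, for \(\delta\in\mathcal P_{\rm strat}(R,q)\) and \(\varepsilon<\varepsilon_{\rm seed}\), the scale-\(Q\) data are \(\mathfrak D_{\rm seed}\)-admissible with \(2s\) simple zeros. The window satisfies \(|\mathcal J_{Q,s}^{\varepsilon,\delta}|\ge C_8^{-1}Q^{-B_9}\), which is \eqref{fr:eq:CF-selection-window} with \(c_{11}=C_8^{-1}\) and \(B_{12}=B_9\). The exponent budget \eqref{fr:eq:multipeak-seed-propagation-budget} coincides with \eqref{fr:eq:CF-propagation-exponent-budget} once \(B_{12}=B_9\). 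The margin needed for \(D_{13}\) is \((r_\dagger+3)=(r_*+5)\), so the orders also match. The entries \(r_\dagger=r_*+2\), \(\tau\), \(\gamma=\log\lambda\) are as required. Hence \(\mathfrak D_{\rm CF}\) depends only on \((R,q,r_*)\) and the background data. Shrinking \(\varepsilon_{\rm Y}\le\varepsilon_{\rm seed}\) forces \(Q\ge\max\{Q_0,Q_*\}\), via \eqref{fr:eq:regular-cor-epsilon-Q-comparison} and \(Q\to\infty\) as \(\varepsilon\downarrow0\). The proposition then yields compact \(\mathcal Y_Q^{(s)}\) with \(|\mathcal Y_Q^{(s)}|\ge\frac12|\mathcal J_Q|\ge\frac12C_8^{-1}Q^{-B_9}\), and every \(E\) in it is first-return regular with \(2s\) simple zeros, relative to \(\gamma\).

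Next, for each such \(E\), invoke Proposition~\ref{fr:prop:Young-CF-holomorphic-realization} and Corollary~\ref{fr:cor:Young-CF-acceleration-data}. They give a data-dependent \(j_{\rm hol}\ge\ell\) for which \((\mathscr D_j)_{j\ge j_{\rm hol}}\) satisfy all hypotheses of Theorem~\ref{thm:persistent-divisor-acceleration}. The parameters are \(m_j=\underline R_j\), \(\chi=g_Q\), \(P_{\rm loc}=2s\), and \(\beta_\pm\) as in \eqref{fr:eq:CF-holomorphic-beta-choice}. Positivity of \(\chi\) follows from \(\Omega_Q<\gamma/4\) in Corollary~\ref{fr:cor:Young-CF-rate-loss}. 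Item (ii) is then read off from \eqref{fr:eq:CF-holomorphic-admissibility}: \(\nu_j=2s\), \(P_{j,a}=1\), \(c_{j,a,1}=r_{j,a}\), with no other atomwise real zeros. This uses the completeness clause of Definition~\ref{fr:def:Young-2s-regular}(\ref{item:regular-zeros}) and the complex factorization obtained from Lemma~\ref{lem:real-cluster-complex-divisor}. Truncating at \(j_{\rm hol}\) preserves admissibility, since every condition is levelwise.

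For consistency with Corollary~\ref{fr:cor:prevalent-multiwell-Young-regular}, note two further facts. Proposition~\ref{fr:prop:Young-regular-is-spectral} gives spectral inclusion. Theorem~\ref{thm:persistent-divisor-acceleration} gives \(\omega=s\). Hence the chosen sets satisfy \eqref{fr:eq:regular-spectral-layer-measure} as well.

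The only point needing care, and the main obstacle, is uniformity. The thresholds \(Q_*\) and the constants depend on \(\mathfrak D_{\rm CF}\) alone, and hence on \((R,q,r_*)\), never on \(E\), \(\delta\) or \(\varepsilon\). The \(E\)-dependent quantities, namely \(h_\ell^{\rm aux}\), \(J_2\) and \(j_{\rm hol}\), enter only after the real selection. One must therefore check that the choice \(\varepsilon_{\rm Y}\) is made once, as the minimum of \(\varepsilon_{\rm seed}\) and the amplitude forcing \(Q\ge Q_*\), uniformly over the finitely many \(s\le N\).
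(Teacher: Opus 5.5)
Your proposal is correct and follows the paper's joint proof of the two corollaries essentially step for step. You take \(\mathcal Y_{Q,s}^{\varepsilon,\delta}\) to be the sets produced by Proposition~\ref{fr:prop:Young-CF-reinduction-exclusion}, applied with \(V_Q=V_{\varepsilon,\delta}\) and \((c_{11},B_{12})=(C_8^{-1},B_9)\) to the seed windows of Proposition~\ref{fr:prop:prevalent-multiwell-seed}; you shrink \(\varepsilon_{\rm Y}\le\varepsilon_{\rm seed}\) once, uniformly over the finitely many \(s\le N\); and at each retained energy you read off the hierarchy and item (ii) from Proposition~\ref{fr:prop:Young-CF-holomorphic-realization} and Corollary~\ref{fr:cor:Young-CF-acceleration-data}. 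The only difference is cosmetic: you allow any \(\varrho_Q\in(0,\rho)\) where the paper takes \(\varrho_Q=\rho\).
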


\begin{proof}[Proof of
Corollaries~\ref{fr:cor:prevalent-multiwell-Young-regular}
and~\ref{fr:cor:prevalent-multiwell-divisor-hierarchy}]
Proposition~\ref{fr:prop:prevalent-multiwell-seed} supplies the intervals
\(\mathcal J_{Q,s}^{\varepsilon,\delta}\), the lower bound
\eqref{fr:eq:multipeak-seed-window-measure}, common initial-scale parameters
\(\mathfrak D_{\rm seed}\), an accompanying exponent \(D_{13}\), and
admissible initial-scale data with \(2s\) simple zeros for every
\(1\le s\le N\).
Apply Proposition~\ref{fr:prop:Young-CF-reinduction-exclusion} with
\[
 \gamma=\log\lambda,\qquad
 V_Q=V_{\varepsilon,\delta},\qquad \varrho_Q=\rho,\qquad
 (c_{11},B_{12})=(C_8^{-1},B_9).
\]
The parameter values \(r_\dagger,\tau,\gamma\) agree by construction, and
\eqref{fr:eq:multipeak-seed-propagation-budget} is precisely
\eqref{fr:eq:CF-propagation-exponent-budget}.  Since \(1\le s\le N\)
is a finite range, decreasing
\(\varepsilon_{\rm Y}\le\varepsilon_{\rm seed}\) once makes the
starting denominator exceed all corresponding thresholds.  The
proposition gives the compact sets, their measure bounds, and
first-return regularity; Corollary~\ref{fr:cor:Young-CF-rate-loss}
gives their growth-rate deficit \(\Omega_Q\).  Proposition
\ref{fr:prop:Young-regular-is-spectral} gives their spectral inclusion.

\smallskip
For the holomorphic conclusion, fix \(E\in\mathcal Y_{Q,s}^{\varepsilon,\delta}\).
Proposition~\ref{fr:prop:Young-CF-holomorphic-realization} and
Corollary~\ref{fr:cor:Young-CF-acceleration-data} give the
stated hierarchy for \(j\ge j_{\rm hol}(E)\), satisfying the hypotheses
of Theorem~\ref{thm:persistent-divisor-acceleration} with
\(P_{\rm loc}=2s\).  Hence
\[
 \omega(\alpha,E;V_{\varepsilon,\delta})
 =\frac12P_{\rm loc}=s.
\]
This proves \eqref{fr:eq:regular-spectral-layer-measure}.
\end{proof}

\begin{proof}[Proof of
Proposition~\ref{prop:perturbative-hierarchy-package}]
Let \(\mathcal P_{\mathrm{strat}}\) be defined by
\eqref{eq:explicit-Pstrat}.  It is open and prevalent by
Proposition~\ref{prop:standard-prevalence-Schur}.  Fix
\(\delta\in\mathcal P_{\mathrm{strat}}\).  By
\eqref{eq:Pstrat-quantitative-exhaustion}, choose \(R,q\ge1\) such that
\(\delta\in\mathcal P_{\mathrm{strat}}(R,q)\), and set
\[
 \varepsilon_{\mathrm{hier}}(\delta)
 :=\varepsilon_{\rm Y}(R,q,3)>0.
\]

Let \(0<\varepsilon<\varepsilon_{\mathrm{hier}}(\delta)\).  Apply
Corollaries~\ref{fr:cor:prevalent-multiwell-Young-regular}
and~\ref{fr:cor:prevalent-multiwell-divisor-hierarchy} with \(r_*=3\)
and set
\[
 \mathcal Y_s^{\varepsilon,\delta}
 :=\mathcal Y_{Q,s}^{\varepsilon,\delta},
 \qquad 1\le s\le N.
\]
Equation~\eqref{fr:eq:regular-spectral-layer-measure} gives
\eqref{eq:black-box-positive-spectral-set}.
Corollary~\ref{fr:cor:prevalent-multiwell-divisor-hierarchy} supplies, for every
\(E\in\mathcal Y_s^{\varepsilon,\delta}\), the hierarchy
\((K_j,m_j,\mathscr D_j,D_j,\mathscr A_j)_{j\ge j_{\rm hol}}\).
Taking \(j_0:=j_{\rm hol}\) and \(\chi:=g_Q\) gives the remaining
conclusions.
\end{proof}
\appendix
\section{Proof of the almost-Mathieu upper-edge estimates}
\label{app:AMO-upper-edge}

\noindent Throughout this appendix, fix the data of
Proposition~\ref{fr:prop:deterministic-AMO-seed} and write
\begin{equation}\label{eq:AMO-appendix-data}
\begin{gathered}
 \beta\in\DC(\kappa_\beta,\tau),\qquad
 H_x:=H_{\lambda,\beta,x},\qquad
 \Sigma_{\mu,\beta}:=\Sigma(2\mu\cos(2\pi\,\cdot),\beta)\quad(\mu>0),\\
 E_+:=E_+(\beta)=\sup\Sigma_{\lambda,\beta},\qquad
 A_m(x,E):=A_{m,\beta}^{2\lambda\cos(2\pi\,\cdot)}(x,E).
 \end{gathered}
\end{equation}
All return times and return matrices refer to \(T_\beta\), with the
conventions of Definition~\ref{fr:def:regular-phase-family} and
Subsection~\ref{subsec:induced-exponents}. On an atom with labels \(p,q\),
\[
 B_I^+(x,E)=A_p(x,E),\qquad
 B_I^-(x,E)=A_q(x-q\beta,E).
\]
For lattice restrictions, \(e_j\) denotes the coordinate vector at site \(j\).
Here \(c,C>0\) depend only on \((\lambda,\beta,\kappa_\beta,\tau)\).
Additional dependences are indicated explicitly. Constants in each
statement are fixed; elsewhere their values may change from one
occurrence to the next.

Fix, independently of every derivative order,
\[
 \sigma_{\rm cmp}=\frac1{2\tau},\qquad B=4,\qquad
 m^{1-1/\tau}\log^2(m+2)\le CR_\tau(m+2)=o(m).
\]
\subsection{The positive dual state and a finite-volume gap}
\label{amo:sec:dual}

For \(m\ge1\), put
\begin{equation}\label{amo:eq:P-definition}
 P_m(x,E):=\det(E\operatorname{Id}-H_x^{[0,m-1]}),
 \qquad P_0:=1,\qquad P_{-1}:=0.
\end{equation}
The Dirichlet recurrence gives
\begin{equation}\label{amo:eq:transfer-determinant-identity}
 A_m(x,E)=
 \begin{pmatrix}
 P_m(x,E)&-P_{m-1}(x+\beta,E)\\
 P_{m-1}(x,E)&-P_{m-2}(x+\beta,E)
 \end{pmatrix}.
\end{equation}

\subsubsection{Reducibility at the upper edge}
\begin{theorem}
\label{amo:thm:dual-edge-state-input}
There are a positive real analytic function \(\psi:\T\to\R\) and
an exponentially decaying real sequence \(u\) with the following properties.
\begin{enumerate}
\item The Fourier representation and normalization are
\begin{equation}\label{amo:eq:positive-dual-normalization}
 \psi(t)=\sum_{j\in\Z}u_j e^{2\pi ijt}>0,
 \qquad \int_\T\psi=1.
\end{equation}
\item The sequence \(u\) satisfies
\begin{equation}\label{amo:eq:edge-state-eq}
 H_0u=E_+u,
\end{equation}
\begin{equation}\label{amo:eq:edge-state-positive-even}
 u_0=1,\qquad 0<u_j<1\ (j\ne0),\qquad u_{-j}=u_j.
\end{equation}
For some \(C,c>0\), \(|u_j|\le Ce^{-c|j|}\).
\item The function \(\psi\) satisfies
\begin{equation}\label{amo:eq:dual-analytic-bloch}
 \lambda\bigl(\psi(t+\beta)+\psi(t-\beta)\bigr)
 +2\cos(2\pi t)\psi(t)=E_+\psi(t).
\end{equation}
\end{enumerate}
\end{theorem}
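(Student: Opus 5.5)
The plan is to work on the dual side, where Aubry duality turns the supercritical upper-edge problem for $H_x=H_{\lambda,\beta,x}$ into a subcritical one. Dividing \eqref{amo:eq:dual-analytic-bloch} by $\lambda$ gives the equation
\[
 \psi(t+\beta)+\psi(t-\beta)+2\lambda^{-1}\cos(2\pi t)\psi(t)=\lambda^{-1}E_+\psi(t),
\]
which is the eigen-equation of the dual operator with coupling $\lambda^{-1}<1$ and energy $\lambda^{-1}E_+=\max\Sigma_{\lambda^{-1},\beta}$. The identity $\Sigma_{\lambda,\beta}=\lambda\Sigma_{\lambda^{-1},\beta}$ is standard Aubry duality. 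So the task is to produce a positive analytic $\psi$ solving this dual eigen-equation, i.e.\ an analytic positive invariant section of the dual cocycle at the top of its spectrum.

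\textbf{Step 1: a positive analytic section.} The dual cocycle $(\beta,A^{\rm dual}_{E'})$, with $E'=\lambda^{-1}E_+$ and coupling $\lambda^{-1}<1$, is subcritical in Avila's sense. Hence by almost reducibility (Avila's theorem, and for Diophantine $\beta$ the quantitative results of Eliasson and Hou--You / Avila--Jitomirskaya), it is analytically almost reducible. At the edge $E'=\max\Sigma$ the fibered rotation number is $0$. In the Diophantine case a zero rotation number gives analytic reducibility to a parabolic constant matrix $\begin{pmatrix}1&\mu\\0&1\end{pmatrix}$ with $\mu\ne0$, the nonzero $\mu$ reflecting that $E'$ is a genuine edge. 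The first column of the conjugacy then yields a real analytic $\psi$ solving the dual equation.

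\textbf{Step 2: positivity.} Positivity is the main obstacle, and I would obtain it by a Sturm-type argument. At the top edge the conjugacy can be chosen homotopic to the identity with no winding, since the rotation number is $0$. Then $\psi$ is a quasi-periodic analogue of the ground state. If $\psi$ changed sign, a Sturm oscillation count along orbits would give rotation number $>0$ at some energy in the edge region, a contradiction. Concretely:
\begin{itemize}
\item I would compare with the periodic approximants $\beta\approx p_n/q_n$, whose top eigenfunctions are positive by Perron--Frobenius, since the off-diagonal entries are positive.
\item I would pass to the limit using the uniform analytic bounds from reducibility, which gives $\psi\ge0$.
\item Strict positivity then follows from the equation itself: a zero of a nonnegative solution of the second-order recurrence at $t_0$ forces $\psi(t_0\pm\beta)=0$, and by minimality $\psi\equiv0$.
\end{itemize}
Finally, normalize so that $\int_\T\psi=1$.

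\textbf{Step 3: transfer to $u$.} Set $u_j:=\hat\psi(j)$, so $u_0=\int\psi=1$. Analyticity of $\psi$ gives $|u_j|\le Ce^{-c|j|}$. Taking Fourier coefficients in \eqref{amo:eq:dual-analytic-bloch} gives
\[
 u_{j+1}+u_{j-1}+2\lambda\cos(2\pi j\beta)u_j=E_+u_j,
\]
which is exactly \eqref{amo:eq:edge-state-eq} with $x=0$. Reality of $u$ and evenness $u_{-j}=u_j$ follow from uniqueness of the positive section up to scaling, applied to $\overline\psi$ and $\psi(-\cdot)$. Both are again positive solutions, since the potential is even and real, and both have integral $1$.

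For $0<u_j<1$ with $j\ne0$, positivity of $\psi$ gives $|u_j|<\int\psi=1$ as a strict inequality, because $e^{2\pi ijt}$ is nonconstant. For $u_j>0$ I would use the ground-state property of $u$ for $H_0$ at its top energy. The decaying eigenvector at $\sup\Sigma$ has no sign change, again by Sturm comparison, or equivalently by the Perron--Frobenius limit from Step 2 applied on the lattice side. Together with $u_0=1$, this forces $u_j>0$ for all $j$.
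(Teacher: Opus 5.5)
Your outline (Aubry duality, reducibility at the top dual edge, Fourier transfer) follows the paper. The gap is in the positivity of $\psi$, which you rightly call the crux: your concrete argument does not establish it.
\begin{itemize}
\item For $\beta_n=p_n/q_n$ the dual equation splits into $q_n$-periodic Jacobi matrices whose Perron--Frobenius eigenvalue varies with the phase $t$. So the approximants give no positive function on $\T$ solving the equation at a single energy.
\item The analytic bounds from reducibility concern the cocycle at $\beta$ itself and give no uniform control over the approximants.
\item Nothing identifies a limit, if one exists, with a nonzero multiple of your $\psi$.
\end{itemize}
The missing idea is one you use only on the lattice side. Since $\psi$ is analytic, it is a genuine $L^2(\T)$ eigenfunction of $(\mathscr L_*f)(t)=f(t+\beta)+f(t-\beta)+2\lambda^{-1}\cos(2\pi t)f(t)$. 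The Fourier transform conjugates $\mathscr L_*$ to $\lambda^{-1}H_0$, whose spectrum has top $\lambda^{-1}E_+$. Using $\operatorname{Re}(\overline{\psi(t)}\psi(t+\beta))\le|\psi(t)||\psi(t+\beta)|$,
\[
 \lambda^{-1}E_+\|\psi\|_2^2=\langle\psi,\mathscr L_*\psi\rangle
 \le\langle|\psi|,\mathscr L_*|\psi|\rangle
 \le\lambda^{-1}E_+\|\psi\|_2^2 .
\]
Equality makes $|\psi|$ an eigenfunction as well. A zero of $|\psi|$ at $t_0$ then forces zeros at $t_0\pm\beta$, density of the orbit gives $|\psi|>0$, and so $\psi$ has constant sign.

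Your Sturm heuristic can also be made rigorous, directly at $E'=\lambda^{-1}E_+$. Restrict $\phi_k=\psi(t+k\beta)$ to a maximal bounded interval on which $\phi>0$, and call the result $f$. Then $\langle f,(H_{\lambda^{-1},\beta,t}-E')f\rangle\ge0$, so $f$ is a finitely supported eigenvector, which is impossible. Minimality then rules out sign changes of $\psi$. Your concrete steps, however, do not do this.

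There is a second gap, in evenness. You derive it from uniqueness of the positive solution up to scaling, which you never prove. The claim $\mu\ne0$ in the parabolic normal form is asserted, not derived. You also skip that the reducing conjugacy is only $\operatorname{PSL}(2,\R)$-valued and may have nonzero degree with an elliptic constant; the paper rotates this away before taking a fixed vector. Your argument that $u_j>0$ ("no sign change") presupposes that $u$ is real, so it inherits this gap.

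The paper avoids uniqueness entirely by applying the same variational argument to $a=\widehat\psi$ without assuming $a$ is real:
\begin{itemize}
\item Equality in $\langle a,H_0a\rangle\le\langle|a|,H_0|a|\rangle\le E_+\|a\|_2^2$ gives $H_0|a|=E_+|a|$.
\item Zero propagation along the recurrence gives $|a_j|>0$ for all $j$.
\item Equality in $\operatorname{Re}(\overline{a_j}a_{j+1})\le|a_j||a_{j+1}|$ gives $a_{j+1}/a_j>0$, so $a_j>0$ because $a_0=\int\psi>0$.
\item Evenness then follows from reality of $\psi$: $a_{-j}=\overline{a_j}=a_j$.
\end{itemize}
So your order of steps should be reversed: positivity of the coefficients first, evenness second. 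Your strict bound $|u_j|<1$ is correct as written.
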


\begin{proof}
Set \(\lambda_*:=\lambda^{-1}\),
\(\widetilde E_+:=\lambda^{-1}E_+\), and
\[
 A_*(t):=\begin{pmatrix}
 \widetilde E_+-2\lambda_*\cos(2\pi t)&-1\\1&0
 \end{pmatrix}.
\]
Aubry duality and the IDS--rotation-number identity give
\[
 \Sigma_{\lambda,\beta}=\lambda\Sigma_{\lambda_*,\beta},
 \qquad N_*(\widetilde E_+)=1,
 \qquad \rho(\beta,A_*)=0
 \quad\text{\cite[\S2.3 and (2.13)]{AvilaJitomirskaya2010}}.
\]
The hypotheses of \cite[Theorems~1.4 and~4.1(3)]{AvilaJitomirskaya2010}
are satisfied because
\[
 0<\lambda_*<1=\lambda_0(2\cos(2\pi\,\cdot)),\qquad
 2\rho(\beta,A_*)=0\in\beta\Z+\Z.
\]
Thus there are analytic \(\mathcal B_0:\T\to\operatorname{PSL}(2,\R)\)
and a constant \(\mathcal C_0\) such that
\[
 \mathcal B_0(t+\beta)A_*(t)\mathcal B_0(t)^{-1}=\mathcal C_0,
 \qquad |\operatorname{tr}\mathcal C_0|\le2.
\]
The trace inequality follows from
\(\widetilde E_+\in\Sigma_{\lambda_*,\beta}\) and
\cite[\S2.2]{AvilaJitomirskaya2010}.

Let \(k=\deg\mathcal B_0\). The degree formula on
\(\operatorname{PSL}(2,\R)\) gives
\[
 2\rho(\beta,\mathcal C_0)\equiv k\beta\pmod\Z
 \quad\text{\cite[(2.9)]{AvilaJitomirskaya2010}}.
\]
If \(\mathcal C_0\) is elliptic, a constant conjugacy makes it
\(R_{2\pi\vartheta}\), where
\(2\vartheta-k\beta\in\Z\).  Consequently
\[
 \widehat{\mathcal B}(t):=R_{-\pi kt}\mathcal B_0(t),
 \qquad \deg\widehat{\mathcal B}=0,
\]
\[
 \widehat{\mathcal B}(t+\beta)A_*(t)
 \widehat{\mathcal B}(t)^{-1}
 =R_{2\pi\vartheta-\pi k\beta}
 =\operatorname{Id}\quad\text{in }\operatorname{PSL}(2,\R).
\]
If \(\mathcal C_0\) is parabolic or scalar, then
\(2\rho(\beta,\mathcal C_0)=0\pmod\Z\), hence
\(k\beta\in\Z\) and \(k=0\); take
\(\widehat{\mathcal B}=\mathcal B_0\).
Since \(\deg\widehat{\mathcal B}=0\), it has a one-periodic analytic
\(\operatorname{SL}(2,\R)\) lift \(\mathcal B\) homotopic to the identity.
The lifted conjugacy differs from a fixed lift of its constant value
by a continuous map \(\T\to\{\pm\operatorname{Id}\}\), which is constant.
Since \(\deg\mathcal B=0\),
\[
 \mathcal B(t+\beta)A_*(t)\mathcal B(t)^{-1}=\mathcal C,
 \qquad \rho(\beta,\mathcal C)=0.
\]
Thus \(\mathcal C v=v\) for some real \(v\ne0\).  Writing
\[
 \mathcal B(t)^{-1}v=
 \binom{\psi(t)}{\psi(t-\beta)}
\]
gives a nonzero real analytic solution of
\eqref{amo:eq:dual-analytic-bloch}.

Define
\[
 (\mathscr L_*f)(t):=f(t+\beta)+f(t-\beta)
                    +2\lambda_*\cos(2\pi t)f(t).
\]
Fourier transform identifies \(\mathscr L_*\) with
\(\lambda_*H_0\).  Therefore
\[
 \widetilde E_+\|\psi\|_2^2
 =\langle\psi,\mathscr L_*\psi\rangle
 \le\langle|\psi|,\mathscr L_*|\psi|\rangle
 \le\widetilde E_+\|\psi\|_2^2.
\]
Put \(Q_*:=\widetilde E_+-\mathscr L_*\ge0\).  Equality gives
\[
 \|Q_*^{1/2}|\psi|\|_2^2=0
 \quad\Longrightarrow\quad Q_*|\psi|=0.
\]
By continuity this equation holds pointwise; hence
\[
 |\psi(t_0)|=0
 \ \Longrightarrow\ |\psi(t_0\pm\beta)|=0
 \ \Longrightarrow\ \psi\equiv0.
\]
Choose the resulting constant sign so that \(\psi>0\).

Let \(a_j:=\widehat\psi_j\).  Then \(a_0>0\),
\(|a_j|\le Ce^{-c|j|}\), and \(H_0a=E_+a\).  Again,
\[
 E_+\|a\|_2^2
 =\langle a,H_0a\rangle
 \le\langle|a|,H_0|a|\rangle
 \le E_+\|a\|_2^2,
\]
Thus \(H_0|a|=E_+|a|\), and the recurrence gives
\[
 |a_j|=0\ \Longrightarrow\ |a_{j-1}|+|a_{j+1}|=0
 \ \Longrightarrow\ a_0=0,
 \qquad\text{so } |a_j|>0\quad(j\in\Z).
\]
Moreover,
\[
 0=2\sum_j\bigl(|a_j||a_{j+1}|
       -\operatorname{Re}(\overline{a_j}a_{j+1})\bigr)
 \quad\Longrightarrow\quad a_j>0\quad(j\in\Z).
\]
Reality of \(\psi\) now gives \(a_{-j}=a_j\).  For \(j\ne0\),
\[
 a_0^2-|a_j|^2
 =\frac12\int_{\T^2}\psi(s)\psi(t)
       |e^{2\pi ijs}-e^{2\pi ijt}|^2\,ds\,dt>0.
\]
Set \(u_j=a_j/a_0\) and replace \(\psi\) by \(\psi/a_0\).
\end{proof}

\subsubsection{The quadratic form and finite support}

We use the Fourier transform and quadratic form
\[
 \begin{gathered}
 (\mathcal Fv)(t):=\sum_jv_je^{2\pi ijt},\qquad
 \mathcal Q_x(v):=\langle v,(E_+-H_x)v\rangle,\\
 a(t):=\psi(t)\psi(t+\beta),\qquad a_*:=\min_\T a>0.
\end{gathered}
\]
For \(v\in\ell^2(\Z)\), write \(g=(\mathcal Fv)/\psi\).

\begin{lemma}
\label{amo:lem:magnetic-ground-form}
For every \(v\in\ell^2(\Z)\) and \(x\in\T\),
\begin{equation}\label{amo:eq:magnetic-ground-form}
 \mathcal Q_x(v)=\lambda\int_\T a(t)
       |e^{2\pi ix}g(t+\beta)-g(t)|^2\,dt,
\end{equation}
and
\begin{equation}\label{amo:eq:ground-fourier-lower}
 \mathcal Q_x(v)\ge16\lambda a_*
 \sum_{k\in\Z}\|x+k\beta\|_\T^2|\widehat g_k|^2.
\end{equation}
\end{lemma}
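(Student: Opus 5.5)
The plan is to conjugate \(H_x\) by the Fourier transform \(\mathcal F\), use the dual equation \eqref{amo:eq:dual-analytic-bloch} from Theorem~\ref{amo:thm:dual-edge-state-input} to perform a ground-state substitution \(f=\psi g\), and then apply Parseval to the resulting difference quotient.

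\emph{Step 1: conjugation.} Write \(f=\mathcal Fv\in L^2(\T)\). Since \(\sum_jv_{j\pm1}e^{2\pi ijt}=e^{\mp2\pi it}f(t)\), the kinetic part becomes multiplication by \(2\cos(2\pi t)\). Writing \(2\lambda\cos(2\pi(x+j\beta))=\lambda(e^{2\pi i(x+j\beta)}+e^{-2\pi i(x+j\beta)})\), the potential part becomes the shift operator \(\lambda\bigl(e^{2\pi ix}f(t+\beta)+e^{-2\pi ix}f(t-\beta)\bigr)\). Hence
\[
 (\mathcal FH_x\mathcal F^{-1}f)(t)
 =2\cos(2\pi t)f(t)+\lambda\bigl(e^{2\pi ix}f(t+\beta)+e^{-2\pi ix}f(t-\beta)\bigr).
\]
Parseval then gives \(\mathcal Q_x(v)=\langle f,(E_+-\mathcal FH_x\mathcal F^{-1})f\rangle_{L^2(\T)}\).

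\emph{Step 2: ground-state substitution.} Because \(\psi\) is positive, continuous and bounded below on \(\T\), the quotient \(g=f/\psi\) lies in \(L^2\). Substitute \(f=\psi g\). By \eqref{amo:eq:dual-analytic-bloch},
\[
 (E_+-2\cos(2\pi t))\psi(t)|g(t)|^2=\lambda\bigl(\psi(t+\beta)+\psi(t-\beta)\bigr)\psi(t)|g(t)|^2 .
\]
Consequently \(\mathcal Q_x(v)\) equals \(\lambda\) times the integral of four terms:
\[
 \psi(t)\psi(t+\beta)|g(t)|^2,\qquad \psi(t)\psi(t-\beta)|g(t)|^2,
\]
\[
 -e^{2\pi ix}\psi(t)\psi(t+\beta)\overline{g(t)}g(t+\beta),\qquad
 -e^{-2\pi ix}\psi(t)\psi(t-\beta)\overline{g(t)}g(t-\beta).
\]
In the second and fourth terms I translate \(t\mapsto t+\beta\), using invariance of Lebesgue measure on \(\T\). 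After this, every term carries the common weight \(a(t)=\psi(t)\psi(t+\beta)\). The integrand becomes
\[
 a(t)\Bigl(|g(t)|^2+|g(t+\beta)|^2-2\operatorname{Re}\bigl(e^{2\pi ix}\overline{g(t)}g(t+\beta)\bigr)\Bigr)
 =a(t)\,|e^{2\pi ix}g(t+\beta)-g(t)|^2,
\]
which is \eqref{amo:eq:magnetic-ground-form}. The only care needed is bookkeeping of the sign of \(\beta\) in the shifts and of the phase factors under the translation. Since all integrands are integrable, no further justification is required.

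\emph{Step 3: Fourier lower bound.} Bound \(a\ge a_*\) pointwise. With \(g(t)=\sum_k\widehat g_ke^{2\pi ikt}\), the function \(e^{2\pi ix}g(t+\beta)-g(t)\) has coefficients \((e^{2\pi i(x+k\beta)}-1)\widehat g_k\). Parseval gives
\[
 \mathcal Q_x(v)\ge\lambda a_*\sum_{k\in\Z}|e^{2\pi i(x+k\beta)}-1|^2|\widehat g_k|^2 .
\]
Finally, \(|e^{2\pi i\theta}-1|=2|\sin\pi\theta|\ge4\|\theta\|_\T\), by concavity of \(\sin\) on \([0,\pi/2]\). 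Squaring yields the factor \(16\) and proves \eqref{amo:eq:ground-fourier-lower}.

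No step is a real obstacle. The main point requiring attention is the consistent handling of the shift direction and phases in Step 2.
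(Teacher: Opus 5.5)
Your proposal is correct and is essentially the paper's proof: Fourier conjugation of \(H_x\), the substitution \(\mathcal Fv=\psi g\) with the dual equation \eqref{amo:eq:dual-analytic-bloch} turning the diagonal part into \(\lambda\int a(t)\bigl(|g(t)|^2+|g(t+\beta)|^2\bigr)\,dt\), the cross terms combining into \(-2\lambda\operatorname{Re}\int a(t)e^{2\pi ix}\overline{g(t)}g(t+\beta)\,dt\), and then Parseval together with \(|e^{2\pi is}-1|\ge4\|s\|_\T\). The one slip is cosmetic: in the displayed consequence of the dual equation in your Step~2, the left-hand side should carry \(\psi(t)^2|g(t)|^2=|\psi(t)g(t)|^2\), not \(\psi(t)|g(t)|^2\); the four-term expansion you write immediately afterwards is already correct.
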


\begin{proof}
With \(T_\beta g(t):=g(t+\beta)\),
\[
 \mathcal FH_x\mathcal F^{-1}
 =2\cos(2\pi t)+\lambda e^{2\pi ix}T_\beta
                  +\lambda e^{-2\pi ix}T_{-\beta}.
\]
The diagonal part of the expanded square is
\[
 \begin{aligned}
 \lambda\int a(t)(|g(t+\beta)|^2+|g(t)|^2)\,dt
 &=\lambda\int\psi(t)(\psi(t-\beta)+\psi(t+\beta))|g(t)|^2\,dt\\
 &=\int(E_+-2\cos(2\pi t))|\psi(t)g(t)|^2\,dt.
 \end{aligned}
\]
The cross term is
\[
 -2\lambda\operatorname{Re}\int_\T e^{2\pi ix}
 \overline{\psi(t)g(t)}\psi(t+\beta)g(t+\beta)\,dt,
\]
which proves \eqref{amo:eq:magnetic-ground-form}. All operators involved
are bounded on \(L^2\). Finally, Parseval's identity gives
\[
 \mathcal Q_x(v)\ge\lambda a_*
 \sum_k|e^{2\pi i(x+k\beta)}-1|^2|\widehat g_k|^2,
 \qquad |e^{2\pi is}-1|\ge4\|s\|_\T.
\]
\end{proof}

\begin{lemma}
\label{amo:lem:strict-finite-edge}
For every finite integer interval \(J\) and every \(x\in\T\),
\begin{equation}\label{amo:eq:strict-finite-edge}
 E_+\operatorname{Id}-H_x^J>0.
\end{equation}
\end{lemma}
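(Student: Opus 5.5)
The plan is to combine the magnetic ground-state representation of Lemma~\ref{amo:lem:magnetic-ground-form} with the finite support of vectors in \(\ell^2(J)\). Take \(v\in\ell^2(J)\), extended by zero to \(\ell^2(\Z)\), so that \(\langle v,(E_+\operatorname{Id}-H_x^J)v\rangle=\mathcal Q_x(v)\). By \eqref{amo:eq:magnetic-ground-form}, \(\mathcal Q_x(v)\ge0\). It therefore suffices to show that \(\mathcal Q_x(v)=0\) forces \(v=0\). Strict positivity on the finite-dimensional space \(\ell^2(J)\) then follows by compactness of the unit sphere, or simply from the fact that a positive semidefinite matrix with trivial kernel is positive definite.

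Suppose \(\mathcal Q_x(v)=0\). Since \(a\ge a_*>0\), the identity \eqref{amo:eq:magnetic-ground-form} gives \(e^{2\pi ix}g(t+\beta)=g(t)\) for a.e.\ \(t\), where \(g=(\mathcal Fv)/\psi\). Here \(\mathcal Fv\) is a trigonometric polynomial and \(\psi\) is positive and real analytic, so \(g\) is real analytic on \(\T\) and the relation holds for every \(t\).

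The main step is to rule out a nonzero \(g\). Fourier coefficients give \((e^{2\pi i(x+k\beta)}-1)\widehat g_k=0\) for all \(k\). Irrationality of \(\beta\) allows at most one \(k\) with \(x+k\beta\in\Z\), so \(g=c\,e^{2\pi ikt}\) for a single \(k\). Hence \(\mathcal Fv=c\,e^{2\pi ikt}\psi\). The Fourier coefficients of the right-hand side are \(c\,u_{j-k}\), which by \eqref{amo:eq:edge-state-positive-even} are all nonzero when \(c\ne0\). This contradicts the finite support of \(v\), so \(c=0\) and \(v=0\).

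Alternatively, one can avoid the Fourier step using \eqref{amo:eq:ground-fourier-lower} directly: vanishing forces all \(\widehat g_k\) with \(\|x+k\beta\|_\T\ne0\) to vanish, and the argument then concludes as above. The only delicate point is justifying that \(g\) has summable Fourier coefficients. This follows from the analyticity and positivity of \(\psi\) in Theorem~\ref{amo:thm:dual-edge-state-input}.
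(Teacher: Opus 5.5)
Your proof is correct and follows the paper's argument: vanishing of \(\mathcal Q_x(v)\) confines \(\widehat g\) to the at most one index with \(x+k\beta\in\Z\), so \(v\) is a nonzero multiple of a translate of the everywhere-positive \(u\), which contradicts finite support. The paper reads this off directly from \eqref{amo:eq:ground-fourier-lower}, exactly as in your alternative. The summability concern is unnecessary, since \(g\in L^2\) already justifies the Fourier-coefficient identities.
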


\begin{proof}
Extend \(v\in\ell^2(J)\) by zero.  If \(\mathcal Q_x(v)=0\),
\eqref{amo:eq:ground-fourier-lower} gives
\[
 \operatorname{supp}\widehat g
 \subset\{k:x+k\beta\in\Z\},
 \qquad \#\{k:x+k\beta\in\Z\}\le1.
\]
Thus either \(v=0\) or \(v_j=z u_{j-k}\) for some \(z\ne0\).
Since every \(u_j>0\), the latter is incompatible with finite support.
\end{proof}

For a finite integer interval \(J=[b_-,b_+]\cap\Z\), write
\(L=|J|\). For \(K\in\mathbb N\), we will use the following condition
on \(x\in\T\) and \(\delta>0\):
\begin{equation}\label{amo:eq:single-resonance-hypotheses}
 0<\delta\le\frac{\kappa_\beta}{4(L+2K)^\tau},
 \qquad \|x+j\beta\|_\T\ge\delta\quad(b_-<j<b_+).
\end{equation}
No restriction is imposed at the endpoints of \(J\).

\begin{lemma}
\label{amo:lem:single-resonance-gap}
There are \(K\in\mathbb N\) and \(c>0\) such that, for every
\(J,x,\delta\) satisfying \eqref{amo:eq:single-resonance-hypotheses},
\begin{equation}\label{amo:eq:single-resonance-gap}
 E_+-\max\sigma(H_x^J)\ge c\delta^2.
\end{equation}
\end{lemma}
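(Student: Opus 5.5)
The plan is to use the ground-state form of Lemma~\ref{amo:lem:magnetic-ground-form}. Fix a unit vector \(v\in\ell^2(J)\), extended by zero, and put \(g=(\mathcal Fv)/\psi\). It suffices to prove \(\mathcal Q_x(v)\ge c\delta^2\). By \eqref{amo:eq:ground-fourier-lower},
\[
 \mathcal Q_x(v)\ge16\lambda a_*\sum_k\|x+k\beta\|_\T^2|\widehat g_k|^2 .
\]
The difficulty is that \(\widehat g\) is not supported in \(J\), since division by \(\psi\) spreads the support. Because \(1/\psi\) is real analytic, \(\widehat{(1/\psi)}_m\) decays like \(e^{-c|m|}\). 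Hence \(\widehat g=v*\widehat{(1/\psi)}\) is concentrated on the \(K\)-neighbourhood \(J_K:=[b_--K,b_++K]\) of \(J\), with \(\sum_{k\notin J_K}|\widehat g_k|^2\le Ce^{-cK}\). Also \(\|\widehat g\|_2=\|g\|_{L^2}\ge\|\mathcal Fv\|_2/\max\psi\ge c_0\).

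Next comes the resonance count on \(J_K\). For distinct \(j,k\in J_K\) we have \(|j-k|\le L+2K\), so
\[
 \|x+j\beta\|_\T+\|x+k\beta\|_\T\ge\|(j-k)\beta\|_\T\ge\kappa_\beta(L+2K)^{-\tau}\ge4\delta .
\]
Thus at most one \(k_0\in J_K\) has \(\|x+k_0\beta\|_\T<2\delta\). If there is none, then \(\mathcal Q_x(v)\ge c\delta^2\sum_{J_K}|\widehat g_k|^2\ge c\delta^2(c_0^2-Ce^{-cK})\), and we choose \(K\) large. The hypothesis \(\|x+j\beta\|_\T\ge\delta\) at interior sites confines a possible resonant \(k_0\) to the endpoints or to the buffer \(J_K\setminus(b_-,b_+)\).

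The main obstacle is the case where \(|\widehat g_{k_0}|\) carries essentially all the mass of \(\widehat g\). I would handle it by a rigidity/compactness comparison with the edge state, in the spirit of Lemma~\ref{amo:lem:strict-finite-edge}. If \(\widehat g\approx z e_{k_0}\), then \(\mathcal Fv\approx z e^{2\pi ik_0t}\psi\), i.e.\ \(v_j\approx z u_{j-k_0}\) in \(\ell^2\). Since \(v\) vanishes outside \(J\) and \(k_0\) lies within \(K\) of an endpoint, at least roughly half of the mass of \(u_{\cdot-k_0}\) lies outside \(J\) by evenness of \(u\) (the tail beyond the endpoint on the far side). This forces \(\|v-zu_{\cdot-k_0}\|_2\ge c|z|\).

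Quantitatively, write \(\widehat g=\widehat g_{k_0}e_{k_0}+r\). Then
\[
 \|r\|_2\ge c'\min\{1,|\widehat g_{k_0}|\}-Ce^{-cK}\cdot(\text{tail}),
\]
using that \(\mathcal F^{-1}(\psi\cdot)\) is bounded with bounded inverse. The non-resonant part gives \(\mathcal Q\ge c\delta^2(\|r\|_2^2-Ce^{-cK})\). Combined with \(\|\widehat g\|\ge c_0\) this yields \(\mathcal Q\ge c\delta^2\) once \(K\) is fixed large, with \(K\) depending only on \(\psi\) and \(u\). The case of an interior resonance below \(2\delta\) but above \(\delta\) is simply absorbed, since then \(\|x+k_0\beta\|_\T\ge\delta\) already gives the bound directly.
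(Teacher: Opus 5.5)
Your proposal is correct and follows essentially the paper's argument: the exponential decay of $\widehat{(1/\psi)}$ confines $\widehat g$ to the $K$-padded interval, and the Diophantine bound leaves at most one resonant index there. In the resonant case, writing $v=z\,u_{\cdot-k_0}+e$ and using that $v$ vanishes just outside the strict interior of $J$ gives $|z|\,u_1\le\|e\|_2\le(\max_\T\psi)\|r\|_2$, which bounds the non-resonant mass, and hence $\mathcal Q_x(v)/\delta^2$, from below.

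The only difference is cosmetic. The paper uses a single zero coordinate adjacent to $k_0$ together with $\|v\|_2=1$, whereas you use all the mass outside $J$ and $\|\widehat g\|_2\ge 1/\max_\T\psi$. One imprecision: when $k_0$ is exactly an endpoint, that outside mass is $\sum_{i\ge1}u_i^2\ge u_1^2>0$, which is positive but not ``roughly half''.
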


\begin{proof}
Set
\[
 M_\psi:=\max_\T\psi,\quad U:=\|u\|_2,\quad
 C_*:=M_\psi(1+U/u_1).
\]
Since \(\min_\T\psi>0\), the reciprocal \(1/\psi\) is holomorphic in
a complex strip of positive width.  Its Fourier coefficients satisfy
\[
 b_j:=\widehat{(1/\psi)}_j,\qquad
 |b_j|\le Ce^{-c|j|},\qquad
 \mathfrak b_K:=\sum_{|j|>K}|b_j|\le Ce^{-cK}.
\]
Fix
\[
 K\ge1\quad\text{with }C_*\mathfrak b_K\le\tfrac12,
 \qquad c:=4\lambda a_*/C_*^2.
\]
For \(\|v\|_2=1\), \(\operatorname{supp}v\subset J\), let
\(\widehat g=b*v\) and \(J^K=[b_--K,b_++K]\cap\Z\).
Young's inequality gives
\begin{equation}\label{amo:eq:fixed-padding-tail}
 \|\widehat g\|_{\ell^2(\Z\setminus J^K)}
 \le\|b\mathbf1_{\{|j|>K\}}\|_1\|v\|_2=\mathfrak b_K.
\end{equation}
Let \(\mathcal R:=\{k\in J^K:\|x+k\beta\|_\T<\delta\}\).
Then \(\#\mathcal R\le1\), since two such indices give
\[
 \kappa_\beta(L+2K)^{-\tau}
 \le\|(k-\ell)\beta\|_\T
 <2\delta\le\tfrac12\kappa_\beta(L+2K)^{-\tau}.
\]
By \eqref{amo:eq:ground-fourier-lower},
\[
 \|\widehat g\|_{\ell^2(J^K\setminus\mathcal R)}
 \le\frac{\mathcal Q_x(v)^{1/2}}
                {4\sqrt{\lambda a_*}\,\delta}.
\]
When \(\mathcal R=\{k_0\}\), set \(z=\widehat g_{k_0}\) and
\(u^{(k_0)}_j=u_{j-k_0}\).  Then
\[
 v=z u^{(k_0)}+e,\qquad
 \|e\|_2\le M_\psi\left(
 \frac{\mathcal Q_x(v)^{1/2}}{4\sqrt{\lambda a_*}\,\delta}
 +\mathfrak b_K\right).
\]
The hypothesis puts \(k_0\) outside the strict interior of \(J\).
Choose a zero coordinate of \(v\) by
\[
 j_*=
 \begin{cases}
 k_0,&k_0\notin J,\\
 b_--1,&k_0=b_-,\\
 b_++1,&k_0=b_+>b_-.
 \end{cases}
 \qquad
 |z|u_1\le |z u_{j_*-k_0}|=|e_{j_*}|\le\|e\|_2.
\]
The same choice applies when \(L=1\), and hence
\[
 1=\|v\|_2\le |z|U+\|e\|_2
 \le(1+U/u_1)\|e\|_2.
\]
If \(\mathcal R=\varnothing\), then \(v=e\) and the corresponding
bound holds with \(M_\psi\) in place of \(C_*\). In either case,
\[
 1\le C_*\left(
 \frac{\mathcal Q_x(v)^{1/2}}{4\sqrt{\lambda a_*}\,\delta}
 +\mathfrak b_K\right)
 \quad\Longrightarrow\quad \mathcal Q_x(v)\ge c\delta^2.
\]
The variational principle now gives \eqref{amo:eq:single-resonance-gap}.
\end{proof}

We apply this estimate to the intervals \(J=[1,m]\) and
\(J=[-m,-1]\), where \(m\ge1\).

\begin{corollary}
\label{amo:cor:anchored-side-gap}
There is \(c>0\) such that, for every \(m\ge1\)
and either interval \(J\) above, the following hold whenever
\(|x|\le c(m+1)^{-\tau}\).
\begin{enumerate}
\item The finite-volume gap satisfies
\begin{equation}\label{amo:eq:anchored-side-gap}
 E_+-\max\sigma(H_x^J)\ge c(m+1)^{-2\tau}.
\end{equation}
\item If also \(|E-E_+|\le\tfrac12c(m+1)^{-2\tau}\), then
\begin{equation}\label{amo:eq:anchored-side-resolvent}
 E-H_x^J>0,\qquad
 \|(E-H_x^J)^{-1}\|\le2c^{-1}(m+1)^{2\tau}.
\end{equation}
\end{enumerate}
\end{corollary}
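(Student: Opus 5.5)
The plan is to derive Corollary~\ref{amo:cor:anchored-side-gap} directly from Lemma~\ref{amo:lem:single-resonance-gap} by choosing \(\delta\) proportional to \((m+1)^{-\tau}\). Then part (ii) follows from part (i) by a Weyl-type shift of the spectrum.

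For part (i), take \(J=[1,m]\) or \(J=[-m,-1]\), so \(L=m\). Let \(K\) be the integer fixed by Lemma~\ref{amo:lem:single-resonance-gap}. Put
\[
 \delta:=\frac{\kappa_\beta}{4(m+2K)^\tau}.
\]
Since \(m+2K\le(2K+1)(m+1)\), this gives \(\delta\ge c_0(m+1)^{-\tau}\) with \(c_0=\kappa_\beta/(4(2K+1)^\tau)\). The first condition in \eqref{amo:eq:single-resonance-hypotheses} holds by construction.

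For the interior condition, every strict interior site \(j\) of either interval satisfies \(1\le|j|\le m\). By the Diophantine property,
\[
 \|x+j\beta\|_\T\ge\|j\beta\|_\T-|x|\ge\kappa_\beta m^{-\tau}-|x|.
\]
We require \(|x|\le c(m+1)^{-\tau}\) with \(c\le\kappa_\beta/4\). Using \(m^{-\tau}\ge(m+1)^{-\tau}\), the right side is at least \(\tfrac34\kappa_\beta(m+1)^{-\tau}\). This is at least \(\delta\), since \(\delta\le\tfrac14\kappa_\beta m^{-\tau}\le\tfrac14\kappa_\beta 2^\tau(m+1)^{-\tau}\). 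To be safe we compare against the cruder bound \(\delta\le \kappa_\beta/(4m^\tau)\) directly: \(\kappa_\beta m^{-\tau}-|x|\ge \tfrac34\kappa_\beta m^{-\tau}\ge\delta\) as soon as \(|x|\le\tfrac14\kappa_\beta m^{-\tau}\), which follows from \(|x|\le c(m+1)^{-\tau}\) with \(c\le\kappa_\beta/4\).

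If \(m\le2\), there are no strict interior sites or only one, and the same bound covers it. No condition is needed at the endpoints \(j=\pm1\) or \(j=\pm m\). This is the key point, because \(x\) may be extremely close to \(0\), and the site \(j=0\) lies outside \(J\).

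Lemma~\ref{amo:lem:single-resonance-gap} then gives \(E_+-\max\sigma(H_x^J)\ge c'\delta^2\ge c'c_0^2(m+1)^{-2\tau}\). We shrink \(c\) to be at most \(\min\{\kappa_\beta/4,\,c'c_0^2\}\). This gives \eqref{amo:eq:anchored-side-gap}.

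For part (ii), suppose \(|E-E_+|\le\tfrac12c(m+1)^{-2\tau}\). Then every eigenvalue \(\mu\) of the real symmetric matrix \(H_x^J\) satisfies
\[
 E-\mu\ge E_+-\mu-|E-E_+|\ge\tfrac12c(m+1)^{-2\tau}>0.
\]
Hence \(E-H_x^J>0\). The spectral theorem then bounds the resolvent norm by the reciprocal of the smallest eigenvalue of \(E-H_x^J\), giving \eqref{amo:eq:anchored-side-resolvent}.

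The only real obstacle is bookkeeping in part (i). A single constant \(c\) must serve both as the admissible phase window and as the gap constant, and it must be checked that \(\delta\) stays below the Diophantine threshold \(\kappa_\beta/(4(L+2K)^\tau)\) while still being comparable to \((m+1)^{-\tau}\). The choice of \(\delta\) above handles both requirements, since \(K\) depends only on the background data.
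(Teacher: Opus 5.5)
Your proposal is correct and follows the paper's proof essentially verbatim: the same choice \(\delta=\kappa_\beta/(4(m+2K)^\tau)\), the Diophantine bound at the sites \(1\le|j|\le m\) fed into Lemma~\ref{amo:lem:single-resonance-gap}, the comparison \(m+2K\le(2K+1)(m+1)\) to get the \((m+1)^{-2\tau}\) gap, and the same spectral shift for part (ii). Your phase constant \(\kappa_\beta/4\) differs harmlessly from the paper's \(\kappa_\beta/(8(1+2K)^\tau)\), and your first comparison via \(2^\tau\) is only valid when \(2^\tau\le3\); the direct bound \(\tfrac34\kappa_\beta m^{-\tau}\ge\kappa_\beta/(4m^\tau)\ge\delta\) that you give next is the one to keep, so nothing is missing.
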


\begin{proof}
Let \(\widehat c>0\) be the gap constant in
Lemma~\ref{amo:lem:single-resonance-gap}. Choose
\[
 c=\min\left\{\frac{\kappa_\beta}{8(1+2K)^\tau},
             \frac{\widehat c\kappa_\beta^2}{16(1+2K)^{2\tau}}\right\},
 \qquad \delta=\frac{\kappa_\beta}{4(m+2K)^\tau}.
\]
For \(1\le|j|\le m\),
\[
 \|x+j\beta\|_\T
 \ge\kappa_\beta m^{-\tau}-|x|
 \ge\kappa_\beta m^{-\tau}-\frac{\kappa_\beta}{8(m+2K)^\tau}
 \ge\delta.
\]
Lemma~\ref{amo:lem:single-resonance-gap} gives
\[
 E_+-\max\sigma(H_x^J)\ge\widehat c\delta^2
 \ge c(m+1)^{-2\tau}.
\]
Hence
\[
 E-H_x^J\ge
 \bigl(c(m+1)^{-2\tau}-|E-E_+|\bigr)\operatorname{Id}.
\]
\end{proof}

\subsection{Eigenvalue counting and sharp decay}
\label{amo:sec:counting}

Let \(N\) be the IDS and, counting multiplicities, put
\[
 K_r(x,E):=\#\{\nu\in\sigma(H_x^{[0,r-1]}):\nu\le E\},
 \qquad F_r(E):=\int_\T K_r(x,E)\,dx.
\]

\begin{lemma}
\label{amo:lem:uniform-counting-discrepancy}
For every \(m\ge2\),
\begin{equation}\label{amo:eq:uniform-counting-discrepancy}
 \sup_{x\in\T,E\in\R}|K_m(x,E)-mN(E)|
 \le C m^{1-1/\tau}\log(m+2).
\end{equation}
\end{lemma}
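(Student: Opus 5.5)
The plan is to compare the finite counting function $K_m(x,E)$ with the averaged count via the IDS, through a discrepancy estimate for the rotation $T_\beta$. For fixed $E$ the map $x\mapsto K_m(x,E)$ is not regular, so I will not try to control it directly. Instead I will use the standard identity that $N(E)$ is the density of states computed from any phase.

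The key steps, in order, are as follows.

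\textbf{Step 1 (splitting a long box).} Fix a length $M\gg m$ with $M=km$. Cut the box $[0,M-1]$ into $k$ consecutive blocks of length $m$, with phases $x+im\beta$. The Dirichlet restrictions differ from the direct sum of the block restrictions by a perturbation of rank at most $2k$. Eigenvalue interlacing then gives
\[
\Bigl|K_M(x,E)-\sum_{i<k}K_m(x+im\beta,E)\Bigr|\le 2k .
\]

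\textbf{Step 2 (sliding window).} Counting is similarly stable under shifting the window by one site, at a cost of $O(1)$. Hence
\[
K_M(x,E)=\sum_{t=0}^{M-1}\frac1m K_m(x+t\beta,E)+O(M/m)+O(m).
\]
Letting $M\to\infty$, unique ergodicity shows that the left side divided by $M$ tends to $N(E)$ for every $x$ (uniformly, since $N$ is continuous). This gives $F_m(E)=mN(E)+O(1)$.

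\textbf{Step 3 (Denjoy--Koksma-type comparison).} It remains to bound $K_m(x,E)-F_m(E)$ uniformly. For this I use that $x\mapsto K_m(x,E)$ has bounded variation in a controlled sense. Each of the $m$ eigenvalues of $H_x^{[0,m-1]}$ is a continuous piecewise-analytic function of $x$. For fixed $E$, the set $\{x:\nu_i(x)\le E\}$ is a union of at most $O(m)$ intervals, since $P_m(\cdot,E)$ is a trigonometric polynomial of degree $m$ with at most $2m$ zeros. Thus $\operatorname{Var}_x K_m(\cdot,E)\le Cm^2$. This is too crude for direct Koksma. Instead, first average $K_m$ over a shorter window: apply Step 1 with blocks of length $\ell$ inside the box of length $m$, so that
\[
K_m(x,E)=\sum_{i<m/\ell}K_\ell(x+i\ell\beta,E)+O(m/\ell).
\]
Each $K_\ell(\cdot,E)$ has variation $O(\ell^2)$. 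The points $x+i\ell\beta$, $i<m/\ell$, form a rotation orbit by $\ell\beta$, whose discrepancy, using $\beta\in\DC(\kappa_\beta,\tau)$ via Erdős--Turán, is $O((m/\ell)^{-1/\tau}\log m)$ times a polynomial in $\ell$. Koksma's inequality then gives
\[
\sum_i K_\ell(x+i\ell\beta,E)=\frac m\ell F_\ell(E)+O\bigl(\ell^2 (m/\ell)^{1-1/\tau}\log m\bigr).
\]
Finally, insert $F_\ell=\ell N+O(1)$ from Step 2.

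\textbf{Step 4 (optimizing).} The last step is to balance the error terms $m/\ell$ against $\ell^{C}(m/\ell)^{1-1/\tau}\log m$.

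The main obstacle is Step 3. Naive optimization gives only an exponent $1-c/\tau$, not the sharp $1-1/\tau$. To recover $1-1/\tau$, I would avoid the variation bound in $x$. Instead I would use the monotonicity of $K_m$ in $E$ and pass to discrepancy in terms of interval counts: for fixed $E$, $K_\ell(\cdot,E)$ is a sum of at most $\ell$ indicator functions of unions of $O(\ell)$ intervals. With bounded $\ell$ (a fixed constant), the per-block Koksma error is $O(1)\cdot D$, where $D\lesssim (m)^{-1/\tau}\log m$ is the discrepancy of an $m$-point Diophantine orbit; summing over $m$ points yields exactly $m^{1-1/\tau}\log m$. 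So I would in fact take $\ell$ bounded and argue directly with $\ell=1$-site counting via Sturm sequences. Concretely, $K_m(x,E)$ equals the number of sign changes of $(P_j(x,E))_{j\le m}$, which is a sum over $t$ of a function of $x+t\beta$ depending on finitely many sites up to a tail correction. This reduction to a local ergodic sum is the delicate point.
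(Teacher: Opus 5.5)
Your Steps 1 and 2 are sound. They give $F_r(E)=rN(E)+O(1)$, which is also what the paper uses. Step 3, however, never reaches the exponent $1-1/\tau$, and both repairs you propose break down.

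If $\ell$ is bounded, the splitting error from Step 1 and the term $\frac m\ell\bigl(F_\ell(E)-\ell N(E)\bigr)$ are each of size $m/\ell\asymp m$, so the estimate becomes trivial.

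The Sturm-sequence reduction is not available either. Whether $P_j(x,E)$ and $P_{j+1}(x,E)$ have opposite signs is decided by the Riccati ratio $P_{j+1}/P_j$, which depends on all sites $0,\dots,j$. It is a function on the projective skew product over $T_\beta$, not a finite-range function of $x+j\beta$. At energies in the spectrum there is no uniform forgetting of the initial condition that could furnish your ``tail correction.''

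Two ideas are missing, and you were close to both.

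First, your variation bound overcounts by a factor $m$. The zeros of $P_r(\cdot,E)=\prod_i(E-\nu_i)$ are shared by all branches. Since $P_r(\cdot,E)$ is a trigonometric polynomial of exact degree $r$, it has at most $2r$ zeros on $\T$ counted with multiplicity. Dirichlet eigenvalues are simple, so each zero is a contact of a single branch with $E$, of order equal to the multiplicity. It contributes at most $1$ to the variation at a crossing and at most $2$ at a tangency, which has order at least $2$. Hence $\operatorname{Var}_\T K_r(\cdot,E)\le 2r$.

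Second, do not pass to the orbit of $r\beta$, whose Diophantine constant degrades to $\kappa_\beta r^{-\tau}$. Instead, use your own sliding-window device from Step 2 inside the box of length $m$. Write $m=t+k_tr+s_t$ and average over all offsets $0\le t<r$. The complete-block starts then exhaust $\{0,\dots,m-r\}$, a consecutive $T_\beta$-orbit segment of length $M=m-r+1$. Denjoy--Koksma along the Ostrowski expansion of $M$ bounds $\sum_{a<M}\bigl(K_r(x+a\beta,E)-F_r(E)\bigr)$ by $CrM^{1-1/\tau}\log(M+2)$. After division by $r$, this no longer depends on $r$.

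The cuts and the term $F_r-rN$ cost $O(m/r)$. The two end fragments of length less than $r$ must be controlled recursively through $\mathfrak d_j:=\sup_{x,E}|K_j(x,E)-jN(E)|$, $j<r$, because the crude bound $O(r)$ fails when $\tau<2$. This gives
\[
 \mathfrak d_m\le Cm^{1-1/\tau}\log(m+2)+C\frac mr+2\max_{j<r}\mathfrak d_j,
\]
and the choice $r=\lceil m^{1/\tau}\rceil$ closes a strong induction.
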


\begin{proof}
The highest Fourier coefficient of \(P_r(\cdot,E)\) is
\[
 \widehat P_r(r,E)=(-\lambda)^r
                 e^{2\pi i\beta\sum_{j=0}^{r-1}j}\ne0.
\]
Put \(Z_r(E):=\{x\in\T:P_r(x,E)=0\}\).  Then
\[
 \deg P_r(\cdot,E)=r,\qquad
 \sum_{x\in Z_r(E)}\operatorname{ord}_xP_r(\cdot,E)\le2r.
\]
Dirichlet Jacobi eigenvalues are simple. Thus, near each
\(x_0\in Z_r(E)\), there is exactly one analytic eigenvalue branch
\(\nu(x)\) with \(\nu(x_0)=E\), and
\[
 \operatorname{ord}_{x_0}P_r(\cdot,E)
 =\operatorname{ord}_{x_0}(\nu-E).
\]
The indicator \(\mathbf1_{\{\nu\le E\}}\) contributes at most one
to the variation at a crossing and at most two at a tangency.
Since a tangency has order at least two, summing gives
\begin{equation}\label{amo:eq:counting-BV}
 \operatorname{Var}_\T K_r(\cdot,E)
 \le\sum_{x\in Z_r(E)}\operatorname{ord}_xP_r(\cdot,E)\le2r.
\end{equation}

For the direct sum of \(b\) consecutive length-\(r\) restrictions,
\[
 \operatorname{rank}\!\left(H_x^{[0,br-1]}
 -\bigoplus_{j=0}^{b-1}H_x^{[jr,(j+1)r-1]}\right)\le2(b-1).
\]
The eigenvalue counting inequality and phase invariance give
\[
 |F_{br}(E)-bF_r(E)|\le2(b-1).
\]
For every polynomial \(p\), finite propagation gives
\[
 \left|\frac1m\int_\T\operatorname{tr}p(H_x^{[0,m-1]})\,dx
          -\int p\,dN\right|\le\frac{C_{26}(p)}{m}.
\]
Polynomial approximation on the common compact spectral interval and
continuity of \(N\) \cite[Theorem~1.2]{AvilaJitomirskaya2010} give
\(F_m(E)/m\to N(E)\) for every \(E\).
Letting \(b\to\infty\) yields
\begin{equation}\label{amo:eq:averaged-counting-boundary}
 |F_r(E)-rN(E)|\le2.
\end{equation}

Let \(q_s\) be the continued fraction denominators of \(\beta\).
The Diophantine bound and the Ostrowski digits of
\(M=\sum_s b_sq_s\) give
\[
 \begin{aligned}
 q_{s+1}&\le\kappa_\beta^{-1}q_s^\tau,\\
 b_s&\le\min\{M/q_s,Cq_s^{\tau-1}\}
      \le CM^{1-1/\tau},\\
 \#\{s:b_s\ne0\}&\le C\log(M+2).
 \end{aligned}
\]
For a periodic function \(f\) of bounded variation, Denjoy--Koksma gives
\[
 \left|\sum_{a=0}^{q_s-1}f(y+a\beta)-q_s\int_\T f\right|
 \le\operatorname{Var}_\T(f).
\]
Summing over the Ostrowski blocks yields
\begin{equation}\label{amo:eq:BV-orbit-bound}
 \left|\sum_{a=0}^{M-1}f(x+a\beta)-M\int_\T f\right|
 \le C\operatorname{Var}_\T(f)M^{1-1/\tau}\log(M+2).
\end{equation}

Set \(\mathfrak d_j:=\sup_{x,E}|K_j(x,E)-jN(E)|\),
\(\mathfrak d_0:=0\).  For \(2\le r\le m/2\), write
\[
 m=t+k_tr+s_t,\quad 0\le t<r,\quad0\le s_t<r,
 \qquad M:=m-r+1.
\]
Cut into an initial length \(t\), \(k_t\) complete lengths \(r\),
and a final length \(s_t\).  Their complete starts satisfy
\[
 \bigsqcup_{t=0}^{r-1}\{t+jr:0\le j<k_t\}
 =\{0,\ldots,m-r\},\qquad\sum_{t=0}^{r-1}k_t=M.
\]
The cut errors and the two endpoint deviations yield
\[
 \begin{aligned}
 r\mathfrak d_m\le{}&
 \sup_{x,E}\left|\sum_{a=0}^{M-1}
                    (K_r(x+a\beta,E)-rN(E))\right|\\
 &+2r\max_{j<r}\mathfrak d_j+2(M+r).
 \end{aligned}
\]
By \eqref{amo:eq:counting-BV}--\eqref{amo:eq:BV-orbit-bound},
\begin{equation}\label{amo:eq:counting-recursion}
 \mathfrak d_m\le C m^{1-1/\tau}\log(m+2)
                +Cm/r+2\max_{j<r}\mathfrak d_j.
\end{equation}
Take \(r=\lceil m^{1/\tau}\rceil\) for large \(m\) and
\(\Phi(t):=(t+2)^{1-1/\tau}\log(t+2)\).  Then
\[
 m/r\le m^{1-1/\tau},\qquad
 \frac{\Phi(r)}{\Phi(m)}\longrightarrow0.
\]
Fix \(m_0\) with \(2\Phi(r)\le\Phi(m)/2\) for \(m\ge m_0\).
Writing the nonrecursive part of
\eqref{amo:eq:counting-recursion} as at most \(C\Phi(m)\), choose
\[
 \widehat C\ge\max\left\{2C,
       \max_{0\le j<m_0}\frac{\mathfrak d_j}{\Phi(j)}\right\}.
\]
Strong induction closes by
\[
 \mathfrak d_m\le C\Phi(m)+2\widehat C\Phi(r)
 \le(C+\widehat C/2)\Phi(m)\le \widehat C\Phi(m).
\]
\end{proof}

\begin{proposition}
\label{amo:prop:determinant-estimates}
For \(m\ge2\), the following estimates hold.
\begin{enumerate}
\item Uniformly in the phase,
\begin{equation}\label{amo:eq:uniform-determinant-upper}
 \sup_{x\in\T}\log P_m(x,E_+)
 \le\gamma m+C m^{1-1/\tau}\bigl(\log(m+2)\bigr)^2.
\end{equation}
\item If \(0<\eta\le\min\{1,E_+-\max\sigma(H_x^{[0,m-1]})\}\), then
\begin{equation}\label{amo:eq:gap-determinant-lower}
 \log P_m(x,E_+)
 \ge\gamma m-Cm^{1-1/\tau}\log(m+2)\log\frac C\eta.
\end{equation}
\end{enumerate}
\end{proposition}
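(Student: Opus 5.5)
The plan is to write \(\log P_m(x,E)=\sum_{\nu\in\sigma(H_x^{[0,m-1]})}\log(E-\nu)\) at \(E=E_+\). By Lemma~\ref{amo:lem:strict-finite-edge} every eigenvalue lies strictly below \(E_+\), so the sum is finite. We then compare it with the Thouless formula. Since \(E_+\in\Sigma_{\lambda,\beta}\) and \(\lambda>1\), we have \(L(E_+)=\gamma\) (Bourgain--Jitomirskaya / Herman together with the Aubry duality used in Theorem~\ref{amo:thm:dual-edge-state-input}). The Thouless formula then gives \(\gamma=\int\log|E_+-\nu|\,dN(\nu)\). The discrepancy between the empirical eigenvalue measure of \(H_x^{[0,m-1]}\) and \(mN\) is controlled in Kolmogorov distance by Lemma~\ref{amo:lem:uniform-counting-discrepancy}, with \(\Delta_m:=\sup_{x,E}|K_m-mN|\le Cm^{1-1/\tau}\log(m+2)\).

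Concretely, set \(f(\nu)=\log(E_+-\nu)\) on \(\nu<E_+\). All spectra lie in a fixed interval \([-E_*,E_+]\) with \(E_*=2+2\lambda\). Integrating by parts against \(K_m(x,\cdot)-mN\) gives
\[
 \log P_m(x,E_+)-m\gamma=-\int_{-E_*}^{E_+}\bigl(K_m(x,\nu)-mN(\nu)\bigr)\frac{d\nu}{E_+-\nu}.
\]
The boundary terms vanish: at \(\nu=-E_*\) both counting functions are zero, and at \(\nu\uparrow E_+\) we have \(K_m=m\) and \(N(E_+)=1\), while \((K_m-mN)\log(E_+-\nu)\to0\) because \(m(1-N(\nu))\log(E_+-\nu)\to0\). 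The only dangerous region is near \(E_+\), where \(d\nu/(E_+-\nu)\) is not integrable.

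For the upper bound (i), the sign is favourable. Near \(E_+\), \(K_m(x,\nu)\le m\), so \(-(K_m-mN)\le m(1-N(\nu))\). Splitting the integral at \(\nu=E_+-\eta_0\), the far part is bounded by \(\Delta_m\log(C/\eta_0)\). The near part is bounded by \(m\int_{E_+-\eta_0}^{E_+}(1-N(\nu))\,\frac{d\nu}{E_+-\nu}\), and this requires a quantitative estimate on \(1-N\) near the edge. For this I would use Hölder continuity of the IDS for Diophantine \(\beta\) (Avila--Jitomirskaya, \(1/2\)-Hölder in the subcritical dual regime, transferred by duality), which gives \(1-N(\nu)\le C(E_+-\nu)^{1/2}\). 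Then the near part is at most \(Cm\eta_0^{1/2}\). Choosing \(\eta_0=m^{-2/\tau}\) makes this \(Cm^{1-1/\tau}\), and the far part becomes \(\Delta_m\log m\le Cm^{1-1/\tau}(\log(m+2))^2\), which is (i). Alternatively one could simply invoke upper semicontinuity of \(\frac1m\log\|A_m\|\); but the required rate is what forces the quantitative splitting.

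For the lower bound (ii), the gap hypothesis means \(K_m(x,\nu)=m\) for \(\nu\ge E_+-\eta\). On \([E_+-\eta,E_+]\) the integrand is therefore \(m(1-N(\nu))\ge0\), which only increases \(\log P_m\) and can be dropped. On \([-E_*,E_+-\eta]\) the integrand is bounded by \(\Delta_m/(E_+-\nu)\), which integrates to at most \(\Delta_m\log(C/\eta)\). This gives (ii) with the stated error \(Cm^{1-1/\tau}\log(m+2)\log(C/\eta)\). I expect the main obstacle to be the edge contribution in (i), namely justifying the Hölder (or any power) bound for \(1-N\) at \(E_+\) with constants depending only on the background data. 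If Hölder continuity cannot be cited, I would instead derive the needed bound from the gap Lemma~\ref{amo:lem:single-resonance-gap} and the counting lemma applied at scale \(\eta_0^{-1/(2\tau)}\).
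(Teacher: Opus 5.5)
This is essentially the paper's proof: the Thouless formula with $L(E_+)=\gamma$, Stieltjes integration by parts against the uniform counting discrepancy of Lemma~\ref{amo:lem:uniform-counting-discrepancy}, the Avila--Jitomirskaya H\"older bound $1-N(E_+-s)\le C\sqrt{s}$ for the edge term in (i), and dropping the nonnegative edge term in (ii). The paper phrases your splitting through the truncated logarithm $\log\max\{E_+-t,\eta\}$ and takes $\eta=m^{-2}$ instead of $m^{-2/\tau}$; this is equivalent. One slip to fix: integration by parts gives $\log P_m(x,E_+)-m\gamma=+\int (K_m-mN)\,\frac{d\nu}{E_+-\nu}$, so the inequality you need, and the one that follows from $K_m\le m$, is $K_m-mN\le m(1-N)$ rather than $-(K_m-mN)\le m(1-N)$; your later steps in (i) and (ii) already use this correct sign.
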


\begin{proof}
Let \(\nu_{m,x}\) count the eigenvalues of the block, with total
mass \(m\), and set \(E_{\rm lo}:=-2\lambda-3\).  Both
\(\nu_{m,x}\) and \(m\,dN\) are supported in
\([E_{\rm lo},E_+]\), and their cumulative difference vanishes at
the endpoints.  For \(0<\eta\le1\), put
\[
 g_\eta(t):=\log\max\{E_+-t,\eta\},\qquad
 \operatorname{Var}_{[E_{\rm lo},E_+]}g_\eta\le\log(C/\eta).
\]
Stieltjes integration by parts and
Lemma~\ref{amo:lem:uniform-counting-discrepancy} give
\begin{equation}\label{amo:eq:truncated-log-comparison}
 \left|\int g_\eta\,d\nu_{m,x}-m\int g_\eta\,dN\right|
 \le C m^{1-1/\tau}\log(m+2)\log\frac C\eta.
\end{equation}
The Thouless formula \cite[(2.12)]{AvilaJitomirskaya2010} and
the AMO exponent \cite[Corollary~2]{BourgainJitomirskaya2002} give
\[
 \int\log(E_+-t)\,dN(t)=L(E_+)=\log\lambda=\gamma.
\]
By \cite[Theorem~1.2]{AvilaJitomirskaya2010},
\(1-N(E_+-s)\le C\sqrt s\).  Tonelli's theorem yields
\begin{equation}\label{amo:eq:IDS-truncation-error}
 0\le\int g_\eta\,dN-\gamma
 =\int_0^\eta\frac{1-N(E_+-s)}s\,ds
 \le C\sqrt\eta.
\end{equation}
With \(\eta=m^{-2}\),
\[
 \begin{aligned}
 \log P_m(x,E_+)
 &\le\int g_\eta\,d\nu_{m,x}\\
 &\le\gamma m+Cm\sqrt\eta
       +Cm^{1-1/\tau}\log(m+2)\log(C/\eta),
 \end{aligned}
\]
which proves \eqref{amo:eq:uniform-determinant-upper}.  Under the
gap hypothesis, truncation is inactive on the finite spectrum, so
\[
 \log P_m(x,E_+)=\int g_\eta\,d\nu_{m,x}
 \ge m\int g_\eta\,dN
       -Cm^{1-1/\tau}\log(m+2)\log(C/\eta).
\]
Since \(\int g_\eta\,dN\ge\gamma\), the lower bound follows.
\end{proof}

\subsubsection{The Wronskian and phase-zero estimates}

Put
\begin{equation}\label{amo:eq:Q-def}
 Q_j:=P_j(\beta,E_+)
     =\det(E_+\operatorname{Id}-H_0^{[1,j]}),
 \qquad Q_0=1,\qquad Q_{-1}=0.
\end{equation}

\begin{proposition}
\label{amo:prop:sharp-anchor}
\begin{enumerate}
\item For every \(j\ge0\),
\begin{align}
 e^{\gamma j-CR_\tau(j+2)}
 &\le Q_j\le e^{\gamma j+CR_\tau(j+2)},
 \label{amo:eq:sharp-Q}\\
 e^{-\gamma j-CR_\tau(j+2)}
 &\le u_j\le C e^{-\gamma j+CR_\tau(j+2)},
 \label{amo:eq:sharp-u}\\
 \|u\|_{\ell^2(\{k>j\})}
 &\le C e^{-\gamma j+CR_\tau(j+2)}.
 \label{amo:eq:sharp-u-tail}
\end{align}
\item The Wronskian identity and series representation are
\begin{equation}\label{amo:eq:W-Q-u}
 Q_ju_j-Q_{j-1}u_{j+1}=1\qquad(j\ge0),
\end{equation}
\begin{equation}\label{amo:eq:minimal-solution-series}
 u_j=Q_{j-1}\sum_{\ell=j}^{\infty}
                  \frac1{Q_{\ell-1}Q_\ell}\qquad(j\ge1).
\end{equation}
\end{enumerate}
\end{proposition}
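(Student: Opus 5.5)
The plan is to derive the Wronskian identity \eqref{amo:eq:W-Q-u} first, by pure algebra, and then to read the sharp bounds off it, using Proposition~\ref{amo:prop:determinant-estimates} and the gap in Corollary~\ref{amo:cor:anchored-side-gap} as the only analytic input.

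\emph{Wronskian identity.} Both sequences solve the same recurrence at energy \(E_+\) along the phase-zero orbit. The sequence \(Q_j\) satisfies the Dirichlet recurrence \(Q_{j}=(E_+-2\lambda\cos(2\pi j\beta))Q_{j-1}-Q_{j-2}\), and \(u\) satisfies \eqref{amo:eq:edge-state-eq}. Hence \(W_j:=Q_ju_j-Q_{j-1}u_{j+1}\) is independent of \(j\). At \(j=0\) we get \(W_0=Q_0u_0-Q_{-1}u_1=1\), which is \eqref{amo:eq:W-Q-u}.

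\emph{Bounds on \(Q_j\).} By Lemma~\ref{amo:lem:strict-finite-edge}, \(Q_j>0\). Dividing the identity by \(Q_{j-1}Q_j\) gives
\[
 \frac{u_j}{Q_{j-1}}-\frac{u_{j+1}}{Q_j}=\frac1{Q_{j-1}Q_j}.
\]
Summing from \(j\) to \(\infty\) yields \eqref{amo:eq:minimal-solution-series}, provided \(u_{\ell+1}/Q_\ell\to0\). That holds because \(u\) decays and \(Q_\ell\ge\) const.\ (the lower bound below makes this immediate).

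For the lower bound in \eqref{amo:eq:sharp-Q}, apply Proposition~\ref{amo:prop:determinant-estimates}(ii) with \(m=j\) and \(x=\beta\), where the block \([1,j]\) is the phase-zero block \(J=[1,m]\) of Corollary~\ref{amo:cor:anchored-side-gap} with \(x=0\). This gives \(\eta\ge c(j+1)^{-2\tau}\), so \(\log(C/\eta)\le C\log(j+2)\), and the loss is \(Cj^{1-1/\tau}\log^2(j+2)\le CR_\tau(j+2)\). The upper bound is \eqref{amo:eq:uniform-determinant-upper} directly. Small \(j\) are absorbed into \(C\).

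\emph{Bounds on \(u_j\) and its tail.} From the series, with \(Q_{\ell-1}Q_\ell\ge e^{\gamma(2\ell-1)-CR_\tau(\ell+2)}\) and \(Q_{j-1}\le e^{\gamma(j-1)+CR_\tau(j+1)}\), we get
\[
 u_j\le Ce^{\gamma j+CR_\tau}\sum_{\ell\ge j}e^{-2\gamma\ell+CR_\tau(\ell+2)}.
\]
Since \(R_\tau(\ell)=o(\ell)\), the sum is dominated by its first term up to a factor \(e^{CR_\tau(j+2)}\); monotonicity of \(R_\tau\) and the bound \(R_\tau(\ell+2)\le R_\tau(j+2)+\epsilon(\ell-j)+C\) handle the tail. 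This gives the upper bound in \eqref{amo:eq:sharp-u}.

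The lower bound comes from keeping only the \(\ell=j\) term: \(u_j\ge 1/Q_j\ge e^{-\gamma j-CR_\tau}\). Alternatively, use \(W_j=1\) with \(u_{j+1}>0\), which gives \(u_j\ge1/Q_j\).

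The tail bound \eqref{amo:eq:sharp-u-tail} follows by summing the squares of the upper bounds, \(\sum_{k>j}e^{-2\gamma k+2CR_\tau(k+2)}\), with the same geometric-domination argument.

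\emph{Main obstacle.} The delicate step is the lower bound on \(Q_j\). It requires the anchored gap at phase \(0\) to be only polynomially small, so that \(\log(1/\eta)\) costs just \(\log j\). This is exactly what Corollary~\ref{amo:cor:anchored-side-gap} provides, since \(\|j\beta\|\ge\kappa_\beta j^{-\tau}\) keeps the block \([1,j]\) away from resonances with the edge state. One must check that the translated block \(H_\beta^{[0,j-1]}=H_0^{[1,j]}\) fits that corollary's convention. The remaining work is uniform bookkeeping of the \(o(m)\) function \(R_\tau\) in the geometric sums.
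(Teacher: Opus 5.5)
Your proposal is correct and follows essentially the paper's own argument. The ingredients match: the Wronskian of \(u\) with the Dirichlet solution (\(Q_j\) shifted by one site), the two-sided bounds on \(Q_j\) from Proposition~\ref{amo:prop:determinant-estimates} combined with the anchored gap of Corollary~\ref{amo:cor:anchored-side-gap} at phase \(0\), telescoping to the series, the lower bound \(u_j\ge Q_j^{-1}\), and the sublinear increment bound on \(R_\tau\) to control the geometric tails; the only cosmetic difference is that you check constancy of the Wronskian directly from the recurrence rather than introducing the auxiliary solution \(\varphi\) with \(\varphi_0=0\), \(\varphi_1=1\).
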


\begin{proof}
Corollary~\ref{amo:cor:anchored-side-gap} and
Proposition~\ref{amo:prop:determinant-estimates}, with
\(\eta=\min\{1,c(j+1)^{-2\tau}\}\), give
\[
 |\log Q_j-\gamma j|
 \le Cj^{1-1/\tau}\bigl(\log(j+2)\bigr)^2
 \le CR_\tau(j+2)\qquad(j\ge2).
\]
This proves \eqref{amo:eq:sharp-Q}, after adjusting the initial cases.

Let \(\varphi\) solve \(H_0\varphi=E_+\varphi\) with
\(\varphi_0=0\), \(\varphi_1=1\).  Then
\(\varphi_{j+1}=Q_j\), and its Wronskian with \(u\) gives
\[
 Q_ju_j-Q_{j-1}u_{j+1}
 =\varphi_{j+1}u_j-\varphi_ju_{j+1}
 =\varphi_1u_0-\varphi_0u_1=1.
\]
In particular, \(u_j\ge Q_j^{-1}\).  Division and telescoping yield
\[
 \frac{u_j}{Q_{j-1}}-\frac{u_{L+1}}{Q_L}
 =\sum_{\ell=j}^L\frac1{Q_{\ell-1}Q_\ell},
 \qquad \frac{u_{L+1}}{Q_L}\longrightarrow0,
\]
because \(u\in\ell^2\) and \(Q_L\to\infty\).
This proves \eqref{amo:eq:minimal-solution-series}.

Since \(R_\tau'(t)\to0\), for the constant \(C\) in
\eqref{amo:eq:sharp-Q} and all sufficiently large \(j\),
\[
 6C\bigl(R_\tau(\ell+2)-R_\tau(j+2)\bigr)
 \le\frac\gamma2(\ell-j)\qquad(\ell\ge j).
\]
Applying \eqref{amo:eq:sharp-Q} to the series gives
\[
 \begin{aligned}
 u_j
 &\le\sum_{\ell=j}^{\infty}
   e^{-\gamma j-2\gamma(\ell-j)+3CR_\tau(\ell+2)}\\
 &\le e^{-\gamma j+3CR_\tau(j+2)}
        \sum_{h=0}^{\infty}e^{-3\gamma h/2}.
 \end{aligned}
\]
Using the same increment bound,
\[
 \sum_{k>j}u_k^2
 \le C e^{-2\gamma j+6CR_\tau(j+2)}
                \sum_{h\ge1}e^{-3\gamma h/2}.
\]
These estimates and \(u_j\ge Q_j^{-1}\) prove the remaining
bounds; enlarging \(C\) covers the finite initial range.
\end{proof}

\subsection{The central Schur complement and its second phase derivative}
\label{amo:sec:schur}

For $l,r\ge1$, set
\[
 \Lambda=[-l,r]\cap\mathbb Z,\qquad D=\Lambda\setminus\{0\},
 \qquad M=l+r+1,\qquad d=\min\{l,r\}.
\]
Write $H_x^-=H_x^{[-l,-1]}$, $H_x^+=H_x^{[1,r]}$, and define
\begin{align}
 R^\pm(x,E)&=(E-H_x^\pm)^{-1},\notag\\
 h_l^-(x,E)&=\langle e_{-1},R^-(x,E)e_{-1}\rangle,\notag\\
 h_r^+(x,E)&=\langle e_1,R^+(x,E)e_1\rangle,
 \label{amo:eq:central-weyl}\\
 S_{l,r}(x,E)&=E-2\lambda\cos(2\pi x)-h_l^-(x,E)-h_r^+(x,E).
 \label{amo:eq:central-schur-definition}
\end{align}
The vector
\begin{equation}\label{amo:eq:central-minimizer}
 f_0=1,\qquad f|_{[-l,-1]}=R^-e_{-1},\qquad
 f|_{[1,r]}=R^+e_1
\end{equation}
satisfies
\begin{equation}\label{amo:eq:central-stationarity}
 (E-H_x^\Lambda)f=S_{l,r}e_0.
\end{equation}
For real parameters with $E-H_x^\pm>0$ and vectors satisfying
$v_0=1$, $\operatorname{supp}v\subset\Lambda$, completion of the square gives
\begin{equation}\label{amo:eq:central-variational}
 \begin{aligned}
 \langle v,(E-H_x)v\rangle
 &=S_{l,r}+\langle v-f,(E-H_x^\Lambda)(v-f)\rangle,\\
 S_{l,r}&=\min_{\substack{v_0=1\\\operatorname{supp}v\subset\Lambda}}
                 \langle v,(E-H_x)v\rangle.
 \end{aligned}
\end{equation}
Vectors supported on $\Lambda$ are extended by zero to $\mathbb Z$.
For the same normalized positive dual state, use the coefficient $a$ in
\eqref{amo:eq:magnetic-ground-form} and set
\begin{equation}\label{amo:eq:effective-curvature}
 \mathcal A_\psi=\int_\T a(t)^{-1}\,dt,\qquad
 \mathcal K_\psi=\frac{8\pi^2\lambda}{\mathcal A_\psi}>0.
\end{equation}

Set $B_{14}:=\lceil8\tau+2\rceil$. For $c_*>0$, let
\begin{equation}\label{amo:eq:central-polynomial-bidisk}
 \mathcal D_M=
 \{(z,E)\in\mathbb C^2:|z|\le c_*M^{-B_{14}},\quad
                         |E-E_+|\le c_*M^{-B_{14}}\}.
\end{equation}
\begin{proposition}
\label{amo:prop:central-schur}
There are $c_*,c,C>0$ and $M_0$, depending only on the fixed data,
such that the following hold whenever $M\ge M_0$ and $d\ge M/4$.
\begin{enumerate}
\item The resolvents, Weyl values, $f$, and $S_{l,r}$ are holomorphic on a
neighborhood of $\mathcal D_M$. On $\mathcal D_M\cap\mathbb R^2$,
\begin{equation}\label{amo:eq:central-side-gap}
 E-\max\sigma(H_x^\pm)\ge cM^{-2\tau}.
\end{equation}
\item For every integer $k\ge0$ and $p+q\le k$, uniformly on $\mathcal D_M$,
\begin{equation}\label{amo:eq:central-polynomial-jets}
 \begin{split}
 &\|\partial_z^p\partial_E^qR^-\|
  +\|\partial_z^p\partial_E^qR^+\|
  +\|\partial_z^p\partial_E^qf\|_2\\
 &\qquad+|\partial_z^p\partial_E^qh_l^-|
  +|\partial_z^p\partial_E^qh_r^+|
  +|\partial_z^p\partial_E^qS_{l,r}|
  \le C_{14}(k)M^{2\tau(k+1)}.
 \end{split}
\end{equation}
The constants $c_*,M_0$ and the bidisk are independent of $k$.
\end{enumerate}
\end{proposition}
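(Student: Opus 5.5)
The plan is to derive everything from the real gap in Corollary~\ref{amo:cor:anchored-side-gap}, extended to complex parameters by a Neumann series. For part (i) on real points, note that \(H_x^-=H_x^{[-l,-1]}\) and \(H_x^+=H_x^{[1,r]}\) are exactly the two anchored intervals of length \(l,r\le M\). Since \(|x|\le c_*M^{-B_{14}}\le c(M+1)^{-\tau}\) once \(c_*\) is small and \(M\ge M_0\) (recall \(B_{14}>\tau\)), Corollary~\ref{amo:cor:anchored-side-gap}(i) gives \(E_+-\max\sigma(H_x^\pm)\ge c(M+1)^{-2\tau}\). Because \(|E-E_+|\le c_*M^{-B_{14}}\) and \(B_{14}>2\tau\), subtracting this energy shift yields \eqref{amo:eq:central-side-gap}, together with \(\|R^\pm(x,E)\|\le CM^{2\tau}\) for real \((x,E)\in\mathcal D_M\).

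For holomorphy, fix a real base point \((x_0,E_0)\) with \(x_0=\operatorname{Re}z\) and \(E_0=\operatorname{Re}E\), and write
\[
 E-H_z^\pm=(E_0-H_{x_0}^\pm)+\bigl[(E-E_0)-(V(z+j\beta)-V(x_0+j\beta))_j\bigr],
\]
where \(V=2\lambda\cos(2\pi\,\cdot)\). The diagonal perturbation has norm at most \(C(|\operatorname{Im}z|+|\operatorname{Im}E|)\le Cc_*M^{-B_{14}}\), since \(V\) is entire and hence Lipschitz on a fixed strip. This is much smaller than the inverse resolvent bound \(cM^{-2\tau}\). The Neumann series therefore converges on a neighborhood of \(\mathcal D_M\) and gives \(\|R^\pm\|\le 2CM^{2\tau}\) there. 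The resolvents are holomorphic in \((z,E)\), and so are \(h_l^-\), \(h_r^+\), \(f\) (built from \(R^\pm e_{\mp1}\) and \(f_0=1\)), and \(S_{l,r}\).

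For the jet bound \eqref{amo:eq:central-polynomial-jets}, I would not use Cauchy estimates on \(\mathcal D_M\), whose radius \(M^{-B_{14}}\) would cost \(M^{B_{14}k}\). Instead I would differentiate directly. The identities \(\partial_E R=-R^2\) and \(\partial_z R=R\,\operatorname{diag}(-V'(z+j\beta))R\), iterated by Leibniz' rule, express \(\partial_z^p\partial_E^qR\) as a sum of at most \(C(k)\) words. Each word contains at most \(k+1\) resolvent factors and diagonal factors \(\operatorname{diag}(V^{(i)}(z+j\beta))\), whose norms are bounded by a \(k\)-dependent constant on the fixed strip. This gives \(\|\partial_z^p\partial_E^qR^\pm\|\le C_{14}(k)M^{2\tau(k+1)}\). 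The bounds for \(h^\pm=\langle e_{\mp1},R^\pm e_{\mp1}\rangle\) and \(\|\partial^{p,q}f\|_2\) follow immediately, and \(S_{l,r}\) adds only the harmless terms \(E-V(z)\).

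The constants \(c_*\) and \(M_0\) are chosen once, from the \(k=0\) resolvent bound alone, so they and the bidisk are independent of \(k\). The hypothesis \(d\ge M/4\) is not needed beyond \(l,r\le M\) here. The main point requiring care is this choice of the holomorphic neighborhood: the perturbation must stay below the real gap \(\asymp M^{-2\tau}\), which is what forces \(B_{14}>2\tau\). The generous choice \(B_{14}=\lceil 8\tau+2\rceil\) leaves room for the later curvature statements.
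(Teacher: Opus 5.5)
Your proof is correct and follows the paper's argument: a Neumann series around a real point where Corollary~\ref{amo:cor:anchored-side-gap} supplies the $M^{-2\tau}$ gap, followed by the expansion of $\partial^\zeta R^\pm$ into words with at most $|\zeta|+1$ resolvent factors. The paper uses the single base point $(0,E_+)$ where you use $(\operatorname{Re}z,\operatorname{Re}E)$, and your only slips are cosmetic: the Green columns are $R^\pm e_{\pm1}$ (not $e_{\mp1}$), and $\partial_zR^\pm=R^\pm\operatorname{diag}(V'(z+j\beta))R^\pm$ carries no minus sign, neither of which affects the bounds.
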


The constants in Proposition~\ref{amo:prop:central-schur} can be chosen
so that the following estimates hold on the same bidisk.

\begin{corollary}
\label{amo:cor:central-schur-curvature}
Under the hypotheses of Proposition~\ref{amo:prop:central-schur}, the following hold.
\begin{enumerate}
\item At $(0,E_+)$,
\begin{align}
 0<S_{l,r}&\le Ce^{-cd},
 &|\partial_xS_{l,r}|&\le Ce^{-cd},
 \label{amo:eq:central-small-affine}\\
 \partial_ES_{l,r}&=\|u\|_2^2+O(e^{-cd}),
 &\partial_x^2S_{l,r}&=\mathcal K_\psi+O(e^{-cd}),
 \label{amo:eq:central-explicit-limits}\\
 h_l^-&=u_1+O(e^{-cd}),
 &h_r^+&=u_1+O(e^{-cd}).
 \label{amo:eq:central-weyl-limits}
\end{align}
\item On $\mathcal D_M\cap\mathbb R^2$,
\begin{equation}\label{amo:eq:central-positive-derivatives}
 1\le\partial_ES_{l,r}\le C,\qquad
 c\le\partial_x^2S_{l,r}\le C,\qquad
 c\le h_l^-,h_r^+\le C.
\end{equation}
\end{enumerate}
\end{corollary}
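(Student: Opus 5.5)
The plan is to evaluate everything at the real base point $(0,E_+)$ first and then transfer to the real bidisk using the jet bounds \eqref{amo:eq:central-polynomial-jets}. At $(0,E_+)$ the half-line restrictions of $H_0$ are cut at the symmetry center of the even edge state $u$. The argument of Proposition~\ref{amo:prop:sharp-anchor} applies to the truncations $u|_{[1,r]}$ and $u|_{[-l,-1]}$: the edge equation gives $(E_+-H_0^+)u|_{[1,r]}=e_1+u_{r+1}e_r$, and similarly on the left. The side resolvent bound $\|R^\pm\|\le CM^{2\tau}$ from \eqref{amo:eq:central-side-gap} then yields
\[
 \|R^+e_1-u|_{[1,r]}\|_2+\|R^-e_{-1}-u|_{[-l,-1]}\|_2\le CM^{2\tau}e^{-\gamma d+CR_\tau(d+2)}\le Ce^{-cd},
\]
using \eqref{amo:eq:sharp-u} and \eqref{amo:eq:sharp-u-tail} together with $d\ge M/4$. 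Evaluating the first coordinates gives \eqref{amo:eq:central-weyl-limits}, since $u_{-1}=u_1$.

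For $S_{l,r}$ we use \eqref{amo:eq:central-stationarity}. The central recurrence $E_+-2\lambda=2u_1$ gives $S_{l,r}=2u_1-h^--h^+=O(e^{-cd})$, and Lemma~\ref{amo:lem:strict-finite-edge} with the variational identity \eqref{amo:eq:central-variational} gives $S_{l,r}>0$. The energy derivative is $\partial_E S=1+\|R^-e_{-1}\|^2+\|R^+e_1\|^2$, which converges to $\|u\|_2^2$. For $\partial_x S$, the first variation is $2\pi\cdot 2\lambda\sin(0)$ minus weighted sums $\sum_j 4\pi\lambda\sin(2\pi j\beta)f_j^2$. Evenness of $u$ makes the limiting sum cancel, so the finite sum is $O(e^{-cd})$.

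The main obstacle is the curvature identity $\partial_x^2S=\mathcal K_\psi+O(e^{-cd})$. Naively one would differentiate the Weyl values twice. Instead I would use the variational characterization \eqref{amo:eq:central-variational} together with the magnetic form of Lemma~\ref{amo:lem:magnetic-ground-form}. Write $S(x)=\min_{v_0=1}\mathcal Q_x(v)+(E_+-E)\|v\|^2$, and in Fourier variables $\mathcal Q_x(v)=\lambda\int a\,|e^{2\pi ix}g(t+\beta)-g(t)|^2\,dt$ with $g=\mathcal Fv/\psi$.

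At $x$ small, a trial function $g=1+x\,\phi$ solving the cohomological equation $e^{2\pi ix}g(t+\beta)-g(t)\approx 2\pi i x\,w(t)$ is optimal when $w=a^{-1}/\mathcal A_\psi$. This is the constrained minimizer of $\int a|w|^2$ subject to $\int w=1$, which is forced by $v_0=1$, i.e. by the mean of $\psi g$. The resulting minimal value is $4\pi^2\lambda x^2/\mathcal A_\psi=\tfrac12\mathcal K_\psi x^2$. The Diophantine condition solves the cohomological equation with analytic loss, and truncation to $\Lambda$ costs $O(e^{-cd})$ by the exponential decay of $u$ and of the analytic corrector. This yields a two-sided quadratic expansion $S(x)=S(0)+\tfrac12\mathcal K_\psi x^2+O(x^3M^{C})+O(e^{-cd})$.

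Holomorphy and the jet bound at order $3$ then convert this expansion into the second-derivative statement at $0$: take the Cauchy estimate on radius $c_*M^{-B_{14}}$, or compare second differences at scale $e^{-cd/3}$.

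Finally, \eqref{amo:eq:central-positive-derivatives} follows on $\mathcal D_M\cap\mathbb R^2$ from the base values and the Taylor bound. The increments are at most $C_{14}(3)M^{8\tau}\cdot c_*M^{-B_{14}}$, which is small because $B_{14}>8\tau+2$. The lower bound $\partial_E S\ge1$ is exact, since $\partial_E S=1+\|R^-e_{-1}\|^2+\|R^+e_1\|^2$, and $h^\pm\ge c$ follows from positivity of the resolvents together with the lower bound $u_1>0$.
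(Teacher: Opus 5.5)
Apart from the curvature identity, your steps agree with the paper's: the truncated edge state compared with the side resolvents, $S=2u_1-h^--h^+$ at $(0,E_+)$, $\partial_ES=\langle f,f\rangle$, the cancellation of $\langle u,\dot Q(0)u\rangle$ by evenness, and the transfer to $\mathcal D_M$ using $B_{14}>8\tau+2$. The gap is in the lower half of your ``two-sided'' expansion of $S(x,E_+)$. The trial vector $P_\Lambda\mathcal F^{-1}\bigl(\psi(1+x\phi)\bigr)$, with $\int_{\mathbb T}\psi\phi=0$ so that $v_0=1$, gives only the upper bound $S(x,E_+)\le\tfrac12\mathcal K_\psi x^2+O(|x|^3)+O(e^{-cd})$. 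For the reverse inequality, write $g=\mathcal Ff/\psi$ for the actual minimizer $f$; weighted Cauchy--Schwarz gives $\mathcal Q_x(f)\ge\lambda|e^{2\pi ix}-1|^2|\widehat g_0|^2/\mathcal A_\psi$. The quantity that appears is the mean $\widehat g_0=\int_{\mathbb T}g$, whereas $v_0=1$ fixes $\int_{\mathbb T}\psi g$. So the constraint $\int w=1$ is \emph{not} forced by $v_0=1$. Without further input the lower bound is actually false. In infinite volume, if $x\equiv-k\beta$, then $v_j=u_{j-k}/u_k$ has $v_0=1$ and $\mathcal Q_x(v)=0$, and such phases accumulate at $0$. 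The finite support in $\Lambda$ therefore has to enter the lower bound, but you use $\Lambda$ only to truncate the trial function.

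To close the argument along your route, feed the upper bound back in. From $\mathcal Q_x(f)=S(x,E_+)\le Cx^2+Ce^{-cd}$, \eqref{amo:eq:ground-fourier-lower}, and $\|x+k\beta\|_{\mathbb T}\ge\tfrac12\kappa_\beta|k|^{-\tau}$ for $0<|k|\le K_1\sim|x|^{-1/\tau}$, you control $\widehat g_k$ in that range. For $|k|>K_1$, use $\|f\|_2\le CM^{2\tau}$ and the decay of $\widehat\psi_k=u_k$. Together these give $|\widehat g_0-1|\le C(|x|+e^{-cd/2})$, hence the matching lower bound; then take second differences at scale $e^{-cd/3}$. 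Your Cauchy-estimate alternative does not work: the variational bounds hold only for real $x$, and radius $c_*M^{-B_{14}}$ would give only polynomial accuracy. Also, the energy term is $(E-E_+)\|v\|^2$, not $(E_+-E)\|v\|^2$.

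The paper avoids every estimate at $x\ne0$. It uses the exact envelope identity $\partial_\theta^2S|_{(0,E_+)}=\langle f,\ddot Q(0)f\rangle+2\operatorname{Re}\langle\dot f,\dot Q(0)f\rangle$. It proves $\|\dot f-w\|_2\le CM^{4\tau}e^{-cd}$ from the linear equation $Q_D(\dot f-w^\Lambda)|_D=-P_D\dot Q(0)(f-u^\Lambda)-b_\Lambda(w)|_D$ and the side gap. It then reads off $\mathcal K_\psi$ from \eqref{amo:eq:corrector-hessian}.
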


We prove Proposition~\ref{amo:prop:central-schur} and
Corollary~\ref{amo:cor:central-schur-curvature} together.

\begin{proof}
We first construct a corrector for the phase derivative. Put
$\theta=2\pi x$, $Q(\theta)=E_+-H_{\theta/(2\pi)}$,
and $D_\beta h=h(\cdot+\beta)-h$;
dots denote $\theta$-derivatives at zero.
Lemma~\ref{amo:lem:magnetic-ground-form} gives
\begin{equation}\label{amo:eq:central-magnetic-form}
 \langle v,Q(\theta)v\rangle
 =\lambda\int_\T a\,|e^{i\theta}g(\cdot+\beta)-g|^2,
 \qquad \mathcal Fv=\psi g.
\end{equation}
Since $a>0$ is analytic, for some $\rho>0$ the function
\[
 b=i\bigl((\mathcal A_\psi a)^{-1}-1\bigr)
 \quad\hbox{satisfies}\quad
 \widehat b_0=0,\qquad |\widehat b_k|\le Ce^{-2\pi\rho|k|}.
\]
Taking $\rho/2$ as the smaller strip width, the Diophantine inequality yields
\[
 \widehat h_k=\frac{\widehat b_k}{e^{2\pi ik\beta}-1}\quad(k\ne0),
 \qquad |\widehat h_k|\le C|k|^\tau e^{-2\pi\rho|k|}
                  \le Ce^{-\pi\rho|k|}.
\]
Set
\[
 \widehat h_0=-\sum_{k\ne0}\widehat\psi_{-k}\widehat h_k,
 \qquad \widehat\psi_0=\int_\T\psi=1.
\]
Then
\begin{equation}\label{amo:eq:magnetic-corrector-equation}
 D_\beta h=i\bigl((\mathcal A_\psi a)^{-1}-1\bigr),
 \qquad \int_\T\psi h=0,
\end{equation}
and
\begin{equation}\label{amo:eq:magnetic-corrector-vector}
 w=\mathcal F^{-1}(\psi h),\qquad w_0=0,\qquad
 |u_j|+|w_j|\le Ce^{-c|j|}
\end{equation}
for a fixed $c>0$.
Polarization and differentiation of
\eqref{amo:eq:central-magnetic-form} give, for $k\in L^2(\T)$,
\[
 \begin{aligned}
 \langle\mathcal F^{-1}(\psi k),Q(0)w+\dot Q(0)u\rangle
 &=\lambda\int_\T a\,\overline{D_\beta k}\,(D_\beta h+i)\\
 &=\frac{i\lambda}{\mathcal A_\psi}
       \int_\T\overline{D_\beta k}=0.
 \end{aligned}
\]
Multiplication by $\psi$ is invertible on $L^2$, so
\begin{equation}\label{amo:eq:corrector-linear-equation}
 Q(0)w+\dot Q(0)u=0.
\end{equation}
Moreover,
\[
 \begin{gathered}
 \ker Q(0)
 =\mathcal F^{-1}\{\psi g:D_\beta g=0\}
 =\mathbb Cu,\\
 Q(0)(\overline w-w)=0,\qquad
 (\overline w-w)_0=0
 \quad\Longrightarrow\quad w=\overline w.
 \end{gathered}
\]
For $h_1\in L^2(\T)$, the effective quadratic coefficient follows from
\begin{equation}\label{amo:eq:corrector-effective-minimum}
 \int_\T a|i+D_\beta h_1|^2
 \ge\frac{|\int_\T(i+D_\beta h_1)|^2}{\int_\T a^{-1}}
 =\mathcal A_\psi^{-1}
 =\int_\T a|i+D_\beta h|^2.
\end{equation}
Indeed, \eqref{amo:eq:central-magnetic-form} along $u+\theta w$ and
\eqref{amo:eq:corrector-linear-equation} give
\begin{equation}\label{amo:eq:corrector-hessian}
 \begin{aligned}
 \frac{2\lambda}{\mathcal A_\psi}
 &=\left.\frac{d^2}{d\theta^2}
   \langle u+\theta w,Q(\theta)(u+\theta w)\rangle\right|_{\theta=0}\\
 &=\langle u,\ddot Q(0)u\rangle
    +4\operatorname{Re}\langle w,\dot Q(0)u\rangle
    +2\langle w,Q(0)w\rangle\\
 &=\langle u,\ddot Q(0)u\rangle
    +2\operatorname{Re}\langle w,\dot Q(0)u\rangle.
 \end{aligned}
\end{equation}

We next compare the corrector with the finite-volume minimizer.
Let $P_\Lambda,P_D$ be coordinate restriction and set
\[
 Q_\Lambda=P_\Lambda Q(0)P_\Lambda^*,\qquad
 \dot Q_\Lambda=P_\Lambda\dot Q(0)P_\Lambda^*,\qquad
 Q_D=P_DQ(0)P_D^*.
\]
Corollary~\ref{amo:cor:anchored-side-gap} gives
\begin{equation}\label{amo:eq:central-side-inverse-at-zero}
 Q_D\ge cM^{-2\tau}I,\qquad \|Q_D^{-1}\|\le CM^{2\tau}.
\end{equation}
For a sequence $v$, put
$b_\Lambda(v)=v_{-l-1}e_{-l}+v_{r+1}e_r$.
Writing $u^\Lambda=P_\Lambda u$, $w^\Lambda=P_\Lambda w$,
$f=f_{l,r}(0,E_+)$ and
$\dot f=\left.\partial_\theta f_{l,r}(\theta/(2\pi),E_+)\right|_0$,
we have
\[
 \begin{aligned}
 Q_\Lambda u^\Lambda&=b_\Lambda(u),\\
 Q_\Lambda w^\Lambda+\dot Q_\Lambda u^\Lambda&=b_\Lambda(w),\\
 Q_D(f-u^\Lambda)|_D&=-b_\Lambda(u)|_D,\\
 Q_D(\dot f-w^\Lambda)|_D
 &=-P_D\dot Q(0)(f-u^\Lambda)-b_\Lambda(w)|_D.
 \end{aligned}
\]
Here $f_0=u_0=1$, $\dot f_0=w_0=0$, and $\dot Q(0)$ is diagonal.
Thus, including the omitted tails,
\begin{equation}\label{amo:eq:central-minimizer-convergence}
 \|f-u\|_2\le CM^{2\tau}e^{-cd},
\end{equation}
\begin{equation}\label{amo:eq:central-corrector-convergence}
 \|\dot f-w\|_2
 \le CM^{2\tau}\bigl(\|f-u\|_2+e^{-cd}\bigr)
 \le CM^{4\tau}e^{-cd}.
\end{equation}
Since $M\le4d$, decreasing $c>0$ gives
\[
 \|f-u\|_2+\|\dot f-w\|_2\le Ce^{-cd}.
\]

For real parameters, stationarity and $f_0=1$ give
\begin{align}
 \partial_ES_{l,r}&=\langle f,f\rangle\ge1,
 \label{amo:eq:schur-energy-envelope}\\
 \partial_\theta S_{l,r}&=\langle f,(\partial_\theta Q)f\rangle,
 \notag\\
 \partial_\theta^2S_{l,r}|_{(0,E_+)}
 &=\langle f,\ddot Q(0)f\rangle
       +2\operatorname{Re}\langle\dot f,\dot Q(0)f\rangle.
 \label{amo:eq:schur-finite-hessian}
\end{align}
In the last two lines $E=E_+$.
Since $u_{-j}=u_j$ and $(\dot Q(0))_{jj}=2\lambda\sin(2\pi j\beta)$,
\[
 \langle u,\dot Q(0)u\rangle=0,\qquad
 E_+-2\lambda=2u_1.
\]
The boundary estimates, the boundedness of $\dot Q(0),\ddot Q(0)$,
and \eqref{amo:eq:corrector-hessian} yield
\[
 \begin{gathered}
 h_l^-=f_{-1}=u_1+O(e^{-cd}),\qquad
 h_r^+=f_1=u_1+O(e^{-cd}),\\
 S_{l,r}(0,E_+)=O(e^{-cd}),\qquad
 \partial_xS_{l,r}(0,E_+)=O(e^{-cd}),\\
 \partial_ES_{l,r}(0,E_+)=\|u\|_2^2+O(e^{-cd}),\qquad
 \partial_x^2S_{l,r}(0,E_+)
 =(2\pi)^2\frac{2\lambda}{\mathcal A_\psi}+O(e^{-cd}).
 \end{gathered}
\]
Lemma~\ref{amo:lem:strict-finite-edge} and
\eqref{amo:eq:central-variational} give $S_{l,r}(0,E_+)>0$.

It remains to extend these estimates to $\mathcal D_M$.
For $A_\pm(z,E)=E-H_z^\pm$ and $|z|\le1$, all positive-order derivatives of
$A_\pm$ have operator norm bounded independently of $M$, and
\[
 \|A_\pm(z,E)-A_\pm(0,E_+)\|
 \le C(|z|+|E-E_+|).
\]
Choose $c_*>0$ so that, on the bidisk with twice the radii of
$\mathcal D_M$,
\[
 \|A_\pm(0,E_+)^{-1}(A_\pm(z,E)-A_\pm(0,E_+))\|
 \le Cc_*M^{2\tau-B_{14}}\le\tfrac12.
\]
The Neumann series gives holomorphy and $\|R^\pm\|\le CM^{2\tau}$;
the same perturbation estimate for self-adjoint matrices gives
\eqref{amo:eq:central-side-gap}.
For the derivative estimates with fixed $k$, write $C=C(k)$ for
auxiliary constants whose values may increase. For a nonzero two-variable multi-index $\zeta$,
\[
 \partial^\zeta R^\pm
 =\sum_{\ell=1}^{|\zeta|}(-1)^\ell
   \sum_{\substack{\zeta_1+\cdots+\zeta_\ell=\zeta\\|\zeta_i|\ge1}}
   \frac{\zeta!}{\zeta_1!\cdots\zeta_\ell!}
   R^\pm(\partial^{\zeta_1}A_\pm)R^\pm\cdots
       (\partial^{\zeta_\ell}A_\pm)R^\pm.
\]
Consequently
\[
 \|\partial^\zeta R^\pm\|
 \le C\sum_{\ell=1}^{|\zeta|}M^{2\tau(\ell+1)}
 \le CM^{2\tau(k+1)},\qquad 1\le|\zeta|\le k.
\]
The defining formulas for $f,h_l^-,h_r^+,S_{l,r}$ satisfy the same
polynomial bound. Choosing a fixed $C_{14}(k)$ that dominates the sum
of these bounds proves \eqref{amo:eq:central-polynomial-jets} on the
same bidisk for every $k$.
Finally, using these bounds only through order three, the mean-value
formula gives, for real $(x,E)\in\mathcal D_M$,
\[
 \begin{aligned}
 |\partial_x^2S(x,E)-\partial_x^2S(0,E_+)|
   &\le Cc_*M^{8\tau-B_{14}},\\
 |\partial_ES(x,E)-\partial_ES(0,E_+)|
   &\le Cc_*M^{6\tau-B_{14}},\\
 |h_l^-(x,E)-h_l^-(0,E_+)|
 +|h_r^+(x,E)-h_r^+(0,E_+)|
   &\le Cc_*M^{4\tau-B_{14}}.
 \end{aligned}
\]
All three bounds tend to zero. The positive limits
$\mathcal K_\psi$, $\|u\|_2^2$, $u_1$, together with
\eqref{amo:eq:schur-energy-envelope}, prove
\eqref{amo:eq:central-positive-derivatives}.
\end{proof}
\subsection{Finite-volume singular value estimates and the overlap coefficient}
\label{amo:sec:svd}

All SVDs in this subsection are real. In the next two lemmas, the constants
\(C_{15}(s,C_0,K)\) and \(D_{21}(s,C_0,K)\) are chosen simultaneously.
In their proofs, \(C=C(s,C_0,K)\) and \(P=P(s,C_0,K)\) denote
auxiliary positive bounds that may increase from one occurrence to the next.
For the next lemma, fix
\(m\ge2\) and a real phase--energy domain \(\Omega\).  Assume on
\(\Omega\) that
\[
 E-\max\sigma(H_{\lambda,\beta,z}^{[0,m-1]})\ge\delta>0,
 \qquad \delta^{-1}\le C_0(m+2)^K,
 \qquad \|A_m(z,E)\|>2.
\]

\begin{lemma}
\label{amo:lem:svd-calculus}
Under these assumptions, for every integer \(s\ge1\),
\begin{equation}\label{amo:eq:svd-calculus}
 \frac{|\partial_{z,E}^{\zeta}\|A_m\||}{\|A_m\|}
 +|\partial_{z,E}^{\zeta}u(A_m)|
 +|\partial_{z,E}^{\zeta}s(A_m)|
 \le C_{15}(m+2)^{D_{21}},\qquad 1\le|\zeta|\le s.
\end{equation}
The angle bounds hold for every local continuous SVD lift.
\end{lemma}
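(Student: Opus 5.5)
The plan is to normalize the transfer matrix by its principal Dirichlet determinant and reduce everything to Green-function calculus. I would then apply the fixed-gap SVD lemma to the normalized matrix.

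Put \(D:=P_m(z,E)\). By the gap hypothesis \(D>0\); write \(\mathcal N:=D^{-1}A_m\). By \eqref{svd:eq:normalized-transfer-green}, the entries of \(\mathcal N\) are \(1\), \(-g_L\), \(g_R\), and \(-g_Lg_R+D^{-2}\). Here \(g_L,g_R\) are the boundary Green values \(\langle e,(E-H^{[\cdot]})^{-1}e\rangle\) of the two length-\((m-1)\) subblocks. By Cauchy interlacing, these subblocks also have spectrum at distance at least \(\delta\) below \(E\).

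The first step bounds the resolvents. The resolvent \(R=(E-H)^{-1}\) of each block satisfies \(\|R\|\le\delta^{-1}\le C_0(m+2)^K\). The entries of \(\partial_{z,E}H\) are bounded independently of \(m\) (the potential is a fixed trigonometric polynomial). The iterated resolvent formula, as in Step (ii) of the proof of Proposition~\ref{prop:polynomial-starting-scale-perturbed-dichotomy}, then gives \(\|\partial^\zeta R\|\le C(m+2)^{K(|\zeta|+1)}\).

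The second step controls the determinant. Jacobi's formula \(\partial\log D=-\operatorname{tr}(R\,\partial(E-H))\), differentiated repeatedly, gives \(|\partial^\zeta\log D|\le Cm(m+2)^{K(|\zeta|+1)}\). Faà di Bruno converts this into the bound \(|\partial^\zeta D^{-2}|\le C(m+2)^{P}D^{-2}\). So every entry of \(\mathcal N\) has \(C^s\) norm at most \(C(m+2)^P\).

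The third step handles the singular data. There are two regimes.

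In the first regime \(D^{-2}\le\tfrac12\). Then \(\det\mathcal N=D^{-2}\), \(\mathcal N_{11}=1\), and \(\|\mathcal N\|\ge1\). By \eqref{svd:eq:artificial-uniform-singular-gap}, the Gram gap satisfies \(s_1^2-s_2^2\ge\tfrac34\). The proof of Lemma~\ref{svd:lem:fixed-gap-oriented-stability} (explicit projector formula and the chain rule, without the homotopy) then bounds \(\partial^\zeta\log s_1(\mathcal N)\) and the derivatives of the singular axes polynomially in the \(C^s\) norms of \(\mathcal N\). Since \(A_m\) and \(\mathcal N\) differ by a positive scalar factor, they have the same singular directions. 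This gives the bounds on \(\partial^\zeta u(A_m)\) and \(\partial^\zeta s(A_m)\), together with \(\log\|A_m\|=\log D+\log s_1(\mathcal N)\). Faà di Bruno applied to \(\exp\) gives the relative bound for \(\partial^\zeta\|A_m\|/\|A_m\|\).

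The second regime, \(D^{-2}>\tfrac12\) (small \(D\)), is the one I expect to be the main obstacle. Here the fixed gap of \(\mathcal N\) is not automatic. I would instead use \(\|A_m\|>2\) directly. The singular values of \(A_m\) are \(\lambda,\lambda^{-1}\) with \(\lambda>2\), so \(\lambda^2-\lambda^{-2}\ge\tfrac{15}{4}\), and the same chain-rule argument applies to \(A_m\) itself. The entries of \(A_m\) are \(D\) and the neighbouring determinants, which equal \(D\) times Green values. Each therefore has derivatives bounded by \(C(m+2)^P\) times its own size. Dividing by \(\|A_m\|\ge\max(D,1)\) keeps the normalized jets polynomial.

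The angle bounds are insensitive to the choice of local continuous lift, since different lifts differ by constants. Taking \(C_{15},D_{21}\) to dominate both regimes completes the argument.
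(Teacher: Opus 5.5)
Your proposal is correct and follows essentially the paper's route: normalize by the principal determinant $D$, control the resolvent and $\log D$ by the iterated resolvent formula and Jacobi's formula, and read off $\log\|A_m\|$ and the angles from the explicit two-by-two Gram projector with a uniform gap. The case split is unnecessary. With $R:=(E-H_{\lambda,\beta,z}^{[0,m-1]})^{-1}$ one has $D^{-1}=R_{0,m-1}\le\|R\|\le\delta^{-1}$, so $\mathcal N$ has polynomially bounded jets even when $D$ is small. Moreover $s_1(\mathcal N)\ge|\mathcal N_{11}|=1$ and $s_2(\mathcal N)/s_1(\mathcal N)=\|A_m\|^{-2}<\tfrac14$ give a Gram gap of at least $\tfrac{15}{16}$ in every regime, which is exactly how the paper argues. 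Two minor slips do not affect the bounds: by Cramer's rule, $g_L,g_R$ are the corner diagonal entries of this full-block resolvent rather than Green values of the subblocks, and Jacobi's formula carries no minus sign.
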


\begin{proof}
Put \(J=[0,m-1]\), \(D=\det(E-H^J)>0\), and \(R=(E-H^J)^{-1}\).
The inverse-derivative formula in the proof of
Proposition~\ref{amo:prop:central-schur}, Jacobi's identity
\[
 \partial_i\log D=\operatorname{tr}\bigl(R\partial_i(E-H^J)\bigr),
\]
and \(\|\partial_z^jH^J\|\le2\lambda(2\pi)^j\) give
\begin{equation}\label{amo:eq:logdet-jets}
 \begin{aligned}
 \|\partial^\zeta R\|&\le C(1+\delta^{-1})^{s+1}
                    &&(0\le|\zeta|\le s),\\
 |\partial^\zeta\log D|&\le C m(1+\delta^{-1})^s
                    &&(1\le|\zeta|\le s),\\
 |\partial^\zeta D|&\le C(m+2)^{P}D
                    &&(1\le|\zeta|\le s).
 \end{aligned}
\end{equation}
The continuant and complementary-minor identities yield
\[
 \mathsf M:=D^{-1}A_m=
 \begin{pmatrix}
 1&-R_{0,0}\\
 R_{m-1,m-1}&-\det\bigl(R|_{\{0,m-1\}\times\{0,m-1\}}\bigr)
 \end{pmatrix}.
\]
Consequently,
\[
 \|\partial^\zeta\mathsf M\|\le C(m+2)^{P}\quad(|\zeta|\le s).
\]
Since \(D\le\|A_m\|\), Leibniz's rule gives
\begin{equation}\label{amo:eq:matrix-relative-jets}
 \|\partial^\zeta A_m\|\le C(m+2)^{P}\|A_m\|,
 \qquad 1\le|\zeta|\le s.
\end{equation}

For \(T=\mathsf M^{\mathsf T}\mathsf M\) or \(\mathsf M\mathsf M^{\mathsf T}\), set
\[
 \Delta=\sqrt{(T_{11}-T_{22})^2+4T_{12}^2},\qquad
 \nu_\pm=\tfrac12(\operatorname{tr}T\pm\Delta),\qquad
 \Pi=\frac{T-\nu_-\operatorname{Id}}{\Delta}.
\]
As \(\det A_m=1\) and \(\mathsf M_{11}=1\),
\[
 \nu_+=D^{-2}\|A_m\|^2\ge1,\qquad
 \frac{\nu_-}{\nu_+}=\|A_m\|^{-4}<\frac1{16},\qquad
 \Delta\ge\frac{15}{16}\nu_+\ge\frac{15}{16}.
\]
The projector formula and \(\|A_m\|=D\sqrt{\nu_+}\) now give
\[
 \|\partial^\zeta\Pi\|
 +\frac{|\partial^\zeta\|A_m\||}{\|A_m\|}
 \le C(m+2)^{P},\qquad 1\le|\zeta|\le s.
\]
For a local oriented unit vector \(v=(\cos\theta,\sin\theta)\) in
one of their ranges,
\[
 \Pi v=v,\quad v^{\mathsf T}\partial_i v=0
 \quad\Longrightarrow\quad
 \partial_i v=(\partial_i\Pi)v,\qquad
 \partial_i\theta=(-v_2,v_1)\partial_i v.
\]
Induction gives the same type of polynomial bound for the angles, proving
\eqref{amo:eq:svd-calculus}. Any other continuous lift differs
locally by a constant.
\end{proof}

For the next lemma, fix \(\ell,r\ge1\), put \(M=\ell+r+1\) and
\(\Lambda=[-\ell,r]\), and let \(\Omega\) be a real phase--energy
domain with \(|E|\le2\lambda+3\). Assume on \(\Omega\) that
\begin{equation}\label{amo:eq:side-svd-hypotheses}
 E-\max\sigma(H_x^{[-\ell,-1]})\ge\delta,\qquad
 E-\max\sigma(H_x^{[1,r]})\ge\delta,\qquad 0<\delta\le1,
\end{equation}
and
\[
 D_-:=\det(E-H_x^{[-\ell,-1]})\ge2,\qquad
 D_+:=\det(E-H_x^{[1,r]})\ge2.
\]
Use the Weyl values and Schur complement in
\eqref{amo:eq:central-weyl}--\eqref{amo:eq:central-schur-definition},
abbreviated by \(h^-:=h_\ell^-\), \(h^+:=h_r^+\), and \(S:=S_{\ell,r}\).
Set
\[
 \begin{gathered}
 a_0:=E-2\lambda\cos(2\pi x),\\
 L:=A_\ell(x-\ell\beta,E),\qquad R:=A_{r+1}(x,E),\\
 q_-:=\frac{(1,h^-)^{\mathsf T}}{\sqrt{1+(h^-)^2}},\qquad
 q_+:=\frac{(a_0-h^+,-1)^{\mathsf T}}{\sqrt{1+(a_0-h^+)^2}}.
 \end{gathered}
\]

\begin{lemma}
\label{amo:lem:schur-svd}
Under \eqref{amo:eq:side-svd-hypotheses} and the determinant bounds
\(D_-,D_+\ge2\) above, the following hold.
\begin{enumerate}
\item Both \(L\) and \(R\) have simple top singular values.
The expanding output vector of \(L\) and expanding input vector of
\(R\) have unique orientations \(v_-,v_+\) satisfying
\begin{equation}\label{amo:eq:weyl-positive-orientation}
 \langle v_-,q_-\rangle>0,\qquad \langle v_+,q_+\rangle>0.
\end{equation}
These vectors are real analytic.
\item Their overlap satisfies
\begin{equation}\label{amo:eq:schur-svd}
 \cos G:=\langle v_-,v_+\rangle=\Xi S+\varepsilon,\qquad
 \Xi:=\frac1{\sqrt{1+(h^-)^2}\sqrt{1+(a_0-h^+)^2}},
\end{equation}
where
\begin{equation}\label{amo:eq:schur-svd-value}
 |\varepsilon|\le2(D_-^{-2}+D_+^{-2}),\qquad c\delta^2\le\Xi\le1.
\end{equation}
\item If \(\delta^{-1}\le C_0(M+2)^K\), then, for every fixed integer
\(s\ge0\),
\begin{equation}\label{amo:eq:schur-svd-jets}
 \begin{aligned}
 |\partial_{x,E}^\zeta\varepsilon|
 &\le C_{15}(M+2)^{D_{21}}(D_-^{-2}+D_+^{-2}),\\
 |\partial_{x,E}^\zeta\Xi|&\le C_{15}(M+2)^{D_{21}},\qquad |\zeta|\le s.
 \end{aligned}
\end{equation}
\end{enumerate}
\end{lemma}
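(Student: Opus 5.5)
Write \(\mathsf L:=D_-^{-1}L\) and \(\mathsf R:=D_+^{-1}R\) for the normalized transfer matrices, as in \eqref{svd:eq:normalized-transfer-green}. The plan is to read both singular directions off the rank-one part of these matrices and then compare the resulting exact overlap with \(\Xi S\).

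\textbf{Normal forms.} For the left block \(L=A_\ell(x-\ell\beta,E)\) on sites \([-\ell,-1]\), the Green identity gives
\[
 \mathsf L=\begin{pmatrix}1&-g_L\\ g_R&-g_Lg_R+D_-^{-2}\end{pmatrix}.
\]
The entry adjacent to the cut is the boundary Green value \(h^-=\langle e_{-1},R^-e_{-1}\rangle\). Thus \(\mathsf L\) equals a rank-one matrix with output column \((1,h^-)^{\mathsf T}\), plus \(D_-^{-2}e_2e_2^{\mathsf T}\).

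For the right block \(R=A_{r+1}(x,E)\) on \([0,r]\), the same identity, combined with the Schur factorization \(\det(E-H^{[0,r]})=D_+(a_0-h^+)\), exhibits \(\mathsf R\) as rank one with input row proportional to \((a_0-h^+,-1)\), plus \(D_+^{-2}\) times a fixed coordinate matrix.

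In both cases \(\det\mathsf M=D_\pm^{-2}\le1/4\) and \(s_1(\mathsf M)\ge1\) (since \(\mathsf M_{11}=1\)). Hence \eqref{svd:eq:artificial-uniform-singular-gap} gives a Gram gap of at least \(3/4\), so the top singular values are simple. The positive-orientation rule \eqref{svd:eq:signed-axis-from-projector}, applied with reference vectors \(q_\mp\), fixes \(v_\mp\) uniquely.

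\textbf{Comparison with the rank-one directions.} Interpolate the determinant correction \(t\in[0,D_\pm^{-2}]\) linearly. At \(t=0\) the singular directions are exactly \(q_-\) and \(q_+\). The projector formula \eqref{svd:eq:explicit-projector-formula} gives \(\|\partial_tP\|\le C\) uniformly, since the gap along the interpolation is at least \(3/4\). This yields
\[
 \|v_-^{\rm svd}-q_-\|+\|v_+^{\rm svd}-q_+\|\le C(D_-^{-2}+D_+^{-2}).
\]
It remains to compute \(\langle q_-,q_+\rangle\). The numerator is \(a_0-h^+-h^-=S\) and the denominator is \(\Xi^{-1}\), so \(\langle q_-,q_+\rangle=\Xi S\); this is the algebraic identity already used in the main text. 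Writing \(\varepsilon=\langle v_-,v_+\rangle-\langle q_-,q_+\rangle\) then gives \eqref{amo:eq:schur-svd}. The explicit constant \(2\) in \eqref{amo:eq:schur-svd-value} I expect from a direct computation in which the explicit \(2\times2\) projector formula is evaluated for a rank-one matrix plus an \(\epsilon e_2e_2^{\mathsf T}\)-type perturbation.

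\textbf{Bounds on \(\Xi\) and the jet estimates.} From \(E-H^\pm\ge\delta\) we get \(0<h^\pm\le\delta^{-1}\), hence \(|a_0-h^+|\le C+\delta^{-1}\). Therefore \(\Xi\ge c\delta^2\), and clearly \(\Xi\le1\).

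For \eqref{amo:eq:schur-svd-jets}, differentiate in \((x,E)\) under the interpolation integral:
\[
 \varepsilon=\int_0^{1}\partial_\theta\bigl\langle v_-(\theta D_-^{-2}),v_+(\theta D_+^{-2})\bigr\rangle\,d\theta .
\]
The ingredients are the resolvent jets from the proof of Lemma~\ref{amo:lem:svd-calculus} (which give polynomial bounds for \(h^\pm\), \(D_\pm\) and their logarithmic derivatives), Jacobi's formula, and the bound \(|\partial^\zeta D_\pm^{-2}|\le C(M+2)^PD_\pm^{-2}\). The key point is that every derivative retains the factor \(D_\pm^{-2}\), as in Step (iii) of the proof of Lemma~\ref{lem:differentiated-tip-response}. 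The bound on \(\Xi\) follows from the chain rule together with \(\delta^{-1}\le C_0(M+2)^K\).

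\textbf{Main obstacle.} I expect the hardest step to be the precise bookkeeping that identifies the right-block input direction with \(q_+\). This requires the complementary-minor identity expressing the relevant transfer-matrix entries through \(h^+\) and the site-\(0\) potential, and checking that the orientations match the sign conventions in \eqref{amo:eq:weyl-positive-orientation}.
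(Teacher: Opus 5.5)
Your overall scheme matches the paper's. You normalize by $D_\pm$, with $t_\pm:=D_\pm^{-2}$, and write $D_-^{-1}L=M_-(t_-)$ and $D_+^{-1}R=M_+(t_+)$ as a rank-one matrix plus $t$ times a coordinate matrix ($e_2e_2^{\mathsf T}$, resp.\ $e_2e_1^{\mathsf T}$). You then interpolate in $t$, use $\langle q_-,q_+\rangle=\Xi S$, and get the jets from $V_\pm(y,t_\pm(y))-q_\pm(y)=t_\pm(y)\int_0^1\partial_tV_\pm(y,\rho t_\pm(y))\,d\rho$ together with $|\partial^\zeta t_\pm|\le C(M+2)^Pt_\pm$. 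Here $V_\pm(t):=\Pi_\pm(t)q_\pm/\|\Pi_\pm(t)q_\pm\|$, with $\Pi_\pm(t)$ the upper singular projectors.

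The step you flag as the main obstacle is harmless. Either factor $R=A_r(x+\beta,E)\,\mathsf C(a_0)$ as the paper does, or use your Schur route, which gives the first row of $D_+^{-1}R$ exactly as $(a_0-h^+,-1)$ without dividing by $\det(E-H^{[0,r]})$. But then $(D_+^{-1}R)_{11}=a_0-h^+$, not $1$. For the right block, $s_1\ge1$ must instead come from $\|(D_+^{-1}R)^{\mathsf T}e_1\|=\sqrt{1+(a_0-h^+)^2}\ge1$.

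The genuine gap is the $C^0$ comparison, on which items (1) and (2) rest.

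\emph{The uniform bound on $\partial_tP$ is not justified.} You claim $\|\partial_tP\|\le C$ ``since the gap is at least $3/4$''. But $\|\partial_tG\|$ is comparable to $\|M\|$, which is only bounded by $C\delta^{-2}$, because $h^\pm$ and the far-endpoint Green entries $g_\pm$ are only $O(\delta^{-1})$. A gap of $3/4$ alone therefore gives a $\delta$-dependent bound; you need the gap $s_1^2-s_2^2$ to grow like $s_1^2$.

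\emph{An unspecified $C$ is not enough.} It does not give the stated constant $2$, which you leave to a computation you do not carry out. Nor does it give $\|P(t_\pm)-q_\pm q_\pm^{\mathsf T}\|<1/2$, which the orientation rule \eqref{svd:eq:signed-axis-from-projector} requires. Since $t_\pm\le1/4$, that would need $C<2$.

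\emph{The missing observation.} For every $t$ one has exactly $M_-(t)e_1=(1,h^-)^{\mathsf T}$ and $M_+(t)^{\mathsf T}e_1=(a_0-h^+,-1)^{\mathsf T}$, both of norm at least $1$. So the component of $q_\pm$ along the contracting singular vector is at most $s_2=t/s_1\le t$. Consequently:
\[
\|(\operatorname{Id}-\Pi_\pm(t))q_\pm\|\le t,\qquad \|\Pi_\pm(t)q_\pm\|\ge\sqrt{1-t^2}>0,\qquad |V_\pm(t)-q_\pm|\le t+t^2\le2t.
\]
The middle bound makes the positive orientation well defined, and the last gives $|\varepsilon|\le2(t_-+t_+)$ at once. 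Your interpolation can also be closed by the sharp $2\times2$ bound for the singular angle,
\[
|\partial_t\theta|\le\frac{s_1+s_2}{s_1^2-s_2^2}=\frac{1}{s_1-s_2}\le\frac43,
\]
using $\|\partial_tM_\pm\|=1$. The same projection margin keeps the denominator $\|\Pi_\pm q_\pm\|$ bounded below when you differentiate $V_\pm(y,t)$ jointly in $(y,t)$ for the jet estimates.
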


\begin{proof}
For an interval \(J=[a,b]\), write \(D=\det(E-H_x^J)\) and
\(g_a,g_b\) for its endpoint diagonal Green entries.  The continuant
identity gives
\begin{equation}\label{amo:eq:normalized-continuant}
 D^{-1}A_{|J|}(x+a\beta,E)
 =\begin{pmatrix}1&-g_a\\g_b&-g_ag_b+D^{-2}\end{pmatrix}
 =\binom1{g_b}(1,-g_a)+D^{-2}e_2e_2^{\mathsf T}.
\end{equation}
For \(|J|=1\), \(g_a=g_b=D^{-1}\), so the same formula holds.
Let \(g_-,g_+\) be the Green entries at \(-\ell,r\), respectively,
and set \(t_\pm=D_\pm^{-2}\le1/4\).  With
\(\mathsf C(a)=\left(\begin{smallmatrix}a&-1\\1&0\end{smallmatrix}\right)\),
define
\begin{align}
 M_-(t)&=\binom1{h^-}(1,-g_-)+t e_2e_2^{\mathsf T},
 \label{amo:eq:left-rank-one}\\
 M_+(t)&=\left[\binom1{g_+}(1,-h^+)+t e_2e_2^{\mathsf T}\right]
          \mathsf C(a_0).
 \label{amo:eq:right-rank-one}
\end{align}
Then
\[
 D_-^{-1}L=M_-(t_-),\qquad D_+^{-1}R=M_+(t_+).
\]
For \(0\le t\le1/4\),
\[
 \begin{gathered}
 \det M_\pm(t)=t,\\
 M_-(t)e_1=(1,h^-)^{\mathsf T},\qquad
 M_+(t)^{\mathsf T}e_1=(a_0-h^+,-1)^{\mathsf T},\\
 s_1(M_\pm)\ge1,\qquad s_2(M_\pm)=t/s_1(M_\pm)\le t,\\
 s_1(M_\pm)^2-s_2(M_\pm)^2\ge15/16.
 \end{gathered}
\]
Let \(\Pi_-(t),\Pi_+(t)\) be the upper eigenprojectors of
\(M_-M_-^{\mathsf T},M_+^{\mathsf T}M_+\).  The contracting
components of \(M_-e_1,M_+^{\mathsf T}e_1\) have norms at most
\(s_2\).  Hence
\begin{equation}\label{amo:eq:projection-margin}
 \begin{gathered}
 \|(\operatorname{Id}-\Pi_\pm(t))q_\pm\|\le t,\qquad
 \|\Pi_\pm(t)q_\pm\|\ge\sqrt{1-t^2},\\
 V_\pm(t):=\frac{\Pi_\pm(t)q_\pm}{\|\Pi_\pm(t)q_\pm\|},\qquad
 |V_\pm(t)-q_\pm|\le2t.
 \end{gathered}
\end{equation}
Since \(\|L\|\ge D_-\ge2\) and \(\|R\|\ge D_+\ge2\), their
top singular values are simple.  Thus \(v_\pm=V_\pm(t_\pm)\) are
globally defined real analytic
orientations, \(V_\pm(0)=q_\pm\), and
\[
 \langle q_-,q_+\rangle=\Xi(a_0-h^--h^+),\qquad
 |\langle v_-,v_+\rangle-\langle q_-,q_+\rangle|
 \le2(t_-+t_+).
\]
Together with \(0<h^\pm\le\delta^{-1}\) and \(|a_0|\le C\),
this proves \eqref{amo:eq:schur-svd}--\eqref{amo:eq:schur-svd-value}.
The SVD convention gives
\[
 \arg v_-=u(L),\qquad \arg v_+=s(R)-\pi/2,\qquad
 \langle v_-,v_+\rangle=\cos\bigl(\pi/2-s(R)+u(L)\bigr).
\]

For derivatives, put \(y=(x,E)\). The estimates
\eqref{amo:eq:logdet-jets}, applied through order \(s+1\) to the two side
restrictions, give polynomial bounds for the required derivatives of
\(h^\pm,g_\pm,a_0\), and
\begin{equation}\label{amo:eq:t-jets}
 |\partial_y^\zeta t_\pm|\le C(M+2)^{P}t_\pm,
 \qquad |\zeta|\le s.
\end{equation}
The explicit two-by-two projector formula in the preceding proof,
the gap \(15/16\), and \eqref{amo:eq:projection-margin} give
\[
 |\partial_y^\zeta\partial_t^j V_\pm(y,t)|
 \le C(M+2)^{P},\qquad |\zeta|+j\le s+1,\quad 0\le t\le1/4.
\]
Using the exact remainder identity
\[
 V_\pm(y,t_\pm(y))-q_\pm(y)
 =t_\pm(y)\int_0^1
       \partial_tV_\pm(y,\rho t_\pm(y))\,d\rho
\]
and \eqref{amo:eq:t-jets}, the product and chain rules give
\[
 |\partial_y^\zeta(v_\pm-q_\pm)|
 \le C(M+2)^{P}t_\pm,\qquad |\zeta|\le s.
\]
Finally,
\[
 \varepsilon=\langle v_--q_-,v_+\rangle
             +\langle q_-,v_+-q_+\rangle
\]
and the formula for \(\Xi\) give bounds of the same form.
Taking the maxima of the coefficients and exponents obtained here
and in the proof of Lemma~\ref{amo:lem:svd-calculus} at order
\(\max\{1,s\}\) gives the common
constants \(C_{15}(s,C_0,K)\), \(D_{21}(s,C_0,K)\) and proves
\eqref{amo:eq:schur-svd-jets}.
\end{proof}

\subsection{First-return estimates and completion of the proof}
\label{amo:sec:first-return-svd}

Use the windows $I_n$, $\widetilde I_n$, and
$\mathcal E_n$ of Proposition~\ref{fr:prop:deterministic-AMO-seed}.
All constants in this subsection are uniform in the return atom and $n$.
Here $K$ again denotes the fixed padding from
Lemma~\ref{amo:lem:single-resonance-gap}.

\subsubsection{Return arithmetic}

Lemma~\ref{B:lem:return-time-poly-two-sided}, applied with
\((\alpha,\kappa)=(\beta,\kappa_\beta)\), gives
\[
 c|I|^{-1/\tau}\le r_I^\pm(x)\le C|I|^{-\tau}
 \qquad(x\in I^{\rm reg}).
\]

\begin{lemma}
\label{amo:lem:reciprocal-central-atom}
For all sufficiently large $n$, $0\in I_n^{\rm reg}$ and
\begin{equation}\label{amo:eq:central-return-size}
 m_n=r_{I_n}^+(0)=r_{I_n}^-(0),\qquad
 cN_n^{1/\tau}\le m_n\le N_n.
\end{equation}
The common atom $\mathcal A_n^0$ containing $0$ has labels $m_n,m_n$ and
\begin{equation}\label{amo:eq:central-atom-width}
 [-cN_n^{-(2\tau+1)},cN_n^{-(2\tau+1)}]\subset\mathcal A_n^0.
\end{equation}
\end{lemma}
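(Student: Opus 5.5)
The plan is to work directly with the Diophantine arithmetic of \(\beta\) on the symmetric window \(I_n=[-N_n^{-1},N_n^{-1}]\).

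\emph{Step 1: regularity of \(0\).} Show \(0\in I_n^{\rm reg}\). The only way \(0\) fails is if \(k\beta\equiv\pm N_n^{-1}\pmod 1\) for some \(k\in\mathbb Z\). For \(k=0\) this is impossible. For \(k\neq0\), the point \(N_n^{-1}\) is rational while \(k\beta\) is irrational, so the congruence fails. Hence \(0\) is regular.

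\emph{Step 2: equality and size of the return times.} The window is symmetric, and \(T_\beta^{-m}0=-m\beta\) lies in \(I_n\) exactly when \(m\beta\) does. Therefore \(r_{I_n}^+(0)=r_{I_n}^-(0)=m_n\), where
\[
 m_n=\min\{m\ge1:\|m\beta\|_\T\le N_n^{-1}\}.
\]
For the lower bound, use \(\kappa_\beta m_n^{-\tau}\le\|m_n\beta\|_\T\le N_n^{-1}\), which gives \(m_n\ge\kappa_\beta^{1/\tau}N_n^{1/\tau}\). For the upper bound, Dirichlet's theorem gives some \(1\le m\le N_n\) with \(\|m\beta\|_\T<N_n^{-1}\), so \(m_n\le N_n\). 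Together these prove \eqref{amo:eq:central-return-size}.

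\emph{Step 3: width of the central atom.} This is the main point. Return times of \(x\in I_n\) can only change when some orbit point \(x+k\beta\), with \(|k|\le M\), crosses \(\pm N_n^{-1}\). Here \(M\) is the uniform return bound \(CN_n^{\tau}\) supplied by Lemma~\ref{B:lem:return-time-poly-two-sided}. It is not enough to control only the first return, since the atom requires both one-sided return times to be constant.

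Fix \(x\) with \(|x|\le\rho:=cN_n^{-(2\tau+1)}\). It suffices to show that \(\mathbf 1_{I_n}(x+k\beta)=\mathbf 1_{I_n}(k\beta)\) for all \(1\le|k|\le m_n\). That identity gives constant return times on \([-\rho,\rho]\), hence containment in the atom of \(0\). Since this atom has labels \((m_n,m_n)\) at \(0\), those are its labels. For this I only need the gap
\[
 \bigl|\|k\beta\|_\T-N_n^{-1}\bigr|>\rho,\qquad 1\le|k|\le m_n\le N_n .
\]

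\emph{Obstacle: the gap estimate.} This inequality is where the difficulty lies, because \(\|k\beta\|_\T\) could in principle come close to \(N_n^{-1}\). My plan is a parity argument. Suppose \(k\beta\equiv\pm N_n^{-1}+\epsilon\) with \(|\epsilon|\le\rho\). Then \(2k\beta\) is within \(2\rho\) of \(\pm2N_n^{-1}\). Comparing \(k\) with \(m_n\), the difference \((k\mp m_n)\beta\) or \((k\pm m_n)\beta\) is within \(\rho+\|m_n\beta\|_\T\) of a point differing from \(\pm N_n^{-1}\) by a controlled amount. This does not by itself yield a contradiction, since \(\|m_n\beta\|_\T\) is only bounded above by \(N_n^{-1}\).

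A cleaner route, which I would try first, avoids the gap estimate altogether by shrinking \(N_n\). The window \(N_n=\lceil n^{C_I}\rceil\) is given, but the degree of freedom is the \(n\)-independent constant \(c\). Pick the closest approach
\[
 \min_{1\le|k|\le 2N_n}\bigl|\,\|k\beta\|_\T-N_n^{-1}\bigr| .
\]
If it is at most \(\rho\), then two such \(k,k'\) give \(\|(k-k')\beta\|_\T\le2\rho\), hence \(|k-k'|\ge(\kappa_\beta/2\rho)^{1/\tau}\gg N_n\), which contradicts \(|k-k'|\le4N_n\). That contradiction requires two distinct near-hits, however. With only one near-hit, I would instead use that \(N_n^{-1}\) is rational with denominator \(N_n\): then \(N_nk\beta\) lies within \(N_n\rho\) of an integer. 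The Diophantine bound then gives
\[
 \kappa_\beta(N_n|k|)^{-\tau}\le N_n\rho,
\]
so \(\rho\ge\kappa_\beta N_n^{-2\tau-1}\). Choosing \(c<\kappa_\beta\) produces the contradiction. This last argument is exactly the exponent \(2\tau+1\) in the statement, so I expect it to be the intended proof.
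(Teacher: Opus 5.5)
Your proposal is correct and, once the parity and two-near-hit detours are discarded, it coincides with the paper's proof. The paper obtains regularity of $0$ from $\partial I_n\subset\mathbb Q/\mathbb Z$, defines $m_n$ as the first $j\ge1$ with $\|j\beta\|_\T\le N_n^{-1}$, bounds it by the Diophantine estimate and pigeonhole, and gets the width from $\operatorname{dist}_\T(j\beta,\partial I_n)\ge N_n^{-1}\|jN_n\beta\|_\T\ge\kappa_\beta N_n^{-(2\tau+1)}$ for $1\le|j|\le m_n\le N_n$ — exactly your final argument (and only these indices matter, so the uniform bound $M$ from Lemma~\ref{B:lem:return-time-poly-two-sided} is not needed).
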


\begin{proof}
Since $\partial I_n=\{\pm N_n^{-1}\}\subset\mathbb Q/\mathbb Z$,
$j\beta\notin\partial I_n$ for all $j\in\mathbb Z$, and hence
$0\in I_n^{\rm reg}$. Symmetry, the Diophantine bound, and the
pigeonhole principle give
\[
 m_n=\min\{j\ge1:\|j\beta\|_\T\le N_n^{-1}\},\qquad
 \kappa_\beta m_n^{-\tau}\le N_n^{-1},\qquad m_n\le N_n.
\]
For $1\le |j|\le m_n\le N_n$,
\[
 \operatorname{dist}_\T(j\beta,\partial I_n)
 \ge N_n^{-1}\|jN_n\beta\|_\T
 \ge\kappa_\beta N_n^{-(2\tau+1)}.
\]
Thus $|x|\le\kappa_\beta N_n^{-(2\tau+1)}/4$ preserves every membership
relation $x+j\beta\in I_n$ for $|j|\le m_n$. Both return labels
remain $m_n$ on regular phases, proving \eqref{amo:eq:central-atom-width}.
\end{proof}

\subsubsection{First-return restrictions}

For the next proposition, let $D_1$ satisfy
\begin{equation}\label{amo:eq:Dgap-choice}
 D_1\ge C_I\tau(2\tau+1)+2.
\end{equation}
Given $n$ and $\mathcal A\in\mathcal P_{I_n}$,
let $(r,z)$ denote either first-return block,
\[
 (r,z)=(p_{\mathcal A},x)\quad\text{or}\quad
 (r,z)=(q_{\mathcal A},x-q_{\mathcal A}\beta),\qquad x\in\mathcal A.
\]

\begin{proposition}
\label{amo:prop:stopped-blocks}
There is $c>0$ such that, for all sufficiently large $n$, every
$\mathcal A\in\mathcal P_{I_n}$, and both choices of $(r,z)$ above,
the following hold.
\begin{enumerate}
\item The finite-volume gap satisfies
\begin{equation}\label{amo:eq:stopped-intrinsic-gap}
 E_+-\max\sigma(H_z^{[0,r-1]})
 \ge c(r+1)^{-2\tau}\ge cn^{-2C_I\tau^2}.
\end{equation}
\item For $|E-E_+|\le cn^{-D_1}$,
\begin{equation}\label{amo:eq:stopped-positive-growth}
 \|A_r(z,E)\|\ge P_r(z,E)
 \ge\exp\{\gamma r-CR_\tau(r+2)\}>0.
\end{equation}
\item On the same energy interval, for every fixed $s\ge1$, the SVD estimates
in Proposition~\ref{fr:prop:deterministic-AMO-seed} hold
through order $s$, with a polynomial in $r+2$ uniform in $n$ and $\mathcal A$.
\end{enumerate}
\end{proposition}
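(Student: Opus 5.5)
The plan is to reduce everything to the anchored single-resonance gap (Lemma~\ref{amo:lem:single-resonance-gap}) by showing that, along a first-return block, the phases $z+j\beta$ stay away from the integers at all strict interior sites. Then the determinant estimates (Proposition~\ref{amo:prop:determinant-estimates}) give the growth bound, and Lemma~\ref{amo:lem:svd-calculus} gives the SVD calculus.

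\emph{Step 1: the gap.} Fix $\mathcal A$, $x\in\mathcal A$ regular, and a block $(r,z)$. By definition of the first return, the interior points $z+j\beta$, $0<j<r$, lie outside $I_n$, so $\|z+j\beta\|_\T> N_n^{-1}$ there. The endpoints $j=0$ and $j=r-1$ may be close to $0$, since $z$ or $z+r\beta$ lies in $I_n$; Lemma~\ref{amo:lem:single-resonance-gap} places no restriction at endpoints, which is exactly why it was stated that way.

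To apply it with $J=[0,r-1]$ we need $N_n^{-1}\ge\delta$ with $\delta=\kappa_\beta/(4(r+2K)^\tau)$. Since $r\ge cN_n^{1/\tau}$ by Lemma~\ref{B:lem:return-time-poly-two-sided}, this reads $\kappa_\beta(r+2K)^{-\tau}\lesssim N_n^{-1}$. That is the wrong direction when $r$ is only $\sim N_n^{1/\tau}$, so one must instead take $\delta:=\min\{N_n^{-1},\kappa_\beta/(4(r+2K)^\tau)\}$. This gives a gap $\ge c\delta^2$.

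I expect this to be the main obstacle: the claimed bound $c(r+1)^{-2\tau}$ must come out of this minimum. The first option to try is the Diophantine lower bound on the return time, which gives $N_n^{-1}\ge\kappa_\beta r^{-\tau}$ (because $\|r\beta\|\le 2N_n^{-1}$ for a return). Hence $\delta\gtrsim r^{-\tau}$ in both cases, and the gap is $\ge c(r+1)^{-2\tau}$. Combined with $r\le Cn^{C_I\tau}$ (from $|I_n|^{-\tau}$), this yields the second inequality in \eqref{amo:eq:stopped-intrinsic-gap}.

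\emph{Step 2: growth.} Choose $\eta=c(r+1)^{-2\tau}$ in \eqref{amo:eq:gap-determinant-lower} to get $\log P_r(z,E_+)\ge\gamma r-Cr^{1-1/\tau}\log^2(r+2)\ge\gamma r-CR_\tau(r+2)$.

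To move to $|E-E_+|\le cn^{-D_1}$, note that \eqref{amo:eq:Dgap-choice} makes $n^{-D_1}$ much smaller than the gap $cn^{-2C_I\tau^2}$, since $D_1>2C_I\tau^2$. All eigenvalues therefore stay at distance at least half the gap from $E$. Then
\[
 |\log P_r(z,E)-\log P_r(z,E_+)|\le r\,|E-E_+|\,\|R\|\le Cr\,n^{-D_1}n^{2C_I\tau^2}=o(1),
\]
which is absorbed. Finally $\|A_r\|\ge|(A_r)_{11}|=P_r$ by \eqref{amo:eq:transfer-determinant-identity}.

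\emph{Step 3: SVD estimates.} On this energy interval the gap is $\delta\ge c(r+2)^{-2\tau}$, so $\delta^{-1}\le C_0(r+2)^{K}$ with $K=2\tau$. Also $\|A_r\|>2$ for large $n$ by Step 2. Lemma~\ref{amo:lem:svd-calculus} then gives \eqref{amo:eq:svd-calculus}, and with it the derivative bounds of Proposition~\ref{fr:prop:deterministic-AMO-seed}(ii), uniformly in $n$ and $\mathcal A$ at each fixed order $s$. The lower bound on $\lambda^\sigma$ is Step 2.
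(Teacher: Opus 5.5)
Your proposal is correct and follows essentially the same route as the paper. The paper also uses the Diophantine bound $\kappa_\beta r^{-\tau}\le\|r\beta\|_\T\le2N_n^{-1}$ to see that $\delta=\kappa_\beta/(8(r+2K)^\tau)\le N_n^{-1}/4$ is admissible in Lemma~\ref{amo:lem:single-resonance-gap}; this is equivalent to your minimum, up to the factor $2$ you dropped. It then applies \eqref{amo:eq:gap-determinant-lower}, the same $2r|t|/g_r$ energy perturbation justified by \eqref{amo:eq:Dgap-choice}, and Lemma~\ref{amo:lem:svd-calculus}.

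The one detail you omit is the extension from regular phases to every $x\in\mathcal A$. The paper gets this from density of regular phases and continuity of the fixed-length matrices.
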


\begin{proof}
For $z\in I_n^{\rm reg}$, $r=r_{I_n}^+(z)$, and $\rho=N_n^{-1}$,
\[
 \|z+j\beta\|_\T>\rho\quad(1\le j<r),\qquad
 \kappa_\beta r^{-\tau}\le\|r\beta\|_\T\le2\rho.
\]
Thus the choice
\[
 \delta=\frac{\kappa_\beta}{8(r+2K)^\tau}
 \le\min\{\rho/4,\kappa_\beta/[4(r+2K)^\tau]\}
\]
satisfies the hypotheses of Lemma~\ref{amo:lem:single-resonance-gap}
on $[0,r-1]$.  That lemma and
Lemma~\ref{B:lem:return-time-poly-two-sided} imply
\begin{equation}\label{amo:eq:return-length-range}
 cn^{C_I/\tau}\le r\le Cn^{C_I\tau},\qquad
 g_r:=E_+-\max\sigma(H_z^{[0,r-1]})\ge c(r+1)^{-2\tau}.
\end{equation}
This proves \eqref{amo:eq:stopped-intrinsic-gap}.
Proposition~\ref{amo:prop:determinant-estimates} with
$\eta=\min\{1,c(r+1)^{-2\tau}\}$ gives
\[
 \log P_r(z,E_+)\ge\gamma r-CR_\tau(r+2).
\]
Write $\mu_1,\ldots,\mu_r$ for the eigenvalues of $H_z^{[0,r-1]}$.
For $|t|\le g_r/2$,
\[
 \begin{aligned}
 \left|\log\frac{P_r(z,E_++t)}{P_r(z,E_+)}\right|
 &=\left|\sum_{j=1}^r\log\left(1+\frac{t}{E_+-\mu_j}\right)\right|\\
 &\le\frac{2r|t|}{g_r}
 \le C(r+1)^{2\tau+1}|t|.
 \end{aligned}
\]
For $|t|\le cn^{-D_1}$, the last bound is at most
\[
 Ccn^{C_I\tau(2\tau+1)-D_1}\le Ccn^{-2},
\]
and $|t|\le g_r/2$ for large $n$. Hence
\[
 E_++t-H_z^{[0,r-1]}\ge\tfrac12g_r\operatorname{Id},\qquad
 \|A_r(z,E_++t)\|\ge P_r(z,E_++t)>2.
\]
The logarithmic estimate proves \eqref{amo:eq:stopped-positive-growth};
Lemma~\ref{amo:lem:svd-calculus} gives the derivatives.
For the preceding return block,
\[
 z=x-q_{\mathcal A}\beta\in I_n^{\rm reg},\qquad
 r_{I_n}^+(z)=q_{\mathcal A}.
\]
Density of regular phases and continuity of the fixed-length matrices
extend the estimates to every $x\in\mathcal A$.
\end{proof}

\subsubsection{The central overlap coefficient}

For the next proposition, write
\[
 L_m=A_m(x-m\beta,E),\qquad R_m=A_m(x,E).
\]

\begin{proposition}
\label{amo:prop:central-joining}
There are $B_{15},c,C>0$ such that, for every sufficiently large $m$,
the blocks $L_m,R_m$ admit the orientation in
\eqref{amo:eq:weyl-positive-orientation} on
\[
 \mathcal U_m=\{(x,E):|x|\le(2m+1)^{-B_{15}},\quad
                         |E-E_+|\le(2m+1)^{-B_{15}}\}.
\]
Writing $H_m=\cos G_m$, the following hold.
\begin{enumerate}
\item On $\mathcal U_m$,
\begin{equation}\label{amo:eq:joining-normal-form}
 H_m(x,E)=C_{1,m}(x)x^2+C_{2,m}(x,E)(E-E_+)+R_m^{\rm aff}(x),
\end{equation}
where $C_{1,m},C_{2,m}$ are real $C^2$ functions, $R_m^{\rm aff}$ is affine, and
\[
 \begin{gathered}
 c\le C_{1,m},C_{2,m}\le C,\qquad
 \|C_{1,m}\|_{C^2}+\|C_{2,m}\|_{C^2}\le C(2m+1)^{B_{15}},\\
 \|R_m^{\rm aff}\|_{C^2}\le Ce^{-cm},\qquad
 c\le\partial_EH_m,\partial_x^2H_m\le C.
 \end{gathered}
\]
\item At $(0,E_+)$,
\begin{equation}\label{amo:eq:joining-explicit-limits}
 \begin{aligned}
 \partial_EH_m(0,E_+)&=\frac{\|u\|_2^2}{1+u_1^2}+O(e^{-cm}),\\
 \partial_x^2H_m(0,E_+)&=
 \frac{\mathcal K_\psi}{1+u_1^2}+O(e^{-cm}).
 \end{aligned}
\end{equation}
\end{enumerate}
\end{proposition}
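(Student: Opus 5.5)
The proof specializes Lemma~\ref{amo:lem:schur-svd} to the symmetric split \(\ell=r=m\), \(M=2m+1\), and then transfers the Schur information of Proposition~\ref{amo:prop:central-schur} and Corollary~\ref{amo:cor:central-schur-curvature}. With \(\ell=m\), \(L=A_m(x-m\beta,E)=L_m\). For the right block, \(A_{r+1}(x,E)\) has length \(m+1\), while \(R_m=A_m(x,E)\). I will therefore either run the lemma with \(r=m-1\) (so \(A_{r+1}=R_m\), \(M=2m\), \(d=m-1\ge M/4\)) or use the one-step identity \(A_{m+1}=A_E(x+m\beta)A_m\). The first option is cleaner, and I adopt it. The statement is insensitive to replacing \(2m+1\) by \(2m\) after enlarging \(B_{15}\).

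\textbf{Step 1: hypotheses.} Choose \(B_{15}\ge B_{14}+1\), so that \(\mathcal U_m\subset\mathcal D_M\cap\mathbb R^2\) for large \(m\). Proposition~\ref{amo:prop:central-schur} then gives the side gaps \(\delta\ge cM^{-2\tau}\), so \(\delta^{-1}\le C_0(M+2)^{2\tau}\). Proposition~\ref{amo:prop:determinant-estimates}, together with the same gap and a first-order perturbation in \((x,E)\) as in the proof of Proposition~\ref{amo:prop:stopped-blocks}, gives
\[
 D_\pm\ge e^{\gamma m-CR_\tau(m+2)}\ge 2 .
\]
Lemma~\ref{amo:lem:schur-svd} now supplies the orientation \eqref{amo:eq:weyl-positive-orientation} and the identity \(H_m=\Xi S+\varepsilon\), with
\[
 \|\varepsilon\|_{C^3}\le C(M+2)^{D_{21}}e^{-2\gamma m+CR_\tau}\le Ce^{-cm}
\]
and polynomial \(C^3\) bounds on \(\Xi\).

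\textbf{Step 2: explicit limits.} At \((0,E_+)\), Corollary~\ref{amo:cor:central-schur-curvature} gives \(S,\partial_xS=O(e^{-cd})\) and \(h^\pm=u_1+O(e^{-cd})\). The recurrence \(E_+-2\lambda=2u_1\) gives \(a_0-h^+=u_1+O(e^{-cd})\), hence \(\Xi=(1+u_1^2)^{-1}+O(e^{-cd})\). Differentiating the product, every term carrying a factor \(S\) or \(\partial_xS\) is \(O(e^{-cd})\) times a polynomial in \(M\). The surviving terms are \(\Xi\,\partial_ES\) and \(\Xi\,\partial_x^2S\), and inserting \eqref{amo:eq:central-explicit-limits} gives \eqref{amo:eq:joining-explicit-limits}. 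These limits are positive.

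\textbf{Step 3: normal form and bounds on \(\mathcal U_m\).} Uniform positivity of \(\partial_EH_m\) and \(\partial_x^2H_m\) on \(\mathcal U_m\) follows from the mean value theorem, using third-order jet bounds of size \(CM^{P}\) times the radius \(M^{-B_{15}}\), with \(B_{15}\) large. For the normal form, let \(R_m^{\rm aff}(x)=H_m(0,E_+)+\partial_xH_m(0,E_+)x\), which is \(O(e^{-cm})\) in \(C^2\) by Step 2. Taylor's formula with integral remainder then gives
\[
 H_m-R_m^{\rm aff}=x^2\int_0^1(1-t)\,\partial_x^2H_m(tx,E_+)\,dt+(E-E_+)\int_0^1\partial_EH_m(x,E_++t(E-E_+))\,dt .
\]
The first coefficient depends only on \(x\), as required, and the second depends on \((x,E)\). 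Both integrands are bounded between \(c\) and \(C\). Their \(C^2\) norms are controlled by the \(C^4\) norm of \(H_m\), which is polynomial in \(M\); this requires Lemma~\ref{amo:lem:schur-svd} at order \(s=4\).

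\textbf{Main obstacle.} The hard part is the bookkeeping in Step 1: checking that every polynomial loss, from \(\delta^{-1}\), from the jets of \(S\), and from the jets of \(\Xi\), stays below the factor \(e^{-2\gamma m}\) in \(\varepsilon\) and the factor \(e^{-cd}\) in the Schur estimates. The other delicate point is the index mismatch between \(A_{r+1}\) and \(R_m\), which is handled by the choice \(r=m-1\).
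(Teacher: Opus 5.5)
Your proposal is correct and is essentially the paper's proof. It uses the same split $[-m,m-1]$, i.e.\ $\ell=m$, $r=m-1$, so that Lemma~\ref{amo:lem:schur-svd} applies directly to $L_m,R_m$. From there you follow the paper step for step: the identity $H_m=\Xi S_{m,m-1}+\varepsilon_m$, the limits of Corollary~\ref{amo:cor:central-schur-curvature} at $(0,E_+)$, the mean-value argument on $\mathcal U_m$, and the integral Taylor coefficients $C_{1,m},C_{2,m},R_m^{\rm aff}$.

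The only differences are cosmetic.
\begin{itemize}
\item For $D_\pm\ge2$, the paper starts from $D_-(0,E_+)=Q_m$ and $D_+(0,E_+)=Q_{m-1}$ (Proposition~\ref{amo:prop:sharp-anchor}) and perturbs with Jacobi's formula on a domain of exponent larger than $2\tau+2$. You instead rerun Proposition~\ref{amo:prop:determinant-estimates}; either works.
\item The paper takes $\varepsilon_m$ in $C^4$ from the start, which you correct later in your own Step~3.
\item $D_+$ comes from a side of length $m-1$, so its bound is $e^{\gamma(m-1)-CR_\tau(m+2)}$; this is harmless.
\end{itemize}
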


\begin{proof}
Apply Proposition~\ref{amo:prop:central-schur} and
Corollary~\ref{amo:cor:central-schur-curvature} to $[-m,m-1]$,
whose strict side lengths are $m,m-1$. On its real bidisk,
\[
 E-H_x^\pm\ge cm^{-2\tau}\operatorname{Id},\qquad
 D_-(0,E_+)=Q_m,\quad D_+(0,E_+)=Q_{m-1},
\]
and Jacobi's formula gives
\[
 |\partial_x\log D_\pm|+|\partial_E\log D_\pm|
 \le Cm^{2\tau+1}.
\]
Thus on a smaller phase--energy domain with exponent larger than $2\tau+2$,
\[
 D_-\ge e^{\gamma m-CR_\tau(m+2)},\qquad
 D_+\ge e^{\gamma(m-1)-CR_\tau(m+2)}.
\]
For some exponent $P>0$, Lemma~\ref{amo:lem:schur-svd} gives
\begin{equation}\label{amo:eq:central-comparison}
 H_m=\Xi_mS_{m,m-1}+\varepsilon_m,\qquad
 \|\varepsilon_m\|_{C^4}\le Cm^P e^{-2\gamma(m-1)+CR_\tau(m+2)}
 \le Ce^{-cm}.
\end{equation}
After increasing $P$ once, all derivatives of $\Xi_m$ and $H_m$
through order four are bounded by $Cm^P$ on this domain. At $(0,E_+)$,
\[
 \begin{gathered}
 h_m^-,h_{m-1}^+=u_1+O(e^{-cm}),\qquad
 E_+-2\lambda=2u_1,\\
 \Xi_m=(1+u_1^2)^{-1}+O(e^{-cm}).
 \end{gathered}
\]
Together with \eqref{amo:eq:central-small-affine} and
\eqref{amo:eq:central-explicit-limits}, the product rule implies
\[
 \begin{aligned}
 |H_m|+|\partial_xH_m|&\le Ce^{-cm},\\
 \partial_EH_m&=\Xi_m\partial_ES_{m,m-1}+O(e^{-cm}),\\
 \partial_x^2H_m&=\Xi_m\partial_x^2S_{m,m-1}+O(e^{-cm}).
 \end{aligned}
\]
This proves \eqref{amo:eq:joining-explicit-limits}.
Fix
\[
 B_{15}>\max\{B_{14},2\tau+2,P\}+2.
\]
Then $\mathcal U_m$ lies in the preceding domain for large $m$, and
\[
 \sup_{\mathcal U_m}|\partial^\zeta H_m-\partial^\zeta H_m(0,E_+)|
 \le Cm^{P-B_{15}}=o(1),\qquad \zeta=(0,1),(2,0).
\]
Both derivatives therefore stay between fixed positive constants.
Finally set
\[
 \begin{aligned}
 C_{1,m}(x)&=\int_0^1(1-t)\partial_x^2H_m(tx,E_+)\,dt,\\
 C_{2,m}(x,E)&=\int_0^1\partial_EH_m(x,E_++t(E-E_+))\,dt,\\
 R_m^{\rm aff}(x)&=H_m(0,E_+)+x\partial_xH_m(0,E_+).
 \end{aligned}
\]
Taylor's formula proves \eqref{amo:eq:joining-normal-form}. The
$C^4$ bound on $H_m$, established before choosing $B_{15}$, gives
\[
 \|C_{1,m}\|_{C^2}+\|C_{2,m}\|_{C^2}
 \le Cm^P\le C(2m+1)^{B_{15}}.
\]
\end{proof}

\subsubsection{Exterior separation and completion}

Choose
\begin{align}
 B_1&>\max\{C_I(2\tau+1),C_IB_{15},C_I\}+2,
 \label{amo:eq:Ccen-choice}\\
 D_{15}&=2\max\{C_I\tau^2,B_1\}.
 \label{amo:eq:Dext-choice}
\end{align}

\begin{lemma}
\label{amo:lem:exterior-return-gap}
For every atom with labels $p,q$ and
$x\in\mathcal A\cap(I_n\setminus\widetilde I_n)$,
\[
 E_+-\max\sigma(H_x^{[-q,p-1]})\ge cn^{-D_{15}}.
\]
\end{lemma}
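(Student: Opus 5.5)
The plan is to apply Lemma~\ref{amo:lem:single-resonance-gap} directly to the two-sided interval \(J=[-q,p-1]\) at phase \(x\). Its length is \(L=p+q\le Cn^{C_I\tau}\) by Lemma~\ref{B:lem:return-time-poly-two-sided}. The hypothesis \eqref{amo:eq:single-resonance-hypotheses} asks for a lower bound \(\|x+j\beta\|_\T\ge\delta\) at every strict interior site \(-q<j<p-1\), with \(\delta\le\kappa_\beta/[4(L+2K)^\tau]\). The gap \(c\delta^2\) then yields the claim, once we check \(\delta\ge cn^{-D_{15}/2}\). Since \(D_{15}=2\max\{C_I\tau^2,B_1\}\), it suffices to take
\[
 \delta:=\min\Bigl\{\frac{\kappa_\beta}{4(L+2K)^\tau},\ \frac{c}{N}n^{-B_1}\Bigr\},
\]
where the first term is at least \(cn^{-C_I\tau^2}\).

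The interior sites split into three groups. For \(j\ne0\), the point \(x+j\beta\) lies strictly between the backward and forward first returns of \(x\) to \(I_n\). It therefore lies outside \(I_n=[-N_n^{-1},N_n^{-1}]\), so \(\|x+j\beta\|_\T>N_n^{-1}\ge cn^{-C_I}\). The endpoint sites \(j=-q\) and \(j=p\) are excluded from the strict interior, and \(p\) itself lies outside \(J\). This leaves \(j=0\), where we need \(\|x\|_\T\ge\delta\). Here the hypothesis \(x\notin\widetilde I_n=(-n^{-B_1},n^{-B_1})\) gives \(\|x\|_\T\ge n^{-B_1}\). Since \(B_1>C_I\), this is the binding term and it fits under \(D_{15}/2\).

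One sign convention needs fixing first. In this appendix the windows are at frequency \(\beta\) on the base circle, so \(\widetilde I_n\) is centered only at \(0\) and no factor \(N\) is needed; I will use \(\delta=\min\{\kappa_\beta/[4(L+2K)^\tau],\,n^{-B_1}\}\). I also have to check that \(x\) regular, or the density argument, handles boundary phases. Regular phases are dense, and \(\sigma(H_x^J)\) is continuous in \(x\), so the gap bound extends from regular \(x\) in the atom to all of \(\mathcal A\cap(I_n\setminus\widetilde I_n)\).

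The main obstacle is the interior claim for \(j\ne0\). I must verify that \(x-q\beta\) and \(x+p\beta\) really are the first visits to \(I_n\) in each direction, which is the definition of \(r_{I_n}^\pm\) on regular phases. This guarantees that the whole strict interior avoids \(I_n\) except at \(j=0\). It is the analogue of the one-sided argument in Proposition~\ref{amo:prop:stopped-blocks}, now joined across the central cut, and it is the only point where the single-resonance structure of Lemma~\ref{amo:lem:single-resonance-gap} is used with the resonance forced off \(0\) by the exterior condition. Combining the three groups, the gap is at least \(c\delta^2\ge cn^{-2\max\{C_I\tau^2,B_1\}}=cn^{-D_{15}}\).
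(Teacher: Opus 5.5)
Your proposal is correct and is essentially the paper's proof: apply Lemma~\ref{amo:lem:single-resonance-gap} on \([-q,p-1]\), with \(\delta\) the minimum of three quantities, namely the Diophantine cap \(c(p+q+2K)^{-\tau}\ge cn^{-C_I\tau^2}\), the interior clearance \(N_n^{-1}\) for \(j\ne0\) (which you rightly absorb, since \(n^{-B_1}\le N_n^{-1}\)), and \(n^{-B_1}\) at \(j=0\); then extend from regular phases by continuity. The only slip is cosmetic: the right endpoint of \(J\) is \(p-1\), not \(p\), which changes nothing because that lemma imposes no condition at the endpoints.
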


\begin{proof}
For regular $x$, all strict interior indices $j\ne0$ satisfy
$\|x+j\beta\|_\T>N_n^{-1}$, and $\|x\|_\T\ge n^{-B_1}$.
With
\[
 \delta=c\min\{(p+q+2K)^{-\tau},N_n^{-1},n^{-B_1}\}
 \ge cn^{-D_{15}/2},
\]
Lemma~\ref{amo:lem:single-resonance-gap} gives the gap $c\delta^2$.
Continuity extends it to every stated phase.
\end{proof}

Choose the common exponent and then the energy window by
\begin{equation}\label{amo:eq:CE-choice}
 \begin{gathered}
 D_1=D_1(C_I,r_\dagger)
 \ge\max\{C_I\tau(2\tau+1)+2,\,3D_{15}+1\},\\
 B_2>D_1+2.
 \end{gathered}
\end{equation}
These choices imply \eqref{amo:eq:Dgap-choice} and
\[
 D_1\ge3D_{15}+1>C_IB_{15}+2,\qquad
 B_2>\max\{D_{15}+2,C_IB_{15}+2\}.
\]

\begin{proof}[Proof of Propositions~\ref{fr:prop:AMO-edge-state}--\ref{fr:prop:AMO-return-nondegeneracy}]
Proposition~\ref{amo:prop:sharp-anchor} proves
Proposition~\ref{fr:prop:AMO-edge-state} for the pair
$(\sigma_{\rm cmp},B)$ fixed at the beginning of the appendix.

For the return matrices and their determinants, note that, for all
sufficiently large $n$,
\[
 n^{-B_2}\le cn^{-D_1},\qquad
 D_1>2C_I\tau^2.
\]
Proposition~\ref{amo:prop:stopped-blocks}, through order $r_\dagger$,
gives the estimates in Proposition~\ref{fr:prop:deterministic-AMO-seed}
and the Dirichlet conclusions of
Proposition~\ref{fr:prop:AMO-return-nondegeneracy}.
The return norms exceed two, and their simple singular lines on the contractible domain
$\mathcal A\times\mathcal E_n$ admit continuous orientations and
real angle lifts. The common constants are chosen at the end of the proof.

For Proposition~\ref{fr:prop:AMO-central-normal-form},
Lemma~\ref{amo:lem:reciprocal-central-atom} and
\eqref{amo:eq:Ccen-choice} give
\[
 \widetilde I_n\subset\mathcal A_n^0,\qquad
 p_{\mathcal A_n^0}=q_{\mathcal A_n^0}=m_n\le N_n\le2n^{C_I},\qquad
 \widetilde I_n\times\mathcal E_n\subset\mathcal U_{m_n}.
\]
Apply Proposition~\ref{amo:prop:central-joining} with $m=m_n$ and put
\[
 \widetilde C_{1,n}=C_{1,m_n},\qquad
 \widetilde C_{2,n}=C_{2,m_n},\qquad
 \widetilde R_n=R_{m_n}^{\rm aff}.
\]
Then
\[
 C(2m_n+1)^{B_{15}}\le n^{D_1},\qquad
 Ce^{-cm_n}\le e^{-c'\gamma m_n}
\]
after increasing $n_0$ and decreasing $c'>0$.  The orientation in
\eqref{amo:eq:weyl-positive-orientation} fixes the central relative sign
and proves Proposition~\ref{fr:prop:AMO-central-normal-form}.

It remains to prove the overlap estimate in
Proposition~\ref{fr:prop:AMO-return-nondegeneracy}. On the exterior domain,
Lemma~\ref{amo:lem:exterior-return-gap} and
\eqref{amo:eq:CE-choice} imply
\begin{equation}\label{amo:eq:exterior-E-gap}
 E-H_x^{[-q,p-1]}\ge cn^{-D_{15}}\operatorname{Id},\qquad
 E\in\mathcal E_n.
\end{equation}
Thus
\[
 S_{q,p-1}
 =\langle e_0,(E-H_x^{[-q,p-1]})^{-1}e_0\rangle^{-1}
 \ge cn^{-D_{15}}.
\]
The strict sides have the same gap by interlacing. Their determinants obey
\[
 \begin{aligned}
 D_-&=P_q(x-q\beta,E)\ge e^{\gamma q-CR_\tau(q+2)},\\
 P_p(x,E)&=D_+\bigl(E-2\lambda\cos(2\pi x)-h^+\bigr),\\
 0&<E-2\lambda\cos(2\pi x)-h^+\le C,\\
 D_+&\ge C^{-1}P_p(x,E)\ge C^{-1}e^{\gamma p-CR_\tau(p+2)}.
 \end{aligned}
\]
By Lemma~\ref{amo:lem:schur-svd} and \eqref{amo:eq:return-length-range},
\[
 \begin{gathered}
 |\cos G_{I_n,\mathcal A}|=|\Xi S_{q,p-1}+\varepsilon|,\qquad
 \Xi\ge cn^{-2D_{15}},\\
 |\varepsilon|\le C(D_-^{-2}+D_+^{-2})
 \le Ce^{-c n^{C_I/\tau}},\\
 |\cos G_{I_n,\mathcal A}|
 \ge cn^{-3D_{15}}-Ce^{-c n^{C_I/\tau}}
 \ge c'n^{-D_1}.
 \end{gathered}
\]
The absolute value is independent of the relative orientation, which
completes Proposition~\ref{fr:prop:AMO-return-nondegeneracy}. Taking the
minimum of the finitely many positive constants,
and enlarging $C$ and $n_0$ as needed gives one common choice for all
four propositions.
\end{proof}
\section{Proof of the normalized product estimate}
\label{app:normalized-product-proof}

\noindent This appendix proves the finite-product estimates by graph recursions.
Steps~(i)--(ii) prove Lemma~\ref{B:lem:finite-two-state-growth};
Steps~(iii)--(iv) prove Lemma~\ref{B:lem:differentiated-finite-two-state-product};
Steps~(v)--(vi) prove Corollary~\ref{B:cor:finite-two-state-frames}.

\begin{proof}
Set
\[
 r_t:=-\frac{\Theta_t}{H_t},
 \qquad s_t:=\frac{\Theta_t}{H_t},
 \qquad
 T_t:=H_t^{-1}D_tW_t
 =\begin{pmatrix}1&r_t\\ \epsilon_ts_t&\epsilon_t\end{pmatrix}.
\]
Thus, by \eqref{B:eq:finite-two-state-product},
\(\mathscr H_L^{-1}\mathscr M_L=T_{L-1}\cdots T_1D_0\).
For \(L=1\), take \(a_L=1\), \(\zeta_L=\rho_L=0\).  Then
\(\mathscr M_1=e_1e_1^{\mathsf T}+\epsilon_0e_2e_2^{\mathsf T}\),
and all three statements follow directly.  Hence assume \(L\ge2\).

\smallskip
\noindent\emph{Step (i): Graph recursions.}
Define the forward column graph and the backward input row by
\[
\begin{aligned}
 \zeta_0&:=0,\qquad a_0:=1,\\
 \zeta_t&:=\epsilon_t
   \frac{s_t+\zeta_{t-1}}{1+r_t\zeta_{t-1}},\\
 a_t&:=a_{t-1}(1+r_t\zeta_{t-1}),
 \qquad 1\le t<L,\\
 \eta_{L-1}&:=r_{L-1},\qquad
 \eta_t:=\frac{r_t+\epsilon_t\eta_{t+1}}
              {1+\epsilon_ts_t\eta_{t+1}}
 \quad(1\le t<L-1),
 \qquad
 \rho_L:=\epsilon_0\eta_1.
\end{aligned}
\]
For \(P_L:=\mathscr H_L^{-1}\mathscr M_L\), the recursions give
\[
 P_Le_1=a_{L-1}\binom1{\zeta_{L-1}},\qquad
 e_1^{\mathsf T}P_L=b_1(1,\rho_L),\qquad
 b_1=(P_L)_{11}=a_{L-1}.
\]
Set \(a_L:=a_{L-1}\), \(\zeta_L:=\zeta_{L-1}\).  Fixing the first row
and column and then the determinant yields
\[
\begin{aligned}
 \Delta_L:=\det(\mathscr H_L^{-1}\mathscr M_L)
 &=\left(\prod_{t=0}^{L-1}\epsilon_t\right)
  \left(\prod_{t=1}^{L-1}H_t^{-2}\right),
 \\
 \mathscr H_L^{-1}\mathscr M_L
 &=a_L\binom{1}{\zeta_L}(1,\rho_L)
  +\frac{\Delta_L}{a_L}e_2e_2^{\mathsf T}.
\end{aligned}
\]
\smallskip
\noindent\emph{Step (ii): The \(C^0\) estimate.}
For Lemma~\ref{B:lem:finite-two-state-growth}, under
\eqref{B:eq:finite-two-state-input-bounds}, set
\(\delta:=\epsilon\kappa^{-2}\le10^{-4}\).  Since
\(H_t^2+\Theta_t^2=1\), one has
\(
 |r_t|=|s_t|
 =\left|H_t^{-2}-1\right|^{1/2}
 \le\sqrt2\,\kappa^{-1}
\).
The simultaneous induction closes through
\[
\begin{aligned}
 |r_t\zeta_{t-1}|,\ |\epsilon_ts_t\eta_{t+1}|&\le8\delta,
 &\frac{1+4\epsilon}{1-8\delta}&<4,\\
 |\zeta_t|&\le4\sqrt2\,\kappa^{-1}|\epsilon_t|,
 &|\eta_t|&\le4\sqrt2\,\kappa^{-1},\\
 |\rho_L|&\le4\sqrt2\,\kappa^{-1}|\epsilon_0|.
\end{aligned}
\]
All denominators therefore have modulus at least \(1-8\delta\).  Put
\(d_L:=\Delta_L/a_L^2\) and
\(\log a_L:=\sum_{t=1}^{L-1}
\operatorname{Log}(1+r_t\zeta_{t-1})\), where
\(\operatorname{Log}\) is the branch near \(1\) with
\(\operatorname{Log}1=0\).
The determinant formula gives
\[
\begin{aligned}
 \|\log a_L\|_\infty
 &\le\frac{8}{1-8\delta}(L-1)\delta
 <8.1(L-1)\delta,\\
 |a_L|&\ge(1-8\delta)^{L-1},\\
 |d_L|&\le\min\{|\epsilon_0|,|\epsilon_{L-1}|\}
 \left(\frac{\delta}{(1-8\delta)^2}\right)^{L-1}<0.0011\delta.
\end{aligned}
\]
Write
\(N_L:=a_L^{-1}\mathscr H_L^{-1}\mathscr M_L
=\binom{1}{\zeta_L}(1,\rho_L)+d_Le_2e_2^{\mathsf T}\).
Since \(|\zeta_L|,|\rho_L|\le4\sqrt2\delta\), the rank-one term has
operator norm between \(1\) and \(1+32\delta^2\).  Hence
\[
\begin{aligned}
 \bigl|\|N_L\|-1\bigr|&<0.034\delta,
 &|\log\|N_L\||&<0.1\delta,\\
 \left|
  \log\|\mathscr M_L\|-\sum_{t=1}^{L-1}\log|H_t|
 \right|
 &<8.1(L-1)\delta+0.1\delta\\
 &\le10L\delta,
\end{aligned}
\]
where the last estimate uses
\(\mathscr M_L=\mathscr H_La_LN_L\).  This proves
\eqref{B:eq:finite-two-state-uniform-log-product}.

\smallskip
\noindent\emph{Step (iii): Derivative estimates.}
In Steps~(iii)--(vi), \(C=C(m)>0\) denotes auxiliary constants,
which may increase from one estimate to the next. The fixed constants
\(C_2(m)\ge2\) and \(0<c_1(m)\le10^{-4}\) in
Lemma~\ref{B:lem:differentiated-finite-two-state-product} will be chosen
after these estimates. Interpret logarithmic derivatives as in the
statement and assume
\eqref{B:eq:finite-two-state-jet-input} and
\eqref{B:eq:finite-two-state-smallness}.  Since \(C_2(m)\ge2\),
\[
 L\delta\le c_1(m),\qquad
 \|\log a_L\|_\infty\le10L\delta\le10c_1(m)<10^{-2},\qquad
 \|a_L-1\|_\infty<\tfrac12.
\]
Put \(\mathcal Q_0:=1+K_0\) and
\(\mathcal Q_r:=1+K_r+\mathcal R_r(K_1,\ldots,K_{r-1})\), where the
polynomials \(\mathcal R_r\) have coefficients depending only on \(r\), and
set
\[
 Z_{t,r}^+:=\max_{|\beta|=r}
 \|\partial^\beta\zeta_t\|_\infty,\qquad
 Z_{t,r}^-:=\max_{|\beta|=r}
 \|\partial^\beta\eta_t\|_\infty.
\]
The graph derivatives satisfy
\[
\begin{aligned}
 \partial_\zeta\!\left[\epsilon_t
    \frac{s_t+\zeta}{1+r_t\zeta}\right]
 &=\frac{\epsilon_tH_t^{-2}}{(1+r_t\zeta)^2},\\
 \partial_\eta\!\left[
    \frac{r_t+\epsilon_t\eta}{1+\epsilon_ts_t\eta}\right]
 &=\frac{\epsilon_tH_t^{-2}}{(1+\epsilon_ts_t\eta)^2},\\
 |\partial_\zeta(\cdots)|,\ |\partial_\eta(\cdots)|
 &\le\frac{\delta}{(1-8\delta)^2}\le\frac14.
\end{aligned}
\]
Leibniz's rule, the quotient rule, and Fa\`a di Bruno's formula therefore give
\[
\begin{aligned}
 Z_{t,r}^+
 &\le\frac14Z_{t-1,r}^+
   +CL^{C}\kappa^{-C}\mathcal Q_r,\\
 Z_{t,r}^-
 &\le\frac14Z_{t+1,r}^-
   +CL^{C}\kappa^{-C}\mathcal Q_r.
\end{aligned}
\]
Geometric summation from \(\zeta_0=0\) and
\(\eta_{L-1}=r_{L-1}\) gives, uniformly in \(t\),
\[
 Z_{t,r}^++Z_{t,r}^-
 \le CL^{C}\kappa^{-C}\mathcal Q_r,\qquad 1\le t<L.
\]
To retain the endpoint factors, put
\[
 F_t:=\frac{s_t+\zeta_{t-1}}{1+r_t\zeta_{t-1}},\qquad
 \zeta_t=\epsilon_tF_t\quad(1\le t<L).
\]
The preceding bounds and the denominator bound in Step~(ii) imply
\[
 \|\partial^\beta F_t\|_\infty
 \le CL^{C}\kappa^{-C}\mathcal Q_r,
 \qquad |\beta|=r\le m.
\]
Also,
\[
 \partial^\gamma\epsilon_t
 =\epsilon_t\,\mathcal T_\gamma
   \bigl((\partial^\beta\log\epsilon_t)_{1\le|\beta|\le|\gamma|}\bigr),
\]
where \(\mathcal T_\gamma\) is the universal derivative polynomial
and \(\mathcal T_0=1\).  Leibniz's rule therefore gives
\[
 \begin{aligned}
 \|\partial^\beta\zeta_t\|_\infty
 &\le C\|\epsilon_t\|_\infty
       L^{C}\kappa^{-C}\mathcal Q_r,\qquad 1\le t<L,\\
 \|\partial^\beta\rho_L\|_\infty
 &=\|\partial^\beta(\epsilon_0\eta_1)\|_\infty
 \le C\epsilon_{\rm in}L^{C}\kappa^{-C}\mathcal Q_r.
 \end{aligned}
\]
In particular, the first estimate at \(t=L-1\) contains
\(\epsilon_{\rm out}\).  Differentiating the formula for \(\log a_L\) in Step~(ii) gives
\[
\begin{aligned}
 \|\partial^\beta\log a_L\|_\infty
 &\le C\epsilon L^{C}\kappa^{-C}\mathcal Q_r,\\
 \partial^\beta\log\Delta_L
 &=\sum_{t=0}^{L-1}\partial^\beta\log\epsilon_t
   -2\sum_{t=1}^{L-1}\partial^\beta\log H_t
   \qquad(|\beta|\ge1),
\end{aligned}
\]
Indeed, each term in a positive-order derivative of
\(\operatorname{Log}(1+r_t\zeta_{t-1})\) contains a factor
\(\partial^\gamma\zeta_{t-1}\), with \(0\le|\gamma|\le|\beta|\);
the term with \(t=1\) vanishes.  Finally,
\(d_L=\Delta_La_L^{-2}\), so
\[
 \begin{aligned}
 |\partial^\beta d_L|
 &\le C|d_L|L^{C}\kappa^{-C}\mathcal Q_r,\\
 \|\partial^\beta d_L\|_\infty
 &\le C\min\{\epsilon_{\rm in},\epsilon_{\rm out}\}
       L^{C}\kappa^{-C}\mathcal Q_r.
 \end{aligned}
\]
Here we used the bound for \(d_L\) in Step~(ii).  The same step gives
the zeroth-order bounds.  For the highest derivatives,
write \(F\) for the vector of local input coordinates
\((\log\epsilon_t,H_t,\Theta_t)\), and \(\Phi\) for one of the
analytic expressions above. If \(|\beta|=r\ge1\),
\[
 \partial^\beta[\Phi(F)]
 =D\Phi(F)[\partial^\beta F]
 +\sum_{p=2}^{r}\ 
  \sum_{\substack{\beta_1+\cdots+\beta_p=\beta\\|\beta_i|\ge1}}
  c_{\beta_1,\ldots,\beta_p}
  D^p\Phi(F)[\partial^{\beta_1}F,\ldots,\partial^{\beta_p}F].
\]
The constants in the sum are the multivariate chain-rule coefficients.
Every \(|\beta_i|\le r-1\) in the second term; thus the dependence on
\(\partial^\beta F\) is affine. Applying the preceding estimates
separately to the two terms proves the asserted bounds.

\smallskip
\noindent\emph{Step (iv): Spectral projections.}
Put \(\mathbf x_L:=(1,\zeta_L)^{\mathsf T}\),
\(\mathbf y_L:=(1,\rho_L)^{\mathsf T}\), and
\(E_{22}:=e_2e_2^{\mathsf T}\). Then
\(N_L=\mathbf x_L\mathbf y_L^{\mathsf T}+d_LE_{22}\).
Define
\[
\begin{aligned}
 G_{\rm in}&:=N_L^{\mathsf T}N_L,&
 G_{{\rm in},0}&:=
  (\mathbf x_L^{\mathsf T}\mathbf x_L)
  \mathbf y_L\mathbf y_L^{\mathsf T},\\
 G_{\rm out}&:=N_LN_L^{\mathsf T},&
 G_{{\rm out},0}&:=
  (\mathbf y_L^{\mathsf T}\mathbf y_L)
  \mathbf x_L\mathbf x_L^{\mathsf T}.
\end{aligned}
\]
For \(d_L=0\), \(G_{\rm in}=G_{{\rm in},0}\) and
\(G_{\rm out}=G_{{\rm out},0}\); their common nonzero eigenvalue stays
near \(1\), and their Riesz projections are respectively
\[
 \frac{\mathbf y_L\mathbf y_L^{\mathsf T}}
      {\mathbf y_L^{\mathsf T}\mathbf y_L},
 \qquad
 \frac{\mathbf x_L\mathbf x_L^{\mathsf T}}
      {\mathbf x_L^{\mathsf T}\mathbf x_L}.
\]
Direct multiplication gives
\[
\begin{aligned}
 G_{\rm in}-G_{{\rm in},0}
 &=d_L\bigl(
   \mathbf y_L\mathbf x_L^{\mathsf T}E_{22}
  +E_{22}\mathbf x_L\mathbf y_L^{\mathsf T}\bigr)
  +d_L^2E_{22},\\
 G_{\rm out}-G_{{\rm out},0}
 &=d_L\bigl(
   \mathbf x_L\mathbf y_L^{\mathsf T}E_{22}
  +E_{22}\mathbf y_L\mathbf x_L^{\mathsf T}\bigr)
  +d_L^2E_{22}.
\end{aligned}
\]
Also \(\widehat M=a_LN_L\).  Hence the preceding bounds and
\(L\delta\le c_1(m)\le 10^{-4}\) give
\[
\begin{aligned}
 \|\widehat M-e_1e_1^{\mathsf T}\|_\infty
 &\le \|a_L-1\|_\infty\\
 &\quad+\|a_L\|_\infty
 \|N_L-e_1e_1^{\mathsf T}\|_\infty\\
 &\le20L\delta\le20c_1(m)\le2\cdot10^{-3}<\frac1{10}.
\end{aligned}
\]
Criterion~\ref{crit:rank-one-spectral-separation} defines the spectral
data throughout \(\Omega\) and fixes the
branches normalized at \(1\).  Multiplication of a Gram matrix by the
nonzero scalar \(a_L^2\) does not change its spectral subspaces, so the
Riesz projections of \(G_{\rm in}\) and \(G_{\rm out}\) are precisely
\(\Pi_M^{\rm in}\) and \(\Pi_M^{\rm out}\).

Choose one positively oriented circle \(\Gamma_1\) separating the disk
about \(1\) from the disk about \(0\) for all four Gram matrices above.
All resolvents on \(\Gamma_1\) are uniformly bounded.  If \(G\) and
\(G_0\) denote either corresponding pair, the Riesz identity is
\[
 \Pi(G)-\Pi(G_0)
 =\frac1{2\pi i}\int_{\Gamma_1}
  (\xi I_2-G)^{-1}(G-G_0)
  (\xi I_2-G_0)^{-1}\,d\xi .
\]
Differentiation and the displayed Gram expansions give, for
\(|\beta|=r\le m\),
\[
 \|\partial^\beta(\Pi(G)-\Pi(G_0))\|_\infty
 \le C\min\{\epsilon_{\rm in},\epsilon_{\rm out}\}
       L^{C}\kappa^{-C}\mathcal Q_r.
\]
The rank-one input and output projectors depend only on \(\rho_L\)
and \(\zeta_L\), respectively.  Their bounds from Step~(iii) prove the
first two lines of \eqref{B:eq:finite-two-state-projector-output},
including the affine dependence on the highest derivatives.
Choose \(\mathcal P_r\) so that \(\mathcal Q_r\le\mathcal B_r\).

Finally,
\[
 \nu_M=\operatorname{tr}
 \bigl(\widehat M^{\mathsf T}\widehat M\Pi_M^{\rm in}\bigr),
 \qquad
 \log\frac{\Lambda_M}{\mathscr L_0\mathscr H_L}
 =\frac12\log\nu_M.
\]
Since \(\widehat M=a_LN_L\), differentiating these identities
and using the graph and Riesz bounds gives the multiplier estimate in the
third line of \eqref{B:eq:finite-two-state-projector-output}.
Step~(iii) also gives the affine dependence and separate bounds for the
linear and constant terms.  After reducing \(c_1(m)\), the zeroth-order
bound gives
\[
 |e_1^{\mathsf T}\Pi_M^{\rm in}e_1-1|<\tfrac12,
\]
as required in Corollary~\ref{B:cor:finite-two-state-frames}.

\smallskip
\noindent\emph{Step (v): Canonical frames.}
The projection bound defines \(v_M\), and
\[
\begin{aligned}
 v_M^{\mathsf T}v_M&=1,&
 u_M&=\Lambda_M^{-1}Mv_M=\nu_M^{-1/2}\widehat Mv_M,\\
 u_M^{\mathsf T}u_M
 &=\Lambda_M^{-2}v_M^{\mathsf T}M^{\mathsf T}Mv_M=1,&
 u_Mu_M^{\mathsf T}&=\Pi_M^{\rm out}.
\end{aligned}
\]
Thus \(U_M,V_M\in{\rm SO}(2,\mathbb C)\).  Reducing \(c_1(m)\) gives
\(\|u_M-e_1\|<1/2\), which selects the normalized branch
\[
 u_M=\frac{\Pi_M^{\rm out}e_1}
 {(e_1^{\mathsf T}\Pi_M^{\rm out}e_1)^{1/2}},
\]
with the square root normalized at \(1\).  Differentiating this formula
and its input analogue for \(v_M\) gives the frame bounds, with factors
\(\epsilon_{\rm out}\) and \(\epsilon_{\rm in}\), respectively.
The affine dependence follows as in Step~(iii).

\smallskip
\noindent\emph{Step (vi): The normal form.}
The defining identities give
\[
 Mv_M=\Lambda_Mu_M,\quad M^{\mathsf T}u_M=\Lambda_Mv_M
 \quad\Longrightarrow\quad
 U_M^{\mathsf T}MV_M=
 \begin{pmatrix}\Lambda_M&0\\0&\det(M)\Lambda_M^{-1}\end{pmatrix}.
\]
This is \eqref{B:eq:finite-two-state-exact-bilinear-normal-form}.

Finally, \eqref{B:eq:finite-two-state-exact-bilinear-normal-form} and
\(\det M=\mathscr L_0^2\prod_{t=0}^{L-1}\epsilon_t\) give the
determinant-one specialization.  For real data the Gram eigenvalues are
nonnegative, and the two separation disks show that the selected eigenvalue
near \(1\) is the larger one.  Therefore \(|\Lambda_M|=\|M\|\), while
\(\mathscr L_0\mathscr H_L\) fixes the orientation.  In the real-symmetric
setting, uniqueness of the Riesz projections and normalized branches gives
the asserted symmetry.

For each \(0\le r\le m\), the preceding argument produces only finitely
many coefficients and powers, all depending only on \(m\). Choose a single
\(C_2(m)\ge2\) dominating all of them, and choose \(c_1(m)>0\) small
enough for the finitely many smallness requirements above. Since
\(L\ge1\), \(\kappa\le1\), and
\(2+K_0+\sum_{r=1}^mK_r\ge2\), enlarging \(C_2(m)\) strengthens
\eqref{B:eq:finite-two-state-smallness} and enlarges the upper bounds in
\eqref{B:eq:finite-two-state-projector-output} and
\eqref{B:eq:finite-two-state-frame-output}. These fixed constants therefore
work simultaneously at every order \(0\le r\le m\), completing the proof.
\end{proof}

\end{document}